\newtheorem{conj}[subsubsection]{Conjecture}
\newtheorem{prop}[subsubsection]{Proposition}
\newtheorem{propconstr}[subsubsection]{Proposition-Construction}
\newtheorem{cor}[subsubsection]{Corollary}
\newtheorem{lem}[subsubsection]{Lemma}
\newtheorem{defn}[subsubsection]{Definition}
\newtheorem{thm}[subsubsection]{Theorem}
\newtheorem{quest}[subsubsection]{Question}
\numberwithin{equation}{section}
\theoremstyle{remark}
\newtheorem{rem}[subsubsection]{Remark}
\newcommand{\conjref}[1]{Conjecture~\ref{#1}}
\newcommand{\lemref}[1]{Lemma~\ref{#1}}
\newcommand{\thmref}[1]{Theorem~\ref{#1}}
\newcommand{\secref}[1]{Sect.~\ref{#1}}
\newcommand{\corref}[1]{Corollary~\ref{#1}}
\newcommand{\propref}[1]{Proposition~\ref{#1}}
\newcommand{\nc}{\newcommand}
\nc{\ssec}{\subsection}
\nc{\sssec}{\subsubsection}
\nc{\renc}{\renewcommand}
\nc{\on}{\operatorname}
\nc\ol{\overline}
\nc\wt{\widetilde}
\nc{\Loc}{\on{Loc}}
\nc{\Bun}{\on{Bun}}
\nc{\BQ}{{\mathbb{Q}}}
\nc{\BA}{{\mathbb{A}}}
\nc{\BC}{{\mathbb{C}}}
\nc{\BH}{{\mathbb{H}}}
\nc{\BG}{{\mathbb{G}}}
\nc{\BK}{{\mathbb{K}}}
\nc{\BN}{{\mathbb{N}}}
\nc{\BD}{{\mathbb{D}}}
\nc{\BE}{{\mathbb{E}}}
\nc{\BV}{{\mathbb{V}}}
\nc{\BZ}{{\mathbb{Z}}}
\nc{\BL}{{\mathbb{L}}}
\nc{\CA}{{\mathcal{A}}}
\nc{\CC}{{\mathcal{C}}}
\nc{\CG}{{\mathcal{G}}}
\nc{\CI}{{\mathcal{I}}}
\nc{\CJ}{{\mathcal{J}}}
\nc{\CO}{{\mathcal{O}}}
\nc{\CP}{{\mathcal{P}}}
\nc{\CR}{{\mathcal{R}}}
\nc{\CU}{{\mathcal{U}}}
\nc{\CV}{{\mathcal{V}}}
\nc{\CW}{{\mathcal{W}}}
\nc{\CK}{{\mathcal{K}}}
\nc{\CM}{{\mathcal{M}}}
\nc{\CN}{{\mathcal{N}}}
\nc{\CL}{{\mathcal{L}}}
\nc{\CT}{{\mathcal{T}}}
\nc{\CE}{{\mathcal{E}}}
\nc{\CF}{{\mathcal{F}}}
\nc{\CX}{{\mathcal{X}}}
\nc{\CY}{{\mathcal{Y}}}
\nc{\CZ}{{\mathcal{Z}}}
\nc{\D}{{\mathcal{D}}}
\nc{\fd}{{\mathfrak{d}}}
\nc{\fg}{{\mathfrak{g}}}
\nc{\fD}{{\mathfrak{D}}}
\nc{\fh}{{\mathfrak{h}}}
\nc{\fl}{{\mathfrak{l}}}
\nc{\fn}{{\mathfrak{n}}}
\nc{\sM}{{\mathsf M}}
\nc{\ppart}{(\!(t)\!)}
\nc{\qqart}{[\![t]\!]}
\nc{\hg}{{\widehat\fg}}
\nc{\sA}{{\mathsf A}}
\nc{\sB}{{\mathsf B}}
\nc{\sF}{{\mathsf F}}
\nc{\sG}{{\mathsf G}}
\nc{\sH}{{\mathsf H}}
\nc{\sK}{{\mathsf K}}
\nc{\sS}{{\mathsf S}}
\nc{\sT}{{\mathsf T}}
\nc{\sk}{{\mathsf k}}
\nc{\sj}{{\mathsf j}}
\nc{\bC}{{\mathbf{C}}}
\nc{\bZ}{{\mathbf{Z}}}
\nc{\bD}{{\mathbf{D}}}
\nc{\bO}{{\mathbf{O}}}
\nc{\bU}{{\mathbf{U}}}
\nc{\bc}{{\mathbf{c}}}
\nc{\be}{{\mathbf{e}}}
\nc{\bo}{{\mathbf{o}}}
\nc{\br}{{\mathbf{r}}}
\nc{\bM}{{\mathbf{M}}}
\nc{\bA}{{\mathbf{A}}}
\nc{\bK}{{\mathbf{K}}}
\nc{\oS}{\overset{\circ}S{}}
\nc{\oY}{\overset{\circ}Y{}}
\nc{\oX}{\overset{\circ}X{}}
\nc{\of}{\overset{\circ}f{}}
\nc{\oCX}{\overset{\circ}\CX{}}
\nc{\opi}{\overset{\circ}\pi{}}
\nc{\obc}{\overline\bc}
\nc{\fW}{{\mathfrak{W}}}
\nc{\reg}{{\text{\rm reg}}}
\nc{\nilp}{{\text{\rm nilp}}}
\nc{\cG}{{\check{G}}}
\nc{\cB}{{\check{B}}}
\nc{\cg}{{\check{\fg}}}
\nc{\cb}{{\check{\fb}}}
\nc{\cn}{{\check{\fn}}}
\nc{\mer}{{\on{mer}}}
\nc{\Const}{\mathsf{Const}}
\nc{\Whit}{\on{Whit}}
\nc{\KL}{\on{KL}}
\nc{\FS}{\on{FS}}
\nc{\LocSys}{\on{LocSys}}
\nc{\QCoh}{\on{QCoh}}
\nc{\Coh}{\on{Coh}}
\nc{\IndCoh}{{\on{IndCoh}}}
\nc{\Cat}{\on{Cat}}
\nc{\Op}{\on{Op}}
\nc{\Gr}{\on{Gr}}
\nc{\Fl}{\on{Fl}}
\nc{\Rep}{\on{Rep}}
\renc{\mod}{{\on{-mod}}}
\nc{\Conn}{\on{Conn}}
\nc{\unit}{{\mathbf{1}}}
\nc{\Hom}{\on{Hom}}
\nc{\End}{\on{End}}
\nc{\Vect}{\on{Vect}}
\nc{\Av}{\on{Av}}
\nc{\Ind}{\on{Ind}}
\nc{\Spec}{\on{Spec}}
\nc{\KG}{K\backslash G}
\nc{\comult}{{co\text{-}mult}}
\nc{\counit}{{co\text{-}unit}}
\nc{\uHom}{{\underline{\Hom}}}
\nc{\dgSch}{\on{DGSch}}
\nc{\dgindSch}{\on{DGindSch}}
\nc{\indSch}{\on{indSch}}
\nc{\Sch}{\on{Sch}}
\nc{\affdgSch}{\on{DGSch}^{\on{aff}}}
\nc{\affSch}{\on{Sch^{\on{aff}}}}
\nc{\Groupoids}{\on{Grpd}}
\nc{\inftygroup}{\infty\on{-Grpd}}
\nc{\inftyPicgroup}{\infty\on{-PicGrpd}}
\nc{\inftyCat}{\infty\on{-Cat}}
\nc{\StinftyCat}{\on{DGCat}}
\nc{\MoninftyCat}{\infty\on{-Cat}^{Mon}}
\nc{\SymMoninftyCat}{\infty\on{-Cat}^{SymMon}}
\nc{\SymMonStinftyCat}{\on{DGCat}^{SymMon}}
\nc{\MonStinftyCat}{\on{DGCat}^{Mon}}
\nc{\inftystack}{\on{Stk}}
\nc{\inftystackalg}{Stk^{1\text{-}alg}}
\nc{\inftyprestack}{\on{PreStk}}
\nc{\inftydgnearstack}{\on{NearStk}}
\nc{\inftydgstack}{\on{Stk}}
\nc{\inftydgstackalg}{DGStk^{1\text{-}alg}}
\nc{\inftydgprestack}{\on{PreStk}}
\nc{\mmod}{\on{-}{\mathbf{mod}}}
\nc{\wh}{\widehat}
\nc{\nDG}{^{\leq n}\!\on{DG}}
\nc{\Maps}{\on{Maps}}
\nc{\CMaps}{{\mathcal Maps}}
\nc{\bLoc}{\mathbf{Loc}}
\nc{\bGamma}{\mathbf{\Gamma}}
\nc{\bDelta}{\mathbf{\Delta}}
\nc{\dr}{{\on{dR}}}
\nc{\comod}{\on{-comod}}
\nc{\Oblv}{{\mathbf{oblv}}}
\nc{\oblv}{{\on{oblv}}}
\nc{\Iinv}{{\mathbf{inv}}}
\nc{\ccoinv}{{\mathbf{coinv}}}
\nc{\Iind}{{\mathbf{ind}}}
\nc{\coIind}{{\mathbf{coind}}}
\nc{\ind}{{\on{ind}}}
\nc{\coind}{{\on{coind}}}
\nc{\Rrec}{{\mathbf{rec}}}
\nc{\Rres}{{\mathbf{res}}}
\nc{\coRres}{{\mathbf{cores}}}
\nc{\Rreg}{{\mathbf{Reg}}}
\nc{\commod}{\on{-}{\mathbf{comod}}}
\nc{\oCY}{\overset{\circ}{\CY}}
\nc{\osF}{\overset{\circ}{\sF}}
\nc{\osG}{\overset{\circ}{\sG}}
\nc{\oU}{\overset{\circ}{U}}
\nc{\obC}{\overset{\circ}{\bC}}
\nc{\obD}{\overset{\circ}{\bD}}
\nc{\one}{\mathbf{1}}
\begin{document}

\title[Sheaves of categories]{Sheaves of categories and the notion of 1-affineness}

\author{Dennis Gaitsgory}

\address{Department of Mathematics, Harvard University, 1 Oxford street, Cambridge MA, 02138, USA}\

\email{gaitsgde@math.harvard.edu}

\date{\today}

\begin{abstract}
We define the notion of 1-affineness for a prestack, and prove an array of results that establish
1-affineness of certain types of prestacks.
\end{abstract}

\maketitle

\tableofcontents

\section*{Introduction}

\ssec{The notion of sheaf of categories}

\sssec{}

Before we define what we mean by a \emph{sheaf of categories}, let us specify what these are sheaves on:
we will consider sheaves of categories over arbitrary \emph{prestacks}.

\medskip

In this paper we work in the framework of derived algebraic geometry, as developed by J.~Lurie. For a brief summary
of our conventions, the reader is referred to the paper \cite{Stacks}. Throughout this paper we will be working over a
fixed ground field $k$ of characteristic $0$. 

\medskip

Let $\affdgSch$ be the category of affine DG schemes. By definition, a \emph{prestack} is an arbitrary functor \footnote{Technically,
we require our prestacks to be accessible as functors.} of $\infty$-categories
$$\CY:(\affdgSch)^{\on{op}}\to \inftygroup.$$

I.e., a prestack is given by its functor of points on affine DG schemes (with the only condition being of set-theoretic nature,
referred to in the footnote). 

\sssec{}

Informally, a sheaf of categories $\CC$ over a prestack $\CY$ is a functorial assignment for every affine DG scheme $S$, 
mapping to $\CY$, of a DG category $\bGamma(S,\CC)$, which is acted on by the monoidal DG category $\QCoh(S)$ of
quasi-coherent sheaves on $S$. I.e., 
\begin{equation} \label{e:shvcat prel}
(S\to \CY) \in \affdgSch_{/\CY}\rightsquigarrow \bGamma(S,\CC)\in \QCoh(S)\mmod,
\end{equation}
where we denote by 
$\QCoh(S)\mmod$ the $\infty$-category of $\QCoh(S)$-module categories.

\medskip

The assignment \eqref{e:shvcat prel} must be functorial in $S$ in the sense that for a map
$f:S_1\to S_2$ in $\affdgSch_{/\CY}$ we must be given an isomorphism in $\QCoh(S_1)\mmod$:
$$\QCoh(S_1)\underset{\QCoh(S_2)}\otimes \bGamma(S_2,\CC)\to \bGamma(S_1,\CC),$$
together with a homotopy-coherent system of compatibilities for compositions of morphisms.

\medskip

A precise definition of the $\infty$-category $\on{ShvCat}(\CY)$ is given in \secref{ss:defn shv-cat}.

\sssec{}

As is often the case with subjects such as the present one, a natural question to ask is why we should
care about the notion of sheaf of categories, and especially in such generailty.

\medskip

The author was led to the study of this notion by the (still highly conjectural) local
geometric Langlands program. Namely, the object of study of this program is the notion
of category, equipped with an action of the loop group $G\ppart$. 

\medskip

Now, the $\infty$-category of categories acted on by $G\ppart$
is (more or less by definition) the same as $\on{ShvCat}(\CY)$, for 
$$\CY:=B\left(G\ppart_\dr\right),$$
where $B(\CG)$ denotes the classifying prestack of a given group-prestack $\CG$,
and $(-)_\dr$ is the de Rham prestack of a given stack. 

\medskip

It turns out that the case of $B\left(G\ppart_\dr\right)$ contains the complexity of all the examples
considered in this paper combined (algebraic stacks, indschemes, classifying stacks of 
formal groups, de Rham prestacks).

\ssec{Quasi-coherent sheaves on a prestack}

We shall now take a slightly different approach to what a sheaf of categories over a prestack might mean. 

\sssec{}

For every prestack $\CY$ we have the DG category $\QCoh(\CY)$. By definition,
its objects are assigments
$$(S\to \CY) \in \affdgSch_{/\CY}\rightsquigarrow \CF_S\in \QCoh(S),$$
endowed with the data of 
$$f^*(\CF_{S_2})\simeq \CF_{S_1}, \quad (f:S_1\to S_2) \in \affdgSch_{/\CY},$$
together with a homotopy-coherent system of compatibilities for compositions of morphisms.
I.e., informally, a quasi-coherent sheaf on $\CY$ is a compatible family of quasi-coherent sheaves
on affine DG schemes mapping to $\CY$. 

\medskip

The DG category $\QCoh(\CY)$ has a natural (symmetric) monoidal structure given by term-wise
tensor product:
$$(\CF^1\otimes \CF^2)_S:=\CF^1_S\underset{\CO_S}\otimes \CF^2_S.$$

\sssec{}

Can consider the $\infty$-category $\QCoh(\CY)\mmod$ of module categories over $\QCoh(\CY)$.
The goal of this paper is to study the connection between the $\infty$-categories
$$\on{ShvCat}(\CY)  \text{ and } \QCoh(\CY)\mmod.$$

\medskip

The above two $\infty$-categories are tautologically equivalent if $\CY$ is an affine DG scheme. 

\sssec{}

The first observation is that the categories $\on{ShvCat}(\CY)$ and $\QCoh(\CY)\mmod$ are related by
a pair of adjoint functors:

\medskip

Given $\CC\in \on{ShvCat}(\CY)$, we can take the DG category $\bGamma(\CY,\CC)$
of its global sections over $\CY$. It will be naturally acted on by $\QCoh(\CY)$. Thus, we
obtain an object 
$$\bGamma^{\on{enh}}(\CY,\CC)\in \QCoh(\CY)\mmod,$$
and we obtain a functor
$$\bGamma^{\on{enh}}_\CY:=\bGamma^{\on{enh}}(\CY,-):\on{ShvCat}(\CY)\to \QCoh(\CY)\mmod.$$

\medskip

The functor $\bGamma^{\on{enh}}_\CY$ admits a left adjoint, denoted $\bLoc_\CY$, given by tensoring up.
Namely, for $\bC\in \QCoh(\CY)\mmod$ we let $\bLoc_\CY(\bC)$ be the sheaf of categories, whose value
on $S \in \affdgSch_{/\CY}$ is
$$\QCoh(S)\underset{\QCoh(\CY)}\otimes \bC.$$

\sssec{}

We can now give the definition central for this paper: we shall say that a prestack $\CY$ is 1-affine if
the functors $\bGamma^{\on{enh}}_\CY$ and $\bLoc_\CY$ are (mutually inverse) equivalences
of $\infty$-categories.

\medskip

Thus, $\CY$ is 1-affine, if and only if the category $\on{ShvCat}(\CY)$ can be completely recovered
from the monoidal DG category $\QCoh(\CY)$.

\medskip

As was mentioned above, an affine DG scheme is tautologically 1-affine. 

\sssec{}
The origin of the name is the following: let us say that a prestack $\CY$ is \emph{weakly 0-affine}
if the functor
$$\Gamma(\CY,-):\QCoh(\CY)\to \Gamma(\CY,\CO_\CY)\mod$$
is an equivalence of categories. Here $\Gamma(\CY,\CO_\CY)$ is the (DG) algebra of global sections of $\CO_\CY$,
and $\Gamma(\CY,\CO_\CY)\mod$ is DG category of its modules.

\medskip

Tautologically, an affine DG scheme is weakly 0-affine. However, the class of weakly 0-affine
prestacks is much larger than just affine DG schemes. For example, any quasi-affine DG scheme is 
weakly 0-affine. In addition, the algebraic stack $\on{pt}/\BG_a$ is also weakly 0-affine. 

\medskip

The notion of 1-affineness is a higher-categorical analog, where instead of modules over DG algebras, we
consider module categories over monoidal DG categories. 

\ssec{Main results}

This paper aims to determine which prestacks are 1-affine. 

\begin{rem}
Let us say right away that it is ``much easier'' for a prestack to be 1-affine than weakly 0-affine.
We shall see multiple manifestations of this phenomenon below (however, it is \emph{not} true that
every weakly $0$-affine prestack is 1-affine).
\end{rem}

\sssec{}

First, one shows that any (quasi-compact, quasi-separated) DG scheme is 1-affine 
(\thmref{t:alg space}). 

\medskip

Furthermore, we show that algebraic stacks (under some not too restrictive technical conditions) are also
1-affine (\thmref{t:alg}). \footnote{Unfortunately, the only proof of this result that we could come up with
for arbitrary algebraic stacks is rather complicated. On the other hand, a much simpler proof can be
given for algebraic stacks that are global quotients (\thmref{t:quotient}). The core idea of both proofs
is due to J.~Lurie. }

\medskip

One of the technical conditions for 1-affineness of algebraic stacks is that the inertia group
of points be of finite type. This condition turns out to be necessary. Namely, we show (\thmref{t:thick})
that the classifying stack of a group-scheme of infinite type is typically \emph{not} 1-affine. 

\sssec{}

One can wonder whether it is reasonable to expect higher Artin stacks to be 1-affine. Unfortunately,
we did not find a principle that governs the answer:

\medskip

Consider the iterated classifying spaces $B\BG_a$, $B^2\BG_a$, $B^3\BG_a$. We prove (\thmref{t:iterated B})
that they are all 1-affine. However, we also prove that $B^4\BG_a$ is \emph{not} 1-affine.

\sssec{}

Another class of prestacks of interest for us is (DG) indschemes. These turn out \emph{not} to be 1-affine, 
even in the nicest cases, such as $\BA^\infty$.

\sssec{}

A third class of primary interest is prestacks of the form $Z_\dr$, where $Z$ is a scheme of finite type.
We can think of $\on{ShvCat}(Z_\dr)$ as the category of \emph{crystals of categories} over $Z$. 

\medskip

We prove (\thmref{t:main DR}) that $Z_\dr$ is 1-affine. 

\medskip

Note, however, that if $Z$ is not a scheme but an algebraic stack, then $Z_\dr$ is no longer 1-affine. 

\sssec{Methods}

Let us say a few words about what goes into proving that a given class of prestacks is or is not 1-affine. Invariably,
this question reduces to that of whether a certain functor between two very concrete DG categories is \emph{monadic}
(see \secref{sss:intr monadic} for what this means). 

\medskip

Usually, the monadicity of a functor is established using the Barr-Beck-Lurie theorem (\cite[Theorem 6.2.2.5]{Lu2}).
In order to apply this theorem, one needs to check two conditions. One is that the functor in question is
conservative (usually, this is fairly easy). The second condition is that the functor commutes with certain
geometric realizations. This condition is much harder to check in practice, unless our functor happens to
commute with \emph{all} colimits (i.e., is continuous), while the latter is not always the case.  

\medskip

Verifying this second condition constitutes the bulk of the technical work in this paper. Let us emphasize again
that, although our main assertions are initially about \emph{continuous} functors between DG categories
(i.e., functors that commute with all colimits), the core of the proofs involves non-continuous functors. 

\medskip

So, one can say that at the end of the day, the proofs consist of showing that certain colimits commute
with certain limits, i.e., we deal with convergence problems. 
In this sense, what we do in this paper can be called ``functional analysis within homological algebra." 

\ssec{Organization of the paper}

The paper can be loosely divided into three parts. 

\sssec{}

In Part I we give the definitions and discuss some general constructions.

\medskip

In \secref{s:shvcat} we define sheaves of categories, the property of 1-affineness, and discuss
some basic results.

\medskip

In \secref{s:results} we state the main results of this paper pertaining to 1-affineness and non 1-affineness
of certain classes of prestacks. 

\medskip

In \secref{s:functors} we discuss the functors of direct and inverse image of sheaves of categories,
and study how these functors interact with the functors $\bGamma^{\on{enh}}$ and $\bLoc$
mentioned earlier.

\medskip

In \secref{s:formal compl} we show that the property of 1-affineness survives the operation of taking
the formal completion of a prestack along a closed subset. 

\sssec{}

In Part II we consider the question of 1-affineness of algebraic spaces and algebraic stacks.

\medskip

In \secref{s:alg space} we prove that (quasi-compact, quasi-separated) algebraic spaces
are 1-affine. In addition, we single out a class of prestacks (we call them \emph{passable}; this
class includes algebraic stacks satisfying certain technical hypotheses) for which the functor 
$\bGamma^{\on{enh}}$ is fully faithful. 

\medskip

In \secref{s:crit aff} we give several equivalent conditions for a (passable) algebraic stack to be
1-affine. Essentially, these conditions reduce the verification of 1-affineness of a given 
algebraic stack to the question of monadicity of a certain very concrete functor. 

\medskip

In \secref{s:BG} we show that the classifying stack of a (classical) algebraic group of finite type
is 1-affine. The proof is based on the criterion of 1-affineness developed in \secref{s:crit aff}.
The idea of the proof belongs to J.~Lurie. From the 1-affineness of the classifying stack
we (easily) deduce the 1-affineness of algebraic stacks that are global quotients. 

\medskip

Finallly, in \secref{s:stacks} we prove that algebraic stacks (under certain technical hypotheses)
are 1-affine. As was mentioned above, the proof is rather long. It consists of checking the
monadacity of a functor when the conditions of the Barr-Beck-Lurie theorem could
not be checked directly (or, rather, the author did not find a way to do so). 

\sssec{}

In Part III we treat the question of 1-affineness of a host of cases: (DG) indschemes, 
classifying prestacks of general group-prestacks, classifying prestacks of formal groups, 
de Rham prestacks, and other related types of prestacks. 

\medskip

In \secref{s:indsch} we specify a class of (DG) indschemes, for which the functor $\bLoc$ is
fully faithful. This class includes formally smooth indcshemes locally almost of finite type. 
We also show that (DG) indschemes are typically not 1-affine. 

\medskip

In \secref{s:classifying} we study sheaves of categories over prestacks of the form $B\CG$, where
$\CG$ is a group-object in the category of prestacks. We explain how the theory of sheaves of
categories over such prestacks can be viewed as ``higher representation theory," i.e., as the
theory of categories acted on by $\CG$.

\medskip

In \secref{s:formal groups} we show that prestacks of the form $B\CG$, where $\CG$ is a formal group,
which as a formal scheme is isomorphic to $\on{Spf}(k[t_1,...,t_n])$, is 1-affine. 

\medskip

In \secref{s:DR} we study the question of 1-affineness of prestacks of the form $Z_\dr$, where $Z$
is a scheme or algebraic stack.  The proof of 1-affineness in the case of schemes relies on 1-affineness
of formal classifying spaces, developed in the previous section.

\medskip

In \secref{s:inf loop} we study the following problem: we start with a DG scheme $Z$ with a point $z\in Z(k)$, and
consider the (derived) inertia group $\Omega(Z,z)$ (a.k.a. the infinitesimal loop group of $Z$ at $z$). 
We study the connection between sheaves of categories on the prestack $B(\Omega(Z,z))$ and 
sheaves of categories on the formal completion of $Z$ at $z$.

\medskip

In \secref{s:coaffine} we study the question of 1-affineness of iterated classifying prestacks of the form 
$B^k(\BG_a)$, and of classifying prestacks of iterated loop groups $\Omega^k(Z,z)$. 

\sssec{}

This paper contains several appendices, included for the reader's convenience in order to make the exposition
more self-contained.

\medskip

In \secref{s:proof of descent} we reproduce the proof of the result of J.~Lurie that the assigment
$$\CY\rightsquigarrow \on{ShvCat}(\CY)$$
is itself a sheaf in the fppf topology.  

\medskip

In \secref{s:quasi-affine} we reproduce proofs of several statements from \cite{QCoh} pertaining to
the behavior of quasi-affine morphisms from the point of view of tensor products of categories. 

\medskip

In \secref{s:Beck-Chevalley} we review the (monadic and co-monadic) Beck-Chevalley conditions for co-simplicial categories.
These conditions make the totalization of the given co-simplicial category calculable: namely the forgetful
functor of evaluation on $0$-simplices turns out to be monadic (resp., co-monadic), and the corresponding monad
(resp., co-monad) can be described explicitly.

\medskip 

In \secref{s:rigid} we review the notion of rigidity for a monoidal DG category. \footnote{For compactly generated
monoidal DG categories, the condition of rigidity is equivalent to requiring that every compact object admit a left
and right monoidal duals.} This notion turns out to be very
convenient, as it allows for explicit control of the operation of tensor product of module categories over
our monoidal DG category. 

\medskip

In \secref{s:Hopf} we prove a certain basic result about commutative Hopf algebras in symmetric monoidal $\infty$-categories
(its version in ordinary categories is easy to prove by hand, and so is often passed by, without being stated 
explicitly).

\ssec{Conventions}

\sssec{}  \label{sss:intr monadic}

This paper relies on the theory of $\infty$-categories as developed by J.~Lurie in \cite{Lu1} and \cite{Lu2}. 
By a slight abuse of terminology we shall sometimes say ``category", when we actually mean $\infty$-category.

\sssec{}

The following terminology is used throughout the paper. If $\sM$ is a monad acting on an $\infty$-category $\bC$,
we let $\sM\mod(\bC)$ denote the category of $\sM$-modules (sometimes also called $\sM$-algebras) in $\bC$.
We let
$$\on{ind}_\sM:\bC\rightleftarrows \sM\mod(\bC):\on{oblv}_\sM$$
the resulting adjoint pair of functors (``oblv" stands for the forgetful functor, and ``ind" for the induction functor). 

\medskip 

Let
$$\bC\leftarrow \bD:\sG$$
be a functor between $\infty$-categories. We shall say that $\sG$ is \emph{monadic} if it admits
a left adjoint, denoted $\sF$, and when we view the composition $\sG\circ \sF$ as a monad
acting on $\bC$, the resulting functor
$$(\sG\circ \sF)\mod(\bC)\leftarrow \bD:\sG^{\on{enh}}$$
is an equivalence. 

\medskip

Replacing ``left" by ``right", we obtain the notion of \emph{co-monadic} functor. 

\sssec{}

Our conventions regarding DG categories follow those adopted in \cite{DGCat}. 

\medskip

We let $\Vect$
denote the DG category of chain complexes of $k$-vector spaces. \footnote{The reader can substitute the 
notion of DG category by a better documented notion of presentable stable $\infty$-category, tensored
over $\Vect$.} In this paper all DG categories will be assumed presentable (in particular, cocomplete,
i.e., closed under arbitrary direct sums). 

\medskip

We let $\StinftyCat$ denote the $\infty$-category of
DG categories and accessible exact functors. We let $\StinftyCat_{\on{cont}}$ denote the category with
the same objects, but where we restrict 1-morphisms to be continuous (i.e., commuting with all
direct sums, equivalently, with all colimits). 

\medskip

In multiple places of the paper we will use the result of \cite[Corollary 5.5.3.3]{Lu1} that says the colimit
of a diagram in $\StinftyCat_{\on{cont}}$, can be computed as a DG category, as the limit in 
$\StinftyCat$ of the diagram obtained by passage to right adjoint functors. For a sktech of the
proof of this result the reader is referred to \cite[Lemma  1.3.3]{DGCat}.

\sssec{}

The $\infty$-category $\StinftyCat_{\on{cont}}$ carries a natural symmetric monoidal structure given
by tensor product of DG categories. (We emphasize that we live in the world of cocomplete DG categories
and continuous functors.) 

\medskip

If $\bO$ is an algebra object in $\StinftyCat_{\on{cont}}$, i.e., a monoidal DG category, we let $\bO\mmod$
denote the category of $\bO$-modules in $\StinftyCat_{\on{cont}}$, i.e., the $\infty$-category of $\bO$-module
categories.  

\medskip

In general, throughout the paper, we use boldface symbols for ``higher" objects and functors. E.g., if $\CG$ is an
affine DG group-scheme, we use $\on{inv}^\CG$ to denote the functor 
$$\Rep(\CG)\to \Vect,$$
of invariants on the category of $\CG$-representations, and we use $\Iinv^\CG$ to denote the functor 
$$\CG\mmod\to \StinftyCat_{\on{cont}}$$
that sends a DG category acted on by $\CG$ to the category of $\CG$-equivariant objects.

\medskip

For a pair of DG categories $\bD_1,\bD_2$, we let $\uHom(\bD_1,\bD_2)$ denote their ``internal Hom",
i.e., the DG category of continuous functors $\bD_1\to \bD_2$. Similarly, for $\bD_1,\bD_2\in \bO\mmod$
we will use the notation $\uHom_{\bO}(\bD_1,\bD_2)$ for the DG category of functors compatible
with the $\bO$-module structure.

\sssec{}

Our conventions regarding derived algebraic geometry follow those of \cite{Stacks}. We let $\affdgSch$
denote the $\infty$-category of affine DG schemes, which is by definition the opposite category to that
of connective $k$-algebras.

\medskip

We let $\on{PreStk}$ denote the $\infty$-category of all prestacks, i.e., the category of accessible functors
$$(\affdgSch)^{\on{op}}\to \inftygroup,$$
where $\inftygroup$ is the $\infty$-ategory of $\infty$-groupoids (a.k.a., spaces). 

\medskip

In the main body of the paper, we will need the notion of fppf morphism between DG schemes, for
which the reader is referred to \cite[Sect. 2.1]{Stacks}\footnote{In this paper we use the more common
``fppf" rather than ``fpppf."}. 

\medskip

We will need the notion of what it means for a DG scheme or Artin stack to be classical
(resp., eventually coconnective). By definition,
an affine DG scheme is classical (resp., eventually coconnective) if its DG ring of functions 
has no (resp., finitely many) non-zero cohomology groups. An Artin stack is classical 
(resp., eventually coconnective)
if it admits an fppf cover by an affine DG scheme which is classical (resp., eventually coconnective). 
For further details see \cite[Sects. 1.1, 2.4 and 4.6]{Stacks}.

\medskip

We let $\on{PreStk}_{\on{laft}}$ denote the full subcategory of $\on{PreStk}$ formed by prestacks
that are \emph{locally almost of finite type}, see \cite[Sect. 1.3.9]{Stacks}.

\sssec{}

In some proofs, we will need to use the category of \emph{ind-coherent sheaves}, developed
in \cite{IndCoh}. This category is defined on prestacks that belong to $\on{PreStk}_{\on{laft}}$. 

\ssec{Acknowledgements}

The problems such as those addressed in this paper were brought to the author's awareness by J.~Lurie,
so this paper can be regarded as a research project carried out under his guidance, and the
influence of his ideas is evident everywhere in the text.  

\medskip

The author is grateful to V.~Drinfeld for collaboration on \cite{DrGa}, which supplied the key ideas for
our main result on algebraic stacks, \thmref{t:alg}.

\medskip

The author is grateful to N.~Rozenblyum and S.~Raskin for numerous helpful discussions of various topics
related to the contents of this paper. 

\medskip

The author is grateful to D.~Beraldo for posing the question of 1-affineneness of de Rham prestacks, which
prompted the writing of this paper.

\medskip

The author is supported by NSF grant DMS-1063470.

\bigskip

\centerline{\bf Part I: Generalities}

\section{Quasi-coherent sheaves of categories}  \label{s:shvcat}

\ssec{Definition of a quasi-coherent sheaf of categories} \label{ss:defn shv-cat}

\sssec{}

Consider the functor
$$\on{ShvCat}_{\affdgSch}:(\affdgSch)^{\on{op}}\to \inftyCat,\quad S\mapsto \QCoh(S)\mmod,$$
that assigns to an affine DG scheme the $\infty$-category of module categories
over the monoidal DG category $\QCoh(S)$. 

\medskip

Let 
$$\on{ShvCat}_{\on{PreStk}}:(\on{PreStk})^{\on{op}}\to  \inftyCat$$
be the right Kan extension of $\on{ShvCat}_{\affdgSch}$ along the Yoneda embedding
$$(\affdgSch)^{\on{op}}\hookrightarrow \on{PreStk}^{\on{op}}.$$

\medskip

For a prestack $\CY$, we let 
$$\on{ShvCat}(\CY)\in \inftyCat$$
denote the value of $\on{ShvCat}_{\on{PreStk}}$ on $\CY\in \on{PreStk}$.

\medskip

We shall refer to objects of $\on{ShvCat}(\CY)$ as a ``quasi-coherent sheaves of DG categories on $\CY$."

\sssec{}

In other words, for $\CY\in \on{PreStk}$, an object $\CC\in \on{ShvCat}(\CY)$ is an assignment 
$$S\in \affdgSch_{/\CY}\rightsquigarrow \bGamma(S,\CC)\in \QCoh(S)\mmod,$$
and for an arrow $g:S_1\to S_2$ in $\affdgSch_{/\CY}$ of an equivalence
$$\QCoh(S_1)\underset{\QCoh(S_2)}\otimes \bGamma(S_2,\CC)\simeq \bGamma(S_1,\CC),$$
along with a homotopy-coherent system of compatibilities.

\medskip

Morphisms between sheaves of categories are defined naturally. 

\medskip

From the definition of $\on{ShvCat}(-)$ as the right Kan extension we obtain: 

\begin{lem}
The functor $\on{ShvCat}(-)$ takes colimits in $\on{PreStk}$ to limits in $\inftyCat$.
\end{lem}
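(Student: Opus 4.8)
The plan is to exploit the fact that $\on{PreStk}$ is the free cocompletion of $\affdgSch$. Concretely, since prestacks are (accessible) functors $(\affdgSch)^{\on{op}}\to\inftygroup$, colimits in $\on{PreStk}$ are computed objectwise, and the Yoneda embedding $y\colon\affdgSch\hookrightarrow\on{PreStk}$ is dense: every prestack is canonically the colimit $\CY\simeq\on{colim}_{(S\to\CY)\in\affdgSch_{/\CY}}S$ of the affine DG schemes mapping to it (co-Yoneda). Passing to opposite categories, $\on{PreStk}^{\on{op}}$ is then the free completion of $(\affdgSch)^{\on{op}}$. The statement to be proved is exactly that the functor $\on{ShvCat}\colon\on{PreStk}^{\on{op}}\to\inftyCat$ preserves limits (a colimit in $\on{PreStk}$ is a limit in $\on{PreStk}^{\on{op}}$), so I would phrase everything on the opposite side and reduce to showing that $\on{ShvCat}$, being a right Kan extension along $y^{\on{op}}$, is limit-preserving.

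The main step is to invoke the universal property of the free completion: restriction along $y^{\on{op}}$ identifies the $\infty$-category of limit-preserving functors $\on{PreStk}^{\on{op}}\to\inftyCat$ with that of all functors $(\affdgSch)^{\on{op}}\to\inftyCat$, and the inverse equivalence is precisely right Kan extension along $y^{\on{op}}$. Since $\on{ShvCat}_{\on{PreStk}}$ is by definition $\on{Ran}_{y^{\on{op}}}\on{ShvCat}_{\affdgSch}$, it lies in the essential image of this inverse, and hence preserves limits, which is the desired conclusion. Equivalently, and more hands-on, I would write out the pointwise formula for the right Kan extension, $\on{ShvCat}(\CY)\simeq\lim_{(S\to\CY)\in\affdgSch_{/\CY}}\QCoh(S)\mmod$, and verify the interchange of limits directly for a colimit $\CY\simeq\on{colim}_{i}\CY_i$.

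For this concrete verification the key input is that representable prestacks are completely compact (tiny): mapping out of $S$ is evaluation at $S$, and evaluation commutes with all colimits because colimits of prestacks are computed objectwise; thus $\Maps(S,\on{colim}_i\CY_i)\simeq\on{colim}_i\Maps(S,\CY_i)$. Unstraightening this identifies the index category $\affdgSch_{/\CY}$ with the corresponding (Grothendieck-construction) colimit of the slices $\affdgSch_{/\CY_i}$. The remaining, and genuinely technical, point is then a Fubini-type interchange: one must show that the limit of $S\mapsto\QCoh(S)\mmod$ over this total category computes the iterated limit $\lim_i\on{ShvCat}(\CY_i)$. I expect this cofinality/limit-interchange step, essentially the assertion that a limit over a category assembled from a colimit of index categories agrees with the iterated limit, to be the main obstacle, and I would handle it with the Fubini theorem for limits over (co)Cartesian fibrations rather than by hand.
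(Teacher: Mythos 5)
Your proposal is correct and is essentially the paper's own argument: the paper proves the lemma simply by observing that $\on{ShvCat}_{\on{PreStk}}$ is \emph{defined} as the right Kan extension of $\on{ShvCat}_{\affdgSch}$ along the Yoneda embedding, and right Kan extension along $y^{\on{op}}$ (equivalently, the universal property of $\on{PreStk}$ as a free cocompletion, which you invoke) automatically produces a functor taking colimits of prestacks to limits in $\inftyCat$. Your supplementary hands-on verification via tiny objects and Fubini for limits is a fine (if unnecessary) elaboration of the same point, modulo the set-theoretic caveat---already implicit in the paper---that prestacks are required to be accessible functors on the large category $\affdgSch$.
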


\sssec{}

The basic example of an object of $\on{ShvCat}(\CY)$ is $\QCoh_{/\CY}$, whose value on
$S\in  \affdgSch_{/\CY}$ is $\QCoh(S)$. 

\medskip

The category $\on{ShvCat}(\CY)$ carries a symmetric monoidal structure given by component-wise
tensor product, and $\QCoh_{/\CY}$ is its unit object. 

\sssec{}

The category $\on{ShvCat}(\CY)$ contains colimits that are computed value-wise.

\medskip

The category $\on{ShvCat}(\CY)$ contains limits, which are computed by
$$\bGamma\left(S,\underset{i}{\underset{\longleftarrow}{lim}}\,(\CC_i)\right)\simeq \underset{i}{\underset{\longleftarrow}{lim}}\,\bGamma(S,\CC_i).$$

Indeed, this follows from the fact that for a morphism $f:S_1\to S_2$ in $\affdgSch_{/\CY}$, the functor
$$ \QCoh(S_1)\underset{\QCoh(S_2)}\otimes -:\QCoh(S_2)\mmod\to \QCoh(S_1)\mmod$$
commutes with \emph{limits}, which in turn follows from the fact that the category
$\QCoh(S_1)$ is dualizable as an object of $\QCoh(S_2)\mmod$, see \lemref{l:dualizable in rigid}. 

\ssec{Global sections}

\sssec{}

For a given $\CY$ and $\CC\in \on{ShvCat}(\CY)$, we can right-Kan-extend the functor 
$$\bGamma(-,\CC):(\affdgSch_{/\CY})^{\on{op}}\to \StinftyCat_{\on{cont}}$$ to a functor
$$(\on{PreStk}_{/\CY})^{\on{op}}\to \StinftyCat_{\on{cont}}; \quad \CZ\mapsto \bGamma(\CZ,\CC).$$
I.e.,
$$\bGamma(\CZ,\CC):=\underset{S\in \affdgSch_{/\CZ}}{\underset{\longleftarrow}{lim}}\, \bGamma(S,\CC).$$ 

\medskip

For example, it is clear that
$$\bGamma(\CZ,\QCoh_{/\CY})\simeq \QCoh(\CZ).$$

\medskip

In particular, we obtain a DG category $\bGamma(\CY,\CC)$. 

\sssec{}

It is clear that the functor 
$$\CZ\mapsto \bGamma(\CZ,\CC)$$
takes colimits in $\on{PreStk}_{/\CY}$ to limits in $\StinftyCat_{\on{cont}}$. 

\sssec{}

The functor 
$$\bGamma(\CZ,-):\on{ShvCat}(\CY)\to  \StinftyCat_{\on{cont}}$$
is lax symmetric monoidal. 

\medskip

In particular, we obtain that it natually upgrades to a functor
$$\on{ShvCat}(\CY)\to  \bGamma(\CZ,\QCoh_{/\CY})\mmod\simeq \QCoh(\CZ)\mmod.$$

We shall denote the resulting functor
$$\on{ShvCat}(\CY)\to \QCoh(\CZ)\mmod$$
by $\bGamma^{\on{enh}}(\CZ,-)$. 

\medskip 

When $\CZ=\CY$, we shall sometimes write 
$$\bGamma^{\on{enh}}_\CY:\on{ShvCat}(\CY)\to \QCoh(\CY)\mmod$$
instead of $\bGamma^{\on{enh}}(\CY,-)$.

\ssec{Posing the problem}

\sssec{}

We note that the functor
$$\bGamma^{\on{enh}}_\CY:\on{ShvCat}(\CY)\to \QCoh(\CY)\mmod$$
admits a left adjoint; we denote it by $\bLoc_\CY$.

\medskip

Namely, for $\bC\in \QCoh(\CY)\mmod$ we have
$$\bGamma(S,\bLoc_\CY(\bC))=\QCoh(S)\underset{\QCoh(\CY)}\otimes \bC,\quad S\in \affdgSch_{/\CY}.$$

\medskip

It clear from the construction that the functor 
$$\bLoc_\CY:\QCoh(\CY)\mmod\to \on{ShvCat}(\CY)$$
is symmetric monoidal. 

\sssec{}

The questions that we want to address in this paper are the following: 

\begin{quest}  \label{q:main} \hfill

\begin{enumerate}

\item

Under what conditions, for $\CC\in \on{ShvCat}(\CY)$ is the co-unit map
\begin{equation} \label{e:counit}
\bLoc_\CY\left(\bGamma^{\on{enh}}(\CY,\CC)\right)\to \CC
\end{equation}
an equivalence? 

\medskip

\item Under what conditions on $\CY$ is \eqref{e:counit} an equivalence for \emph{all}
$\CC\in \on{ShvCat}(\CY)$? I.e., when is $\bGamma^{\on{enh}}_\CY$ fully faithful? 

\medskip

\item Under what conditions, for $\bC\in  \QCoh(\CY)\mmod$ is the unit map
\begin{equation} \label{e:unit}
\bC\to \bGamma^{\on{enh}}(\CY,\bLoc_\CY(\bC))
\end{equation}
an equivalence? 

\medskip

\item Under what conditions on $\CY$ is \eqref{e:unit} an equivalence for \emph{all}
$\bC \in \QCoh(\CY)\mmod$? I.e., when is $\bLoc_\CY$ fully faithful? 

\end{enumerate}

\end{quest}

\sssec{}

In some cases, the answer is very easy: 

\begin{lem}
Suppose that $\bC$ is dualizable as an object of $\QCoh(\CY)\mmod$. Then the adjunction map
$$\bC\to \bGamma^{\on{enh}}(\CY,\bLoc_\CY(\bC))$$
is an equivalence.
\end{lem}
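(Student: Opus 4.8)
The plan is to compute $\bGamma^{\on{enh}}(\CY,\bLoc_\CY(\bC))$ straight from the definitions and to exhibit the unit map as the result of commuting the tensor product $-\underset{\QCoh(\CY)}\otimes\bC$ past a limit.

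First I would unwind the target. By the definition of $\bGamma(\CY,-)$ as a right Kan extension and of $\bLoc_\CY$ by tensoring up, the object $\bGamma^{\on{enh}}(\CY,\bLoc_\CY(\bC))\in\QCoh(\CY)\mmod$ is computed as
\begin{equation*}
\bGamma^{\on{enh}}(\CY,\bLoc_\CY(\bC))\simeq \underset{S\in\affdgSch_{/\CY}}{\lim}\,\left(\QCoh(S)\underset{\QCoh(\CY)}\otimes\bC\right),
\end{equation*}
the limit being taken in $\QCoh(\CY)\mmod$, with transition functors for $g\colon S_1\to S_2$ over $\CY$ induced by the $\QCoh(\CY)$-linear functor $g^*\colon\QCoh(S_2)\to\QCoh(S_1)$.

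Next I would note that the diagram $S\rightsquigarrow\QCoh(S)$ with these same transition functors has limit $\QCoh(\CY)$, again in $\QCoh(\CY)\mmod$; this is precisely the presentation of $\QCoh(\CY)$ as a compatible family of quasi-coherent sheaves on affine DG schemes mapping to $\CY$. The diagram whose limit we must compute is thus the image of $S\rightsquigarrow\QCoh(S)$ under $-\underset{\QCoh(\CY)}\otimes\bC$. The key input is that, since $\bC$ is dualizable in the symmetric monoidal $\infty$-category $\QCoh(\CY)\mmod$ with dual $\bC^\vee$, the functor $-\underset{\QCoh(\CY)}\otimes\bC$ is biadjoint to $-\underset{\QCoh(\CY)}\otimes\bC^\vee$; in particular it is a right adjoint, hence commutes with limits. (This is the same mechanism used in \secref{s:shvcat}, via \lemref{l:dualizable in rigid}, to produce limits in $\on{ShvCat}(\CY)$.) Consequently
\begin{equation*}
\underset{S}{\lim}\,\left(\QCoh(S)\underset{\QCoh(\CY)}\otimes\bC\right)\simeq\left(\underset{S}{\lim}\,\QCoh(S)\right)\underset{\QCoh(\CY)}\otimes\bC\simeq\QCoh(\CY)\underset{\QCoh(\CY)}\otimes\bC\simeq\bC .
\end{equation*}

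Finally I would verify that this chain of equivalences inverts the unit map, rather than merely asserting an abstract equivalence of the two sides. Under the universal property of the limit, the unit $\bC\to\bGamma^{\on{enh}}(\CY,\bLoc_\CY(\bC))$ is pinned down by its composites with the projections to $\QCoh(S)\underset{\QCoh(\CY)}\otimes\bC$, and these composites are exactly the base-change maps $\bC\to\QCoh(S)\underset{\QCoh(\CY)}\otimes\bC$, i.e. the images under $-\underset{\QCoh(\CY)}\otimes\bC$ of the structure maps $\QCoh(\CY)\to\QCoh(S)$ of the limit presentation of $\QCoh(\CY)$. Hence the unit map is itself $-\underset{\QCoh(\CY)}\otimes\bC$ applied to the canonical equivalence $\QCoh(\CY)\xrightarrow{\sim}\underset{S}{\lim}\,\QCoh(S)$, and so is an equivalence. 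I expect the only genuine subtlety to be this last identification — matching the canonically supplied adjunction unit with the map manufactured by commuting the tensor past the limit — since the two substantive inputs, namely that $\QCoh(\CY)$ is the limit of the $\QCoh(S)$ and that tensoring by a dualizable module preserves limits, are formal.
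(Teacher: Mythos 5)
Your proposal is correct and follows the paper's own argument: the paper likewise writes $\bGamma(\CY,\bLoc_\CY(\bC))$ as $\underset{S\in \affdgSch_{/\CY}}{\underset{\longleftarrow}{lim}}\,\left(\QCoh(S)\underset{\QCoh(\CY)}\otimes \bC\right)$, uses dualizability of $\bC$ to commute $-\underset{\QCoh(\CY)}\otimes \bC$ past the limit, and identifies the result with $\QCoh(\CY)\underset{\QCoh(\CY)}\otimes \bC\simeq \bC$. Your closing verification that the abstract equivalence really is the adjunction unit is a point the paper leaves implicit, and it is a welcome addition rather than a deviation.
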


\begin{proof}
This follows from the fact that for $\bC\in \QCoh(\CY)\mmod$ dualizable, the functor
$$-\underset{\QCoh(\CY)}\otimes \bC:\QCoh(\CY)\mmod\to \StinftyCat_{\on{cont}}$$
commutes with limits. Indeed,
\begin{multline*} 
\bGamma(\CY,\bLoc_\CY(\bC))\simeq \underset{S\in \affdgSch_{/\CY}}{\underset{\longleftarrow}{lim}}\, (\QCoh(S)\underset{\QCoh(\CY)}\otimes \bC)\simeq \\
\left(\underset{S\in \affdgSch_{/\CY}}{\underset{\longleftarrow}{lim}}\, \QCoh(S)\right) \underset{\QCoh(\CY)}\otimes \bC\simeq
\QCoh(\CY) \underset{\QCoh(\CY)}\otimes \bC\simeq \bC.
\end{multline*}

\end{proof}

\sssec{}

We give the following definition: 

\begin{defn}
We shall say that $\CY$ is 1-affine if the functors $\bGamma_\CY^{\on{enh}}$ and $\bLoc_\CY$ are
mutually inverse equivalences.
\end{defn}

The main results of this paper will amount to saying that certain classes of prestacks are (or are not)
1-affine. 

\ssec{Dualizability and compact generation}

\sssec{}

Recall that in any symmetric monoidal $\infty$-category we can talk about the property of an
object to be dualizable. 

\medskip

Since the functor $\bLoc_\CY$ is symmetric monoidal, it automatically sends dualizable objects in  
$\QCoh(\CY)\mmod$ to dualizable objects in $\on{ShvCat}(\CY)$.

\medskip

The following is also tautological:

\begin{lem}
If $\CY$ is 1-affine, then the lax symmetric monoidal structure on $\bGamma^{\on{enh}}_\CY$ is strict
(i.e., non-lax).
\end{lem}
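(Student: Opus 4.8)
The plan is to deduce the claim formally from two inputs: that $\bLoc_\CY$ is \emph{strictly} symmetric monoidal, and that 1-affineness makes it an equivalence. First I would recall how the lax structure on $\bGamma^{\on{enh}}_\CY$ arises in the first place: it is the structure that the right adjoint of a symmetric monoidal functor carries canonically. Writing $\bM_i:=\bGamma^{\on{enh}}(\CY,\CC_i)$ for $\CC_1,\CC_2\in \on{ShvCat}(\CY)$, and letting $\eta,\epsilon$ be the unit and counit of the adjunction $\bLoc_\CY\dashv \bGamma^{\on{enh}}_\CY$, the multiplication constraint
$$\bM_1\underset{\QCoh(\CY)}\otimes \bM_2\to \bGamma^{\on{enh}}(\CY,\CC_1\otimes \CC_2)$$
is by construction the mate, under this adjunction, of the composite
$$\bLoc_\CY\!\left(\bM_1\underset{\QCoh(\CY)}\otimes \bM_2\right)\simeq \bLoc_\CY(\bM_1)\otimes \bLoc_\CY(\bM_2)\xrightarrow{\epsilon\otimes\epsilon} \CC_1\otimes \CC_2,$$
where the first equivalence is the (strong) monoidal structure of $\bLoc_\CY$. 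The unit constraint is, likewise, the mate of the equivalence $\bLoc_\CY(\QCoh(\CY))\simeq \QCoh_{/\CY}$.

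Next I would invoke 1-affineness, which says precisely that $\eta$ and $\epsilon$ are equivalences. In the displayed composite the first arrow is an equivalence because $\bLoc_\CY$ is strong monoidal, and the second is a tensor product of two copies of $\epsilon$, hence also an equivalence; so the composite is an equivalence. Passing to the mate along an adjunction whose unit $\eta$ is an equivalence preserves equivalences, so the multiplication constraint is an equivalence. The identical argument handles the unit constraint. Therefore every structure map of the lax symmetric monoidal functor $\bGamma^{\on{enh}}_\CY$ is invertible, which is exactly the assertion that its lax structure is strict.

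The conceptual packaging I would ultimately record is this: a strictly symmetric monoidal functor that is an equivalence of underlying $\infty$-categories is an equivalence in $\SymMoninftyCat$ (the forgetful functor to $\inftyCat$ is conservative), and hence its inverse is canonically, strictly symmetric monoidal. Under 1-affineness this inverse is $\bGamma^{\on{enh}}_\CY$, and it carries the lax structure above. The single point that needs care — and the reason the statement is genuinely ``tautological'' rather than vacuous — is the $\infty$-categorical coherence bookkeeping: one must know that the lax structure figuring in the statement really is the mate structure, and that ``the mate of an equivalence along an adjunction of equivalences is an equivalence'' holds with all higher homotopies. I expect this to be the only delicate part, but it is a formal property of adjunctions internal to $\SymMoninftyCat$ (equivalently, of the functoriality of taking adjoints of symmetric monoidal functors), and requires no computation beyond what is indicated.
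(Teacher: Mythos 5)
Your argument is correct and is essentially the paper's own: the paper states this lemma with no proof at all, labeling it ``tautological,'' and your mate/adjunction argument (structure maps of $\bGamma^{\on{enh}}_\CY$ are mates of equivalences along an adjunction whose unit and counit are invertible, hence themselves equivalences) is precisely that tautology spelled out. The one subtlety you flag — that the limit-defined lax structure on $\bGamma^{\on{enh}}_\CY$ coincides with the mate structure coming from the symmetric monoidal left adjoint $\bLoc_\CY$ — is indeed the only coherence point, and it is disposed of formally by the uniqueness of the lax structure on a right adjoint in a symmetric monoidal adjunction, exactly as you indicate.
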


From here we obtain:

\begin{cor}
If $\CY$ is 1-affine, and $\CC\in \on{ShvCat}(\CY)$ is dualiable, then $\bGamma(\CY,\CC)$ is dualizable
as an object of $\QCoh(\CY)\mmod$.
\end{cor}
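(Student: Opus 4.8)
The plan is to deduce this as a purely formal consequence of the preceding Lemma, together with the general principle that a \emph{strong} (i.e., non-lax) symmetric monoidal functor preserves dualizable objects. First I would recall this principle: if $F:\sA\to\sB$ is a symmetric monoidal functor between symmetric monoidal $\infty$-categories whose structure is strict, then $F$ sends dualizable objects to dualizable objects. Indeed, given an object $X\in\sA$ with dual $X^\vee$ and evaluation/coevaluation maps $\mathrm{ev}:X\otimes X^\vee\to\one$ and $\mathrm{coev}:\one\to X^\vee\otimes X$, the images $F(\mathrm{ev})$ and $F(\mathrm{coev})$ exhibit $F(X^\vee)$ as a dual of $F(X)$: because $F$ carries the unit to the unit and the tensor product to the tensor product on the nose, it carries the triangle (zig-zag) identities witnessing the duality in $\sA$ to the corresponding identities in $\sB$. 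This step involves only diagrammatic manipulation of duality data, which I would not spell out.

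Next I would invoke the preceding Lemma: since $\CY$ is 1-affine, the lax symmetric monoidal structure on the functor
$$\bGamma^{\on{enh}}_\CY:\on{ShvCat}(\CY)\to\QCoh(\CY)\mmod$$
is in fact strict. Hence $\bGamma^{\on{enh}}_\CY$ is a strong symmetric monoidal functor, and I may apply the principle above with $F=\bGamma^{\on{enh}}_\CY$ and $X=\CC$. Using the hypothesis that $\CC$ is dualizable in the symmetric monoidal $\infty$-category $\on{ShvCat}(\CY)$, I conclude that $\bGamma^{\on{enh}}_\CY(\CC)$ is dualizable in $\QCoh(\CY)\mmod$.

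Finally, I would observe that, by construction, $\bGamma^{\on{enh}}_\CY(\CC)$ is precisely the DG category $\bGamma(\CY,\CC)$ together with its $\QCoh(\CY)$-module structure, i.e., exactly the object of $\QCoh(\CY)\mmod$ whose dualizability is asserted. This identifies the conclusion of the previous paragraph with the statement to be proved.

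The only point requiring genuine care — and the real content of the corollary — is the strictness of the monoidal structure, which is exactly what the preceding Lemma supplies from the 1-affineness hypothesis; without it, a merely lax symmetric monoidal functor need not preserve duals, and the argument would break down. Everything else is the formal, structural fact that strong symmetric monoidal functors transport duality data, so I expect no serious obstacle once the Lemma is in hand.
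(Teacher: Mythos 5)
Your proposal is correct and is exactly the argument the paper intends: the corollary is stated as an immediate consequence of the preceding lemma, i.e., 1-affineness makes the lax symmetric monoidal structure on $\bGamma^{\on{enh}}_\CY$ strict, and a strict symmetric monoidal functor carries duality data to duality data, so the dualizable object $\CC$ is sent to a dualizable object $\bGamma^{\on{enh}}_\CY(\CC)=\bGamma(\CY,\CC)\in\QCoh(\CY)\mmod$. No discrepancy with the paper's (implicit) proof.
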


\sssec{}

Let us make the notion of being dualizable as an object of $\on{ShvCat}(\CY)$ more explicit:

\begin{prop}
An object $\CC\in \on{ShvCat}(\CY)$ is dualizable if and only if for every $S\in \affdgSch_{/\CY}$,
the category $\bGamma(S,\CC)$ is dualizable as a plain DG category.
\end{prop}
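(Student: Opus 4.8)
The plan is to exploit the presentation of $\on{ShvCat}(\CY)$ as a limit of symmetric monoidal $\infty$-categories, and then to pass from module-dualizability to plain dualizability using rigidity of $\QCoh(S)$. The structural input I would record first is this: by its definition as a right Kan extension we have $\on{ShvCat}(\CY)\simeq \underset{S\in\affdgSch_{/\CY}}{\underset{\longleftarrow}{lim}}\,\QCoh(S)\mmod$, and under this identification the symmetric monoidal structure on $\on{ShvCat}(\CY)$ (component-wise tensor product, with unit $\QCoh_{/\CY}$) is the limit of the symmetric monoidal structures on the $\QCoh(S)\mmod$ taken along the transition functors $\QCoh(S_1)\underset{\QCoh(S_2)}\otimes-$, which are symmetric monoidal. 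In particular each evaluation functor $\bGamma(S,-)\colon\on{ShvCat}(\CY)\to\QCoh(S)\mmod$ is the projection onto a term of this limit, hence is itself symmetric monoidal.

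Granting this, the argument splits into two independent assertions. The first is purely formal: \emph{in a limit of symmetric monoidal $\infty$-categories along symmetric monoidal functors, an object is dualizable if and only if its image in each term is dualizable}. The second is the rigidity input: \emph{for $S$ affine an object of $\QCoh(S)\mmod$ is dualizable if and only if it is dualizable as a plain DG category} — this holds because $\QCoh(S)$ is rigid, so that over it module-dualizability coincides with dualizability as a bare DG category (\secref{s:rigid}, cf. \lemref{l:dualizable in rigid}). Combining the two, $\CC$ is dualizable in $\on{ShvCat}(\CY)$ iff each $\bGamma(S,\CC)$ is dualizable in $\QCoh(S)\mmod$ iff each $\bGamma(S,\CC)$ is dualizable as a plain DG category, which is exactly the claim.

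The only real work is in the first assertion. Its forward direction is immediate, since the projections are symmetric monoidal and symmetric monoidal functors carry dualizable objects to dualizable objects. For the converse I would build the dual term by term: set $\bGamma(S,\CC^\vee):=\bGamma(S,\CC)^\vee$, the dual in $\QCoh(S)\mmod$. Since the transition functors are symmetric monoidal they send duals to duals, so the $\bGamma(S,\CC^\vee)$ are compatible under base change and assemble into an object $\CC^\vee\in\on{ShvCat}(\CY)$; the term-wise evaluation and coevaluation maps are preserved by the transition functors in the same way and assemble into morphisms $\CC^\vee\otimes\CC\to\QCoh_{/\CY}$ and $\QCoh_{/\CY}\to\CC\otimes\CC^\vee$, with the triangle identities holding because they hold in each $\QCoh(S)\mmod$.

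The main obstacle is precisely this assembly: one must verify that the duals, the two structure maps, and all their higher coherences glue coherently through the limit, rather than merely term by term. The clean way to dispatch this is to appeal to the contractibility of the space of duality data attached to a dualizable object: passing to the full symmetric monoidal subcategory of dualizable objects is right adjoint to the inclusion of those symmetric monoidal $\infty$-categories all of whose objects are dualizable, and therefore commutes with limits. This yields the first assertion directly at the level of symmetric monoidal $\infty$-categories and removes any need to manipulate coherence data by hand.
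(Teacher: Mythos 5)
Your proposal is correct and takes essentially the same route as the paper: the paper's proof also realizes $\on{ShvCat}(\CY)$ as the limit of the symmetric monoidal categories $\QCoh(S)\mmod$ and combines exactly your two ingredients, namely Lurie's lemma that dualizability in a limit of symmetric monoidal categories is detected termwise (\lemref{l:dualizable in limit}) together with \lemref{l:dualizable in rigid} applied to the rigid monoidal category $\QCoh(S)$. The only difference is that the paper cites the limit lemma without proof, whereas you sketch an argument for it via contractibility of duality data, which is consistent with the standard proof of that lemma.
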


\begin{proof}
The proof follows from the combination of the following two lemmas:

\begin{lem}[Lurie] \label{l:dualizable in limit}
Let a symmetric monoidal category $\bO$ be equal to the limit
$$\underset{\alpha}{\underset{\longleftarrow}{lim}}\, \bO_\alpha$$
of a diagram $\alpha\mapsto \bO_\alpha$ of symmetric monoidal categories. Then an object
$\bo\in \bO$ is dualizable if and only if its projection $\bo_\alpha\in \bO_\alpha$ is dualizable
for every index $\alpha$.
\end{lem}

For the next lemma recall the notion of \emph{rigid} monoidal DG category, see \secref{ss:rigidity}.
For example, for an affine DG scheme $S$, the monoidal DG category $\QCoh(S)$ is rigid (this
is the trivial case of \lemref{l:pass rigid}). 

\medskip

We have (see \secref{sss:dualizable in rigid}):

\begin{lem} \label{l:dualizable in rigid}
Let $\bA$ is a symmetric monoidal DG category, which is rigid as a monoidal DG category. Then $\bC\in \bA\mmod$ is dualizable
as an object of the symmetric monoidal category $\bA\mmod$ if and only if $\bC$ is dualizable as a plain DG category
\emph{(}i.e., as an object of $\StinftyCat_{\on{cont}}$\emph{)}. 
\end{lem}

\end{proof}

We also notice the following corollary of \lemref{l:dualizable in rigid}:

\begin{cor} \label{c:Loc commutes with limits rigid}
Let $\CY$ be such that $\QCoh(\CY)$ is rigid. Then:

\smallskip

\noindent{\em(a)} An object $\bC\in \QCoh(\CY)\mmod$ is dualizable if and only if
it is dualizable as a plain category.

\smallskip

\noindent{\em(b)} Then the functor $\bLoc_\CY$ commutes with limits. 

\end{cor}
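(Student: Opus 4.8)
The plan is to deduce both assertions from \lemref{l:dualizable in rigid} applied with $\bA=\QCoh(\CY)$, which is legitimate precisely because $\QCoh(\CY)$ is assumed rigid. Part (a) is then immediate: \lemref{l:dualizable in rigid} with $\bA=\QCoh(\CY)$ states verbatim that an object $\bC\in\QCoh(\CY)\mmod$ is dualizable in the symmetric monoidal category $\QCoh(\CY)\mmod$ if and only if it is dualizable as a plain object of $\StinftyCat_{\on{cont}}$.

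For part (b), I would first reduce to a value-wise statement. Since limits in $\on{ShvCat}(\CY)$ are computed value-wise, i.e.\ $\bGamma(S,\underset{i}{\underset{\longleftarrow}{lim}}\,\CC_i)\simeq \underset{i}{\underset{\longleftarrow}{lim}}\,\bGamma(S,\CC_i)$ for every $S\in\affdgSch_{/\CY}$, and since by definition $\bGamma(S,\bLoc_\CY(\bC))\simeq \QCoh(S)\underset{\QCoh(\CY)}\otimes\bC$, it suffices to show that for each fixed $S$ the functor
$$\QCoh(S)\underset{\QCoh(\CY)}\otimes - \,:\,\QCoh(\CY)\mmod\to \QCoh(S)\mmod$$
commutes with limits. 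Now for $S$ affine the DG category $\QCoh(S)$ is dualizable as a plain object of $\StinftyCat_{\on{cont}}$ (it is compactly generated, hence dualizable), so part (a) upgrades this to dualizability of $\QCoh(S)$, viewed as an object of $\QCoh(\CY)\mmod$ via the map $S\to\CY$. Tensoring over $\QCoh(\CY)$ with a dualizable module then admits a left adjoint, given by tensoring with its dual, and therefore commutes with limits. This is exactly the mechanism already used in \secref{ss:defn shv-cat} to establish that $\on{ShvCat}(\CY)$ has limits, now run with $\QCoh(\CY)$ in place of $\QCoh(S_2)$; the sole new ingredient is that part (a) converts the plain-level dualizability of $\QCoh(S)$ into dualizability over $\QCoh(\CY)$, and this is where rigidity of $\QCoh(\CY)$ is genuinely consumed.

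The one point I would treat carefully — and the step I expect to be the main (though modest) obstacle — is that the limits in the reduction above are taken in $\QCoh(S)\mmod$, whereas the general ``tensoring with a dualizable object is a right adjoint'' argument most naturally yields preservation of limits in $\StinftyCat_{\on{cont}}$. I would bridge this by composing with the conservative, limit-preserving forgetful functor $\QCoh(S)\mmod\to\StinftyCat_{\on{cont}}$, which reduces the claim to the statement that $-\underset{\QCoh(\CY)}\otimes\QCoh(S)$ preserves limits at the level of underlying DG categories. Since all the requisite dualizability has been secured by part (a), I do not anticipate any serious difficulty here.
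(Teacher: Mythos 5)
Your proposal is correct and follows the paper's own (implicit) argument: part (a) is exactly \lemref{l:dualizable in rigid} applied with $\bA=\QCoh(\CY)$, and part (b) is obtained by computing limits in $\on{ShvCat}(\CY)$ value-wise and invoking dualizability of $\QCoh(S)$ as an object of $\QCoh(\CY)\mmod$ --- the same mechanism the paper already deploys in \secref{ss:defn shv-cat} to show that $\on{ShvCat}(\CY)$ has limits. Your added care about where the limits live (bridging via the conservative, limit-preserving forgetful functor to $\StinftyCat_{\on{cont}}$) is a sound way to handle a point the paper leaves tacit.
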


\sssec{}

One can also ask the following questions:

\begin{quest}  \label{q:compact} \hfill

\begin{enumerate}

\item
Suppose that $\CC\in \on{ShvCat}(\CY)$ is such that for all $S\in \affdgSch_{/\CY}$, the category
$\bGamma(S,\CC)$ is compactly generated. When can we guarantee that $\bGamma(\CY,\CC)$ is compactly generated
as a plain category? 

\medskip

\item 
Let $\bC\in \QCoh(\CY)\mmod$ be compactly generated as a plain DG category. When can we guarantee 
that $\QCoh(S)\underset{\QCoh(\CY)}\otimes \bC$ is compactly generated for any $S\in \affdgSch_{/\CY}$. 

\end{enumerate}

\end{quest}

These questions appear to be more subtle. For example, to the best of our knowledge, it is not known
whether the category $\QCoh(\CY)$ is compactly generated when $\CY$ is an algebraic stack (when 
$\CY$ is neither smooth nor a global quotient).

\ssec{Descent}

Before we proceed to the discussion of main results of this paper, let us remark that the questions such as
those in Question \ref{q:main} are insensitive to fppf sheafification: 

\sssec{}

First, we recall that following result, which is essentially established in \cite[Theorem 5.4]{Lu3}:

\medskip

\begin{thm} \label{t:descent 1}
Let $Y$ be an affine DG scheme, and let $\bC$ be an object $\QCoh(Y)\mmod$. Then the functor
$$(\affdgSch_{/Y})^{\on{op}}\to \StinftyCat_{\on{cont}},  \quad S\rightsquigarrow \bGamma(S,\bLoc_Y(\bC))=\QCoh(S)\underset{\QCoh(Y)}\otimes \bC$$
satisfies fppf descent.
\end{thm}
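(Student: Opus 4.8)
The plan is to reduce the descent statement to an application of the (co-monadic) Barr--Beck--Lurie theorem, combined with the rigidity of $\QCoh(Y)$. By the definition of a functor satisfying fppf descent, it suffices to fix an fppf cover $f:S'\to S$ in $\affdgSch_{/Y}$, to form its \v{C}ech nerve $S'^{\bullet}$ (so $S'^n=S'\times_S\cdots\times_S S'$), and to show that the natural functor
$$\QCoh(S)\underset{\QCoh(Y)}\otimes \bC\longrightarrow \underset{[n]\in\Delta}{\underset{\longleftarrow}{lim}}\,\left(\QCoh(S'^n)\underset{\QCoh(Y)}\otimes \bC\right)$$
is an equivalence. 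Since $S$ and $S'$ are both affine, the morphism $f$ is affine, so that $f_*$ is continuous; and $\QCoh(Y)$ is rigid, being $\QCoh$ of an affine DG scheme (the trivial case of \lemref{l:pass rigid}).

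First I would recall flat descent for quasi-coherent sheaves themselves: the functor $f^*:\QCoh(S)\to\QCoh(S')$ is conservative (by faithful flatness) and admits the continuous right adjoint $f_*$, so by the co-monadic Barr--Beck--Lurie theorem it is co-monadic, and the co-simplicial category $[n]\mapsto\QCoh(S'^n)$ satisfies the co-monadic Beck--Chevalley condition of \secref{s:Beck-Chevalley}. Consequently $\QCoh(S)\simeq\underset{[n]\in\Delta}{\underset{\longleftarrow}{lim}}\,\QCoh(S'^n)$ is identified with the category of co-modules over the comonad $\sT:=f^*\circ f_*$ on $\QCoh(S')$, with the evaluation-on-$0$-simplices functor being exactly $f^*$.

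The heart of the argument is to show that applying $-\underset{\QCoh(Y)}\otimes\bC$ is compatible with this picture. Using rigidity of $\QCoh(Y)$, each $\QCoh(S'^n)$ and $\QCoh(S)$ is dualizable as a $\QCoh(Y)$-module (\lemref{l:dualizable in rigid}), and the right adjoint $f_*$ is automatically a morphism of $\QCoh(Y)$-module categories whose formation commutes with $-\underset{\QCoh(Y)}\otimes\bC$; hence tensoring preserves the adjunction $(f^*,f_*)$ together with all the relevant base-change squares. It follows that the tensored co-simplicial category $[n]\mapsto\QCoh(S'^n)\underset{\QCoh(Y)}\otimes\bC$ again satisfies the co-monadic Beck--Chevalley condition, with evaluation functor $f^*\otimes\on{id}$ co-monadic and associated comonad $\sT\otimes\on{id}$. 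Comparing the two sides, both the totalization of the tensored diagram and the tensor of the totalization get identified with $(\sT\otimes\on{id})$-comodules in $\QCoh(S')\underset{\QCoh(Y)}\otimes\bC$, which yields the desired equivalence.

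The step I expect to be the main obstacle is precisely this last compatibility: in general $-\underset{\QCoh(Y)}\otimes\bC$ does \emph{not} commute with the totalization (a limit), so one cannot argue naively — this is exactly the convergence issue flagged in the introduction. The point is that the Beck--Chevalley condition exhibits the totalization as a co-module category, i.e.\ as an ``absolute'' limit built from a single adjunction, and it is this finiteness that survives tensoring with the arbitrary module $\bC$ once rigidity guarantees that right adjoints and dualizations are carried along by the tensor operation. Verifying carefully that the Beck--Chevalley squares are preserved under tensoring --- equivalently, that co-module categories base-change correctly along $-\underset{\QCoh(Y)}\otimes\bC$ in the rigid setting --- is where the real work lies.
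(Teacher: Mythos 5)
The formal half of your argument does go through: since $\QCoh(Y)$ is rigid and $f$ is affine, each adjunction $(f^{\alpha,*},f^\alpha_*)$ in the \v{C}ech nerve is an adjunction \emph{internal} to $\QCoh(Y)\mmod$ (the projection formula makes $f^\alpha_*$ a strict module functor), so the 2-functor $-\underset{\QCoh(Y)}\otimes\bC$ preserves these adjunctions and the Beck--Chevalley squares; hence the tensored co-simplicial category satisfies the co-monadic Beck--Chevalley condition and, by \lemref{l:comonadicity}, its totalization is identified with $(\sT\otimes\on{Id}_\bC)\comod$ in $\QCoh(S')\underset{\QCoh(Y)}\otimes\bC$. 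The gap is in the other half, where you write that ``both the totalization of the tensored diagram and the tensor of the totalization'' are so identified. The second identification says that $f^*\otimes\on{Id}_\bC$ is itself co-monadic, i.e.\ that $\QCoh(S)\underset{\QCoh(Y)}\otimes\bC\to (\sT\otimes\on{Id}_\bC)\comod$ is an equivalence --- but this is not a compatibility to be ``verified carefully''; it \emph{is} the theorem, restated. Barr--Beck--Lurie would require $f^*\otimes\on{Id}_\bC$ to be conservative and to preserve the relevant split totalizations, and neither property is stable under tensoring with an arbitrary $\bC$: rigidity guarantees that right adjoints pass through the tensor, not that conservativity does. Your supporting claim that the co-monadic description exhibits the totalization as an ``absolute'' limit is false --- \emph{split} totalizations are absolute, co-monadic ones are not --- and this is precisely why nothing formal forces the limit to commute with $-\underset{\QCoh(Y)}\otimes\bC$.

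The paper's proof never confronts a general fppf cover directly, and this is the idea your proposal is missing. It first reduces fppf descent to Nisnevich descent plus finite flat descent (Nisnevich-locally, any fppf map admits a section after a finite flat base change), and in each of these two cases there is a concrete mechanism converting the limit into something that manifestly commutes with tensoring: for a basic Nisnevich square, the descent condition is a \emph{finite} limit (a pullback square of categories) whose comparison functor has an explicitly written right adjoint (a shifted Cone of pushforwards), visibly obtained from the case $\bC=\QCoh(Y)$ by applying $-\underset{\QCoh(Y)}\otimes\bC$, so the unit and co-unit stay isomorphisms (\secref{sss:Nisn}); for a finite flat map, the pullback functors admit \emph{left} adjoints $f_\sharp$ (ind-extended from $\CE\mapsto (f_*(\CE^\vee))^\vee$ on perfect objects), so by \cite[Lemma 1.3.3]{DGCat} the totalization is rewritten as a geometric realization, i.e.\ a colimit, which commutes with tensoring. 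For a general fppf cover $f^*$ admits no left adjoint, so this limit-to-colimit conversion is unavailable; that is why the reduction step is essential and why your direct attack cannot be closed by rigidity alone.
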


This theorem is proved in \cite{Lu3} when instead of the fppf topology one considers the \'etale topology. The
remanining step is easy (and well-known), since 

\medskip

\hskip3cm``fppf descent"=``Nisnevich descent"+``finite flat descent." 

\medskip

For the sake of completeness,
we will prove \thmref{t:descent 1} in Appendix \ref{s:proof of descent}.

\sssec{}

As a formal consequence, we obtain:

\begin{cor}
For a prestack $\CY$ and $\CC\in \on{ShvCat}(\CY)$, the functor
$$(\affdgSch_{/\CY})^{\on{op}}\to \StinftyCat_{\on{cont}},  \quad S\rightsquigarrow \bGamma(S,\CC)$$
satisfies fppf descent.
\end{cor}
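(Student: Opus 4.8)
The plan is to deduce this corollary formally from \thmref{t:descent 1}, by localizing at the base of each cover. Recall that, for a functor out of $(\affdgSch_{/\CY})^{\on{op}}$, the assertion of fppf descent means that for every fppf cover $f:T\to S$ in $\affdgSch_{/\CY}$, with \v{C}ech nerve $T^{\times_S\bullet}$, the canonical map
$$\bGamma(S,\CC)\to \on{Tot}\left(\bGamma(T^{\times_S\bullet},\CC)\right)$$
is an equivalence. The first thing to observe is that every term $T^{\times_S n}$ of this \v{C}ech nerve is an affine DG scheme equipped with its tautological map to $S$, so that the whole \v{C}ech diagram lies in the slice $\affdgSch_{/S}$, which we regard as a subcategory of $\affdgSch_{/\CY}$ via composition with the structure map $S\to\CY$. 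Hence it is enough to understand, for each fixed $S$, the restriction of $S'\rightsquigarrow\bGamma(S',\CC)$ along $\affdgSch_{/S}\to\affdgSch_{/\CY}$.

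\medskip

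Next I would identify this restriction with a localization functor of precisely the form treated in \thmref{t:descent 1}. Fix $S\in\affdgSch_{/\CY}$ and set
$$\bC_S:=\bGamma(S,\CC)\in\QCoh(S)\mmod,$$
using the tautological equivalence $\on{ShvCat}(S)\simeq\QCoh(S)\mmod$ for affine $S$. For any $(T\to S)\in\affdgSch_{/S}$, the defining base-change property of a sheaf of categories furnishes a canonical equivalence
$$\bGamma(T,\CC)\simeq\QCoh(T)\underset{\QCoh(S)}\otimes\bGamma(S,\CC)=\QCoh(T)\underset{\QCoh(S)}\otimes\bC_S=\bGamma(T,\bLoc_S(\bC_S)).$$
The content of this step is to promote these level-wise equivalences to an equivalence of functors
$$\bGamma(-,\CC)|_{\affdgSch_{/S}}\simeq\bGamma(-,\bLoc_S(\bC_S)):(\affdgSch_{/S})^{\on{op}}\to\StinftyCat_{\on{cont}}.$$
In fact, unwinding the definition of $\on{ShvCat}(-)$ as a right Kan extension, the restriction of $\CC$ along $S\to\CY$ is tautologically the sheaf of categories $\bLoc_S(\bC_S)\in\on{ShvCat}(S)$, and this identification is exactly what we need.

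\medskip

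With this in place, the conclusion is immediate. By \thmref{t:descent 1} applied to $Y:=S$ and $\bC:=\bC_S$, the functor $\bGamma(-,\bLoc_S(\bC_S))$ satisfies fppf descent. Since the entire \v{C}ech diagram of $f:T\to S$ lives in $\affdgSch_{/S}$, and since under the above identification the cosimplicial object $\bGamma(T^{\times_S\bullet},\CC)$ agrees with $\bGamma(T^{\times_S\bullet},\bLoc_S(\bC_S))$, the descent map for $\bGamma(-,\CC)$ along $f$ is an equivalence. As $S$ and $f$ were arbitrary, the claim follows.

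\medskip

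The step that demands genuine care---and that accounts for the word ``formal'' in the statement---is the promotion from a level-wise to a functorial (hence cosimplicial) equivalence in the second paragraph: I must verify that the base-change equivalences are natural in $T$ up to coherent homotopy, so that they induce an equivalence of the full cosimplicial diagrams rather than merely of their individual terms. This is guaranteed by the homotopy-coherent system of compatibilities built into the very datum of an object of $\on{ShvCat}(\CY)$, and is encoded cleanly by the universal property of the right Kan extension defining $\on{ShvCat}$. Everything else is bookkeeping that reduces the general prestack $\CY$ to the affine base $S$, where \thmref{t:descent 1} applies verbatim.
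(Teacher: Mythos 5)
Your proof is correct and is precisely the formal deduction the paper intends when it states this corollary as ``a formal consequence'' of \thmref{t:descent 1}: restrict $\CC$ along $S\to \CY$, use the tautological 1-affineness of the affine DG scheme $S$ to identify $\CC|_{S}\simeq \bLoc_S(\bGamma(S,\CC))$ (with the coherence supplied by the Kan-extension definition of $\on{ShvCat}$), and apply the theorem with $Y=S$. Nothing to add.
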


From here:

\begin{cor} \label{c:Cech descent}
Let $\CC$ be an object of $\on{ShvCat}(\CY)$.  

\smallskip

\noindent{\em(a)}
If $\CZ\to \CW$ is an fppf surjection in $\on{PreStk}_{/\CY}$, then the pullback functor
$$\bGamma(\CW,\CC) \to \on{Tot}(\bGamma(\CZ^\bullet/\CW,\CC))$$
is an equivalence, where $\CZ^\bullet/\CW$ is the \v{C}ech nerve of
the map $\CZ\to \CW$.

\smallskip

\noindent{\em(b)} For $\CZ\in \on{PreStk}_{/\CY}$, the pullback functor
$$\bGamma(L_{/\CY}(\CZ),\CC)\to \bGamma(\CZ,\CC)$$
is an equivalence, where $L_{/\CY}(-)$ is fppf sheafification in the category
$\on{PreStk}_{/\CY}$.

\smallskip

\noindent{\em(b')} If $\CY$ is an fppf stack, then for $\CZ\in \on{PreStk}_{/\CY}$, the pullback functor
$$\bGamma(L(\CZ),\CC)\to \bGamma(\CZ,\CC)$$
is an equivalence, where $L(-)$ is fppf sheafification in the category
$\on{PreStk}$.

\end{cor}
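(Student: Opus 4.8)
The plan is to isolate a single invariance property of $\bGamma(-,\CC)$ and obtain all three statements as instances of it. Recall from above that $\CZ\mapsto \bGamma(\CZ,\CC)$, being the right Kan extension of $S\mapsto \bGamma(S,\CC)$ along Yoneda, carries colimits in $\on{PreStk}$ to limits in $\StinftyCat_{\on{cont}}$; equivalently it is a limit-preserving functor $\on{PreStk}^{\on{op}}\to \StinftyCat_{\on{cont}}$. The first step is to establish the \emph{Key Lemma}: this functor inverts every fppf-local equivalence, i.e.\ every map $\CZ_1\to \CZ_2$ whose fppf sheafification is an equivalence; in particular the pullback $\bGamma(L(\CZ),\CC)\to \bGamma(\CZ,\CC)$ is an equivalence for every $\CZ$. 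To prove it, I would use that the class of fppf-local equivalences is generated, under the $2$-out-of-$3$ property and under colimits in the arrow category, by the \v{C}ech maps $|S'^\bullet/S|\to S$ attached to fppf covers $S'\to S$ of \emph{affine} DG schemes (every prestack is a colimit of affines, and every cover is refinable by affine ones). Since $\bGamma(-,\CC)$ sends colimits to limits, it suffices to check that it inverts each such generating map; and indeed $\bGamma(|S'^\bullet/S|,\CC)\simeq \on{Tot}\bGamma(S'^\bullet/S,\CC)$ by the colimit-to-limit property, while this totalization is identified with $\bGamma(S,\CC)$ precisely by the fppf descent of $S\mapsto \bGamma(S,\CC)$ established in the preceding corollary.

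Granting the Key Lemma, parts (b) and (b$'$) are immediate. For (b$'$), when $\CY$ is an fppf stack the unit $\CZ\to L(\CZ)$ is by construction an fppf-local equivalence and $L(\CZ)$ lies over $L(\CY)\simeq \CY$, so the Lemma applies verbatim. For (b), I would observe that $L_{/\CY}(\CZ)$ is built from $\CZ$ by the relative sheafification process, whose constituent \v{C}ech nerves are attached to maps that are fppf surjections \emph{over} $\CY$ and hence, a fortiori, fppf surjections in $\on{PreStk}$; consequently $\CZ\to L_{/\CY}(\CZ)$ is again an absolute fppf-local equivalence (equivalently $L(L_{/\CY}(\CZ))\simeq L(\CZ)$), and the Lemma applies. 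Note that when $\CY$ is itself a sheaf one has $L_{/\CY}(\CZ)\simeq L(\CZ)$, so (b) and (b$'$) agree; the content of (b) beyond (b$'$) is exactly the case of general $\CY$.

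For (a), the \v{C}ech nerve $\CZ^\bullet/\CW$ is a simplicial object of $\on{PreStk}_{/\CY}$ whose geometric realization $|\CZ^\bullet/\CW|$ carries a canonical map to $\CW$. The colimit-to-limit property gives $\on{Tot}\bGamma(\CZ^\bullet/\CW,\CC)\simeq \bGamma(|\CZ^\bullet/\CW|,\CC)$, so (a) becomes the assertion that $\bGamma(\CW,\CC)\to \bGamma(|\CZ^\bullet/\CW|,\CC)$ is an equivalence. Since fppf sheafification is left exact and cocontinuous it commutes both with the formation of the \v{C}ech nerve and with its realization, whence $L(|\CZ^\bullet/\CW|)\simeq |L(\CZ)^\bullet/L(\CW)|\simeq L(\CW)$, the last step because $\CZ\to\CW$ being an fppf surjection makes $L(\CZ)\to L(\CW)$ an effective epimorphism in the fppf $\infty$-topos. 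Thus $|\CZ^\bullet/\CW|\to \CW$ is an fppf-local equivalence and (a) follows from the Key Lemma.

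The main obstacle is the Key Lemma, and within it the assertion that the fppf-local equivalences are generated under colimits and $2$-out-of-$3$ by the \v{C}ech maps of \emph{affine} covers: this is where the (potentially transfinite) structure of fppf sheafification and the reduction of arbitrary covers to affine ones genuinely enter, and it is the only input beyond the preceding corollary that is not purely formal. The secondary point requiring care is the slice comparison used in (b), namely that relative sheafification over a general $\CY$ produces an absolute fppf-local equivalence.
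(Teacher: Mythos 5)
Your proposal is correct and matches the paper's (implicit) argument: the paper states this corollary as a purely formal consequence of fppf descent for $S\mapsto \bGamma(S,\CC)$ on affine DG schemes together with the fact that $\bGamma(-,\CC)$ takes colimits of prestacks to limits, and your Key Lemma plus the three deductions are exactly that formal argument spelled out. The only stylistic remark is that the generation claim you flag as the main obstacle can be bypassed: since $\bGamma(-,\CC)$ is the right Kan extension of a presheaf satisfying descent, inverting fppf-local equivalences follows directly from the sheafification adjunction $\Maps(L(\CZ),F)\simeq \Maps(\CZ,F)$ (corepresenting in $\StinftyCat_{\on{cont}}$ to reduce to space-valued sheaves), so no saturation/generation analysis is actually needed.
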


\sssec{}

Next, from \thmref{t:descent 1}, one formally deduces the next result (this is \cite[Theorem 5.13]{Lu3}):

\begin{thm} \label{t:descent 2}
The functor $\on{ShvCat}_{\on{PreStk}}:(\on{PreStk})^{\on{op}}\to  \inftyCat$
satisfies fppf descent.
\end{thm}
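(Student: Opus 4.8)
The plan is to use \thmref{t:descent 1} as the essential input and reduce the assertion to an fppf cover of affine DG schemes, where it becomes a statement of comonadic descent. Since $\on{ShvCat}_{\on{PreStk}}$ is by construction the right Kan extension of $\on{ShvCat}_{\affdgSch}$ along the Yoneda embedding, it agrees with $S\mapsto \QCoh(S)\mmod$ on affine DG schemes, and by the Lemma of \secref{ss:defn shv-cat} it carries colimits in $\on{PreStk}$ to limits in $\inftyCat$. Consequently, writing an arbitrary prestack as a colimit of affines and refining fppf covers by covers of affines, the verification of fppf descent reduces to the following affine statement: for an fppf cover $f\colon S'\to S$ of affine DG schemes, with \v{C}ech nerve $S^\bullet=S'^\bullet/S$, the base-change (comparison) functor
\[
\Phi\colon \QCoh(S)\mmod\to \on{Tot}\left(\QCoh(S^\bullet)\mmod\right),\qquad \bC\mapsto \left(\QCoh(S^n)\underset{\QCoh(S)}\otimes \bC\right)_n
\]
is an equivalence.

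First I would prove that $\Phi$ is fully faithful. For $\bC,\bD\in \QCoh(S)\mmod$, mapping spaces in the totalization are computed as a totalization of mapping spaces, and the adjunction between extension and restriction of scalars along $\QCoh(S)\to \QCoh(S^n)$ identifies the $n$-th term with $\Maps_{\QCoh(S)\mmod}\left(\bC,\ \QCoh(S^n)\underset{\QCoh(S)}\otimes \bD\right)$. Pulling the totalization inside the first variable and invoking \thmref{t:descent 1}, which gives $\on{Tot}\left(\QCoh(S^\bullet)\underset{\QCoh(S)}\otimes \bD\right)\simeq \bD$, we recover $\Maps_{\QCoh(S)\mmod}(\bC,\bD)$; hence $\Phi$ is fully faithful. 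The same input shows that $\QCoh(S')\underset{\QCoh(S)}\otimes-$ is conservative: any $\bC$ killed by it is also killed by every $\QCoh(S^n)\underset{\QCoh(S)}\otimes-$, since each $S^n\to S$ factors through $S'$, so \thmref{t:descent 1} forces $\bC\simeq 0$.

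It then remains to prove essential surjectivity, which I expect to be the main obstacle. The approach is comonadic descent: the functor $\QCoh(S')\underset{\QCoh(S)}\otimes -$ is conservative and admits the right adjoint given by restriction of scalars, while, by the comonadic Beck--Chevalley formalism of \secref{s:Beck-Chevalley}, evaluation on $0$-simplices $\on{Tot}(\QCoh(S^\bullet)\mmod)\to \QCoh(S')\mmod$ is comonadic with an explicitly computable comonad. The crux is to produce, for a descent datum $(\bC_n)_n$, the candidate $\bC:=\on{Tot}(\bC_\bullet)$, where each $\bC_n$ is restricted to a $\QCoh(S)$-module category along $\QCoh(S)\to\QCoh(S^n)$, and to check that base change along $S'\to S$ commutes with this totalization, so that $\Phi(\bC)\simeq(\bC_n)_n$. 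This commutation of a colimit-type base change with a limit-type totalization is exactly the convergence issue at the heart of the paper; it is not automatic, but is guaranteed here because the cosimplicial object becomes split after the faithfully flat base change $S'\to S$, which is precisely the content of the Beck--Chevalley condition. Matching the two comonads via this condition then identifies $\QCoh(S)\mmod$ with the totalization and completes the affine case; the passage back to general prestacks, together with the reduction ``fppf $=$ Nisnevich $+$ finite flat'', proceeds exactly as for \thmref{t:descent 1}.
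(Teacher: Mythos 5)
You have the right skeleton, and it is essentially the paper's: reduce to an fppf cover $\pi\colon T\to S$ of affine DG schemes, observe that fully faithfulness of the comparison functor is exactly the statement that the unit $\bC\to \on{Tot}\bigl(\QCoh(T^\bullet/S)\underset{\QCoh(S)}\otimes\bC\bigr)$ is an equivalence, i.e.\ \thmref{t:descent 1}, and then reduce the counit to showing that
$$\QCoh(T)\underset{\QCoh(S)}\otimes \on{Tot}(\bC^\bullet)\to \on{Tot}\Bigl(\QCoh(T)\underset{\QCoh(S)}\otimes \bC^\bullet\Bigr)\simeq \bC^0$$
is an equivalence, the last identification coming from the splitness of $\QCoh(T)\underset{\QCoh(S)}\otimes\bC^\bullet\simeq \bC^{\bullet+1}$. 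The gap is in the first arrow, precisely the step you yourself call the crux. You assert that the commutation of $\QCoh(T)\underset{\QCoh(S)}\otimes-$ with the totalization is ``guaranteed because the cosimplicial object becomes split after the faithfully flat base change, which is precisely the content of the Beck--Chevalley condition.'' Neither half of this is correct. Splitness of the base-changed diagram makes its totalization an absolute limit \emph{downstairs}, in $\QCoh(T)\mmod$; it says nothing about the diagram $\bC^\bullet$ upstairs in $\QCoh(S)\mmod$, which is not split, so it cannot force $F(\on{Tot}(\bC^\bullet))\simeq \on{Tot}(F(\bC^\bullet))$ for $F=\QCoh(T)\underset{\QCoh(S)}\otimes-$. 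And the Beck--Chevalley condition yields comonadicity of evaluation on $0$-simplices of $\on{Tot}\bigl(\QCoh(T^\bullet/S)\mmod\bigr)$; it does not say that $F$ preserves this limit. If you instead try to conclude by Barr--Beck--Lurie applied to $F$ (conservativity plus preservation of totalizations of $F$-split cosimplicial objects), the second hypothesis is literally the same unproved commutation, so the argument becomes circular.

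The missing ingredient --- and the one the paper's proof uses at exactly this point --- is that $\QCoh(T)$ is dualizable as an object of $\QCoh(S)\mmod$: the monoidal category $\QCoh(S)$ is rigid, $\QCoh(T)$ is dualizable as a plain DG category, and \lemref{l:dualizable in rigid} then upgrades this to dualizability as a module category. Consequently $\QCoh(T)\underset{\QCoh(S)}\otimes-$ admits a left adjoint (tensoring with the dual), hence commutes with \emph{all} limits, in particular with $\on{Tot}(\bC^\bullet)$. With this one lemma inserted, your splitness observation correctly finishes the counit, and your argument then coincides with the proof given in Appendix \ref{s:proof of descent}. (A cosmetic slip elsewhere: in your fully-faithfulness computation the totalization is pulled inside the \emph{second} variable of the mapping spaces, not the first; and strictly speaking one should note that the term-wise adjunction equivalences are compatible with the cosimplicial structure maps.)
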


For the reader's convenience, we will supply the derivation \thmref{t:descent 1} $\Rightarrow$ \thmref{t:descent 2} in
Appendix \ref{s:proof of descent}.

\medskip

As a formal consequence of \thmref{t:descent 2}, we obtain:

\begin{cor}  \hfill  \label{c:shv via Cech}

\smallskip

\noindent{\em(a)}
If $\CZ\to \CY$ is an fppf surjection in $\on{PreStk}$, the pullback functor
$$\on{ShvCat}(\CY)\to \on{Tot}(\on{ShvCat}(\CZ^\bullet/\CY))$$
is an equivalence, where $\CZ^\bullet/\CY$ is the \v{C}ech nerve of
the map $\CZ\to \CY$.

\smallskip

\noindent{\em(b)} For $\CY\in \on{PreStk}$, the pullback functor
$$\on{ShvCat}(L(\CY))\to \on{ShvCat}(\CY)$$
is an equivalence, where $L(-)$ is fppf sheafification. 

\end{cor}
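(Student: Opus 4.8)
The plan is to deduce both statements from \thmref{t:descent 2}, which I read as the assertion that the limit-preserving functor $\on{ShvCat}_{\on{PreStk}}$ (recall that, being a right Kan extension from affines, it carries colimits in $\on{PreStk}$ to limits in $\inftyCat$) inverts all fppf-local equivalences. Equivalently, by the universal property of fppf sheafification $L:\on{PreStk}\to \on{Shv}_{\on{fppf}}$ as a reflective localization, this says that $\on{ShvCat}_{\on{PreStk}}$ factors as
\begin{equation} \label{e:factor shv}
\on{ShvCat}_{\on{PreStk}}\simeq \ol{\on{ShvCat}}\circ L^{\on{op}}
\end{equation}
for some $\ol{\on{ShvCat}}:(\on{Shv}_{\on{fppf}})^{\on{op}}\to \inftyCat$. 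The first task is to extract \eqref{e:factor shv} from the bare phrase ``satisfies descent''; this is the standard characterization of $\inftyCat$-valued sheaves among limit-preserving functors, which I would simply invoke.

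Granting \eqref{e:factor shv}, part (b) is immediate. Since $L$ is idempotent, the map $L(\CY)\to L(L(\CY))$ is an equivalence, so applying $\ol{\on{ShvCat}}$ and using \eqref{e:factor shv} shows that $\on{ShvCat}(L(\CY))\to \on{ShvCat}(\CY)$ is an equivalence.

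For part (a) I would combine two standard features of the left-exact localization $L$. First, since $L$ preserves finite limits it commutes with the formation of \v{C}ech nerves, giving $(L(\CZ))^\bullet/(L(\CY))\simeq L(\CZ^\bullet/\CY)$; via \eqref{e:factor shv} this identifies $\ol{\on{ShvCat}}((L(\CZ))^\bullet/(L(\CY)))$ with $\on{ShvCat}(\CZ^\bullet/\CY)$. Second, an fppf surjection $\CZ\to \CY$ becomes an effective epimorphism $L(\CZ)\to L(\CY)$ in the $\infty$-topos $\on{Shv}_{\on{fppf}}$, so that $L(\CY)\simeq |(L(\CZ))^\bullet/(L(\CY))|$ as a colimit there. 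Now $\ol{\on{ShvCat}}$ carries colimits in $\on{Shv}_{\on{fppf}}$ to limits in $\inftyCat$ (colimits of sheaves are sheafifications of colimits of prestacks, so this follows from \eqref{e:factor shv} together with the colimit-to-limit property of $\on{ShvCat}_{\on{PreStk}}$). Applying $\ol{\on{ShvCat}}$ to the effective-epimorphism presentation of $L(\CY)$ and rewriting via the first point yields $\on{ShvCat}(\CY)\simeq \on{Tot}(\on{ShvCat}(\CZ^\bullet/\CY))$, which is (a).

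The only real work is in the first paragraph: packaging ``satisfies fppf descent'' as the factorization \eqref{e:factor shv} and verifying that $\ol{\on{ShvCat}}$ sends topos-colimits to limits. Once the passage into the sheaf $\infty$-topos is justified, left-exactness of sheafification and the effective-epimorphism criterion for descent make both (a) and (b) purely formal, and the geometry of the specific surjection $\CZ\to \CY$ never re-enters --- exactly as in the analogous derivation of \corref{c:Cech descent} from \thmref{t:descent 1}.
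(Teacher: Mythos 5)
Your proposal is correct and matches the paper's intent: the paper states \corref{c:shv via Cech} as ``a formal consequence of \thmref{t:descent 2}'' without spelling out the argument, and your derivation --- reading descent as the factorization of the limit-preserving functor $\on{ShvCat}_{\on{PreStk}}$ through the fppf sheafification $L$, then using idempotency of $L$ for (b) and left-exactness of $L$ plus the effective-epimorphism presentation $L(\CY)\simeq |L(\CZ^\bullet/\CY)|$ for (a) --- is precisely the standard formalization of that consequence. The facts you invoke (the colimit-to-limit property of the right Kan extension, the universal property of sheafification for $\inftyCat$-valued sheaves) are exactly the ingredients the paper has in place, so no gap remains.
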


\section{Statements of the results}  \label{s:results}

\ssec{Algebraic spaces and schemes}

The following will not be difficult (see \secref{ss:alg space}):

\begin{thm} \label{t:alg space}
Let $\CY$ be a quasi-compact quasi-separated algebraic space. Then $\CY$ is 1-affine.
\end{thm}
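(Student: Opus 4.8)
The plan is to build $\CY$ out of affine DG schemes by Nisnevich descent, and to check that both sides of the adjunction $\bLoc_\CY\dashv\bGamma^{\on{enh}}_\CY$ obey the same descent pattern. Recall that a quasi-compact quasi-separated algebraic space admits a \emph{scallop decomposition}: a finite chain of open subspaces $\emptyset=\CY_0\hookrightarrow\CY_1\hookrightarrow\cdots\hookrightarrow\CY_n=\CY$ in which each inclusion $\CY_i\hookrightarrow\CY_{i+1}$ is completed to an \emph{elementary distinguished square} by an \'etale map $W_i\to\CY_{i+1}$ from an affine scheme that induces an isomorphism away from $\CY_i$. I would induct on the length $n$. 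The base case $n\le 1$ is an affine DG scheme, which is tautologically 1-affine.

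The inductive step reduces to a descent assertion for a single elementary distinguished square, given by an open immersion $U\hookrightarrow\CY$ and an \'etale map $W\to\CY$ with $V:=U\times_\CY W$: if $U$, $W$ and $V$ are 1-affine, then so is $\CY$. On the sheaf-of-categories side this is immediate. The two-term cover $U\sqcup W\to\CY$ has a \v{C}ech nerve whose totalization is controlled by the square (the isomorphism-on-the-complement condition collapses the cosimplicial object to a cofork), so by \thmref{t:descent 2} and \corref{c:shv via Cech} the functor $\on{ShvCat}(-)$ sends the square to a pullback, giving $\on{ShvCat}(\CY)\simeq\on{ShvCat}(U)\times_{\on{ShvCat}(V)}\on{ShvCat}(W)$. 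The content is to prove the matching statement $\QCoh(\CY)\mmod\simeq\QCoh(U)\mmod\times_{\QCoh(V)\mmod}\QCoh(W)\mmod$ and to verify that $\bGamma^{\on{enh}}$ (equivalently $\bLoc$) intertwines the two pullback descriptions. Granting this, the equivalences over $U$, $V$, $W$ supplied by the inductive hypothesis assemble into an equivalence over $\CY$.

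The heart of the argument, and the step I expect to be the main obstacle, is this descent for module categories. First, Nisnevich descent for quasi-coherent sheaves gives $\QCoh(\CY)\simeq\QCoh(U)\times_{\QCoh(V)}\QCoh(W)$ as a pullback of symmetric monoidal DG categories. Since $\CY$ and each of $U$, $V$, $W$ is a qcqs algebraic space, $\QCoh(\CY)$ is rigid (the relevant instance of \lemref{l:pass rigid}); by \corref{c:Loc commutes with limits rigid} this makes $\bLoc_\CY$ commute with limits, which is precisely what lets one push $\bLoc$ past the pullback. The remaining input is a general fact about rigid monoidal categories: passing to module categories carries a pullback of rigid monoidal DG categories to the corresponding pullback of module categories. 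This rests on the dualizability theory of \secref{ss:rigidity}, and in particular on \lemref{l:dualizable in rigid}, which identifies dualizable module categories over a rigid base with dualizable plain DG categories; this is what guarantees that the base-change functors $\QCoh(S_1)\underset{\QCoh(S_2)}\otimes(-)$ interact correctly with both limits and internal Homs, so that the descent diagram for module categories converges.

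With module-category descent in hand, the conclusion is formal. The counit \eqref{e:counit} and the unit \eqref{e:unit} over $\CY$ are exhibited as limits of the corresponding maps over $U$, $V$ and $W$ — using \corref{c:shv via Cech} on the $\on{ShvCat}$ side and the module-category pullback on the other, together with the fact that $\bLoc_\CY$ commutes with the relevant limits. As these maps are equivalences over $U$, $V$ and $W$ by induction, they are equivalences over $\CY$, which closes the induction and proves that $\CY$ is 1-affine. I emphasize that the only genuinely technical point is the module-category descent of the previous paragraph; the rest is a formal consequence of the descent results already established for $\on{ShvCat}(-)$ and of the rigidity of $\QCoh$ on qcqs algebraic spaces.
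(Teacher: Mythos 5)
Your geometric skeleton (scallop decomposition, induction over elementary distinguished squares, descent on both sides) matches the paper's, but the step you yourself flag as the heart of the argument has a genuine gap. You justify the module-category descent
$\QCoh(\CY)\mmod\simeq \QCoh(U)\mmod\times_{\QCoh(V)\mmod}\QCoh(W)\mmod$
by appeal to ``a general fact about rigid monoidal categories: passing to module categories carries a pullback of rigid monoidal DG categories to the corresponding pullback of module categories.'' No such general fact holds, and it does not follow from the results you cite. What \lemref{l:dualizable in rigid} and \corref{c:Loc commutes with limits rigid} give is only the \emph{unit} half: for each fixed $\bC\in\QCoh(\CY)\mmod$, tensoring with the dualizable $\QCoh(\CY)$-modules $\QCoh(U),\QCoh(V),\QCoh(W)$ commutes with limits, and the square of values is a pullback (this is the Nisnevich part of \thmref{t:descent 1}, itself proved in \secref{sss:Nisn} by an explicit construction of a right adjoint, not formally from rigidity). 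The \emph{counit} half --- that an arbitrary descent datum $(\bC_U,\bC_W,\bC_V)$ is recovered from its limit --- is where the content sits: after commuting the tensor past the limit one must identify $\QCoh(U)\underset{\QCoh(\CY)}\otimes \bC_W$ with $\QCoh(U\underset{\CY}\times W)\underset{\QCoh(W)}\otimes \bC_W$, which requires the K\"unneth equivalence $\QCoh(U)\underset{\QCoh(\CY)}\otimes\QCoh(W)\simeq \QCoh(U\underset{\CY}\times W)$ over the non-affine base $\CY$ (a nontrivial geometric input, obtained in the paper from passability, \propref{p:base change pass}), together with a conservativity statement. Rigidity of the corners alone cannot be the operative reason: the paper's own \thmref{t:A-infty} exhibits a limit of rigid monoidal categories, $\QCoh(\BA^\infty)\simeq \underset{n}{\underset{\longleftarrow}{lim}}\,\QCoh(\BA^n)$, for which modules over the limit differ from the limit of module categories --- that is exactly the failure of $1$-affineness of $\BA^\infty$ --- even though every dualizability ingredient you invoke is present there too. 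So for the finite square your claim, while true \emph{a posteriori}, is essentially equivalent to the theorem being proved; it is not a formal lemma, and your proposal never supplies the inputs that would prove it.

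There is also a structural gap in the induction: in each elementary distinguished square the overlap $V=\CY_i\underset{\CY_{i+1}}\times W_i$ is a quasi-affine scheme (an open in an affine), which is not one of the terms $\CY_j$ of your chain, so ``$1$-affine by the inductive hypothesis'' is unavailable for it. The paper closes this loop by a preliminary, self-contained induction for quasi-compact \emph{separated} schemes (cover by affines; separatedness makes pairwise intersections affine, so that induction genuinely terminates), and only then runs the algebraic-space induction with quasi-affine schemes available as known input; you need this preliminary stage as well. Finally, note how the paper sidesteps your hard step entirely: it proves full faithfulness of $\bGamma^{\on{enh}}_\CY$ once and for all via passability (\propref{p:alg}), and then proves full faithfulness of $\bLoc_\CY$ by comparing, for each \emph{fixed} $\bC$, the value-wise pullback \eqref{e:pullback 1} against the sections pullback \eqref{e:pullback 2}. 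Both are $1$-categorical statements that the induction can actually supply, and no $2$-descent for $\QCoh(-)\mmod$ over a non-affine base is ever needed.
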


\ssec{Algebraic stacks}  \label{ss:alg stacks}

Our conventions regarding algebraic stacks follow those of \cite[Sect. 1.3.3]{DrGa}. In particular,
we assume that the diagonal morphism is representable, quasi-compact and quasi-separated. 

\sssec{}

In \secref{s:BG} we will prove:

\begin{thm} \label{t:BG}
The stack $\on{pt}/G$, where $G$ is a classical affine algebraic group of finite type, is 1-affine.
\end{thm}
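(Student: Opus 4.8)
The plan is to combine fppf descent with the co-monadic Beck--Chevalley formalism of \secref{s:Beck-Chevalley}, following Lurie's idea. Write $\CY=\on{pt}/G$ and recall that $\QCoh(\on{pt}/G)\simeq\Rep(G)$, the DG category of representations of $G$. Since $G$ is a classical affine algebraic group of finite type, $\Rep(G)$ is compactly generated by its finite-dimensional representations and is \emph{rigid} as a monoidal DG category. By \lemref{l:dualizable in rigid} and \corref{c:Loc commutes with limits rigid}, dualizability over $\Rep(G)$ is detected on underlying DG categories, and tensoring over $\Rep(G)$ with a dualizable module category commutes with all limits. In particular $\Vect$, regarded as a $\Rep(G)$-module category via the fiber functor $\oblv\colon\Rep(G)\to\Vect$, is dualizable (being dualizable as a plain DG category), so $\Vect\underset{\Rep(G)}\otimes-$ commutes with limits. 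This is the ``convergence'' input that finiteness of $G$ supplies.

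First I would record the descent presentation attached to the fppf cover $p\colon\on{pt}\to\on{pt}/G$, whose \v{C}ech nerve is the bar construction $G^{\times\bullet}$. By \corref{c:shv via Cech}(a),
$$\on{ShvCat}(\on{pt}/G)\simeq\on{Tot}\big(\on{ShvCat}(G^{\times\bullet})\big),$$
and since each $G^{\times n}$ is an affine DG scheme, hence tautologically 1-affine, the right-hand side is $\on{Tot}\big(\QCoh(G^{\times\bullet})\mmod\big)$. Applying the co-monadic Beck--Chevalley condition of \secref{s:Beck-Chevalley} to this co-simplicial category identifies the totalization with the category of co-modules over an explicit comonad $\sT$ on $\on{ShvCat}(\on{pt})\simeq\StinftyCat_{\on{cont}}$, the forgetful functor being the pullback $p^*$; concretely $\sT$ is the comonad given by co-action of $\QCoh(G)$ coming from the group structure.

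Next I would show that $\Rep(G)\mmod$ carries the matching description. The base-change functor $\Vect\underset{\Rep(G)}\otimes-\colon\Rep(G)\mmod\to\StinftyCat_{\on{cont}}$ is a left adjoint; it is conservative because $p$ is an fppf cover (so $\Vect$ is a faithful $\Rep(G)$-module category), and it commutes with all limits by the rigidity input above. Hence, by the co-monadic Barr--Beck--Lurie theorem, it is comonadic. Its comonad is $\Vect\underset{\Rep(G)}\otimes\on{Res}(-)$, which by the projection formula and rigidity equals $\big(\Vect\underset{\Rep(G)}\otimes\Vect\big)\otimes-$; since $\Vect\underset{\Rep(G)}\otimes\Vect\simeq\QCoh(\on{pt}\underset{\on{pt}/G}\times\on{pt})\simeq\QCoh(G)$, this comonad is again co-action of $\QCoh(G)$. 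A Beck--Chevalley comparison for the square relating $\bGamma^{\on{enh}}_{\on{pt}/G}$ to the two forgetful functors identifies it with $\sT$, so $\bGamma^{\on{enh}}_{\on{pt}/G}$ becomes the induced functor between two categories of $\sT$-comodules in $\StinftyCat_{\on{cont}}$, hence an equivalence. Its inverse is then automatically $\bLoc_{\on{pt}/G}$, so $\on{pt}/G$ is 1-affine.

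The main obstacle I anticipate is precisely the comonadicity of $\Vect\underset{\Rep(G)}\otimes-$ together with the identification of its comonad with the geometric comonad $\sT$. Conservativity is the easy half; the genuine work is the commutation with the totalizations computing $\on{Tot}\big(\QCoh(G^{\times\bullet})\mmod\big)$, i.e. the convergence question flagged in the introduction. This is exactly where the hypothesis that $G$ is of finite type enters: it makes $\Rep(G)$ rigid and $\Vect$ dualizable over it, which is what forces base change to commute with the relevant limits and lets the Barr--Beck--Lurie hypotheses be verified. Dropping finiteness is what breaks this argument and, correspondingly, 1-affineness for classifying stacks of groups of infinite type (cf. \thmref{t:thick}).
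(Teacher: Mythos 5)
Your formal skeleton (descent plus the co-monadic Beck--Chevalley description of $\on{ShvCat}(\on{pt}/G)$ as $\QCoh(G)$-comodule categories, and the limit-commutation of $\Vect\underset{\Rep(G)}\otimes -$ coming from rigidity via \lemref{l:dualizable in rigid}) is sound and matches the paper's own formalism in \secref{s:crit aff} and \secref{ss:groups}. But there is a fatal gap at the step you declare to be ``the easy half'': the conservativity of $\Vect\underset{\Rep(G)}\otimes -$ on $\Rep(G)\mmod$. The justification you offer --- that $p$ is an fppf cover, so $\Vect$ is a ``faithful'' $\Rep(G)$-module category --- is not an argument: no such implication exists, and in fact this conservativity statement is \emph{equivalent} to the theorem you are proving. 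Indeed, by \propref{p:cond for Gamma} ((i) $\Leftrightarrow$ (iii)), 1-affineness of $\on{pt}/G$ is equivalent to conservativity of $\bLoc_{\on{pt}/G}$; and since every term of the \v{C}ech nerve of $S\underset{\on{pt}/G}\times \on{pt}\to S$ factors through $\on{pt}$, descent (\corref{c:Cech descent}(a)) shows that a map $\bC_1\to\bC_2$ becomes an equivalence after $\bLoc_{\on{pt}/G}$ if and only if it does after $\Vect\underset{\Rep(G)}\otimes -$. That rigidity plus the existence of a cover cannot formally yield conservativity is shown by a toy example: $\QCoh(\BA^1)$ is rigid (the trivial case of \lemref{l:pass rigid}) and is augmented at $0\in\BA^1$, yet $\Vect\underset{\QCoh(\BA^1)}\otimes \QCoh(\BG_m)\simeq \QCoh(\{0\}\underset{\BA^1}\times \BG_m)=0$, so base change to the augmentation kills a nonzero module category. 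The cover property enters your Step A (descent for $\on{ShvCat}$) but gives no handle whatsoever on the 2-category $\Rep(G)\mmod$; bridging the two sides is the theorem itself. Your emphasis is thus exactly inverted: commutation with limits is the formal half (rigidity), while conservativity is where the entire difficulty is concentrated.

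For comparison, the paper's proof is organized precisely around making this hard step tractable, and it has no purely formal substitute. It first reduces to $G$ reductive by choosing a closed embedding $G\hookrightarrow GL_n$ and applying \corref{c:1-affine base and fiber} to $\on{pt}/G\to\on{pt}/GL_n$ (note that your argument has no way to exploit reductivity, which is a warning sign). It then invokes \propref{p:? criterion}(4), converting 1-affineness into monadicity of the \emph{discontinuous} right adjoint $(\coind_G)^R:\Rep(G)\to\Vect$ of $\coind_G=f_*$; for reductive $G$ one has $\Rep(G)\simeq\Vect^A$, so this functor is $\{W_a\}\mapsto \underset{a}\Pi\, W_a$, whose conservativity is manifest, and the remaining Barr--Beck--Lurie condition on geometric realizations is verified by the t-structure arguments of \lemref{l:monads in limit} and \lemref{l:bounded}. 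This semisimplicity-plus-completeness argument is the content your proposal is missing. A secondary point: your identification $\Vect\underset{\Rep(G)}\otimes\Vect\simeq\QCoh(G)$ does not follow from ``the projection formula and rigidity''; it requires passability of $\on{pt}/G$ (\propref{p:base change pass}, resting on \thmref{t:ev coconn pass}), which is available but should be cited, since it is exactly the input making $\bGamma^{\on{enh}}_{\on{pt}/G}$ fully faithful.
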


The assumption that the group $G$ be of finite type is important. Namely, in \secref{ss:pro infty} we will prove:

\begin{thm} \label{t:thick}
The stack $\on{pt}/G$ for $G=\underset{n}{\underset{\longleftarrow}{lim}}\, (\BG_a)^{\times n}$
is \emph{not} 1-affine.
\end{thm}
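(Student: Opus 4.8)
The plan is to reduce the question, exactly as in \secref{s:crit aff}, to the (co)monadicity of a single concrete functor, and then to exhibit the failure of one of the Barr--Beck--Lurie hypotheses. Write $\iota\colon \on{pt}\to \on{pt}/G$ for the canonical atlas, and recall $\QCoh(\on{pt}/G)=\Rep(G)$. First I would record that $\Rep(G)$ is \emph{rigid}: it is compactly generated by its finite-dimensional representations, and since $G=\varprojlim_n (\BG_a)^{\times n}$ with surjective transition maps, every finite-dimensional representation is a comodule whose coaction involves only finitely many coordinates, hence factors through some quotient $(\BG_a)^{\times n}$ and is therefore dualizable. By \corref{c:Loc commutes with limits rigid}, dualizability over $\Rep(G)$ then coincides with dualizability as a plain DG category, and $\bLoc_{\on{pt}/G}$ commutes with limits. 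In particular rigidity is \emph{not} the obstruction, so the failure of $1$-affineness must be located in a convergence phenomenon.

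Next I would consider the base-change functor
$$\Phi:=\QCoh(\on{pt})\underset{\Rep(G)}\otimes -\ \colon\ \Rep(G)\mmod\longrightarrow \StinftyCat_{\on{cont}}.$$
Since $\on{pt}\to \on{pt}/G$ is an fppf surjection, \corref{c:shv via Cech} together with the comonadic Beck--Chevalley formalism of \secref{s:Beck-Chevalley} identifies $\on{ShvCat}(\on{pt}/G)$ with comodules for the comonad $\bM\mapsto \QCoh(G)\otimes\bM$ on $\StinftyCat_{\on{cont}}$, the comonad being read off from $\on{pt}\underset{\on{pt}/G}\times\on{pt}\simeq G$. By rigidity the same fibre square computes $\QCoh(\on{pt})\underset{\Rep(G)}\otimes\QCoh(\on{pt})\simeq \QCoh(G)$, so $\Phi$ carries the \emph{same} comonad, and $\Phi\circ \bGamma^{\on{enh}}_{\on{pt}/G}\simeq \bGamma(\on{pt},-)$. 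The standard comparison lemma then shows that $\on{pt}/G$ is $1$-affine if and only if $\Phi$ is comonadic. Conservativity of $\Phi$ is the easy half (it follows from faithful flatness of the atlas), so everything comes down to whether $\Phi$ preserves the totalizations of its $\Phi$-split cosimplicial objects; equivalently, whether $\QCoh(\on{pt})$ is a descendable $\Rep(G)$-algebra.

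The heart of the argument is to show that this last condition \emph{fails} for $G=\varprojlim_n (\BG_a)^{\times n}$, in sharp contrast with \thmref{t:BG}. The relevant cobar (Amitsur) tower is governed by the cohomology $\bGamma(\on{pt}/G,\CO)$, which is the Chevalley--Eilenberg cohomology of the abelian Lie algebra $\fg=\on{Lie}(G)=\varprojlim_n k^{n}$: for the finite-dimensional groups $(\BG_a)^{\times n}$ this is the finite exterior algebra $\Lambda^\bullet (k^{n})^\vee$ and the tower converges, whereas for $G$ itself it is an exterior algebra on countably many generators, so the unit $\CO_{\on{pt}/G}$ fails to be compact and the tower does not converge. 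The plan is to convert this divergence into an explicit $\Phi$-split cosimplicial diagram in $\Rep(G)\mmod$ whose totalization is not preserved by $\Phi$, using the presentation $\Rep(G)=\varinjlim_n \Rep((\BG_a)^{\times n})$ together with the $1$-affineness of each $\on{pt}/(\BG_a)^{\times n}$ to pin down the finite stages, and then showing that the passage to the limit in $n$ destroys comonadicity. I expect this convergence analysis --- identifying the offending totalization and proving that $\Phi$ does not commute with it --- to be the main obstacle, since it is precisely the interchange of a colimit with a limit that the introduction flags as the technical core; by comparison, conservativity, rigidity, and the identification of the comonad are all formal.
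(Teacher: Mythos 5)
Your proposal never actually reaches the theorem: everything is reduced to exhibiting a $\Phi$-split cosimplicial object whose totalization $\Phi$ fails to preserve, and that step is explicitly deferred (``I expect this convergence analysis \dots to be the main obstacle''). Worse, the reductions you do carry out are aimed at the wrong Barr--Beck condition, and the claims used to get there would, if correct, prove the \emph{opposite} of the theorem. Note that $\Phi\simeq \Oblv_G\circ \bLoc_{\on{pt}/G}$, where $\Oblv_G$ is evaluation at the atlas; since every affine $S\to \on{pt}/G$ factors through $\on{pt}$ (there are no nontrivial $G$-torsors on affines, cf.\ \secref{ss:pro infty}), $\Oblv_G$ is conservative, so conservativity of $\Phi$ is \emph{equivalent} to conservativity of $\bLoc_{\on{pt}/G}$. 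Moreover, your rigidity claim together with the affineness of the diagonal would make $\on{pt}/G$ passable (\lemref{l:pass rigid}), hence $\bGamma^{\on{enh}}_{\on{pt}/G}$ fully faithful by \propref{p:alg}; and then, exactly as in \propref{p:cond for Gamma}, the triangle identities show that conservativity of $\bLoc$ already implies $1$-affineness. So ``conservativity is the easy half'' cannot be right: on your own premises it is equivalent to the statement being disproved. The appeal to faithful flatness is a non sequitur here --- flatness of the atlas gives conservativity of $\oblv_G=f^*$ on $\QCoh$, which says nothing about the functor $\bC\mapsto \Vect\underset{\Rep(G)}\otimes \bC$ on module categories.

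The paper's proof (\secref{ss:pro infty}) locates the failure precisely in the condition you dismiss: $1$-affineness would force the discontinuous right adjoint $(\coind_G)^R:\Rep(G)\to\Vect$ to be monadic (the logic of \secref{sss:monadicity of coinduction} and \propref{p:? criterion}), and this functor is not even \emph{conservative}, since
$$(\coind_G)^R(k)\simeq \CMaps_{\Rep(G)}(\CO_G,k)=0,$$
as one sees from the infinite Koszul resolution of $k$ by the coinduced modules $\coind_G(\Lambda^n(W))$, with $\CO_G=\on{Sym}(W)$ and $W$ countable-dimensional (the resulting complex $\Hom_{\Vect^\heartsuit}(\on{Sym}(W),\Lambda^\bullet(W))$ is acyclic precisely because $W$ is infinite-dimensional). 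Separately, your proposal is internally inconsistent: you assert that $\Rep(G)$ is compactly generated by its finite-dimensional representations (which would make the unit, a finite-dimensional representation, compact) and later assert that $\CO_{\on{pt}/G}$ fails to be compact; rigidity in the sense of \secref{ss:rigidity} requires exactly the continuity of $\on{unit}^R$, i.e.\ compactness of the unit, and if the unit is not compact then no dualizable object is compact, since $\CMaps(V,-)\simeq \CMaps(k,V^\vee\otimes-)$. This matters because the identification $\Vect\underset{\Rep(G)}\otimes\Vect\simeq \QCoh(G)$, which you use to pin down the comonad, is not formal: it is condition (2) of \propref{p:1-aff and rigid}(b), i.e.\ part of what $1$-affineness asserts. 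The repair is to abandon the descendability route and run the paper's conservativity computation, which is elementary and decisive.
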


In \secref{sss:global quotient}, from \thmref{t:BG} we will deduce:

\begin{thm} \label{t:quotient}
An algebraic stack that can be realized as $Z/G$, where $Z$ is a quasi-compact quasi-separated algebraic space
and $G$ is a classical affine algebraic group of finite type,  is $1$-affine.
\end{thm}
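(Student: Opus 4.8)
The plan is to propagate $1$-affineness from the base $\on{pt}/G$ (which is $1$-affine by \thmref{t:BG}) across the projection $q\colon Z/G\to \on{pt}/G$. The essential geometric feature is that $q$ is representable by quasi-compact quasi-separated algebraic spaces: its base change along the atlas $\on{pt}\to\on{pt}/G$ is precisely $Z$, and since this property is fppf-local on the base and $\on{pt}\to\on{pt}/G$ is a cover, the base change of $q$ along any $S\to\on{pt}/G$ is again a qcqs algebraic space. In particular every such fibre is $1$-affine by \thmref{t:alg space}, and the task is to turn ``$1$-affine base $+$ schematic morphism with $1$-affine fibres'' into ``$1$-affine total space''.

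First I would record the relative statement. Applying $q_*$ to the unit, define $\CA\in\on{ShvCat}(\on{pt}/G)$ by $\bGamma(S,\CA):=\QCoh\bigl(S\underset{\on{pt}/G}\times Z/G\bigr)$ for $S\in\affdgSch_{/(\on{pt}/G)}$; with the termwise tensor product this is a commutative algebra object of the symmetric monoidal $\infty$-category $\on{ShvCat}(\on{pt}/G)$. I claim that pullback along $q$ upgrades to an equivalence
\begin{equation*}
\on{ShvCat}(Z/G)\ \simeq\ \CA\mmod\bigl(\on{ShvCat}(\on{pt}/G)\bigr),
\end{equation*}
the ``relative $1$-affineness'' of the schematic morphism $q$. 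This is the relative enhancement of \thmref{t:alg space}: over the base $\on{pt}$ it specializes to $\on{ShvCat}(Z)\simeq\QCoh(Z)\mmod$.

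Granting the display, the deduction is soft. Since $\on{pt}/G$ is $1$-affine, $\on{ShvCat}(\on{pt}/G)\simeq\QCoh(\on{pt}/G)\mmod$, and under this equivalence $\CA$ passes to the commutative algebra $\QCoh(Z/G)\in\QCoh(\on{pt}/G)\mmod$ determined by the monoidal pullback $q^*\colon\QCoh(\on{pt}/G)\to\QCoh(Z/G)$. The general identity $A\mmod(\bO\mmod)\simeq A\mmod$, for $A$ a commutative algebra in $\bO\mmod$ (here $\bO=\QCoh(\on{pt}/G)$ and $A=\QCoh(Z/G)$), then gives
\begin{equation*}
\CA\mmod\bigl(\on{ShvCat}(\on{pt}/G)\bigr)\ \simeq\ \QCoh(Z/G)\mmod .
\end{equation*}
Composing the two displays yields $\on{ShvCat}(Z/G)\simeq\QCoh(Z/G)\mmod$, and one checks the composite is $\bGamma^{\on{enh}}_{Z/G}$; hence $Z/G$ is $1$-affine.

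The main obstacle is the first display, i.e.\ the relative equivalence. I would prove it by fppf descent: by \corref{c:shv via Cech} one has $\on{ShvCat}(Z/G)\simeq\on{Tot}\bigl(\on{ShvCat}(Z\times G^{\times\bullet})\bigr)$ and $\on{ShvCat}(\on{pt}/G)\simeq\on{Tot}\bigl(\on{ShvCat}(G^{\times\bullet})\bigr)$ along the \v{C}ech nerves of $Z\to Z/G$ and $\on{pt}\to\on{pt}/G$, and each term $Z\times G^{\times n}$ is a qcqs algebraic space, hence $1$-affine by \thmref{t:alg space}. The delicate point is passing from these levelwise equivalences to an equivalence of totalizations, which requires the module-category side to satisfy descent, i.e.\ that $\CA\mmod(-)$ and the relevant base-change functors commute with totalizations. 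This is exactly where the finite-type hypothesis on $G$ enters: it makes $\QCoh(\on{pt}/G)=\Rep(G)$, and hence $\QCoh(Z/G)$, \emph{rigid} (\lemref{l:pass rigid}), so that by \corref{c:Loc commutes with limits rigid} and \lemref{l:dualizable in rigid} the tensoring functors commute with limits and the comonadic Beck--Chevalley conditions of \secref{s:Beck-Chevalley} apply, identifying the totalization with the sought module category. (Equivalently, one may bypass the relative formulation and run the same descent directly along $Z\to Z/G$, reducing $1$-affineness of $Z/G$ to the claim that $\QCoh(Z/G)\mmod\to\on{Tot}\bigl(\QCoh(Z\times G^{\times\bullet})\mmod\bigr)$ is an equivalence, with rigidity once more the crux.)
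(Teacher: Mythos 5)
Your proposal is correct in substance, and its skeleton coincides with the paper's: the paper proves \thmref{t:quotient} in one line, by applying \corref{c:1-affine base and fiber} to the morphism $q:Z/G\to \on{pt}/G$, whose target is 1-affine by \thmref{t:BG} and whose base change by any affine DG scheme is a quasi-compact quasi-separated algebraic space, hence 1-affine by \thmref{t:alg space} --- exactly your two inputs. Where you differ is that you re-prove the propagation principle instead of quoting it: your $\CA$ is the paper's $\coIind_q(\QCoh_{/(Z/G)})$, and your relative equivalence $\on{ShvCat}(Z/G)\simeq \CA\mmod\bigl(\on{ShvCat}(\on{pt}/G)\bigr)$, combined with the identity $A\mmod(\bO\mmod)\simeq A\mmod$, is a module-theoretic repackaging of what the paper extracts from \propref{p:almost commute 1}(a,iii),(b) and \propref{p:nice base change} via unit/counit diagram chases and a conservativity argument for $\coIind_q$. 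Your route is legitimate and yields a reusable statement (relative 1-affineness of any morphism whose affine base changes are 1-affine), at the price of invoking the ``modules in a limit'' formalism; the paper's route avoids that formalism entirely.

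One diagnosis in your write-up is wrong and should be fixed. The ``delicate point'' in your descent argument --- passing from the levelwise equivalences $\on{ShvCat}(Z\times G^{\times n})\simeq \CA_n\mmod\bigl(\on{ShvCat}(G^{\times n})\bigr)$ to the totalizations --- requires no rigidity and no finiteness of $G$: it is the formal fact that, for a cosimplicial symmetric monoidal $\infty$-category equipped with a compatible commutative algebra, modules in the limit are the limit of the levelwise module categories, together with the observation (the lemma in \secref{sss:dir and inv}, valid for an arbitrary morphism) that $\CA$ is genuinely a sheaf of categories, so the algebras $\CA_n$ match up under the transition functors. Consequently your relative equivalence holds for \emph{any} morphism of prestacks all of whose affine base changes are 1-affine. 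The finite-type hypothesis on $G$ enters this proof in exactly one place: \thmref{t:BG}, i.e.\ 1-affineness of the base $\on{pt}/G$, which you need in order to identify $\CA\mmod\bigl(\on{ShvCat}(\on{pt}/G)\bigr)$ with $\QCoh(Z/G)\mmod$ --- consistent with \thmref{t:thick}, which shows the theorem fails without it. Relatedly, your parenthetical shortcut is circular as stated: reducing to the assertion that $\QCoh(Z/G)\mmod\to \on{Tot}\bigl(\QCoh(Z\times G^{\times\bullet})\mmod\bigr)$ is an equivalence is fine, but given \corref{c:shv via Cech} and \thmref{t:alg space} that assertion essentially \emph{is} the 1-affineness of $Z/G$; rigidity of $\Rep(G)$ alone cannot produce it, and establishing it carries the full weight of the argument behind \thmref{t:BG} (monadicity of the right adjoint of coinduction, via reduction to the reductive case).
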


\sssec{}

Finally, in \secref{s:stacks} we will prove:

\begin{thm} \label{t:alg}
An eventually coconnective quasi-compact algebraic stack locally almost of finite type
with an affine diagonal is 1-affine.
\end{thm}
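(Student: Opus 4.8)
The plan is to prove that the adjunction $(\bLoc_\CY, \bGamma^{\on{enh}}_\CY)$ has both unit and counit invertible, by comparing the two sides through \v{C}ech descent along an affine atlas. Since $\CY$ is an algebraic stack with affine diagonal, I can choose a smooth surjection $f\colon U\to\CY$ with $U$ an affine DG scheme; affineness of the diagonal then guarantees that every term $U^n:=U\times_\CY\cdots\times_\CY U$ of the \v{C}ech nerve is again affine. By \corref{c:shv via Cech} together with the tautological identification $\on{ShvCat}(S)\simeq\QCoh(S)\mmod$ for affine $S$, this yields
$$\on{ShvCat}(\CY)\simeq\on{Tot}\bigl(\QCoh(U^\bullet)\mmod\bigr).$$
The whole problem is therefore to show that $\QCoh(\CY)\mmod$ maps equivalently to the same totalization, compatibly with $\bGamma^{\on{enh}}_\CY$ and $\bLoc_\CY$.

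First I would record the structural inputs. Using \secref{s:rigid} I would verify that, under the stated hypotheses (eventually coconnective, quasi-compact, locally almost of finite type, affine diagonal), the monoidal DG category $\QCoh(\CY)$ is rigid. By \corref{c:Loc commutes with limits rigid} this has two payoffs: dualizability over $\QCoh(\CY)$ coincides with plain dualizability (via \lemref{l:dualizable in rigid}), and $\bLoc_\CY$ commutes with limits. The invertibility of the counit, i.e. full faithfulness of $\bGamma^{\on{enh}}_\CY$, I would obtain by checking that $\CY$ falls into the class of \emph{passable} prestacks of \secref{s:alg space}, for which $\bGamma^{\on{enh}}$ is already fully faithful; the technical hypotheses in the statement are precisely what is needed for passability.

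The heart of the argument is the invertibility of the unit, which by the criterion of \secref{s:crit aff} is equivalent to the base-change functor $F:=-\underset{\QCoh(\CY)}\otimes\QCoh(U)\colon\QCoh(\CY)\mmod\to\QCoh(U)\mmod$ being comonadic. Conservativity of $F$ follows from $f$ being a faithfully flat (smooth surjective) atlas, and $F$ is a left adjoint (to restriction of scalars along the monoidal functor $f^*$), so comonadic Barr--Beck--Lurie applies. Granting the remaining hypothesis, I would identify the resulting comonad $F\circ(\text{restriction})$ with the one governing the \v{C}ech cosimplicial object, via the Beck--Chevalley formalism of \secref{s:Beck-Chevalley}; this matches $\QCoh(\CY)\mmod$ with $\on{Tot}\bigl(\QCoh(U^\bullet)\mmod\bigr)$ and, combined with the $\on{ShvCat}$ descent above, realizes $\bGamma^{\on{enh}}_\CY$ and $\bLoc_\CY$ as mutually inverse equivalences.

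The main obstacle is the surviving Barr--Beck--Lurie condition: that $F$, which is a priori only continuous (colimit-preserving), nonetheless preserves the totalizations of the $F$-cosplit cosimplicial objects arising in comonadic descent. This is exactly the ``colimits commuting with limits'' convergence problem flagged in the introduction, and it is where the finiteness hypotheses do the work. I expect not to be able to verify the condition head-on for $\QCoh$; instead I would route the computation through the category $\IndCoh(\CY)$ (available since $\CY$ is locally almost of finite type), where the analogous pullback and pushforward functors enjoy the boundedness and continuity needed to control the totalization, and then transport the conclusion back to $\QCoh$. Eventual coconnectivity is what guarantees the requisite convergence and the comparison between the $\QCoh$- and $\IndCoh$-pictures.
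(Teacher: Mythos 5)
Your proposal contains a genuine gap at its central step, and the gap is not repairable by the means you indicate. You claim that conservativity of the base-change functor $F=\QCoh(U)\underset{\QCoh(\CY)}\otimes -\,:\QCoh(\CY)\mmod\to \QCoh(U)\mmod$ ``follows from $f$ being a faithfully flat atlas.'' This is circular: since restriction $\coRres_f:\on{ShvCat}(\CY)\to\on{ShvCat}(U)$ is conservative (\corref{c:shv via comonad}) and $\bLoc_\CY(\bC)|_U\simeq \bLoc_U(F(\bC))$, conservativity of $F$ is \emph{equivalent} to conservativity of $\bLoc_\CY$; and for a passable stack, conservativity of $\bLoc_\CY$ is equivalent to 1-affineness itself (implication (iii) $\Rightarrow$ (i) of \propref{p:cond for Gamma}). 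So the statement you dismiss as easy \emph{is} the theorem. Flat descent intuition does not transfer to module categories: the whole content of the paper is that the categorified analogue of faithfully flat descent can fail (e.g.\ \thmref{t:thick}, where $\on{pt}\to \on{pt}/G$ is a faithfully flat cover of a non-1-affine stack, or \thmref{t:iterated B}(c)). There is no pointwise/fiberwise criterion available to check that a map of module categories becomes an equivalence after tensoring up along a flat cover.

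Your second step (the ``surviving'' Barr--Beck--Lurie condition, to be handled by routing through $\IndCoh$) also does not match how the difficulty is actually resolved, and as stated it is only a hope: the paper explicitly records that the Barr--Beck--Lurie conditions for the relevant functor could not be verified directly. The actual proof does not work with $F$ on module categories at all. It uses the third criterion (\propref{p:? criterion}): 1-affineness is equivalent to monadicity of the \emph{discontinuous} functor $f^?:\QCoh(\CY)\to\QCoh(U)$, right adjoint to $f_*$, at the level of the $\QCoh$ categories themselves. Monadicity of $f^?$ is then proved by hand, not by citing BBL: one needs (i) the conservativity/cogeneration statements of \propref{p:generation}, proved by Noetherian induction, reduction to classical reduced stacks, and reduction via \cite{DrGa} to global quotients, where \thmref{t:quotient} (hence \thmref{t:BG}) is the essential input; (ii) the uniform amplitude bound of \propref{p:bdd dim}; and (iii) a convergence argument comparing truncations of geometric realizations with finite skeleta, using that affineness of the diagonal makes each $(f^i)_*$ t-exact and that the amplitude of $(f^i)^?$ is uniformly bounded. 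The role of $\IndCoh$ is much narrower than you suggest: it appears only inside the proof of \propref{p:generation}(b) (via \propref{p:IndCoh generation}), because one needs the functor $\imath^!$ to be continuous so that it admits a right adjoint $\imath_?$ --- a manipulation unavailable in $\QCoh$ --- not because $\IndCoh$ controls the convergence of totalizations. Your outline is missing all of these ingredients: the reduction to global quotients, the boundedness statement, and the t-structure convergence argument, which together constitute the proof.
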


We conjecture that in \thmref{t:alg}, the assumption that $\CY$
be eventually coconnective is superfluous. 

\ssec{Formal completions}

In \secref{s:formal compl} we will prove:

\begin{thm} \label{t:main formal} 
Suppose that $\CY$ is obtained as a formal completion \footnote{The definition of formal completion will
be recalled in \secref{sss:formal compl}.} of a $1$-affine prestack
along a closed subfunctor such that the embedding of its complement is
quasi-compact. Then $\CY$ is $1$-affine. 
\end{thm}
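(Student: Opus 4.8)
The plan is to transport the $1$-affineness of the ambient prestack to its formal completion by realizing both $\on{ShvCat}(\CY)$ and $\QCoh(\CY)\mmod$ as \emph{matching} full subcategories --- the objects ``supported/complete along $Z$'' --- of $\on{ShvCat}(\CX)$ and $\QCoh(\CX)\mmod$, where $\CX$ denotes the given $1$-affine prestack, $Z$ the closed subfunctor, and $\CY=\CX^\wedge_Z$. Write $j:U\hookrightarrow\CX$ for the quasi-compact open complement and $\hat\imath:\CY\to\CX$ for the tautological map. First I would record two base-change compatibilities of the functors from \secref{s:functors}: pushforward of sheaves of categories commutes with global sections, giving $\bGamma^{\on{enh}}_\CX\circ\hat\imath_*\simeq\hat\imath_*\circ\bGamma^{\on{enh}}_\CY$ (essentially by the definition of $\hat\imath_*$ and of $\bGamma$ as a limit, where the two $\hat\imath_*$ are the pushforwards on sheaves of categories and on module categories), and dually $\hat\imath_*\circ\bLoc_\CY\simeq\bLoc_\CX\circ\hat\imath_*$. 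Granting these, and granting that $\hat\imath_*$ is fully faithful on both sides, the theorem is immediate: for $\CC\in\on{ShvCat}(\CY)$ the counit $\bLoc_\CY\bGamma^{\on{enh}}_\CY(\CC)\to\CC$ becomes, after applying the conservative functor $\hat\imath_*$, the counit $\bLoc_\CX\bGamma^{\on{enh}}_\CX(\hat\imath_*\CC)\to\hat\imath_*\CC$, an equivalence since $\CX$ is $1$-affine; symmetrically for the unit on $\QCoh(\CY)\mmod$.

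So the real work is to produce the recollement together with these compatibilities, and here the quasi-compactness of the complement is decisive: it guarantees that $j_*:\QCoh(U)\to\QCoh(\CX)$ is continuous, whence the local-cohomology functor $\Gamma_Z=\on{fib}(\on{id}\to j_*j^*)$ is given by tensoring with an idempotent (non-unital) algebra $R:=\on{fib}(\CO_\CX\to j_*\CO_U)$ in the symmetric monoidal category $\QCoh(\CX)$. I would then identify $\QCoh(\CY)$ with the category of modules over $R$ in $\QCoh(\CX)$ --- the torsion--complete (Greenlees--May) equivalence, valid under the present finiteness hypotheses --- so that the localization $\QCoh(\CX)\to\QCoh(\CY)$ is \emph{smashing}. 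For a smashing localization the induced functor $\QCoh(\CY)\mmod\to\QCoh(\CX)\mmod$ is fully faithful with essential image the $R$-local module categories, i.e. those $\bC$ for which $R\underset{\QCoh(\CX)}\otimes\bC\to\bC$ is an equivalence; this settles full faithfulness of $\hat\imath_*$ on the module side. Because $\bGamma^{\on{enh}}_\CX$ and $\bLoc_\CX$ are $\QCoh(\CX)$-linear, they commute with $R\underset{\QCoh(\CX)}\otimes(-)$ and therefore carry the $R$-local subcategory on one side into the $R$-local subcategory on the other; as $\CX$ is $1$-affine this restricted pair is an equivalence, and one checks it coincides with $(\bLoc_\CY,\bGamma^{\on{enh}}_\CY)$.

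The corresponding statement on the $\on{ShvCat}$ side is where I expect the main difficulty. One must show that $\hat\imath_*:\on{ShvCat}(\CY)\to\on{ShvCat}(\CX)$ is fully faithful with the same ``$Z$-local'' essential image, and that this matches the module-category picture above. The natural route is to present $\CY$ as the colimit $\underset{n}{\on{colim}}\,Z_n$ of its infinitesimal neighborhoods, apply the lemma that $\on{ShvCat}$ turns colimits in $\on{PreStk}$ into limits, and compare with the \v{C}ech/descent computation of $\on{ShvCat}(\CX)$ furnished by \thmref{t:descent 2} and \corref{c:shv via Cech}.

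The crux is precisely the compatibility $\hat\imath_*\circ\bLoc_\CY\simeq\bLoc_\CX\circ\hat\imath_*$: it asks that the colimit/tensor operation defining $\bLoc$ commute with the limit (over $n$, and over $\affdgSch_{/\CX}$) computing the pushforward --- a convergence problem of exactly the type flagged in the introduction. When $\QCoh(\CX)$ is rigid this is clean, since by \corref{c:Loc commutes with limits rigid} the functor $\bLoc_\CX$ already commutes with all limits and the pushforward is a limit; in general one leans on the idempotence of $R$ to reduce the interchange to the continuity of $j_*$ secured by quasi-compactness of the complement. Verifying this interchange --- equivalently, that the $Z$-local subcategories of $\on{ShvCat}(\CX)$ and of $\QCoh(\CX)\mmod$ correspond under the $1$-affine equivalence for $\CX$ --- is the heart of the proof, and everything else is formal manipulation of the adjunctions $(\hat\imath^*,\hat\imath_*)$ together with the compatibilities of \secref{s:functors}.
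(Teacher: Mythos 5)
Your overall architecture is the same as the paper's: embed $\QCoh(\CY^\wedge_{\CY'})\mmod$ and $\on{ShvCat}(\CY^\wedge_{\CY'})$ into $\QCoh(\CY)\mmod$ and $\on{ShvCat}(\CY)$ via the pushforwards $\Rres_{\wh{i}}$ and $\coIind_{\wh{i}}$ (your $\hat\imath_*$, with your $\CX$ being the paper's $\CY$), identify the essential images as the ``torsion'' objects (module categories on which $\on{Im}(\jmath_*)$ acts by zero, resp.\ sheaves of categories vanishing on the open complement), and transport 1-affineness through the resulting commutative squares; this is precisely the content of \propref{p:X vs Y mod}, \propref{p:image formal} and \secref{sss:form loc}. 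Two of your steps are more complicated than necessary: full faithfulness of $\coIind_{\wh{i}}$ is immediate from $\CY^\wedge_{\CY'}\underset{\CY}\times \CY^\wedge_{\CY'}\simeq \CY^\wedge_{\CY'}$, with no need for infinitesimal neighborhoods or descent, and the compatibility of pushforward with $\bGamma^{\on{enh}}$ is formal, being the adjoint of the tautological commutation \eqref{e:commute 1}.

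The genuine gap is at the step you yourself single out as the crux, and your proposed fix does not close it. The commutation $\coIind_{\wh{i}}\circ \bLoc_{\CY^\wedge_{\CY'}}\simeq \bLoc_{\CY}\circ \Rres_{\wh{i}}$ comes down (via \propref{p:almost commute 1}(a), or by direct inspection) to the \emph{affine} case of the theorem: for $S\in \affdgSch$ with closed $S'\subset S$ and $\bC\in \QCoh(S)\mmod$ on which the action factors through $\QCoh(S^\wedge_{S'})$, one must show that
$$\bC\longrightarrow \bGamma(S^\wedge_{S'},\bLoc_S(\bC))\simeq \underset{T\in \affdgSch_{/S^\wedge_{S'}}}{\underset{\longleftarrow}{lim}}\, \left(\QCoh(T)\underset{\QCoh(S)}\otimes \bC\right)$$
is an equivalence. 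The smashing structure (continuity of $\jmath_*$, idempotence of $R=\on{Cone}(\CO_S\to \jmath_*\CO_{S_0})[-1]$) gives the $\QCoh$-level localization sequence and lets you replace $\QCoh(S)$ by $\QCoh(S^\wedge_{S'})$ under the tensor, but it says nothing about commuting the displayed \emph{limit} with $-\underset{\QCoh(S)}\otimes \bC$: the index category has no final object, $\bC$ is an arbitrary module category, and $\QCoh(S^\wedge_{S'})$ is not rigid in general (its unit, the local cohomology complex of $\CO_S$, need not be compact), so neither your appeal to ``continuity of $j_*$'' nor a rigidity argument applies; rigidity of $\QCoh$ of the ambient prestack is likewise unavailable, since a general 1-affine prestack need not have rigid $\QCoh$. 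The paper's mechanism is of a different nature: by \cite[Proposition 6.7.4]{IndSch} one writes $S^\wedge_{S'}\simeq \underset{n}{\underset{\longrightarrow}{colim}}\, S_n$ with closed subschemes $S_n$ whose transition pullbacks $\iota_{n_1,n_2}^*$ admit \emph{left} adjoints, so that by \cite[Lemma 1.3.3]{DGCat} the limit above can be rewritten as a colimit, which does commute with tensoring by $\bC$; one then concludes by \propref{p:compl and supp} and \lemref{l:tensor up local}. Without this ingredient (or a substitute for it) your argument does not go through, and your logical order risks circularity: the compatibility you propose to ``grant'' at the outset is exactly the statement that requires the affine case to be established first.
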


In particular, combining with \thmref{t:alg space}, we obtain:

\begin{cor}
Let $\CY$ be the formal completion of a quasi-compact quasi-separated algebraic space, 
along a closed subset whose complement is quasi-compact. Then $\CY$ is $1$-affine. 
\end{cor}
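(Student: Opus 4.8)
The plan is to obtain this as an immediate consequence of the two results that precede it, with essentially no new input. Write $X$ for the quasi-compact quasi-separated algebraic space of which $\CY$ is the formal completion, and let $Z\subseteq X$ be the closed subset along which we complete, so that by hypothesis the complementary open $U=X\setminus Z$ is quasi-compact.

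The first step is to apply \thmref{t:alg space}: since $X$ is a quasi-compact quasi-separated algebraic space, it is $1$-affine. This is exactly the kind of base prestack to which the formal-completion theorem applies. The second step is to feed this into \thmref{t:main formal} with the $1$-affine prestack taken to be $X$ and the closed subfunctor taken to be $Z$. The theorem concludes that the formal completion $\CY$ is $1$-affine, which is what we want.

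The only point that warrants a moment's attention, rather than a genuine obstacle, is matching the hypotheses of the two statements: \thmref{t:main formal} asks for a closed subfunctor whose complement has \emph{quasi-compact embedding}, whereas the corollary phrases the condition as the complement being \emph{quasi-compact}. These coincide here because $X$ is itself quasi-compact: for an open embedding $U\hookrightarrow X$ into a quasi-compact base, quasi-compactness of the morphism is equivalent to quasi-compactness of $U$ (as $U=U\times_X X$ is the pullback of the quasi-compact $X$ along a quasi-compact map, and conversely). Thus the hypothesis of \thmref{t:main formal} is met, and the corollary follows formally. All of the actual mathematical content is carried by \thmref{t:alg space} and \thmref{t:main formal}; this statement merely records their combination.
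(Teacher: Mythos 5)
Your proof is correct and is exactly the paper's argument: the corollary is stated there as the immediate combination of \thmref{t:alg space} (a quasi-compact quasi-separated algebraic space is 1-affine) and \thmref{t:main formal} applied to that space. One small correction to your hypothesis-matching remark: the direction you actually need (quasi-compactness of the open complement as a space implies quasi-compactness of its embedding as a morphism) relies on the \emph{quasi-separatedness} of the ambient space, not its quasi-compactness --- quasi-compactness of the ambient space gives only the converse implication.
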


\ssec{(DG) ind-schemes}

We refer the reader to \cite{IndSch} for our conventions regarding DG indschemes. In particular, we will need the notions
of a \emph{weakly $\aleph_0$} DG indscheme (see \cite[1.4.11]{IndSch}) and of \emph{formally smooth}
DG indscheme (see \cite[8.1.3]{IndSch}).

\sssec{}

In \secref{ss:indsch} we will prove:

\begin{thm}  \label{t:indsch}
Let $\CY$ be a weakly $\aleph_0$ formally smooth 
DG indscheme localy almost of finite type. Then the functor $\bLoc_\CY$ is fully faithful.
\end{thm}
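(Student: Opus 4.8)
The plan is to reduce to the unit map and then to a concrete statement about tensoring up. Fully faithfulness of $\bLoc_\CY$ is equivalent to the unit map $\bC\to \bGamma^{\on{enh}}(\CY,\bLoc_\CY(\bC))$ being an equivalence for every $\bC\in \QCoh(\CY)\mmod$ (this is Question~\ref{q:main}(4)). Using that $\CY$ is weakly $\aleph_0$, I would fix a presentation $\CY\simeq \underset{n}{\underset{\longrightarrow}{\lim}}\,Y_n$ as a countable colimit of closed embeddings $\iota_n\colon Y_n\hookrightarrow Y_{n+1}$ of quasi-compact quasi-separated schemes almost of finite type. Each $Y_n$ is 1-affine by \thmref{t:alg space}, so $\on{ShvCat}(Y_n)\simeq \QCoh(Y_n)\mmod$, and since $\on{ShvCat}(-)$ sends colimits of prestacks to limits of categories (the first Lemma of \secref{ss:defn shv-cat}), the target of the unit map is computed as
$$\bGamma^{\on{enh}}(\CY,\bLoc_\CY(\bC))\simeq \underset{n}{\underset{\longleftarrow}{\lim}}\,\left(\QCoh(Y_n)\underset{\QCoh(\CY)}\otimes \bC\right),$$
the limit being taken along the $*$-pullback functors $\iota_n^*$. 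Thus the theorem amounts to showing that $-\underset{\QCoh(\CY)}\otimes \bC$ commutes with the tautological limit $\QCoh(\CY)\simeq \underset{n}{\underset{\longleftarrow}{\lim}}\,\QCoh(Y_n)$.

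For $\bC$ dualizable over $\QCoh(\CY)$ this holds by the lemma preceding \defnref{} (tensoring by a dualizable module commutes with all limits), so the entire content lies in the non-dualizable case. Here my plan is to exploit the complementary presentation $\QCoh(\CY)\simeq \underset{n}{\underset{\longrightarrow}{\lim}}\,\QCoh(Y_n)$ along the \emph{pushforward} functors $\iota_{n,*}$, a standard property of indschemes locally almost of finite type (cf.\ \cite{IndSch}). Because $-\underset{\QCoh(\CY)}\otimes \bC$ is continuous, it commutes with this colimit, yielding $\bC\simeq \underset{n}{\underset{\longrightarrow}{\lim}}\,\bigl(\QCoh(Y_n)\underset{\QCoh(\CY)}\otimes \bC\bigr)$ along $\iota_{n,*}$. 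The two computations would then match once we compare the pushforward-colimit with the pullback-limit.

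The comparison is where \cite[Corollary 5.5.3.3]{Lu1} enters: the colimit along $\iota_{n,*}\otimes \on{id}$ equals the limit along their right adjoints, which are $\iota_n^!\otimes \on{id}$. Since the target limit is taken along $\iota_n^*\otimes\on{id}$, everything reduces to reconciling $\iota_n^!$ with $\iota_n^*$. This is exactly where formal smoothness is used: it forces each $\iota_n$ to be a regular (lci) closed embedding whose normal bundle is a finite-rank vector bundle, so that $\iota_n^!\simeq \iota_n^*(-)\otimes \det(N_{\iota_n})[\on{codim}]$ differs from $\iota_n^*$ only by tensoring with an invertible object and a shift. I would package these twists into a compatible system identifying the tower $(\QCoh(Y_n),\iota_n^!)$ with $(\QCoh(Y_n),\iota_n^*)$, so that the two limits agree and both recover $\bC$.

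The main obstacle I expect is not the levelwise identity $\iota_n^!\simeq \iota_n^*\otimes(\text{line bundle})$, but promoting it to a \emph{coherent} equivalence of the full countable towers after tensoring with an arbitrary, possibly non-dualizable, $\bC$ — that is, genuinely commuting the tensor product with the countable limit. This is a convergence problem of precisely the sort flagged in the introduction: the weakly $\aleph_0$ hypothesis is what lets me replace an arbitrary filtered limit by a tower, where the relevant $\lim^1$-type obstructions can be controlled, while formal smoothness is what guarantees that the discrepancy between $*$- and $!$-restriction is invertible at every stage. Assembling the compatibilities of the relative dualizing data across the whole tower, and checking that the resulting equivalence of diagrams survives $-\underset{\QCoh(\CY)}\otimes \bC$, is the technical heart of the argument.
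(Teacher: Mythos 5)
Your opening moves match the paper's: reduce to the unit map, present $\CY\simeq \underset{n}{\underset{\longrightarrow}{colim}}\, Y_n$ with $Y_n$ 1-affine by \thmref{t:alg space}, and identify $\bGamma^{\on{enh}}(\CY,\bLoc_\CY(\bC))$ with $\underset{n}{\underset{\longleftarrow}{lim}}\,\bigl(\QCoh(Y_n)\underset{\QCoh(\CY)}\otimes \bC\bigr)$ along $*$-pullbacks, so that everything hinges on commuting $-\underset{\QCoh(\CY)}\otimes \bC$ with this limit. But the two claims that carry your argument from there are false, and they fail exactly on the examples the theorem is designed for. Formal smoothness is a property of the indscheme as a whole (a lifting property against nilpotent extensions); it does \emph{not} make the transition maps of a presentation regular embeddings. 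Take $\CY=\on{Spf}(k[[t]])=\underset{n}{\underset{\longrightarrow}{colim}}\,\Spec(k[t]/t^n)$: this is weakly $\aleph_0$, formally smooth, locally almost of finite type (it is precisely the case that \thmref{t:main formal groups} needs \thmref{t:indsch} for), yet in \emph{any} presentation the transition maps $\Spec(k[t]/t^m)\hookrightarrow \Spec(k[t]/t^{m'})$ have ideal generated by the zero-divisor $t^m$, hence are never regular. Correspondingly, the right adjoint of $\iota_{n,*}$ (your $\iota_n^!$) is $\on{RHom}_{k[t]/t^{m'}}(k[t]/t^m,-)$, which has \emph{unbounded} cohomological amplitude (the relevant Ext groups are nonzero in all degrees), so it is not $\iota_n^*$ twisted by a shifted line bundle, and there is no system of determinant twists to assemble. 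For the same reason your "standard property" $\QCoh(\CY)\simeq \underset{n}{\underset{\longrightarrow}{colim}}\,\QCoh(Y_n)$ along pushforwards is not true in general: since $\iota_{n,*}$ is the \emph{right} adjoint of $\iota_n^*$, that colimit is (by \cite[Corollary 5.5.3.3]{Lu1}) the limit along the right adjoints of the $\iota_{n,*}$, not the limit along the $\iota_n^*$ that defines $\QCoh(\CY)$; the two agree only when the discrepancy between the two pullbacks is invertible, which is exactly what fails above. (A further, smaller gloss: even when adjoints exist, the right adjoint of $\iota_{n,*}\otimes\on{Id}_\bC$ need not be $\iota_n^!\otimes\on{Id}_\bC$ unless the adjoint is continuous and compatible with the module structure.)

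The paper's proof (\propref{p:main prop} plus the argument in \secref{ss:indsch}) repairs precisely this point by passing to $\IndCoh$. The presentation-by-pushforwards you want is a genuine feature of $\IndCoh$, not of $\QCoh$: for closed (hence ind-proper) embeddings, $\iota^{\IndCoh}_{n,*}$ is \emph{left} adjoint to $\iota_n^!$ and is compatible with the $\QCoh(\CY)$-action, so the limit $\underset{n}{\underset{\longleftarrow}{lim}}\,\bigl(\IndCoh(Y_n)\underset{\QCoh(\CY)}\otimes \bC\bigr)$ along $\iota_n^!\otimes\on{Id}$ is automatically a colimit and commutes with tensoring, yielding $\IndCoh(\CY)\underset{\QCoh(\CY)}\otimes \bC$. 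Formal smoothness enters once and globally, via \cite[Theorem 10.1.1]{IndSch}: $\Upsilon_\CY:\QCoh(\CY)\to \IndCoh(\CY)$ (tensoring by $\omega_\CY$) is an equivalence — this, rather than any levelwise regularity, is the correct incarnation of your "twisting" idea, and it can only be formulated after replacing $\QCoh(Y_n)$ by $\IndCoh(Y_n)$. The comparison between the $\QCoh$-limit and the $\IndCoh$-limit is the natural transformation $\Upsilon$, which termwise is fully faithful with a continuous module-compatible right adjoint (\cite[Corollary 9.6.3]{IndCoh}, applicable since by \cite[Theorem 9.1.6]{IndSch} the $Y_n$ may be taken classical of finite type); this makes the comparison functor fully faithful, exhibits $\bC$ as a retract of $\bGamma(\CY,\bLoc_\CY(\bC))$, and forces the unit map to be an equivalence.
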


From here in \secref{sss:loop group} we will deduce:

\begin{thm} \label{t:loop group}
Let $G$ be a classical affine algebraic group of finite type. Then for the DG indscheme $G\ppart$,
the functor $\bLoc_{G\ppart}$ is fully faithful.
\end{thm}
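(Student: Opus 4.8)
The strategy is to verify that the loop group $G\ppart$ satisfies the hypotheses of \thmref{t:indsch} and then to invoke that theorem directly. Recall first that $G\ppart$ is a group DG indscheme: choosing a closed embedding $G\hookrightarrow GL_N$ of algebraic groups, we realize $G\ppart$ as a closed sub-group-indscheme of $GL_N\ppart$, the latter being an indscheme via its action on the lattice indscheme attached to $k\ppart^N$. I would organize the verification into the three properties required by \thmref{t:indsch}: being weakly $\aleph_0$, being locally almost of finite type, and being formally smooth.

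For the weakly $\aleph_0$ property I would use the pole-order presentation. Writing $\Lambda_0=k\qqart^N$ for the standard lattice, the sub-indschemes $Y_n\subset GL_N\ppart$ cut out by $t^n\Lambda_0\subseteq g\cdot \Lambda_0\subseteq t^{-n}\Lambda_0$ exhaust $GL_N\ppart$ as $n$ ranges over $\BN$, giving a presentation indexed by a countable filtered poset of closed embeddings; intersecting with $G\ppart$ yields the corresponding countable presentation of $G\ppart$, hence the weakly $\aleph_0$ property in the sense of \cite[1.4.11]{IndSch}. The locally-almost-of-finite-type property I would extract from the standard indscheme structure of $G\ppart$ established in \cite{IndSch}, taking care that it is the full loop group, rather than a level quotient, whose finiteness hypotheses are being checked.

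The geometric heart of the argument is formal smoothness, and this is where the assumption that $G$ be of finite type over a field of characteristic $0$ enters. In characteristic $0$ every reduced algebraic group is smooth (Cartier), so $G$ is a smooth affine scheme, hence formally smooth over $k$. To deduce formal smoothness of $G\ppart$ I would apply the deformation-theoretic criterion of \cite[8.1.3]{IndSch}: given a test scheme $S=\Spec(R)$ and a square-zero extension $S\hookrightarrow S'=\Spec(R')$ with kernel a connective $R$-module $I$, the induced map $R'\ppart\to R\ppart$ is again a square-zero extension, now with kernel $I\ppart$. Formal smoothness of $G$ over $k$ then guarantees that every $R\ppart$-point of $G$ lifts to an $R'\ppart$-point, i.e. that $\Maps(S',G\ppart)\to \Maps(S,G\ppart)$ is surjective on $\pi_0$; together with the good behavior of the (pro-)cotangent complex of $G\ppart$, this gives formal smoothness of the indscheme in the required sense.

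With the three hypotheses in hand, \thmref{t:indsch} applies and yields that $\bLoc_{G\ppart}$ is fully faithful. I expect the main obstacle to be the formal-smoothness step: although the lifting assertion reduces cleanly to smoothness of $G$, checking it at the level of the indscheme requires controlling the deformation theory of $G\ppart$ uniformly across the pole-order presentation, and verifying that the passage from $R$ to $R\ppart$ interacts correctly both with square-zero extensions and with the colimit defining the indscheme. The accompanying bookkeeping — ensuring that the finiteness hypotheses of \thmref{t:indsch} are genuinely met for the full loop group rather than only for its finite-type or level-quotient approximations — is the other delicate point.
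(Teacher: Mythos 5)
Your proposal founders on the ``locally almost of finite type'' hypothesis, and this is not a bookkeeping issue but a genuine obstruction: $G\ppart$ is \emph{not} locally almost of finite type. In your own pole-order presentation $G\ppart\simeq \underset{n}{\underset{\longrightarrow}{colim}}\, Y_n$, each $Y_n$ is an affine scheme of \emph{infinite} type (already the arc group $G\qqart=\underset{n}{\underset{\longleftarrow}{lim}}\, G(k[t]/t^n)$ is a pro-algebraic group of infinite type, and every $Y_n$ contains a translate of it), and no presentation by finite-type schemes exists. So the hypotheses of \thmref{t:indsch} simply fail for $G\ppart$, and the failure propagates into the proof mechanism as well: \thmref{t:indsch} rests on \propref{p:main prop}, which works with $\IndCoh$, and $\IndCoh$ is only defined on $\on{PreStk}_{\on{laft}}$. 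The sentence in which you propose to ``extract'' the laft property from \cite{IndSch} for the full loop group is the precise point at which the argument breaks; the property you need is false, not merely delicate. (Your verifications of the weakly $\aleph_0$ and formal smoothness conditions are fine, but they cannot save the strategy.)

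The statement is nevertheless true, and the intended route is to apply \thmref{t:indsch} not to $G\ppart$ but to the affine Grassmannian $\Gr_G=G\ppart/G\qqart$, which \emph{is} a weakly $\aleph_0$, formally smooth indscheme locally (almost) of finite type, so that $\bLoc_{\Gr_G}$ is fully faithful. One then transfers the conclusion along the canonical projection $\pi:G\ppart\to \Gr_G$, which is an affine morphism (a torsor under the affine group scheme $G\qqart$), using \propref{p:almost commute 1}(a,iii): the base change of $\pi$ by any affine DG scheme is an affine DG scheme, hence tautologically 1-affine, and the required identification
$$\QCoh(S)\underset{\QCoh(\Gr_G)}\otimes \QCoh(G\ppart)\simeq \QCoh(S\underset{\Gr_G}\times G\ppart)$$
holds because $\pi$ is affine, by \propref{p:quasi-aff}. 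With these two conditions verified, fully faithfulness of $\bLoc_{\Gr_G}$ implies fully faithfulness of $\bLoc_{G\ppart}$. The moral is that the finiteness hypotheses in \thmref{t:indsch} are imposed ``along the base,'' and the loop group only satisfies them after dividing out the infinite-type directions, which are then handled by the affine-morphism descent of \propref{p:almost commute 1}.
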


\sssec{}

We should note that even the ``nicest" DG indschemes are typically \emph{not} 1-affine. As a manifestation of this,
in \secref{ss:A-infty} we will prove:

\begin{thm} \label{t:A-infty}
Let $\CY=\BA^\infty:=\underset{n}{\underset{\longrightarrow}{colim}}\, \BA^n$. Then $\CY$ is \emph{not} $1$-affine.
\end{thm}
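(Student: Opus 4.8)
The plan is to exploit the presentation $\BA^\infty=\underset{n}{\underset{\longrightarrow}{colim}}\,\BA^n$ as a filtered colimit of the closed embeddings $\BA^n\hookrightarrow\BA^{n+1}$, and to produce a single sheaf of categories on which the counit of the adjunction $(\bLoc_{\BA^\infty},\bGamma^{\on{enh}}_{\BA^\infty})$ is not an equivalence. First I note that $\BA^\infty$ satisfies the hypotheses of \thmref{t:indsch} (it is a formally smooth, weakly $\aleph_0$, ind-affine DG indscheme locally almost of finite type), so $\bLoc_{\BA^\infty}$ is fully faithful; equivalently the unit of the adjunction is always an isomorphism. Consequently $\BA^\infty$ is $1$-affine if and only if $\bGamma^{\on{enh}}_{\BA^\infty}$ is fully faithful, i.e. the counit $\bLoc_{\BA^\infty}(\bGamma^{\on{enh}}(\BA^\infty,\CC))\to\CC$ is an equivalence for every $\CC$. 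It therefore suffices to exhibit one $\CC$ for which it fails.

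Next I would construct the witness. Since $\on{ShvCat}(-)$ takes colimits to limits and each $\BA^n$ is affine, hence $1$-affine (\thmref{t:alg space}), we have $\on{ShvCat}(\BA^\infty)\simeq \underset{n}{\underset{\longleftarrow}{lim}}\,\QCoh(\BA^n)\mmod$, with transition functors $\QCoh(\BA^n)\otimes_{\QCoh(\BA^{n+1})}-$. Let $i_0\colon\on{pt}\to\BA^\infty$ be the origin and set $P_n:=\BA^n\times_{\BA^\infty}\on{pt}$. I define $\CC$ by the compatible family $\bGamma(\BA^n,\CC):=\QCoh(P_n)$; compatibility amounts to the base-change equivalence $\QCoh(\BA^n)\otimes_{\QCoh(\BA^{n+1})}\QCoh(P_{n+1})\simeq \QCoh(P_n)$, which holds over the affine scheme $\BA^{n+1}$ because $\QCoh(\BA^n)$ is dualizable over $\QCoh(\BA^{n+1})$ (\lemref{l:dualizable in rigid}) and $P_n\simeq \BA^n\times_{\BA^{n+1}}P_{n+1}$. (Equivalently, $\CC$ is the direct image $(i_0)_\bullet\QCoh_{/\on{pt}}$.) A direct computation of the Koszul-type derived fibre products identifies $P_n$ with $\underset{m\ge n}{\underset{\longrightarrow}{colim}}\,\Spec\Lambda^\bullet(k^{m-n}[1])$, so the categories $\QCoh(P_n)$ are genuinely nontrivial (not equivalent to $\Vect$).

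Then I would compute the global sections. Because colimits are universal in $\on{PreStk}$, one has $\underset{n}{\underset{\longrightarrow}{colim}}\,P_n\simeq \big(\underset{n}{\underset{\longrightarrow}{colim}}\,\BA^n\big)\times_{\BA^\infty}\on{pt}\simeq \on{pt}$, and since $\QCoh$ sends colimits of prestacks to limits, $\bGamma(\BA^\infty,\CC)\simeq \underset{n}{\underset{\longleftarrow}{lim}}\,\QCoh(P_n)\simeq \QCoh(\on{pt})\simeq \Vect$, regarded as a $\QCoh(\BA^\infty)$-module via the fibre functor at the origin. Thus the counit, evaluated on $\BA^n$, becomes the Künneth comparison map $\QCoh(\BA^n)\otimes_{\QCoh(\BA^\infty)}\Vect\to\QCoh(P_n)$, and the whole problem reduces to showing this is not an equivalence for some $n$. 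Taking $n=0$, and using finite Künneth over each affine $\BA^m$ to identify $\QCoh(P_0)\simeq \underset{m}{\underset{\longleftarrow}{lim}}\,\big(\Vect\otimes_{\QCoh(\BA^m)}\Vect\big)\simeq \underset{m}{\underset{\longleftarrow}{lim}}\,\Lambda^\bullet(k^m[1])\mod$, the counit is exactly the canonical map
$$\Vect\otimes_{\underset{m}{\underset{\longleftarrow}{lim}}\,\QCoh(\BA^m)}\Vect\ \longrightarrow\ \underset{m}{\underset{\longleftarrow}{lim}}\,\big(\Vect\otimes_{\QCoh(\BA^m)}\Vect\big)$$
from a relative tensor product over a limit to the limit of the relative tensor products.

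The main obstacle is precisely the verification that this last map is not an equivalence. Its source is a geometric realization of a two-sided bar construction (a colimit), while $\QCoh(\BA^\infty)=\underset{m}{\underset{\longleftarrow}{lim}}\,\QCoh(\BA^m)$ is a limit, so the discrepancy is exactly the failure of the colimit defining $\otimes$ to commute with the limit defining $\QCoh(\BA^\infty)$ — the kind of convergence problem flagged in the introduction. I would settle it by comparing a computable invariant on the two sides, most naturally the endomorphism algebra of the unit object: the target (limit) side produces the full tower of exterior algebras $\Lambda^\bullet(k^m[1])$, which is unbounded below, whereas the tensor-product side yields a strictly smaller algebra. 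Any such discrepancy shows the counit is not an isomorphism, hence $\bGamma^{\on{enh}}_{\BA^\infty}$ is not fully faithful and $\BA^\infty$ is not $1$-affine.
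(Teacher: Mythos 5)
Your construction and reduction coincide exactly with the paper's own proof: your $\CC$ is the paper's witness $\coIind_\iota(\Vect)$ (direct image of the trivial sheaf of categories along the origin $\iota:\on{pt}\to \BA^\infty$), and both arguments reduce to showing that the Künneth functor
$$\Vect\underset{\QCoh(\BA^\infty)}\otimes \Vect\to \QCoh(\on{pt}\underset{\BA^\infty}\times \on{pt})$$
is not an equivalence. The gap is in your final step, and it is fatal as stated: the invariant you propose — the endomorphism algebra of the unit — does \emph{not} distinguish the two sides. On the target side $\on{End}(\CO_{\on{pt}\times_{\BA^\infty}\on{pt}})\simeq p_*p^*(k)\simeq \underset{m}{\underset{\longleftarrow}{lim}}\,\on{Sym}(V_m^*[1])$, the completed exterior tower, as you say. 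But on the source side, writing $\sS:\Vect\to \Vect\otimes_{\QCoh(\BA^\infty)}\Vect$ for the tautological functor and $\sT$ for its right adjoint, one has $\on{End}(\sS(k))\simeq \sT\sS(k)=\sM(k)$, the value of the monad $\sM=\sT\circ\sS$ on $k$; and this monad is isomorphic, as a plain endofunctor of $\Vect$, to $p_*\circ p^*$. Indeed, identifying $\QCoh(\BA^\infty)\simeq \IndCoh(\BA^\infty)$ via $\Psi_{\BA^\infty}$ (formal smoothness), the action functor becomes $\iota^!$, which admits the left adjoint $\iota^{\IndCoh}_*$, so \corref{c:tensor prod Beck-Chevalley} identifies $\sM$ with $\iota^!\circ(\iota^!)^R$; under $\Psi_{\BA^\infty}$ this is $\iota^*\circ\iota_*$, which is $p_*\circ p^*$ by base change. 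Moreover, the Künneth functor sends unit to unit and induces precisely this isomorphism on unit endomorphisms. So both sides carry the \emph{same} unbounded-below completed algebra; there is no ``strictly smaller algebra'' on the tensor-product side, and no discrepancy of mapping complexes between the units (or between any pair of objects induced from the monad) can exist.

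What is actually going on — and what your argument is missing — is that the difference between the two categories is invisible to the monad: the source \emph{is} the category $\sM\mod(\Vect)$ (this is \lemref{l:inf monadic}, proved by exactly the Beck--Chevalley argument sketched above), while the target is \emph{not} the category of modules over the isomorphic monad $\sH=p_*\circ p^*$ (\lemref{l:inf non-monadic}). A failure of monadicity cannot be detected by endomorphisms of the unit; it is detected by the failure of the comparison functor $(p_*)^{\on{enh}}$ to preserve certain \emph{infinite direct sums}, since $\sH$ is a discontinuous (inverse-limit) monad. The paper constructs a specific object $N\in \QCoh(\on{pt}\times_{\BA^\infty}\on{pt})$ (an inverse limit of symmetric algebras with trivial module structure), notes $(p_*)^{\on{enh}}(N)\simeq k$, and shows that the map $\underset{i}\bigoplus\,(p_*)^{\on{enh}}(N)[-2i]\to (p_*)^{\on{enh}}\bigl(\underset{i}\bigoplus\, N[-2i]\bigr)$ fails to be an isomorphism because the right-hand side acquires a nonzero first cohomology from a nonvanishing $R^1\on{lim}$. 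Locating the obstruction in the interaction of $p_*$ with infinite coproducts — rather than in any algebra of endomorphisms — and producing such a test object is the essential idea your proposal lacks; without it (or an equivalent substitute) the proof cannot be completed along the lines you describe.
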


\sssec{}

We do not know whether the functor $\bLoc_\CY$ is fully faithful for more general DG indschemes. For example,
we do not know this in the example of
$$\CY:=\on{pt}\underset{\BA^\infty}\times \on{pt}.$$

\ssec{Classifying prestacks}

\sssec{}

Let $\CG$ be a group-object in $\on{PreStk}$. We let $B^\bullet\CG$ denote the standard simplicial object
of $\on{PreStk}$ associated with $\CG$, i.e., the usual simplicial model for the classifying space.

\medskip

We let $B\CG$ denote the geometric realization,
$$B\CG:=|B^\bullet\CG|\in \on{PreStk}.$$

We remark that if $G$ is an algebraic group, then the algebraic stack $\on{pt}/G$, mentioned earlier, is
by definition the fppf sheafification of $BG$.

\medskip

If $\CG$ is such that $\bLoc_{\CG}$ is fully faithful, the category $\on{ShvCat}(B\CG)$ can be described more explicitly, see \secref{ss:groups}.

\begin{rem} \label{r:BG}
Note that \thmref{t:BG} (combined with \corref{c:shv via Cech}(b)) implies that if $G$ is a classical an affine algebraic
group of finite type, then $BG$ is 1-affine. Note also that according to \thmref{t:thick}, if $G$ is an affine
group-scheme of infinite type, then $BG$ is typically not 1-affine. Below we will discuss several
more cases when $B\CG$ is (or is not) 1-affine.
\end{rem} 

\sssec{}

In \secref{ss:formal groups} we will prove:

\begin{thm} \label{t:main formal groups}
Let $\CG$ be a group-object in $\on{PreStk}$, which as a prestack is a weakly $\aleph_0$ formally smooth DG indscheme 
locally almost of finite type with $({}^{cl}\CG)_{red}=\on{pt}$. In this case: 

\smallskip

\noindent{\em(a)} The functor $\bLoc_{B\CG}$ is fully faithful.

\smallskip

\noindent{\em(b)} The prestack $B\CG$ is 1-affine if and only if the tangent space of $\CG$ at the origin is 
finite-dimensional.
\end{thm}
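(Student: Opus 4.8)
The plan is to reduce everything to the bar construction $\CG^{\times\bullet}$ and to the single structural input we are handed, namely \thmref{t:indsch}. Since $\on{ShvCat}(-)$ sends colimits of prestacks to limits, and $\on{pt}\to B\CG$ is (after fppf sheafification, to which $\on{ShvCat}$ is insensitive) a cover with \v{C}ech nerve $\CG^{\times\bullet}$, \corref{c:shv via Cech} gives
\[
\on{ShvCat}(B\CG)\simeq \on{Tot}\bigl(\on{ShvCat}(\CG^{\times\bullet})\bigr),\qquad \QCoh(B\CG)\simeq \on{Tot}\bigl(\QCoh(\CG^{\times\bullet})\bigr).
\]
First I would record that each $\CG^{\times n}$ is again a weakly $\aleph_0$, formally smooth DG indscheme locally almost of finite type, so that $\bLoc_{\CG^{\times n}}$ is fully faithful by \thmref{t:indsch}; and I would invoke the compatibility of $\bLoc$ and $\bGamma^{\on{enh}}$ with pullback from \secref{s:functors}, so that restricting $\bLoc_{B\CG}(\bC)$ to $\CG^{\times n}$ yields $\bLoc_{\CG^{\times n}}(\bC_n)$ with $\bC_n:=\QCoh(\CG^{\times n})\underset{\QCoh(B\CG)}\otimes\bC$.

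For part (a), evaluating the unit map through the totalization and applying fully faithfulness at each cosimplicial level reduces the claim to the identity
\[
\on{Tot}_n\Bigl(\QCoh(\CG^{\times n})\underset{\QCoh(B\CG)}\otimes\bC\Bigr)\;\simeq\;\Bigl(\on{Tot}_n\,\QCoh(\CG^{\times n})\Bigr)\underset{\QCoh(B\CG)}\otimes\bC\;=\;\bC,
\]
i.e.\ to the assertion that $-\underset{\QCoh(B\CG)}\otimes\bC$ commutes with the totalization of the Amitsur complex of $\pi\colon\on{pt}\to B\CG$. This is exactly the ``colimits commute with limits'' convergence problem flagged in the Methods. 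I would resolve it by verifying the comonadic Beck--Chevalley condition for the cosimplicial category $\QCoh(\CG^{\times\bullet})$ (see \secref{s:Beck-Chevalley}): once it holds, the totalization is computed by an explicit comonad built from the continuous structure functors of the bar construction, and tensoring with $\bC$ intertwines with this comonad term by term. The hypotheses enter precisely here: formal smoothness makes the structure maps of the bar complex well-behaved (so the relevant pushforwards are continuous and satisfy base change), while weak $\aleph_0$-ness bounds the complex so that the countable totalization converges. This verification is the main obstacle of part (a).

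For part (b), once $\bLoc_{B\CG}$ is fully faithful, $B\CG$ is $1$-affine if and only if $\bLoc_{B\CG}$ is essentially surjective, equivalently the counit $\bLoc_{B\CG}\bigl(\bGamma^{\on{enh}}(B\CG,\CC)\bigr)\to\CC$ is an equivalence for every $\CC$; since $\bGamma(B\CG,-)$ is the functor of $\CG$-invariants $\Iinv^\CG$, this asks whether invariants followed by localization recovers every category acted on by $\CG$. In the finite-dimensional case $\CG\simeq\on{Spf}(k[t_1,\dots,t_n])$ I would prove this directly: finiteness of $\fg:=T_e\CG$ makes $\Iinv^\CG=\bGamma(B\CG,-)$ continuous, since it is computed by a finite Chevalley--Eilenberg-type limit, so that Barr--Beck--Lurie applies and the counit is an equivalence; where convenient I would organize the argument by induction on $n$, presenting $\CG$ as an iterated extension of formal $\wh{\BG_a}$'s and using \thmref{t:main formal} to feed the one-dimensional case into the higher ones. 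In the infinite-dimensional case I would instead exhibit an explicit $\CC$ for which the counit fails, modeled on the counterexamples of \thmref{t:thick} and \thmref{t:A-infty}: writing $\CG$ as a colimit of its finite-dimensional sub-formal-groups, the invariants functor ceases to commute with this colimit precisely because $\dim\fg=\infty$, and the resulting non-convergence produces a sheaf of categories not in the essential image of $\bLoc_{B\CG}$.

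The hard part throughout is the same convergence phenomenon: in (a) one must show the tensor product commutes with the totalization of the bar complex, and in (b) one must pin down that this convergence --- equivalently, the continuity of $\Iinv^\CG$ --- holds \emph{exactly} when the tangent space is finite-dimensional, which is what separates $1$-affineness from its failure.
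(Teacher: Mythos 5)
Your reduction in part (a) is the correct one and matches the paper: fully faithfulness of $\bLoc_{B\CG}$ comes down to showing that $-\underset{\QCoh(B\CG)}\otimes \bC$ commutes with the totalization of $\QCoh(\CG^{\times\bullet})$ (this is exactly Step 1 of \propref{p:main prop}). But your mechanism for that commutation is not a proof, and the roles you assign to the hypotheses are not the ones they actually play. Weak $\aleph_0$-ness does not ``bound the complex so that the countable totalization converges''; together with formal smoothness and laft-ness, its actual function is to make $\Upsilon_{\CG^{\times n}}:\QCoh(\CG^{\times n})\to \IndCoh(\CG^{\times n})$ an equivalence (\cite[Theorem 10.1.1]{IndSch}). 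The paper's mechanism (\propref{p:main prop}) is to transport the totalization to $\IndCoh$, where the transition functors are $!$-pullbacks along \emph{ind-proper} maps and hence admit left adjoints compatible with the $\QCoh(B\CG)$-action; then \cite[Lemma 1.3.3]{DGCat} rewrites the limit as a colimit, which manifestly commutes with tensoring, and a retract argument plus fully faithfulness of each $\Upsilon_{\CG^{\times n}}\otimes \on{Id}_\bC$ finishes the proof. Your alternative --- verify comonadic Beck--Chevalley for $\QCoh(\CG^{\times\bullet})$ with $*$-pushforwards and then ``intertwine term by term'' --- has two gaps: (i) continuity and base change for $\QCoh$-pushforwards of these ind-schemes are not free (the honest verification factors through the $\Upsilon$-equivalence, i.e.\ through the paper's route); and (ii) even granting Beck--Chevalley for $\QCoh(\CG^{\times\bullet})$, neither the Beck--Chevalley condition nor comonadicity is automatically inherited by the tensored cosimplicial category $\QCoh(\CG^{\times\bullet})\underset{\QCoh(B\CG)}\otimes\bC$, and that inheritance is precisely the convergence problem you set out to solve, not a formality.

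Part (b) contains a genuine conceptual error: continuity of $\Iinv_\CG=\bGamma(B\CG,-)$ is \emph{not} the dividing line between the finite- and infinite-dimensional cases. Under the hypotheses of the theorem, $\QCoh(\CG)_{\on{conv}^L}\simeq \IndCoh(\CG)_{\on{conv}}$ is rigid \emph{regardless} of $\dim\fg$ (ind-properness plus base change, \secref{sss:formal IndCoh}), and rigidity forces $\Iinv_\CG\simeq \ccoinv_\CG$ (\corref{c:inv and coinv}); so invariants is continuous even in the non-1-affine infinite-dimensional case, and your proposed counterexample (failure of continuity of invariants) does not exist. The correct criterion, given rigidity, is \propref{p:1-aff and rigid'}: $B\CG$ is 1-affine if and only if the \emph{discontinuous} right adjoint $p_?$ of $p^!\simeq p^*$ is monadic --- the entire difficulty lives in a non-continuous functor, so ``continuity, hence Barr--Beck--Lurie'' cannot be the argument. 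In the finite-dimensional case, $\CG\simeq V^\wedge_0$ and $\IndCoh(\CG)\simeq \Rep(V^*)$ identifies $p_?$ with $(\on{coind}_{V^*})^R$, whose monadicity is exactly \thmref{t:BG} for the group $V^*$ (proved via t-structure left/right-completeness and \lemref{l:bounded}, not via continuity); your Chevalley--Eilenberg heuristic does not substitute for this input. In the infinite-dimensional case, $\CG\simeq \underset{n}{\underset{\longrightarrow}{colim}}\,(\BA^n)^\wedge_0$ and the failure is that $p_?$ is not \emph{conservative}: it annihilates $(\iota_0)^{\IndCoh}_*(k)$, because $\omega_\CG\simeq \underset{n}{\underset{\longrightarrow}{colim}}\,(\iota_n)^{\IndCoh}_*(\omega_{(\BA^n)^\wedge_0})$ and the maps out of the $n$-th term land in cohomological degrees $\geq n$, so the relevant limit vanishes. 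Your instinct to exploit the colimit presentation of $\CG$ is right, but the quantity that degenerates is conservativity of $p_?$, not continuity of $\Iinv_\CG$.
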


In addition, in \secref{ss:HCh}, we will prove:

\begin{thm} \label{t:HCh}
Let $G$ be a classical affine algebraic group of finite type, and $H\subset G$ a subgroup. Let $\CG$ be
the formal completion of $G$ along $H$. Then $B\CG$ is 1-affine.
\end{thm}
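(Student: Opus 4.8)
The plan is to prove Theorem~\ref{t:HCh} by exhibiting $\CG$, the formal completion of $G$ along the subgroup $H$, as an extension of the finite-type algebraic group $H$ by a formal group of the type handled in \thmref{t:main formal groups}, and then to bootstrap the 1-affineness of $B\CG$ from the two cases we already control. First I would analyze the structure of $\CG$ itself. Since $\CG$ is the formal completion of $G$ along the closed subgroup $H$, there is a natural map $\CG \to H$ (the formal neighborhood projects to its reduced subscheme, and here $({}^{cl}\CG)_{red} = H$ because $H$ is already reduced of finite type), and the fiber over the identity is the formal completion $\CG_e$ of $G$ along the point $e\in H$. This $\CG_e$ is a formal group which, as a formal scheme, is isomorphic to $\on{Spf}(k[\![t_1,\dots,t_n]\!])$ with $n = \dim G$; in particular it is a weakly $\aleph_0$ formally smooth DG indscheme locally almost of finite type with trivial reduced part, so \thmref{t:main formal groups}(a) applies to it, and since its tangent space $\fg$ is finite-dimensional, part (b) tells us that $B\CG_e$ is \emph{1-affine}.

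The key step is then to deduce 1-affineness of $B\CG$ from that of $B\CG_e$ and $BH$ via the extension $1\to \CG_e\to \CG\to H\to 1$. The natural device is the fibration sequence of classifying prestacks
$$
B\CG_e \to B\CG \to BH,
$$
which exhibits $B\CG$ as fppf-locally over $BH$ looking like $B\CG_e$. More precisely, I would use \corref{c:shv via Cech}: pulling back along the fppf cover $\on{pt}\to BH$ (valid since $H$ is of finite type, hence $BH$ is a genuine algebraic stack and this is an fppf atlas), the \v{C}ech nerve of $\on{pt}\to B\CG$ relative to $BH$ presents $\on{ShvCat}(B\CG)$ as a totalization of $\on{ShvCat}$ of the simplicial prestack $\on{pt}\times_{BH} (B\CG)^{\bullet+1}$. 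The fiber in this picture is controlled by $\CG_e$, so one reduces the 1-affineness of $B\CG$ to the combination of the 1-affineness of $B\CG_e$ (just established) with the 1-affineness of $BH$ (from \thmref{t:BG} together with \remref{r:BG}). Concretely, I would verify that the relevant co-unit and unit maps \eqref{e:counit} and \eqref{e:unit} for $B\CG$ are equivalences by checking them after descent along $\on{pt}\to BH$, where they become the corresponding statements for the formal group $\CG_e$ twisted equivariantly by the $H$-action.

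The main obstacle I anticipate is the compatibility of the descent argument with the non-strict, \emph{lax} monoidal nature of the global-sections functor: to run the Barr--Beck--Lurie reduction described in the \textbf{Methods} portion of the introduction, one must control how the functors $\bGamma^{\on{enh}}$ and $\bLoc$ interact with the base change along $\on{pt}\to BH$, and verify that the resulting cosimplicial category satisfies the (co)monadic Beck--Chevalley condition (\secref{s:Beck-Chevalley}) needed to compute the totalization. Here the finite-type hypothesis on $G$ is what guarantees both that $\QCoh(BH)$ is rigid — so that \corref{c:Loc commutes with limits rigid} makes $\bLoc$ commute with the limits appearing in the totalization — and that the $H$-equivariance can be handled by \thmref{t:BG}. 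I would therefore organize the proof so that the only genuinely new analytic input is the 1-affineness of the formal group $B\CG_e$, with everything else assembled formally from \thmref{t:main formal groups}, \thmref{t:BG}, and the descent results of \corref{c:shv via Cech}; the delicate convergence point will be establishing that taking $H$-equivariants commutes with the totalization that computes $\bGamma(B\CG_e,-)$, which is exactly the sort of ``colimits commuting with limits'' verification flagged as the crux of the paper's method.
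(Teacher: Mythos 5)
Your argument breaks down at its very first step: there is no natural homomorphism of group prestacks $\CG\to H$. A formal completion does not project onto its reduced part; what the definition of formal completion gives you canonically is $\CG = G\underset{G_\dr}\times H_\dr$, hence a homomorphism $\CG\to H_\dr$, whose kernel is indeed the formal group $\CG_e=G^\wedge_e$. So the extension you actually have is $1\to \CG_e\to \CG\to H_\dr\to 1$, and the quotient $\CG/\CG_e$ is $H_\dr$, \emph{not} $H$. Moreover, no homomorphism $\CG\to H$ restricting to the identity on $H$ exists in general: it would induce a Lie algebra map $dr:\fg\to \fh$ with $dr|_{\fh}=\on{id}$; taking $G=SL_2$ and $H$ its diagonal torus (or its unipotent subgroup), $\fh$ is abelian, so $dr$ kills $[\fg,\fg]=\fg\supseteq \fh$, a contradiction. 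Thus the fibration sequence $B\CG_e\to B\CG\to BH$ on which the whole proposal rests does not exist, even though your subsidiary claims (that $\CG_e$ satisfies the hypotheses of \thmref{t:main formal groups} with finite-dimensional tangent space, so $B\CG_e$ is 1-affine, and that $BH$ is 1-affine by \thmref{t:BG}) are correct.

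The natural repair --- replacing $BH$ by $B(H_\dr)\simeq (BH)_\dr$ --- does not salvage the argument: the map $\on{pt}\to B(H_\dr)$ is not an fppf cover (its fibers are $H_\dr$), so \corref{c:shv via Cech}(a) provides no descent along it, and the base $(BH)_\dr$ is not 1-affine in general (\propref{p:dr stack} shows this already for $H=\BG_a$), so \corref{c:1-affine base and fiber} is unavailable as well; this failure is precisely the phenomenon obstructing this route, not a Beck--Chevalley technicality. The paper fibers in the opposite direction: the homomorphism $\CG\to G$ \emph{does} exist, $BG$ is 1-affine by \thmref{t:BG} and \corref{c:shv via Cech}(b), every map $S\to BG$ from an affine DG scheme factors through $\on{pt}$, and the fiber $\on{pt}\underset{BG}\times B\CG$ becomes, after \'etale sheafification, the de Rham prestack $(G/H)_\dr$, which is 1-affine by \thmref{t:main DR}; then \corref{c:product 1-affine} and \corref{c:1-affine base and fiber} conclude. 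Your overall strategy --- decompose $B\CG$ into known 1-affine pieces and apply the base-and-fiber corollary --- is exactly right in spirit, but the only decomposition that exists puts the de Rham space $(G/H)_\dr$ in the fiber and the finite-type group $G$ in the base, rather than $B\CG_e$ in the fiber and $H$ in the base.
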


\sssec{}

Let $0\neq V\in \Vect^\heartsuit$ be finite-dimensional, regarded as a commutative (i.e., $E_\infty$) group-object
of $\on{PreStk}$.   

\medskip

Recall that by Remark \ref{r:BG}, the prestack $BV$ is 1-affine. As another series of examples of group-objects $\CG$ for 
which $B\CG$ is (or is not) 1-affine, in \secref{s:coaffine} we will prove:

\begin{thm}  \label{t:iterated B} \hfill

\smallskip

\noindent{\em(a)} The prestack $B^2(V)$ is 1-affine.

\smallskip

\noindent{\em(b)} The prestack $B^3(V)$ is 1-affine.

\smallskip

\noindent{\em(c)} The prestack $B^4(V)$ is \emph{not} 1-affine.

\smallskip

\noindent{\em(d)} The prestack $B^2(V^\wedge_0)$ is \emph{not} 1-affine, where 
$V^\wedge_0$ is the completion of $V$ at the origin, regarded as a commutative 
group-object of $\on{PreStk}$. 

\end{thm}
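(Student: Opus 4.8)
The plan is to treat all four statements through a single mechanism: the bar resolution of a commutative group object, combined with the comonadic Beck--Chevalley formalism of \secref{s:Beck-Chevalley}. For any commutative group object $\CG$ in $\on{PreStk}$ one has $B\CG=|B^\bullet\CG|$, so the basic Lemma that $\on{ShvCat}(-)$ carries colimits to limits gives
$$\on{ShvCat}(B\CG)\simeq \on{Tot}\left(\on{ShvCat}(\CG^{\times\bullet})\right),$$
while $\QCoh(B\CG)\simeq \on{Tot}(\QCoh(\CG^{\times\bullet}))$ since $\QCoh$ likewise sends colimits of prestacks to limits. When every $\CG^{\times m}$ is already known to be $1$-affine, the first totalization becomes $\on{Tot}(\QCoh(\CG^{\times\bullet})\mmod)$, and $1$-affineness of $B\CG$ says exactly that the canonical comparison
$$\left(\on{Tot}\,\QCoh(\CG^{\times\bullet})\right)\mmod\to \on{Tot}\left(\QCoh(\CG^{\times\bullet})\mmod\right)$$
is an equivalence; that is, forming module categories must commute with this particular $\on{Tot}$.

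To make the totalizations computable I would run Beck--Chevalley for the augmented cosimplicial object attached to the basepoint $\on{pt}\to B\CG$. Evaluation on $0$-simplices is the pullback $\on{ShvCat}(B\CG)\to \on{ShvCat}(\on{pt})=\StinftyCat_{\on{cont}}$ (resp.\ $\QCoh(B\CG)\mmod\to \Vect\mmod=\StinftyCat_{\on{cont}}$), and the aim is to show each is comonadic with the two comonads on $\StinftyCat_{\on{cont}}$ agreeing under $\bGamma^{\on{enh}}$. Conservativity is routine; the $\on{ShvCat}$-side comonad is $\bM\mapsto \QCoh(\CG)\otimes\bM$, and the module-side comonad is $\bM\mapsto \left(\Vect\underset{\QCoh(B\CG)}\otimes\Vect\right)\otimes\bM$. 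These agree exactly when, uniformly in $\bM$, the base-change map
$$\left(\Vect\underset{\QCoh(B\CG)}\otimes\Vect\right)\otimes\bM\to \Vect\underset{\QCoh(B\CG)}\otimes(\Vect\otimes\bM)\simeq \QCoh(\Omega B\CG)\otimes\bM$$
is an equivalence, together with $\Omega B\CG\simeq\CG$. Thus $1$-affineness is reduced to a convergence statement: the bar totalization computing $\Vect\otimes_{\QCoh(B\CG)}\Vect$ must commute with $-\otimes\bM$ for an arbitrary, possibly non-dualizable, $\bM$.

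For (a) and (b) I run this as an induction in $k$, the key point being that the simplices are lower cases of the same problem: $(B^{k-1}V)^{\times m}\simeq B^{k-1}(V^{\times m})$ is $1$-affine by the inductive hypothesis, with base case $BV\simeq B\BG_a^{\dim V}$ treated by Remark~\ref{r:BG}. Hence $\on{ShvCat}(B^kV)\simeq\on{Tot}(\QCoh(B^{k-1}(V^{\times\bullet}))\mmod)$, and it remains to establish the convergence of the previous paragraph. Using $\Gamma(B^kV,\CO)\simeq\on{Sym}(V^*[-k])$, whose augmentation ideal is concentrated in cohomological degrees $\ge k$, I expect the requisite connectivity estimate to hold precisely in the range $k\le 3$. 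I emphasize that this is \emph{not} a naive induction of the form ``$\CG$ $1$-affine $\Rightarrow$ $B\CG$ $1$-affine'': that implication is false, as $B(V^\wedge_0)$ is $1$-affine by \thmref{t:main formal groups} while $B^2(V^\wedge_0)$ is not. What degrades under delooping is the connectivity estimate controlling the convergence, which must be tracked quantitatively rather than merely propagated.

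For (c) and (d) the same machinery breaks, and I would make the failure explicit. Note first that for $B^4V$ the simplices $B^3(V^{\times m})$ are still $1$-affine by (b), so the obstruction is genuinely the failure of $\on{Tot}$ to commute with $-\otimes\bM$, not a defect of the individual terms. To witness it I test the unit map (equivalently, the full faithfulness of $\bLoc$) on the non-dualizable module category $\Vect$, regarded as a $\QCoh(B^4V)$-module via the augmentation at the basepoint; since the unit is automatically an equivalence on dualizable objects by the lemma on dualizable $\bC$, any witness must be non-dualizable. I would compute $\bGamma^{\on{enh}}(\bLoc(\Vect))$ as the totalization of the cobar complex of this augmentation and exhibit its value as a nontrivial completion of $\Vect$, so the unit is not an equivalence. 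For (d) the identical argument applies to $B^2(V^\wedge_0)$, where---$V^\wedge_0$ being formal rather than a smooth affine scheme---the connectivity estimate is weaker and the obstruction already surfaces after the second delooping, even though $V^\wedge_0$ and $B(V^\wedge_0)$ are $1$-affine by \thmref{t:main formal} and \thmref{t:main formal groups}. The main obstacle throughout is precisely this convergence analysis: proving that the bar/cobar totalization commutes with tensoring by an arbitrary module category in the admissible range, and, on the negative side, pinning down a single explicit $\bM$ on which it provably does not, with the turnaround occurring at generator degree $4$ for the honest vector group and already after the second delooping of its formal completion.
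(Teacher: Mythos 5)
Your overall reduction---identify $\on{ShvCat}(B\CG)$ with $\CG\mmod$ via the bar resolution and compare the two comonads on $\StinftyCat_{\on{cont}}$ arising from the basepoint---has the same general shape as the paper's criteria (\propref{p:1-aff and rigid}, \propref{p:1-aff and rigid'}), but both halves of your execution have genuine gaps. For the positive parts (a), (b), the entire content is delegated to a ``convergence/connectivity estimate'' that you never state or prove; the sentence ``I expect the requisite connectivity estimate to hold precisely in the range $k\le 3$'' \emph{is} the theorem. Moreover, the heuristic you attach to it (the augmentation ideal of $\on{Sym}(V^*[-k])$ lives in degrees $\geq k$) cannot be the right mechanism, since no such numerical bound distinguishes $k=3$ from $k=4$. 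What actually governs the dichotomy in the paper is Koszul duality: $\QCoh(B^{k-1}V)$, with the monoidal structure dual to its co-monoidal one, is identified with $\QCoh(Z_{k-2})_{\on{ptw}}$ for the iterated loop space $Z_{k-2}=\Spec(\on{Sym}(V[k-2]))$ of $V^*$, and \propref{p:1-aff and rigid'} converts 1-affineness of $B^kV$ into monadicity of the \emph{discontinuous} functor $\iota^{\QCoh,!}$ (\conjref{conj:pre-loop}). This holds for $k=2$ ($Z_0=V^*$ smooth) and $k=3$ ($Z_1$ an exterior algebra; shift-of-grading reduction to \thmref{t:BG}), and fails for $k=4$ because $\on{Sym}(V[2])$ is a polynomial algebra on degree $-2$ generators, so $Z_2$ is not eventually coconnective. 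The dichotomy is a parity/boundedness (torsion vs.\ completion) phenomenon, not a connectivity count that ``runs out'' at $4$.

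For the negative parts (c), (d), your proposed witness provably cannot work. Take $\CG=B^3V$, so that $\QCoh(\CG)_{\on{conv}^L}\simeq\QCoh(Z_2)_{\on{ptw}}$ is rigid. Then $\Iinv^{\on{enh}}_\CG\simeq\ccoinv^{\on{enh}}_\CG$ by \corref{c:inv and coinv}, so $\bGamma^{\on{enh}}_{B\CG}\circ\bLoc_{B\CG}$ is computed by bar colimits, which commute with the bar colimit defining $\bLoc_{B\CG}(\Vect)=\Vect\underset{\Rep(\CG)}\otimes\Vect$; hence
$$\bGamma^{\on{enh}}\bigl(B\CG,\bLoc_{B\CG}(\Vect)\bigr)\simeq \ccoinv^{\on{enh}}_\CG(\Vect)\underset{\Rep(\CG)}\otimes\Vect\simeq \Rep(\CG)\underset{\Rep(\CG)}\otimes\Vect\simeq\Vect,$$
and the triangle identity (applied to the counit at $\QCoh(\CG)\in\on{ShvCat}(B\CG)$, using $\bGamma^{\on{enh}}(B\CG,\QCoh(\CG))\simeq\Vect$) shows the unit at $\Vect$ is split, hence an equivalence. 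So no ``nontrivial completion of $\Vect$'' appears: the unit is fine at your test object (your premise that this $\Vect$ is non-dualizable is also unjustified; over a rigid base, module dualizability reduces to plain dualizability by \lemref{l:dualizable in rigid}). In these examples it is not $\bLoc$ but fully faithfulness of $\bGamma^{\on{enh}}$ that fails, and the defect sits at the \emph{regular} object $\QCoh(\CG)\in\on{ShvCat}(B\CG)$: its counit is the map \eqref{e:recovery map}, which the paper disproves by showing the discontinuous right adjoint $p_*$ is not even conservative---Koszul-dually, $\iota^{\QCoh,!}$ kills the module $k[\xi,\xi^{-1}]$ over $k[\xi]=\on{Sym}(V[2])$ for (c) (see \secref{sss:loop space noncoconn}), and kills direct images from $V^*-\{0\}$ for (d). Any correct proof of the negative statements must exhibit an explicit object of this kind that the fiber functor cannot see; testing the unit on augmentation modules will never detect it.
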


\sssec{}

Let now $\CG$ be a group-object of $\affdgSch_{\on{aft}}$. (Note that the fppf sheafification of $B\CG$
is not an algebraic stack, so the discussion in \secref{ss:alg stacks} is not applicable to it.)

\medskip

We propose the following conjecture: 

\begin{conj} \label{conj:BG der} 
The classifying prestack $B\CG$ is 1-affine. 
\end{conj}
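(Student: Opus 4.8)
The statement to prove is \conjref{conj:BG der}: for $\CG$ a group-object of $\affdgSch_{\on{aft}}$, the classifying prestack $B\CG$ is 1-affine. The strategy is to reduce this to monadicity of a concrete functor, following the general method described in \secref{sss:intr monadic} and applied throughout the paper. First I would use the fppf-descent result \corref{c:shv via Cech} together with the simplicial presentation $B\CG = |B^\bullet\CG|$: the \v{C}ech nerve of $\on{pt}\to B\CG$ is exactly $B^\bullet\CG$, whose $n$-th term is $\CG^{\times n}$, an affine DG scheme (almost of finite type). Since affine DG schemes are tautologically 1-affine, both $\on{ShvCat}$ and $\QCoh(-)\mmod$ can be computed as totalizations of the co-simplicial objects obtained by applying these functors to $B^\bullet\CG$. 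The 1-affineness of $B\CG$ then becomes the assertion that the natural map between these two totalizations---$\bGamma^{\on{enh}}_{B\CG}$ and $\bLoc_{B\CG}$ being mutually inverse---is an equivalence.

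\textbf{Key steps.} Concretely, I would proceed as follows. (1) Identify $\QCoh(B\CG)$ with the category of comodules (or, dually, modules) over the Hopf algebra $\QCoh(\CG)$ acting via convolution, using the appendix \secref{s:Hopf} on commutative Hopf algebras; equivalently, $\QCoh(B\CG)\simeq \Rep(\CG)$ in the derived sense. (2) Recognize $\QCoh(B\CG)$ as a \emph{rigid} monoidal DG category---this is the crucial structural input, and it should follow because $\CG$ is affine and almost of finite type, making $\QCoh(\CG)$ dualizable with the requisite duality data (cf.\ \secref{s:rigid} and \lemref{l:pass rigid}). Rigidity is what makes $\bLoc_{B\CG}$ commute with limits by \corref[b]{c:Loc commutes with limits rigid}, which is exactly what is needed to compare it against the limit-computation of $\bGamma$. (3) With rigidity in hand, apply the criterion from \secref{s:crit aff}: the co-unit and unit maps of the $(\bLoc_{B\CG}, \bGamma^{\on{enh}}_{B\CG})$ adjunction are controlled by whether the forgetful functor $\on{ShvCat}(B\CG)\to \on{ShvCat}(\on{pt})=\StinftyCat_{\on{cont}}$ (pullback along $\on{pt}\to B\CG$) is co-monadic, with co-monad given by the $\CG$-action. (4) Verify the Barr--Beck--Lurie hypotheses for this forgetful functor: conservativity is essentially descent, and the commutation with the relevant totalizations is where the real content lies.

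\textbf{The main obstacle.} The hard part will be step (4), precisely the convergence issue flagged in the paper's own discussion of methods: checking that the pullback functor commutes with the totalization (co-geometric-realization) computing $\on{ShvCat}(B\CG)$ from $\on{ShvCat}(B^\bullet\CG)$. Because $\CG$ is a general affine DG group, not finite type in the classical sense (it is only almost of finite type, and may be derived), one cannot invoke the finite-type arguments behind \thmref{t:BG} directly, nor the formal-group techniques of \thmref{t:main formal groups}. I expect the resolution to require the Beck--Chevalley machinery of \secref{s:Beck-Chevalley} to make the totalization explicitly calculable, reducing the convergence to a rigidity-driven statement that the relevant co-simplicial diagram satisfies the co-monadic Beck--Chevalley condition. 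This is genuinely the crux: it is the same ``functional analysis within homological algebra'' obstruction that the author identifies as the technical heart of every 1-affineness proof, and for an arbitrary affine DG group it is not settled by the prior cases---which is presumably why the statement is posed as a conjecture rather than a theorem.
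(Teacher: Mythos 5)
The first thing to say is that there is no proof in the paper to compare against: \conjref{conj:BG der} is genuinely left open. The paper offers only evidence, namely \thmref{t:vector}, which treats the special case $\CG=\Spec(\on{Sym}(V[n]))$. That proof does not follow your outline: it reduces $n=0$ to \thmref{t:BG}, handles $n=1,2$ via \corref{c:B loop group} --- i.e., by realizing $\CG$ as an infinitesimal loop group $\Omega(Z,z)$ and verifying \conjref{conj:pre-loop} for $Z$ smooth and for $Z=\on{pt}\underset{\BA^n}\times \on{pt}$ (\propref{p:loop space}) --- and then reaches all other $n$ by the shift-of-grading trick of \secref{sss:trick}. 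Your proposal, by your own admission, also stops short of a proof: your step (4) is precisely the unresolved convergence/monadicity problem, and no mechanism is offered for it. So the proposal is a plan with a gap exactly where the conjecture is hard; that is consistent with the statement's status as a conjecture, but it establishes nothing.

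Beyond the acknowledged gap, your step (2) is wrong as stated. Rigidity of the relevant monoidal category does not follow from $\CG$ being affine and almost of finite type, and you also conflate two different categories: in the paper's criteria (\propref{p:1-aff and rigid'}) what must be rigid is the convolution category $\QCoh(\CG)_{\on{conv}}$ (equivalently $\QCoh(\CG)^\vee$), not $\Rep(\CG)=\QCoh(B\CG)$. For $\CG=\Spec(\on{Sym}(V))$ (the classical case $n=0$) or $\CG=\Spec(\on{Sym}(V[2]))$, the complex $\Gamma(\CG,\CO_\CG)$ is not perfect, the right adjoint of the convolution product is not continuous, and $\QCoh(\CG)_{\on{conv}}$ is \emph{not} rigid; this is exactly why \thmref{t:BG} is proved by entirely different means (reduction to reductive $G$, the decomposition $\Rep(G)\simeq \Vect^A$, and the t-structure/left-completeness argument of \lemref{l:bounded}), and why \thmref{t:vector} for $n\geq 1$ proceeds through \conjref{conj:pre-loop} and grading shifts rather than through any rigidity of $\Rep$. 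An appeal to \lemref{l:pass rigid} for $\CY=B\CG$ is equally unjustified, since it presupposes that $\CO_{B\CG}$ (the trivial representation) is compact --- a nontrivial cohomological finiteness condition on $\CG$. Finally, \thmref{t:iterated B}(c) shows how delicate your step (4) really is: for the group $B^3(V)$ the relevant discontinuous right adjoint $\iota_2^{\QCoh,!}$ fails even to be conservative because the underlying DG scheme is not eventually coconnective. Any proof of \conjref{conj:BG der} must therefore exploit affineness and the aft hypothesis in an essential way that neither your outline nor the paper currently supplies.
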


As a piece of evidence toward this conjecture, in \secref{ss:vector} we will prove:

\begin{thm} \label{t:vector}
Let $V\in \Vect^\heartsuit$ be finite-dimensional, and for $n\in \BZ^+$ let us view 
$\CG=\Spec(\on{Sym}(V[n]))$ as a group-object of $\affdgSch_{\on{aft}}$. Then
$B\CG$ is 1-affine.
\end{thm}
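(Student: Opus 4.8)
The plan is to exhibit $B\CG$ as an instance of \thmref{t:main formal groups}, whose part~(b) reduces the $1$-affineness of $B\CG$ to a single numerical condition: the finite-dimensionality of the tangent space of $\CG$ at the origin. Accordingly, the work splits into two tasks. First, I would verify that $\CG=\Spec(\on{Sym}(V[n]))$ meets the standing hypotheses of that theorem, i.e. that as a prestack it is a weakly $\aleph_0$, formally smooth DG indscheme locally almost of finite type with $({}^{cl}\CG)_{red}=\on{pt}$. Second, I would identify its tangent space and observe that it is finite-dimensional.

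Most of these conditions are immediate from the explicit presentation. The vector-space structure on $V[n]$ makes $\on{Sym}(V[n])$ into a (graded-commutative) Hopf algebra with $V[n]$ primitive, so $\CG$ is a commutative group-object of $\affdgSch_{\on{aft}}$; being an affine DG scheme, it is a fortiori a DG indscheme, and the weakly $\aleph_0$ condition is then automatic. Since $n\geq 1$, every summand $\on{Sym}^m(V[n])$ with $m\geq 1$ lies in strictly positive homological degree, so $H^0(\on{Sym}(V[n]))=k$ and hence $({}^{cl}\CG)_{red}=\on{pt}$. Finiteness of $\dim V$ makes $\on{Sym}(V[n])$ almost of finite type, whence $\CG$ is locally almost of finite type. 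Finally, the cotangent space at the origin is $e^*L_\CG\simeq V[n]$, so the tangent space is $T_e\CG\simeq V^*[-n]$, which is finite-dimensional; this is precisely the condition appearing in \thmref{t:main formal groups}(b).

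The one substantive point is formal smoothness. For this I would apply the deformation-theoretic criterion underlying \cite[8.1.3]{IndSch}: given a square-zero extension $S\hookrightarrow S'$ of affine DG schemes with ideal $I$, a map $S\to\CG$ is the datum of a map of complexes $V[n]\to\Gamma(S,\CO_S)$, and lifting it to $S'$ amounts to lifting along the surjection $\Gamma(S',\CO_{S'})\to\Gamma(S,\CO_S)$. Because the source $V[n]$ is a free (projective) $k$-module, no obstruction intervenes and the desired lifts exist; combined with the fact that the pro-cotangent complex $V[n]$ is perfect, this yields formal smoothness. I expect this to be the step requiring the most care, since one must match the explicit free-algebra computation with the precise indscheme-theoretic formulation of formal smoothness in \cite{IndSch}.

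With all the hypotheses verified, \thmref{t:main formal groups}(b) applies directly, and the finite-dimensionality of $T_e\CG\simeq V^*[-n]$ forces $B\CG$ to be $1$-affine, as claimed.
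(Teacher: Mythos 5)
Your reduction to \thmref{t:main formal groups} breaks down at exactly the step you flagged as the substantive one: $\CG=\Spec(\on{Sym}(V[n]))$ with $n\geq 1$ is \emph{not} formally smooth in the sense of \cite[8.1]{IndSch}, so that theorem does not apply. The quickest way to see this is via a result the paper itself invokes (in the proof of \thmref{t:indsch}): by \cite[Theorem 9.1.6]{IndSch}, a weakly $\aleph_0$ formally smooth DG indscheme locally almost of finite type is automatically \emph{classical}. But $\CG$ is not classical: $\on{Sym}(V[n])$ has non-zero cohomology in degrees $-mn$ (for $n$ even, for all $m\geq 0$; for $n$ odd, for all $m\leq \dim V$). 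The flaw in your deformation-theoretic verification is that it only addresses the $\pi_0$-lifting property, and that property holds here for trivial reasons: since $\Gamma(S,\CO_S)$ is connective, one has $\pi_0\Maps(S,\CG)\simeq V^*\otimes H^n(\Gamma(S,\CO_S))=0$, i.e.\ every map $S\to\CG$ is homotopic to the constant map, so surjectivity on $\pi_0$ carries no content. Formal smoothness in the sense of \cite{IndSch} requires in addition that $\Maps(S,\CY)$ be $m$-truncated whenever $S$ is $m$-coconnective, and this fails: for a classical affine $S=\Spec(A_0)$ one has $\pi_n\Maps(S,\CG)\simeq V^*\otimes A_0\neq 0$. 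A related mismatch: the tangent complex of $\CG$ at the origin is $V^*[-n]$, concentrated in degree $n$ rather than $0$, whereas the dichotomy in \thmref{t:main formal groups}(b) rests on the classification of honest formal groups as either $V^\wedge_0$ or $\underset{n}{\underset{\longrightarrow}{colim}}\,(\BA^n)^\wedge_0$ --- and $\Spec(\on{Sym}(V[n]))$ is neither. Indeed, the entire point of \thmref{t:vector} in the paper is to provide evidence for \conjref{conj:BG der}, which concerns genuinely \emph{derived} affine group schemes precisely because they fall outside the formal-groups theorem; if your reduction worked, that conjecture would be largely vacuous.

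For comparison, the paper's actual proof proceeds by realizing $\CG$ as a derived inertia (loop) group: for $n=1$, $\Spec(\on{Sym}(V[1]))\simeq \Omega(Z,z)$ for the smooth scheme $Z=\Spec(\on{Sym}(V))$, and for $n=2$, $\Spec(\on{Sym}(V[2]))\simeq \Omega(Z,z)$ for $Z=\Spec(\on{Sym}(V[1]))\simeq \on{pt}\underset{\BA^{\dim V}}\times \on{pt}$; in both cases 1-affineness of $B\CG$ follows from \corref{c:B loop group}, whose hypothesis (\conjref{conj:pre-loop}) is verified in \propref{p:loop space}. General even (resp.\ odd) $n$ is then reduced to $n=2$ (resp.\ $n=1$) by the shift-of-grading trick of \secref{sss:trick}, using \lemref{l:grading trick} together with \propref{p:1-aff and rigid}(b). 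Any repair of your write-up should follow that route; no argument routed through formal smoothness can succeed.
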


\sssec{}  \label{sss:quotient by A-infty}

Finally, let us take 
$$\CG=\underset{n}{\underset{\longrightarrow}{colim}}\, (\BG_a)^{\times n}.$$

In \secref{sss:quotient by A-infty proof} we will show that $B\CG$ is not 1-affine. In this example, 
the functor $\bGamma^{\on{enh}}_{B\CG}$ fails to be fully faithful. We do not
know whether $\bLoc_{B\CG}$ is fully faithful. 

\ssec{De Rham prestacks}

\sssec{}

Let $\CZ$ be a prestack. Recall that the prestack $\CZ_\dr$ is defined by
$$\Maps(S,\CZ_\dr):=\Maps(({}^{cl}S)_{red},\CZ).$$

\sssec{}

In \secref{ss:DR} we will prove:

\begin{thm} \label{t:main DR}  
Let $\CY$ be of the form $\CZ_\dr$, where $\CZ$ is an indscheme \footnote{We say ``indscheme" instead of ``DG indscheme",
because for a prestack $\CZ$, the prestack $\CZ_\dr$ only depends on the underlying classical prestack.} locally
of finite type.  Then $\CY$ is $1$-affine. 
\end{thm}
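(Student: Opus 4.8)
The plan is to build $\CZ_\dr$ out of pieces already known to be 1-affine, and to propagate 1-affineness along explicit presentations, reducing in three stages to the case of a smooth affine scheme. Since $\CZ_\dr$ depends only on $({}^{cl}\CZ)_{red}$, I may assume $\CZ$ is a reduced classical indscheme locally of finite type, so $\CZ=\underset{\alpha}{\underset{\longrightarrow}{colim}}\,Z_\alpha$ is a filtered colimit of closed embeddings of reduced schemes of finite type. Because every affine DG scheme $S$ has quasi-compact $({}^{cl}S)_{red}$, each map $({}^{cl}S)_{red}\to\CZ$ factors through some $Z_\alpha$, so $\CZ_\dr\simeq\underset{\alpha}{\underset{\longrightarrow}{colim}}\,(Z_\alpha)_\dr$ is a genuine colimit in $\on{PreStk}$. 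For a single scheme $Z$ the question is Zariski-local (fppf descent, \corref{c:shv via Cech}), so I may take $Z$ affine; choosing a closed embedding $Z\hookrightarrow\BA^n$, one checks directly from the definition of $(-)_\dr$ that $Z_\dr$ is the formal completion of $(\BA^n)_\dr$ along the closed subfunctor $Z_\dr$, whose complement is quasi-compact. Granting 1-affineness of $(\BA^n)_\dr$, \thmref{t:main formal} then delivers 1-affineness of $Z_\dr$, reducing everything to the smooth case.

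The heart of the argument is the case of a smooth $Z$ of finite type (e.g. $Z=\BA^n$). Here the canonical projection $p\colon Z\to Z_\dr$ is an effective epimorphism already in $\on{PreStk}$: since $Z$ is classical and formally smooth, $Z(S)=Z({}^{cl}S)\to Z(({}^{cl}S)_{red})=Z_\dr(S)$ is surjective for every $S$, so $Z_\dr\simeq|Z^\bullet/Z_\dr|$ is the geometric realization of the \v{C}ech nerve of $p$. By the Lemma that $\on{ShvCat}(-)$ carries colimits in $\on{PreStk}$ to limits in $\inftyCat$, this yields
$$\on{ShvCat}(Z_\dr)\simeq\on{Tot}\big(\on{ShvCat}(Z^\bullet/Z_\dr)\big)$$
with no recourse to descent along a non-flat map. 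The term $Z^{m}/Z_\dr$ is the formal completion of $Z^{m+1}$ along the small diagonal; as a formal completion of a smooth (hence, by \thmref{t:alg space}, 1-affine) scheme along a closed subscheme with quasi-compact complement, it is 1-affine by \thmref{t:main formal}. Therefore each $\on{ShvCat}(Z^{m}/Z_\dr)\simeq\QCoh(Z^{m}/Z_\dr)\mmod$, and
$$\on{ShvCat}(Z_\dr)\simeq\on{Tot}\big(\QCoh(Z^\bullet/Z_\dr)\mmod\big).$$

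It remains to identify the right-hand side with $\QCoh(Z_\dr)\mmod$, compatibly with $\bGamma^{\on{enh}}_{Z_\dr}$. One knows $\QCoh(Z_\dr)\simeq\on{Tot}\big(\QCoh(Z^\bullet/Z_\dr)\big)$, and that $\QCoh(Z_\dr)$ (the monoidal category of crystals on $Z$) is rigid, see \secref{s:rigid}. The required statement is thus that forming module categories commutes with this totalization, i.e. that the augmentation $\QCoh(Z_\dr)\to\QCoh(Z^\bullet/Z_\dr)$ exhibits $\QCoh(Z_\dr)\mmod$ as $\on{Tot}\big(\QCoh(Z^\bullet/Z_\dr)\mmod\big)$. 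This is precisely the sort of convergence problem flagged in the introduction, and is where I expect the main difficulty to lie: it is not formal. I would establish it through the (co)monadic Beck--Chevalley formalism of \secref{s:Beck-Chevalley}, using rigidity of $\QCoh(Z_\dr)$, together with \corref{c:Loc commutes with limits rigid}, to guarantee that the base-change functors $\QCoh(Z^{m}/Z_\dr)\underset{\QCoh(Z_\dr)}{\otimes}(-)$ are continuous, commute with the totalizations, and satisfy the Beck--Chevalley condition, so that the cosimplicial category $\QCoh(Z^\bullet/Z_\dr)\mmod$ is computed by its $0$-cosimplices as $\QCoh(Z_\dr)\mmod$.

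Finally, I would return to the first reduction and treat the filtered colimit $\CZ_\dr\simeq\underset{\alpha}{\underset{\longrightarrow}{colim}}\,(Z_\alpha)_\dr$ by the same mechanism: the colimit-to-limit Lemma gives $\on{ShvCat}(\CZ_\dr)\simeq\underset{\alpha}{\underset{\longleftarrow}{lim}}\,\QCoh((Z_\alpha)_\dr)\mmod$, and rigidity of $\QCoh(\CZ_\dr)$ again permits commuting $\mmod$ past the (now filtered) limit to recover $\QCoh(\CZ_\dr)\mmod$, matching $\bGamma^{\on{enh}}_{\CZ_\dr}$ with the limit of the termwise equivalences. The one genuinely new phenomenon, relative to the schemes case, is this interchange of module categories with a limit over the indexing category of the indscheme, which I expect to be controlled by the same rigidity input rather than to require fresh ideas.
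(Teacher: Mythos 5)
Your reduction steps (indscheme $\rightsquigarrow$ scheme $\rightsquigarrow$ affine $\rightsquigarrow$ $\BA^n$, via the observation that $Z_\dr$ is the formal completion of $(\BA^n)_\dr$ along $Z_\dr$ together with \thmref{t:main formal}) agree with the paper, and your \v{C}ech-nerve reformulation for smooth $Z$ is also correct: since $Z\to Z_\dr$ is $\pi_0$-surjective on points of every affine, $Z_\dr\simeq |Z^\bullet/Z_\dr|$ in $\on{PreStk}$, the terms $Z^m/Z_\dr$ are formal completions of smooth schemes along diagonals and hence 1-affine, so indeed $\on{ShvCat}(Z_\dr)\simeq \on{Tot}\left(\QCoh(Z^\bullet/Z_\dr)\mmod\right)$. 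But at that point you have only reformulated the theorem: 1-affineness of $Z_\dr$ is now \emph{equivalent} to identifying this totalization with $\QCoh(Z_\dr)\mmod$, and the entire difficulty of the theorem sits in that step. The tool you invoke for it is not available: $\QCoh(Z_\dr)$ (i.e.\ D-modules on $Z$ with the pointwise/!-tensor structure) is \emph{not} rigid. Its monoidal operation is a !-restriction along the diagonal, whose right adjoint is discontinuous; equivalently, by \secref{sss:comp gen rigid}, the compact generator (the free D-module $D_Z$) admits no monoidal dual -- only objects that are perfect over $\CO_Z$ (vector bundles with connection) are dualizable. So \corref{c:Loc commutes with limits rigid}, \corref{c:any object as limit}, and the rigid-category Beck--Chevalley corollaries of \secref{s:rigid} cannot be applied to $\QCoh(Z_\dr)$, for $Z_\dr$ of an indscheme either. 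Moreover, even granting some Beck--Chevalley structure, \lemref{l:comonadicity} would only exhibit the totalization as comodules over a comonad acting on the $0$-cosimplices $\QCoh(Z)\mmod$; to conclude you would still need a 2-categorical Barr--Beck argument showing that $\bC\mapsto \QCoh(Z)\underset{\QCoh(Z_\dr)}\otimes\bC$ is comonadic -- in particular conservative -- and that conservativity is essentially the content of the theorem, nowhere addressed in your plan.

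For comparison, the paper closes exactly this gap by a different mechanism. For $Z=\BA^n$ it uses the fibration $(\BA^n)_\dr\to B\CG$, where $\CG=(\BA^n)^\wedge_0$ is the formal group: the base change of this map by any $S\in \affdgSch_{/B\CG}$ is the affine DG scheme $S\times \BA^n$, so \corref{c:1-affine base and fiber} reduces everything to 1-affineness of $B\CG$, i.e.\ \thmref{t:main formal groups}(b). The rigidity that powers that theorem is rigidity of the \emph{convolution} category $\IndCoh(\CG)_{\on{conv}}\simeq \QCoh(\CG)_{\on{conv}^L}$ -- which holds because $\CG$ is ind-proper -- not rigidity of the crystal category; and the final analytic input is the monadicity of a discontinuous coinduction-type functor, inherited from the proof of \thmref{t:BG} (Lurie's t-structure argument). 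Likewise, for the passage from schemes to indschemes the paper does not commute $\mmod$ past a limit: it proves full faithfulness of $\bLoc_{\CZ_\dr}$ via \propref{p:main prop} (using $\Upsilon$ and $\IndCoh$), and proves the counit is an equivalence by a cofinality argument combined with \lemref{l:tensor up local}, exploiting that the restriction functors $\QCoh((Z_i)_\dr)\to \QCoh((Z_{i_0})_\dr)$ admit left adjoints compatible with the module structures. Your proposal would become a proof only if you replace the appeal to rigidity of $\QCoh(Z_\dr)$ by an argument of this kind.
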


\sssec{}

However, in \secref{ss:DR stack} we will show:

\begin{prop} \label{p:dr stack}
Let $\CY=\CZ_\dr$, where $\CZ=\on{pt}/\BG_a$. Then $\CY$ is \emph{not} 1-affine.
\end{prop}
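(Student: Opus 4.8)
The plan is to produce an object $\CC \in \on{ShvCat}(\CY)$, where $\CY = (\on{pt}/\BG_a)_\dr$, for which the counit map \eqref{e:counit} fails to be an equivalence; equivalently, we exhibit the failure of $\bGamma^{\on{enh}}_\CY$ to be fully faithful, or a failure of $\bLoc_\CY$ to be fully faithful. The natural strategy is to reduce the computation of $\on{ShvCat}(\CY)$ and of $\QCoh(\CY)\mmod$ to something explicit, exploiting the structure of $\CY$. First I would note that $\CZ = \on{pt}/\BG_a$ is an algebraic stack, so $\CY = \CZ_\dr$ is the de Rham prestack of a \emph{stack} rather than a scheme; this is exactly the case flagged right after \thmref{t:main DR} as being beyond the scope of that theorem. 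The key structural input is the fibration $\on{pt}_\dr \to \CZ_\dr$, or better, the interpretation of $B\BG_a$-in-the-de-Rham-direction: since $(\on{pt}/\BG_a)_\dr$ only depends on the reduced classical stack, and since $\BG_a$ is unipotent, I would try to identify $\CY$ with a classifying prestack $B\CG$ for a suitable de Rham / formal group $\CG$ built from $\BG_a$.

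The crucial move is to relate $\CZ_\dr = (\on{pt}/\BG_a)_\dr$ to the already-understood cases in \thmref{t:iterated B}. Concretely, one expects a comparison of the form $(\on{pt}/\BG_a)_\dr \simeq B^2(\BG_{a,\dr})$ or a presentation involving $B$ of the de Rham (equivalently formal) version of $\BG_a$, and then to leverage that $(\BG_a)_\dr$ has an infinite-dimensional tangent complex at the origin. The template for the \emph{negative} results in this paper — \thmref{t:thick} (infinite-type inertia), \thmref{t:main formal groups}(b) (infinite-dimensional tangent space obstructs 1-affineness), and \thmref{t:iterated B}(c),(d) — all share the feature that some infinite-dimensionality in the (co)tangent direction breaks the convergence of the relevant colimit-limit interchange. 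So I would aim to match $\CY$ against \thmref{t:main formal groups}(b) or \thmref{t:iterated B}(d): the prestack $B^2(V^\wedge_0)$ is explicitly stated there to be \emph{not} 1-affine, and $V^\wedge_0$ is precisely a formal completion of a vector group at the origin, which is the kind of object $\BG_{a,\dr}$-type groups produce. The heart of the argument is therefore to establish a presentation $\CY \simeq B\CG$ with $\CG$ a (formal/de Rham) group whose tangent space is infinite-dimensional, and then invoke the relevant non-1-affineness result.

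Once the identification is in place, I would make the failure of 1-affineness concrete by tracing through either $\bGamma^{\on{enh}}_\CY$ or $\bLoc_\CY$: using \corref{c:shv via Cech} and the \v{C}ech/descent machinery, $\on{ShvCat}(\CY)$ is computed as a totalization over the simplicial object $B^\bullet\CG$, while $\QCoh(\CY)\mmod$ is controlled by the monoidal category $\QCoh(\CY)$ via the bar construction. The discrepancy manifests exactly as a monadicity failure in the sense of \secref{sss:intr monadic}: the relevant forgetful functor will be conservative but will \emph{not} commute with the geometric realizations required by Barr–Beck–Lurie, because of the infinite-dimensional tangent direction. I expect the main obstacle to be the first step — rigorously establishing the identification of $(\on{pt}/\BG_a)_\dr$ with the appropriate $B\CG$ or iterated classifying prestack, including the precise formal/de Rham group-object structure on the $\BG_a$-factor — since everything downstream reduces to a previously-proven non-1-affineness statement, whereas pinning down this identification requires care about the interaction of $(-)_\dr$ with classifying prestacks and with the reduced-classical truncation built into $\CZ_\dr$.
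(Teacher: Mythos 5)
Your approach rests on two misidentifications, and the mechanism you predict is the opposite of the one that actually operates. The correct identification is
$$\CY=(\on{pt}/\BG_a)_\dr\simeq B\bigl((\BG_a)_\dr\bigr),$$
since tautologically $(BG)_\dr\simeq B(G_\dr)$ and sheafification is harmless by \corref{c:shv via Cech}(b); it is \emph{not} $B^2$ of anything. Your parenthetical ``de Rham (equivalently formal) version of $\BG_a$'' conflates $(\BG_a)_\dr$ with $(\BG_a)^\wedge_0$, which are entirely different objects: in fact $(\BG_a)_\dr\simeq \BG_a/(\BG_a)^\wedge_0$. Consequently no prior negative result of the paper applies to the group $\CG=(\BG_a)_\dr$. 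It is not a formal group in the sense of \thmref{t:main formal groups}, because its underlying reduced classical prestack is $\BG_a$ rather than $\on{pt}$, so the hypothesis $({}^{cl}\CG)_{red}=\on{pt}$ fails; and $B((\BG_a)_\dr)\not\simeq B^2((\BG_a)^\wedge_0)$, so \thmref{t:iterated B}(d) does not apply either -- the two are related only by a fibration sequence $B\BG_a\to B((\BG_a)_\dr)\to B^2((\BG_a)^\wedge_0)$, and the paper's permanence results (\corref{c:1-affine base and fiber}, \propref{p:almost commute 1}) propagate 1-affineness or fully-faithfulness from the base \emph{up} to the total space, never a negation from the base up. Finally, the ``infinite-dimensional tangent space'' heuristic is exactly wrong here: the tangent space of $(\BG_a)_\dr$ at the origin is \emph{zero} (passing to the de Rham prestack kills all infinitesimal directions), so the obstruction cannot be of the completion/convergence type seen in \thmref{t:thick}, \thmref{t:main formal groups}(b) and \thmref{t:iterated B}(c),(d).

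The actual proof is a direct conservativity failure -- the opposite of your predicted ``conservative but fails to commute with geometric realizations.'' By \thmref{t:main DR}, $(\BG_a)_\dr$ is 1-affine, so by \secref{ss:groups} one has $\on{ShvCat}(\CY)\simeq (\BG_a)_\dr\mmod$ with $\bGamma^{\on{enh}}_\CY$ identified with the invariants functor. Consider the canonical map $\QCoh((\BG_a)_\dr)\to\Vect$ in $(\BG_a)_\dr\mmod$, which is not an equivalence. Its source has invariants $\Vect$, while its target has invariants
$$\QCoh(B((\BG_a)_\dr))\simeq \on{Tot}\bigl(\QCoh(((\BG_a)^\bullet)_\dr)\bigr),$$
and this also collapses to $\Vect$: for every $n$ the pullback $\Vect\to \QCoh((\BG_a^{\times n})_\dr)$ is fully faithful (the de Rham cohomology of $\BA^n$ is trivial), and it is an equivalence on $0$-simplices, so the totalization agrees with that of the constant cosimplicial category with value $\Vect$, which is $\Vect$ since $\bDelta$ is contractible. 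Hence $\bGamma_\CY$ fails to be conservative, and $\CY$ is not 1-affine. The key point your proposal misses is this ``de Rham contractibility'' of $B\BG_a$: it is the \emph{smallness} of $\QCoh(\CY)$ (it is just $\Vect$), not any infinite-dimensionality, that breaks 1-affineness here.
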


Hence, if $\CY=\CZ_\dr$, where $\CZ$ is quasi-compact algebraic stack locally 
of finite type with an affine diagonal, it is not in general true that $\CY$ is 1-affine. 

\medskip

However, we propose:

\begin{conj}  \label{conj:dr stack}
Let $\CZ$ is quasi-compact algebraic stack locally of finite type with an affine diagonal. Then the functor 
$\bLoc_{\CZ_\dr}$ is fully faithful. 
\end{conj}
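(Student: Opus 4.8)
The plan is to reduce full faithfulness of $\bLoc_{\CZ_\dr}$ to the already-established $1$-affineness of de Rham prestacks of affine schemes (\thmref{t:main DR}), by means of a smooth simplicial resolution of $\CZ$ together with the fact that $(-)_\dr$ commutes with colimits. First I would choose a smooth affine atlas $u\colon U_0\to\CZ$ and form its \v{C}ech nerve $U_\bullet$, a simplicial affine scheme locally of finite type (affineness of all $U_n$ uses that the diagonal of $\CZ$ is affine). Since $(-)_\dr$ preserves colimits of prestacks, $\CZ_\dr\simeq\underset{[n]\in\bDelta^{\mathrm{op}}}{\mathrm{colim}}\,(U_n)_\dr$, and each $(U_n)_\dr$ is $1$-affine by \thmref{t:main DR}. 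Writing $\bA:=\QCoh(\CZ_\dr)$ and $\bA_n:=\QCoh((U_n)_\dr)$, the facts that $\QCoh(-)$, $\bGamma(-,\CC)$ and $\on{ShvCat}(-)$ all send colimits in $\on{PreStk}$ to limits give $\bA\simeq\on{Tot}(\bA_\bullet)$ and compute global sections of any $\CC\in\on{ShvCat}(\CZ_\dr)$ as $\on{Tot}(\bGamma((U_\bullet)_\dr,\CC))$.

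Next I would unwind the unit map. For $\bC\in\bA\mmod$, base change for $\bLoc$ (\secref{s:functors}) identifies the restriction of $\bLoc_{\CZ_\dr}(\bC)$ to $(U_n)_\dr$ with $\bLoc_{(U_n)_\dr}(\bA_n\otimes_\bA\bC)$; applying $1$-affineness of $(U_n)_\dr$ then yields $\bGamma^{\on{enh}}((U_n)_\dr,\bLoc_{\CZ_\dr}(\bC))\simeq \bA_n\otimes_\bA\bC$. Combining with the previous step,
\[
\bGamma^{\on{enh}}(\CZ_\dr,\bLoc_{\CZ_\dr}(\bC))\simeq\on{Tot}\bigl(\bA_\bullet\underset{\bA}\otimes\bC\bigr),
\]
and the unit map becomes the canonical arrow $\bC=\bA\otimes_\bA\bC\to\on{Tot}(\bA_\bullet\otimes_\bA\bC)$. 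Thus $\bLoc_{\CZ_\dr}$ is fully faithful if and only if, for every $\bC$, the functor $-\otimes_\bA\bC$ commutes with the totalization $\on{Tot}(\bA_\bullet)\simeq\bA$; equivalently, the monoidal functor $\bA\to\bA_0$ is of effective descent for $\bA$-module categories.

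For dualizable $\bC$ this is automatic, so the content is the non-dualizable case --- precisely the ``colimits versus limits'' convergence problem flagged in the Methods. Here I would first record that $\QCoh(\CZ_\dr)$ should be rigid (\secref{s:rigid}), so that by \corref{c:Loc commutes with limits rigid} each $\bA_n$ is dualizable as an $\bA$-module and $\bLoc_{\CZ_\dr}$ commutes with limits. I would then try to establish descendability of $\bA\to\bA_0$ directly, using the comonadic Beck--Chevalley formalism of \secref{s:Beck-Chevalley}: one checks conservativity of base change along the cover (the easy input) together with compatibility of the cobar totalization with the resulting comonad, which is exactly what forces $-\otimes_\bA\bC$ through the totalization for arbitrary $\bC$.

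The hard part will be this last descendability statement, and it is the reason the result is only conjectural. After applying $(-)_\dr$, the smoothness of $u$ is replaced by infinitesimal (formal-group) directions: the fibers of $U_{0,\dr}\to\CZ_\dr$ are governed by classifying prestacks of formal groups attached to the inertia of $\CZ$, and it is the full faithfulness of $\bLoc$ for such $B\CG$ (\thmref{t:main formal groups}, \thmref{t:HCh}) that should supply the local control needed for convergence. I expect that smoothness of the atlas alone does not yield descendability on the de Rham side, which is consistent with \propref{p:dr stack}: $\CZ_\dr$ need not be $1$-affine, the failure lying on the side of $\bGamma^{\on{enh}}_{\CZ_\dr}$ rather than of $\bLoc_{\CZ_\dr}$, so that only full faithfulness of the latter can be expected in general.
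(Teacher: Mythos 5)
There is no proof in the paper to compare yours against: the statement you were given is \conjref{conj:dr stack}, which the paper explicitly leaves open (it only adds a further conjecture describing the expected essential image of $\bLoc_{\CZ_\dr}$). So the only question is whether your proposal constitutes a proof on its own, and it does not --- as you yourself acknowledge in your final paragraph.

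Your reduction skeleton is reasonable: since $\on{ShvCat}$ and $\QCoh$ are insensitive to fppf sheafification (\corref{c:shv via Cech}), one may replace $\CZ_\dr$ by $|(U_\bullet)_\dr|$ (note that $\CZ$ is only the \emph{sheafification} of $|U_\bullet|$, a point you elide), and then the unit map for $\bC\in \bA\mmod$ indeed becomes $\bC\to \on{Tot}\bigl(\bA_\bullet\underset{\bA}\otimes \bC\bigr)$, so the conjecture is equivalent to a descent statement for module categories along $\bA\to \bA_0$. But two of your supporting steps fail. First, $\QCoh(\CZ_\dr)$ is \emph{not} rigid, already for $\CZ$ a smooth affine scheme: under the identification $\QCoh(Z_\dr)\otimes \QCoh(Z_\dr)\simeq \QCoh((Z\times Z)_\dr)$ the monoidal operation is $\Delta^!$, whose right adjoint is not continuous (for instance, $\Delta^!$ does not preserve compact objects), so \corref{c:Loc commutes with limits rigid} is unavailable, and the dualizability of $\bA_n$ as an $\bA$-module would require an independent argument. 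Second, and decisively, the commutation of $-\underset{\bA}\otimes \bC$ with the totalization for arbitrary non-dualizable $\bC$ \emph{is} the entire content of the conjecture, and your appeal to the Beck--Chevalley formalism of \secref{s:Beck-Chevalley} does not get off the ground: after applying $(-)_\dr$, the face maps of the \v{C}ech nerve are no longer proper-like, so the functors $f^!$ do not admit the left adjoints that condition (3) of \propref{p:main prop} requires --- this is precisely why the paper's proof of \thmref{t:main DR} (where the transition maps are closed embeddings, hence ind-proper) does not extend to stacks, and why the statement was left as a conjecture. Your last paragraph correctly locates the difficulty, consistently with \propref{p:dr stack} (where the failure of 1-affineness lies on the side of $\bGamma^{\on{enh}}$), but locating the gap is not the same as closing it.
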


\medskip

In fact, we conjecture that in the situation of \conjref{conj:dr stack} it should be possible
to describe the essential image of the functor $\bLoc_{\CZ_\dr}$ in terms of the groups
of automorphisms of geometric points of $\CZ$. 

\section{Direct and inverse images for sheaves of categories}  \label{s:functors}

\ssec{Definition of functors}

Let $f:\CY_1\to \CY_2$ be a morphism between prestacks.

\sssec{}

The monoidal functor
$$f^*:\QCoh(\CY_2)\to \QCoh(\CY_1)$$
defines a forgetful functor
$$\Rres_f:\QCoh(\CY_1)\mmod\to \QCoh(\CY_2)\mmod.$$

It has a left adjoint, denoted $\Iind_f$, given by
$$\bC_2\mapsto \QCoh(\CY_1)\underset{\QCoh(\CY_2)}\otimes \bC_2.$$

\sssec{}

We also have a tautological functor
$$\coRres_f:\on{ShvCat}(\CY_2)\to \on{ShvCat}(\CY_1).$$

Namely, for $\CC_2\in \on{ShvCat}(\CY_2)$, we restrict the assignment $S\mapsto \bGamma(S,\CC_2)$
from $\affdgSch_{/\CY_2}$ to $\affdgSch_{/\CY_1}$. 

\medskip

We shall sometimes denote this functor by
$$\CC_2\in \on{ShvCat}(\CY_2)\mapsto \CC_2|_{\CY_1}\in \on{ShvCat}(\CY_1).$$

\sssec{}  \label{sss:dir and inv}

We claim that the functor $\coRres_f$ admits a right adjoint (to be denoted $\coIind_f$).
Namely, for $\CC_1\in \on{ShvCat}(\CY_2)$ and $S\in \affdgSch_{/\CY_2}$ we set
$$\bGamma(S,\coIind_f(\CC_1)):=\bGamma(S\underset{\CY_2}\times \CY_1,\CC_1).$$

The fact that this is indeed a quasi-coherent sheaf of categories follows from the next lemma:

\begin{lem}
For a map $S'\to S$ in $\affdgSch_{/\CY_2}$, the functor
$$\QCoh(S')\underset{\QCoh(S)}\otimes \bGamma(S\underset{\CY_2}\times \CY_1,\CC_1)\to 
\bGamma(S'\underset{\CY_2}\times \CY_1,\CC_1)$$
is an equivalence.
\end{lem}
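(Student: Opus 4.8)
The plan is to verify directly that the assignment $S\mapsto \bGamma(S\underset{\CY_2}\times\CY_1,\CC_1)$ obeys the base-change property characterizing a quasi-coherent sheaf of categories, by reducing to affine test schemes, where it becomes the defining property of $\CC_1\in\on{ShvCat}(\CY_1)$. Set $\CW:=S\underset{\CY_2}\times\CY_1$ and $\CW':=S'\underset{\CY_2}\times\CY_1$, both regarded as objects of $\on{PreStk}_{/\CY_1}$ via the projection to $\CY_1$, so that $\bGamma(-,\CC_1)$ is defined on them. The elementary geometric input is the cancellation $\CW'\simeq S'\underset{S}\times\CW$, valid because $S'\to S$ is a morphism over $\CY_2$. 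First I would write $\CW$ as the tautological colimit of its affine points, $\CW\simeq\underset{T\in\affdgSch_{/\CW}}{\underset{\longrightarrow}{colim}}\,T$, a colimit in $\on{PreStk}$. Since colimits in $\on{PreStk}$ are computed valuewise and base change along the fixed map $S'\to S$ preserves colimits of spaces (colimits are universal in the $\infty$-topos $\inftygroup$), this gives $\CW'\simeq\underset{T}{\underset{\longrightarrow}{colim}}\,(S'\underset{S}\times T)$, where each $S'\underset{S}\times T$ is again an affine DG scheme. Applying $\bGamma(-,\CC_1)$, which sends colimits in $\on{PreStk}_{/\CY_1}$ to limits, yields
$$\bGamma(\CW,\CC_1)\simeq\underset{T}{\underset{\longleftarrow}{lim}}\,\bGamma(T,\CC_1),\qquad \bGamma(\CW',\CC_1)\simeq\underset{T}{\underset{\longleftarrow}{lim}}\,\bGamma(S'\underset{S}\times T,\CC_1).$$

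Next I would analyze each term of the second limit. Every $T\in\affdgSch_{/\CW}$ maps to $\CY_1$, and $S'\underset{S}\times T\to T$ is a morphism of affine DG schemes over $\CY_1$; hence the defining property of $\CC_1\in\on{ShvCat}(\CY_1)$ gives $\bGamma(S'\underset{S}\times T,\CC_1)\simeq\QCoh(S'\underset{S}\times T)\underset{\QCoh(T)}\otimes\bGamma(T,\CC_1)$. Combining this with the base-change (Künneth) isomorphism for quasi-coherent sheaves on affine DG schemes, $\QCoh(S'\underset{S}\times T)\simeq\QCoh(S')\underset{\QCoh(S)}\otimes\QCoh(T)$, and cancelling the factor $\QCoh(T)\underset{\QCoh(T)}\otimes-$, I obtain a natural equivalence $\bGamma(S'\underset{S}\times T,\CC_1)\simeq\QCoh(S')\underset{\QCoh(S)}\otimes\bGamma(T,\CC_1)$, functorial in $T$.

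It then remains to commute the tensor product past the limit, so as to conclude
$$\bGamma(\CW',\CC_1)\simeq\underset{T}{\underset{\longleftarrow}{lim}}\left(\QCoh(S')\underset{\QCoh(S)}\otimes\bGamma(T,\CC_1)\right)\simeq\QCoh(S')\underset{\QCoh(S)}\otimes\underset{T}{\underset{\longleftarrow}{lim}}\,\bGamma(T,\CC_1)\simeq\QCoh(S')\underset{\QCoh(S)}\otimes\bGamma(\CW,\CC_1),$$
which is exactly the asserted equivalence once one checks that the resulting composite agrees with the canonical comparison functor coming from the $\QCoh(S')$-module structure and restriction along $\CW'\to\CW$. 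The middle isomorphism is the heart of the matter and the step I expect to be the main obstacle, since tensoring does not commute with limits in general. Here it does: because $S'\to S$ is a map of affine DG schemes, $\QCoh(S)$ is rigid (the trivial case of \lemref{l:pass rigid}) and $\QCoh(S')$, being dualizable as a plain DG category, is dualizable as an object of $\QCoh(S)\mmod$ by \lemref{l:dualizable in rigid}; consequently the functor $\QCoh(S')\underset{\QCoh(S)}\otimes-\colon\QCoh(S)\mmod\to\StinftyCat_{\on{cont}}$ commutes with arbitrary limits, exactly as in the computation of limits in $\on{ShvCat}(\CY)$ recorded earlier. This completes the argument.
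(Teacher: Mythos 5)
Your proof is correct and takes essentially the same route as the paper's: both compute $\bGamma(S\underset{\CY_2}\times \CY_1,\CC_1)$ as a limit of $\bGamma(T,\CC_1)$ over $T\in \affdgSch_{/S\underset{\CY_2}\times\CY_1}$, use dualizability of $\QCoh(S')$ as a $\QCoh(S)$-module (\lemref{l:dualizable in rigid}) to pass $\QCoh(S')\underset{\QCoh(S)}\otimes -$ through that limit, and identify the terms via the defining base-change property of $\CC_1$ together with $\QCoh(S'\underset{S}\times T)\simeq \QCoh(S')\underset{\QCoh(S)}\otimes \QCoh(T)$. The only cosmetic difference is that you re-index the limit computing sections over $S'\underset{\CY_2}\times \CY_1$ via universality of colimits in $\on{PreStk}$, where the paper instead invokes cofinality of the functor $T\mapsto T\underset{S}\times S'$; these are interchangeable justifications of the same step.
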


\begin{proof}

By definition, we calculate $\bGamma(S\underset{\CY_2}\times \CY_1,\CC_1)$ as 
$$\underset{T\in \affdgSch_{/S\underset{\CY_2}\times\CY_1}}{\underset{\longleftarrow}{lim}}\, \bGamma(T,\CC_1).$$

\medskip

Since $\QCoh(S')$ is dualizable as a $\QCoh(S)$-module (by \lemref{l:dualizable in rigid}), tensoring with it commutes with limits
in the category $\QCoh(S)\mmod$. Hence, we obtain: 
$$\QCoh(S')\underset{\QCoh(S)}\otimes \bGamma(S\underset{\CY_2}\times \CY_1,\CC_1)\simeq 
\underset{T\in \affdgSch_{/S\underset{\CY_2}\times\CY_1}}{\underset{\longleftarrow}{lim}}\, 
\left(\QCoh(S')\underset{\QCoh(S)}\otimes \bGamma(T,\CC_1)\right).$$

Note that the functor
$$\affdgSch_{/S\underset{\CY_2}\times\CY_1}\to \affdgSch_{/S'\underset{\CY_2}\times\CY_1},\quad T\mapsto T\underset{S}\times S'$$
is cofinal. Hence, we can calculate $\bGamma(S'\underset{\CY_1}\times \CY_1,\CC_1)$ as
$$\underset{T\in \affdgSch_{/S\underset{\CY_2}\times\CY_1}}{\underset{\longleftarrow}{lim}}\, \left(\bGamma(S'\underset{S}\times T,\CC_1)\right),$$
and the two expressions are manifestly isomorphic.

\end{proof}

\sssec{}

Let 
$$\CY_1\overset{f_{1,2}}\longrightarrow \CY_2 \overset{f_{2,3}}\longrightarrow \CY_3$$
be a pair of morphisms. We have an obvious isomorphism
$$\coRres_{f_{1,2}}\circ \coRres_{f_{2,3}}\simeq \coRres_{f_{1,3}}.$$

By passing to right adjoints we obtain a canonical isomorphism
$$\coIind_{f_{2,3}}\circ \coIind_{f_{1,2}}\simeq \coIind_{f_{1,3}}.$$

\sssec{}

We note that the functor 
$$\bGamma(\CY,-):\on{ShvCat}(\CY)\to \StinftyCat_{\on{cont}}$$
is a particular case of $\coIind$, namely, for for the morphism $p_\CY:\CY\to \on{pt}$.

\medskip

In particular, we obtain:

\begin{lem}
For a morphism $f:\CY_1\to \CY_2$ and $\CC\in \on{ShvCat}(\CY_1)$
there is a canonical isomorphism
$$\bGamma(\CY_2,\coIind_f(\CC))\simeq \bGamma(\CY_1,\CC).$$
\end{lem}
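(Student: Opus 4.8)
The plan is to deduce this as a special case of the composition isomorphism for $\coIind$ established immediately above, namely $\coIind_{f_{2,3}}\circ \coIind_{f_{1,2}}\simeq \coIind_{f_{1,3}}$. First I would invoke the identification recorded in the preceding paragraph: under the tautological equivalence $\on{ShvCat}(\on{pt})\simeq \StinftyCat_{\on{cont}}$ (coming from $\QCoh(\on{pt})\simeq \Vect$), the global sections functor $\bGamma(\CY,-)$ \emph{is} the functor $\coIind_{p_\CY}$ for the structure map $p_\CY:\CY\to \on{pt}$. This is exactly the content of the remark preceding the lemma.

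Next I would factor the structure map of $\CY_1$ through $f$, that is, write $p_{\CY_1}=p_{\CY_2}\circ f$, and apply the composition isomorphism to the pair $\CY_1\overset{f}\to \CY_2\overset{p_{\CY_2}}\to \on{pt}$. This gives $\coIind_{p_{\CY_2}}\circ \coIind_f\simeq \coIind_{p_{\CY_1}}$, which under the identification of the previous paragraph reads $\bGamma(\CY_2,-)\circ \coIind_f\simeq \bGamma(\CY_1,-)$. Evaluating on $\CC\in \on{ShvCat}(\CY_1)$ yields the asserted canonical isomorphism $\bGamma(\CY_2,\coIind_f(\CC))\simeq \bGamma(\CY_1,\CC)$.

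Since the argument is purely formal, I do not expect a genuine obstacle; the only thing to be careful about is the bookkeeping, namely that the stated isomorphism $\coIind_{f_{2,3}}\circ \coIind_{f_{1,2}}\simeq \coIind_{f_{1,3}}$ is indeed the one obtained by passing to right adjoints from the evident identity $\coRres_{f_{1,2}}\circ \coRres_{f_{2,3}}\simeq \coRres_{f_{1,3}}$, so that the resulting isomorphism is canonical and compatible with the identification $\bGamma=\coIind_{p_{(-)}}$.

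As a consistency check, and an alternative route independent of the composition isomorphism, I would note the direct computation. By the definitions in \secref{sss:dir and inv}, $\bGamma(\CY_2,\coIind_f(\CC))\simeq \underset{S\in \affdgSch_{/\CY_2}}{\underset{\longleftarrow}{lim}}\,\bGamma(S\underset{\CY_2}\times \CY_1,\CC)$, and unwinding each inner term as a limit over $\affdgSch_{/S\underset{\CY_2}\times\CY_1}$ presents the whole expression as a limit over pairs $(S,\,T\to S\underset{\CY_2}\times\CY_1)$, the value depending only on $T$. For fixed $T\in \affdgSch_{/\CY_1}$ the category of such $S$ has an initial object, given by $S=T$ (mapped to $\CY_2$ through $\CY_1$), so a cofinality argument of the same type as the one used in the proof of the lemma in \secref{sss:dir and inv} collapses the double limit to $\underset{T\in \affdgSch_{/\CY_1}}{\underset{\longleftarrow}{lim}}\,\bGamma(T,\CC)=\bGamma(\CY_1,\CC)$, recovering the same isomorphism.
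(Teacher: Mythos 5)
Your proof is correct and is essentially the paper's own argument: the paper derives the lemma directly from the identification $\bGamma(\CY,-)\simeq \coIind_{p_\CY}$ together with the composition isomorphism $\coIind_{f_{2,3}}\circ \coIind_{f_{1,2}}\simeq \coIind_{f_{1,3}}$ applied to $\CY_1\overset{f}\to \CY_2\to \on{pt}$, exactly as you do. Your limit/cofinality consistency check is a fine bonus but not needed.
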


\sssec{}

Let us note the following property of prestacks for which $\bGamma^{\on{enh}}_\CY$ is fully faithful:

\begin{prop}  \label{p:nice base change}
Suppose $\CY$ is such that $\bGamma^{\on{enh}}_\CY$ is fully faithful. Then for $S\in \affdgSch_{/\CY}$ and
$f:\CY'\to \CY$, the map
$$\QCoh(S)\underset{\QCoh(\CY)}\otimes \QCoh(\CY')\to \QCoh(S\underset{\CY}\times \CY')$$
is an equivalence.
\end{prop}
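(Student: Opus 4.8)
The plan is to reduce the statement to a manipulation of global sections of a suitable sheaf of categories, using the hypothesis that $\bGamma^{\on{enh}}_\CY$ is fully faithful. The key observation is that both sides of the asserted equivalence are of the form $\bGamma(S, -)$ applied to objects of $\on{ShvCat}(\CY)$ that are obtained by pushing forward $\QCoh_{/\CY'}$. Concretely, consider the sheaf of categories $\CC := \coIind_f(\QCoh_{/\CY'}) \in \on{ShvCat}(\CY)$. By definition of $\coIind_f$ (see \secref{sss:dir and inv}), its value on $S \in \affdgSch_{/\CY}$ is
$$\bGamma(S, \CC) = \bGamma(S \underset{\CY}\times \CY', \QCoh_{/\CY'}) \simeq \QCoh(S \underset{\CY}\times \CY'),$$
which is precisely the right-hand side of the map we wish to analyze. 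So the target of our map is literally $\bGamma(S, \CC)$.

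For the left-hand side, first I would compute $\bGamma^{\on{enh}}(\CY, \CC) \in \QCoh(\CY)\mmod$. By the \lemref{} relating $\bGamma$ to $\coIind$, we have $\bGamma(\CY, \coIind_f(\QCoh_{/\CY'})) \simeq \bGamma(\CY', \QCoh_{/\CY'}) \simeq \QCoh(\CY')$, and tracking the enhanced module structure, this identifies $\bGamma^{\on{enh}}(\CY, \CC) \simeq \QCoh(\CY')$ as a $\QCoh(\CY)$-module category (via $f^*$). Now, because $\bGamma^{\on{enh}}_\CY$ is fully faithful, the co-unit map $\bLoc_\CY(\bGamma^{\on{enh}}(\CY,\CC)) \to \CC$ is an equivalence. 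Evaluating this equivalence on $S$ and using the defining formula for $\bLoc_\CY$, the left-hand side becomes
$$\bGamma(S, \bLoc_\CY(\QCoh(\CY'))) = \QCoh(S) \underset{\QCoh(\CY)}\otimes \QCoh(\CY'),$$
which is exactly the source of our map. Thus the full-faithfulness of $\bGamma^{\on{enh}}_\CY$, applied to the single object $\CC = \coIind_f(\QCoh_{/\CY'})$, directly yields the desired equivalence.

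The main thing to verify carefully—and the step I expect to be the real content—is that the comparison map furnished by abstract adjunction (the co-unit of $\bLoc_\CY \dashv \bGamma^{\on{enh}}_\CY$, evaluated on $S$) coincides with the naturally-defined base-change map in the statement, namely $\QCoh(S)\otimes_{\QCoh(\CY)}\QCoh(\CY') \to \QCoh(S\times_\CY \CY')$. This is a matter of chasing the identifications of $\bGamma^{\on{enh}}(\CY,\coIind_f(\QCoh_{/\CY'}))$ with $\QCoh(\CY')$ through the construction of $\coIind_f$ and confirming that the resulting map is the canonical one; the equivalence itself is then automatic from full-faithfulness. One should also confirm that $\coIind_f(\QCoh_{/\CY'})$ is genuinely an object of $\on{ShvCat}(\CY)$, but this is guaranteed by the \lemref{} in \secref{sss:dir and inv} establishing the base-change compatibility for $\coIind_f$. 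No rigidity or dualizability hypotheses on $\QCoh(\CY)$ are needed, since we invoke full-faithfulness of $\bGamma^{\on{enh}}_\CY$ as a black box rather than recomputing limits.
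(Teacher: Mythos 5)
Your proposal is correct and is essentially the paper's own proof: the paper likewise considers $\CC:=\coIind_f(\QCoh_{/\CY'})\in \on{ShvCat}(\CY)$ and observes that the two sides of the proposition are obtained by evaluating the two sides of the co-unit map $\bLoc_\CY(\bGamma^{\on{enh}}(\CY,\CC))\to \CC$ on $S$, which is an equivalence by the fully-faithfulness hypothesis. Your write-up simply makes explicit the identifications (of $\bGamma(S,\CC)$ with $\QCoh(S\underset{\CY}\times \CY')$, and of $\bGamma^{\on{enh}}(\CY,\CC)$ with $\QCoh(\CY')$) that the paper leaves implicit.
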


\begin{proof}
Consider the object $$\CC:=\coIind_f(\QCoh_{/\CY'})\in \on{ShvCat}(\CY).$$
Now, the two sides in the proposition are obtained by evaluation the two sides in 
$$\bLoc_\CY(\bGamma^{\on{enh}}(\CY,\CC))\to \CC$$
on $S\in \affdgSch_{/\CY}$. 
\end{proof}

\ssec{Commutation of diagrams}

Let $f:\CY_1\to \CY_2$ be a morphism of presracks. 

\sssec{}

We note that the following diagram is commutative by construction
\begin{equation} \label{e:commute 1}
\CD
\QCoh(\CY_1)\mmod  @>{\bLoc_{\CY_1}}>>  \on{ShvCat}(\CY_1)  \\
@A{\Iind_f}AA    @AA{\coRres_f}A   \\
\QCoh(\CY_2)\mmod  @>{\bLoc_{\CY_2}}>>  \on{ShvCat}(\CY_2).
\endCD
\end{equation} 

By adjunction, the following diagram is commutative as well:
\begin{equation} \label{e:commute 2}
\CD
\QCoh(\CY_1)\mmod  @<{\bGamma^{\on{enh}}_{\CY_1}}<<  \on{ShvCat}(\CY_1)  \\
@V{\Rres_f}VV    @VV{\coIind_f}V   \\
\QCoh(\CY_2)\mmod  @<{\bGamma^{\on{enh}}_{\CY_2}}<<  \on{ShvCat}(\CY_2).
\endCD
\end{equation} 

\sssec{}

Hence, each of the following two diagrams
\begin{equation} \label{e:almost commute 1}
\CD
\QCoh(\CY_1)\mmod  @>{\bLoc_{\CY_1}}>>  \on{ShvCat}(\CY_1)  \\
@V{\Rres_f}VV    @VV{\coIind_f}V   \\
\QCoh(\CY_2)\mmod  @>{\bLoc_{\CY_2}}>>  \on{ShvCat}(\CY_2)
\endCD
\end{equation}
and
\begin{equation} \label{e:almost commute 2}
\CD
\QCoh(\CY_1)\mmod  @<{\bGamma^{\on{enh}}_{\CY_1}}<<  \on{ShvCat}(\CY_1)  \\
@A{\Iind_f}AA    @AA{\coRres_f}A   \\
\QCoh(\CY_2)\mmod  @<{\bGamma^{\on{enh}}_{\CY_2}}<<  \on{ShvCat}(\CY_2)
\endCD
\end{equation}
commutes \emph{up to a natural transformation}.

\sssec{}

For future use, let us record the following:

\begin{lem} \label{l:morphism between 1-affine}
Let $f:\CY_1\to \CY_2$ be a morphism between 1-affine prestacks. Then for $\CC\in \on{ShvCat}(\CY_2)$, 
the functor
$$\QCoh(\CY_1)\underset{\QCoh(\CY_2)}\otimes \bGamma^{\on{enh}}(\CY_2,\CC)\to \bGamma^{\on{enh}}(\CY_1,\coRres_f(\CC))$$
is an equivalence, i.e., the natural transformation in the diagram \eqref{e:almost commute 2} is an isomorphism.
\end{lem}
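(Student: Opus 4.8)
The plan is to recognize the map in question as the horizontal \emph{mate} of the strictly commuting square \eqref{e:commute 1}, and then to observe that when both $\CY_1$ and $\CY_2$ are 1-affine, this mate factors as a composite of three maps, each of which is an equivalence. Recall from the definition of 1-affineness that for a 1-affine prestack $\CY$ both the unit $\on{id}\to \bGamma^{\on{enh}}_\CY\circ \bLoc_\CY$ and the counit $\bLoc_\CY\circ \bGamma^{\on{enh}}_\CY\to \on{id}$ of the adjunction $(\bLoc_\CY,\bGamma^{\on{enh}}_\CY)$ are equivalences. This is the only way the hypothesis enters.

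First I would fix notation for the adjunctions: write $\eta_i\colon \on{id}\to \bGamma^{\on{enh}}_{\CY_i}\circ \bLoc_{\CY_i}$ and $\epsilon_i\colon \bLoc_{\CY_i}\circ \bGamma^{\on{enh}}_{\CY_i}\to \on{id}$ for $i=1,2$, and recall the canonical isomorphism $\coRres_f\circ \bLoc_{\CY_2}\simeq \bLoc_{\CY_1}\circ \Iind_f$ of \eqref{e:commute 1}. The natural transformation depicted in \eqref{e:almost commute 2}, namely $\Iind_f\circ \bGamma^{\on{enh}}_{\CY_2}\to \bGamma^{\on{enh}}_{\CY_1}\circ \coRres_f$, is by construction the mate of \eqref{e:commute 1} with respect to the adjunctions $(\bLoc_{\CY_i},\bGamma^{\on{enh}}_{\CY_i})$ (we pass to right adjoints only in the horizontal direction, keeping the left adjoints $\Iind_f,\coRres_f$ in the vertical direction). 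Unwinding this mate gives the factorization
\begin{multline*}
\Iind_f\circ \bGamma^{\on{enh}}_{\CY_2} \xrightarrow{\eta_1} \bGamma^{\on{enh}}_{\CY_1}\circ \bLoc_{\CY_1}\circ \Iind_f\circ \bGamma^{\on{enh}}_{\CY_2} \overset{\sim}\longrightarrow \\
\bGamma^{\on{enh}}_{\CY_1}\circ \coRres_f\circ \bLoc_{\CY_2}\circ \bGamma^{\on{enh}}_{\CY_2} \xrightarrow{\epsilon_2} \bGamma^{\on{enh}}_{\CY_1}\circ \coRres_f,
\end{multline*}
where the middle arrow is induced by the isomorphism \eqref{e:commute 1}.

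Now the conclusion is immediate: the first arrow is whiskered from the unit $\eta_1$, which is an equivalence because $\CY_1$ is 1-affine; the middle arrow is an equivalence by \eqref{e:commute 1}; and the last arrow is whiskered from the counit $\epsilon_2$, which is an equivalence because $\CY_2$ is 1-affine. Hence the composite, which is the map of the lemma evaluated on $\CC$, is an equivalence, as claimed. I would also spell out that evaluating this natural transformation on a fixed $\CC\in \on{ShvCat}(\CY_2)$ reproduces precisely the stated map $\QCoh(\CY_1)\underset{\QCoh(\CY_2)}\otimes \bGamma^{\on{enh}}(\CY_2,\CC)\to \bGamma^{\on{enh}}(\CY_1,\coRres_f(\CC))$, since $\Iind_f(\bC_2)=\QCoh(\CY_1)\underset{\QCoh(\CY_2)}\otimes \bC_2$.

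The argument is entirely formal, so there is no genuine analytic obstacle here; the only point that requires care is the bookkeeping in the first two steps, namely confirming that the natural transformation named in the lemma and drawn in \eqref{e:almost commute 2} really is the mate of \eqref{e:commute 1} (and not, say, its inverse or the mate taken in the vertical direction). I expect this identification — verifying that the mate construction of \eqref{e:commute 1} yields the three-step composite above with $\eta_1$ and $\epsilon_2$ appearing as displayed — to be the main, if modest, thing to pin down; once it is in place, the equivalences of $\eta_1$ and $\epsilon_2$ under 1-affineness finish the proof with no further input.
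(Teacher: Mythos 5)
Your proof is correct and is essentially the paper's own argument: the paper's one-line proof ("follows from the commutation of \eqref{e:commute 1}, as the horizontal arrows are equivalences") is precisely your three-step factorization, with the unit $\eta_1$ and counit $\epsilon_2$ being equivalences by 1-affineness of $\CY_1$ and $\CY_2$. You have simply made explicit the mate bookkeeping that the paper leaves implicit.
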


\begin{proof}
Follows from the commutation of \eqref{e:commute 1}, as the horizontal arrows are equivalences.
\end{proof}

\sssec{}

The following assertion will be a key tool for many proofs: 

\begin{prop}  \label{p:almost commute 1}
Assume that for any $S\in \affdgSch_{/\CY_2}$ the map
$$\QCoh(S)\underset{\QCoh(\CY_2)}\otimes \QCoh(\CY_1)\to \QCoh(S\underset{\CY_2}\times \CY_1)$$
is an equivalence. 

\medskip

\noindent{\em(a)} Suppose that $f$ is such that its base change by every affine DG scheme yields a prestack 
for which $\bLoc$ is fully faithful. Then:

\smallskip

\noindent{\em(i)} The diagram \eqref{e:almost commute 1} commutes, 
i.e., the natural transformation is an isomorphism.

\smallskip

\noindent{\em(ii)} If $\QCoh(\CY_1)$ is dualizable as an object of $\QCoh(\CY_2)\mmod$, then the diagram
\eqref{e:almost commute 2} commutes, i.e., the natural transformation is an isomorphism.

\smallskip

\noindent{\em(iii)} If $\bLoc_{\CY_2}$ is fully faithful, then so is $\bLoc_{\CY_1}$. 

\medskip

\noindent{\em(b)} Suppose that $f$ is such that its base change by every affine DG scheme yields a 1-affine 
prestack. Then if $\bGamma^{\on{enh}}_{\CY_2}$ is fully faithful, then so is $\bGamma^{\on{enh}}_{\CY_1}$.
\end{prop}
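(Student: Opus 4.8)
The plan is to build everything on the two structural facts already in hand: the commutativity (on the nose) of diagram \eqref{e:commute 1} relating $\bLoc$ with $\Iind_f$ and $\coRres_f$, and its adjoint \eqref{e:commute 2} relating $\bGamma^{\on{enh}}$ with $\Rres_f$ and $\coIind_f$. The hypothesis that $\QCoh(S)\underset{\QCoh(\CY_2)}\otimes \QCoh(\CY_1)\to \QCoh(S\underset{\CY_2}\times \CY_1)$ is an equivalence for all $S$ is precisely the input that lets us compute the base change $S\underset{\CY_2}\times\CY_1$ at the level of quasi-coherent sheaves; it should be invoked section-by-section after evaluating the relevant sheaves of categories on an arbitrary $S\in\affdgSch_{/\CY_2}$.

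For part (a)(i), I would evaluate both composites in \eqref{e:almost commute 1} on $S\in\affdgSch_{/\CY_2}$. Going one way, $\bGamma(S,\coIind_f(\bLoc_{\CY_1}(\Rres_f(\bC))))=\bGamma(S\underset{\CY_2}\times\CY_1,\bLoc_{\CY_1}(\Rres_f(\bC)))$ by the definition of $\coIind_f$ in \secref{sss:dir and inv}. Because $f$ base-changes to prestacks for which $\bLoc$ is fully faithful, the counit $\bLoc(\bGamma^{\on{enh}}(-))\to\on{id}$ is an equivalence on $S\underset{\CY_2}\times\CY_1$, so this global-sections computation reduces, via \corref{c:Cech descent} and the descent machinery, to $\QCoh(S\underset{\CY_2}\times\CY_1)\underset{\QCoh(S)}\otimes(\ldots)$; here the base-change hypothesis identifies $\QCoh(S\underset{\CY_2}\times\CY_1)$ with $\QCoh(S)\underset{\QCoh(\CY_2)}\otimes\QCoh(\CY_1)$, and one matches this against the other composite $\bGamma(S,\bLoc_{\CY_2}(\Iind_f\text{ or }\Rres_f\,\bC))=\QCoh(S)\underset{\QCoh(\CY_2)}\otimes(\ldots)$. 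For (a)(ii), the extra dualizability of $\QCoh(\CY_1)$ over $\QCoh(\CY_2)$ is what allows tensoring to commute with the limits defining $\bGamma^{\on{enh}}$ (exactly as in \lemref{l:dualizable in rigid} and the global-sections lemma), so that the adjoint diagram \eqref{e:almost commute 2} inherits the isomorphism from (i) by passage to adjoints. Part (a)(iii) is then formal: full faithfulness of $\bLoc$ is the statement that the unit $\on{id}\to\bGamma^{\on{enh}}\circ\bLoc$ is an equivalence, and the commutation established in (i)–(ii) transports this property from $\CY_2$ to $\CY_1$ along the conservative/compatible functors $\Rres_f,\coIind_f$.

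For part (b), I would run the mirror-image argument for $\bGamma^{\on{enh}}$: the hypothesis is now that $f$ base-changes to \emph{1-affine} prestacks, so both $\bLoc$ and $\bGamma^{\on{enh}}$ are equivalences after base change. One checks that the unit of the $(\bLoc_{\CY_1},\bGamma^{\on{enh}}_{\CY_1})$ adjunction is an equivalence on a given $\CC$ by testing it through $\coRres_f$ into the base-changed fibers, where 1-affineness forces the corresponding unit to be invertible; full faithfulness of $\bGamma^{\on{enh}}_{\CY_2}$ supplies the seed, and diagram \eqref{e:commute 2} propagates it.

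I expect the main obstacle to be step (a)(i): the clean-looking identification of the two composites hides a genuine exchange of a colimit (the tensor product $\QCoh(S)\underset{\QCoh(\CY_2)}\otimes-$) with the limit defining $\bGamma(S\underset{\CY_2}\times\CY_1,-)$ over $\affdgSch_{/S\underset{\CY_2}\times\CY_1}$. Making this commutation rigorous is exactly the ``convergence'' issue flagged in the introduction, and it is the base-change hypothesis together with dualizability (hence \lemref{l:dualizable in rigid}) that must be deployed carefully to justify pulling the tensor factor inside the limit; the remaining parts are then comparatively formal consequences obtained by adjunction and by transporting unit/counit equivalences.
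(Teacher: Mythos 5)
Your outline follows the paper's strategy (evaluate on affine schemes over $\CY_2$, use the base-change hypothesis, transport unit/counit equivalences along the commuting squares), but three concrete defects would stop the proof. First, in (a)(i) you invoke the wrong adjunction map: full faithfulness of $\bLoc$ on $S\underset{\CY_2}\times \CY_1$ says the \emph{unit} $\bC\to \bGamma^{\on{enh}}(\bLoc(\bC))$ is an equivalence (as you yourself state correctly when discussing (a)(iii)), not the counit $\bLoc(\bGamma^{\on{enh}}(-))\to \on{Id}$ --- the latter is full faithfulness of $\bGamma^{\on{enh}}$, which is not a hypothesis here. With the unit in hand, (a)(i) needs no descent machinery and no limit--colimit exchange at all: for $\bC_1\in \QCoh(\CY_1)\mmod$ (note that your composite $\coIind_f\circ \bLoc_{\CY_1}\circ \Rres_f$ does not typecheck, since $\Rres_f(\bC_1)$ is a $\QCoh(\CY_2)$-module), one side evaluated on $S$ is $\QCoh(S)\underset{\QCoh(\CY_2)}\otimes \bC_1\simeq \QCoh(S\underset{\CY_2}\times \CY_1)\underset{\QCoh(\CY_1)}\otimes \bC_1=:\bC$ by the base-change hypothesis, while the other side is $\bGamma(S\underset{\CY_2}\times \CY_1,\bLoc_{S\underset{\CY_2}\times \CY_1}(\bC))$, which equals $\bC$ precisely by the unit isomorphism. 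Consequently your closing paragraph misplaces the ``convergence'' issue: the exchange of a tensor product with a limit occurs only in (a)(ii), and it cannot be obtained ``by passage to adjoints'' from (i) --- diagrams \eqref{e:almost commute 1} and \eqref{e:almost commute 2} are not adjoint to one another (it is \eqref{e:commute 1} and \eqref{e:commute 2} that are). What (a)(ii) actually requires is a fiberwise identification $\bGamma(S\underset{\CY_2}\times \CY_1,\CC_2)\simeq \QCoh(\CY_1)\underset{\QCoh(\CY_2)}\otimes \bGamma(S,\CC_2)$, proved using $\CC_2|_S\simeq \bLoc_S(\bGamma(S,\CC_2))$ and again the hypothesis of (a) (this is \lemref{l:sect on fiber}); only then can one write $\bGamma(\CY_1,\coRres_f(\CC_2))$ as the limit over $S\in \affdgSch_{/\CY_2}$ of these terms and use dualizability of $\QCoh(\CY_1)$ to pull the tensor factor out of the limit.

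In (b) the mechanism is also wrong. Full faithfulness of $\bGamma^{\on{enh}}_{\CY_1}$ concerns the \emph{counit}, not the unit, and the functor along which one must test it is $\coIind_f:\on{ShvCat}(\CY_1)\to \on{ShvCat}(\CY_2)$, not $\coRres_f$, which goes in the opposite direction and cannot transport a statement from $\CY_1$ to $\CY_2$. The key step your sketch misses entirely is that under hypothesis (b) the functor $\coIind_f$ is \emph{conservative}: if $\coIind_f(\phi)$ is invertible for $\phi:\CC'_1\to \CC''_1$, then for each $T\in \affdgSch_{/\CY_1}$ the induced map on sections over $\CZ:=T\underset{\CY_2}\times \CY_1$ is invertible; since $\CZ$ is 1-affine, the map $\CC'_1|_{\CZ}\to \CC''_1|_{\CZ}$ of sheaves of categories is itself invertible, and evaluating at $T\to \CZ$ gives invertibility of $\bGamma(T,\CC'_1)\to \bGamma(T,\CC''_1)$. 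Granting conservativity, one applies $\coIind_f$ to the counit over $\CY_1$, uses (a,i) (whose hypotheses hold, since 1-affineness implies full faithfulness of $\bLoc$) together with the commutativity of \eqref{e:commute 2} to identify the result with the counit over $\CY_2$, which is invertible because $\bGamma^{\on{enh}}_{\CY_2}$ is fully faithful. Without this conservativity step, the ``propagation'' you describe has no content.
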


\begin{cor}  \label{c:1-affine base and fiber}
Let $f:\CY_1\to \CY_2$ be a map, where $\CY_2$ is 1-affine, and the base change of $f$ by an affine DG scheme yields a 
1-affine prestack. Then $\CY_1$ is 1-affine.
\end{cor}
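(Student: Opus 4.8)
The plan is to deduce everything from \propref{p:almost commute 1}, so the first task is to verify its standing hypothesis for our map $f$. That hypothesis asks that for every $S\in \affdgSch_{/\CY_2}$ the canonical map
$$\QCoh(S)\underset{\QCoh(\CY_2)}\otimes \QCoh(\CY_1)\to \QCoh(S\underset{\CY_2}\times \CY_1)$$
be an equivalence. Since $\CY_2$ is 1-affine, the functor $\bGamma^{\on{enh}}_{\CY_2}$ is in particular fully faithful, and so this equivalence is exactly the content of \propref{p:nice base change}. Thus we are entitled to invoke \propref{p:almost commute 1}.

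Next I would record the elementary observation that a 1-affine prestack has $\bLoc$ fully faithful, since in that case $\bLoc$ is even an equivalence. Applying this to the fibers of $f$, the standing assumption that every base change of $f$ along an affine DG scheme is 1-affine supplies both hypotheses needed in \propref{p:almost commute 1}: the weaker one of part (a), that these base changes have $\bLoc$ fully faithful, and the stronger one of part (b), that they are 1-affine. I then apply the two parts. Part (a)(iii) requires $\bLoc_{\CY_2}$ to be fully faithful, which holds because $\CY_2$ is 1-affine, and it yields that $\bLoc_{\CY_1}$ is fully faithful. Part (b) requires $\bGamma^{\on{enh}}_{\CY_2}$ to be fully faithful, which again holds because $\CY_2$ is 1-affine, and it yields that $\bGamma^{\on{enh}}_{\CY_1}$ is fully faithful.

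Finally, recall that $\bLoc_{\CY_1}$ and $\bGamma^{\on{enh}}_{\CY_1}$ form an adjoint pair, with $\bLoc_{\CY_1}$ the left adjoint. Full faithfulness of $\bLoc_{\CY_1}$ says the unit of this adjunction is an isomorphism, while full faithfulness of $\bGamma^{\on{enh}}_{\CY_1}$ says the counit is an isomorphism; an adjunction with both unit and counit invertible is an adjoint equivalence. Hence $\bLoc_{\CY_1}$ and $\bGamma^{\on{enh}}_{\CY_1}$ are mutually inverse equivalences, which is precisely the assertion that $\CY_1$ is 1-affine. I do not anticipate a genuine obstacle here: the entire substance has been front-loaded into \propref{p:almost commute 1} and \propref{p:nice base change}, and this corollary is a bookkeeping combination of their conclusions. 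The only point demanding a moment's care is the very first one—checking that the 1-affineness of $\CY_2$ (via \propref{p:nice base change}) is exactly what licenses the base-change hypothesis of \propref{p:almost commute 1}.
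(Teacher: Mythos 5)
Your proposal is correct and follows exactly the paper's own proof: invoke \propref{p:nice base change} (via 1-affineness of $\CY_2$) to verify the standing hypothesis of \propref{p:almost commute 1}, then apply points (a,iii) and (b) of that proposition and conclude by the standard fact that an adjunction with invertible unit and counit is an equivalence. The only difference is that you spell out the bookkeeping the paper leaves implicit.
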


\begin{proof}
Follows from \propref{p:almost commute 1}, points (a,iii) and (b). The condition of the proposition
holds because of \propref{p:nice base change}.
\end{proof}
 
\begin{cor} \label{c:product 1-affine}
The product of two 1-affine prestacks is 1-affine.
\end{cor}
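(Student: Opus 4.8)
The plan is to bootstrap from \corref{c:1-affine base and fiber}, using the two facts that an affine DG scheme is tautologically 1-affine and that the product of two affine DG schemes is again affine. The idea is to realize $\CY_1\times\CY_2$ as the total space of a projection whose base is 1-affine and whose affine base changes are 1-affine, handling one level down by the same device.

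First I would record the intermediate statement that if $\CY$ is 1-affine and $S$ is an affine DG scheme, then $\CY\times S$ is 1-affine. To prove it, apply \corref{c:1-affine base and fiber} to the projection $\CY\times S\to\CY$. The target $\CY$ is 1-affine by hypothesis, and for any affine DG scheme $T\to\CY$ the base change is $T\underset{\CY}\times(\CY\times S)\simeq T\times S$, which is the product of two affine DG schemes, hence affine and therefore tautologically 1-affine. Thus both hypotheses of \corref{c:1-affine base and fiber} are satisfied, and $\CY\times S$ is 1-affine.

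With this in hand, suppose $\CY_1$ and $\CY_2$ are both 1-affine. Now apply \corref{c:1-affine base and fiber} to the projection $\CY_1\times\CY_2\to\CY_2$. Its target $\CY_2$ is 1-affine, and for any affine DG scheme $S\to\CY_2$ the base change is $S\underset{\CY_2}\times(\CY_1\times\CY_2)\simeq\CY_1\times S$, which is 1-affine by the intermediate statement just proved. Hence \corref{c:1-affine base and fiber} applies once more and $\CY_1\times\CY_2$ is 1-affine.

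I do not expect a genuine obstacle here: the whole argument is a two-step application of \corref{c:1-affine base and fiber}, and the only inputs beyond that corollary are the tautological 1-affineness of affine DG schemes and the closure of affineness under products. The one point to keep honest is that \corref{c:1-affine base and fiber} requires its affine base changes to be 1-affine \emph{prestacks}; in the two applications these base changes are respectively $T\times S$ (affine, hence 1-affine) and $\CY_1\times S$ (handled by the first step), so the hypotheses are genuinely met in each case.
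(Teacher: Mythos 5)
Your proof is correct and is essentially the paper's own (implicit) argument: the corollary is stated immediately after \corref{c:1-affine base and fiber} precisely so that it follows by applying that corollary to the projection $\CY_1\times\CY_2\to\CY_2$, with the affine base changes $\CY_1\times S$ handled by the same corollary one level down, exactly as in your two-step bootstrap. The one point you flag — that the base changes must genuinely be 1-affine prestacks — is handled correctly, since $T\times S$ is affine and hence tautologically 1-affine.
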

 
\ssec{Proof of \propref{p:almost commute 1}} 

\sssec{Proof of point \em{(a,i)}}  \hfill

\medskip
 
Fix $\bC_1\in \QCoh(\CY_1)\mmod$ and $S\in \affdgSch_{/\CY_2}$. By definition:
\begin{multline*}
\bGamma(S,\bLoc_{\CY_2}\circ \Rres_f(\bC_1))=\QCoh(S)\underset{\QCoh(\CY_2)}\otimes \bC_1\simeq \\
\simeq (\QCoh(S)\underset{\QCoh(\CY_2)}\otimes \QCoh(\CY_1)) \underset{\QCoh(\CY_1)}\otimes \bC_1,
\end{multline*}
while the latter maps isomorphically to 
$$\QCoh(S\underset{\CY_2}\times \CY_1)\underset{\QCoh(\CY_1)}\otimes \bC_1,$$
by the assumption of the proposition.

\medskip

Also,
$$\bGamma(S,\coIind_f\circ \bLoc_{\CY_1}(\bC_1))\simeq
\bGamma(S\underset{\CY_2}\times \CY_1,\bLoc_{\CY_1}(\bC_1)).$$

Set 
$$\bC:=\QCoh(S\underset{\CY_2}\times \CY_1)\underset{\QCoh(\CY_1)}\otimes \bC_1\in
\QCoh(S\underset{\CY_2}\times \CY_1)\mmod.$$

We have
$$\bLoc_{\CY_1}(\bC_1)|_{S\underset{\CY_2}\times \CY_1}\simeq \bLoc_{S\underset{\CY_2}\times \CY_1}(\bC),$$
and hence
$$\bGamma(S\underset{\CY_2}\times \CY_1,\bLoc_{\CY_1}(\bC_1))\simeq
\bGamma(S\underset{\CY_2}\times \CY_1,\bLoc_{S\underset{\CY_2}\times \CY_1}(\bC)).$$
Now, the assumtion in (a) implies that the latter is isomorphic to $\bC$ itself, as desired.  

\qed

\sssec{}

For the proof of point (a,ii) we will need the following assertion:

\begin{lem}  \label{l:sect on fiber}
Under the assumption of (a)  for  $S\in \affdgSch_{/\CY_2}$ we have:

\smallskip

\noindent{\em(1)}
For $\bC\in \QCoh(S)\mmod$,
the natural map
\begin{equation} \label{e:sect on fiber 1}
\QCoh(\CY_1)\underset{\QCoh(\CY_2)} \otimes \bC\to
\bGamma(S\underset{\CY_2}\times \CY_1,\bLoc_S(\bC))
\end{equation}
is an isomorphism. 

\smallskip

\noindent{\em(2)} For $\CC_2\in \on{ShvCat}(\CY_2)$, the natural map
\begin{equation} \label{e:sect on fiber 2}
\QCoh(\CY_1)\underset{\QCoh(\CY_2)} \otimes \bGamma(S,\CC_2)\to \bGamma(S\underset{\CY_2}\times \CY_1,\CC_2)
\end{equation}
is an isomorphism.
\end{lem}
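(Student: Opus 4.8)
The plan is to prove (1) directly and then deduce (2) as a special case. Throughout, write $S':=S\underset{\CY_2}\times \CY_1$ and let $g:S'\to S$ be the projection, so that $g$ is precisely the base change of $f$ along $S\to \CY_2$. By the hypothesis of point (a), $S'$ is then a prestack for which $\bLoc_{S'}$ is fully faithful, and by the standing assumption of \propref{p:almost commute 1} we have a canonical equivalence $\QCoh(S')\simeq \QCoh(S)\underset{\QCoh(\CY_2)}\otimes \QCoh(\CY_1)$.

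For (1), I would first restrict the sheaf of categories $\bLoc_S(\bC)$ along $g$. Applying the commutative diagram \eqref{e:commute 1} to the morphism $g:S'\to S$ yields
$$\coRres_g(\bLoc_S(\bC))\simeq \bLoc_{S'}(\Iind_g(\bC)),\qquad \Iind_g(\bC)=\QCoh(S')\underset{\QCoh(S)}\otimes \bC.$$
Invoking the base-change equivalence for $\QCoh$ and rearranging the (commuting) tensor products, the $\QCoh(S')$-module on the right simplifies:
$$\QCoh(S')\underset{\QCoh(S)}\otimes \bC\simeq \left(\QCoh(S)\underset{\QCoh(\CY_2)}\otimes \QCoh(\CY_1)\right)\underset{\QCoh(S)}\otimes \bC\simeq \QCoh(\CY_1)\underset{\QCoh(\CY_2)}\otimes \bC=:\bC'.$$
Thus $\bLoc_S(\bC)|_{S'}\simeq \bLoc_{S'}(\bC')$. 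Taking global sections over $S'$ and invoking the full faithfulness of $\bLoc_{S'}$ — i.e.\ that the unit $\bC'\to \bGamma^{\on{enh}}_{S'}(\bLoc_{S'}(\bC'))$ is an equivalence — identifies $\bGamma(S',\bLoc_S(\bC))$ with $\bC'=\QCoh(\CY_1)\underset{\QCoh(\CY_2)}\otimes \bC$, which is exactly the source of \eqref{e:sect on fiber 1}. It remains to check that this chain of identifications is the natural map, which establishes (1).

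For (2), I would specialize (1). Since $S$ is affine it is tautologically 1-affine, so setting $\bC:=\bGamma(S,\CC_2)=\bGamma^{\on{enh}}_S(\CC_2|_S)\in \QCoh(S)\mmod$ gives $\bLoc_S(\bC)\simeq \CC_2|_S$. Feeding this into (1) and using that restriction is functorial (so that $(\CC_2|_S)|_{S'}\simeq \CC_2|_{S'}$) yields
$$\QCoh(\CY_1)\underset{\QCoh(\CY_2)}\otimes \bGamma(S,\CC_2)\simeq \bGamma(S',\bLoc_S(\bGamma(S,\CC_2)))\simeq \bGamma(S',\CC_2),$$
and one again checks that the composite is the natural map \eqref{e:sect on fiber 2}.

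The main obstacle lies in the core of (1): the full faithfulness of $\bLoc_{S'}$ is precisely what allows us to recover the module category $\bC'$ from the sheaf $\bLoc_{S'}(\bC')$ through its global sections; without this hypothesis (supplied by assumption (a)) the global sections could be strictly larger. The $\QCoh$ base-change assumption is the other indispensable input, guaranteeing that the ``tensor-up'' module over $\QCoh(S')$ agrees with the geometric fiber product. Everything else is formal bookkeeping of the identifications furnished by the commutative square \eqref{e:commute 1} and by the tautological 1-affineness of the affine scheme $S$.
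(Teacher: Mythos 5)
Your proposal is correct and follows essentially the same route as the paper's own proof: both identify $\QCoh(S\underset{\CY_2}\times \CY_1)\underset{\QCoh(S)}\otimes \bC$ with $\QCoh(\CY_1)\underset{\QCoh(\CY_2)}\otimes \bC$ via the base-change assumption, observe that $\bLoc_S(\bC)|_{S\underset{\CY_2}\times \CY_1}\simeq \bLoc_{S\underset{\CY_2}\times \CY_1}$ of this module (you do it via \eqref{e:commute 1}, the paper does it directly), and then conclude by the full faithfulness of $\bLoc$ on the fiber product supplied by assumption (a). Your deduction of (2) from (1), taking $\bC:=\bGamma(S,\CC_2)$ and using $\CC_2|_S\simeq \bLoc_S(\bGamma(S,\CC_2))$ from 1-affineness of the affine scheme $S$, is exactly the paper's argument.
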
 
 
\begin{proof}

We rewrite the left-hand side in \eqref{e:sect on fiber 1} as
$$(\QCoh(\CY_1)\underset{\QCoh(\CY_2)} \otimes \QCoh(S))\underset{\QCoh(S)}\otimes \bC,$$
which by the assumption of the proposition maps isomorphically to 
$$\QCoh(S\underset{\CY_1}\times \CY_2)\underset{\QCoh(S)}\otimes \bC.$$

So, in order to prove that  \eqref{e:sect on fiber 1} is an isomorphism, we have to show that the natural map
\begin{equation} \label{e:sect on fiber 1a}
\QCoh(S\underset{\CY_1}\times \CY_2)\underset{\QCoh(S)}\otimes \bC\to
\bGamma(S\underset{\CY_2}\times \CY_1,\bLoc_S(\bC))
\end{equation}
is an isomorphism. 

\medskip

Set 
$$\bC':=\QCoh(S\underset{\CY_1}\times \CY_2)\underset{\QCoh(S)}\otimes \bC\in 
\QCoh(S\underset{\CY_1}\times \CY_2)\mmod.$$

We have:
$$\bLoc_S(\bC)|_{S\underset{\CY_2}\times \CY_1}\simeq \bLoc_{S\underset{\CY_2}\times \CY_1}(\bC'),$$
and the map in \eqref{e:sect on fiber 1a} identifies with
$$\bC'\to \bGamma(S\underset{\CY_2}\times \CY_1,\bLoc_{S\underset{\CY_2}\times \CY_1}(\bC')),$$
which is an isomorphism by the assumption in (a). This shows that  \eqref{e:sect on fiber 1} is an
isomorphism.

\medskip

To prove that \eqref{e:sect on fiber 2} is an isomorphism, we note that the two sides identify with
the corresponding sides in \eqref{e:sect on fiber 1} for 
$$\bC:=\bGamma(S,\CC_2),$$
using the fact that
$$\CC_2|_S\simeq \bLoc_S(\bGamma(S,\CC_2)).$$

\end{proof}

\sssec{Proof of point \em{(a,ii)}}

For $\CC_2\in \on{ShvCat}(\CY_2)$, we have 
\begin{equation}  \label{e:sections upstairs as limit}
\bGamma(\CY_1,\coRres_f(\CC_2))\simeq \bGamma(\CY_2,\coIind_f\circ \coRres_f(\CC_2))
\simeq \underset{S\in \affdgSch_{/\CY_2}}{\underset{\longleftarrow}{lim}}\, \bGamma(S\underset{\CY_2}\times \CY_1,\CC_2).
\end{equation}

By \lemref{l:sect on fiber}(2), we have 
$$\bGamma(S\underset{\CY_2}\times \CY_1,\CC_2)\simeq \QCoh(\CY_1)\underset{\QCoh(\CY_2)} \otimes \bGamma(S,\CC_2).$$

Hence, the expression in \eqref{e:sections upstairs as limit} identifies with 
\begin{equation}  \label{e:sections upstairs as limit 1}
\underset{S\in \affdgSch_{/\CY_2}}{\underset{\longleftarrow}{lim}}\,  \QCoh(\CY_1)\underset{\QCoh(\CY_2)} \otimes \bGamma(S,\CC_2).
\end{equation}

Now, the assumption that $\QCoh(\CY_1)$ is dualizable as an object of $\QCoh(\CY_2)\mmod$ implies that
$$\QCoh(\CY_1)\underset{\QCoh(\CY_2)} \otimes -:\QCoh(\CY_2)\mmod\to \StinftyCat_{\on{cont}}$$
commutes with limits, so we can rewrite the expression in \eqref{e:sections upstairs as limit 1} as
$$ \QCoh(\CY_1)\underset{\QCoh(\CY_2)} \otimes\left( \underset{S\in \affdgSch_{/\CY_2}}{\underset{\longleftarrow}{lim}}\, \bGamma(S,\CC_2)\right)\simeq
\QCoh(\CY_1)\underset{\QCoh(\CY_2)} \otimes \bGamma(\CY_2,\CC_2),$$
as desired.

\sssec{Proof of point \em{(a,iii)}}

We need to show that the unit map
$$\bC_1\to \bGamma^{\on{enh}}(\CY_1,\bLoc_{\CY_1}(\bC_1))$$
is an isomorphism. Note that the functor $\Rres_f$ is conservative. Hence, it suffices to show that
$$\Rres_f(\bC_1)\to \Rres_f\left(\bGamma^{\on{enh}}(\CY_1,\bLoc_{\CY_1}(\bC_1))\right)$$
is an isomorphism. However, we have a commutative diagram
$$
\CD
\Rres_f\left(\bGamma^{\on{enh}}(\CY_1,\bLoc_{\CY_1}(\bC_1))\right)  @>{\sim}>> \bGamma^{\on{enh}}\left(\CY_2,\coIind_f(\bLoc_{\CY_1}(\bC_1))\right)  \\
@AAA    @AAA    \\
\Rres_f(\bC_1)  @>>>  \bGamma^{\on{enh}}\left(\CY_2,\bLoc_{\CY_2}(\Rres_f(\bC_1))\right),
\endCD
$$
where the right vertical arrow is an isomorphism by point (a,i). Hence, if $\bLoc_{\CY_2}$ is fully faithful, the bottom
horizontal arrow is an isomorphism, and hence so is the left vertical arrow. 

\medskip

\sssec{Proof of point \em{(b)}}

By an argument similar that in point (a,iii), it suffices to show that under the assumption of point (b), the functor 
$\coIind_f$ is conservative.

\medskip

Let $\phi:\CC'_1\to \CC''_1$ be a morphism in $\on{ShvCat}(\CY_1)$, such that
$\coIind_f(\phi)$ is an isomorphism. We need to show that for every 
$T\in \affdgSch_{\CY_1}$, the resulting map
\begin{equation} \label{e:restr to fiber 1}
\bGamma(T,\CC'_1)\to \bGamma(T,\CC''_1)
\end{equation}
is an isomorphism (under the assumption of the proposition). 

\medskip

Set $\CZ:=T\underset{\CY_2}\times \CY_1$, considered as a prestack over $\CY_1$. Consider the corresponding map
\begin{equation} \label{e:restr to fiber 2}
\CC'_1|_{\CZ}\to \CC''_1|_{\CZ}.
\end{equation}

The assumption that $\coIind_f(\phi)$ is an isomorphism implies that the induced map
$$\bGamma(\CZ,\CC'_1|_{\CZ})\to \bGamma(\CZ,\CC''_1|_{\CZ})$$
is an isomorphism. Now, the fact that $\CZ$ is 1-affine implies that \eqref{e:restr to fiber 2}
is an isomorphism as well. 

\medskip

Evaluating \eqref{e:restr to fiber 2} on $T\in \affdgSch_{/\CZ}$, we obtain that \eqref{e:restr to fiber 1}
is an isomorphism, as required.

\qed

\section{The case of formal completions}  \label{s:formal compl}

Let $\CY$ be a $1$-affine prestack, $\CY'\overset{\iota}\to \CY$
a closed embedding, and let $\CY_0\overset{\jmath}\hookrightarrow \CY$ be the complementary open. 
Throughout this section, we will be assuming that $\jmath$ is quasi-compact. 

\ssec{$\QCoh$-modules on a formal completion}

\sssec{}  \label{sss:with supports}

We have an adjoint pair of functors
$$\jmath^*:\QCoh(\CY)\rightleftarrows \QCoh(\CY_0):\jmath_*.$$

The assumption that $\jmath$ is quasi-compact implies that $\jmath_*$ is continuous (see, e.g., \cite[Proposition 2.1.1]{QCoh}).

\medskip

Let $\QCoh(\CY)_{\CY'}$ be the full subcategory of $\QCoh(\CY)$ consisting of objects set-theoretically
supported on $\CY'$, i.e., $\QCoh(\CY)_{\CY'}=\on{ker}(\jmath^*)$. 

\medskip

Let
$$\wh\imath^{\QCoh}_!:\QCoh(\CY)_{\CY'}\hookrightarrow \QCoh(\CY)$$
denote the tautological embedding.

\medskip

The functor $\wh\imath^{\QCoh}_!$ admits a continuous right adjoint, denoted by $\wh\imath^{\QCoh,!}$, 
and given by 
$$\CF\mapsto \on{Cone}(\CF\to \jmath_*\circ \jmath^*(\CF))[-1].$$

We obtain a localization sequence:
\begin{equation} \label{e:localization seq}
\QCoh(\CY)_{\CY'}\underset{\wh\imath^{\QCoh,!}}{\overset{\wh\imath^{\QCoh}_!}\rightleftarrows} \QCoh(\CY) 
\underset{\jmath_*}{\overset{\jmath^*}\rightleftarrows} \QCoh(\CY_0).
\end{equation}

\sssec{}  \label{sss:formal compl}

Let $\CY^\wedge_{\CY'}$ the formal completion of $\CY$ along $\CY'$, see \cite[Defn. 6.1.2]{IndSch}.
I.e.,  $\CY^\wedge_{\CY'}$ is the prestack defined by
$$\Maps(S,\CY^\wedge_{\CY'}):=\Maps(S,\CY)\underset{\Maps(({}^{cl}S)_{red},\CY)}\times \Maps(({}^{cl}S)_{red},\CY').$$
Let $\wh{i}:\CY^\wedge_{\CY'}\to \CY$ denote the tautological map. 

\medskip

The following results from \cite[Proposition 7.1.3]{IndCoh} by base change:

\begin{prop} \label{p:compl and supp}
The functor
$$\wh\imath^*:\QCoh(\CY)\to \QCoh(\CY^\wedge_{\CY'})$$ factors as 
$$\QCoh(\CY) \overset{\wh\imath^{\QCoh,!}}\longrightarrow \QCoh(\CY)_{\CY'}\to \QCoh(\CY^\wedge_{\CY'}),$$
where the second arrow is an equivalence.
\end{prop}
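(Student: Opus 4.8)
The plan is to establish the two assertions of the proposition separately: first that $\wh\imath^*$ factors through $\wh\imath^{\QCoh,!}$, and then that the resulting functor is an equivalence. I would dispose of the factorization first, as it is essentially formal. By the localization sequence \eqref{e:localization seq}, the functor $\wh\imath^{\QCoh,!}$ exhibits $\QCoh(\CY)_{\CY'}$ as the quotient of $\QCoh(\CY)$ by the essential image of $\jmath_*$; concretely, $\on{ker}(\wh\imath^{\QCoh,!})$ is exactly this essential image. Since $\wh\imath^*$ is continuous, to factor it through $\wh\imath^{\QCoh,!}$ it is enough to check that $\wh\imath^*\circ \jmath_*\simeq 0$. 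This I would verify by base change along the quasi-compact open immersion $\jmath$: formal completion commutes with base change, so $\CY^\wedge_{\CY'}\underset{\CY}\times \CY_0$ is the formal completion of $\CY_0$ along $\CY_0\underset{\CY}\times \CY'=\emptyset$, hence is empty; since $\jmath_*$ is continuous and satisfies base change, this forces $\wh\imath^*\circ \jmath_*\simeq 0$. This yields the factorization together with a well-defined functor $\Phi\colon \QCoh(\CY)_{\CY'}\to \QCoh(\CY^\wedge_{\CY'})$.

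It then remains to prove that $\Phi$ is an equivalence, and here the plan is to reduce to the affine case by base change. Writing $\CY$ as the colimit of the affine DG schemes $S\in \affdgSch_{/\CY}$ mapping to it, and using both that colimits of prestacks are universal and the base-change identification $S\underset{\CY}\times \CY^\wedge_{\CY'}\simeq S^\wedge_{S'}$ (where $S':=S\underset{\CY}\times \CY'$, a direct check on functors of points), one obtains $\CY^\wedge_{\CY'}\simeq \on{colim}_S\, S^\wedge_{S'}$, whence $\QCoh(\CY^\wedge_{\CY'})\simeq \lim_S \QCoh(S^\wedge_{S'})$. On the other side, the condition cutting out $\QCoh(\CY)_{\CY'}=\on{ker}(\jmath^*)$ can be tested after restriction to each $S$, so that $\QCoh(\CY)_{\CY'}\simeq \lim_S \QCoh(S)_{S'}$ as a subcategory of $\QCoh(\CY)\simeq \lim_S \QCoh(S)$. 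Under these presentations $\Phi$ is the limit of the evident functors $\Phi_S\colon \QCoh(S)_{S'}\to \QCoh(S^\wedge_{S'})$.

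It is therefore enough to know that each $\Phi_S$ is an equivalence. This is exactly the affine case of the assertion — that for an affine DG scheme $S$ with a closed subscheme $S'$ whose open complement is quasi-compact, pullback to the formal completion identifies $\QCoh(S^\wedge_{S'})$ with the subcategory $\QCoh(S)_{S'}$ of objects supported on $S'$ — and it is supplied by \cite[Proposition 7.1.3]{IndCoh}. As a limit of equivalences is an equivalence, $\Phi$ is an equivalence, which completes the proof.

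The step I expect to be the main obstacle is not the affine input but the bookkeeping in the reduction: beyond identifying both sides as limits of their affine counterparts, one must check that $\Phi$ is genuinely the limit of the $\Phi_S$ as a morphism of diagrams over $\affdgSch_{/\CY}$, i.e.\ that the base-change equivalences and the factorizations are compatible in a homotopy-coherent way. The quasi-compactness of $\jmath$ is what keeps this in check: it ensures that $\jmath_*$, and each $(\jmath_S)_*$, is continuous, so that the recollement \eqref{e:localization seq} and the functor $\wh\imath^{\QCoh,!}$ are preserved by the transition maps and pass to the limit without correction terms.
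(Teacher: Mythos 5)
Your proof is correct and is essentially the paper's own argument: the paper disposes of this proposition in a single line (``results from \cite[Proposition 7.1.3]{IndCoh} by base change''), and your argument --- the formal factorization via $\wh\imath^*\circ \jmath_*\simeq 0$, followed by the reduction to the affine case over $\affdgSch_{/\CY}$ using that formal completion and the support condition both base-change along $S\to \CY$, with the cited result supplying the affine case --- is exactly what that line means. The coherence bookkeeping you flag as the main obstacle can be sidestepped by defining the second arrow directly as the restriction of $\wh\imath^*$ to the full subcategory $\QCoh(\CY)_{\CY'}$ (the factorization then forces it to agree with your $\Phi$), since pullback functors and full-subcategory inclusions are manifestly compatible with the transition maps of the diagram over $\affdgSch_{/\CY}$.
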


\sssec{}

Consider now the adjoint functors
$$\Iind_{\wh{i}}:\QCoh(\CY)\mmod\rightleftarrows \QCoh(\CY^\wedge_{\CY'})\mmod:\Rres_{\wh{i}}.$$

\begin{prop} \label{p:X vs Y mod}
The functor $\Rres_{\wh{i}}$ is fully faithful; its essential image consists of those $\bC_\CY\in \QCoh(\CY)\mmod$,
on which $\on{ker}(\wh{i}^*)=\on{Im}(\jmath_*)\subset \QCoh(\CY)$ acts trivially (i.e., by zero). 
\end{prop}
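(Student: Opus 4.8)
The plan is to reduce the statement to the theory of idempotent algebras, applied one categorical level up, inside the symmetric monoidal $\infty$-category $\QCoh(\CY)\mmod$.

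First I would use \propref{p:compl and supp} to identify $\QCoh(\CY^\wedge_{\CY'})$ with the full subcategory $\QCoh(\CY)_{\CY'}\subset \QCoh(\CY)$, under which the monoidal functor $\wh\imath^*$ becomes the colocalization $\wh\imath^{\QCoh,!}$. The observation that drives everything is that $\wh\imath^*$ is then a \emph{monoidal localization}: it is a right adjoint whose left adjoint $\wh\imath^{\QCoh}_!$ is fully faithful, and by the localization sequence \eqref{e:localization seq} its kernel is exactly $\on{Im}(\jmath_*)$. Concretely, writing $\CA_0:=\jmath_*\jmath^*(\CO_\CY)$ and $\CA':=\on{fib}(\CO_\CY\to \CA_0)$ for the complementary algebras of that sequence, one has $\wh\imath^{\QCoh,!}(\CF)\simeq \CF\otimes \CA'$, using the projection formula $\jmath_*\jmath^*(-)\simeq (-)\otimes \CA_0$, which is available precisely because $\jmath$ is quasi-compact.

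The key step — and the one I expect to be the main obstacle — is to promote this to the assertion that $\QCoh(\CY^\wedge_{\CY'})$ is an \emph{idempotent} commutative algebra object of $\QCoh(\CY)\mmod$, i.e.\ that the multiplication
$$\QCoh(\CY^\wedge_{\CY'})\underset{\QCoh(\CY)}\otimes \QCoh(\CY^\wedge_{\CY'})\to \QCoh(\CY^\wedge_{\CY'})$$
is an equivalence. The localization is \emph{smashing}: $\wh\imath^{\QCoh,!}=\wh\imath^*$ is continuous (again by quasi-compactness of $\jmath$, via continuity of $\jmath_*$), so $\CA'$ is a genuine idempotent algebra and $\QCoh(\CY)_{\CY'}\simeq \CA'\mod(\QCoh(\CY))$. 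The displayed equivalence is the module-category incarnation of $\CA'\otimes \CA'\simeq \CA'$; it follows from the idempotent-object formalism of \cite{Lu2}, Sect.~4.8.2, applied in $\QCoh(\CY)\mmod$, or can be extracted from the results of \secref{s:rigid} on tensor products of module categories. Granting it, for $\bD\in \QCoh(\CY^\wedge_{\CY'})\mmod$ the counit $\Iind_{\wh\imath}\,\coIind$-adjoint map $\QCoh(\CY^\wedge_{\CY'})\otimes_{\QCoh(\CY)}\bD\to \bD$ is an equivalence: writing $\bD\simeq \QCoh(\CY^\wedge_{\CY'})\otimes_{\QCoh(\CY^\wedge_{\CY'})}\bD$ and using idempotence to rewrite $\QCoh(\CY^\wedge_{\CY'})\otimes_{\QCoh(\CY)}\QCoh(\CY^\wedge_{\CY'})\simeq \QCoh(\CY^\wedge_{\CY'})$, the two sides agree. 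Hence $\Rres_{\wh\imath}$ is fully faithful.

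It remains to identify the essential image. For the forward inclusion, on $\Rres_{\wh\imath}(\bD)$ the category $\QCoh(\CY)$ acts through $\wh\imath^*=\wh\imath^{\QCoh,!}$, and $\wh\imath^{\QCoh,!}\circ \jmath_*\simeq 0$ (because $\jmath^*\jmath_*\simeq \on{id}$), so every object of $\on{Im}(\jmath_*)$ acts by zero. For the converse, suppose $\on{Im}(\jmath_*)$ acts by zero on $\bC\in \QCoh(\CY)\mmod$; in particular the idempotent $\CA_0$ acts by zero, and then the fiber sequence $\CA'\to \CO_\CY\to \CA_0$ of endofunctors of $\bC$ forces $\CA'$ to act as the identity. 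Consequently the $\QCoh(\CY)$-action on $\bC$ factors through $\CA'\mod=\QCoh(\CY^\wedge_{\CY'})$ (equivalently, the localization $\QCoh(\CY^\wedge_{\CY'})\otimes_{\QCoh(\CY)}\bC\to \bC$ is an equivalence), exhibiting $\bC$ in the image of $\Rres_{\wh\imath}$. Matching the two descriptions of the image completes the proof.
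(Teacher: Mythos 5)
Your reduction via \propref{p:compl and supp}, and the overall architecture you propose --- idempotence of $\QCoh(\CY^\wedge_{\CY'})$ as an algebra object of $\QCoh(\CY)\mmod$ implies full faithfulness, plus the fiber sequence $\CA'\to \CO_\CY\to \CA_0$ for the essential image --- are sound, and you have correctly isolated the crux. But your justification of that crux contains a genuine error: $\CA'$ is \emph{not} a unital algebra, so the claims ``$\CA'$ is a genuine idempotent algebra'' and ``$\QCoh(\CY)_{\CY'}\simeq \CA'\mod(\QCoh(\CY))$'' fail. Already for $\CY=\BA^1$, $\CY'=\{0\}$ one has $\CA'\simeq \left(k[t,t^{-1}]/k[t]\right)[-1]$, concentrated in cohomological degree $1$, so $\Hom_{\QCoh(\CY)}(\CO_\CY,\CA')=0$ and no unit map can exist. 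Lurie's idempotent-object formalism concerns objects equipped with a map \emph{from} the unit (such as $\CO_\CY\to \CA_0$, or, one categorical level up, $\wh\imath^*:\QCoh(\CY)\to \QCoh(\CY^\wedge_{\CY'})$); it applies to the localization side, not to the colocalization object $\CA'$, whose structure map $\CA'\to \CO_\CY$ points the wrong way. Moreover, even one level up that formalism takes idempotence as an \emph{input} and yields full faithfulness as output; it cannot be used to prove the equivalence $\QCoh(\CY^\wedge_{\CY'})\underset{\QCoh(\CY)}\otimes \QCoh(\CY^\wedge_{\CY'})\simeq \QCoh(\CY^\wedge_{\CY'})$, which is exactly what is at stake. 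The same flawed identification $\QCoh(\CY)_{\CY'}\simeq \CA'\mod$ is what you lean on in the last step of the essential-image argument (``the action factors through $\CA'\mod$''), so that step is unsupported as well.

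The repair is short and is exactly what the paper does (\lemref{l:tensor up local}), an elementary substitute for idempotent-algebra theory on the colocalization side: in the localization sequence $\QCoh(\CY^\wedge_{\CY'})\rightleftarrows \QCoh(\CY)\rightleftarrows \on{Im}(\jmath_*)$ all functors are maps of (right) $\QCoh(\CY)$-module categories --- this is where your ``smashing'' observation, i.e.\ the continuity of $\jmath_*$ and of $\wh\imath^{\QCoh,!}$, enters. Hence tensoring the sequence over $\QCoh(\CY)$ with any module category $\bC$ on which $\on{Im}(\jmath_*)$ acts by zero again yields a localization sequence; in it the functor $\on{Im}(\jmath_*)\underset{\QCoh(\CY)}\otimes \bC\to \bC$ is both fully faithful and zero, hence its source vanishes, and therefore $\bC\to \QCoh(\CY^\wedge_{\CY'})\underset{\QCoh(\CY)}\otimes \bC$ is an equivalence. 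Taking $\bC=\QCoh(\CY^\wedge_{\CY'})$ gives your idempotence statement, and taking $\bC$ arbitrary with $\on{Im}(\jmath_*)$ acting by zero gives the converse inclusion for the essential image (note also that the natural map goes $\bC\to \QCoh(\CY^\wedge_{\CY'})\underset{\QCoh(\CY)}\otimes \bC$, not in the direction you wrote). With this lemma in place, the rest of your argument goes through.
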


The assertion of the proposition follows from  \propref{p:compl and supp} and the next general assertion:

\medskip

Let $\bO$ be a monoidal DG category, and let $\sF:\bO\to \bO'$ be a monoidal functor. 
Let $\bC$ be an $\bO$-module category, on which $\on{ker}(\sF)$ acts by zero.

\begin{lem} \label{l:tensor up local}
Assume that $\sF$ admits a fully faithful continuous right or left adjoint, which is a map of right $\bO$-module categories. 
Then the canonical map
$$\bC\simeq  \bO\underset{\bO}\otimes \bC \to \bO'\underset{\bO}\otimes \bC$$
is an equivalence. 
\end{lem}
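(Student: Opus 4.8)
The plan is to use the adjoint $\sG\colon\bO'\to\bO$ to manufacture an explicit inverse to the canonical map $u\colon\bC\simeq\bO\underset{\bO}\otimes\bC\to\bO'\underset{\bO}\otimes\bC$. This map is induced by $\sF$ regarded as a map of right $\bO$-module categories $\bO\to\bO'$ (it is right $\bO$-linear by monoidality). Since by hypothesis $\sG$ is a continuous map of right $\bO$-modules, I can tensor it over $\bO$ with $\on{id}_\bC$ to obtain $v:=\sG\underset{\bO}\otimes\on{id}_\bC\colon\bO'\underset{\bO}\otimes\bC\to\bO\underset{\bO}\otimes\bC\simeq\bC$. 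The two composites $uv$ and $vu$ are then computed by tensoring the endofunctors $\sF\sG$ and $\sG\sF$ of $\bO'$ and $\bO$ with $\on{id}_\bC$. It suffices to show that both $uv$ and $vu$ are equivalences, for then $v$ is a two-sided inverse of $u$, and hence $u$ is an equivalence.

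For $uv$ the argument is immediate: whichever of the two adjunctions $\sF\dashv\sG$ or $\sG\dashv\sF$ holds, full faithfulness of $\sG$ is equivalent to the corresponding counit or unit exhibiting $\sF\sG\simeq\on{id}_{\bO'}$, so $uv\simeq\on{id}$. For $vu$ the key observation is that $P:=\sG\sF$ is a continuous right $\bO$-module endofunctor of $\bO$ (a composite of right $\bO$-linear functors), and any such endofunctor is left multiplication by its value $p:=P(\one_\bO)=\sG(\one_{\bO'})$ on the unit. Consequently $vu=P\underset{\bO}\otimes\on{id}_\bC$ is identified with the action functor $p\cdot(-)\colon\bC\to\bC$, and the structural comparison with $\on{id}_\bC=\one_\bO\cdot(-)$ is the action of the adjunction (co)unit $\one_\bO\to p$ (resp.\ $p\to\one_\bO$). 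The triangle identities together with $\sF\sG\simeq\on{id}_{\bO'}$ force $\sF$ applied to this (co)unit to be an equivalence, so its fiber (resp.\ cofiber) $k$ satisfies $\sF(k)=0$, i.e.\ $k\in\on{ker}(\sF)$.

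At this point I would invoke the hypothesis that $\on{ker}(\sF)$ acts by zero on $\bC$: applying the exact, continuous action functor $(-)\cdot c$ to the fiber sequence $k\to\one_\bO\to p$ (resp.\ $p\to\one_\bO\to k$) yields $k\cdot c=0$, so the comparison map $c\to p\cdot c$ (resp.\ $p\cdot c\to c$) is an equivalence for every $c$, naturally in $c$. Hence $vu\simeq\on{id}_\bC$, completing the proof.

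The hard part is not the idea but the bookkeeping: one must carefully justify the identification $vu\simeq p\cdot(-)$ and match the \emph{canonical} comparison transformation with the action of the adjunction (co)unit, while handling the ``right or left adjoint'' alternatives uniformly. Both rest on the elementary but coherence-laden fact that evaluation at $\one_\bO$ identifies continuous right $\bO$-module endofunctors of $\bO$ with $\bO$ itself, under which $P$ corresponds to $p$ and the unit (resp.\ counit) corresponds to the map $\one_\bO\to p$ (resp.\ $p\to\one_\bO$). Once this is set up, exactness and continuity of the $\bO$-action on $\bC$, combined with the vanishing hypothesis on $\on{ker}(\sF)$, finish the argument.
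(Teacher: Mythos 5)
Your proof is correct, but it takes a genuinely different route from the paper's. The paper argues at the level of categories: it tensors the whole localization sequence $\bO'\rightleftarrows \bO\rightleftarrows \on{ker}(\sF)$ with $\bC$ over $\bO$, observes that the resulting functor $\on{ker}(\sF)\underset{\bO}\otimes \bC\to \bC$ is simultaneously fully faithful and zero, concludes $\on{ker}(\sF)\underset{\bO}\otimes \bC=0$, and hence that the localization functor $\bC\to \bO'\underset{\bO}\otimes \bC$ is an equivalence. You instead build the explicit candidate inverse $v=\sG\underset{\bO}\otimes \on{Id}_\bC$ and check both composites: $uv$ via tensoring the (co)unit equivalence $\sF\sG\simeq \on{Id}_{\bO'}$, and $vu$ via the identification of the right $\bO$-linear monad $\sG\sF$ with left multiplication by $p=\sG(\one_{\bO'})$, reducing everything to the fiber sequence $k\to \one_\bO\to p$ (resp.\ $p\to\one_\bO\to k$) with $k\in\on{ker}(\sF)$ acting by zero. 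What your version buys: you never need to form the category $\on{ker}(\sF)\underset{\bO}\otimes \bC$ or know that tensoring preserves the full localization-sequence structure; the vanishing hypothesis is used only for a single object of $\on{ker}(\sF)$, which makes the argument more self-contained and object-level. What the paper's buys: given the formal fact that $\bO$-linear localization sequences descend along $-\underset{\bO}\otimes\bC$, the proof is three lines with no computation of monads. Note that both arguments rest on the same foundational input, which you correctly flag as the ``coherence-laden'' part: the hypothesis that the adjoint is a strict map of right $\bO$-module categories is what makes the adjunction (hence its unit/counit, hence fully faithfulness) descend under $-\underset{\bO}\otimes\bC$; without it neither $uv\simeq\on{Id}$ in your proof nor the tensored localization sequence in the paper's proof would be justified.
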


\begin{proof}

Let $\sF$ have a fully continuous right adjoint.  The assumption on $\sF$ implies that in the localization sequence 
$$\bO'\overset{\sF} \rightleftarrows \bO \rightleftarrows \on{ker}(\sF),$$
all functors are maps of right $\bO$-module categories. Hence, tensoring up by $\bC$ over $\bO$ on the right, we
obtain a localization sequence of DG categories:
$$\bO'\underset{\bO}\otimes \bC \overset{\sF\otimes \on{Id}_\bC} \rightleftarrows \bC \rightleftarrows 
\on{ker}(\sF)\underset{\bO}\otimes \bC.$$

\medskip

However, the assumption on $\bC$ says that the functor $\bC\leftarrow \on{ker}(\sF)\underset{\bO}\otimes \bC$ is zero, and
since this functor is fully faithful, we obtain that $\on{ker}(\sF)\underset{\bO}\otimes \bC=0$. Hence, the functor
$$\bO'\underset{\bO}\otimes \bC \leftarrow \bC,$$
is an equivalence, as desired.

\medskip

The proof when $\sF$ admits a fully faithful left adjoint is similar.

\end{proof}

\ssec{Sheaves of categories on a formal completion}

\sssec{}

Consider now the pair of adjoint functors
$$\coRres_{\wh{i}}:\on{ShvCat}(\CY)\rightleftarrows \on{ShvCat}(\CY^\wedge_{\CY'}):\coIind_{\wh{i}},$$
see \secref{sss:dir and inv}.

\medskip

Since $\CY^\wedge_{\CY'}\underset{\CY}\times \CY^\wedge_{\CY'}\simeq \CY^\wedge_{\CY'}$, the adjunction map
$$\coRres_{\wh{i}}\circ \coIind_{\wh{i}}\to \on{Id}$$
is an isomorphism. Hence, the functor $\coIind_{\wh{i}}$ is fully faithful. We now claim:

\begin{prop}  \label{p:image formal}
The essential image of $\coIind_{\wh{i}}$ consists of those $\CC\in \on{ShvCat}(\CY)$,
for which $\CC|_{\CY_0}=0$. 
\end{prop}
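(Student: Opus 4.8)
The functor $\coIind_{\wh{i}}$ is a fully faithful right adjoint to $\coRres_{\wh{i}}$, so its essential image consists precisely of those $\CC$ for which the unit map $\CC\to \coIind_{\wh{i}}\circ\coRres_{\wh{i}}(\CC)$ is an isomorphism. I will prove the two inclusions separately. For the inclusion ``$\subseteq$'', note that for $\CC=\coIind_{\wh{i}}(\CD)$ and $T\in\affdgSch_{/\CY_0}$ the value $\bGamma(T,\CC|_{\CY_0})=\bGamma(T,\CC)$ equals $\bGamma(T\underset{\CY}\times\CY^\wedge_{\CY'},\CD)$ by the defining formula for $\coIind$. Since $T$ maps to the open $\CY_0$ while $\CY^\wedge_{\CY'}$ is set-theoretically supported on $\CY'$, the fiber product $T\underset{\CY}\times\CY^\wedge_{\CY'}$ is empty, whence $\bGamma(T,\CC|_{\CY_0})=0$. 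Thus $\CC|_{\CY_0}=0$.

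For the reverse inclusion, suppose $\CC|_{\CY_0}=0$; I want the unit to be an isomorphism, which may be checked value-wise on $S\in\affdgSch_{/\CY}$, i.e.\ I must show that $\bGamma(S,\CC)\to\bGamma(S\underset{\CY}\times\CY^\wedge_{\CY'},\CC)$ is an equivalence. Writing $S'=S\underset{\CY}\times\CY'$ with complementary open $S_0=S\underset{\CY}\times\CY_0$, and using that formal completion commutes with base change so that $S\underset{\CY}\times\CY^\wedge_{\CY'}\simeq S^\wedge_{S'}$, the claim reduces to the purely local statement: \emph{for an affine DG scheme $S$ with closed subset $S'$ of quasi-compact open complement $S_0$, and $\CC\in\on{ShvCat}(S)$ with $\CC|_{S_0}=0$, the restriction $\bGamma(S,\CC)\to\bGamma(S^\wedge_{S'},\CC)$ is an equivalence.} The point of this reduction is that $S$ is now affine, hence tautologically $1$-affine, so $\CC\simeq\bLoc_S(\bC)$ for $\bC:=\bGamma(S,\CC)\in\QCoh(S)\mmod$.

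Next I would translate the hypothesis into module-theoretic language. By the commutation in \eqref{e:commute 1}, $\CC|_{S_0}\simeq\bLoc_{S_0}\big(\QCoh(S_0)\underset{\QCoh(S)}\otimes\bC\big)$; since $S_0$ is a quasi-compact quasi-separated scheme it is $1$-affine by \thmref{t:alg space}, so $\CC|_{S_0}=0$ is equivalent to $\QCoh(S_0)\underset{\QCoh(S)}\otimes\bC=0$. Tensoring the localization sequence \eqref{e:localization seq} with $\bC$ (as in the proof of \lemref{l:tensor up local}), this is in turn equivalent to the condition that $\on{Im}(\jmath_*)=\on{ker}(\wh\imath^{\QCoh,*})$ act by zero on $\bC$, i.e.\ by \propref{p:compl and supp} and \propref{p:X vs Y mod} to $\bC$ lying in the essential image of $\Rres_{\wh{i}}$, say $\bC\simeq\Rres_{\wh{i}}(\bM)$ with $\bM=\Iind_{\wh{i}}(\bC)\in\QCoh(S^\wedge_{S'})\mmod$. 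Unwinding the definition of $\bGamma$ on the completion and using $\QCoh(T)\underset{\QCoh(S)}\otimes\bC\simeq\QCoh(T)\underset{\QCoh(S^\wedge_{S'})}\otimes\bM$ for $T\in\affdgSch_{/S^\wedge_{S'}}$, one identifies $\bGamma(S^\wedge_{S'},\CC)$ with $\oblv\,\bGamma^{\on{enh}}_{S^\wedge_{S'}}\bLoc_{S^\wedge_{S'}}(\bM)$, so that the map in question is exactly the (forgotten) unit map for $\bLoc_{S^\wedge_{S'}}$ evaluated on $\bM$.

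The main obstacle is therefore the final step: showing that this unit $\bM\to\bGamma^{\on{enh}}_{S^\wedge_{S'}}\bLoc_{S^\wedge_{S'}}(\bM)$ is an equivalence for $\bM$ in the image of $\Iind_{\wh{i}}$ — a completeness assertion that is precisely (a part of) the $1$-affineness of the affine formal completion $S^\wedge_{S'}$. Concretely, this amounts to proving that a $\QCoh(S)$-module $\bC$ on which $\on{Im}(\jmath_*)$ acts by zero is recovered as the limit $\underset{n}{\underset{\longleftarrow}{lim}}\ \QCoh(S'_n)\underset{\QCoh(S)}\otimes\bC$ of its restrictions to the infinitesimal thickenings $S'_n$ of $S'$. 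Here the quasi-compactness of $\jmath$ is essential: it guarantees that $\jmath_*$ is continuous and that the completion is computed by a well-behaved tower of thickenings, while the rigidity of $\QCoh(S)$ — through the dualizability criterion \lemref{l:dualizable in rigid} and the resulting commutation of the relevant tensor products with limits (\corref{c:Loc commutes with limits rigid})— is what allows one to interchange the tensoring by $\bC$ with the limit defining $\QCoh(S^\wedge_{S'})$, thereby completing the identification and establishing that $\CC$ lies in the essential image of $\coIind_{\wh{i}}$.
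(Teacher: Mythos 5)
Your skeleton coincides with the paper's: you characterize the essential image by the unit of the $(\coRres_{\wh{i}},\coIind_{\wh{i}})$-adjunction, dispose of the easy inclusion by the emptiness of $T\underset{\CY}\times \CY^\wedge_{\CY'}$ for $T$ over $\CY_0$, reduce value-wise to an affine $S$ (using that formal completion commutes with base change), and translate $\CC|_{S_0}=0$ into ``$\on{Im}(\jmath_*)$ acts by zero on $\bC=\bGamma(S,\CC)$'' via \thmref{t:alg space}, \eqref{e:localization seq}, \propref{p:compl and supp} and \propref{p:X vs Y mod}. All of that is correct (and, as you note, you cannot instead just quote \thmref{t:main formal}, whose proof rests on this very proposition). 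The gap is in the last step, which is the mathematical heart of the matter. You must show that
$$\bC\to \underset{n}{\underset{\longleftarrow}{lim}}\,\left(\QCoh(S_n)\underset{\QCoh(S)}\otimes \bC\right)$$
is an equivalence, where $S_n$ are the thickenings presenting $S^\wedge_{S'}$, and you propose to interchange $-\underset{\QCoh(S)}\otimes\bC$ with the limit by appealing to rigidity of $\QCoh(S)$ through \lemref{l:dualizable in rigid} and \corref{c:Loc commutes with limits rigid}. Neither result applies: \lemref{l:dualizable in rigid} governs tensoring with a \emph{dualizable} module category, whereas the module being carried across the limit here is $\bC$ itself, an \emph{arbitrary} object of $\QCoh(S)\mmod$ (dualizability of the terms $\QCoh(S_n)$ is irrelevant — they sit inside the limit, in the other slot); and \corref{c:Loc commutes with limits rigid}(b) says $\bLoc_S$ preserves limits in the variable of $\QCoh(S)\mmod$, a different statement. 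The unrestricted interchange of tensoring by $\bC$ with such limits is precisely the colimit-versus-limit ``convergence problem'' the paper singles out as the substance of all its proofs: it holds when $\bC$ is dualizable and fails in general — were it true for all $\bC$, the same one-line argument applied to the limit over $\affdgSch_{/\CY}$ would prove that $\bLoc_\CY$ is fully faithful for \emph{every} prestack $\CY$.

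What actually licenses the interchange — and this is the paper's proof — is a geometric input special to formal completions: by \cite[Proposition 6.7.4]{IndSch} one can present $S^\wedge_{S'}$ as $\underset{n}{\underset{\longrightarrow}{colim}}\, S_n$ with closed subschemes $S_n$ of $S$ whose transition functors $\iota_{n_1,n_2}^*$ admit \emph{left} adjoints. Then \cite[Lemma 1.3.3]{DGCat} rewrites the limit $\underset{n}{\underset{\longleftarrow}{lim}}\,\bigl(\bC\otimes_{\QCoh(S)}\QCoh(S_n)\bigr)$, formed along the pullbacks, as a colimit formed along their left adjoints; tensoring by $\bC$ commutes with colimits unconditionally, so the expression collapses to $\bC\underset{\QCoh(S)}\otimes\QCoh(S^\wedge_{S'})$, which is $\bC$ by \propref{p:compl and supp} and \lemref{l:tensor up local}. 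In short: quasi-compactness of $\jmath$ is indeed needed (continuity of $\jmath_*$, hence the localization sequence), but it is the left-adjointability of the tower of thickenings, not rigidity of $\QCoh(S)$, that makes the tensor-limit interchange legitimate; without it your argument does not close.
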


\begin{proof}

The assertion readily reduces to the case when $\CY=S\in \affdgSch$. Let $S'\subset S$ be a closed DG subscheme
whose complement $S_0\overset{\jmath}\hookrightarrow S$ is quasi-compact. We need to show that for
$$\bC\in \QCoh(S)\mmod,$$
on which the action of $\QCoh(S)$ factors through the restriction functor $\QCoh(S)\to \QCoh(S^\wedge_{S'})$, the map
$$\bC=\bGamma(S,\bLoc_S(\bC))\to \bGamma(S^\wedge_{S'},\bLoc_S(\bC))$$
is an equivalence.

\medskip

By \cite[Proposition 6.7.4]{IndSch}, we can exhibit $S^\wedge_{S'}$ as
$$\underset{n}{\underset{\longrightarrow}{lim}}\, S_n,$$
where $S_n$ are closed subschemes of $S$, and the transition maps $\iota_{n_1,n_2}:S_{n_1}\to S_{n_2}$
are such that the functors $\iota_{n_1,n_2}^*$ admit \emph{left} adjoints. 

\medskip

In this case, by \cite[Lemma 1.3.3]{DGCat}, we calculate
$$\QCoh(S^\wedge_{S'}):=\underset{n}{\underset{\longleftarrow}{lim}}\, \QCoh(S_n)\simeq
\underset{n}{\underset{\longrightarrow}{colim}}\, \QCoh(S_n),$$
where the \emph{limit} is taken with respect to the transition functors $\iota_{n_1,n_2}^*$,
and the \emph{colimit} is taken with respect to the transition functors $(\iota_{n_1,n_2}^*)^L$.

\medskip

Similarly, 
\begin{multline*}
\bGamma(S^\wedge_{S'},\bLoc_S(\bC)):=
\underset{n}{\underset{\longleftarrow}{lim}}\, \left(\bC\underset{\QCoh(S)}\otimes \QCoh(S_n)\right)\simeq 
\underset{n}{\underset{\longrightarrow}{colim}}\, \left(\bC\underset{\QCoh(S)}\otimes \QCoh(S_n)\right)\simeq \\
\simeq \bC\underset{\QCoh(S)}\otimes \left(\underset{n}{\underset{\longrightarrow}{colim}}\, \QCoh(S_n)\right) \simeq 
\bC\underset{\QCoh(S)}\otimes \QCoh(S^\wedge_{S'})\simeq \bC,
\end{multline*}
where the latter is isomorphism holds by \propref{p:compl and supp} and \lemref{l:tensor up local}.

\end{proof}

\ssec{Proof of \thmref{t:main formal}}

\sssec{}  \label{sss:form loc}

Recall the commutative diagram:

$$
\CD
\on{ShvCat}(\CY^\wedge_{\CY'})   @>{\coIind_{\wh{i}}}>>  \on{ShvCat}(\CY)  \\
@V{\bGamma^{\on{enh}}_{\CY^\wedge_{\CY'}}}VV     @VV{\bGamma^{\on{enh}}_{\CY}}V  \\
\QCoh(\CY^\wedge_{\CY'})\mmod   @>{\Rres_{\wh{i}}}>> \QCoh(\CY)\mmod.
\endCD
$$

\medskip

By Propositions \ref{p:X vs Y mod} and \ref{p:image formal}, the horizontal arrows in this diagram are fully faithful.
Hence, we obtain that if $\bGamma^{\on{enh}}_{\CY}$ is fully faithful, then so is 
$\bGamma^{\on{enh}}_{\CY^\wedge_{\CY'}}$.

\sssec{}

We now claim that the map $$\wh{i}:\CY^\wedge_{\CY'}\to \CY$$
satisfies the conditions of \propref{p:almost commute 1}(a). 

\medskip

Indeed, the assumption of \propref{p:almost commute 1} is satisfied by the localization sequence
\eqref{e:localization seq} and \cite[Proposition 3.2.1]{QCoh}, applied to $\jmath$. 
The assumption of \propref{p:almost commute 1}(a) is satisfied by \secref{sss:form loc} above, applied
to an affine DG scheme and its formal completion. 

\medskip

Hence, by \propref{p:almost commute 1}(a,i), we obtain that the diagram
$$
\CD
\on{ShvCat}(\CY^\wedge_{\CY'})   @>{\coIind_{\wh{i}}}>>  \on{ShvCat}(\CY)  \\
@A{\bLoc_{\CY^\wedge_{\CY'}}}AA     @AA{\bLoc_{\CY}}A  \\
\QCoh(\CY^\wedge_{\CY'})\mmod   @>{\Rres_{\wh{i}}}>> \QCoh(\CY)\mmod.
\endCD
$$
commutes as well.  

\sssec{}

From the above diagram, we conlcude that if the functor $\bLoc_{\CY}$ is fully faithful, then so is 
$\bLoc_{\CY^\wedge_{\CY'}}$.

\qed

\bigskip

\centerline{\bf Part II: 1-Affinness of Algebraic Stacks} 

\section{Algebraic stacks: preparations}  \label{s:alg space}

\ssec{Passable prestacks}  \label{ss:passable}

\sssec{}

We give the following definition, taken from \cite[Sect. 3.3]{QCoh}:

\begin{defn}
A prestack $\CY$ is called \emph{passable} if it satisfies:
\begin{itemize}

\item The diagonal morphism $\CY\to \CY\times \CY$ is schematic, quasi-affine and quasi-compact;

\item $\CO_\CY\in \QCoh(\CY)$ is compact;

\item $\QCoh(\CY)$ is dualizable as a plain DG category.

\end{itemize}
\end{defn}

As our initial observation, we will prove:
\begin{prop} \label{p:alg}  Let $\CY$ be a passable prestack. Then the functor 
$$\bGamma^{\on{enh}}_\CY:\on{ShvCat}(\CY)\to \QCoh(\CY)\mmod$$
is fully faithful. 

\end{prop}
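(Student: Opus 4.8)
The plan is to exploit the adjunction $\bLoc_\CY\dashv\bGamma^{\on{enh}}_\CY$ and reduce full faithfulness of the right adjoint to the statement that its counit is an equivalence. Since $\bGamma^{\on{enh}}_\CY$ is a right adjoint, it is fully faithful if and only if for every $\CC\in\on{ShvCat}(\CY)$ the counit \eqref{e:counit}, namely $\bLoc_\CY(\bGamma^{\on{enh}}(\CY,\CC))\to\CC$, is an equivalence. As $\on{ShvCat}(\CY)$ is a limit of the categories $\QCoh(S)\mmod$, equivalences are detected on $S$-sections, so it suffices to prove that for every $S\in\affdgSch_{/\CY}$ the canonical map, which I will call $(\ast)$,
\[
\QCoh(S)\underset{\QCoh(\CY)}\otimes\bGamma(\CY,\CC)\to\bGamma(S,\CC)
\]
is an equivalence; here the left-hand side is by definition $\bGamma(S,\bLoc_\CY(\bGamma^{\on{enh}}(\CY,\CC)))$, and $\bGamma(\CY,\CC)$ is regarded as a $\QCoh(\CY)$-module via $\bGamma^{\on{enh}}_\CY$.

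To prove $(\ast)$, I would expand $\bGamma(\CY,\CC)\simeq\underset{S'\in\affdgSch_{/\CY}}{\lim}\,\bGamma(S',\CC)$ by the definition of global sections. The essential input from passability is that, because the diagonal of $\CY$ is schematic and quasi-affine, every structure map $f\colon S\to\CY$ is quasi-affine (its graph is a base change of $\Delta_\CY$), so the results on quasi-affine morphisms reproduced in \secref{s:quasi-affine} (following \cite{QCoh}) apply: in particular $\QCoh(S)$ is dualizable as an object of $\QCoh(\CY)\mmod$. Dualizable objects tensor-commute with all limits, whence
\[
\QCoh(S)\underset{\QCoh(\CY)}\otimes\bGamma(\CY,\CC)\simeq\underset{S'}{\lim}\;\QCoh(S)\underset{\QCoh(\CY)}\otimes\bGamma(S',\CC).
\]
For each affine $S'$ we have $\CC|_{S'}\simeq\bLoc_{S'}(\bGamma(S',\CC))$, and using the $\QCoh$-level base-change equivalence $\QCoh(S)\underset{\QCoh(\CY)}\otimes\QCoh(S')\simeq\QCoh(S\underset{\CY}\times S')$ together with the compatibility of $\bLoc$ with pullback in \eqref{e:commute 1}, I would rewrite each term as $\bGamma(S\underset{\CY}\times S',\CC)$; the last identification uses that $S\underset{\CY}\times S'$ is quasi-affine and that $\bGamma(U,\bLoc_U(-))\simeq\on{Id}$ for quasi-affine $U$, again by \secref{s:quasi-affine}.

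It remains to assemble the pieces. In $\on{PreStk}$ colimits are universal and $\CY\simeq\underset{S'\in\affdgSch_{/\CY}}{\on{colim}}\,S'$, so $S\simeq\underset{S'}{\on{colim}}\,(S\underset{\CY}\times S')$ in $\on{PreStk}_{/\CY}$; since $\bGamma(-,\CC)$ carries colimits of prestacks to limits of categories, $\underset{S'}{\lim}\,\bGamma(S\underset{\CY}\times S',\CC)\simeq\bGamma(S,\CC)$, and tracing the canonical maps identifies the composite with $(\ast)$. The main obstacle, and the only place genuine work is required, is the first move of the second paragraph: commuting $\QCoh(S)\underset{\QCoh(\CY)}\otimes(-)$ past the defining limit and establishing the base change $\QCoh(S)\underset{\QCoh(\CY)}\otimes\QCoh(S')\simeq\QCoh(S\underset{\CY}\times S')$. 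Both rest squarely on passability (the quasi-affine diagonal together with dualizability of $\QCoh(\CY)$) and must be proved directly at the level of $\QCoh$-module categories, \emph{before} invoking anything about $\on{ShvCat}(\CY)$, so as to avoid circularity with \propref{p:nice base change}, which deduces the very same base change in the opposite logical direction.
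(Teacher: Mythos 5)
Your proposal is correct and is essentially the paper's own argument: the paper proves \propref{p:alg} by applying \propref{p:almost commute 1}(a,ii) to each $T\to \CY$, and the proof of that proposition together with \lemref{l:sect on fiber}(2) is precisely your chain of equivalences --- expand $\bGamma(\CY,\CC)$ as a limit over $\affdgSch_{/\CY}$, identify each term $\QCoh(S)\underset{\QCoh(\CY)}\otimes \bGamma(S',\CC)\simeq \bGamma(S\underset{\CY}\times S',\CC)$ using the $\QCoh$-level base change and fully faithfulness of $\bLoc$ on the quasi-affine fiber products (supplied by \thmref{t:alg space} for quasi-affine schemes, proved independently), and pull the tensor product through the limit by dualizability. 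The one correction: dualizability of $\QCoh(S)$ as an object of $\QCoh(\CY)\mmod$ is not a consequence of the quasi-affineness of $S\to\CY$ recorded in \secref{s:quasi-affine}; the paper obtains it from rigidity of $\QCoh(\CY)$ (\propref{p:passable rigid}, which uses the remaining passability hypotheses --- compactness of $\CO_\CY$ and dualizability of $\QCoh(\CY)$ as a plain DG category) combined with \lemref{l:dualizable in rigid}, so you should route this step through rigidity rather than through quasi-affine morphisms.
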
 

\sssec{}

Many algebraic stacks are passable as prestacks. In particular, any algebraic stack, which in the  
terminology of \cite{BFN} is \emph{perfect}, is passable. 

\medskip

The following assertion is proved in \cite[Theorems 4.3.1 and 1.4.2]{DrGa}:

\begin{thm}  \label{t:ev coconn pass}
An eventually coconnective QCA \footnote{Under the ``locally almost of finite type" assumption, 
QCA means ``quasi-compact and the automorphism group of every field-valued point is affine."}
algebraic stack locally almost of finite type is passable.
\end{thm}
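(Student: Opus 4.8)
The plan is to verify the three conditions in the definition of passable separately, treating the diagonal and the compactness of $\CO_\CY$ as the more direct inputs, and isolating the dualizability of $\QCoh(\CY)$ as the genuine difficulty.

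First I would dispose of the diagonal. By the conventions on algebraic stacks recalled in \secref{ss:alg stacks}, the diagonal $\CY\to \CY\times \CY$ is already representable, quasi-compact and quasi-separated, so the only point is quasi-affineness. Since quasi-affineness of a morphism is local on the target in the fppf topology, I would pick a smooth atlas $U\to \CY$ with $U\in \affdgSch$ and base-change the diagonal along the fppf cover $U\times U\to \CY\times \CY$. Using $\CY\underset{\CY\times\CY}\times (U\times U)\simeq U\underset{\CY}\times U$, this produces the morphism $U\underset{\CY}\times U\to U\times U$, which is the relative isomorphism scheme. The QCA hypothesis — that the automorphism group of every field-valued point is affine — is exactly the geometric content that forces this morphism to be affine (for $\CY=\on{pt}/G$ it is the map $G\to \on{pt}$), so $\Delta_\CY$ is quasi-affine; this also upgrades ``representable'' to ``schematic'', since a quasi-affine algebraic space is a scheme.

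Next I would treat the compactness of $\CO_\CY$. Because $\Gamma(\CY,-)\simeq \Hom_{\QCoh(\CY)}(\CO_\CY,-)$, compactness of $\CO_\CY$ is precisely continuity of the global sections functor $\QCoh(\CY)\to \Vect$. Here the key external input is the cohomological boundedness theorem for QCA stacks, which is the content of \cite[Theorem 1.4.2]{DrGa}: $\Gamma(\CY,-)$ has finite cohomological amplitude. Combined with the locally-almost-of-finite-type hypothesis, this forces $\Gamma(\CY,-)$ to commute with all colimits, so $\CO_\CY$ is compact. The eventual coconnectivity of $\CY$ guarantees that $\CO_\CY$ itself lies in a bounded range of cohomological degrees, so that the amplitude bound applies directly to it.

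The main obstacle is the third condition, dualizability of $\QCoh(\CY)$ as a plain DG category, and here I would deliberately avoid trying to prove compact generation of $\QCoh(\CY)$ — which, as noted after Question \ref{q:compact}, is not known for general algebraic stacks. Instead the plan is to route through ind-coherent sheaves. For a QCA stack locally almost of finite type, $\IndCoh(\CY)$ is compactly generated (by coherent sheaves), and a compactly generated DG category is automatically dualizable in $\StinftyCat_{\on{cont}}$. It then remains to transfer dualizability from $\IndCoh(\CY)$ to $\QCoh(\CY)$, and this is where eventual coconnectivity is essential: it makes the functor $\Xi\colon \QCoh(\CY)\to \IndCoh(\CY)$ fully faithful, with continuous retraction given by $\Psi\colon \IndCoh(\CY)\to \QCoh(\CY)$ satisfying $\Psi\circ \Xi\simeq \on{Id}$. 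Thus $\QCoh(\CY)$ is a retract of $\IndCoh(\CY)$ in the symmetric monoidal $\infty$-category $\StinftyCat_{\on{cont}}$, and since dualizable objects are closed under retracts, $\QCoh(\CY)$ is dualizable. I expect the hardest inputs to be the compact generation of $\IndCoh(\CY)$ for QCA stacks together with the verification that $\Xi$ is fully faithful with continuous retraction in the eventually coconnective case; this combined package is essentially the content of \cite[Theorem 4.3.1]{DrGa}.
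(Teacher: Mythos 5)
First, a remark on the target: the paper contains no proof of \thmref{t:ev coconn pass} at all --- it is imported wholesale from \cite[Theorems 4.3.1 and 1.4.2]{DrGa}, so your proposal is really being compared with that source. On the second and third passability conditions you reconstruct its content correctly. Compactness of $\CO_\CY$ is precisely continuity of $\Gamma(\CY,-)\simeq \CMaps_{\QCoh(\CY)}(\CO_\CY,-)$, and this is \cite[Theorem 1.4.2]{DrGa}, deduced there from bounded cohomological dimension; and dualizability of $\QCoh(\CY)$ is indeed obtained by your retract argument: $\IndCoh(\CY)$ is compactly generated for QCA stacks locally almost of finite type, hence dualizable, while eventual coconnectivity makes $\Xi_\CY$ fully faithful with continuous retraction $\Psi_\CY$, so that $\QCoh(\CY)$ is a retract of $\IndCoh(\CY)$ in $\StinftyCat_{\on{cont}}$, and dualizable objects are closed under retracts. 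Your decision to bypass compact generation of $\QCoh(\CY)$ itself is also the right one, for the reason noted after Question \ref{q:compact}. Two hypotheses are misplaced in your second step, however: continuity of $\Gamma(\CY,-)$ requires neither the laft hypothesis nor eventual coconnectivity ($\CO_\CY$ is in any case connective, so no boundedness of the structure sheaf is needed); in this theorem eventual coconnectivity enters only where you use it in the third step.

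The genuine gap is the diagonal. Your claim that the QCA hypothesis forces $U\underset{\CY}\times U\to U\times U$ to be \emph{affine} is false: that would make $\Delta_\CY$ affine, and already the affine plane with doubled origin is a QCA stack (a quasi-compact, quasi-separated classical scheme of finite type, with trivial stabilizers) whose diagonal is only quasi-affine --- the preimage under $\Delta$ of the chart $U_1\times U_2\simeq \BA^2\times \BA^2$ is $U_1\cap U_2\simeq \BA^2\setminus\{0\}$, which is not affine, and the same computation refutes affineness after base change to the atlas $U=U_1\sqcup U_2$. More importantly, even the correct statement --- quasi-affineness of the diagonal --- is not ``exactly the content'' of the pointwise hypothesis: affineness of automorphism groups of field-valued points only gives affineness of the \emph{fibers} of the Isom-space (your $\on{pt}/G$ computation is purely fiberwise), and the passage from affine fibers to quasi-affineness of the family is a nontrivial theorem about group schemes/inertia; this is why the general QCA definition in \cite[Definition 1.1.8]{DrGa} also requires the classical inertia to be of finite presentation (a condition absorbed by laft, as the footnote to the theorem indicates), and it is part of what the citation to \cite{DrGa} is carrying. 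Your fppf-localization reduction to the atlas is fine, but as written the heart of this step is asserted rather than proved or cited.
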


We conjecture that in \thmref{t:ev coconn pass} the hypothesis that $\CY$ should be eventually coconnective
is unnecessary. 

\sssec{}

The proof of \propref{p:alg} will use the following ingredient (see \cite[Proposition 2.3.2]{QCoh}; the proof is reproduced in 
\secref{sss:pass rigid} for the reader's convenience):

\begin{prop}  \label{p:passable rigid}
If $\CY$ is passable, the category $\QCoh(\CY)$ is rigid as a monoidal DG category.
\end{prop}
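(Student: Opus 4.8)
The plan is to verify directly the definition of rigidity recalled in \secref{s:rigid}: a monoidal DG category $\bA$ is rigid provided its unit $\one_{\bA}$ is compact, $\bA$ is dualizable as a plain DG category, and the multiplication functor $\on{mult}:\bA\otimes\bA\to\bA$ admits a \emph{continuous} right adjoint which is a morphism of $\bA\otimes\bA$-module categories (i.e.\ a map of $\bA$-bimodules). For $\bA=\QCoh(\CY)$ the unit is $\CO_\CY$, which is compact by the second passability condition, and $\QCoh(\CY)$ is dualizable by the third; so two of the three requirements are immediate, and the entire content of the proposition is the bimodule-right-adjoint condition on the multiplication.

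To handle the multiplication, first I would identify it geometrically. The monoidal product on $\QCoh(\CY)$ sends $\CF_1,\CF_2$ to $\Delta^*(\CF_1\boxtimes\CF_2)$, so $\on{mult}$ factors as
$$\QCoh(\CY)\otimes\QCoh(\CY)\xrightarrow{\boxtimes}\QCoh(\CY\times\CY)\xrightarrow{\Delta^*}\QCoh(\CY),$$
where $\Delta:\CY\to\CY\times\CY$ is the diagonal. The key preliminary step is that, because $\QCoh(\CY)$ is dualizable as a plain DG category, the external-product functor $\boxtimes$ is an equivalence (a K\"unneth-type statement for $\QCoh$, which is among the facts about tensor products of categories reproduced in \secref{s:quasi-affine}, following \cite{QCoh}). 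Under this equivalence $\on{mult}$ is identified with $\Delta^*$, and computing the right adjoint of $\on{mult}$ reduces to computing that of $\Delta^*$.

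Now I would invoke the first passability condition: the diagonal $\Delta$ is schematic, quasi-affine and quasi-compact. For such a morphism, \secref{s:quasi-affine} provides that $\Delta^*$ admits a continuous right adjoint $\Delta_*$ satisfying base change and the projection formula
$$\Delta_*(\Delta^*(\CG)\otimes\CF)\simeq \CG\otimes\Delta_*(\CF),\qquad \CG\in\QCoh(\CY\times\CY),\ \CF\in\QCoh(\CY).$$
This projection formula is exactly the statement that $\Delta_*$ is a morphism of $\QCoh(\CY\times\CY)$-module categories, i.e.\ of $\QCoh(\CY)$-bimodules. Transporting back through the K\"unneth equivalence then exhibits a continuous bimodule right adjoint to $\on{mult}$, completing the verification against the definition of rigidity.

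The main obstacle I anticipate is not any single step in isolation but the bookkeeping that threads the two inputs together: one must check that the K\"unneth equivalence $\boxtimes$ is itself a morphism of bimodule categories, so that the bimodule structure on $\Delta_*$ is genuinely transported to a bimodule structure on $\on{mult}^R$, rather than surviving only as an abstract continuous right adjoint. That compatibility, together with the continuity of $\Delta_*$ and the projection formula (which in the derived/stacky setting are \emph{not} automatic and are precisely where the quasi-affineness of the diagonal is used, via \secref{s:quasi-affine}), is the real content; the remaining assertions are a direct reading-off of the passability hypotheses against the definition of rigidity.
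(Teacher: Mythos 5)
Your proposal is correct and is essentially the paper's own argument: the paper proves this via \lemref{l:pass rigid}, using dualizability of $\QCoh(\CY)$ to get the K\"unneth equivalence $\QCoh(\CY)\otimes\QCoh(\CY)\simeq\QCoh(\CY\times\CY)$ (citing \cite[Proposition 1.4.4]{QCoh}), identifying $\on{mult}$ with $\Delta_\CY^*$, and then deducing continuity of the right adjoint and the bimodule (projection-formula) property from the hypotheses on the diagonal, with compactness of $\CO_\CY$ giving the unit condition. The only difference is that you spell out the bimodule-compatibility bookkeeping that the paper compresses into ``the required properties follow now from the assumptions on the morphism $\Delta_\CY$.''
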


\begin{proof}[Proof of \propref{p:alg}]

We need to show that for $\CC\in \on{ShvCat}(\CY)$ and $T\in \affdgSch_{/\CY}$, the canonical map
$$\QCoh(T)\underset{\QCoh(\CY)}\otimes \bGamma(\CY,\CC)\to \bGamma(T,\CC)$$
is an isomorphism. We will prove this by applying \propref{p:almost commute 1}(a,ii) to the morphism $T\to \CY$.

\medskip

The condition of \propref{p:almost commute 1} holds by \cite[Proposition 3.3.3]{QCoh} (for the reader's convenience
we will reproduce the proof in \propref{p:base change pass}). 

\medskip

The condition of \propref{p:almost commute 1}(a) holds by \thmref{t:alg space} 
in the case of the quasi-affine schemes (which will be proved independently). 

\medskip

The condition of \propref{p:almost commute 1}(a,ii) holds by \lemref{l:dualizable in rigid}, using \propref{p:passable rigid}.

\end{proof} 

\ssec{A corollary of fully faithfulness of $\bLoc$}

\sssec{}

Let $\CY_1,\CY_2$ be two objects of $\on{PreStk}$, and recall that we have a canonically defined (symmetric monoidal) functor
\begin{equation} \label{e:product map}
\QCoh(\CY_1)\otimes \QCoh(\CY_2)\to \QCoh(\CY_1\times \CY_2).
\end{equation}

Recall also that the map \eqref{e:product map} is an equivalence if for one of the prestacks, the category 
$\QCoh(\CY_i)$ is dualizable (for the proof see, e.g., \cite[Proposition 1.4.4]{QCoh}). 

\sssec{}

We shall now prove:

\begin{prop} \label{p:Loc and prod} 
Suppose that $\CY_1$ is such that $\bLoc_{\CY_1}$ is fully faithful. Then \eqref{e:product map}
is an equivalence.
\end{prop}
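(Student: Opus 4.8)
The plan is to test the hypothesis on $\bLoc_{\CY_1}$ against one specific module category, namely $\QCoh(\CY_1)\otimes \QCoh(\CY_2)$, and then to match the resulting identity with the presentation of $\QCoh(\CY_1\times \CY_2)$ as a limit over the affine points of $\CY_1$.

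First I would set $\bC:=\QCoh(\CY_1)\otimes \QCoh(\CY_2)$, regarded as an object of $\QCoh(\CY_1)\mmod$ via the action of $\QCoh(\CY_1)$ on the first tensor factor. For $S\in \affdgSch_{/\CY_1}$ one computes, using that $\QCoh(S)\underset{\QCoh(\CY_1)}\otimes \QCoh(\CY_1)\simeq \QCoh(S)$ and that tensoring with the fixed category $\QCoh(\CY_2)$ commutes with relative tensor products (the latter being a colimit),
$$\bGamma(S,\bLoc_{\CY_1}(\bC))=\QCoh(S)\underset{\QCoh(\CY_1)}\otimes \bC\simeq \QCoh(S)\otimes \QCoh(\CY_2).$$
Passing to global sections over $\CY_1$, the full faithfulness of $\bLoc_{\CY_1}$ (i.e.\ the equivalence $\bC\simeq \bGamma^{\on{enh}}(\CY_1,\bLoc_{\CY_1}(\bC))$, and in particular on underlying DG categories) yields
$$\QCoh(\CY_1)\otimes \QCoh(\CY_2)\simeq \underset{S\in \affdgSch_{/\CY_1}}{\underset{\longleftarrow}{lim}}\, \QCoh(S)\otimes \QCoh(\CY_2),$$
the transition functors being $f^*\otimes \on{Id}$ for $f:S_1\to S_2$ in $\affdgSch_{/\CY_1}$.

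Next I would compute the target of \eqref{e:product map} independently. Writing $\CY_1\simeq \underset{S\in \affdgSch_{/\CY_1}}{\underset{\longrightarrow}{colim}}\, S$, hence $\CY_1\times \CY_2\simeq \underset{S}{\underset{\longrightarrow}{colim}}\,(S\times \CY_2)$, and using that $\QCoh(-)$ takes colimits of prestacks to limits, I obtain
$$\QCoh(\CY_1\times \CY_2)\simeq \underset{S\in \affdgSch_{/\CY_1}}{\underset{\longleftarrow}{lim}}\, \QCoh(S\times \CY_2).$$
For $S$ affine the category $\QCoh(S)$ is dualizable, so by the result recalled just before the proposition the external product $\QCoh(S)\otimes \QCoh(\CY_2)\to \QCoh(S\times \CY_2)$ is an equivalence, intertwining the transition functors $f^*\otimes \on{Id}$ and $(f\times \on{Id})^*$. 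Hence both sides of \eqref{e:product map} are identified with the single limit $\underset{S}{\underset{\longleftarrow}{lim}}\,\QCoh(S)\otimes\QCoh(\CY_2)$.

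Finally I would check that these two identifications are compatible with the canonical map \eqref{e:product map} itself, so as to conclude that \eqref{e:product map} \emph{is} an equivalence, not merely that an abstract isomorphism exists. This is the only genuinely delicate point: one verifies that for each $S$ the composite of \eqref{e:product map} with pullback to $\QCoh(S\times \CY_2)$ agrees, under the external-product equivalence, with the localization map $\QCoh(\CY_1)\otimes \QCoh(\CY_2)\to \QCoh(S)\otimes \QCoh(\CY_2)$ that appears in the first step; by the universal property of the limit this forces \eqref{e:product map} to coincide with the equivalence constructed above. The main obstacle is thus the bookkeeping of natural transformations, ensuring that the diagram computing $\bGamma(\CY_1,\bLoc_{\CY_1}(\bC))$ and the diagram computing $\QCoh(\CY_1\times\CY_2)$ are literally the same, and that \eqref{e:product map} is the comparison map between them.
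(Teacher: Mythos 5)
Your proposal is correct and follows essentially the same route as the paper: both take $\bC=\QCoh(\CY_1)\otimes\QCoh(\CY_2)$ as the test object, compute $\bGamma(S,\bLoc_{\CY_1}(\bC))\simeq \QCoh(S)\otimes\QCoh(\CY_2)\simeq\QCoh(S\times\CY_2)$ using dualizability of $\QCoh(S)$, identify the resulting limit with $\QCoh(\CY_1\times\CY_2)$, and conclude via the unit map of the $(\bLoc_{\CY_1},\bGamma^{\on{enh}}_{\CY_1})$-adjunction being an equivalence. Your extra final step, verifying that the abstract identification of the two limits is compatible with the canonical map \eqref{e:product map}, is a point the paper passes over silently, and it is a reasonable piece of diligence rather than a divergence in method.
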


\begin{proof}

Consider 
$$\bC:=\QCoh(\CY_1)\otimes \QCoh(\CY_2)\in \QCoh(\CY_1)\mmod.$$

The value of $\bLoc_{\CY_1}(\bC)$ on $S\in \affdgSch_{/\CY_1}$ is 
$$\QCoh(S)\underset{\QCoh(\CY_1)}\otimes \left(\QCoh(\CY_1)\otimes \QCoh(\CY_2)\right)\simeq
\QCoh(S)\otimes \QCoh(\CY_2),$$
and the latter is isomorphic to $\QCoh(S\times \CY_2)$, since $\QCoh(S)$ is dualiazable. 

\medskip

Hence, the category $\bGamma(\CY_1,\bLoc_{\CY_1}(\bC))$ is 
$$\underset{S\in \affdgSch_{/\CY_1}}{\underset{\longleftarrow}{lim}}\, \QCoh(S\times \CY_2),$$
and the latter is isomorphic to $\QCoh(\CY_1\times \CY_2)$. 

\medskip

Hence, if $\bC\to \bGamma(\CY_1,\bLoc_{\CY_1}(\bC))$ is an equivalence, then so is
$$\QCoh(\CY_1)\otimes \QCoh(\CY_2)\to \QCoh(\CY_1\times \CY_2).$$

\end{proof}

\sssec{}

As a corollary of \propref{p:Loc and prod}, we obtain:

\begin{cor} \label{c:Loc and prod}
Let $\CY_1,\CY_2\in \on{PreStk}$ such that the functors $\bLoc_{\CY_i}$ $i=1,2$ are fully faithful. Then
$\bLoc_{\CY_1\times \CY_2}$ is also fully faithful.
\end{cor}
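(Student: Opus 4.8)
The plan is to deduce the statement from \propref{p:almost commute 1}(a,iii) applied to the projection $f\colon \CY_1\times \CY_2\to \CY_2$. Since that result concludes fully faithfulness of $\bLoc$ on the \emph{source} of a morphism, provided it holds on the target, this will immediately give fully faithfulness of $\bLoc_{\CY_1\times \CY_2}$ from that of $\bLoc_{\CY_2}$, once the two hypotheses of \propref{p:almost commute 1} are checked. The overarching base-change hypothesis, that for $S\in \affdgSch_{/\CY_2}$ the map $\QCoh(S)\underset{\QCoh(\CY_2)}\otimes \QCoh(\CY_1\times \CY_2)\to \QCoh(S\times \CY_1)$ be an equivalence (note $S\underset{\CY_2}\times(\CY_1\times \CY_2)\simeq S\times \CY_1$), follows from \propref{p:Loc and prod}: the latter identifies $\QCoh(\CY_1\times \CY_2)$ with $\QCoh(\CY_1)\otimes \QCoh(\CY_2)$ (using that $\bLoc_{\CY_1}$ is fully faithful), after which the left-hand side becomes $\QCoh(S)\otimes \QCoh(\CY_1)\simeq \QCoh(S\times \CY_1)$, the last step because $\QCoh(S)$ is dualizable.

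The remaining hypothesis of part (a) is that the base change of $f$ along every affine $S\to \CY_2$ --- which is the projection $S\times \CY_1\to S$ with source $S\times \CY_1$ --- be a prestack for which $\bLoc$ is fully faithful. This is precisely the special case of the corollary in which one of the two factors is affine, so I would first dispose of that case. This reduction is the only real content, and it is the main (if mild) obstacle, since a priori one does not know that $\bLoc_{S\times \CY_1}$ is fully faithful, and one must be careful not to invoke the corollary being proved.

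To treat the affine-factor case, I would apply \propref{p:almost commute 1}(a,iii) once more, now to the projection $g\colon S\times \CY_1\to \CY_1$. For $T\in \affdgSch_{/\CY_1}$ the base change of $g$ is the projection $S\times T\to T$, whose source $S\times T$ is an affine DG scheme and hence $1$-affine, so $\bLoc_{S\times T}$ is an equivalence and in particular fully faithful; this verifies the part-(a) hypothesis. The overarching hypothesis for $g$, namely that $\QCoh(T)\underset{\QCoh(\CY_1)}\otimes \QCoh(S\times \CY_1)\to \QCoh(S\times T)$ be an equivalence, again follows from \propref{p:Loc and prod} together with dualizability of $\QCoh(S)$. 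Since $\bLoc_{\CY_1}$ is fully faithful by assumption, \propref{p:almost commute 1}(a,iii) yields that $\bLoc_{S\times \CY_1}$ is fully faithful. Feeding this back as the verification of the part-(a) hypothesis in the main application above completes the argument.
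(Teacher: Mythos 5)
Your proof is correct and takes essentially the same route as the paper: the paper's own proof is the one-line application of \propref{p:almost commute 1}(a,iii) to the projection $\CY_1\times \CY_2\to \CY_1$ (you project to $\CY_2$ instead, which is the same argument up to relabeling the factors), with the base-change hypothesis supplied by \propref{p:Loc and prod} and dualizability of $\QCoh(S)$. The one thing you do beyond the paper is to spell out the verification that the affine base changes $S\times \CY_1$ themselves have fully faithful $\bLoc$ --- via a second application of \propref{p:almost commute 1}(a,iii) to $S\times \CY_1\to \CY_1$, where all base changes are affine and hence $1$-affine --- a bootstrapping step that the paper's terse proof leaves implicit; your identification of this as the only real content, and your care to avoid circularity, are exactly right.
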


\begin{proof}
Follows from \propref{p:almost commute 1}(a,iii), applied to the map
$\CY_1\times \CY_2\to \CY_1$.
\end{proof}

\sssec{}

As another corollary of \propref{p:Loc and prod}, we obtain:

\begin{cor} 
Let $\CY$ be prestack that satisfies:

\begin{itemize}

\item The diagonal morphism $\CY\to \CY\times \CY$ is representable, quasi-compact and quasi-separated;

\item $\CO_\CY\in \QCoh(\CY)$ is compact;

\item $\bLoc_\CY$ is fully faithful.

\end{itemize}
Then $\QCoh(\CY)$ is rigid as a monoidal DG category, and in particular, dualizable as a plain DG category. 
\end{cor}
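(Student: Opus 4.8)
The plan is to obtain rigidity by identifying the monoidal multiplication of $\QCoh(\CY)$ with pullback along the diagonal, using \propref{p:Loc and prod} to make that identification, and then reading off the two defining properties of a rigid category.

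First I would invoke \propref{p:Loc and prod} with $\CY_1=\CY_2=\CY$: since $\bLoc_\CY$ is fully faithful by hypothesis, the external product functor
$$\QCoh(\CY)\otimes \QCoh(\CY)\to \QCoh(\CY\times \CY)$$
is an equivalence. The symmetric monoidal structure on $\QCoh(\CY)$ is computed by $\CF_1\otimes \CF_2\simeq \Delta^*(\CF_1\boxtimes \CF_2)$, where $\Delta:\CY\to \CY\times \CY$ is the diagonal and $\boxtimes$ denotes the external product. Transporting along the equivalence above, the multiplication functor $\on{mult}:\QCoh(\CY)\otimes \QCoh(\CY)\to \QCoh(\CY)$ is therefore identified with the symmetric monoidal pullback functor $\Delta^*:\QCoh(\CY\times \CY)\to \QCoh(\CY)$.

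Next I would check the two conditions that make $\QCoh(\CY)$ rigid (see \secref{ss:rigidity}). Compactness of the unit $\one_{\QCoh(\CY)}=\CO_\CY$ is precisely the second hypothesis. For the remaining condition I must exhibit a continuous right adjoint of $\on{mult}$ which is a morphism of $\QCoh(\CY)$-bimodules; under the identification above this right adjoint is $\Delta_*$. Since the diagonal is representable, quasi-compact and quasi-separated, $\Delta_*$ is continuous (continuity of pushforward along qcqs morphisms, as in \cite{QCoh}). Finally, as $\Delta^*$ is symmetric monoidal, $\Delta_*$ is a priori a lax morphism of $\QCoh(\CY\times \CY)$-modules, and the projection formula $\Delta_*(\CF)\otimes \CG\simeq \Delta_*(\CF\otimes \Delta^*(\CG))$ — valid because $\Delta$ is qcqs — promotes it to a strict morphism of $\QCoh(\CY\times \CY)\simeq \QCoh(\CY)\otimes \QCoh(\CY)$-modules, i.e., of $\QCoh(\CY)$-bimodules. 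This yields rigidity, and the last clause is then automatic, since a rigid monoidal DG category is self-dual, in particular dualizable, as a plain DG category (\secref{ss:rigidity}).

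The hard part will be the verification of this final rigidity condition, namely that $\Delta_*$ is both continuous and a \emph{strict} bimodule map: continuity is the qcqs property of the diagonal, while the bimodule structure is the projection formula, which I would justify by reducing along an affine atlas over the target $\CY\times \CY$ and invoking base change for the representable morphism $\Delta$. Once the multiplication has been identified with $\Delta^*$ via \propref{p:Loc and prod}, the remainder of the argument is formal.
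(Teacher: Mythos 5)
Your proof is correct and takes essentially the same route as the paper: the paper's one-line proof cites \cite[Proposition 2.3.2]{QCoh}, which is reproduced as \lemref{l:pass rigid}, and its implication (b) $\Rightarrow$ (c) is exactly your argument — use \propref{p:Loc and prod} to identify the monoidal multiplication with $\Delta_\CY^*$, then deduce continuity of the right adjoint and the strict bimodule property from the representable, quasi-compact, quasi-separated diagonal, with compactness of $\CO_\CY$ handling the unit. You merely make explicit the projection-formula step that the paper compresses into ``the required properties follow now from the assumptions on the morphism $\Delta_\CY$.''
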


\begin{proof}
Follows from \cite[Proposition 2.3.2]{QCoh}.
\end{proof}

\sssec{}

Hence, we we obtain:

\begin{cor}  \label{c:Loc and pass}
Let $\CY\in \on{PreStk}$ be such that: 

\begin{itemize}

\item The diagonal morphism $\CY\to \CY\times \CY$ is schematic, quasi-affine and quasi-compact;

\item $\CO_\CY\in \QCoh(\CY)$ is compact.

\end{itemize}

Then we have the following implications: \hfill

\smallskip

\hskip2cm $\bLoc_\CY$ is fully faithful $\Rightarrow$ $\CY$ is passable $\Rightarrow$ $\bGamma^{\on{enh}}_\CY$ is fully faithful.

\end{cor}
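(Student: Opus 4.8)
The plan is to prove the two implications separately, each reducing to a result already established in the excerpt.

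First I would treat the implication ``$\bLoc_\CY$ fully faithful $\Rightarrow$ $\CY$ passable''. Two of the three defining conditions of passability — that the diagonal $\CY\to \CY\times \CY$ be schematic, quasi-affine and quasi-compact, and that $\CO_\CY\in \QCoh(\CY)$ be compact — are among the standing hypotheses of the corollary, so the only thing left to verify is that $\QCoh(\CY)$ is dualizable as a plain DG category. For this I would invoke the immediately preceding corollary, whose hypotheses require that the diagonal be representable, quasi-compact and quasi-separated, that $\CO_\CY$ be compact, and that $\bLoc_\CY$ be fully faithful. The compactness and fully-faithfulness assumptions are given outright; moreover the diagonal hypothesis available here is in fact stronger than what is demanded, since a schematic morphism is representable and a quasi-affine morphism is separated (hence quasi-separated), while quasi-compactness is assumed directly. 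Thus the preceding corollary applies and yields that $\QCoh(\CY)$ is rigid as a monoidal DG category, and in particular dualizable as a plain DG category. All three conditions of passability then hold, so $\CY$ is passable.

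The second implication, ``$\CY$ passable $\Rightarrow$ $\bGamma^{\on{enh}}_\CY$ fully faithful'', requires no new argument: it is exactly the content of \propref{p:alg}. Chaining the two implications gives the desired statement.

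The only genuinely subtle point is the hypothesis-matching in the first implication: one must check that a schematic, quasi-affine, quasi-compact diagonal satisfies the weaker ``representable, quasi-compact, quasi-separated'' requirement of the preceding corollary, and record that that corollary actually delivers rigidity — a property a priori stronger than the dualizability required by passability. Beyond this bookkeeping I expect no real obstacle, since both halves of the statement are packaged in previously established results.
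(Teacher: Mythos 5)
Your proposal is correct and follows exactly the paper's (largely implicit) argument: the paper derives this corollary by chaining the immediately preceding corollary (fully faithfulness of $\bLoc_\CY$ plus the diagonal and compactness hypotheses give rigidity, hence dualizability, of $\QCoh(\CY)$, i.e.\ passability) with \propref{p:alg} (passable implies $\bGamma^{\on{enh}}_\CY$ fully faithful). Your hypothesis-matching remark — schematic implies representable, quasi-affine implies quasi-separated — is precisely the bookkeeping the paper leaves to the reader.
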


\ssec{Algebraic spaces: proof of \thmref{t:alg space}}   \label{ss:alg space}

We shall first prove \thmref{t:alg space}, assuming its validity in the case of quasi-compact quasi-affine schemes.
In particular, we assume the validity of \propref{p:alg}.

\sssec{}

First, we know that any quasi-compact quasi-separated algebraic space $\CY$ is passable as a prestack: this is
given by \cite[Propositions 2.2.2 and 2.3.6]{QCoh}. Hence, the functor $\bGamma^{\on{enh}}_\CY$ is fully faithful
by \propref{p:alg}. It remains to show that $\bLoc_\CY$ is fully faithful. 

\medskip

By \cite[Lemma 2.2.4]{QCoh}, we can exhibit $\CY$ is a finite union
$$\emptyset=\CY_0 \subset \CY_1\subset \ldots \subset \CY_{k-1}\subset \CY_k=\CY$$
of open subsets, such that for each $1\leq i\leq k$ there exists a quasi-affine quasi-compact
scheme $U_k$ equipped with \'etale map $f_k:U_k\to \CY_k$, which is one-to-one over  
$\CY_k-\CY_{k-1}$. 

\medskip

By induction, we can assume that $\CY_{k-1}$ is 1-affine. Thus, we can assume having a Cartesian
diagram of algebraic spaces
$$
\CD
U'  @>{j_U}>> U  \\
@V{f'}VV    @VV{f}V   \\
\CY'  @>{j_\CY}>>  \CY,
\endCD
$$
where the horizontal maps are open embeddings, the vertical maps are \'etale, $f$ is one-to-one over $\CY-\CY'$,
and $U,U',\CY'$ are 1-affine. We wish to deduce that the functor $\bLoc_\CY$ is fully faithful. 

\sssec{}

In \secref{sss:Nisn} it is shown that for $\bC\in \QCoh(\CY)\mmod$ the diagram
\begin{equation} \label{e:pullback 1}
\CD
\QCoh(U')\underset{\QCoh(\CY)}\otimes \bC  @<{(j_U)^*\otimes \on{Id}_\bC}<<  \QCoh(U)\underset{\QCoh(\CY)}\otimes \bC \\
@A{(f')^*\otimes \on{Id}_\bC}AA    @AA{f^*\otimes \on{Id}_\bC}A   \\ 
\QCoh(\CY')\underset{\QCoh(\CY)}\otimes \bC  @<{(j_\CY)^*\otimes \on{Id}_\bC}<<  \bC
\endCD
\end{equation}
is a pull-back diagram of DG categories.

\medskip

For $\CC\in \on{ShvCat}(\CY)$ consider the diagram
\begin{equation} \label{e:pullback 2}
\CD
\bGamma(U',\CC)   @<{(j_U)^*_\CC}<<  \bGamma(U,\CC) \\
@A{(f')^*_\CC}AA    @AA{f^*_\CC}A   \\
\bGamma(\CY',\CC)   @<{(j_\CY)^*_\CC}<<  \bGamma(\CY,\CC),
\endCD
\end{equation}
which is a pull-back diagram by \corref{c:Cech descent}(a) (applied to Nisnevich covers). 

\medskip

Taking $\CC:=\bLoc_\CY(\bC)$, and using the fact that $\CY'$ is 1-affine, we obtain that the map
$$\QCoh(\CY')\underset{\QCoh(\CY)}\otimes \bC\to  \bGamma\left(\CY',\bLoc_{\CY'}(\QCoh(\CY')
\underset{\QCoh(\CY)}\otimes \bC)\right)\simeq 
\bGamma(\CY',\CC)$$
is an equivalence, and the same holds for $\CY'$ replaced by $U'$ and $U$.

\medskip

Hence, we obtain a map from the diagram \eqref{e:pullback 1} to \eqref{e:pullback 2}, which induces equivalences
$$\QCoh(U)\underset{\QCoh(\CY)}\otimes \bC \to \bGamma(U,\CC),\quad 
\QCoh(\CY')\underset{\QCoh(\CY)}\otimes \bC \to \bGamma(\CY',\CC)$$ and 
$$\QCoh(U')\underset{\QCoh(\CY)}\otimes \bC \to \bGamma(U',\CC).$$

\medskip

Hence, the map
$$\bC\to \bGamma(\CY,\CC)$$
is also an equivalence, as required.

\sssec{}

To finish the proof of \thmref{t:alg space}, it remains to give an \emph{a priori} proof in the case when $\CY$
is a quasi-compact quasi-affine scheme. More generally, let us assume that $\CY$ is a quasi-compact separated
scheme. 

\medskip

Note that \propref{p:alg} is valid for $\CY$, because the diagonal of $\CY$ is a closed embedding, and hence is affine
(in the proof of \propref{p:alg} we used the fact that the base change of the diagonal morphism of $\CY$ by an affine
scheme yields a prestack which is 1-affine, which is tautological if the diagonal morphism is affine). 

\medskip

The proof proceeds by induction on the number of affine opens by which we can cover $\CY$.  Namely, we repeat
the proof of \thmref{t:alg space}, given above, replacing the word ``\'etale morphism" by ``open embedding",
and where the schemes $U_k$ are affine. In the induction step,  $\CY'$ and $U'$
are both 1-affine by the induction hypothesis. 

\qed

\ssec{Direct image for representable morphisms}  \label{ss:dir image}

\sssec{}

Let $f:\CY_1\to \CY_2$ be a representable, quasi-compact and quasi-separated 
morphism between prestacks. For $\CC\in \on{ShvCat}(\CY_2)$, 
we have a canonically defined functor
$$f^*_\CC:\bGamma(\CY_2,\CC)\to \bGamma(\CY_1,\CC)=\bGamma(\CY_1,\coRres_f(\CC)).$$

\medskip

We claim:

\begin{prop}  \label{p:direct image}
The above functor $f^*_\CC:\bGamma(\CY_2,\CC)\to \bGamma(\CY_1,\coRres_f(\CC))$
admits a continuous right adjoint (to be denoted $f_{\CC,*}$).  For a Cartesian diagram of prestacks 
$$
\CD
\CY'_1 @>{g_1}>>  \CY_1 \\
@V{f'}VV   @VV{f}V  \\
\CY'_2 @>{g_2}>>  \CY_2,
\endCD
$$
the diagram
$$
\CD
\bGamma(\CY'_1,\CC)   @<{(g_1)^*_\CC}<<   \bGamma(\CY_1,\CC) \\
@V{f'_{\CC,*}}VV   @VV{f_{\CC,*}}V   \\
\bGamma(\CY'_2,\CC)   @<{(g_2)^*_\CC}<<   \bGamma(\CY_2,\CC),
\endCD
$$
which \emph{a priori} commutes up to a natural transformation, actually commutes.
\end{prop}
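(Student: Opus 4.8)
The plan is to construct the right adjoint $f_{\CC,*}$ by first treating the case of an affine base and then assembling the resulting functors over all affine schemes mapping to $\CY_2$. For $S\in\affdgSch_{/\CY_2}$ set $\CX_S:=S\underset{\CY_2}\times\CY_1$; since $f$ is representable, quasi-compact and quasi-separated, each $\CX_S$ is a quasi-compact quasi-separated algebraic space, hence $1$-affine by \thmref{t:alg space}, and $f_S:\CX_S\to S$ is a quasi-compact quasi-separated (qcqs) morphism. Using $\bGamma(\CY_2,\CC)\simeq\underset{S}{\underset{\longleftarrow}{lim}}\,\bGamma(S,\CC)$ together with $\bGamma(\CY_1,\coRres_f(\CC))\simeq\bGamma(\CY_2,\coIind_f\coRres_f(\CC))\simeq\underset{S}{\underset{\longleftarrow}{lim}}\,\bGamma(\CX_S,\CC)$, I would exhibit $f^*_\CC$ as the limit over $S$ of the base-changed pullback functors $(f_S)^*_\CC:\bGamma(S,\CC)\to\bGamma(\CX_S,\CC)$.

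Next I would construct the level-wise right adjoints. Since $S$ is tautologically $1$-affine we have $\CC|_S\simeq\bLoc_S(\bGamma(S,\CC))$; restricting along $f_S$ and invoking the commutation \eqref{e:commute 1} together with the $1$-affineness of $\CX_S$ yields a canonical identification $\bGamma(\CX_S,\CC)\simeq\QCoh(\CX_S)\underset{\QCoh(S)}\otimes\bGamma(S,\CC)$, under which $(f_S)^*_\CC$ becomes $(f_S)^*\otimes\on{Id}_{\bGamma(S,\CC)}$. As $f_S$ is qcqs, the functor $(f_S)_*$ is continuous and, by the projection formula, is a morphism of $\QCoh(S)$-module categories; thus $((f_S)^*,(f_S)_*)$ is an adjunction internal to $\QCoh(S)\mmod$, and such adjunctions are preserved by $-\underset{\QCoh(S)}\otimes\bGamma(S,\CC)$. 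Hence $(f_S)_{\CC,*}:=(f_S)_*\otimes\on{Id}$ is a continuous right adjoint to $(f_S)^*_\CC$.

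The crucial step is then the Beck--Chevalley (base change) condition at the affine level: for $g:S'\to S$ in $\affdgSch_{/\CY_2}$, the square relating $(f_S)_{\CC,*}$, $(f_{S'})_{\CC,*}$ and the two restriction functors must commute. Under the identifications above this is exactly the base-change isomorphism $g^*\circ(f_S)_*\simeq (f_{S'})_*\circ (g')^*$ for the Cartesian square obtained by base-changing $f_S$ along $g$ (with $g':\CX_{S'}\to\CX_S$), promoted to the module categories by tensoring with $\bGamma(S,\CC)$ over $\QCoh(S)$. This isomorphism holds for the derived fiber product and \emph{arbitrary} $g$ precisely because $f_S$ is qcqs. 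The commutation is what allows the family $\{(f_S)_{\CC,*}\}$ to assemble into a morphism of the limit diagrams, producing $f_{\CC,*}:=\underset{S}{\underset{\longleftarrow}{lim}}\,(f_S)_{\CC,*}$ as a right adjoint to $f^*_\CC=\underset{S}{\underset{\longleftarrow}{lim}}\,(f_S)^*_\CC$. I expect this base-change verification to be the main obstacle, since it is where the qcqs hypothesis on $f$ is genuinely used; everything else is formal.

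Finally, continuity and the base-change assertion of the statement both follow formally. Continuity of $f_{\CC,*}$ is checked after composing with the continuous, jointly conservative evaluation functors $\on{ev}_S:\bGamma(\CY_2,\CC)\to\bGamma(S,\CC)$: by construction $\on{ev}_S\circ f_{\CC,*}\simeq (f_S)_{\CC,*}\circ\on{ev}_{\CX_S}$, a composite of continuous functors, so joint conservativity forces $f_{\CC,*}$ to commute with colimits. For the base change in the proposition, I would evaluate both $(g_2)^*_\CC\circ f_{\CC,*}$ and $f'_{\CC,*}\circ(g_1)^*_\CC$ on each $S'\in\affdgSch_{/\CY'_2}$; using $S'\underset{\CY'_2}\times\CY'_1\simeq S'\underset{\CY_2}\times\CY_1$ both sides restrict to the same affine-level functor $(f_{S'})_{\CC,*}$, and joint conservativity of the evaluations yields the desired isomorphism.
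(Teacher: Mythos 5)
Your proposal is correct and follows essentially the same route as the paper: reduce to the affine level via $\bGamma(\CY_2,\CC)\simeq \underset{S}{\underset{\longleftarrow}{lim}}\,\bGamma(S,\CC)$, use 1-affineness of quasi-compact quasi-separated algebraic spaces (\thmref{t:alg space}) to identify $\bGamma(S\underset{\CY_2}\times \CY_1,\CC)$ with $\QCoh(S\underset{\CY_2}\times \CY_1)\underset{\QCoh(S)}\otimes \bGamma(S,\CC)$, define the level-wise right adjoint as $(f_S)_*\otimes \on{Id}$, and invoke $\QCoh$ base change for qcqs maps to get the Beck--Chevalley compatibility that allows assembly over the limit. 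The paper's Step 1 phrases the local case for a qcqs algebraic-space base and its Step 2 performs the same reduction you describe, so the only differences are expository (you spell out the assembly, continuity via jointly conservative evaluations, and the final reduction of the Cartesian-square commutation to affines, which the paper leaves implicit).
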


\sssec{Proof of \propref{p:direct image}, Step 1}

Let us first consider the case when $\CY_2$ is a quasi-compact and quasi-separated algebrac space. In this case,
$\CY_1$ has the same properties.

\medskip

By \lemref{l:morphism between 1-affine}, we have
$$\bGamma(\CY_1,\coRres_f(\CC))\simeq \QCoh(\CY_1)\underset{\QCoh(\CY_2)}\otimes \bGamma(\CY_2,\CC).$$

In this case, the sought-for functor $f_{\CC,*}$ identifies with
$$\QCoh(\CY_1)\underset{\QCoh(\CY_2)}\otimes \bGamma(\CY_2,\CC) \overset{f_*\otimes \on{Id}_{\bGamma(\CY_2,\CC)}}
\longrightarrow 
\QCoh(\CY_2)\underset{\QCoh(\CY_2)}\otimes \bGamma(\CY_2,\CC)= \bGamma(\CY_2,\CC).$$

\sssec{Proof of \propref{p:direct image}, Step 2}

It suffices to show that for any $S\in \affdgSch_{/\CY_2}$, the functor
$$\bGamma(S,\CC)\to \bGamma(S\underset{\CY_2}\times \CY_1,\CC)$$
admits a right adjoint, and that for a map $g:S'\to S$, the corresponding diagram
$$
\CD
\bGamma(S'\underset{\CY_2}\times \CY_1,\CC)   @<<<   \bGamma(S\underset{\CY_2}\times \CY_1,\CC) \\
@VVV   @VVV   \\
\bGamma(S',\CC)   @<<<   \bGamma(S,\CC),
\endCD
$$
commutes. 

\medskip

However, this follows from Step 1 using base change for $\QCoh$ for maps between quasi-compact and quasi-separated
algebraic spaces (see, e.g., \cite[Corollary 1.4.5]{DrGa} for the latter assertion). 

\qed

\ssec{Global sections via a \v{C}ech cover} \label{ss:Cech}

Let $\CY$ be a quasi-compact algebraic stack. 
In this subsection we will describe a more economical way to compute the functor $\bGamma(\CY,-)$. 

\sssec{}

Let $U\to \CY$ be an fppf cover, where $U$ is a quasi-compact and 
quasi-separated algebraic space. Note that all the terms of the \v{C}ech nerve $U^\bullet/\CY$ 
are also quasi-compact and quasi-separated algebraic spaces. 

\medskip

Consider the co-simplicial category $\bGamma(U^\bullet/\CY,\CC)^*$: for $[i]\in \bDelta$ the category of $i$-th simplices is 
$\bGamma(U^i/\CY,\CC)$, and for a map $\alpha:[j]\to [i]$ in $\bDelta$, and the corresponding map
$f^\alpha:U^i/\CY\to U^j/\CY$, the functor 
$$\bGamma(U^j/\CY,\CC)\to \bGamma(U^i/\CY,\CC)$$
is $(f^\alpha)^*_\CC$. 

\medskip

The following results from \corref{c:Cech descent}(a): 

\begin{lem}  \label{l:sect via Cech}
For $\CC\in \on{ShvCat}(\CY)$, the restriction map
$$\Gamma(\CY,\CC)\to \on{Tot}\left(\bGamma(U^\bullet/\CY,\CC)^*\right)$$
is an equivalence.
\end{lem}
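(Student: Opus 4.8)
The plan is to deduce the statement directly from \corref{c:Cech descent}(a), the fppf-descent property of the section functor that has already been established upstream. First I would regard $\CY$ as an object of $\on{PreStk}_{/\CY}$ via its identity map, so that the structure morphism $U\to \CY$ of the cover becomes a morphism in $\on{PreStk}_{/\CY}$. Since $U\to \CY$ is an fppf cover, it is in particular an fppf surjection, and hence the hypothesis of \corref{c:Cech descent}(a) is satisfied with $\CZ:=U$ and $\CW:=\CY$.

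Next I would match the two cosimplicial objects. The \v{C}ech nerve $\CZ^\bullet/\CW$ appearing in \corref{c:Cech descent}(a) is by definition $U^\bullet/\CY$, whose category of $i$-simplices is $\bGamma(U^i/\CY,\CC)$; for a map $\alpha\colon [j]\to [i]$ in $\bDelta$, the induced transition functor is the pullback $(f^\alpha)^*_\CC$ along the corresponding map $f^\alpha\colon U^i/\CY\to U^j/\CY$ of \v{C}ech nerves. This is exactly the cosimplicial category $\bGamma(U^\bullet/\CY,\CC)^*$ described in the statement. Applying \corref{c:Cech descent}(a) then yields that the restriction map $\bGamma(\CY,\CC)\to \on{Tot}(\bGamma(U^\bullet/\CY,\CC)^*)$ is an equivalence, which is the claim.

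There is essentially no obstacle at this stage: all of the substance lies in the fppf-descent results \thmref{t:descent 1} and \thmref{t:descent 2}, from which \corref{c:Cech descent} is derived. The only point requiring verification here is the purely bookkeeping identification that the face and degeneracy maps of the \v{C}ech nerve induce precisely the pullback transition functors $(f^\alpha)^*_\CC$, and this is immediate from the definition of $\coRres$ and of the cosimplicial structure on sections. Finally, I would note that the hypotheses that $\CY$ be quasi-compact and that $U$ be a quasi-compact, quasi-separated algebraic space play no role in the equivalence itself; they are recorded because they guarantee that every term $U^i/\CY$ of the nerve is again a quasi-compact, quasi-separated algebraic space, which is what makes this presentation of $\bGamma(\CY,-)$ \emph{economical} and thus useful in the subsequent sections.
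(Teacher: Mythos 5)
Your proposal is correct and follows exactly the paper's own route: the paper derives \lemref{l:sect via Cech} directly from \corref{c:Cech descent}(a) applied to the fppf surjection $U\to \CY$ viewed in $\on{PreStk}_{/\CY}$, with the identification of the \v{C}ech nerve and its pullback transition functors being the only (purely definitional) verification. Your additional remark that the quasi-compactness hypotheses are not needed for the equivalence itself, but only to keep the terms $U^i/\CY$ in the class of quasi-compact quasi-separated algebraic spaces, is also accurate.
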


\medskip

We shall now describe the category $\on{Tot}\left(\bGamma(U^\bullet/\CY,\CC)^*\right)$ as co-modules
over a co-monad acting on $\bGamma(U,\CC)$. 

\sssec{}

We claim that the co-simplicial category $\bGamma(U^\bullet/\CY,\CC)$ satisfies the \emph{co-monadic} Beck-Chevalley condition
(see \secref{ss:BC} for what this means). Indeed, this follows from \propref{p:direct image}. 

\medskip

Hence, from \lemref{l:comonadicity}, we obtain:

\begin{lem} \hfill \label{l:BC*}

\smallskip

\noindent{\em(a)} 
The functor of evaluation on $0$-simplices 
$$\on{ev}^0:\on{Tot}\left(\bGamma(U^\bullet/\CY,\CC)^*\right)\to \bGamma(U,\CC)$$
admits a (continuous) right adjoint; to be denoted $(\on{ev}^0)^R$.

\smallskip

\noindent{\em(b)} The co-monad $\on{Av}^{U/\CY}_*:=\on{ev}^0\circ (\on{ev}^0)^R$, acting on $\bGamma(U,\CC)$, is isomorphic,
as a plain endo-functor, to $(\on{pr}_2)_{\CC,*}\circ (\on{pr}_1)_\CC^*$, where 
$\on{pr}_1,\on{pr}_2$ denote the two projections
$$U\underset{\CY}\times U\rightrightarrows U.$$

\smallskip

\noindent{\em(c)} The functor
$$\on{ev}^0:\on{Tot}\left(\bGamma(U^\bullet/\CY,\CC)^*\right) \to \bGamma(U,\CC)$$
is co-monadic, i.e., the natural functor $$\on{Tot}\left(\bGamma(U^\bullet/\CY,\CC)\right)\to 
\on{Av}^{U/\CY}_*\comod(\bGamma(U,\CC))$$
is an equivalence. 

\end{lem}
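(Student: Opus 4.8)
The plan is to deduce the entire statement from the general co-monadic Beck--Chevalley formalism recalled in \secref{ss:BC}, specifically from \lemref{l:comonadicity}. That lemma takes as input a co-simplicial DG category satisfying the co-monadic Beck--Chevalley condition and outputs exactly the three conclusions (a), (b), (c): existence of a continuous right adjoint to $\on{ev}^0$, an explicit formula for the resulting co-monad, and co-monadicity of $\on{ev}^0$. So the only thing to verify is that the co-simplicial category $\bGamma(U^\bullet/\CY,\CC)^*$ meets the hypotheses of \lemref{l:comonadicity}.

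First I would recall that the co-monadic Beck--Chevalley condition has two parts: every co-face functor must admit a continuous right adjoint, and for each of the Cartesian squares built from the simplicial identities, the natural transformation obtained by passing to right adjoints (the Beck--Chevalley transformation) must be an isomorphism. Both of these are precisely the content of \propref{p:direct image}. Indeed, every structure map of the \v{C}ech nerve $U^{i}/\CY\to U^{j}/\CY$ is a base change either of $U\to\CY$ or of the diagonal of $\CY$, and is therefore representable, quasi-compact and quasi-separated (here we use that $\CY$ is an algebraic stack with representable, quasi-compact, quasi-separated diagonal). Hence \propref{p:direct image} applies to each co-face map $f^\alpha$: the pullback $(f^\alpha)^*_\CC$ admits a continuous right adjoint $f^\alpha_{\CC,*}$, which gives the first part of the condition.

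For the second part I would observe that the squares relevant to the Beck--Chevalley condition are Cartesian squares of \v{C}ech nerves (a formal property of nerves of covers), and that for such squares the base-change compatibility between the pullbacks and their right adjoints is exactly the commuting square asserted in \propref{p:direct image}. This establishes the co-monadic Beck--Chevalley condition, so \lemref{l:comonadicity} yields (a) and (c) immediately. For (b), the general formula produced by \lemref{l:comonadicity} expresses the co-monad $\on{ev}^0\circ(\on{ev}^0)^R$ as the composite of the right adjoint of one degree-$0$-to-$1$ co-face with the other such co-face; since those two co-faces are precisely the pullbacks $(\on{pr}_1)^*_\CC$ and $(\on{pr}_2)^*_\CC$ along the two projections $U\underset{\CY}\times U\rightrightarrows U$, and since the right adjoint of $(\on{pr}_2)^*_\CC$ is $(\on{pr}_2)_{\CC,*}$ by \propref{p:direct image}, the co-monad is identified with $(\on{pr}_2)_{\CC,*}\circ(\on{pr}_1)^*_\CC$, as claimed.

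The genuinely substantive input is \propref{p:direct image} (existence of direct images with base change for representable morphisms), which we are entitled to assume; granting it, the remainder is bookkeeping. The step I would expect to require the most care is confirming that the face maps of the \v{C}ech nerve remain within the class of representable, quasi-compact, quasi-separated morphisms at \emph{every} simplicial level, and that the squares entering the Beck--Chevalley condition are genuinely Cartesian, so that the base-change clause of \propref{p:direct image} applies uniformly across all degrees.
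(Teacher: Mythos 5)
Your proof is correct and follows essentially the same route as the paper: the paper's own argument consists precisely of observing that $\bGamma(U^\bullet/\CY,\CC)^*$ satisfies the co-monadic Beck--Chevalley condition by \propref{p:direct image} (existence of continuous right adjoints plus base change along the Cartesian squares of the \v{C}ech nerve), and then invoking \lemref{l:comonadicity} to obtain (a), (b) and (c). Your write-up just makes explicit the bookkeeping that the paper leaves implicit.
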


Combining with \lemref{l:sect via Cech}, we obtain:

\begin{cor} \label{c:sect via comonad}
The functor 
$$f^*_\CC:\bGamma(\CY,\CC)\to  \bGamma(U,\CC)$$
is co-monadic.
\end{cor}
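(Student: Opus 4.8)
The plan is to identify the functor $f^*_\CC$ with the evaluation-on-$0$-simplices functor $\on{ev}^0$ appearing in \lemref{l:BC*}, and then invoke part (c) of that lemma directly. In other words, the corollary is essentially the conjunction of the two preceding lemmas, and the only real task is to match up the functors on the two sides.

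First I would recall that \lemref{l:sect via Cech} supplies an equivalence
$$\bGamma(\CY,\CC) \simeq \on{Tot}\left(\bGamma(U^\bullet/\CY,\CC)^*\right),$$
realized by the restriction maps along the terms of the \v{C}ech nerve $U^\bullet/\CY$. Under this equivalence, the composition with $\on{ev}^0$ --- which projects the totalization onto its category of $0$-simplices $\bGamma(U,\CC)$ --- is, by the very construction of the totalization and of the restriction functor $\bGamma(\CY,\CC)\to \on{Tot}(\cdots)$, precisely the restriction along the $0$-simplex $U\to \CY$, i.e.\ the functor $f^*_\CC$. This step is a matter of unwinding definitions and requires no further geometric input.

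Next, \lemref{l:BC*}(c) asserts that $\on{ev}^0$ is co-monadic (this being the payoff of the co-monadic Beck-Chevalley condition verified via \propref{p:direct image}). Transporting this statement along the equivalence of the previous step then yields immediately that $f^*_\CC$ is co-monadic, which is the assertion of the corollary. One may moreover read off the associated co-monad from \lemref{l:BC*}(b): as a plain endofunctor of $\bGamma(U,\CC)$ it is $(\on{pr}_2)_{\CC,*}\circ (\on{pr}_1)^*_\CC$, where $\on{pr}_1,\on{pr}_2\colon U\underset{\CY}\times U\rightrightarrows U$ are the two projections.

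The one point meriting a line of care --- though it is genuinely minor --- is the compatibility asserted in the first paragraph: that $f^*_\CC$ coincides with $\on{ev}^0$ after the identification of \lemref{l:sect via Cech}. All of the substantive content (fppf descent for $\on{ShvCat}$, the existence and base-change compatibility of the direct-image functors, and the co-monadicity furnished by Beck-Chevalley) has already been discharged in the lemmas being cited, so no new obstacle arises here.
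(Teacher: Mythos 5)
Your proposal is correct and matches the paper's own argument: the corollary is obtained precisely by combining \lemref{l:sect via Cech} (which identifies $f^*_\CC$ with the evaluation functor $\on{ev}^0$ out of the totalization) with \lemref{l:BC*}(c) (co-monadicity of $\on{ev}^0$, secured by the Beck--Chevalley condition via \propref{p:direct image}). Your extra remark identifying the co-monad as $(\on{pr}_2)_{\CC,*}\circ (\on{pr}_1)^*_\CC$ is consistent with \lemref{l:BC*}(b) and is exactly how the paper uses this corollary later.
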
 

\ssec{The \v{C}ech picture for sheaves of categories}

We retain the notations of \secref{ss:Cech}.

\sssec{}

Consider the following co-simplicial object of $\inftyCat$, denoted $\on{ShvCat}(U^\bullet/\CY)$. For $[i]\in \bDelta$,
its category of $i$-simplices is $\on{ShvCat}(U^i/\CY)$. For a map $\alpha:[j]\to [i]$ in $\bDelta$, the functor
$$\on{ShvCat}(U^i/\CY)\to \on{ShvCat}(U^i/\CY)$$
is $\coRres_{f^\alpha}$. 

\medskip

The following results from \corref{c:shv via Cech}(a):

\begin{lem}  \label{l:shv via Cech}
The pullback functor
$$\on{ShvCat}(\CY)\to \on{Tot}\left(\on{ShvCat}(U^\bullet/\CY)\right)$$
is an equivalence.
\end{lem}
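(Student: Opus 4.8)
The plan is to invoke \corref{c:shv via Cech}(a) essentially verbatim, with $\CZ:=U$. First I would note that since $U\to\CY$ is an fppf cover, it is in particular an fppf surjection in $\on{PreStk}$, so the hypothesis of \corref{c:shv via Cech}(a) is met. That corollary then asserts precisely that the pullback functor
$$\on{ShvCat}(\CY)\to \on{Tot}\left(\on{ShvCat}(U^\bullet/\CY)\right)$$
is an equivalence, where $U^\bullet/\CY$ is the \v{C}ech nerve of $U\to\CY$. Since all terms $U^i/\CY$ of this nerve are quasi-compact quasi-separated algebraic spaces, the right-hand side is well-defined and is exactly the totalization appearing in the statement.

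The one point I would take care to spell out is the identification of co-simplicial structures. The object $\on{ShvCat}(\CZ^\bullet/\CY)$ produced by \corref{c:shv via Cech}(a) is obtained by applying the contravariant functor $\on{ShvCat}_{\on{PreStk}}\colon\on{PreStk}^{\on{op}}\to\inftyCat$ termwise to the simplicial prestack $U^\bullet/\CY$. I would unwind that the functoriality of $\on{ShvCat}(-)$ along a morphism $g$ of prestacks is, by definition, the restriction functor $\coRres_g$ of \secref{sss:dir and inv}; hence for a structure map $\alpha\colon[j]\to[i]$ in $\bDelta$, with associated map $f^\alpha\colon U^i/\CY\to U^j/\CY$, the induced functor $\on{ShvCat}(U^j/\CY)\to\on{ShvCat}(U^i/\CY)$ is $\coRres_{f^\alpha}$. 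This is exactly the co-simplicial object $\on{ShvCat}(U^\bullet/\CY)$ defined at the start of the subsection, so the two co-simplicial objects coincide and so do their totalizations.

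I do not expect any genuine obstacle: the mathematical content is already packaged in \thmref{t:descent 2} (fppf descent for $\on{ShvCat}_{\on{PreStk}}$), from which \corref{c:shv via Cech} was formally deduced, and the present lemma is merely its specialization to the chosen cover $U\to\CY$. The only mild subtlety is bookkeeping of the co-simplicial face and degeneracy maps, which is routine once one recognizes $\coRres$ as the functoriality of $\on{ShvCat}$.
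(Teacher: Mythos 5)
Your proposal is correct and matches the paper's own proof, which derives \lemref{l:shv via Cech} directly from \corref{c:shv via Cech}(a) (itself a formal consequence of the fppf descent statement \thmref{t:descent 2}). The extra care you take in identifying the co-simplicial structure maps with the functors $\coRres_{f^\alpha}$ is exactly the bookkeeping the paper leaves implicit.
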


\sssec{}

We now claim that $\on{ShvCat}(U^\bullet/\CY)\in \inftyCat^{\bDelta}$ satsfies the co-monadic Beck-Chevalley 
condition. This amounts to the fact that the diagram
$$
\CD
\on{ShvCat}(U^i/\CY)   @<{\coIind_{f^{\partial_i}}}<<  \on{ShvCat}(U^{i+1}/\CY)    \\
@A{\coRres_{f^\alpha}}AA   @A{\coRres_{f^{\alpha+1}}}AA \\
\on{ShvCat}(U^j/\CY)   @<{\coIind_{f^{\partial_i}}}<<  \on{ShvCat}(U^{j+1}/\CY) 
\endCD
$$
being commutative, which follows from the definitions.

\medskip

Hence, from \lemref{l:comonadicity}, we obtain:

\begin{lem} \hfill \label{l:BC-cat}

\smallskip

\noindent{\em(a)} 
The functor of evaluation on $0$-simplices 
$$\on{ev}^0:\on{Tot}\left(\on{ShvCat}(U^\bullet/\CY)\right)\to \on{ShvCat}(U)$$
admits a right adjoint; to be denoted $(\on{ev}^0)^R$.

\smallskip

\noindent{\em(b)} The co-monad $\on{ev}^0\circ (\on{ev}^0)^R$, acting on $\on{ShvCat}(U)$, is isomorphic,
as a plain endo-functor, to $\coIind_{\on{pr}_2}\circ \coRres_{\on{pr}_1}$.

\smallskip

\noindent{\em(c)} The functor
$$\on{ev}^0:\on{Tot}\left(\on{ShvCat}(U^\bullet/\CY)\right) \to \on{ShvCat}(U)$$
is co-monadic, i.e., the natural functor $$\on{Tot}\left(\on{ShvCat}(U^\bullet/\CY)\right)\to 
\left(\on{ev}^0\circ (\on{ev}^0)^R\right)\comod(\on{ShvCat}(U))$$
is an equivalence. 

\end{lem}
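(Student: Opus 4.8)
The plan is to recognize all three assertions as the standard output of the co-monadic Beck--Chevalley formalism of \secref{s:Beck-Chevalley}, applied to the co-simplicial $\infty$-category $\on{ShvCat}(U^\bullet/\CY)$. That is, once we know that this co-simplicial object satisfies the co-monadic Beck--Chevalley condition, the abstract \lemref{l:comonadicity} produces, for \emph{any} such co-simplicial category, precisely (a) a right adjoint to evaluation on $0$-simplices, (b) an explicit pull-push description of the resulting co-monad, and (c) co-monadicity of that evaluation functor. So the entire content of the proof is to check the hypotheses of \lemref{l:comonadicity} and then to match the abstract pull-push formula with the geometric one in terms of $\on{pr}_1,\on{pr}_2$.

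To verify the co-monadic Beck--Chevalley condition, I would check two things. First, each co-face functor $\coRres_{f^\alpha}$ admits a right adjoint, namely $\coIind_{f^\alpha}$; this is guaranteed in complete generality by \secref{sss:dir and inv}, so it is automatic here. Second, the Beck--Chevalley squares must commute, i.e.\ for the relevant co-faces the mate natural transformation $\coIind_{f^{\partial_i}}\circ \coRres_{f^{\alpha+1}}\to \coRres_{f^\alpha}\circ \coIind_{f^{\partial_i}}$ is an isomorphism. This is exactly the commutativity of the displayed square, and it holds for a structural reason: the face and degeneracy maps of the \v{C}ech nerve $U^\bullet/\CY$ assemble into Cartesian squares, and the functor $\coIind_f$ is \emph{defined} by the base-changed sections $\bGamma(S,\coIind_f(\CC))=\bGamma(S\underset{\CY_2}\times\CY_1,\CC)$. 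Combined with the composition compatibilities $\coRres_{f_{1,2}}\circ\coRres_{f_{2,3}}\simeq\coRres_{f_{1,3}}$ and $\coIind_{f_{2,3}}\circ\coIind_{f_{1,2}}\simeq\coIind_{f_{1,3}}$ from \secref{sss:dir and inv}, the desired isomorphism of the two composites reduces to the tautological identification of iterated fibre products in the simplicial nerve. In other words, the Beck--Chevalley isomorphism is just base change for sheaves of categories along the Cartesian squares of the nerve, which holds on the nose by construction.

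With the condition verified, \lemref{l:comonadicity} immediately yields (a) and (c). For (b), the abstract lemma computes the co-monad $\on{ev}^0\circ(\on{ev}^0)^R$ as the pull-push around the square formed by the two co-faces $[0]\rightrightarrows[1]$, which geometrically are the two projections $\on{pr}_1,\on{pr}_2:U\underset{\CY}\times U\to U$; tracing which projection carries the $\coRres$ and which carries the $\coIind$ gives exactly $\coIind_{\on{pr}_2}\circ\coRres_{\on{pr}_1}$, in parallel with the corresponding formula $(\on{pr}_2)_{\CC,*}\circ(\on{pr}_1)_\CC^*$ of \lemref{l:BC*}(b). I expect the only genuine point requiring care to be the bookkeeping in this last identification: one must make sure the mate is formed along the correct projection so that the co-monad comes out as $\coIind_{\on{pr}_2}\circ\coRres_{\on{pr}_1}$ rather than its opposite, and that the co-simplicial structure maps are matched with the face maps of the nerve with the correct variances. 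The actual homological difficulty has already been absorbed into the general Beck--Chevalley lemma, so once that is in hand the present statement is formal.
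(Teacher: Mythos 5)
Your proposal is correct and is essentially the paper's own proof: the paper likewise observes that $\on{ShvCat}(U^\bullet/\CY)$ satisfies the co-monadic Beck--Chevalley condition (the right adjoints $\coIind_{f^\alpha}$ exist by \secref{sss:dir and inv}, and the commutativity of the relevant squares ``follows from the definitions,'' i.e.\ from the base-change description $\bGamma(S,\coIind_f(\CC))=\bGamma(S\underset{\CY_2}\times \CY_1,\CC)$ applied to the Cartesian squares of the \v{C}ech nerve), and then invokes \lemref{l:comonadicity} to obtain (a), (b) and (c), with the co-monad in (b) identified via the two co-faces $[0]\rightrightarrows[1]$ as $\coIind_{\on{pr}_2}\circ\coRres_{\on{pr}_1}$. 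No gap; your extra care about matching the projections in (b) is exactly the bookkeeping implicit in the paper's appeal to the general lemma.
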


Combining with \lemref{l:shv via Cech}, we obtain:

\begin{cor} \label{c:shv via comonad}
The functor
$$\coRres_f:\on{ShvCat}(\CY)\to \on{ShvCat}(U)$$
is co-monadic.
\end{cor}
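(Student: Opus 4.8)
The plan is to deduce this Corollary from \lemref{l:shv via Cech} and \lemref{l:BC-cat} in exactly the same formal manner in which \corref{c:sect via comonad} was deduced from \lemref{l:sect via Cech} and \lemref{l:BC*}. The essential content has already been isolated in those two lemmas; what remains is to package it correctly.

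First I would observe that, under the descent equivalence
$$\on{ShvCat}(\CY)\overset{\sim}{\longrightarrow} \on{Tot}\left(\on{ShvCat}(U^\bullet/\CY)\right)$$
of \lemref{l:shv via Cech}, the functor $\coRres_f$ is identified with the functor $\on{ev}^0$ of evaluation on $0$-simplices. This is essentially tautological, and is forced by the construction of the equivalence: the equivalence of \lemref{l:shv via Cech} is the pullback functor of \corref{c:shv via Cech}(a), which sends $\CC\in \on{ShvCat}(\CY)$ to the compatible family whose value on $[i]\in \bDelta$ is $\coRres_{f^i}(\CC)$, where $f^i:U^i/\CY\to \CY$ is the structure map. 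Evaluating this family on the $0$-simplex $U^0/\CY=U$ returns precisely $\coRres_{f}(\CC)$.

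Next I would invoke \lemref{l:BC-cat}(c), which, using the co-monadic Beck--Chevalley condition verified just above it, asserts that $\on{ev}^0$ is co-monadic. Since we have now written $\coRres_f$ as the composite
$$\on{ShvCat}(\CY)\overset{\sim}{\longrightarrow} \on{Tot}\left(\on{ShvCat}(U^\bullet/\CY)\right)\overset{\on{ev}^0}{\longrightarrow} \on{ShvCat}(U)$$
of an equivalence of $\infty$-categories followed by the co-monadic functor $\on{ev}^0$, and since precomposing a co-monadic functor with an equivalence on its source again yields a co-monadic functor, we conclude that $\coRres_f$ is co-monadic, as desired.

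The only point that is not purely formal is the identification in the first step of $\coRres_f$ with $\on{ev}^0$, and this is the nearest thing to an obstacle; but it is dictated by the very definition of the descent equivalence, whose value on $[0]$ is by construction the restriction functor $\coRres_f$. All the substantive work — namely fppf descent for $\on{ShvCat}$, and the Beck--Chevalley verification making $\on{ev}^0$ co-monadic with the explicit co-monad $\coIind_{\on{pr}_2}\circ \coRres_{\on{pr}_1}$ — has already been carried out in the preceding lemmas, so no further analysis is needed here.
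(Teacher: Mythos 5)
Your proposal is correct and follows exactly the paper's own route: the paper deduces the corollary by combining \lemref{l:shv via Cech} with \lemref{l:BC-cat}(c), identifying $\coRres_f$ with $\on{ev}^0$ composed with the descent equivalence, just as you do. You have merely made explicit the tautological identification and the fact that precomposition with an equivalence preserves co-monadicity, both of which the paper leaves implicit.
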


\section{Algebraic stacks: criteria for 1-affineness}  \label{s:crit aff}

In this section we let $\CY$ be a quasi-compact algebraic stack, which  
is \emph{passable} as a prestack (see \secref{ss:passable} for what this means).
Note that in view of \corref{c:Loc and pass}, the assumption that $\CY$ be
passable is not very restrictive.

\medskip

We will give a series of equivalent conditions for $\CY$ to be 1-affine. 

\ssec{The 1st criterion for 1-affineness}

\sssec{}

Note that under our assumptions on $\CY$, the functor $\bGamma_\CY^{\on{enh}}$ is fully faithful, by
\propref{p:alg}. Hence, the question of 1-affineness for $\CY$ is equivalent to that of fully faithfulness of the
functor $\bLoc_\CY$. 

\medskip

We are going to prove:

\begin{prop} \label{p:cond for Gamma}
The following conditions are equivalent:

\smallskip

\noindent{\em(i)} $\CY$ is $1$-affine. 

\smallskip

\noindent{\em(ii)} The functor $\bLoc_\CY$ is fully faithful;

\smallskip

\noindent{\em(iii)} The functor $\bLoc_\CY$ is conservative; 

\smallskip

\noindent{\em(iv)} The functor $\bGamma^{\on{enh}}_{\CY}$ commutes with tensor products by objects
of $\StinftyCat_{\on{cont}}$. 

\smallskip

\noindent{\em(v)} The unit morphism $\on{Id}\to \bGamma^{\on{enh}}(\CY,\bLoc_\CY(-))$
is an equivalence on objects of the form 
$$\QCoh(\CY)\otimes \bD\in \QCoh(\CY)\mmod, \quad  \bD\in \StinftyCat_{\on{cont}}.$$

\end{prop}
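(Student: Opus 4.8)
The plan is to exploit the fact that, because $\CY$ is passable, \propref{p:alg} guarantees that $\bGamma^{\on{enh}}_\CY$ is fully faithful; equivalently, the counit $\bLoc_\CY\circ \bGamma^{\on{enh}}_\CY\to \on{Id}$ of the adjunction $\bLoc_\CY\dashv \bGamma^{\on{enh}}_\CY$ is an isomorphism. With the counit understood, the entire proposition becomes a statement about the unit $\on{Id}\to \bGamma^{\on{enh}}(\CY,\bLoc_\CY(-))$, and I would organize the argument as the chain $(i)\Leftrightarrow(ii)\Leftrightarrow(iii)$ together with $(i)\Rightarrow(iv)\Rightarrow(v)\Rightarrow(ii)$, with only the last implication carrying genuine content.

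First the formal part. For $(i)\Leftrightarrow(ii)$: since the counit is already an isomorphism, $\CY$ is $1$-affine precisely when the unit is an isomorphism as well, which is the fully faithfulness of the left adjoint $\bLoc_\CY$. For $(ii)\Rightarrow(iii)$ I use that a fully faithful functor is automatically conservative. For the converse $(iii)\Rightarrow(ii)$ I would run the triangle-identity argument: for every $\bC$ the composite $\bLoc_\CY(\bC)\to \bLoc_\CY\bGamma^{\on{enh}}_\CY\bLoc_\CY(\bC)\to \bLoc_\CY(\bC)$ is the identity, and since the second arrow is the counit, hence an isomorphism, the first arrow $\bLoc_\CY(\eta_\bC)$ is an isomorphism; conservativity of $\bLoc_\CY$ then forces the unit $\eta_\bC$ itself to be an isomorphism. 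For $(i)\Rightarrow(iv)$: if $\CY$ is $1$-affine then $\bGamma^{\on{enh}}_\CY$ is inverse to the symmetric monoidal, and in particular $\StinftyCat_{\on{cont}}$-linear, functor $\bLoc_\CY$, hence is itself $\StinftyCat_{\on{cont}}$-linear, which is exactly the statement that it commutes with $-\otimes \bD$. For $(iv)\Rightarrow(v)$ I evaluate at the unit object: since $\bLoc_\CY$ is symmetric monoidal and linear, $\bLoc_\CY(\QCoh(\CY)\otimes \bD)\simeq \QCoh_{/\CY}\otimes \bD$, and then $(iv)$ together with $\bGamma(\CY,\QCoh_{/\CY})\simeq \QCoh(\CY)$ identifies $\bGamma^{\on{enh}}(\CY,\bLoc_\CY(\QCoh(\CY)\otimes \bD))$ with $\QCoh(\CY)\otimes \bD$ compatibly with the unit map.

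The heart of the argument is $(v)\Rightarrow(ii)$. Here I would write an arbitrary $\bC\in \QCoh(\CY)\mmod$ as the geometric realization of its bar resolution by free modules, $\bC\simeq |\bC^\bullet|$ with $\bC^n\simeq \QCoh(\CY)\otimes \bD_n$ for suitable $\bD_n\in \StinftyCat_{\on{cont}}$. Since $\bLoc_\CY$ is a left adjoint it commutes with this realization, and colimits in $\on{ShvCat}(\CY)$ are computed value-wise, so $\bGamma(S,\bLoc_\CY(\bC))\simeq |\bGamma(S,\bLoc_\CY(\bC^\bullet))|$ for each $S\in \affdgSch_{/\CY}$. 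By $(v)$ the unit is an isomorphism on every $\bC^n$, so the claim reduces to showing that $\bGamma^{\on{enh}}_\CY$ — equivalently the limit $\lim_{S/\CY}$ defining $\bGamma(\CY,-)$ — commutes with the realization $|\bLoc_\CY(\bC^\bullet)|$.

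This limit/colimit interchange is the main obstacle, and it is exactly the sort of convergence statement flagged in the introduction. I would handle it not over all of $\affdgSch_{/\CY}$ but by replacing $\bGamma(\CY,-)$ with a \v{C}ech totalization along a cover $U\to\CY$ by a quasi-compact quasi-separated algebraic space (which is $1$-affine by \thmref{t:alg space}), using \corref{c:sect via comonad} to present $f^*_\CC$ as a comonadic functor whose totalization is governed by the explicit comonad $\on{Av}^{U/\CY}_*$. Passability — hence rigidity of $\QCoh(\CY)$ (\propref{p:passable rigid}) and the attendant dualizability/finiteness properties, cf.\ \corref{c:Loc commutes with limits rigid} — is what renders this totalization of bounded amplitude and therefore forces it to commute with the geometric realization. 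Granting this, the full subcategory of $\QCoh(\CY)\mmod$ on which the unit is an isomorphism contains all free modules and is closed under bar realizations, hence is everything, yielding $(ii)$ and closing the cycle.
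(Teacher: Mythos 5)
Your formal implications --- (i)$\Leftrightarrow$(ii)$\Leftrightarrow$(iii) via the triangle identities and fully faithfulness of $\bGamma^{\on{enh}}_\CY$, and (i)$\Rightarrow$(iv)$\Rightarrow$(v) --- match the paper and are fine. The problem is the implication (v)$\Rightarrow$(ii), which is the only step with content, and there your argument has a genuine gap.

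You resolve $\bC$ by the \emph{bar} construction, i.e.\ as a geometric realization $\bC\simeq |\bC^\bullet|$ with $\bC^n\simeq \QCoh(\CY)\otimes \bD_n$, and then you must commute the limit defining $\bGamma(\CY,-)$ (equivalently a \v{C}ech totalization) past this realization. You correctly identify this limit/colimit interchange as the main obstacle, but the justification you offer --- that passability/rigidity renders the totalization ``of bounded amplitude and therefore forces it to commute with the geometric realization'' --- is not an argument. There is no t-structure on $\StinftyCat_{\on{cont}}$, so ``bounded amplitude'' has no meaning for a totalization of a cosimplicial diagram of DG categories, and neither \corref{c:sect via comonad} nor \corref{c:Loc commutes with limits rigid} supplies the needed interchange: comonadicity of $f^*_\CC$ describes $\bGamma(\CY,\CC)$ as comodules over an explicit comonad, but says nothing about how this limit interacts with realizations in the variable $\CC$. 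If such an interchange were available in this generality, much of the hard technical work elsewhere in the paper (e.g.\ the proof of \thmref{t:alg}) would be unnecessary.

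The paper sidesteps the interchange entirely by resolving $\bC$ in the \emph{other} direction: since $\QCoh(\CY)$ is rigid (\propref{p:passable rigid}), \corref{c:any object as limit} --- which rests on \propref{p:rigid Hochschild}, the identification of the bar realization with the co-bar totalization for modules over a rigid monoidal category --- writes an arbitrary $\bC\in\QCoh(\CY)\mmod$ as a \emph{limit} (a totalization of a co-simplicial object) of categories of the form $\QCoh(\CY)\otimes\bD$. Then only limit-with-limit commutation is needed: $\bGamma^{\on{enh}}_\CY$ preserves limits because it is a right adjoint, and $\bLoc_\CY$ preserves limits by \corref{c:Loc commutes with limits rigid}(b) (again rigidity). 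With (v) giving the unit isomorphism on each term, the unit is an isomorphism on $\bC$. In effect, the rigidity you invoke only vaguely is exactly what lets one replace your colimit resolution by a limit resolution; that replacement, rather than any convergence argument, is the missing idea in your proof.
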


\begin{proof}

Since $\bGamma_\CY$ is fully faithful, the equivalence of (i), (ii) and (iii) is evident. It is also clear that (i) implies (iv)
and that (iv) implies (v). 

\medskip

Let assume (v) and deduce (ii). We need to show that for $\bC\in \QCoh(\CY)\mmod$, the unit of the adjunction
$$\bC\to \bGamma^{\on{enh}}(\CY,\bLoc_\CY(\bC))$$
is an equivalence. Since $\QCoh(\CY)$ is rigid, an arbitrary object of $\QCoh(\CY)\mmod$
can be written as a \emph{limit} of objects of the form $\QCoh(\CY)\otimes \bD$ for $\bD\in \StinftyCat_{\on{cont}}$,
see \corref{c:any object as limit}. 

\medskip

Now, the assertion follows from the fact that both $\bGamma^{\on{enh}}_{\CY}$ 
and $\bLoc_\CY$ commute with limits: the former because $\bGamma_\CY^{\on{enh}}$ admits a left adjoint, and the latter
by \corref{c:Loc commutes with limits rigid}(b).

\end{proof}

\ssec{The 2nd criterion for 1-affineness}

We shall now give another set of criteria for $\CY$ to be $1$-affine. The idea of this approach goes back to Jacob 
Lurie. 

\medskip

We shall consider another functor 
$$\on{co-}\bGamma^{\on{enh}}(\CY,-):\on{ShvCat}(\CY)\to \QCoh(\CY)\mmod.$$

\sssec{} 

Choose a fppf cover $U\to \CY$, where $U$ is a quasi-compact and quasi-separated algebraic space.

\medskip

For $\CC\in \on{ShvCat}(\CY)$ we consider a \emph{simplicial} object 
$$\bGamma(U^\bullet/\CY,\CC)_*\in \QCoh(\CY)\mmod.$$ 

Namely, for $[i]\in \bDelta$, the category of $i$-simplices in 
$\bGamma(U^i/\CY,\CC)$. For a map $[j]\to [i]$ in $\bDelta$, the
corresponding functor
$$\bGamma(U^i/\CY,\CC)\to \bGamma(U^j/\CY,\CC)$$
is $(f^\alpha)_{\CC,*}$, see \secref{ss:dir image}.

\sssec{} 

We set
$$\on{co-}\bGamma_U(\CY,\CC):=|\bGamma(U^\bullet/\CY,\CC)_*|.$$

\medskip

Note that the simplicial object $\bGamma(U^\bullet/\CY,\CC)_*$ of $\StinftyCat_{\on{cont}}$ naturally upgrades to one in $\QCoh(\CY)\mmod$;
we will denote it by $\bGamma^{\on{enh}}(U^\bullet/\CY,\CC)_*$ 

\medskip

We denote the resulting functor $\on{ShvCat}(\CY)\to \QCoh(\CY)\mmod$ by $\on{co-}\bGamma^{\on{enh}}_U(\CY,\CC)$.

\medskip


\medskip

Note that the functors  $\on{co-}\bGamma_U(\CY,\CC)$ and $\on{co-}\bGamma^{\on{enh}}_U(\CY,\CC)$ commute with 
colimits and tensor products by objects of $\StinftyCat_{\on{cont}}$, by construction. 

\begin{rem}
The functor
$$\CC\mapsto \on{co-}\bGamma^{\on{enh}}_U(\CY,\CC)$$
a priori depends on the choice of the cover. However, it follows from \propref{p:co condition}
below (specifically, from the fact that \propref{p:co condition}(b) holds for quasi-compact quasi-separated algebraic spaces) that this definition 
can be rewritten invariantly as 
$$\underset{X\in (\on{AlgSpc}_{\on{qc,qs}})_{/\CY}}{\underset{\longrightarrow}{colim}}\, \bGamma(X,\CC)\simeq 
\underset{S\in \affdgSch_{/\CY}}{\underset{\longrightarrow}{colim}}\, \bGamma(S,\CC).$$
So, in fact, we have a well-defined functor
$$\on{co-}\bGamma^{\on{enh}}_U(\CY,-):\on{ShvCat}(\CY)\to \QCoh(\CY)\mmod.$$
\end{rem}

\sssec{} \label{sss:cosections}

We claim now that there exists a canonically defined natural transformation
\begin{equation} \label{e:co and sections}
\on{co-}\bGamma_U(\CY,-)\to \bGamma(\CY,-).
\end{equation}

Indeed, if $f^i$ denotes the morphism $U^i\to \CY$, the corresponding compatible family of functors
$$\bGamma(U^i,\CC)\to \bGamma(\CY,\CC)$$
is given by $(f^i)_{\CC,*}$. 

\medskip

The natural transformation \eqref{e:co and sections} can be interpreted in the framework of the following general
paradigm (specifically, it is a particular case of the map \eqref{e:from coinv to inv} below):

\medskip

Let $\bC^\bullet$ be a co-simplicial category, in which all functors admit right adjoints. 
Let $\bC^{\bullet,R}$ be the simplicial category obtained by passing to the right adjoint
functors. Then each of the evaluation functors
$$\on{ev}^i:\on{Tot}(\bC^\bullet)\to \bC^i$$
admits a right adjoint, and these right adjoints together define a functor
\begin{equation} \label{e:from coinv to inv}
|\bC^{\bullet,R}|\to \on{Tot}(\bC^\bullet).
\end{equation} 

Note also that by replacing the word ``right'' by ``left" we obtain a functor
$$|\bC^{\bullet,L}|\to \on{Tot}(\bC^\bullet),$$
which is an equivalence by \cite[Lemma 1.3.3]{DGCat}.

\medskip

In the above constructions, the category of indices $\bDelta$ can be replaced by any
other index category.

\sssec{}

The natural transformation \eqref{e:co and sections} upgrades to 
\begin{equation} \label{e:co and sections enh}
\on{co-}\bGamma^{\on{enh}}_U(\CY,-)\to \bGamma^{\on{enh}}(\CY,-).
\end{equation}

\medskip

In particular, by evaluating on $\CC:=\QCoh_{/\CY}$,  we obtain a functor
\begin{equation} \label{e:QCoh as colimit}
|\QCoh(U^\bullet/\CY)_*|\to \QCoh(\CY).
\end{equation}

\sssec{}

We claim:

\begin{prop}  \label{p:co condition}
The following conditions are equivalent:  

\smallskip

\noindent{\em(a)} $\CY$ is $1$-affine. 

\smallskip

\noindent{\em(b)} The natural transformation in \eqref{e:co and sections} is an isomorphism.

\smallskip

\noindent{\em(c)} The functor $\on{co-}\bGamma^{\on{enh}}_U(\CY,-)$ is \emph{a} left inverse of $\bLoc_\CY$.

\smallskip

\noindent{\em(d)} The functor in \eqref{e:QCoh as colimit} is an equivalence. 

\smallskip

\noindent{\em(e)} There exists \emph{an} isomorphism $|\QCoh(U^\bullet/\CY)_*|\simeq \QCoh(\CY)$
as $\QCoh(\CY)$-module categories. 

\smallskip

\noindent{\em(f)} The category $|\QCoh(U^\bullet/\CY)_*|$ is dualizable.

\end{prop}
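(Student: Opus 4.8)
The plan is to exploit the full faithfulness of $\bGamma^{\on{enh}}_\CY$ (\propref{p:alg}), so that by \propref{p:cond for Gamma} the $1$-affineness of $\CY$ is equivalent to conservativity of $\bLoc_\CY$. Write $\sA:=|\QCoh(U^\bullet/\CY)_*|$ for the source of \eqref{e:QCoh as colimit} and $\xi:\sA\to\QCoh(\CY)$ for that map. The opening observation I would record is that for $\bC\in\QCoh(\CY)\mmod$ one has $\on{co-}\bGamma^{\on{enh}}_U(\CY,\bLoc_\CY(\bC))\simeq \sA\underset{\QCoh(\CY)}\otimes\bC$: each $U^i/\CY$ is a quasi-compact quasi-separated algebraic space, hence $1$-affine by \thmref{t:alg space}, so $\bGamma(U^i/\CY,\bLoc_\CY(\bC))\simeq \QCoh(U^i/\CY)\underset{\QCoh(\CY)}\otimes\bC$ by \eqref{e:commute 1}, and tensoring with $\bC$ commutes with the realization. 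Granting this, several implications are formal: (d)$\Rightarrow$(c) since $\xi$ an equivalence gives $\sA\underset{\QCoh(\CY)}\otimes(-)\simeq\on{Id}$; (c)$\Rightarrow$(a) since a left inverse makes $\bLoc_\CY$ conservative, whence $1$-affine; (b)$\Rightarrow$(a) because $\on{co-}\bGamma^{\on{enh}}_U(\CY,-)$ commutes with tensoring by $\StinftyCat_{\on{cont}}$, so if it agrees with $\bGamma^{\on{enh}}_\CY$ then \propref{p:cond for Gamma}(iv) applies; and (d)$\Rightarrow$(e)$\Rightarrow$(f) are immediate, the last using that $\QCoh(\CY)$ is dualizable for $\CY$ passable together with \lemref{l:dualizable in rigid}.

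The first genuinely substantial step is (f)$\Rightarrow$(d). By the construction \eqref{e:from coinv to inv}, the map $\xi$ is the canonical comparison from the realization of the simplicial object $\QCoh(U^\bullet/\CY)_*$ (transition functors the pushforwards $(f^\alpha)_{*}$) to the totalization of the cosimplicial object $\QCoh(U^\bullet/\CY)^*$ (transition functors the pullbacks $(f^\alpha)^*$), the latter being $\QCoh(\CY)$ by descent (\corref{c:Cech descent}). Each term $\QCoh(U^i/\CY)$ is dualizable (passability and \lemref{l:dualizable in rigid}), and under the self-duality of $\QCoh$ of a quasi-compact quasi-separated algebraic space the dual of a pushforward is the corresponding pullback. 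Hence, \emph{assuming} $\sA$ dualizable, dualizing the realization turns it into the totalization of the dual cosimplicial object, giving $\sA^\vee\simeq\QCoh(\CY)$, and $\xi^\vee$ is identified with the identity of $\QCoh(\CY)$. Since $\sA$ and $\QCoh(\CY)$ are dualizable, $\xi$ is an equivalence iff $\xi^\vee$ is; thus $\xi$ is an equivalence, proving (d).

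For the reverse direction (a)$\Rightarrow$(d) I would argue as follows. Form the simplicial object $\CC^\bullet:=\coIind_{f^\bullet}(\QCoh_{/U^\bullet/\CY})$ in $\on{ShvCat}(\CY)$, whose transition maps are the $\coIind$'s of the projections of the \v{C}ech nerve. Its restriction $\coRres_f(\CC^\bullet)$ to $U$ is, by base change, the analogous simplicial object for $U\underset{\CY}\times U^\bullet/\CY\simeq U^{\bullet+1}/\CY$, which carries an extra degeneracy; since $\coRres_f$ is a left adjoint it commutes with the realization, so $|\CC^\bullet|$ restricts on $U$ to $\QCoh_{/U}$, and by $\on{ShvCat}$-descent (\corref{c:shv via Cech}) we obtain $|\CC^\bullet|\simeq\QCoh_{/\CY}$. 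Using \eqref{e:commute 2} to identify $\bGamma^{\on{enh}}(\CY,\CC^i)\simeq\QCoh(U^i/\CY)$ with the pushforward transition maps, the simplicial object $\bGamma^{\on{enh}}(\CY,\CC^\bullet)$ is exactly $\QCoh(U^\bullet/\CY)_*$. Now $1$-affineness makes $\bGamma^{\on{enh}}_\CY$ an equivalence, hence commuting with the realization, so $\sA=|\bGamma^{\on{enh}}(\CY,\CC^\bullet)|\simeq\bGamma^{\on{enh}}(\CY,|\CC^\bullet|)\simeq\bGamma^{\on{enh}}(\CY,\QCoh_{/\CY})=\QCoh(\CY)$, which is (d). Finally (a)$\Rightarrow$(b): every $\CC$ is $\bLoc_\CY(\bC)$, and on such objects the natural transformation \eqref{e:co and sections enh} is, by the opening computation, $\xi\underset{\QCoh(\CY)}\otimes\bC$, an equivalence once (d) holds. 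This closes the cycle and yields the equivalence of all six conditions.

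I expect the main obstacle to be precisely the comparison $\xi$: co-sections are computed as a \emph{colimit} (a realization of pushforwards) whereas sections are a \emph{limit} (a totalization of pullbacks), and recognizing that this colimit-to-limit map is governed by dualizability is the heart of the matter. The dualization in (f)$\Rightarrow$(d) is where this is cashed out, and the delicate points are verifying that passing to duals interchanges the pushforward and pullback transition functors compatibly along the entire simplicial diagram, and that $|\CC^\bullet|\simeq\QCoh_{/\CY}$ really does follow from the extra degeneracy after restriction to the cover.
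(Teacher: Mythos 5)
Your proof is correct, and its global architecture — the cycle of implications, with the easy arrows (d)$\Rightarrow$(c)$\Rightarrow$(a), (b)$\Rightarrow$(a), (d)$\Rightarrow$(e)$\Rightarrow$(f) handled via \propref{p:cond for Gamma} and the identification $\on{co-}\bGamma^{\on{enh}}_U(\CY,\bLoc_\CY(\bC))\simeq |\QCoh(U^\bullet/\CY)_*|\underset{\QCoh(\CY)}\otimes \bC$ — matches the paper's; but two of the substantive arrows are proved by genuinely different means. For the dualizability condition, the paper only proves (f)$\Rightarrow$(e): it notes $\on{Funct}_{\on{cont}}(|\QCoh(U^\bullet/\CY)_*|,\Vect)\simeq \on{Tot}(\QCoh(U^\bullet/\CY)^*)\simeq \QCoh(\CY)$ and double-dualizes to get \emph{some} module-category isomorphism, then recovers (d) only by going around the cycle (f)$\Rightarrow$(e)$\Rightarrow$(c)$\Rightarrow$(a)$\Rightarrow$(d); you sharpen this to a direct (f)$\Rightarrow$(d) by additionally identifying the dual of the canonical map $\xi$ with the descent equivalence and using that a map between dualizable categories is an equivalence iff its dual is — a shorter route, at the price of checking that the self-dualities of the $\QCoh(U^i/\CY)$ intertwine pushforwards with pullbacks coherently along the whole simplicial diagram (a coherence the paper also invokes, but only to compute the dual object, not the dual morphism). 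For (a)$\Rightarrow$(d), the paper works downstairs in module categories: it uses conservativity of $\bLoc_\CY$, restricts to $U$, and invokes \propref{p:almost commute 1}(a,ii) together with the splitting of $\QCoh(U^{\bullet+1}/\CY)_*$, the identification $\QCoh(U)\underset{\QCoh(\CY)}\otimes \QCoh(U^i/\CY)\simeq \QCoh(U^{i+1}/\CY)$ being supplied by passability; you instead work upstairs in $\on{ShvCat}(\CY)$, building the simplicial object $\CC^\bullet=\coIind_{f^\bullet}(\QCoh_{/U^\bullet/\CY})$ with \emph{pushforward} transition maps, proving $|\CC^\bullet|\simeq \QCoh_{/\CY}$ by conservativity of $\coRres_f$ (from descent) and the extra degeneracy over the cover, and then transporting through the equivalence $\bGamma^{\on{enh}}_\CY$. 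Both arguments hinge on the same splitting phenomenon after base change to $U$; yours trades the passability computation for the need to have $\CC^\bullet$ as an honest simplicial object of $\on{ShvCat}(\CY)$, i.e., to know that the fiberwise pushforwards are maps of sheaves of categories (projection formula plus base change) assembling coherently — this is exactly what \propref{p:direct image} together with the standard passage to right adjoints in the cosimplicial monad resolution of $\QCoh_{/\CY}$ provides, so the step is legitimate and parallel to the paper's own construction of $\QCoh(U^\bullet/\CY)^?$ in \secref{ss:3rd crit}, but you should cite this infrastructure explicitly, and likewise record that the composite equivalence you produce is the canonical map of \eqref{e:QCoh as colimit} (it is, since the augmentation $\CC^\bullet\to \QCoh_{/\CY}$ induces the functors $(f^i)_{\CC,*}$ on global sections).
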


\begin{proof}

First note that following tautological implications: (b) $\Rightarrow$ (d), (d) $\Rightarrow$ (e), and
(e) $\Rightarrow$ (f).

\medskip

The implication (b) $\Rightarrow$ (a) follows from the implication (iv) $\Rightarrow$ (i) in \propref{p:cond for Gamma}.
The implication (c) $\Rightarrow$ (a) follows from the implication (iii) $\Rightarrow$ (i) in \propref{p:cond for Gamma}.
Clearly, (a) and (b) together imply (c); hence (b) implies (c). 

\medskip

Let us show that (e) implies (c). Given $\bC\in \QCoh(\CY)\mmod$,
we have:
$$\on{co-}\bGamma^{\on{enh}}_U(\CY,\bLoc_\CY(\bC))\simeq \on{co-}\bGamma^{\on{enh}}_U(\CY,\QCoh_{/\CY})\underset{\QCoh(\CY)}\otimes \bC,$$
which identifies with $\bC$ by assumption.

\medskip

Let us now show that (a) implies (d). The assumption in (a) implies that $\bLoc_\CY$ is conservative, i.e., it suffices to show
that the map
$$\bLoc_\CY\left(\on{co-}\bGamma^{\on{enh}}_U(\CY,\QCoh_{/\CY})\right)\to \QCoh_{/\CY}$$
is an isomorphism. By \lemref{l:shv via Cech},
it suffices to show that the map
$$\bLoc_\CY\left(\on{co-}\bGamma^{\on{enh}}_U(\CY,\QCoh_{/\CY})\right)|_{U}\to \QCoh_{/U}$$
is an isomorphism. Since $U$ is 1-affine, it suffices to show that
$$\bGamma\left(U,\bLoc_\CY\left(\on{co-}\bGamma^{\on{enh}}_U(\CY,\QCoh_{/\CY})\right)|_{U}\right)\to \QCoh(U)$$
is an isomorphism. Using \propref{p:almost commute 1}(a,ii), we have:
$$\bGamma\left(U,\bLoc_\CY\left(\on{co-}\bGamma^{\on{enh}}_U(\CY,\QCoh_{/\CY})\right)|_{U}\right)\simeq
\QCoh(U)\underset{\QCoh(\CY)}\otimes \on{co-}\bGamma^{\on{enh}}_U(\CY,\QCoh_{/\CY}),$$
and the latter identifies with
$$|\QCoh(U)\underset{\QCoh(\CY)}\otimes \QCoh(U^\bullet/\CY)_*|.$$

However, the simplicial category $\QCoh(U)\underset{\QCoh(\CY)}\otimes \QCoh(U^\bullet/\CY)_*$ 
identifies with $$\QCoh(U^{\bullet+1}/\CY)_*,$$ (e.g., because $\CY$ is passable), and hence is split by $\QCoh(U)$. This implies
the required assertion.

\medskip

Thus, we obtain that (a) $\Leftrightarrow$ (c) $\Leftrightarrow$  (d) $\Leftrightarrow$ (e). Let us show that these
conditions imply (b). It is sufficient to evaluate both sides of \eqref{e:co and sections} on objects of the form $\bLoc_\CY(\bC)$
for $\bC\in \QCoh(\CY)\mmod$.  By (a), both sides in \eqref{e:co and sections} commute with colimits and 
tensor products by objects of $\StinftyCat_{\on{cont}}$; hence, we can take $\bC=\QCoh(\CY)$. In this case,
the required isomorphism is supplied by (d). 

\medskip

Finally, let us show that (f) implies (e). This follows from the fact that
$$\on{Funct}_{\on{cont}}(|\QCoh(U^\bullet/\CY)_*|,\Vect)\simeq \on{Tot}(\QCoh(U^\bullet/\CY)^*)\simeq
\QCoh(\CY),$$
while $\QCoh(\CY)$ is its own dual. 

\end{proof}

\ssec{The 3rd criterion for 1-affineness}  \label{ss:3rd crit}

We will now give an explicit criterion for condition (d) of \propref{p:co condition} to hold. This will
provide a 3rd criterion for 1-affiness of an algebraic stack. 

\sssec{}

We define the \emph{co-simplicial} object of $\StinftyCat$, denoted $\QCoh(U^\bullet/\CY)^?$ as follows.
For $[i]\in \bDelta$, the category of $i$-simplices is $\QCoh(U^i/\CY)$. For a map $[j]\to [i]$, the
corresponding functor
$$\QCoh(U^j/\CY)\to \QCoh(U^i/\CY)$$
is $(f^\alpha)^?$, \emph{right} adjoint to $f^\alpha_*:\QCoh(U^i/\CY)\to \QCoh(U^j/\CY)$. 

\medskip

We note that the functors $(f^\alpha)^?$ are typically \emph{non-continuous}, so $\QCoh(U^\bullet/\CY)^?$
is a co-simplicial object of $\StinftyCat$, but not in $\StinftyCat_{\on{cont}}$.

\medskip

Note that by \cite[Lemma 1.3.3]{DGCat}, we have a canonical equivalence 
\begin{equation} \label{e:lim and colim}
|\QCoh(U^\bullet/\CY)_*|\to \on{Tot}\left(\QCoh(U^\bullet/\CY)^?\right).
\end{equation}

\sssec{}

We now claim that the co-simplicial category $\QCoh(U^\bullet/\CY)^?$ satsifies the \emph{monadic}
Beck-Chevalley condition (see \secref{ss:BC} for what this means). Indeed, this follows by
applying 
\lemref{l:double right adjoint new} to the simplicial category $\QCoh(U^\bullet/\CY)_*$.

\medskip

Hence, from  from \lemref{l:monadicity}, we obtain:

\begin{lem} \hfill  \label{l:BC?}

\smallskip

\noindent{\em(a)} 
The functor of evaluation on $0$-simplices 
$$\on{ev}^0:\on{Tot}\left(\QCoh(U^\bullet/\CY)^?\right)\to \QCoh(U)$$
admits a left adjoint; to be denoted $(\on{ev}^0)^L$.

\smallskip

\noindent{\em(b)} The monad $\on{Av}^{U/\CY}_?:=\on{ev}^0\circ (\on{ev}^0)^L$, acting on $\QCoh(U)$, is isomorphic,
as a plain endo-functor, to $(\on{pr}_2)_*\circ (\on{pr}_1)^?$.

\smallskip

\noindent{\em(c)} The functor 
$$\on{ev}^0:\on{Tot}\left(\QCoh(U^\bullet/\CY)^?\right)\to \QCoh(U)$$
is monadic, i.e., the natural functor 
$$\on{Tot}\left(\QCoh(U^\bullet/\CY)^?\right)\to 
\on{Av}^{U/\CY}_?\mod(\QCoh(U))$$
is an equivalence. 

\end{lem}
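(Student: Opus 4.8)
The plan is to deduce the assertion formally from two inputs: the general result \lemref{l:monadicity} about co-simplicial categories satisfying the monadic Beck--Chevalley condition, and the verification that $\QCoh(U^\bullet/\CY)^?$ satisfies this condition. I would first establish the latter, and then read off (a), (b) and (c) directly from the former.

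To verify the monadic Beck--Chevalley condition I would avoid manipulating the (typically non-continuous) structure functors $(f^\alpha)^?$ directly. Instead, I would start from the simplicial category $\QCoh(U^\bullet/\CY)_*$, whose structure functors are the continuous pushforwards $f^\alpha_*$, and apply \lemref{l:double right adjoint new}: passing to right adjoints turns the base-change property of $\QCoh(U^\bullet/\CY)_*$ into exactly the monadic Beck--Chevalley condition for $\QCoh(U^\bullet/\CY)^?$. The hypothesis demanded by \lemref{l:double right adjoint new} is base change for the functors $f^\alpha_*$ along the Cartesian squares of the \v{C}ech nerve. Here I would use that each $f^\alpha$ is a base change of the fppf cover $U\to \CY$, hence a representable, quasi-compact and quasi-separated morphism between quasi-compact quasi-separated algebraic spaces, for which $\QCoh$-base change holds by \cite[Corollary 1.4.5]{DrGa}.

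Once the monadic Beck--Chevalley condition is in place, \lemref{l:monadicity} applies and delivers all three statements simultaneously. Part (a) is the existence of the left adjoint $(\on{ev}^0)^L$ to evaluation on $0$-simplices; part (c) is the monadicity of $\on{ev}^0$, i.e. the equivalence $\on{Tot}(\QCoh(U^\bullet/\CY)^?)\simeq \on{Av}^{U/\CY}_?\mod(\QCoh(U))$; and part (b) is the explicit formula for the monad $\on{Av}^{U/\CY}_?=\on{ev}^0\circ (\on{ev}^0)^L$. For the latter I would simply unwind the expression provided by \lemref{l:monadicity} in terms of the two lowest cofaces of $\QCoh(U^\bullet/\CY)^?$, which run along the projections $\on{pr}_1,\on{pr}_2:U\underset{\CY}\times U\rightrightarrows U$ and yield $(\on{pr}_2)_*\circ (\on{pr}_1)^?$.

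The one step I expect to carry real content is the verification of the Beck--Chevalley condition, and its substance is the base-change isomorphism for the \emph{non-continuous} functors $(f^\alpha)^?$. This is exactly where reducing to the continuous pushforwards $f^\alpha_*$ via \lemref{l:double right adjoint new} is essential: it is what lets one avoid checking a base-change statement for functors that do not commute with colimits, in line with the paper's recurring principle that all the difficulty resides in controlling non-continuous functors.
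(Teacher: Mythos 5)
Your proposal is correct and follows essentially the same route as the paper: the paper likewise verifies the monadic Beck--Chevalley condition for $\QCoh(U^\bullet/\CY)^?$ by applying \lemref{l:double right adjoint new} to the simplicial category $\QCoh(U^\bullet/\CY)_*$ (whose own Beck--Chevalley condition amounts to base change for the continuous pushforwards, as you note), and then reads off (a), (b), (c) from \lemref{l:monadicity}. Your write-up merely makes explicit the base-change input that the paper leaves implicit.
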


\sssec{}

Consider now the functor $f_*:\QCoh(U)\to \QCoh(\CY)$. Let $f^?$ denote its \emph{right adjoint}. Consider the monad
$f^?\circ f_*$ acting on $\QCoh(U)$, and the corresponding functor
\begin{equation} \label{e:? via monad}
(f^?)^{\on{enh}}:\QCoh(\CY)\to (f^?\circ f_*)\mod(\QCoh(U)).
\end{equation}

\medskip

Note now that ?-pullback defines a functor
$$\QCoh(\CY)\to  \on{Tot}\left(\QCoh(U^\bullet/\CY)^?\right).$$
which under the equivalence of \eqref{e:lim and colim}, identifies with the right adjoint 
to the functor
$$|\QCoh(U^\bullet/\CY)_*|\to \QCoh(U)$$
of \eqref{e:QCoh as colimit}.

\medskip

Note also that the following diagram tautologically commutes:
\begin{equation} \label{e:factoring through monad 1}
\xy
(-30,0)*+{\QCoh(\CY)}="A";
(30,0)*+{\on{Tot}(\QCoh(U^\bullet/\CY)^?)}="B";
(0,-30)*+{\QCoh(U).}="C";
{\ar@{->} "A";"B"};
{\ar@{->}^{\on{ev}^0} "B";"C"};
{\ar@{->}_{f^?} "A";"C"};
\endxy
\end{equation}

Hence, we obtain a homomorphism of monads acting on $\QCoh(U)$
\begin{equation} \label{e:map of monads}
\on{Av}^{U/\CY}_?:=\left((\on{ev}^0)^L\circ \on{ev}^0\right)\to (f^?\circ f_*).
\end{equation}

\begin{lem}
The map \eqref{e:map of monads} is an isomorphism.
\end{lem}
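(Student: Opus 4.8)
The plan is to reduce the assertion to a base-change statement for the Čech square of $f:U\to\CY$, and then to prove that base change by passing to adjoints from the standard $\QCoh$ base change.

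First I would observe that both sides of \eqref{e:map of monads} are monads acting on $\QCoh(U)$ and that the arrow is a morphism of monads; since the forgetful functor from algebras in endofunctors to plain endofunctors is conservative, such a morphism is an equivalence as soon as it is an equivalence of the underlying endofunctors. So it suffices to check this after forgetting the monad structure. By \lemref{l:BC?}(b) the underlying endofunctor of $\on{Av}^{U/\CY}_?$ is $(\on{pr}_2)_*\circ (\on{pr}_1)^?$, while the target is $f^?\circ f_*$, where $\on{pr}_1,\on{pr}_2:U\underset{\CY}\times U\rightrightarrows U$ are the two projections. Unwinding the construction of the map from the commutative triangle \eqref{e:factoring through monad 1}, in which the functor $\QCoh(\CY)\to \on{Tot}(\QCoh(U^\bullet/\CY)^?)$ is $?$-pullback and the slanted arrow is $f^?$, identifies it with the Beck--Chevalley base-change transformation attached to the Cartesian square whose corners are $U\underset{\CY}\times U$, two copies of $U$, and $\CY$.

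It then remains to prove that this base-change map $(\on{pr}_2)_*\circ (\on{pr}_1)^?\to f^?\circ f_*$ is an equivalence, and I would deduce this from ordinary $\QCoh$ base change. Since $f:U\to\CY$ is representable, quasi-compact and quasi-separated (hence so are the projections), the standard base-change equivalence $f^*\circ f_*\simeq (\on{pr}_2)_*\circ (\on{pr}_1)^*$ holds, by \cite{QCoh}, \cite{DrGa}. Passing to right adjoints is a functorial operation that carries equivalences to equivalences; applying it to this equivalence turns $f^*\circ f_*$ into $f^?\circ f_*$ (the right adjoint of $f^*$ being $f_*$, and that of $f_*$ being $f^?$) and turns $(\on{pr}_2)_*\circ (\on{pr}_1)^*$ into $(\on{pr}_1)_*\circ (\on{pr}_2)^?$. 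Finally, the flip automorphism $\sigma$ of $U\underset{\CY}\times U$ interchanges $\on{pr}_1$ and $\on{pr}_2$ and satisfies $\sigma_*\simeq \sigma^?$ (being an equivalence), which identifies $(\on{pr}_1)_*\circ (\on{pr}_2)^?$ with $(\on{pr}_2)_*\circ (\on{pr}_1)^?$; this yields the desired equivalence. The symmetry of the Čech square thus absorbs the orientation discrepancy.

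The main obstacle is not the geometric input, as base change is classical, but the coherence bookkeeping. One must verify that the abstractly produced monad map of \eqref{e:map of monads} really coincides, as a natural transformation of endofunctors, with the concrete Beck--Chevalley map, and that the passage to right adjoints for the non-continuous functors $f^?$ and $(\on{pr}_i)^?$ is performed in a homotopy-coherent fashion. In the $\infty$-categorical framework this is supplied by the functoriality of the formation of adjoints, but it is the step demanding the most care; once the two maps are matched, the equivalence follows formally from the classical base-change equivalence as above.
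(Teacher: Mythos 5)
Your proposal is correct and follows essentially the same route as the paper: reduce to the underlying endofunctors, identify the source via \lemref{l:BC?}(b), and obtain the map by passing to right adjoints from the standard $\QCoh$ base-change isomorphism for the \v{C}ech square. The only (cosmetic) difference is that you write the base change as $f^*\circ f_*\simeq (\on{pr}_2)_*\circ(\on{pr}_1)^*$ and then need the flip automorphism of $U\underset{\CY}\times U$ to fix the orientation, whereas the paper starts from $f^*\circ f_*\to (\on{pr}_1)_*\circ(\on{pr}_2)^*$ so that passing to right adjoints lands directly on $(\on{pr}_2)_*\circ(\on{pr}_1)^?\to f^?\circ f_*$.
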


\begin{proof}

It is enough to show that the map in question is an isomorphism as plain endo-functors. 
Using \lemref{l:BC?}(b), the map \eqref{e:map of monads} identifies with
$$(\on{pr}_2)_*\circ (\on{pr}_1)^?\to f^?\circ f_*,$$
which is obtained by passing to right adjoints from the base change map
$$f^*\circ f_*\to (\on{pr}_1)_*\circ (\on{pr}_2)^*,$$
while the latter is an isomorphism.

\end{proof}

\sssec{}

Hence, in view of \lemref{l:BC?}(c), we can identify the diagram \eqref{e:factoring through monad 1} with 
\begin{equation} \label{e:factoring through monad 2}
\xy
(-30,0)*+{\QCoh(\CY)}="A";
(30,0)*+{(f^?\circ f_*)\mod(\QCoh(U))}="B";
(0,-30)*+{\QCoh(U).}="C";
{\ar@{->}^{(f^?)^{\on{enh}}} "A";"B"};
{\ar@{->}"B";"C"};
{\ar@{->}_{f^?} "A";"C"};
\endxy
\end{equation}

From here we obtain:

\begin{prop}  \label{p:? criterion} 
The following conditions are equivalent:  

\smallskip

\noindent{\em(1)} $\CY$ is $1$-affine. 

\smallskip

\noindent{\em(2)} The functor $|\QCoh(U^\bullet/\CY)_*|\to \QCoh(\CY)$ is an equivalence.

\smallskip

\noindent{\em(3)} The functor $\QCoh(\CY)\to  \on{Tot}\left(\QCoh(U^\bullet/\CY)^?\right)$ is an equivalence.

\smallskip

\noindent{\em(4)} The functor $f^?:\QCoh(\CY)\to \QCoh(U)$ 
is monadic.

\end{prop}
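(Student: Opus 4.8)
The plan is to prove the proposition by assembling the chain of equivalences (1) $\Leftrightarrow$ (2) $\Leftrightarrow$ (3) $\Leftrightarrow$ (4), each of which is a short consequence of material already in place. The substantive work has been done in the preceding lemmas, so the task is largely one of tracking identifications.

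First, for (1) $\Leftrightarrow$ (2), I would simply note that condition (2) is verbatim condition (d) of \propref{p:co condition}, while (1) is its condition (a). Since the equivalence (a) $\Leftrightarrow$ (d) has already been established there, this step requires nothing further.

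Next, for (2) $\Leftrightarrow$ (3), I would use the canonical equivalence \eqref{e:lim and colim}, which identifies $|\QCoh(U^\bullet/\CY)_*|$ with $\on{Tot}(\QCoh(U^\bullet/\CY)^?)$. Under this identification the $?$-pullback functor appearing in (3) is, as observed in the discussion preceding the proposition, the right adjoint of the functor \eqref{e:QCoh as colimit} appearing in (2). Since one member of an adjoint pair is an equivalence if and only if the other is (in which case they are mutually inverse), conditions (2) and (3) are equivalent.

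Finally, for (3) $\Leftrightarrow$ (4), I would invoke \lemref{l:BC?}(c), which provides an equivalence $\on{Tot}(\QCoh(U^\bullet/\CY)^?)\simeq \Av^{U/\CY}_?\mod(\QCoh(U))$ carrying $\on{ev}^0$ to the forgetful functor, together with the isomorphism of monads $\Av^{U/\CY}_?\simeq f^?\circ f_*$ just established. These identify the target of the functor in (3) with $(f^?\circ f_*)\mod(\QCoh(U))$, and the commuting triangle \eqref{e:factoring through monad 2} shows that the functor in (3) becomes precisely $(f^?)^{\on{enh}}$. Since $f_*$ is the left adjoint of $f^?$, the definition of monadicity (applied with $\sG=f^?$) says exactly that $f^?$ is monadic if and only if $(f^?)^{\on{enh}}$ is an equivalence, i.e. if and only if (3) holds. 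I do not anticipate a serious obstacle here: the only points demanding care are confirming the direction of the adjunction between the functors of (2) and (3), so that ``equivalence $\Leftrightarrow$ equivalence of the adjoint'' applies cleanly, and noting that $f_*$ is genuinely the left adjoint of $f^?$, so that the definition of monadicity is applied in the correct variance.
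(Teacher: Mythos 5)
Your proposal is correct and is exactly the derivation the paper intends: the paper states the proposition with "From here we obtain," relying on \propref{p:co condition} (a)$\Leftrightarrow$(d) for (1)$\Leftrightarrow$(2), the identification \eqref{e:lim and colim} together with the adjunction between \eqref{e:QCoh as colimit} and ?-pullback for (2)$\Leftrightarrow$(3), and \lemref{l:BC?}(c) plus the monad isomorphism \eqref{e:map of monads} and diagram \eqref{e:factoring through monad 2} for (3)$\Leftrightarrow$(4). Your explicit tracking of these identifications, including the variance check that $f_*$ is the left adjoint of $f^?$, fills in precisely the steps the paper leaves implicit.
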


\section{Classifying stacks of algebraic groups}  \label{s:BG}

The goal of this section is to prove Theorems \ref{t:BG}, \ref{t:quotient} and \ref{t:thick}.

\ssec{Reduction steps}   

Let $G$ be a classical affine algebraic group of finite type.

\sssec{}  \label{sss:global quotient}

First, let us note that \thmref{t:BG} implies \thmref{t:quotient}. Indeed, apply \corref{c:1-affine base and fiber} to
the morphism
$$Z/G\to \on{pt}/G.$$

\sssec{}

Let $G_1\hookrightarrow G_2$ be a closed embedding. Note that the corresponding map
$$\on{pt}/G_1\to \on{pt}/G_2$$
is schematic, quasi-compact and quasi-separated. 

\medskip

Hence, by \corref{c:1-affine base and fiber}, if $G_2$ is such that $\on{pt}/G_2$ is 1-affine, the same
will be true for $\on{pt}/G_1$.

\sssec{}

Choose a closed embedding $G\hookrightarrow GL_n$. We obtain that in order to prove \thmref{t:BG},
it is enough to consider the case of $G=GL_n$. I.e., we can assume that $G$ is reductive. 

\ssec{Proof of \thmref{t:BG} in the reductive case}  \label{ss:reductive}

The idea of the proof belongs to Jacob Lurie. 

\sssec{}  

Note that for $\CY=\on{pt}/G$, the category $\QCoh(\CY)$ identifies with $\Rep(G)$,
i.e., the category
of $G$-representations. Under this identification, the functor
$$f^*:\QCoh(\on{pt}/G)\to \QCoh(\on{pt})\simeq \Vect,$$
corresponding to $f:\on{pt}\to \on{pt}/G$, is the forgetful functor $\oblv_G:\Rep(G)\to \Vect$. 

\medskip

The right
adjoint $f_*$ of $f^*$ is the (usual) functor of co-induction
$$\on{coind}_G:\Vect\to \Rep(G),$$ 
\emph{right} adjoint to the forgetful functor $\oblv_G$. 

\sssec{}  \label{sss:monadicity of coinduction}

By \propref{p:? criterion}, it is enough to show that the functor
$$(\coind_G)^R:\Rep(G)\to \Vect$$
is monadic, where $(\coind_G)^R$ is the (discontinuous) \emph{right} adjoint of $\coind_G$.

\begin{rem} \label{r:usual coinduction}
Note that, according to \propref{p:? criterion}, the assertion of \thmref{t:BG} is equivalent to the fact that the functor $(\coind_G)^R$
is monadic for any classical affine algebraic group of finite type (i.e., not necessarily reductive).
\end{rem}

\sssec{}

Let $A$ be the set of irreducible representations of $G$. Since $G$ is reductive (and we are over a field
of characteristic $0$), choosing representatives, we obtain an equivalence
$$\Rep(G)\simeq \Vect^A,$$
where $\Vect^A$ is the product of copies of $\Vect$, indexed by the set $A$.

\medskip

The functor
$$\coind_G:\Vect\to \Rep(G)$$ 
is the functor
$$\sS:\Vect\to \Vect^A,\quad V\rightsquigarrow V^A\in \Vect^A,$$
(i.e., $V$ in each component).

\medskip

The right adjoint functor $(\coind_G)^R$ is
$$\sT:\Vect^A\to \Vect,\quad \{W_a,a\in A\}\in \Vect^A \rightsquigarrow \underset{a\in A}\Pi\, W_a\in \Vect.$$ 

\sssec{}

\medskip

We recall that a DG category $\bC$ equipped with a t-structure is said to be \emph{right-complete} with respect to
this t-structure if the functor
$$\bC\to \underset{n\in \BZ^+}{\underset{\longleftarrow}{lim}}\, \bC^{\leq n},  \quad \bc\mapsto \{\tau^{\leq n}(\bc)\}$$
is an equivalence. 

\medskip

If this happens, the inverse equivalence is given by
$$\{\bc_n\in \bC^{\leq n}\}\mapsto  \underset{n\in \BZ^+}{\underset{\longrightarrow}{lim}}\, \bc_n.$$

\medskip

Recall also $\bC$ is said to be \emph{left-complete} with respect to its t-structure if the functor
$$\bC\to \underset{n\in \BZ^+}{\underset{\longleftarrow}{lim}}\, \bC^{\geq -n},  \quad \bc\mapsto \{\tau^{\geq -n}(\bc)\}$$
is an equivalence. 

\medskip

If this happens, the inverse equivalence is given by
$$\{\bc_n\in \bC^{\geq -n}\}\mapsto  \underset{n\in \BZ^+}{\underset{\longleftarrow}{lim}}\, \bc_n.$$

\sssec{}

Note that both categories $\Vect$ and $\Vect^A$ carry t-structures, in which they are both
right-complete and left-complete. 

\medskip

The functors $\sS$ and $\sT$ are t-exact. In particular, they define a pair of adjoint functors
$$\sS^{\leq n}:\Vect^{\leq n}\rightleftarrows (\Vect^A)^{\leq n}:\sT^{\leq n}$$
for every $n$. 

\sssec{}

Consider the following general paradigm. Let $I$ be an index category, and let 
$$i\rightsquigarrow \bC_i \text{ and } i\rightsquigarrow \bD_i$$
be two family of categories. Let
$$\bC:=\underset{i\in I}{lim}\, \bC_i  \text{ and } \bD:=\underset{i\in I}{lim}\, \bD_i$$
be the limits.

\medskip

Let us be given a compatible system of adjoint functors
$$\sS_i:\bC_i\rightleftarrows \bD_i:\sT_i.$$

Denote by
$$\sS:\bC\rightleftarrows \bD:\sT$$
the resulting adjoint pair.

\medskip

We have the following general lemma:

\begin{lem}  \label{l:monads in limit}
Suppose that for every $i$, the pair $\sS_i:\bC_i\rightleftarrows \bD_i:\sT_i$ is monadic. 
Then the pair $\sS:\bC\rightleftarrows \bD:\sT$ is also monadic.
\end{lem}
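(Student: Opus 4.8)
The plan is to verify the three hypotheses of the Barr--Beck--Lurie theorem \cite[Theorem 6.2.2.5]{Lu2} for the right adjoint $\sT:\bD\to\bC$: that it admits a left adjoint, that it is conservative, and that it preserves geometric realizations of $\sT$-split simplicial objects. The first holds since $\sS\dashv\sT$ is the given adjoint pair. Write $p_i:\bC\to\bC_i$ and $q_i:\bD\to\bD_i$ for the projections out of the limits; compatibility of the system means precisely that $p_i\circ\sT\simeq\sT_i\circ q_i$ and $q_i\circ\sS\simeq\sS_i\circ p_i$, while the transition functors $\bC_\alpha,\bD_\alpha$ of the two diagrams satisfy $\bC_\alpha\circ\sT_i\simeq\sT_j\circ\bD_\alpha$. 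Conservativity of $\sT$ is then immediate: the $q_i$ are jointly conservative (as $\bD=\lim_i\bD_i$), and each $\sT_i$ is conservative since it is monadic; so if $\sT(\phi)$ is an equivalence, then $\sT_i(q_i(\phi))\simeq p_i(\sT(\phi))$ is an equivalence for every $i$, hence $q_i(\phi)$ is, hence $\phi$ is.

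The substantive point is preservation of $\sT$-split geometric realizations, which I would establish levelwise. Let $X_\bullet$ be a $\sT$-split simplicial object of $\bD$, so that $\sT(X_\bullet)$ carries a splitting. For each $i$ the simplicial object $q_i(X_\bullet)$ is $\sT_i$-split, because $\sT_i(q_i(X_\bullet))\simeq p_i(\sT(X_\bullet))$ and a splitting is preserved by the arbitrary functor $p_i$. Monadicity of $\sT_i$ then guarantees that $|q_i(X_\bullet)|$ exists in $\bD_i$ and is preserved by $\sT_i$.

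The key step — and the one I expect to be the main obstacle — is to assemble the objects $|q_i(X_\bullet)|$ into a colimit in $\bD=\lim_i\bD_i$ computed levelwise. For this I must show that each transition functor $\bD_\alpha$ sends $|q_i(X_\bullet)|$ to $|q_j(X_\bullet)|$, i.e. that the canonical comparison map $|q_j(X_\bullet)|\to\bD_\alpha|q_i(X_\bullet)|$ is an equivalence; this is nontrivial because transition functors need not preserve colimits (in the intended application they are truncation functors, which are right adjoints). Here the splitting is used decisively: applying the conservative functor $\sT_j$ and using $\sT_j\circ\bD_\alpha\simeq\bC_\alpha\circ\sT_i$, both sides become geometric realizations built from the \emph{split} simplicial object $\sT(X_\bullet)$, and a split geometric realization is absolute, hence preserved by $\bC_\alpha$, by $\sT_i$ and by $\sT_j$ alike. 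Thus the comparison map becomes an equivalence after applying $\sT_j$, and conservativity of $\sT_j$ promotes it to an equivalence.

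Granting this, the standard computation of colimits in a limit of $\infty$-categories shows that $|X_\bullet|$ exists in $\bD$ with $q_i(|X_\bullet|)\simeq|q_i(X_\bullet)|$. To finish, I test preservation by $\sT$ against the jointly conservative projections: $p_i(\sT|X_\bullet|)\simeq\sT_i|q_i(X_\bullet)|\simeq|\sT_i q_i(X_\bullet)|\simeq|p_i\sT(X_\bullet)|\simeq p_i|\sT(X_\bullet)|$, the second equivalence being preservation of the $\sT_i$-split realization by $\sT_i$ and the last being the absoluteness of the split realization $|\sT(X_\bullet)|$ in $\bC$. Hence the canonical map $|\sT(X_\bullet)|\to\sT|X_\bullet|$ is an equivalence, and Barr--Beck--Lurie yields that $\sT$ is monadic. (Alternatively, one could argue structurally: each monadicity equivalence $\bD_i\simeq(\sT_i\sS_i)\mod(\bC_i)$ is natural in $i$, and passage to modules over a monad commutes with limits of categories, identifying $\bD$ with $(\sT\sS)\mod(\bC)$; but the Barr--Beck route seems more self-contained.)
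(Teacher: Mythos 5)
Your proof is correct. There is nothing in the paper to compare it against: the lemma is invoked as a ``general lemma'' and stated without proof, so your argument fills a genuine gap in the exposition. Moreover, it is the right argument. The two points that actually require care are exactly the ones you isolate: (1) the transition functors of the diagram $i\mapsto \bD_i$ need not preserve geometric realizations (in the paper's application they are the truncations $\tau^{\leq n}$, which are right adjoints), so the componentwise realizations $|q_i(X_\bullet)|$ do not automatically assemble into an object of the limit; and (2) this is rescued by absoluteness of split realizations — after applying the conservative functors $\sT_j$ and using $\sT_j\circ \bD_\alpha\simeq \bC_\alpha\circ \sT_i$, every comparison map in sight becomes a map between realizations of split simplicial objects, hence an equivalence. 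Granting the standard inputs you cite (a $\sT$-split simplicial object has each $q_i(X_\bullet)$ being $\sT_i$-split; split realizations are absolute; monadic functors create colimits of split simplicial objects; colimits in a limit of $\infty$-categories are computed componentwise when the transition functors preserve them; the projections $q_i$, $p_i$ are jointly conservative), your verification of the three Barr--Beck--Lurie hypotheses for $\sT$ is complete. Your parenthetical alternative — identifying $\bD\simeq \lim_i (\sT_i\sS_i)\mod(\bC_i)$ with $(\sT\sS)\mod(\bC)$ — is likely the one-line justification the author had in mind, but as you note it requires establishing the functoriality of the assignment $i\mapsto (\sT_i\sS_i)\mod(\bC_i)$ and the commutation of module categories with limits, so the Barr--Beck route is indeed the more self-contained one.
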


\sssec{}

Applying \lemref{l:monads in limit}, we obtain that it suffices to show that the pair of adjoint functors
$$\sS^{\leq n}:\Vect^{\leq n}\rightleftarrows (\Vect^A)^{\leq n}:\sT^{\leq n}$$
is monadic. With no restriction of generality, we can assume that $n=0$. 

\medskip

We will prove that the pair 
$$\sS^{\leq 0}:\Vect^{\leq 0}\rightleftarrows (\Vect^A)^{\leq 0}:\sT^{\leq 0}$$
is monadic by verifying that the functor $\sT^{\leq 0}$ 
satsifies the conditions of the Barr-Beck-Lurie theorem (\cite[Theorem 6.2.2.5]{Lu2}). 

\medskip

The fact that $\sT$ is conservative is manifest from the explicit
description of the functor in question. 

\medskip

The fact that the functor $\sT$, restricted to $(\Vect^A)^{\leq 0}$, commutes
with $\sG$-split geometric realizations follows from the next general assertion:

\begin{lem} \label{l:bounded}
Let $\sT:\bC_1\to \bC_2$ be a functor between DG categories. Assume that both
categories are equipped with a t-structure and that $\sT$ sends $\bC^{\leq 0}_1$
to $\bC^{\leq k}_2$ for some $k$. Assume that $\bC_2$ is \emph{left}-complete in its
t-structure. Then $\sT$ commutes with all geometric realizations
of objects in $\bC_1^{\leq 0}$.
\end{lem}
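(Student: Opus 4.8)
The plan is to exploit the boundedness of $\sT$ together with the left-completeness of $\bC_2$ to reduce the assertion to finitely many simplicial degrees, where the exactness of $\sT$ (all functors between DG categories being exact) does the work. Given a simplicial object $\bc_\bullet$ valued in $\bC_1^{\leq 0}$, write $\bc:=|\bc_\bullet|$ for its geometric realization; since $\bC_1^{\leq 0}$ is closed under colimits we have $\bc\in \bC_1^{\leq 0}$. There is a canonical comparison map
$$\phi\colon |\sT(\bc_\bullet)| \to \sT(|\bc_\bullet|)=\sT(\bc),$$
and the goal is to prove $\phi$ is an equivalence. Because $\bC_2$ is left-complete, a morphism in $\bC_2$ is an equivalence if and only if it becomes one after applying $\tau^{\geq -m}$ for every $m$. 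So I fix $m$ and reduce to showing that $\tau^{\geq -m}(\phi)$ is an equivalence.

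Next I would invoke the skeletal filtration of a geometric realization. For any simplicial object $\bd_\bullet$ one has $|\bd_\bullet|\simeq \underset{N}{\underset{\longrightarrow}{colim}}\,\on{sk}_N(\bd_\bullet)$, where the cofiber of $\on{sk}_{N-1}(\bd_\bullet)\to \on{sk}_N(\bd_\bullet)$ is a retract of $\bd_N[N]$. Two consequences are relevant. First, since $\sT$ is exact — hence preserves cofiber sequences, shifts, and retracts — an induction on $N$ along these cofiber sequences produces natural equivalences $\on{sk}_N(\sT(\bc_\bullet))\simeq \sT(\on{sk}_N(\bc_\bullet))$, compatible with the transition maps and with $\phi$. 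Second, if $\bd_\bullet$ takes values in $\bC^{\leq j}$, then the cofiber of $\on{sk}_N(\bd_\bullet)\to |\bd_\bullet|$ is built from retracts of $\bd_{N'}[N']$ with $N'>N$, hence lies in $\bC^{\leq j-N-1}$; consequently $\on{sk}_N(\bd_\bullet)\to |\bd_\bullet|$ is a $\tau^{\geq -m}$-equivalence as soon as $N\geq j+m+1$.

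Finally I would assemble these. Applying the second consequence to $\sT(\bc_\bullet)$, which is valued in $\bC_2^{\leq k}$, shows that $\on{sk}_N(\sT(\bc_\bullet))\to |\sT(\bc_\bullet)|$ is a $\tau^{\geq -m}$-equivalence for $N\geq k+m+1$. For the target, I apply $\sT$ to the cofiber sequence $\on{sk}_N(\bc_\bullet)\to \bc\to Q$ with $Q\in \bC_1^{\leq -N-1}=\bC_1^{\leq 0}[N+1]$; exactness places $\sT(Q)$ in $\sT(\bC_1^{\leq 0})[N+1]\subseteq \bC_2^{\leq k-N-1}$, so $\sT(\on{sk}_N(\bc_\bullet))\to \sT(\bc)$ is again a $\tau^{\geq -m}$-equivalence for $N\geq k+m+1$. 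The square
$$
\CD
\on{sk}_N(\sT(\bc_\bullet)) @>{\sim}>> \sT(\on{sk}_N(\bc_\bullet)) \\
@VVV @VV{\sT}V \\
|\sT(\bc_\bullet)| @>{\phi}>> \sT(\bc)
\endCD
$$
commutes by naturality; for $N\geq k+m+1$ its top arrow is an equivalence and both vertical arrows are $\tau^{\geq -m}$-equivalences, whence so is $\phi$. As $m$ was arbitrary, left-completeness finishes the argument. The one genuinely delicate point — and the reason the hypotheses are arranged as they are — is that $\sT$ need \emph{not} commute with the infinite colimit $\underset{N}{\underset{\longrightarrow}{colim}}\,\on{sk}_N$; the boundedness of $\sT$ combined with the left-completeness of $\bC_2$ is exactly what guarantees that, degree by degree in the t-structure, only finitely many skeleta contribute, so that the finite (iterated-cofiber) exactness of $\sT$ suffices.
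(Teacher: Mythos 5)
Your proof is correct, and it is essentially the paper's own approach: the paper states \lemref{l:bounded} without proof, but the identical technique — commuting $\sT$ past finite skeleta (which, being finite colimits, are preserved by any exact functor), using the boundedness of $\sT$ to show that on each truncation $\tau^{\geq -m}$ only finitely many skeleta contribute, and then invoking left-completeness — is exactly what is carried out in Step 1 of the proof of \thmref{t:alg}, see the diagram \eqref{e:compare trunc}. Your treatment of the skeletal filtration (graded pieces being retracts of $\bc_N[N]$ via stable Dold--Kan, hence the cofiber of $\on{sk}_N\to |\cdot|$ lying in $\bC^{\leq -N-1}$ for a simplicial object in $\bC^{\leq 0}$) supplies the details the paper leaves implicit.
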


\ssec{Classifying stacks of group-schemes of infinite type}  \label{ss:pro infty}

In this subsection we will prove \thmref{t:thick}.

\sssec{}

Let $G$ be the affine group-scheme $\underset{n}{\underset{\longleftarrow}{lim}}\, (\BG_a)^{\times n}$. 
Note that the map
\begin{equation} \label{e:BG and pt/G}
BG\to \on{pt}/G
\end{equation}
is an isomorphism (indeed, on an affine DG scheme $S$ there are no non-trivial $G$-torsors). 

\sssec{}

Note that since $G$ is of infinite type, the map $\on{pt}\to \on{pt}/G$ is not an fppf cover
(it is an fpqc cover). However, since \eqref{e:BG and pt/G} is an isomorphism, the map
$$\on{ShvCat}(\on{pt}/G)\to \on{ShvCat}(BG)\simeq 
\on{Tot}(\on{ShvCat}(B^\bullet G)=\on{Tot}\left(\QCoh(B^\bullet G)\mmod\right)$$
is an equivalence.

\medskip

Denote 
$$\Rep(G):=\QCoh(\on{pt}/G).$$

Let $f$ denote the tautological morphism $\on{pt}\to \on{pt}/G$. Set
$$\on{oblv}_G:=f^*:\Rep(G)\to \Vect, \quad \on{coind}_G:=f_*:\Rep(G)\to \Vect.$$

\medskip

Suppose, for the sake of contradiction that $\on{pt}/G$ was 1-affine. Then by the same logic as
in \secref{sss:monadicity of coinduction}, we would obtain that the functor $(\on{coind}_G)^R$,
right adjoint to  $\on{coind}_G$ would be monadic. 

\medskip

However, we claim that $(\on{coind}_G)^R$ fails to be conservative:

\sssec{}

Note that the functor $\on{coind}_G$
sends $k\in \Vect$ to the regular representation $\CO_G\in \Rep(G)$. We claim
that 
$$\CMaps_{\Rep(G)}(\CO_G,k)=0,$$
where $k\in \Rep(G)$ is the trivial representation. 

\medskip

Indeed, if $G=\Spec(\on{Sym}(W))$, where $W$ is a countable-dimensional vector space, then the object
$k\in \Rep(G)$ admits a resolution whose $n$-term is $\on{coind}_G(\Lambda^n(W))$. 

\medskip

Hence,
$\CMaps_{\Rep(G)}(\CO_G,k)$ is computed by the complex whose $n$-th term is
$$\Hom_{\Vect^\heartsuit}(\on{Sym}(W),\Lambda^n(W)),$$
which is easily seen to be acyclic. 

\qed

\section{Algebraic stacks: proof of \thmref{t:alg}}  \label{s:stacks}

Let $\CY$ be as in \thmref{t:alg}.  I.e., $\CY$ is a quasi-compact algebraic stack, locally almost of 
finite type, which is eventually coconnective and has an affine diagonal. 

\medskip

We know that $\CY$ is passable by \thmref{t:ev coconn pass}.
We will prove that $\CY$ is 1-affine by verifying condition (4) of \propref{p:? criterion}.

\ssec{Strategy}

Let $f:U\to \CY$ be an smooth cover, where $U$ is an affine DG scheme. We consider the functor
$$f^?:\QCoh(\CY)\to \QCoh(U),$$
and the resulting monad $f^?\circ f_*$ acting on $\QCoh(U)$. We denote 
the resulting pair of adjoint functors by
$$(f_*)^{\on{enh}}:(f^?\circ f_*)\mod(\QCoh(U))\rightleftarrows \QCoh(\CY):(f^?)^{\on{enh}}.$$

\medskip

We will deduce \thmref{t:alg} from the combination of the following two statements:

\begin{prop}  \label{p:generation} \hfill

\smallskip

\noindent{\em(a)} The functor $f^?$ is conservative.

\smallskip

\noindent{\em(b)} The functor $(f_*)^{\on{enh}}$ is conservative. 

\end{prop}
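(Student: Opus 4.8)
The plan is to establish the two conservativity assertions separately, in both cases exploiting the cover $f\colon U\to\CY$. Note first that, since $\CY$ has affine diagonal and $U$ is affine, the morphism $f$ is affine; writing $\CA:=f_*\CO_U\in\QCoh(\CY)$ we may identify $\QCoh(U)\simeq \CA\mmod(\QCoh(\CY))$ with $f_*=\oblv_\CA$ and $f^*=\CA\otimes(-)$, so that $f^?$ is the (generally discontinuous) right adjoint of $\oblv_\CA$. By adjunction $\Hom(f_*\CG,\CF)\simeq \Hom(\CG,f^?(\CF))$, so $f^?(\CF)=0$ precisely when $\CF$ is right-orthogonal to the essential image of $f_*$; thus part (a) is equivalent to the assertion that $\on{Im}(f_*)$ generates $\QCoh(\CY)$ under colimits.

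For part (a) I would argue via the t-structure. Because $f$ is affine, $f_*$ is t-exact, hence its right adjoint $f^?$ is left t-exact. Assuming $\CF\neq 0$, I would use that under the hypotheses of \thmref{t:alg} (eventually coconnective, locally almost of finite type, affine diagonal) the t-structure on $\QCoh(\CY)$ is sufficiently well behaved that $\CF$ has a nonvanishing bottom cohomology $\CM:=H^m(\CF)\in\QCoh(\CY)^\heartsuit$; left t-exactness of $f^?$ then yields $H^m(f^?(\CF))\simeq f^?_\heartsuit(\CM)$, where $f^?_\heartsuit$ is the right adjoint of $f_*^\heartsuit$ on the abelian hearts. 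Everything thereby reduces to conservativity of $f^?_\heartsuit$, which is classical faithfully flat descent: smooth-locally on $\CY$ the morphism $f$ of \emph{finite} relative dimension presents $\CA$ as a flat algebra for which $f^?_\heartsuit$ is a product functor over the monomials in finitely many relative coordinates, manifestly conservative. I would emphasize that the finite type hypothesis is indispensable here: for the classifying stack of a group-scheme of infinite type the analogous functor fails to be conservative (\thmref{t:thick}), so no purely formal argument can succeed.

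For part (b) I would first identify the functor. By \lemref{l:BC?} and the equivalence \eqref{e:lim and colim}, one has $(f^?\circ f_*)\mmod(\QCoh(U))\simeq |\QCoh(U^\bullet/\CY)_*|$, under which $(f_*)^{\on{enh}}$ becomes the canonical functor $|\QCoh(U^\bullet/\CY)_*|\to\QCoh(\CY)$ of \eqref{e:QCoh as colimit}. Now base change along $f$: the base change of the cover $f$ by itself is the projection $f':=\on{pr}_1\colon U\underset{\CY}\times U\to U$, which carries the diagonal section $\Delta\colon U\to U\underset{\CY}\times U$ with $\on{pr}_1\circ\Delta=\on{Id}_U$. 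The section supplies an extra degeneracy, so the simplicial object computing $(f'_*)^{\on{enh}}$ over $U$ is split, its realization is computed by the augmentation, and the corresponding functor $|\QCoh((U\underset{\CY}\times U)^\bullet/U)_*|\to\QCoh(U)$ is an \emph{equivalence}. Finally, the base-change functor on monad-modules is conservative (this is essentially \corref{c:shv via comonad}, i.e. comonadicity of $\coRres_f$, equivalently faithfully flat descent) and fits into a commutative square with $f^*$, $(f_*)^{\on{enh}}$ and the equivalence $(f'_*)^{\on{enh}}$; a diagram chase then forces $(f_*)^{\on{enh}}$ to be conservative.

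The main obstacle is part (a): unlike (b), which is essentially formal descent plus the diagonal splitting, (a) genuinely uses finiteness, and the difficulty is that $f^?$ is discontinuous, so the dévissage to the heart must be executed with care for the completeness of the t-structure (this is one of the reasons the overall argument is involved). Once both conservativities are secured, they feed into the verification of condition (4) of \propref{p:? criterion}, namely the monadicity of $f^?$: part (a) supplies conservativity of $f^?$, hence of the comparison functor $(f^?)^{\on{enh}}$, while part (b), combined with the fully faithfulness of $\bGamma^{\on{enh}}_\CY$ already available from passability (\propref{p:alg}, via \propref{p:cond for Gamma}), forces the comparison functor to be an equivalence.
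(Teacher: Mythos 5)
Both halves of your proposal have genuine gaps, and in both cases the gap sits exactly at the point where the paper has to work hardest.

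\medskip

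\noindent\textbf{Part (a).} Your reduction of conservativity of $f^?$ to the statement that the essential image of $f_*$ generates $\QCoh(\CY)$ is correct, but neither of the two steps that follow holds up. First, a nonzero object of $\QCoh(\CY)$ need not have a ``bottom cohomology'': objects are unbounded below in general, so the d\'evissage to the heart requires a right cohomological amplitude bound for $f^?$ (this is precisely \propref{p:bdd dim}, whose proof is itself a substantial induction) together with completeness arguments, none of which you supply. Second, and more seriously, conservativity of the heart-level right adjoint $f^?_\heartsuit$ is \emph{not} classical faithfully flat descent: descent concerns the comonad $f^*f_*$ and conservativity of $f^*$, whereas conservativity of the right adjoint of $f_*$ is a \emph{generation} statement about the global category $\QCoh(\CY)$, which cannot be tested smooth-locally on $\CY$ (there is no localization of this property along smooth covers, precisely because base change fails for the discontinuous functors $f^?$). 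That this generation statement is genuinely deep is shown by \thmref{t:thick}: it fails for $\on{pt}/G$ with $G$ of infinite type, a classical stack with affine diagonal to which your sketch would apply essentially verbatim, since the one place where finiteness is supposed to enter (``a product functor over the monomials in finitely many relative coordinates'') is not a meaningful description of $f^?_\heartsuit$ for a smooth cover of a stack. The paper instead proves (a) by Noetherian induction using closed/open localization sequences and supports, the generation result of \cite[Sect.~2.6.8]{DrGa} (where eventual coconnectivity is crucial --- a hypothesis your argument never uses), the generic structure theorem \cite[Proposition 2.3.4]{DrGa} reducing to global quotients $Y/G$, and the reductive trick that the trivial representation is a direct summand of the regular one.

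\medskip

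\noindent\textbf{Part (b).} Your identification of $(f_*)^{\on{enh}}$ with the functor \eqref{e:QCoh as colimit}, and the splitting of the base-changed simplicial category, are both fine. The gap is the claim that the base-change functor $B:|\QCoh(U^\bullet/\CY)_*|\to |\QCoh(U^{\bullet+1}/\CY)_*|\simeq \QCoh(U)$ is conservative. This is not an instance of fppf descent or of \corref{c:shv via comonad}: descent statements concern the \emph{totalization} of the $*$-pullback diagram, not the \emph{colimit} of the pushforward diagram, and by \propref{p:co condition} the good behavior of that colimit is equivalent to 1-affineness of $\CY$ --- the thing being proved. The natural way to get conservativity of $B$ would be to exhibit it as a lift of the conservative functor $\on{pr}^*:\QCoh(U)\to \QCoh(U\underset{\CY}\times U)$ through the monadic forgetful functors; but this requires the base-change isomorphism $\on{pr}^*\circ(f^?\circ f_*)\simeq \left((\on{pr}_1)^?\circ(\on{pr}_1)_*\right)\circ \on{pr}^*$, which fails because $f^?$ is built out of infinite limits and $\on{pr}^*$ out of colimits. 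Already for $\CY=B\BG_m$, $U=\on{pt}$, the left-hand side applied to $k$ is $\left(\underset{\BZ}\Pi\, k\right)\otimes k[t,t^{-1}]$, while the right-hand side is $\underset{\BZ}\Pi\, k[t,t^{-1}]$, and the natural map between them is not an equivalence. Moreover, since your square commutes, its bottom arrow is an equivalence, and $f^*$ is conservative, conservativity of $B$ is \emph{literally equivalent} to conservativity of $(f_*)^{\on{enh}}$: the maneuver repackages part (b), it does not prove it. This is exactly why the paper passes to $\IndCoh$ (where $\imath^!$ is continuous and hence admits a right adjoint, making the d\'evissage with supports possible) and then runs the same Noetherian induction and reduction to global quotients as in part (a).
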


\begin{prop}  \label{p:bdd dim}
There exists a constant $n$ that depends only on $\CY$, such that for any flat map $f:U\to \CY$
with $U\in \affdgSch$, the functor 
$$f^?:\QCoh(\CY)\to \QCoh(U),$$
right adjoint to $f_*$, has a cohomological amplitude bounded on the right by $n$.
\end{prop}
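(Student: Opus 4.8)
The plan is to reduce the amplitude estimate in two stages: first from the stack $\CY$ to an affine smooth atlas, and then to a purely ring-theoretic statement, where the uniformity in $f$ will be supplied by the Gruson--Raynaud/Jensen theorem on projective dimensions of flat modules.

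First I would record that, since $\CY$ has affine diagonal and $U\in\affdgSch$, the morphism $f$ is automatically affine: it factors as $U\to U\times\CY\to\CY$, where the graph is a base change of $\Delta_\CY$ (hence affine) and the second projection is affine because $U$ is. In particular $f_*$ is t-exact, so $f^?$ is well-defined as its right adjoint. Next, fix once and for all a smooth affine atlas $g\colon V=\Spec A\to\CY$ with $A$ almost of finite type; then $\pi_0 A$ is a finite-type $k$-algebra, hence Noetherian of finite Krull dimension $d$, and $A$ is eventually coconnective, say concentrated in amplitude $[-a,0]$. Since $g$ is faithfully flat, the functor $g^*$ is t-exact and conservative. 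Set $W:=U\times_{\CY}V$ with projections $f_V\colon W\to V$ and $g_U\colon W\to U$. Because $\CY$ has affine diagonal, $W\to U\times V$ is a base change of $\Delta_\CY$, so $W$ is again affine, and $f_V$ is flat affine while $g_U$ is faithfully flat.

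The reduction then runs as follows. As $g_U$ is faithfully flat, $g_U^*$ detects the cohomological bound, so it suffices to estimate the amplitude of $g_U^*\,f^?\CF$. By flat base change for $?$-pullback --- obtained by passing to right adjoints in the flat base-change isomorphism $g^*f_*\simeq (f_V)_*\,g_U^*$ --- one has $g_U^*\,f^?\simeq (f_V)^?\,g^*$; and since $g^*$ is t-exact, $g^*\CF\in\QCoh(V)^{\leq 0}$. This reduces the problem, with a bound that will not depend on $U$, to estimating the amplitude of $(f_V)^?$ on $\QCoh(V)^{\leq 0}$. On this affine level $(f_V)^?$ is the coinduction $\on{RHom}_A(B,-)$, where $B:=\Gamma(W,\CO_W)$ is a flat $A$-algebra. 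The crucial input --- and the source of the uniform constant --- is that the projective dimension of \emph{any} flat $\pi_0A$-module is at most $d=\dim\pi_0 A$, by the Gruson--Raynaud/Jensen theorem; this bound is independent of the module, i.e.\ of $B$, i.e.\ of $U$. To transport it from $\pi_0 A$ to the eventually-coconnective DG ring $A$, I would run a dévissage along the finite Postnikov tower of $A$, reducing $\on{RHom}_A(B,-)$ to $\on{RHom}_{\pi_0 A}$-computations at the cost of a shift bounded by $a$. Combining this with the projective-dimension bound yields $(f_V)^?\bigl(\QCoh(V)^{\leq 0}\bigr)\subseteq\QCoh(W)^{\leq n}$ with $n$ depending only on $d$ and $a$, hence only on $\CY$.

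The main obstacle is exactly the demand for uniformity in $U$. For a single flat affine morphism boundedness is essentially formal, but allowing $U$ (equivalently $B$) to be arbitrary --- in particular \emph{not} of finite type over $\CY$ --- rules out naive finite resolutions and forces the appeal to Gruson--Raynaud, whose entire content is that flat modules over a Noetherian ring of finite Krull dimension have projective dimension bounded by the dimension alone. The two secondary technical points, namely the flat base change for the non-continuous functor $f^?$ and the DG dévissage reducing to the classical ring $\pi_0 A$, are routine but must be handled with some care, since $f^?$ does not commute with colimits.
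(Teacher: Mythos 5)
Your reduction to the affine atlas hinges on the claimed ``flat base change for $?$-pullback'': $g_U^*\,f^?\simeq (f_V)^?\,g^*$. This is a genuine gap, not a routine point. Passing to right adjoints in the isomorphism $g^*f_*\simeq (f_V)_*\,g_U^*$ does \emph{not} produce this: the right adjoint of $g^*\circ f_*$ is $f^?\circ g_*$, and the right adjoint of $(f_V)_*\circ g_U^*$ is $(g_U)_*\circ (f_V)^?$, so what you actually obtain is $f^?\circ g_*\simeq (g_U)_*\circ (f_V)^?$ --- a different (and true) statement. The exchange you need is a Beck--Chevalley-type isomorphism in the other direction, and it is false in general: since $f$ is affine, $f^?$ is coinduction $\on{RHom}_{A_0}(B_0,-)$ along the flat algebra $B_0=\Gamma(U,\CO_U)$, and coinduction along a non-perfect module does not commute with flat --- even faithfully flat --- base change, because infinite products do not commute with tensor product. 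Concretely, take $A_0=k$, $B_0=k[x_1,x_2,\dots]$, $A=k[t]$: the natural map $\on{Hom}_k(B_0,k)\otimes_k k[t]\to \on{Hom}_k(B_0,k[t])$ is injective but not surjective (the $k$-linear map sending $x_i\mapsto t^i$ has image of unbounded degree, hence is not in the image). So the amplitude of $f^?\CF$ cannot be transported to the atlas by this route, and your appeal to Gruson--Raynaud never gets off the ground except when $\CY$ is itself affine.

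Nor can the argument be patched using the valid base change together with descent: writing $\CF\simeq \on{Tot}\bigl((g^\bullet)_*(g^\bullet)^*\CF\bigr)$ and applying $f^?(g^n)_*\simeq (g^n_U)_*(f_{V^n})^?$ bounds each cosimplicial term, but the dimensions of the \v{C}ech terms $V^n=V\times_\CY\cdots\times_\CY V$ grow linearly in $n$ when the atlas has positive relative dimension, and in any case a totalization of objects that are bounded above --- even uniformly --- need not be bounded above. This is exactly why the paper does not argue via an atlas: it observes that, $f$ being affine, the proposition is equivalent to bounding $\CMaps_{\QCoh(\CY)}(\CE,-)$ for the flat sheaf $\CE=f_*(\CO_U)$, i.e.\ to \propref{p:flat bdd}, and then proves that bound directly on the stack by Noetherian induction: d\'evissage along a closed substack via formal completions (\lemref{l:on completion}, with a $+1$ coming from $R^1$ of the inverse limit over infinitesimal neighborhoods), with the base case of smooth QCA stacks handled by finite resolutions by vector bundles together with the boundedness of coherent cohomology of QCA stacks from \cite{DrGa}. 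Your identification of uniformity in $U$ as the crux is correct, and the Gruson--Raynaud/Jensen input does settle the case of affine (or more generally quasi-affine) $\CY$; but the passage from the stack to the atlas is precisely the step that fails, and it is where all of the work in the paper's proof is concentrated.
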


\ssec{Proof of \thmref{t:alg}}

Let us assume both Propositions \ref{p:generation} and \ref{p:bdd dim} and deduce \thmref{t:alg}. 

\sssec{}

Let $f:U\to \CY$ be an fppf cover, where $U\in \affdgSch$.

\medskip

By \propref{p:generation}(b), we only have to show that co-unit map
$$(f_*)^{\on{enh}}\circ (f^?)^{\on{enh}}\to \on{Id}_{\QCoh(\CY)}$$
is an isomorphism.

\medskip 

For an object $\CF\in \QCoh(\CY)$, the object $(f_*)^{\on{enh}}\circ (f^?)^{\on{enh}}(\CF)$ is 
the gemetric realization of the simplicial object given by
$$[i]\mapsto (f^i)_*\circ (f^i)^?(\CF),$$
where $f^i:U^i\to \CY$, and where $U^i$ is the $i$-th term of the \v{C}ech nerve of $f:U\to \CY$. 

\medskip

The map
$$|(f^\bullet)_*\circ (f^\bullet)^?(\CF)|\to \CF$$
is the natural augmentation map.

\sssec{Step 1}

We first consider the case when $\CF$ is bounded above with respect to the standard t-structure on $\QCoh(\CY)$. 
With no restriction of generality, let us assume that $\CF\in \QCoh(\CY)^{\leq 0}$. 

\medskip

Since the functor $f^?$ is conservative (by \propref{p:generation}(a)), it suffices to show that the map
$$f^?\left(|(f^\bullet)_*\circ (f^\bullet)^?(\CF)|\right)\to 
f^?(\CF)$$
is an isomorphism. 

\medskip

Consider the composition 
$$|f^?((f^\bullet)_*\circ (f^\bullet)^?(\CF))|\to f^?\left(|(f^\bullet)_*\circ (f^\bullet)^?(\CF)|\right)\to 
f^?(\CF).$$

The composed map is an isomorphism, since the simplicial object $f^?((f^\bullet)_*\circ (f^\bullet)^?(\CF))$
of $\QCoh(U)$ is split by $f^?(\CF)$. Hence, it suffices to show that the map
$$|f^?((f^\bullet)_*\circ (f^\bullet)^?(\CF))|\to f^?\left(|(f^\bullet)_*\circ (f^\bullet)^?(\CF)|\right)$$
is an isomorphism.

\medskip

Since the t-structure on $\QCoh(U)$ is left-complete, it suffices to show that the map
$$\tau^{\geq -k}\left(|f^?((f^\bullet)_*\circ (f^\bullet)^?(\CF))|\right)\to
\tau^{\geq -k}\left(f^?\left(|(f^\bullet)_*\circ (f^\bullet)^?(\CF)|\right)\right)$$
is an isomorphism for every $k\in \BZ^{\geq 0}$. 

\medskip

Let $n$ be the integer from \propref{p:bdd dim}. Consider the commutative diagram
\begin{equation} \label{e:compare trunc}
\CD
\tau^{\geq -k}\left(|f^?((f^\bullet)_*\circ (f^\bullet)^?(\CF))|\right)   @>>>  
\tau^{\geq -k}\left(f^?\left(|(f^\bullet)_*\circ (f^\bullet)^?(\CF)|\right)\right)  \\
@AAA   @AAA \\
\tau^{\geq -k}\left(|f^?((f^\bullet)_*\circ (f^\bullet)^?(\CF))|_{k+2n}\right)   @>>>  
\tau^{\geq -k}\left(f^?\left(|(f^\bullet)_*\circ (f^\bullet)^?(\CF)|_{k+2n}\right)\right),
\endCD
\end{equation}
where for $m\in \BZ^{\geq m}$, we denote by 
$|-|_{\leq m}$ the gemetric realization of the $m$-skeleton
(i.e., the colimit over the subcategory of $\bDelta^{\on{op}}$ corresponding to $[i]$
with $i\leq m$). 

\medskip

We claim that both vertical arrows and the bottom horizontal arrow in the diagram \eqref{e:compare trunc}
are isomorphisms. 

\medskip

The assertion regarding the bottom horizontal arrow follows from the fact that $|-|_{\leq k+2n}$
is a \emph{finite} colimit, and hence commutes with $f^?$.

\medskip

Note that since the diagonal of $\CY$ is affine, all of the maps $f^i$ are affine. In particular, each of
the functors $(f^i)_*$ is t-exact. Furthermore, by the assumption on $n$, each of the functors $(f^i)^?$ 
has a cohomological amplitude bounded on the right by $n$.

\medskip

In particular, each of the terms $(f^i)_*\circ (f^i)^?(\CF)$ lies in $\QCoh(\CY)^{\leq n}$. Hence, we obtain that
for any $k'$, the map 
$$\tau^{\geq -k'}\left(|(f^\bullet)_*\circ (f^\bullet)^?(\CF)|\right)\to
\tau^{\geq -k'}\left(|(f^\bullet)_*\circ (f^\bullet)^?(\CF)|_{\leq k'+n}\right)$$
is an isomorphism. 

\medskip

Taking $k'=k+n$, and using the fact that the cohomological amplitude of $f^?$ 
is bounded on the right by $n$, we obtain that the right vertical arrow in \eqref{e:compare trunc}
is an isomorphism.

\medskip

Similarly, the terms of the simplicial object $f^?((f^\bullet)_*\circ (f^\bullet)^?(\CF))$ lie in 
$\QCoh(\CY)^{\leq 2n}$. Hence, the left vertical arrow in \eqref{e:compare trunc}
is an isomorphism, as required.

\sssec{Step 2}

Let now $\CF$ be arbitrary. Consider the commutative diagram
\begin{equation} \label{e:trunc diag}
\CD
|(f^\bullet)_*\circ (f^\bullet)^?(\CF)|   @>>>   \CF  \\
@AAA    @AAA   \\
\underset{n}{\underset{\longrightarrow}{colim}}\, |(f^\bullet)_*\circ (f^\bullet)^?(\tau^{\leq n}(\CF))|   
@>>>  \underset{n}{\underset{\longrightarrow}{colim}}\, \tau^{\leq n}(\CF).
\endCD
\end{equation}

\medskip

We need to show that the top horizontal arrow in \eqref{e:trunc diag} is an isomorphism. We will do so by
showing that the bottom horizontal arrow, as well as the vertical arrows, are isomorphisms.

\medskip

The assertion regarding the bottom horizontal arrow follows from Step 1. The assertion regarding 
the right vertical arrow expresses the fact that the t-structure on $\QCoh(\CY)$ is right-complete. 

\medskip

To show that the left vertical arrow in \eqref{e:trunc diag} is an isomorphism, 
it suffices to show that for every $i$, the map
$$\underset{n}{\underset{\longrightarrow}{colim}}\, (f^i)_*\circ (f^i)^?(\tau^{\leq n}(\CF))\to (f^i)_*\circ (f^i)^?(\CF)$$
is an isomorphism. 

\medskip

Since the functor $(f^i)_*$ commutes with colimits, it suffices to show that the map
$$\underset{n}{\underset{\longrightarrow}{colim}}\,  (f^i)^?(\tau^{\leq n}(\CF))\to (f^i)^?(\CF)$$
is an isomorphism in $\QCoh(U^i)$. Since the t-structure on $\QCoh(U^i)$ is right-complete
and compatible with filtered colimits, it suffices to show that for every $k\in \BZ^{\geq 0}$, the map
$$\underset{n}{\underset{\longrightarrow}{colim}}\,  \tau^{\leq k}\left((f^i)^?(\tau^{\leq n}(\CF))\right)\to \tau^{\leq k}\left((f^i)^?(\CF)\right)$$
is an isomorphism. However, the map
$$\tau^{\leq k}\left((f^i)^?(\tau^{\leq n}(\CF))\right)\to \tau^{\leq k}\left((f^i)^?(\CF)\right)$$
is already an isomorphism for any $n\geq k$, since the functor $(f^i)^?$ is left t-exact (the latter is because
its left adjoint, namely $(f^i)_*$, is t-exact, and in particular, right t-exact.)

\qed

\ssec{Proof of \propref{p:bdd dim}}

\sssec{}

Let $A$ be a connective $k$-algebra. Let us recall that an object $M\in A\mod$ is said to be \emph{flat} if

\begin{itemize}

\item $M\in A\mod^{\leq 0}$;

\item $H^0(M)$ is flat as an $H^0(A)$-module;

\item the natural maps $H^{-i}(A)\underset{H^0(A)}\otimes H^0(M)\to H^{-i}(M)$ are isomorphisms. 

\end{itemize}

For a prestack $\CY$ and $\CF\in \QCoh(\CY)$, we shall say that $\CF$ is flat if its pullback to
any affine scheme is flat. 

\medskip

Note that the assumption of the proposition implies that the object $f_*(\CO_U)\in \QCoh(\CY)$ is flat.
Hence, the assertion of \propref{p:bdd dim} follows from the next general result:

\begin{prop} \label{p:flat bdd}
Let $\CY$ be a QCA\footnote{See \cite[Definition 1.1.8]{DrGa} for what this means.}
stack, locally almost of finite type. There exists an integer $n$, such
that for any flat $\CE\in \QCoh(\CY)$, the functor 
$$\CMaps_{\QCoh(\CY)}(\CE,-):\QCoh(\CY)\to \Vect$$
has a cohomological amplitude bounded on the right by $n$.
\end{prop}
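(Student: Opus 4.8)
The plan is to bound the amplitude in two stages: a \emph{local} stage on charts of $\CY$, governed by the commutative-algebra fact that flat modules have bounded projective dimension over finite-type rings, and a \emph{global} stage, governed by the boundedness of the cohomological dimension of $\Gamma(\CY,-)$ for QCA stacks, which is available from \cite{DrGa}. Throughout, since $\QCoh(\CY)$ is presentable and its tensor product preserves colimits, the internal Hom $\mathcal{H}om_{\QCoh(\CY)}(\CE,-)$ exists, and one has $\CMaps_{\QCoh(\CY)}(\CE,\CF)\simeq \Gamma\big(\CY,\mathcal{H}om_{\QCoh(\CY)}(\CE,\CF)\big)$. As the $t$-structure on $\Vect$ is left- and right-complete, it suffices to produce $n$ with $H^i\,\CMaps_{\QCoh(\CY)}(\CE,\CF)=0$ for $i>n$ whenever $\CF\in \QCoh(\CY)^{\le 0}$, uniformly in the flat object $\CE$.

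For the local stage I would bound the amplitude of the internal Hom out of a flat sheaf on an affine chart. Choose a smooth affine cover $f:U=\Spec(A)\to \CY$; since $\CY$ is locally almost of finite type, $A$ may be taken so that $H^0(A)$ is of finite type over $k$, hence Noetherian of finite Krull dimension $d$. For a flat $A$-module $M$, flatness reduces the computation of the derived $\Hom_A(M,-)$ to its classical truncation over $H^0(A)$, where the Raynaud--Gruson--Jensen bound applies: a flat module over a Noetherian ring of Krull dimension $d$ has projective dimension $\le d$. Consequently the local internal Hom carries $A\mod^{\le 0}$ into $A\mod^{\le d'}$ for a constant $d'$ depending only on $A$ and \emph{not} on $M$ (any higher cohomologies of $A$ contribute only a further fixed shift, harmless under the eventual coconnectivity at our disposal). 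Pulling back along the flat morphisms in the \v Cech nerve of $f$ and using that flat pullback is $t$-exact and conservative, I would conclude that $\mathcal{H}om_{\QCoh(\CY)}(\CE,-)$ sends $\QCoh(\CY)^{\le 0}$ into $\QCoh(\CY)^{\le d'}$, with $d'$ depending only on $\CY$.

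For the global stage I would invoke the finiteness theorem for QCA stacks of \cite{DrGa}: for $\CY$ quasi-compact QCA and locally almost of finite type, $\Gamma(\CY,-)$ has cohomological amplitude on $\QCoh(\CY)^{\le 0}$ bounded on the right by a constant $N$ depending only on $\CY$. Composing with the local stage yields, for $\CF\in\QCoh(\CY)^{\le 0}$,
$$\CMaps_{\QCoh(\CY)}(\CE,\CF)\simeq \Gamma\big(\CY,\mathcal{H}om_{\QCoh(\CY)}(\CE,\CF)\big)\in \Vect^{\le d'+N},$$
so one may take $n=d'+N$, which is uniform in the flat object $\CE$, as required.

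I expect the crux to be the passage from charts to $\CY$. The \v Cech nerve $U^\bullet/\CY$ has terms whose Krull dimension grows linearly in the simplicial degree, so a naive descent spectral sequence for $\Gamma(\CY,-)$ would, in each fixed total degree, receive contributions from infinitely many simplicial levels. It is precisely the boundedness of the cohomological dimension of QCA stacks --- the nontrivial finiteness input from \cite{DrGa} --- that forces the totalization into a fixed range of degrees and thereby supplies the constant $N$. By contrast, the local projective-dimension bound is the elementary ingredient, its only delicate point being the extraction of a bound \emph{independent} of the flat module $M$.
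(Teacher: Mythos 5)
Your two main ingredients are individually fine: the Raynaud--Gruson/Jensen bound (a flat module over a Noetherian ring of Krull dimension $d$ has projective dimension $\leq d$) gives the local amplitude bound on an affine chart, and \cite[Theorem 1.4.2]{DrGa} bounds the amplitude of $\Gamma(\CY,-)$. The gap is the step that splices them: the claim that $\mathcal{H}om_{\QCoh(\CY)}(\CE,-)$ carries $\QCoh(\CY)^{\leq 0}$ into $\QCoh(\CY)^{\leq d'}$, verified ``by pulling back along the \v{C}ech nerve.'' Coconnectivity of an object of $\QCoh(\CY)$ is indeed detected on a smooth atlas $f:U\to \CY$, but what you then need is that the canonical map $f^*\,\mathcal{H}om_{\QCoh(\CY)}(\CE,\CF)\to \mathcal{H}om_{\QCoh(U)}(f^*\CE,f^*\CF)$ be an equivalence (or at least detect coconnectivity), and this fails unless $\CE$ is perfect; flat quasi-coherent sheaves are in general neither perfect nor finitely presented, and flatness does not help. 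Concretely, for $R=k[t]$, $\CE=k(t)$, $\CF=R$: one has $\on{Ext}^1_{k[t]}(k(t),k[t])\neq 0$, and this $\on{Ext}^1$ is a $k(t)$-vector space, hence survives base change along the flat map $\Spec k(t)\to \Spec k[t]$; meanwhile $\mathcal{H}om_{k(t)}(k(t),k(t))=k(t)$ sits in degree $0$. So internal Hom out of a flat object does not commute with flat pullback, and your bound on the charts does not propagate to the bound you need on $\CY$. Your fallback --- that the cohomological-dimension theorem for QCA stacks ``forces the totalization into a fixed range of degrees'' --- is not an argument: that theorem bounds $\Gamma(\CY,-)$ applied to a single sheaf of bounded coconnectivity, whereas the \v{C}ech/cobar totalization computing $\CMaps_{\QCoh(\CY)}(\CE,\CF)$ has $i$-th term lying only in $\Vect^{\leq d_i}$ with $d_i$ growing linearly in $i$, so each total degree receives potential contributions from infinitely many columns and no vanishing (nor even convergence) follows termwise. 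These are two distinct finiteness problems, and the second is precisely the difficulty of the proposition.

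For comparison, the paper avoids \v{C}ech descent altogether: after reducing to $\CY$ classical and reduced (Step 1), it runs a Noetherian induction, choosing a closed substack $\CY'\subset\CY$ with smooth open complement, and uses the triangle \eqref{e:strange triangle} to split $\CMaps_{\QCoh(\CY)}(\CE,\CF)$ into an open-locus term and a formal-completion term. The smooth locus is handled by finite resolutions by vector bundles plus \cite[Theorem 1.4.2(ii)]{DrGa} (this replaces your Raynaud--Gruson input), while the completion term is handled by \lemref{l:on completion}: quasi-coherent sheaves on the completion form a limit over the classical infinitesimal neighborhoods $\CY'_k$, flatness of $\CE$ is preserved under pullback to each $\CY'_k$, the inductive bound applies there (their reductions all equal $\CY'$), and the $R\lim$ over $\BZ^+$ costs amplitude $1$. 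To salvage your strategy you would need an independent proof of the internal-Hom amplitude bound on $\CY$ itself, which is essentially equivalent to the proposition being proved.
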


The proof of this proposition will occupy the rest of this subsection.

\sssec{Step 1}

We claim that it is sufficient to show that there exists an integer $n$ such that 
$$\Hom_{\QCoh(\CY)}(\CE,\CF[i])=0 \text{ for all } i>n \text{ and } \CF\in \QCoh(\CY)^\heartsuit.$$

The proof is the same as that of \cite[Lemma 2.1.3]{DrGa}. 

\medskip

As in \cite[Sect. 2.2.1]{DrGa}, this allows to assume that $\CY$ is classical:
if $n$ is the integer that works for $^{cl}\CY$, then it also works for $\CY$. 

\medskip

Since $\CY$ is Noetherian, we can further replace $^{cl}\CY$ by $({}^{cl}\CY)_{red}$: if an integer
$n$ works for $({}^{cl}\CY)_{red}$, then it works also for $^{cl}\CY$. 

\sssec{Step 2}

Recall the setting and notations of \secref{sss:with supports}. 

\medskip

We consider the (discontinuous) functor 
$$\wh\imath^{\QCoh}_?:\QCoh(\CY)_{\CY'}\to \QCoh(\CY),$$
\emph{right} adjoint to $\wh\imath^{\QCoh}_!$.

\medskip

Let 
$$\jmath^?:\QCoh(\CY)\to \QCoh(\CY_0)$$ denote the (discontinuous) right adjoint of $\jmath_*$. For 
$\CF\in \QCoh(\CY)$ we have a distinguished triangle
\begin{equation} \label{e:strange triangle}
\jmath_*\circ \jmath^?(\CF)\to \CF\to \wh\imath^{\QCoh}_?\circ \wh\imath^{\QCoh,!}(\CF).
\end{equation}

\medskip

We will prove:

\begin{lem} \label{l:on completion} Let $\CY$ satisfy the assumption of \propref{p:flat bdd}, and assume that
$\CY$ is classical. 

\smallskip

\noindent{\em(a)}
The functor $\wh\imath^{\QCoh}_?\circ \wh\imath^{\QCoh,!}$ is right t-exact. 

\smallskip

\noindent{\em(b)}
Assume that $\CY'$ satisfies the conclusion of \propref{p:flat bdd} with an integer $n'$. Then
for $\CF\in \QCoh(\CY)^{\leq 0}$, we have
$$\CMaps_{\QCoh(\CY)}\left(\CE,\wh\imath^{\QCoh}_?\circ \wh\imath^{\QCoh,!}(\CF)\right)\in \Vect^{\leq n'+1}.$$

\end{lem}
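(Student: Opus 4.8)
The plan is to prove the two parts by different mechanisms: part (a) by identifying the \emph{left} adjoint of $L:=\wh\imath^{\QCoh}_?\circ\wh\imath^{\QCoh,!}$ and checking its t-exactness, and part (b) by using the adjunction to rewrite the mapping object over the formal completion and then resolving the latter by the tower of nilpotent thickenings of $\CY'$.

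For (a), I would first record that, along the chain of adjoints $\wh\imath^{\QCoh}_!\dashv\wh\imath^{\QCoh,!}\dashv\wh\imath^{\QCoh}_?$, the endofunctor $L=\wh\imath^{\QCoh}_?\circ\wh\imath^{\QCoh,!}$ of $\QCoh(\CY)$ has left adjoint $\wh\imath^{\QCoh}_!\circ\wh\imath^{\QCoh,!}$. By the localization sequence \eqref{e:localization seq}, this left adjoint is the ``sections with support'' endofunctor $\CF\mapsto \on{fib}(\CF\to \jmath_*\circ\jmath^*(\CF))$. Since $\jmath^*$ is t-exact and $\jmath_*$ is left t-exact (being right adjoint to the t-exact $\jmath^*$), the composite $\jmath_*\circ\jmath^*$ is left t-exact; a short inspection of the long exact cohomology sequence of the fiber triangle then shows that for $\CF\in\QCoh(\CY)^{\geq 0}$ one has $\on{fib}(\CF\to\jmath_*\jmath^*\CF)\in\QCoh(\CY)^{\geq 0}$, so that $\wh\imath^{\QCoh}_!\circ\wh\imath^{\QCoh,!}$ is left t-exact. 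As a left adjoint is left t-exact if and only if its right adjoint is right t-exact, we conclude that $L$ is right t-exact, which is precisely (a).

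For (b), I would begin from the same chain of adjunctions, which yields a canonical identification
$$\CMaps_{\QCoh(\CY)}\left(\CE,\,\wh\imath^{\QCoh}_?\circ\wh\imath^{\QCoh,!}(\CF)\right)\simeq\CMaps_{\QCoh(\CY)_{\CY'}}\left(\wh\imath^{\QCoh,!}(\CE),\,\wh\imath^{\QCoh,!}(\CF)\right).$$
I then invoke \propref{p:compl and supp} to identify $\QCoh(\CY)_{\CY'}\simeq\QCoh(\CY^\wedge_{\CY'})$ (under which $\wh\imath^{\QCoh,!}$ becomes $\wh\imath^*$), and present the formal completion as the colimit of its nilpotent thickenings $\CY'_m\hookrightarrow\CY$ exactly as in the proof of \propref{p:image formal} (via \cite[Proposition 6.7.4]{IndSch}), so that $\QCoh(\CY^\wedge_{\CY'})\simeq\underset{m}{\underset{\longleftarrow}{lim}}\,\QCoh(\CY'_m)$ along the $*$-restriction functors. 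Since mapping objects in a limit of categories are the limit of the mapping objects, this rewrites the expression as
$$\CMaps_{\QCoh(\CY)}\left(\CE,\,L(\CF)\right)\simeq\underset{m}{\underset{\longleftarrow}{lim}}\,\CMaps_{\QCoh(\CY'_m)}\left(\iota_m^*(\CE),\,\iota_m^*(\CF)\right),\qquad \iota_m\colon\CY'_m\to\CY.$$
On each term, $\iota_m^*(\CE)$ is flat (pullback of a flat object) and $\iota_m^*(\CF)\in\QCoh(\CY'_m)^{\leq 0}$ because $\iota_m^*$ is right t-exact; moreover each $\CY'_m$ has the same reduced substack as $\CY'$, so the dévissage of Step 1 shows that $\CY'_m$ satisfies the conclusion of \propref{p:flat bdd} with the \emph{same} constant $n'$. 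Hence each term lies in $\Vect^{\leq n'}$. Finally, the thickenings form a countable tower, so the limit is computed by a single fiber sequence $\underset{m}{\underset{\longleftarrow}{lim}}\to\underset{m}{\Pi}\to\underset{m}{\Pi}$; as products are t-exact on $\Vect$, passing to this $\on{fib}$ raises the upper bound by at most one, giving $\Vect^{\leq n'+1}$, which is (b). The extra ``$+1$'' is exactly the $\lim^1$ of the tower.

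The step I expect to be the main obstacle is the reduction in (b): making rigorous, at the level of the stack $\CY$ rather than an affine chart, the presentation of $\CY^\wedge_{\CY'}$ by a countable tower of nilpotent thickenings together with the identification of the mapping object in $\underset{m}{\underset{\longleftarrow}{lim}}\,\QCoh(\CY'_m)$ with the termwise limit, and checking the nilpotent-insensitivity of the constant $n'$. By contrast, the t-exactness assertion (a) is routine once the left adjoint is identified.
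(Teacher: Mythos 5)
Your part (b) is correct and is essentially the paper's own argument: adjunction to move the mapping object into $\QCoh(\CY)_{\CY'}\simeq\QCoh(\CY^\wedge_{\CY'})$, the presentation of the completion by classical infinitesimal neighborhoods $\CY'_k$ of $\CY'$, term-by-term bounds coming from Step 1 of the proof of \propref{p:flat bdd} (using $({\CY'_k})_{red}=(\CY')_{red}$), and the cohomological amplitude $1$ of inverse limits over $\BZ^+$ in $\Vect$. The stack-level issue you flag as the main obstacle is handled in the paper by citing \cite[Proposition 6.8.2]{IndSch} (the colimit of the classical infinitesimal neighborhoods maps to $\CY^\wedge_{\CY'}$ by a map that becomes an isomorphism after fppf sheafification, which is harmless since $\QCoh$ is insensitive to fppf sheafification), rather than the affine statement \cite[Proposition 6.7.4]{IndSch}; this is exactly the content of \corref{c:desc compl}.

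Part (a), however, has a genuine error. Your argument rests on the principle ``a left adjoint is left t-exact if and only if its right adjoint is right t-exact,'' and this is false: the correct formal statement for an adjunction $F\dashv G$ is that $F$ is \emph{right} t-exact if and only if $G$ is \emph{left} t-exact. There is no implication in the direction you use. A counterexample is $\jmath^*\dashv\jmath_*$ for the open embedding $\jmath:\BA^2\setminus\{0\}\hookrightarrow\BA^2$: the left adjoint $\jmath^*$ is t-exact (in particular left t-exact), yet $\jmath_*$ is not right t-exact, since $H^1(\jmath_*\CO)\neq 0$. So from the (correct) left t-exactness of the local cohomology functor $\wh\imath^{\QCoh}_!\circ\wh\imath^{\QCoh,!}$ you can conclude nothing about its right adjoint $\wh\imath^{\QCoh}_?\circ\wh\imath^{\QCoh,!}$; indeed, right t-exactness of the completion functor is not a formal consequence of adjunction, and is precisely the point requiring geometric input. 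The paper proves (a) by the same device you reserve for (b): by \corref{c:compl and supp} and \corref{c:desc compl}(b) one has
$$\wh\imath^{\QCoh}_?\circ \wh\imath^{\QCoh,!}(\CF)\simeq \underset{k\in \BZ^+}{\underset{\longleftarrow}{lim}}\, (\imath_k)_*\circ (\imath_k)^*(\CF),$$
where each term lies in $\QCoh(\CY)^{\leq 0}$ for $\CF\in\QCoh(\CY)^{\leq 0}$ (each $\imath_k$ is a closed embedding, so $(\imath_k)_*$ is t-exact and $(\imath_k)^*$ is right t-exact), and the transition maps are surjective on $H^0$, so the potential $\lim^1$-contribution to $H^1$ of the limit vanishes. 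Your proof would be repaired by running (a) through this tower description, which you have already set up for (b), instead of through the adjunction trick.
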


\sssec{Step 3}

Let us assume \lemref{l:on completion} and finish the proof of \propref{p:flat bdd}. 

\medskip

By Step 1, we can assume that $\CY$ is classical and reduced. 
We can choose a closed substack $\CY'\subset \CY$ such that the complementary open $\CY_0$ is smooth.  
In Step 4 we will show that a smooth QCA stack satisfies the conclusion of \propref{p:flat bdd}. 
Let $n_0$ denote the corresponding integer. 

\medskip

We claim that $n:=\on{max}(n',n_0)+1$ will work for $\CY$. Indeed, let
$\CF$ be an object $\QCoh(\CY)^{\leq 0}$, and let us show that
$$\CMaps_{\QCoh(\CY)}(\CE,\CF)\in \Vect^{\leq n+1}.$$

\medskip

By \eqref{e:strange triangle} and \lemref{l:on completion}(b), it suffices to show that
$$\CMaps_{\QCoh(\CY)}\left(\CE,\jmath_*\circ \jmath^?(\CF)\right)\simeq
\CMaps_{\QCoh(\CU)}\left(\jmath^*(\CE),\jmath^?(\CF)\right)\in \Vect^{\leq n_0+1}.$$

\medskip

By the construction of $n_0$, it suffices to show that
$\jmath^?(\CF)\in \QCoh(\CU)^{\leq 1}$. Since $\jmath^*\circ \jmath_*\simeq \on{Id}_{\QCoh(\CU)}$
and $\jmath^*$ is t-exact, it suffices to show that 
$$\jmath_*\circ \jmath^?(\CF)\in \QCoh(\CY)^{\leq 1}.$$

However, the latter follows from \lemref{l:on completion}(a).

\sssec{Step 4}

Assume that $\CY$ is a smooth QCA stack. Let us prove \propref{p:flat bdd} directly in this case. We claim that
in this case the category $\QCoh(\CY)$ is of bounded cohomological dimension. 

\medskip

Indeed, as in Step 1, it is sufficient to show that there exists an integer $n$ such that
$$\Hom_{\QCoh(\CY)}(\CF',\CF[i])=0 \text{ for all } i>n \text{ and } \CF\in \QCoh(\CY)^\heartsuit, \CF'\in \Coh(\CY)^\heartsuit.$$

\medskip

Since $\CY$ is smooth and quasi-compact, there exists an integer $n'$, such that 
any $\CF'\in \QCoh(\CY)^\heartsuit$ admits a resolution of length $n'$ consisting of locally free $\CO_\CY$-modules
of finite rank. Hence,
it suffices to show that there exists an integer $n''$ such that
$$\Hom_{\QCoh(\CY)}(\CE',\CF[i])=0 \text{ for all } i>n'' \text{ and } \CF\in \QCoh(\CY)^\heartsuit$$
for $\CE'$ locally free of finite rank (we will then set $n=n'+n''$). 

\medskip

However,
$$\Hom_{\QCoh(\CY)}(\CE',\CF[i])\simeq \Gamma(\CY,\CF'\otimes (\CE')^\vee),$$
and the existence of $n''$ follows from \cite[Theorem 1.4.2(ii)]{DrGa}. 

\qed

\ssec{Proof of \lemref{l:on completion}}

\sssec{}

Note that from \propref{p:compl and supp} we obtain: 

\begin{cor} \label{c:compl and supp}
The endo-functor $\wh\imath^{\QCoh}_?\circ \wh\imath^{\QCoh,!}$
is canonically isomorphic to
$$\wh\imath_*\circ \wh\imath^*,$$
where 
$$\wh\imath_*:\QCoh(\CY^\wedge_{\CY'})\to \QCoh(\CY)$$
is the right adjoint to $\wh\imath^*$.
\end{cor}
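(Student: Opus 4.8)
The plan is to read the claim directly off \propref{p:compl and supp} by passing to right adjoints, so that no input beyond that proposition is required. Recall that \propref{p:compl and supp} exhibits the pullback functor $\wh\imath^*$ as a composite
$$\QCoh(\CY)\overset{\wh\imath^{\QCoh,!}}\longrightarrow \QCoh(\CY)_{\CY'}\overset{\Phi}\longrightarrow \QCoh(\CY^\wedge_{\CY'}),$$
where $\Phi$ denotes the equivalence furnished by the second arrow of the proposition. In particular $\wh\imath^*$ is the composite of the continuous functor $\wh\imath^{\QCoh,!}$ with an equivalence, hence is continuous and admits the right adjoint $\wh\imath_*$ named in the statement.

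First I would compute $\wh\imath_*$ explicitly. The right adjoint of a composite is the composite of the right adjoints in the opposite order; the right adjoint of the equivalence $\Phi$ is its inverse $\Phi^{-1}$, and the right adjoint of $\wh\imath^{\QCoh,!}$ is, by construction, the (discontinuous) functor $\wh\imath^{\QCoh}_?$. Hence
$$\wh\imath_*\simeq \wh\imath^{\QCoh}_?\circ \Phi^{-1}.$$

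It then remains only to substitute this into $\wh\imath_*\circ \wh\imath^*$ and use the factorization of $\wh\imath^*$ above:
$$\wh\imath_*\circ \wh\imath^*\simeq \wh\imath^{\QCoh}_?\circ \Phi^{-1}\circ \Phi\circ \wh\imath^{\QCoh,!}\simeq \wh\imath^{\QCoh}_?\circ \wh\imath^{\QCoh,!},$$
the middle composite $\Phi^{-1}\circ \Phi$ reducing to the identity on $\QCoh(\CY)_{\CY'}$. This is precisely the asserted identification of endofunctors of $\QCoh(\CY)$. The argument is entirely formal, so I do not expect a genuine obstacle; the only points requiring care are the bookkeeping of adjunctions---keeping straight that $\wh\imath^{\QCoh}_?$ is the right adjoint of $\wh\imath^{\QCoh,!}$, so that the two copies of $\Phi$ cancel to leave exactly $\wh\imath^{\QCoh}_?\circ \wh\imath^{\QCoh,!}$---and checking that passage to right adjoints through the equivalence $\Phi$ produces a canonical (not merely objectwise) isomorphism.
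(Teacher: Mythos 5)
Your proof is correct and matches the paper's: the paper derives the corollary directly from \propref{p:compl and supp}, i.e.\ precisely by factoring $\wh\imath^*$ as $\Phi\circ\wh\imath^{\QCoh,!}$ with $\Phi$ an equivalence, passing to right adjoints to get $\wh\imath_*\simeq \wh\imath^{\QCoh}_?\circ\Phi^{-1}$, and cancelling $\Phi^{-1}\circ\Phi$. Your closing remarks on the bookkeeping (that $\wh\imath^{\QCoh}_?$ is the right adjoint of $\wh\imath^{\QCoh,!}$, and that uniqueness of adjoints makes the identification canonical) are exactly the points the paper leaves implicit.
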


\sssec{}

Let $\CY'_k\overset{\imath_k}\hookrightarrow \CY$ denote the $k$-th \emph{classical} 
infinitesimal neighborhood of $\CY'$ inside $\CY$. 
The following results
from \cite[Proposition 6.8.2]{IndSch}:

\begin{lem}
The map $\underset{k}{colim}\, \CY'_k\to \CY^\wedge_{\CY'}$
becomes an isomorphism after fppf sheafification.
\end{lem}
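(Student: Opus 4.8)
The plan is to recognize this statement as (a base change of) \cite[Proposition 6.8.2]{IndSch}, and to explain the mechanism that makes it work. Since fppf sheafification is a left-exact localization, it suffices to show that the map $\underset{k}{colim}\, \CY'_k\to \CY^\wedge_{\CY'}$ is an \emph{fppf-local equivalence} of prestacks, i.e. that it is fppf-locally surjective on $S$-points and fppf-locally a monomorphism, for every $S\in \affdgSch$. First I would observe that both sides map to $\CY$: the completion by $\wh{i}:\CY^\wedge_{\CY'}\to \CY$, and the colimit by the closed embeddings $\imath_k$. The map $\wh{i}$ is a monomorphism, since by its defining formula $\Maps(S,\CY^\wedge_{\CY'})\to \Maps(S,\CY)$ is the inclusion of those $S\to \CY$ whose restriction to $({}^{cl}S)_{red}$ factors through $\CY'$. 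Likewise each $\imath_k$ is a closed embedding, hence a monomorphism, and the transition maps $\CY'_k\to \CY'_{k+1}$ are closed embeddings as well, so the induced map $\underset{k}{colim}\, \CY'_k\to \CY$ is a filtered colimit of monomorphisms and is therefore again a monomorphism (monomorphisms are stable under filtered colimits and under sheafification). Consequently the ``fppf-local monomorphism'' half is automatic, and the whole statement reduces to the assertion that these two monomorphisms have the \emph{same image} as fppf subsheaves of $\CY$.

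For the comparison of images I would argue pointwise, fppf-locally. Fix $S\in \affdgSch$ and a point $g:S\to \CY$ whose reduced restriction factors through $\CY'$; the task is to produce an fppf cover $S'\to S$ over which $g$ factors through some $\CY'_k$. Let $\CI\subset \CO_\CY$ be the sheaf of ideals cutting out $\CY'$, so that $\CY'_k$ is cut out by $\CI^k$. Because $\CY$ is locally almost of finite type (and Noetherian in the situation of \lemref{l:on completion}), $\CI$ is coherent, so its pullback $g^*\CI$ is a finitely generated $\CO_S$-module; let $I_A\subseteq \pi_0(\CO_S)$ be the image ideal, which is then finitely generated, and note $\on{Im}(g^*(\CI^k)\to \CO_S)=(I_A)^k$. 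The hypothesis on the reduced restriction of $g$ says exactly that $I_A$ lies in the nilradical of $\pi_0(\CO_S)$, i.e. each of its finitely many generators is nilpotent; hence $I_A^k=0$ for some $k$ (depending on $g$ and $S$), which is precisely the condition that $g$ factors through $\CY'_k$. Conversely any such factorization manifestly yields a point of $\CY^\wedge_{\CY'}$, so the two images coincide. To make the pullback of $\CI$ and the resulting factorization literal in the stacky setting one first passes to a smooth affine atlas $U\to \CY$ and lifts $g$ along the fppf cover $S\times_\CY U\to S$, which is where fppf-localization genuinely enters.

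The hard part, and the reason the identification holds only \emph{after} sheafification, is the non-uniformity of the nilpotence order: the integer $k$ with $I_A^k=0$ depends on the test object $S$, so no single $\CY'_k$ can suffice and one is forced into the colimit; moreover extracting finitely many nilpotent generators of $I_A$, together with the factorization data, requires passing to a smooth atlas of $\CY$, an operation that is only fppf-local on $S$. Once both reductions are in place, the remaining input is the elementary fact that a finitely generated ideal contained in the nilradical is itself nilpotent. I would therefore present the proof as the monomorphism reduction of the first paragraph together with the pointwise nilpotence argument of the second, and cite \cite[Proposition 6.8.2]{IndSch} for the execution of this argument in the full required generality (in particular for the compatibility of the neighborhoods $\CY'_k$ with the chosen atlas).
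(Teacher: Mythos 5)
Your proposal is correct and coincides with the paper's own proof of this lemma, which consists in its entirety of the citation to \cite[Proposition 6.8.2]{IndSch}; your reduction to a comparison of subfunctors of $\CY$ and the nilpotence mechanism is a faithful sketch of what lies behind that citation, including the passage to an atlas needed to go from schemes to stacks. One caution: your claim that $I_A^k=0$ in $\pi_0(\CO_S)$ ``is precisely the condition that $g$ factors through $\CY'_k$'' is literally true only for classical test schemes $S$ --- for a derived affine $S$ this $\pi_0$-condition is necessary, but its sufficiency (after possibly enlarging $k$) is exactly the deformation-theoretic content of the cited proposition --- so your sketch is the classical heart of the argument rather than a self-contained proof, which is consistent with the way you ultimately defer to the citation, just as the paper does.
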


\begin{cor} \hfill  \label{c:desc compl}

\smallskip

\noindent{\em(a)} We have a canonical equivalence
$$\QCoh(\CY^\wedge_{\CY'})\simeq \underset{k\in \BZ^+}{\underset{\longleftarrow}{lim}}\, \QCoh(\CY'_k).$$

\smallskip

\noindent{\em(b)} The endo-functor $\wh\imath_*\circ \wh\imath^*$ of $\QCoh(\CY)$ is canonically isomorphic to
$$\CF\mapsto \underset{k\in \BZ^+}{\underset{\longleftarrow}{lim}}\, (\imath_k)_*\circ (\imath_k)^*(\CF).$$

\end{cor}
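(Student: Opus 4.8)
The plan is to deduce both statements formally from the lemma above, using only how $\QCoh$ interacts with colimits of prestacks and with fppf sheafification. For part (a), I would proceed as follows. The lemma above produces an fppf-local isomorphism
\[
L\Bigl(\underset{k}{\underset{\longrightarrow}{colim}}\,\CY'_k\Bigr)\simeq \CY^\wedge_{\CY'},
\]
where $L(-)$ is fppf sheafification. Since $\QCoh$ satisfies fppf descent, it is insensitive to fppf sheafification, i.e.\ $\QCoh(L\CZ)\simeq \QCoh(\CZ)$ for every $\CZ$ (the $\QCoh$-analogue of \corref{c:shv via Cech}(b)); hence $\QCoh(\CY^\wedge_{\CY'})\simeq \QCoh(\underset{k}{\underset{\longrightarrow}{colim}}\,\CY'_k)$. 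Finally, $\QCoh(-)$ sends colimits in $\on{PreStk}$ to limits in $\StinftyCat_{\on{cont}}$, and the transition functors $\imath_{k,k+1}^*\colon \QCoh(\CY'_{k+1})\to \QCoh(\CY'_k)$ are continuous, so the right-hand side is $\underset{k\in \BZ^+}{\underset{\longleftarrow}{lim}}\,\QCoh(\CY'_k)$. This gives (a).

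For part (b), I would first identify $\wh\imath^*$ under the equivalence of (a) and then compute its right adjoint directly. Writing $\on{ev}_k$ for the $k$-th projection of the limit in (a), the factorization of $\CY'_k\to \CY^\wedge_{\CY'}\xrightarrow{\wh\imath}\CY$ as $\imath_k$ shows that $\on{ev}_k\circ \wh\imath^*\simeq (\imath_k)^*$; thus $\wh\imath^*$ is the functor $\CF\mapsto \{(\imath_k)^*\CF\}_k$ assembled from the pullbacks. Since $\CY$ has affine diagonal, each closed embedding $\imath_k$ is affine, so each $(\imath_k)^*$ has a continuous right adjoint $(\imath_k)_*$. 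I would then compute $\wh\imath_*$ via the adjunction, using that mapping spaces in a limit of $\infty$-categories are computed termwise:
\begin{multline*}
\Maps\bigl(\wh\imath^*\CF,\{\CG_k\}\bigr)\simeq \underset{k}{\underset{\longleftarrow}{lim}}\,\Maps_{\QCoh(\CY'_k)}\bigl((\imath_k)^*\CF,\CG_k\bigr)\simeq \\
\simeq \underset{k}{\underset{\longleftarrow}{lim}}\,\Maps_{\QCoh(\CY)}\bigl(\CF,(\imath_k)_*\CG_k\bigr)\simeq \Maps_{\QCoh(\CY)}\Bigl(\CF,\underset{k}{\underset{\longleftarrow}{lim}}\,(\imath_k)_*\CG_k\Bigr).
\end{multline*}
This identifies $\wh\imath_*(\{\CG_k\})\simeq \underset{k}{\underset{\longleftarrow}{lim}}\,(\imath_k)_*\CG_k$, and evaluating on $\{\CG_k\}=\wh\imath^*(\CF)=\{(\imath_k)^*\CF\}$ yields $\wh\imath_*\circ \wh\imath^*(\CF)\simeq \underset{k}{\underset{\longleftarrow}{lim}}\,(\imath_k)_*\circ (\imath_k)^*(\CF)$, which is the assertion of (b).

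The computations are essentially formal once the lemma above is granted, so I do not expect a serious obstacle; the two points requiring care are the insensitivity of $\QCoh$ to fppf sheafification in (a) (this is where the ``after fppf sheafification'' clause of the lemma is absorbed, via fppf descent for $\QCoh$) and, in (b), keeping track of the transition maps, so that the termwise right adjoints $(\imath_k)_*$ assemble into the inverse limit with the correct structure maps $(\imath_{k+1})_*\CG_{k+1}\to (\imath_k)_*\CG_k$, induced by the counit of the adjunction $\imath_{k,k+1}^*\dashv (\imath_{k,k+1})_*$ together with the identification $\imath_{k,k+1}^*\CG_{k+1}\simeq \CG_k$.
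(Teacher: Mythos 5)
Your argument is correct and is essentially the intended one: the paper states this corollary without proof as the formal consequence of the preceding lemma, namely that $\QCoh$ is insensitive to fppf sheafification and takes colimits of prestacks to limits of DG categories (giving (a)), with (b) then obtained by identifying $\wh\imath^*$ with the compatible family $\{(\imath_k)^*\}$ and computing its right adjoint termwise. Two cosmetic corrections: the structure maps $(\imath_{k+1})_*\CG_{k+1}\to(\imath_k)_*\CG_k$ come from the \emph{unit} $\on{Id}\to(\imath_{k,k+1})_*\circ\imath_{k,k+1}^*$ rather than the counit, and your appeal to an affine diagonal is unnecessary --- closed embeddings are affine morphisms regardless, and in fact the mapping-space computation in (b) never uses continuity of the individual functors $(\imath_k)_*$.
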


\sssec{}

We are now ready to prove \lemref{l:on completion}. Point (a) of \lemref{l:on completion}.
follows immediately from Corollaries \ref{c:compl and supp}
and \ref{c:desc compl}(b).

\medskip

To prove point (b) of \lemref{l:on completion}, we note by \propref{p:compl and supp} and \corref{c:desc compl}(a), 
that for $\CE,\CF\in \QCoh(\CY)$, we have:
$$\CMaps_{\QCoh(\CY)}\left(\CE,\wh\imath^{\QCoh}_?\circ \wh\imath^{\QCoh,!}(\CF)\right)\simeq
\underset{k\in \BZ^+}{\underset{\longleftarrow}{lim}}\, \CMaps_{\CY'_k}(\imath_k^*(\CE),\imath_k^*(\CF)).$$

\medskip

Since the functor of projective limit over $\BZ^+$ in $\Vect$ has cohomological amplitude $1$, it is sufficient to
show that for $\CE$ flat and $\CF\in \QCoh(\CY)^{\leq 0}$, each term 
$$\CMaps_{\CY'_k}(\imath_k^*(\CE),\imath_k^*(\CF))$$
belongs to $\Vect^{\leq n'}$. 

\medskip

However, $\imath_k^*(\CE)$ is flat on $\CY'_k$, and $\imath_k^*(\CF)\in \QCoh(\CY'_k)^{\leq 0}$,
and the assertion follows from the definition of $n'$, combined with Step 1 of the proof of
\propref{p:flat bdd}.

\qed

\ssec{Proof of \propref{p:generation}(a)}

\sssec{Step 1}

Let $\CY'\overset{\imath}\hookrightarrow \CY$ be a closed substack, and let $\CY_0\overset{\jmath}\hookrightarrow \CY$
be a complementary open. Denote $U':=U\underset{\CY}\times \CY'$ and $U_0:=U\underset{\CY}\times \CY_0$,
and let $\imath_U$ and $\jmath_U$ denote the corresponding maps, and
$$f':U'\to \CY' \text{ and } f_0:U_0\to \CY_0.$$

\medskip

Consider the commutative diagrams
$$
\CD
\QCoh(U) @>{(\jmath_U)^*}>> \QCoh(U_0)  \\
@V{f_*}VV   @VV{(f_0)_*}V   \\
\QCoh(\CY)  @>{\jmath^*}>> \QCoh(\CY_0)
\endCD
$$
and
$$
\CD
\QCoh(U) @<{(\jmath_U)_*}<< \QCoh(U_0)  \\
@V{f_*}VV   @VV{(f_0)_*}V   \\
\QCoh(\CY)  @<{\jmath_*}<< \QCoh(\CY_0).
\endCD
$$
By passing to right adjoints we obtain the commutative diagrams
$$
\CD
\QCoh(U) @<{(\jmath_U)_*}<< \QCoh(U_0)  \\
@A{f^?}AA   @AA{f_0^?}A  \\
\QCoh(\CY)  @<{\jmath_*}<< \QCoh(\CY_0)
\endCD
$$
and
$$
\CD
\QCoh(U) @>{(\jmath_U)^?}>> \QCoh(U_0) \\
@A{f^?}AA   @AA{f_0^?}A  \\
\QCoh(\CY)  @>{\jmath^?}>> \QCoh(\CY_0).
\endCD
$$

Hence, from the exact triangle \eqref{e:strange triangle}, we obtain that $f^?$ defines a functor
$$(\wh{f}')^?:\QCoh(\CY)_{\CY'}\to \QCoh(U)_{U'}$$
that makes the diagrams
$$
\CD
\QCoh(U)_{U'}  @<{(\wh\imath_U)^{\QCoh,!}}<<  \QCoh(U) \\
@A{(\wh{f}')^?}AA    @AA{f^?}A   \\
\QCoh(\CY)_{\CY'}  @<{\wh\imath^{\QCoh,!}}<<  \QCoh(\CY)
\endCD
$$
and
$$
\CD
\QCoh(U)_{U'}  @>{(\wh\imath_U)^{\QCoh}_?}>>  \QCoh(U) \\
@A{(\wh{f}')^?}AA    @AA{f^?}A   \\
\QCoh(\CY)_{\CY'}  @>{\wh\imath^{\QCoh}_?}>>  \QCoh(\CY)
\endCD
$$
commute. 

\medskip

From the exact triangle \eqref{e:strange triangle}, we obtain that if the functors $f_0^?$ and 
$(\wh{f}')^?$ are both conservative, then so is $f^?$.

\sssec{Step 2}

We will now show that if the functor $(f')^?:\QCoh(\CY')\to \QCoh(U')$ is conservative, then so is 
$$(\wh{f}')^?:\QCoh(\CY)_{\CY'}\to \QCoh(U)_{U'}.$$

\medskip

We note that the functor $\imath_*:\QCoh(\CY')\to \QCoh(\CY)$ factors canonically as
$$\QCoh(\CY')\to \QCoh(\CY)_{\CY'}\overset{\wh\imath^{\QCoh}_!}\hookrightarrow \QCoh(\CY),$$
and the diagram
$$
\CD
\QCoh(U')  @>>>  \QCoh(U)_{U'}  @>{(\wh\imath_U)^{\QCoh}_!}>> \QCoh(U) \\
@V{f'_*}VV    @V{f_*}VV    @VV{f_*}V  \\
\QCoh(\CY')  @>>>  \QCoh(\CY)_{\CY'}  @>{\wh\imath^{\QCoh}_!}>> \QCoh(\CY)
\endCD
$$
commutes. Hence, the diagram 
$$
\CD
\QCoh(U')  @<<<  \QCoh(U)_{U'}  @<{(\wh\imath_U)^{\QCoh,!}}<< \QCoh(U) \\
@A{(f')^?}AA    @A{(\wh{f}')^?}AA    @VV{f^?}V  \\
\QCoh(\CY')  @<<<  \QCoh(\CY)_{\CY'}  @<{\wh\imath^{\QCoh,!}}<< \QCoh(\CY)
\endCD
$$
commutes, as well, where the left horizontal arrows are obtained by restricting the (discontinuous)
functor
$\imath^{\QCoh,!}$ (resp., $(\imath_U)^{\QCoh,!}$), right adjoint to $\imath_*$ (resp., $(\imath_U)_*$),
to  $\QCoh(\CY)_{\CY'}\subset \QCoh(\CY)$ (resp., $\QCoh(U)_{U'}\subset \QCoh(U)$). 

\medskip

Hence, in order to show that $(\wh{f}')^?$ is conservative, it is sufficient to show that the restriction
of the functor 
$$\imath^{\QCoh,!}: \QCoh(\CY)\to \QCoh(\CY')$$
to $\QCoh(\CY)_{\CY'}\subset \QCoh(\CY)$ is conservative. 

\medskip

I.e., we have to show that the essential image of the functor 
$$\imath_*:\QCoh(\CY')\to \QCoh(\CY)$$
generates $\QCoh(\CY)_{\CY'}$. 

\medskip

However, that latter is established in \cite[Sect. 2.6.8]{DrGa}. \footnote{Here the assumption that
$\CY$ be eventually coconnective is crucial.} 

\sssec{Step 3}

By Step 2, we can assume that $\CY$ is classical and reduced (take $\CY':=({}^{cl}\CY)_{red}$). 
By Step 1 and Noetherian induction,
it suffices to show that $\CY$ contains a non-empty open substack $\CY_0$, for which 
\propref{p:generation}(a) holds.

\medskip

Hence, by \cite[Proposition 2.3.4]{DrGa}, we can assume that $\CY$ admits a finite 
\'etale cover $\pi:\wt\CY \to \CY$, such that $\wt\CY$ is isomorphic to the quotient 
of a quasi-compact quasi-separated classical reduced scheme by an action of a classical
affine algebraic group of finite type.

\medskip

For $\CY$ of this form we will establish the assertion of \propref{p:generation}(a) directly. 
First, since the right adjoint of $\pi_*$ is isomorphic to $\pi^*$, we can replace $\CY$ by $\wt\CY$.
So, we can assume that $\CY$ itself is of the form $Y/G$, where $Y$ is a  
quasi-compact quasi-separated classical scheme and $G$ is an algebraic group of finite 
type.

\medskip

In this case, the assertion of \propref{p:generation}(a) follows from \thmref{t:quotient} and
\propref{p:? criterion}. Here is an argument independent of \thmref{t:quotient}: 

\medskip

We need to show that the essential image of the functor $f_*:\QCoh(U)\to \QCoh(\CY)$
generates $\QCoh(\CY)$, where $\CY=Y/G$. 

\medskip

With no restriction of generality, we can assume that $G$ is reductive. Let $p$ denote the
projection $Y\to Y/G$. Since $G$ is reductive, its regular representation contains the trivial
representation as a direct summand. Hence, any $\CF\in \QCoh(Y/G)$ is a direct summand
of $p_*\circ p^*(\CF)$. This reduces the assertion to the case when $Y/G$ is replaced by $Y$,
i.e., we can assume that $\CY$ is a quasi-compact quasi-separated classical reduced scheme $Y$.  
One further easily reduces to the case when $Y$ is affine. 

\medskip

Now, for a map $f:U\to Y$, where $Y$ is an affine classical reduced scheme, the assertion of \propref{p:generation}(a)
is easy: the full subcategory generated by the essential image of $f_*$ is a tensor
ideal, and since $f$ is faithfully flat, it contains the structure sheaf of the generic point of any irreducible 
subscheme of $Y$, and thus equals all of $\QCoh(Y)$. 

\medskip

Alternatively, the assertion of \propref{p:generation}(a) for any affine DG scheme follows from \propref{p:? criterion}.

\ssec{Passing from $\QCoh$ to $\IndCoh$}

In order to prove \propref{p:generation}(b), we will need to replace the categories $\QCoh(\CY)$ and $\QCoh(U)$ by
$\IndCoh(\CY)$ and $\IndCoh(U)$, respectively. The reason for this will be explained in Step 2 of the proof of
\propref{p:IndCoh generation}, which is an $\IndCoh$ version of \propref{p:generation}(b).

\sssec{}

Let us recall that for an algebraic stack $\CY$ locally almost of finite type, there exists a canonically defined
natural transformation
$$\Psi_\CY:\IndCoh(\CY)\to \QCoh(\CY),$$
see \cite[Sect. 11.7]{IndCoh}.

\medskip

Moreover, when $\CY$ is eventually coconnective, the functor $\Psi_\CY$ admits a left adjoint $\Xi_\CY$,
see \cite[Sect. 11.7.3]{IndCoh}. The interactions of the pair $(\Xi_\CY,\Psi_\CY)$ with the functors arising
from schematic maps between stacks are the same as those for maps between DG schemes, see 
\cite[Sect. 3]{IndCoh}.

\sssec{}

Consider the commutative diagram:
\begin{equation} \label{e:Psi dir image}
\CD
\QCoh(U)  @<{\Psi_U}<<  \IndCoh(U) \\
@V{f_*}VV    @VV{f^{\IndCoh}_*}V   \\
\QCoh(\CY)  @<{\Psi_\CY}<<  \IndCoh(\CY).
\endCD
\end{equation}
 
According to \cite[Proposition 3.6.7]{IndCoh}, since $f$ is fppf, the diagram
\begin{equation} \label{e:Xi dir image}
\CD
\QCoh(U)  @>{\Xi_U}>>  \IndCoh(U) \\
@V{f_*}VV    @VV{f^{\IndCoh}_*}V   \\
\QCoh(\CY)  @>{\Xi_\CY}>>  \IndCoh(\CY),
\endCD
\end{equation}
obtained from \eqref{e:Psi dir image} by passing to left adjoints along the horizontal arrows is also
commutative.

\sssec{}

Let $f^{\IndCoh,?}$ denote the (discontinuous) right adjoint to $f^{\IndCoh}_*$. By passing to right adjoints
along all arrows in \eqref{e:Xi dir image}, we obtain a commutative diagram
\begin{equation} \label{e:Psi ?}
\CD
\QCoh(U)  @<{\Psi_U}<<  \IndCoh(U) \\
@A{f^?}AA    @AA{f^{\IndCoh,?}}A   \\
\QCoh(\CY)  @<{\Psi_\CY}<<  \IndCoh(\CY).
\endCD
\end{equation}

Consider the monad $\on{Av}^{\IndCoh,U/\CY}_?:=f^{\IndCoh,?}\circ f^{\IndCoh}_*$ acting on $\IndCoh(U)$. We obtain that the functor
$\Psi_U$ intertwines the actions of the monads $\on{Av}^{\IndCoh}_?$ on $\IndCoh(U)$ and 
$\on{Av}^{U/\CY}_?:=f^?\circ f_*$ on $\IndCoh(\CY)$, respectively. 

\medskip

In particular, we obtain commutative diagrams

$$\xymatrix{
\on{Av}_?^{U/\CY}\mod(\QCoh(U))  \ar[d]^{\oblv}
& \on{Av}_?^{\IndCoh,U/\CY}\mod(\IndCoh(U)) \ar[l]_{\Psi^{\on{Av}}_U}   \ar[d]^{\oblv} \\
\QCoh(U)   \ar@<.7ex>[u]^{\ind} & \IndCoh(U)   \ar[l]_{\Psi_U}  \ar@<.7ex>[u]^{\ind}}$$
and 

$$
\CD
\on{Av}^{U/\CY}_?\mod(\QCoh(U))  @<{\Psi^{\on{Av}}_U}<<  \on{Av}^{\IndCoh,U/\CY}_?\mod(\IndCoh(U)) \\
@A{(f^?)^{\on{enh}}}AA    @AA{(f^{\IndCoh,?})^{\on{enh}}}A   \\
\QCoh(\CY)  @<{\Psi_\CY}<<  \IndCoh(\CY).
\endCD
$$

\sssec{}

In \propref{p:generation}(b) we need to show that the left adjoint $(f_*)^{\on{enh}}$ of $(f^?)^{\on{enh}}$
is conservative. 

\medskip

In the remainder of this subsection, we will deduce this assertion from the next one:

\begin{prop}  \label{p:IndCoh generation}
The functor 
$$(f^{\IndCoh}_*)^{\on{enh}}: \on{Av}^{\IndCoh,U/\CY}_?\mod(\IndCoh(U))\to \IndCoh(\CY),$$
left adjoint to $(f^{\IndCoh,?})^{\on{enh}}$, is conservative.
\end{prop}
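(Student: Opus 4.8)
The overall goal is the conservativity of the left adjoint $(f^{\IndCoh}_*)^{\on{enh}}$; I would first reformulate this as a cogeneration statement and then establish it by d\'evissage, with the passage to $\IndCoh$ entering in the treatment of the infinitesimal part. Using the $\IndCoh$-version of the monadic Beck--Chevalley condition (\lemref{l:BC?}) I identify $\on{Av}^{\IndCoh,U/\CY}_?\mod(\IndCoh(U))$ with $\on{Tot}(\IndCoh(U^\bullet/\CY)^?)\simeq |\IndCoh(U^\bullet/\CY)_*|$, so that $(f^{\IndCoh}_*)^{\on{enh}}$ becomes the canonical realization functor to $\IndCoh(\CY)$ and its right adjoint is $?$-pullback. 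Since $\ker\!\big((f^{\IndCoh}_*)^{\on{enh}}\big)=\{N:\Maps(N,(f^{\IndCoh,?})^{\on{enh}}(\CF))=0\ \forall\,\CF\}$, conservativity is exactly the assertion that the essential image of $(f^{\IndCoh,?})^{\on{enh}}$ is \emph{cogenerating}. This is the companion, for the left adjoint, of the generation statement \propref{p:generation}(a), and I would attack it by the same mechanism.

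Step 1 (d\'evissage). Following Steps 1--3 of the proof of \propref{p:generation}(a), I would choose a closed substack $\CY'\overset{\imath}\hookrightarrow\CY$ with smooth complementary open $\CY_0\overset{\jmath}\hookrightarrow\CY$ and run the $\IndCoh$-incarnation of the localization triangle \eqref{e:strange triangle}, using \propref{p:compl and supp} and \corref{c:compl and supp} to split every object into its restriction to $\CY_0$ and its completion $\wh\imath_*\wh\imath^*(-)$ along $\CY'$. The corresponding compatible functors on $U$ (as in the commuting squares constructed in the proof of \propref{p:generation}(a)) reduce the cogeneration statement on $\CY$ to the two pieces. For the open piece $\CY_0$ is smooth, whence $\IndCoh(\CY_0)\simeq\QCoh(\CY_0)$, and after the finite \'etale reduction of \cite[Proposition 2.3.4]{DrGa} one is on a global quotient where $1$-affineness, hence the desired conservativity, is already known by \thmref{t:quotient}. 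By Noetherian induction the closed piece is controlled by the analogous statement on the infinitesimal neighborhoods $\CY'_k$.

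Step 2 (the completion part; role of $\IndCoh$). Here is where $\IndCoh$ is essential. For a QCA stack locally almost of finite type $\IndCoh(\CY)$ is compactly generated, with compacts $\Coh(\CY)$ (from \cite{DrGa}, \cite{IndCoh}), whereas $\QCoh(\CY)$ is in general \emph{not} compactly generated; it is precisely this failure that blocks the direct argument over $\QCoh$ and forces the detour. Using \corref{c:desc compl}, which presents $\IndCoh(\CY^\wedge_{\CY'})\simeq\underset{k}{\underset{\longleftarrow}{lim}}\,\IndCoh(\CY'_k)$ and $\wh\imath_*\circ\wh\imath^*$ as the corresponding limit of $(\imath_k)_*\circ(\imath_k)^*$, I would test the cogeneration property against the compact generators $\Coh(\CY)$, reducing to each finite thickening $\CY'_k$, where affineness of the diagonal makes all $f^i$ affine and each $(f^i)^{\IndCoh}_*$ t-exact. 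The bounded cohomological amplitude of the discontinuous right adjoint (the $\IndCoh$-analog of \propref{p:bdd dim}) then guarantees that the relevant realizations converge and commute with the left-complete truncations, exactly as in the proof of \thmref{t:alg}.

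I expect Step 2 to be the main obstacle. The difficulty is not formal: conservativity of a \emph{left} adjoint is a cogeneration condition, which does not follow from generation alone, and the right adjoint $(f^{\IndCoh,?})^{\on{enh}}$ is discontinuous. The crux is therefore the convergence problem of interchanging the geometric realization of the bar resolution with the t-structure truncations on $\IndCoh(U)$ and on the thickenings $\CY'_k$; this is where the compact generation of $\IndCoh$ (absent for $\QCoh$) and the boundedness of $f^{\IndCoh,?}$ must be combined, and it is the genuine content that \propref{p:generation}(b) will then inherit through the $\Psi/\Xi$-comparison diagrams.
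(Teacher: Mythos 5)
Your overall skeleton---recasting conservativity of the left adjoint $(f^{\IndCoh}_*)^{\on{enh}}$ as the statement that the essential image of $(f^{\IndCoh,?})^{\on{enh}}$ co-generates, then running an open/closed d\'evissage with Noetherian induction, ending in the smooth case (where $\IndCoh=\QCoh$) and the global quotient case via \cite[Proposition 2.3.4]{DrGa} and \thmref{t:quotient}---agrees with Steps 1 and 3 of the paper's proof. The genuine gap is in your Step 2, the piece supported on the closed substack. You have misidentified the mechanism: the paper never passes through the infinitesimal neighborhoods $\CY'_k$ (that technology, \corref{c:desc compl}---which, note, is stated for $\QCoh$, not $\IndCoh$---belongs to the proof of \propref{p:bdd dim} and \lemref{l:on completion}, a different statement). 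Instead, the paper reduces the co-generation claim for $\wh{\on{Av}}_?^{\IndCoh,U'/\CY'}\mod(\IndCoh(U)_{U'})$ \emph{directly} to the one for $\on{Av}^{\IndCoh,U'/\CY'}_?\mod(\IndCoh(U'))$, i.e.\ to the reduced closed substack itself. The lever is that in $\IndCoh$ the functor $\imath^!$ is \emph{continuous}, hence admits a further (discontinuous) right adjoint $\imath_?$---this, and not compact generation, is the footnoted reason for trading $\QCoh$ for $\IndCoh$. One checks, by passing to right adjoints in base-change diagrams, that $(\imath_U)_?$ intertwines the relevant monads, yielding $(\imath_U^{\on{Av}})_?$ compatible with the enhanced $?$-pullbacks; its image co-generates because it admits a left adjoint $(\imath_U^{\on{Av}})^!$ lifting $(\imath_U)^!$, which is conservative by \cite[Proposition 4.1.7(a)]{IndCoh}---precisely the $\IndCoh$ statement whose $\QCoh$ analogue (\cite[Sect. 2.6.8]{DrGa}) is what forces the eventual coconnectivity hypothesis in \propref{p:generation}(a).

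Even taken on its own terms, your Step 2 does not close: ``testing the co-generation property against the compact generators $\Coh(\CY)$'' is not a meaningful operation, since compact generation detects \emph{generation} (vanishing of the right orthogonal), whereas conservativity of a left adjoint is vanishing of the \emph{left} orthogonal of the image of the right adjoint, and mapping out of compact objects gives no handle on that. Likewise, the amplitude bound of \propref{p:bdd dim} (whose $\IndCoh$ analogue you assert but do not establish) and the truncation/left-completeness convergence argument are the ingredients by which the paper proves, in the proof of \thmref{t:alg}, that the \emph{counit} $(f_*)^{\on{enh}}\circ (f^?)^{\on{enh}}\to \on{Id}$ is an isomorphism \emph{after} conservativity is already known; they do not produce conservativity itself. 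Finally, even granting a reduction to the thickenings $\CY'_k$, each $\CY'_k$ is non-reduced, so finishing from there still requires passing from a stack to its reduction---i.e.\ exactly the adjunction mechanism described above, which your proposal never supplies.
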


\sssec{Proof of \propref{p:generation}(b)}

We need to show that the essential image of the functor $(f^?)^{\on{enh}}$ 
\emph{co-generates} $\on{Av}^{U/\CY}_?\mod(\QCoh(U))$, i.e., generates under the operation of 
taking \emph{limits}.

\medskip

The assertion of \propref{p:IndCoh generation} is equivalent to the fact that the essential image of 
the functor $(f^{\IndCoh,?})^{\on{enh}}$ co-generates $\on{Av}^{\IndCoh,U/\CY}_?\mod(\IndCoh(U))$.

\medskip

Hence, it is sufficient to show that the functor $\Psi^{\on{Av}}_U$ is essentially surjective. We will show that the 
functor $\Psi^{\on{Av}}_U$ admits a fully faithful \emph{right} adjoint. 

\medskip

First, since the functor $\Psi_U$ is a co-localization (see \cite[Proposition 1.5.3]{IndCoh}) and is continuous, it admits a right adjoint,
denoted $\Phi_U$, which is also fully faithful. Hence, it suffices to show that the functor $\Phi_U$
intertwines the actions of the monads $\on{Av}^{U/\CY}_?$ on $\QCoh(U)$ and
$\on{Av}^{\IndCoh,U/\CY}_?$ on $\IndCoh(U)$, respectively. For that, it is sufficient to show that
the diagrams 
$$
\CD
\QCoh(U)  @>{\Phi_U}>> \IndCoh(U) \\
@V{f_*}VV   @VV{f^{\IndCoh}_*}V   \\
\QCoh(\CY)  @>{\Phi_\CY}>> \IndCoh(\CY) 
\endCD
$$
and 
$$
\CD
\QCoh(U)  @>{\Phi_U}>> \IndCoh(U) \\
@A{f^?}AA   @AA{f^{\IndCoh,?}}A   \\
\QCoh(\CY)  @>{\Phi_\CY}>> \IndCoh(\CY) 
\endCD
$$
commute. 

\medskip

The commutation of these diagrams is obtained by passing to right adjoints
in the diagrams 
$$
\CD
\QCoh(U)  @<{\Psi_U}<< \IndCoh(U) \\
@A{f^*}AA   @AA{f^{\IndCoh,*}}A   \\
\QCoh(\CY)  @<{\Psi_\CY}<< \IndCoh(\CY) 
\endCD
$$
(see \cite[Proposition 3.5.4]{IndCoh}) and \eqref{e:Psi dir image}, respectively. 

\qed

\ssec{Proof of the $\IndCoh$-version of \propref{p:generation}(b)}

In this subsection we will prove \propref{p:IndCoh generation}. We need to show that the essential image of the functor
$(f^{\IndCoh,?})^{\on{enh}}$ co-generates $\on{Av}^{\IndCoh,U/\CY}_?\mod(\IndCoh(U))$. 

\sssec{Step 1}

Let $\CY'\overset{\imath}\hookrightarrow \CY$ be a closed substack, and let $\CY_0\overset{\jmath}\hookrightarrow \CY$
be the complementary open. Consider the corresponding adjoint pair of functors
$$\jmath^{\IndCoh,*}:\IndCoh(\CY) \rightleftarrows \IndCoh(\CY_0):\jmath^{\IndCoh}_*.$$

Recall the notation
$$\IndCoh(\CY)_{\CY'}:=\on{ker}(\jmath^{\IndCoh,*}),$$
see \cite[Sect. 4.1.2]{IndCoh}. Let
$$\wh\imath_!:\IndCoh(\CY)_{\CY'}\rightleftarrows \IndCoh(\CY):\wh\imath^!$$
be the resulting adjoint pair of functors. Let $\wh\imath_?$ denote the (discontinuous)
right adjoint of $\wh\imath^!$. We will use similar notations for the corresponding objects on $U$.

\medskip

We have a commutative diagram

$$
\xymatrix{
\IndCoh(U_0) \ar[r]_{(\jmath_U)^{\IndCoh}_*}  \ar[d]_{(f_0)^{\IndCoh}_*} & \IndCoh(U) 
 \ar@<-.7ex>[l]_{(\jmath_U)^{\IndCoh,*}}   \ar[d]^{f^{\IndCoh}_*}  \\
\IndCoh(\CY_0)  \ar[r]_{\jmath^{\IndCoh}_*}  & \IndCoh(\CY)  \ar@<-.7ex>[l]_{\jmath^{\IndCoh,*}} }
$$
and the diagram
$$
\CD
\IndCoh(U_0) @>{(\jmath_U)_*}>> \IndCoh(U) \\
@A{f_0^{\IndCoh,*}}AA  @AA{f^{\IndCoh,*}}A \\
\IndCoh(\CY_0) @>{\jmath_*}>> \IndCoh(\CY)
\endCD
$$
is also commutative, see \cite[Lemma 3.6.9]{IndCoh}.

\medskip

Hence, by passing to the right adjoint functors, we obtain that both functors $(\jmath_U)^{\IndCoh}_*$ and $(\jmath_U)^{\IndCoh,?}$
intertwine the monads $\on{Av}_?^{\IndCoh,U/\CY}$ and $\on{Av}_?^{\IndCoh,U_0/\CY_0}$
acting on $\IndCoh(U)$ and $\IndCoh(U_0)$, respectively. 

\medskip

In addition, we obtain a monad $\wh{\on{Av}}^{U'/\CY'}_?$, acting on  $\IndCoh(U)_{U'}$,
such that both functors $(\wh\imath_U)^!$ and $(\wh\imath_U)_?$ intertwine the monads
$\on{Av}_?^{\IndCoh,U/\CY}$ and $\wh{\on{Av}}_?^{\IndCoh,U'/\CY'}$
acting on $\IndCoh(U)$ and $\IndCoh(U)_{U'}$, respectively. 

\medskip

Thus, we obtain a localization sequence of categories
$$
\xymatrix{
\on{Av}_?^{\IndCoh,U_0/\CY_0}\mod  \ar[r]
& \on{Av}_?^{\IndCoh,U/\CY}\mod  \ar@<.7ex>[l]
\ar[r]
& \wh{\on{Av}}_?^{\IndCoh,U'/\CY'}\mod, \ar@<.7ex>[l]
}$$
which makes the diagram 
$$
\xymatrix{
\IndCoh(\CY_0)   \ar[r]^{\jmath^{\IndCoh}_*}  \ar[d]_{(f_0^{\IndCoh,?})^{\on{enh}}}  & \IndCoh(\CY)
\ar[r]^{\wh\imath^!} \ar@<.7ex>[l]^{\jmath^{\IndCoh,?}}  \ar[d]_{(f^{\IndCoh,?})^{\on{enh}}}    &  \IndCoh(\CY)_{\CY'} \ar@<.7ex>[l]^{\wh\imath_?}
\ar[d]^{((\wh{f'})^{\IndCoh,?})^{\on{enh}}} \\
\on{Av}_?^{\IndCoh,U_0/\CY_0}\mod  \ar[r] \ar[d]_{\oblv}
& \on{Av}_?^{\IndCoh,U/\CY}\mod  \ar@<.7ex>[l]
\ar[r] \ar[d]_{\oblv}
& \wh{\on{Av}}_?^{\IndCoh,U'/\CY'}\mod \ar@<.7ex>[l]  \ar[d]^{\oblv} \\
\IndCoh(U_0) \ar[r]^{(\jmath_U)^{\IndCoh}_*}  & \IndCoh(U) 
\ar[r]^{(\wh\imath_U)^!} \ar@<.7ex>[l]^{(\jmath_U)^{\IndCoh,?}}   
&  \IndCoh(U)_{U'} \ar@<.7ex>[l]^{(\wh\imath_U)_?}}
$$
commute. 

\medskip

Therefore we obtain that if the essential image of $(f_0^{\IndCoh,?})^{\on{enh}}$ co-generates 
the category $\on{Av}^{\IndCoh,U_0/\CY_0}_?\mod(\IndCoh(U_0))$, and the essential image of
$((\wh{f'})^{\IndCoh,?})^{\on{enh}}$ co-generates the category $\wh{\on{Av}}_?^{\IndCoh,U'/\CY'}\mod(\IndCoh(U)_{U'})$,
then the essential image of $(f^{\IndCoh,?})^{\on{enh}}$ co-generates 
$\on{Av}^{\IndCoh,U/\CY}_?\mod(\IndCoh(U))$. 

\sssec{Step 2}

We will now show that if the essential image of 
$$(f')^{\IndCoh,?})^{\on{enh}}:\IndCoh(\CY')\to \on{Av}^{\IndCoh,U'/\CY'}_?\mod(\IndCoh(U'))$$
co-generates  $\on{Av}^{\IndCoh,U'/\CY'}_?\mod(\IndCoh(U'))$, then the essential image of
$$((\wh{f'})^{\IndCoh,?})^{\on{enh}}: \IndCoh(\CY)_{\CY'}\to \wh{\on{Av}}_?^{\IndCoh,U'/\CY'}\mod(\IndCoh(U)_{U'})$$
co-generates $\wh{\on{Av}}_?^{\IndCoh,U'/\CY'}\mod(\IndCoh(U)_{U'})$. 

\medskip

We have a commutative diagram
\begin{equation} \label{e:IndCoh res1}
\CD
\IndCoh(\CY)_{\CY'}  @>{\imath^!}>>   \IndCoh(\CY') \\ 
@A{(\wh{f'})^{\IndCoh}_*}AA    @AA{(f')^{\IndCoh}_*}A   \\ 
\IndCoh(U)_{U'}  @>{(\imath_U)^!}>>   \IndCoh(U') 
\endCD
\end{equation}
(which expresses the base change isomorphism)
and the commutative diagram
\begin{equation} \label{e:IndCoh res2}
\CD
\IndCoh(\CY)_{\CY'}  @>{\imath^!}>>   \IndCoh(\CY') \\ 
@V{(\wh{f'})^{\IndCoh,*}}VV    @VV{(f')^{\IndCoh,*}}V   \\ 
\IndCoh(U)_{U'}  @>{(\imath_U)^!}>>   \IndCoh(U'),
\endCD
\end{equation}
see \cite[Proposition 7.1.6]{IndCoh}.

\medskip

Note that since the functor $\imath^!$ is continuous, it admits a (discontinuous)
right adjoint \footnote{This manipulation is the reason for replacing $\QCoh$ 
by $\IndCoh$ in the proof of \propref{p:generation}(b).}, denoted 
$\imath_?:\IndCoh(\CY)_{\CY'}\to \IndCoh(\CY')$.
By passing to right adjoints in the diagrams \eqref{e:IndCoh res1} and \eqref{e:IndCoh res2}, we obtain commutative diagrams
$$
\CD
\IndCoh(\CY)_{\CY'}  @<{\imath_?}<<   \IndCoh(\CY') \\ 
@V{(\wh{f'})^{\IndCoh,?}}VV    @VV{(f')^{\IndCoh,?}}V   \\ 
\IndCoh(U)_{U'}  @<{(\imath_U)_?}<<   \IndCoh(U') 
\endCD
$$
and
$$
\CD
\IndCoh(\CY)_{\CY'}  @<{\imath_?}<<   \IndCoh(\CY') \\ 
@A{(\wh{f'})^{\IndCoh}_*}AA    @AA{(f')^{\IndCoh}_*}A   \\ 
\IndCoh(U)_{U'}  @<{(\imath_U)_?}<<   \IndCoh(U').
\endCD
$$

\medskip

In particular, we obtain that the functor $(\imath_U)_?$ intertwines monads 
$\on{Av}^{\IndCoh,U'/\CY'}_?$ acting on $\IndCoh(U')$ and $\wh{\on{Av}}_?^{\IndCoh,U'/\CY'}$ acting on $\IndCoh(U)_{U'}$, and 
we have a commutative diagram
$$
\CD
 \IndCoh(\CY)_{\CY'}    @<{\imath_?}<<   \IndCoh(\CY') \\
@V{((\wh{f'})^{\IndCoh,?})^{\on{enh}}}VV   @VV{((f')^{\IndCoh,?})^{\on{enh}}}V   \\
\wh{\on{Av}}_?^{\IndCoh,U'/\CY'}\mod(\IndCoh(U)_{U'})  @<{(\imath_U^{\on{Av}})_?}<<  \on{Av}^{\IndCoh,U'/\CY'}_?\mod(\IndCoh(U')).
\endCD
$$

\medskip

Hence, to carry out Step 2, it remains to show that the essential image of $(\imath_U^{\on{Av}})_?$ co-generates 
$\wh{\on{Av}}_?^{\IndCoh,U'/\CY'}\mod(\IndCoh(U)_{U'})$. For this, it suffices to show that $(\imath_U^{\on{Av}})_?$
admits a left adjoint, to be denoted $(\imath_U^{\on{Av}})^!$, which is conservative.

\medskip

We claim that $(\imath_U^{\on{Av}})^!$ exists and makes the diagram
$$
\CD
\wh{\on{Av}}_?^{\IndCoh,U'/\CY'}\mod(\IndCoh(U)_{U'})  @>{(\imath_U^{\on{Av}})^!}>>  \on{Av}^{\IndCoh,U'/\CY'}_?\mod(\IndCoh(U')) \\
@V{\oblv}VV  @VV{\oblv}V  \\
\IndCoh(U)_{U'} @>{(\imath_U)^!}>> \IndCoh(U')
\endCD
$$
commutative. This would also imply that $(\imath_U^{\on{Av}})^!$ is conservative, since $(\imath_U)^!$ is conservative,
by \cite[Proposition 4.1.7(a)]{IndCoh}.

\medskip

To prove the existence of $(\imath_U^{\on{Av}})^!$ with the required property, it suffices to show that the functor $(\imath_U)^!$
intertwines the monads $\wh{\on{Av}}_?^{\IndCoh,U'/\CY'}$ acting on $\IndCoh(U)_{U'}$ and 
$\on{Av}^{\IndCoh,U'/\CY'}_?$ acting on $\IndCoh(U')$.

\medskip

The latter follows from the commutativity of the diagram \eqref{e:IndCoh res2} and the diagram
$$
\CD
\IndCoh(\CY)_{\CY'} @>{\imath^!}>> \IndCoh(\CY')  \\
@V{(\wh{f}')^{\IndCoh,?}}VV    @VV{(f')^{\IndCoh,?}}V   \\
\IndCoh(U)_{U'} @>{(\imath_U)^!}>> \IndCoh(U'), \\
\endCD
$$
which is obtained by passing to right adjoints in the commutative diagram
$$
\CD
\IndCoh(\CY)_{\CY'} @<{\imath_*}<< \IndCoh(\CY')  \\
@A{(\wh{f}')^{\IndCoh}_*}AA    @AA{(f')^{\IndCoh}_*}A   \\
\IndCoh(U)_{U'} @<{(\imath_U)_*}<< \IndCoh(U'). \\
\endCD
$$

\sssec{Step 3}

By Step 2, we can assume that $\CY$ is classical and reduced (take $\CY':=({}^{cl}\CY)_{red}$). 
By Step 1 and Noetherian induction,
it suffices to show that $\CY$ contains a non-empty open substack $\CY_0$, for which 
\propref{p:IndCoh generation} holds. 

\medskip

In particular, we can replace $\CY$ by its open substack, which is smooth. By assumption,
the morphism $f:U\to \CY$ is smooth, so $U$ is smooth as well. 
Now, for smooth schemes, there is no difference between $\IndCoh$ and $\QCoh$,
and the conclusion of \propref{p:IndCoh generation} is identical to that of \propref{p:generation}(b).

\medskip

Thus, it suffices to show that any classical reduced $\CY$ contains a non-empty open that satisfies the 
conclusion of \propref{p:generation}(b). 

\medskip

The rest of the proof is the same as that of Step 3 in the proof of \propref{p:generation}(a):  

\medskip

We reduce the assertion to the case when $\CY$ is of the form $Y/G$, where $Y$ is a quasi-compact quasi-separated
scheme, and $G$ is a classical affine algebraic group of finite type. However, $\CY$ if this form 
satisfies the conclusion of \propref{p:generation}(b) because of \thmref{t:quotient}
and \propref{p:? criterion}.

\bigskip

\centerline{\bf Part III: (DG) Indschemes, Classifying Prestacks and De Rham Prestacks} 

\section{DG indschemes}  \label{s:indsch}

\ssec{A key proposition}

\sssec{}

Recall that $\on{PreStk}_{\on{laft}}$ denotes
the full subcategory of $\on{PreStk}$ formed by prestacks \emph{locally almost of finite type}, and that on this category we have
a well-defined functor 
$$\IndCoh_{\on{PreStk}_{\on{laft}}}:(\on{PreStk}_{\on{laft}})^{\on{op}}\to \StinftyCat_{\on{cont}},$$
see \cite[Sect. 10.1]{IndCoh}.

\medskip

In addition, we recall that the functor $\IndCoh_{\on{PreStk}_{\on{laft}}}$, comes equipped with a natural transformation, denoted 
$$\Upsilon_{\on{PreStk}_{\on{laft}}}:\QCoh_{\on{PreStk}_{\on{laft}}}\to \IndCoh_{\on{PreStk}_{\on{laft}}},$$
see \cite[Sect. 10.3]{IndCoh}.

\medskip

For an individual object $\CY\in \on{PreStk}_{\on{laft}}$, the corresponding functor
$$\Upsilon_\CY:\QCoh(\CY)\to \IndCoh(\CY)$$ 
is given by 
$$\CF\in \QCoh(\CY) \mapsto \CF\otimes\omega_\CY\in  \IndCoh(\CY),$$
where $\omega_\CY\in \IndCoh(\CY)$ is the dualizing object, and where $\otimes$ is the canonical
action on $\QCoh(\CY)$ on $\IndCoh(\CY)$. 

\sssec{}

A key technical tool that will allow us to establish the results pertaining to 1-affineness
of DG indschemes (as well as formal classifying spaces and de Rham prestacks) is
the following assetion:

\begin{prop} \label{p:main prop} 
Let $\CY\in \on{PreStk}_{\on{laft}}$ be such that the unctor $\Upsilon_\CY:\QCoh(\CY)\to \IndCoh(\CY)$
is an equivalence. Suppose also that $\CY$ can be written as a colimit of $\underset{i\in I}{\underset{\longrightarrow}{colim}}\, \CZ_i$
with $\CZ_i\in \on{PreStk}_{\on{laft}}$, where $I$ is some index category, such that:

\begin{enumerate}

\item For every $i$, the functor $\bLoc_{\CZ_i}$ is fully faithful;

\item For every $i$, the functor $\Upsilon_{\CZ_i}:\QCoh(\CZ_i)\to \IndCoh(\CZ_i)$ is fully faithful and admits a continuous right adjoint,
compatible with the action of $\QCoh(\CZ_i)$; 

\item For every arrow $(i\to j)\in I$ and the corresponding map $f_{i,j}:\CZ_i\to \CZ_j$, the
functor $f_{i,j}^!:\IndCoh(\CZ_j)\to \IndCoh(\CZ_i)$ admits a left adjoint, compatible with the action of $\QCoh(\CZ_j)$; 

\end{enumerate}

Then the functor $\bLoc_\CY$ is fully faithful.
\end{prop}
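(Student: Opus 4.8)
The plan is to identify the unit map $\bC\to \bGamma^{\on{enh}}(\CY,\bLoc_\CY(\bC))$ with an explicit map into a limit, and then to recognize that limit --- via the equivalence $\Upsilon_\CY$ and the adjunctions of hypotheses (2) and (3) --- as a colimit that recovers $\bC$. Write $f_i\colon \CZ_i\to \CY$ and $f_{i,j}\colon \CZ_i\to \CZ_j$ for the structure maps. Since $\on{ShvCat}(-)$ sends colimits to limits and $\CZ\mapsto \bGamma(\CZ,-)$ does the same in the $\CZ$-variable, we have $\bGamma(\CY,\bLoc_\CY(\bC))\simeq \underset{i}{\underset{\longleftarrow}{lim}}\,\bGamma(\CZ_i,\coRres_{f_i}(\bLoc_\CY(\bC)))$. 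By the commutation \eqref{e:commute 1}, $\coRres_{f_i}(\bLoc_\CY(\bC))\simeq \bLoc_{\CZ_i}(\Iind_{f_i}(\bC))$, and since $\bLoc_{\CZ_i}$ is fully faithful by hypothesis (1), applying $\bGamma(\CZ_i,-)$ returns $\bC_i^{\QCoh}:=\QCoh(\CZ_i)\underset{\QCoh(\CY)}\otimes \bC$. Thus the unit becomes a map $u\colon \bC\to \underset{i}{\underset{\longleftarrow}{lim}}\,\bC_i^{\QCoh}$, with transition functors $f_{i,j}^*\otimes \on{Id}_\bC$ and $i$-th component the canonical map $f_i^*\otimes \on{Id}_\bC$; the proposition amounts to showing $u$ is an equivalence.

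Next I would set up the $\IndCoh$ side. Because $\IndCoh$ on $\on{PreStk}_{\on{laft}}$ sends colimits to limits along $!$-pullback, $\IndCoh(\CY)\simeq \underset{i}{\underset{\longleftarrow}{lim}}\,\IndCoh(\CZ_i)$; hypothesis (3) provides left adjoints to the $f_{i,j}^!$, so by \cite[Lemma 1.3.3]{DGCat} this limit is also the colimit $\underset{i}{\underset{\longrightarrow}{colim}}\,\IndCoh(\CZ_i)$ along those left adjoints, with $i$-th structure functor right adjoint to $f_i^!$. All functors and adjunctions here are $\QCoh(\CY)$-linear --- by $\QCoh$-linearity of $!$-pullback and the compatibility built into (3) --- so I may apply the continuous functor $-\underset{\QCoh(\CY)}\otimes \bC$, which preserves the relevant adjunctions. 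Setting $\bC_i^{\IndCoh}:=\IndCoh(\CZ_i)\underset{\QCoh(\CY)}\otimes \bC$, I obtain $\underset{i}{\underset{\longleftarrow}{lim}}\,\bC_i^{\IndCoh}\simeq \underset{i}{\underset{\longrightarrow}{colim}}\,\bC_i^{\IndCoh}\simeq \IndCoh(\CY)\underset{\QCoh(\CY)}\otimes \bC$, and the latter is $\simeq \bC$ since $\Upsilon_\CY$ is an equivalence. Moreover the $i$-th projection of this limit is $f_i^!\otimes \on{Id}_\bC$.

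The final step is to glue the two pictures. The natural transformation $\Upsilon$ is compatible with pullbacks, i.e.\ $\Upsilon_{\CZ_i}\circ f_i^*\simeq f_i^!\circ \Upsilon_\CY$ and $\Upsilon_{\CZ_i}\circ f_{i,j}^*\simeq f_{i,j}^!\circ \Upsilon_{\CZ_j}$, so $\Upsilon\otimes \on{Id}_\bC$ is a map from the $\bC_\bullet^{\QCoh}$-diagram to the $\bC_\bullet^{\IndCoh}$-diagram. Tensoring the fully faithful $\Upsilon_{\CZ_i}$ (with its $\QCoh(\CZ_i)$-linear continuous right adjoint, hypothesis (2)) by $\bC$ over $\QCoh(\CY)$ keeps it fully faithful, so the induced map $\underset{i}{\underset{\longleftarrow}{lim}}(\Upsilon_{\CZ_i}\otimes \on{Id}_\bC)$ is fully faithful. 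On the other hand, $\Upsilon_{\CZ_i}\circ f_i^*\simeq f_i^!\circ \Upsilon_\CY$ shows that the composite of $u$ with this limit map has $i$-th component $f_i^!\otimes \on{Id}_\bC$, i.e.\ it is precisely the equivalence $\bC\simeq \underset{i}{\underset{\longleftarrow}{lim}}\,\bC_i^{\IndCoh}$ of the previous paragraph. A fully faithful functor whose postcomposition with $u$ is an equivalence is automatically essentially surjective, hence an equivalence; and then $u$ is an equivalence as well, proving that $\bLoc_\CY$ is fully faithful.

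I expect the main obstacle to be the bookkeeping in this last step: checking that full faithfulness (hypothesis (2)) and the limit--colimit duality (hypothesis (3)) genuinely survive the base change $-\underset{\QCoh(\CY)}\otimes \bC$ --- which rests on all the relevant adjunctions being $\QCoh(\CY)$-linear --- and that the compatibility of $\Upsilon$ with pullback matches the components of $u$ with the projections $f_i^!\otimes \on{Id}_\bC$ of the $\IndCoh$-limit. Once these compatibilities are verified, the two-out-of-three argument for $u$ is purely formal.
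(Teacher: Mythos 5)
Your proposal is correct and takes essentially the same route as the paper's own proof: you identify $\bGamma(\CY,\bLoc_\CY(\bC))$ with $\underset{i}{\underset{\longleftarrow}{lim}}\,\bigl(\QCoh(\CZ_i)\underset{\QCoh(\CY)}\otimes \bC\bigr)$, compare it via $\Upsilon$ with the analogous $\IndCoh$-limit, collapse the latter to $\bC$ using the limit--colimit swap furnished by hypothesis (3) together with the equivalence $\Upsilon_\CY$, and conclude from the full faithfulness of each $\Upsilon_{\CZ_i}\otimes \on{Id}_\bC$ (hypothesis (2)). The only cosmetic difference is that the paper phrases the endgame as ``$\bC$ is a retract, and the comparison functor $\Upsilon_{\CY,\bC}$ is fully faithful,'' whereas you phrase it as a two-out-of-three argument; these are the same conclusion.
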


The rest of this subsection is devoted to the proof of \propref{p:main prop}.
 
\sssec{Step 1} 

Let $\CY\simeq \underset{i}{\underset{\longrightarrow}{colim}}\, \CZ_i$ be a presentation of $\CY$
as in \propref{p:main prop}. 

\medskip

For $\bC\in \QCoh(\CY)\mmod$, we have:
$$\bGamma(\CY,\bLoc_\CY(\bC))\simeq \underset{i}{\underset{\longleftarrow}{lim}}\, \bGamma(\CZ_i,\bLoc_\CY(\bC)).$$

Note, however, that
$$\bLoc_\CY(\bC)|_{\CZ_i}\simeq \bLoc_{\CZ_i}(\QCoh(\CZ_i)\underset{\QCoh(\CY)}\otimes \bC).$$

Hence, the assumption that the functor $\bLoc_{\CZ_i}$ is fully faithful implies that
$$ \bGamma(\CZ_i,\bLoc_\CY(\bC))\simeq \QCoh(\CZ_i)\underset{\QCoh(\CY)}\otimes \bC.$$

\medskip

Hence, we conclude that
$$\bGamma(\CY,\bLoc_\CY(\bC))\simeq \underset{i}{\underset{\longleftarrow}{lim}}\,
\left(\QCoh(\CZ_i)\underset{\QCoh(\CY)}\otimes \bC\right).$$

\sssec{Step 2}

Consider also the category 
\begin{equation} \label{e:IndCoh values}
\bGamma^!(\CY,\bLoc_\CY(\bC)):=\underset{i}{\underset{\longleftarrow}{lim}}\, \left(\IndCoh(\CZ_i)\underset{\QCoh(\CY)}\otimes \bC\right),
\end{equation} 
where the functors
$$\IndCoh(\CZ_j)\underset{\QCoh(\CY)}\otimes \bC\to \IndCoh(\CZ_i)\underset{\QCoh(\CY)}\otimes \bC$$
are $f_{i,j}^!\otimes \on{id}_\bC$, where for an arrow $(i\to j)\in I$, we denote by $f_{i,j}$ the corresponding map $\CZ_i\to \CZ_j$.

\medskip

Since in the formation of \eqref{e:IndCoh values}, the transition functors admit
left adjoints, by \cite[Lemma 1.3.3]{DGCat}, we can rewrite $\bGamma^!(\CY,\bLoc_\CY(\bC))$ also as 
$$\underset{i}{\underset{\longrightarrow}{colim}}\, \left(\IndCoh(\CZ_i)\underset{\QCoh(\CY)}\otimes \bC\right),$$
where the transition functors $$\IndCoh(\CZ_i)\underset{\QCoh(\CY)}\otimes \bC\to \IndCoh(\CZ_j)\underset{\QCoh(\CY)}\otimes \bC$$
are now $(f_{i,j}^!)^L\otimes \on{Id}_\bC$. 

\medskip

Commuting the colimit with the tensor product we obtain:
\begin{multline*}
\bGamma^!(\CY,\bLoc_\CY(\bC))\simeq \underset{i}{\underset{\longrightarrow}{colim}}\, \left(\IndCoh(\CZ_i)\underset{\QCoh(\CY)}\otimes \bC\right)
\simeq \\
\simeq \left(\underset{i}{\underset{\longrightarrow}{colim}}\, \IndCoh(\CZ_i)\right) \underset{\QCoh(\CY)}\otimes \bC 
\simeq \left(\underset{i}{\underset{\longleftarrow}{lim}}\, \IndCoh(\CZ_i)\right) \underset{\QCoh(\CY)}\otimes \bC \simeq
\IndCoh(\CY)\underset{\QCoh(\CY)}\otimes \bC.
\end{multline*}

\sssec{Step 3}

Recall now the natural transformation $\Upsilon_{\on{PreStk}_{\on{laft}}}$. 

\medskip

For any $\CY\in \on{PreStk}_{\on{laft}}$, this gives rise to a functor, denoted $\Upsilon_{\CY,\bC}$, 
\begin{multline*}
\bGamma(\CY,\bLoc_\CY(\bC))\simeq \underset{i}{\underset{\longleftarrow}{lim}}\,
\left(\QCoh(\CZ_i)\underset{\QCoh(\CY)}\otimes \bC\right)\to \\
\to \underset{i}{\underset{\longleftarrow}{lim}}\,
\left(\IndCoh(\CZ_i)\underset{\QCoh(\CY)}\otimes \bC\right)\simeq\bGamma^!(\CY,\bLoc_\CY(\bC)),
\end{multline*}
and we have a commutative diagram
\begin{equation} \label{e:IndCoh QCoh diagram}
\CD
\bC @>{\sim}>> \QCoh(\CY)\underset{\QCoh(\CY)}\otimes \bC @>>>  \bGamma(\CY,\bLoc_\CY(\bC)) \\
& & @V{\Upsilon_{\CY}\otimes \on{Id}_\bC}VV     @VV{\Upsilon_{\CY,\bC}}V    \\
& & \IndCoh(\CY)\underset{\QCoh(\CY)}\otimes \bC  @>{\sim}>>  \bGamma^!(\CY,\bLoc_\CY(\bC)).
\endCD
\end{equation}

By assumption, $\Upsilon_{\CY}$ is an equivalence, and hence so is the left vertical arrow in 
diagram \eqref{e:IndCoh QCoh diagram}.

\medskip

Thus, we obtain that $\bC$ is a retract of $\bGamma(\CY,\bLoc_\CY(\bC))$. To prove the proposition,
it remains to show that the functor $\Upsilon_{\CY,\bC}$ is fully faithful.

\sssec{Step 4}

To prove that $\Upsilon_{\CY,\bC}$ is fully faithful, it is sufficient to show that
each of the functors 
$$\Upsilon_{\CZ_i}\otimes \on{Id}_{\bC}:\QCoh(\CZ_i)\underset{\QCoh(\CY)}\otimes \bC
\to \IndCoh(\CZ_i)\underset{\QCoh(\CY)}\otimes \bC$$
is fully faithful.

\medskip

However, this follows from the fact that $\Upsilon_{\CZ_i}$ is fully faithful and admits a continuous
right adjoint, compatible with the action of $\QCoh(\CZ_i)$. 
Indeed, in this case, the functor $\Upsilon_{\CZ_i}\otimes \on{Id}_{\bC}$ admits a right
adjoint, given by $(\Upsilon_{\CZ_i})^R\otimes \on{Id}_{\bC}$, and the unit of the adjunction
$$\on{Id}_{\QCoh(\CZ_i)\underset{\QCoh(\CY)}\otimes \bC}\to 
\left((\Upsilon_{\CZ_i})^R\otimes \on{Id}_{\bC}\right)\circ \left(\Upsilon_{\CZ_i}\otimes \on{Id}_{\bC}\right)
\simeq \left((\Upsilon_{\CZ_i})^R\circ \Upsilon_{\CZ_i}\right) \otimes \on{Id}_{\bC}$$
is an isomorphism.

\qed

\ssec{Fully faithfulness of $\bLoc$}  \label{ss:indsch}

In this subsection we will prove Theorems \ref{t:indsch} and \ref{t:loop group}.

\sssec{Proof of \thmref{t:indsch}}

Let $\CY$ be a DG indscheme, which is weakly $\aleph_0$, locally almost of finite type and formally smooth.
First, by \cite[Theorem 10.1.1]{IndSch}, the functor $\Psi_\CY$ is an equivalence. 

\medskip

Next, by 
\cite[Theorem 9.1.6]{IndSch}, we obtain that $\CY$ is \emph{classical}, i.e., it can be written (up to
fppf sheafification) as
$$\underset{\alpha}{\underset{\longrightarrow}{lim}}\, Z_\alpha,$$
where $Z_\alpha$ are classical schemes of finite type, and the maps $Z_{\alpha_1}\to Z_{\alpha_2}$
are closed embeddings. 

\medskip

Let us show that this presentation satisfies that conditions of \propref{p:main prop}. 
Indeed, Condition (1) is satisfied by \thmref{t:alg space}. 
Condition (2) is satisfied by \cite[Corollary 9.6.3]{IndCoh}. Condition (3)
is satisfied because the left adjoints are given by $(f_{i,j})^{\IndCoh}_*$.

\sssec{Proof of \thmref{t:loop group}} \label{sss:loop group}

Consider the affine Grassmannian $\Gr_G$ and the canonical projection $\pi:G\ppart\to \Gr_G$. 

\medskip

By \thmref{t:indsch}, the functor $\bLoc_{\Gr_G}$ is fully faithful. Hence, in order to show that
$\bLoc_{G\ppart}$ is also fully faithful, it suffices to show that $\pi$ satisfies the assumptions
of \propref{p:almost commute 1}(a). 

\medskip

However, since $\pi$ is affine this follows from \cite[Proposition 3.2.1]{QCoh} 
(reproduced for the reader's convenience in the Appendix as \propref{p:quasi-aff}).  

\ssec{Non 1-affineness of $\BA^\infty$}  \label{ss:A-infty}

In this subsection we will prove \thmref{t:A-infty}. I.e., we will show that the functor
$\bGamma_{\BA^\infty}$ fails to be fully faithful.

\sssec{}

Let $\iota$ denote the map $\on{pt}\to \BA^\infty$ corresponding to $0\in \BA^\infty$. Consider
$$\CC:=\coIind_\iota(\Vect).$$

We will show that the counit map 
$$\bGamma\left(\on{pt},\bLoc_{\BA^\infty}\circ \bGamma^{\on{enh}}_{\BA^\infty}(\CC)\right)\to \bGamma(\on{pt},\CC)$$
is not an equivalence. 

\medskip

By definition
$$\bGamma(\on{pt},\CC)\simeq \QCoh(\on{pt}\underset{\BA^\infty}\times \on{pt}),$$
and
$$\bGamma\left(\on{pt},\bLoc_{\BA^\infty}\circ \bGamma^{\on{enh}}_{\BA^\infty}(\CC)\right)
\simeq \Vect\underset{\QCoh(\BA^\infty)}\otimes \Vect.$$

Thus, we want to show that the naturally defined functor
\begin{equation} \label{e:functor to fail}
\Vect\underset{\QCoh(\BA^\infty)}\otimes \Vect\to 
\QCoh(\on{pt}\underset{\BA^\infty}\times \on{pt})
\end{equation}
is not an equivalence.

\sssec{}

Let $\sS$ denote the tautological functor
$$\Vect\to  \Vect\underset{\QCoh(\BA^\infty)}\otimes \Vect,$$
and let $\sT$ be its (discontinuous) right adjoint. 

\medskip 

The composed functor
$$\Vect\overset{\sS}\to \Vect\underset{\QCoh(\BA^\infty)}\otimes \Vect \to \QCoh(\on{pt}\underset{\BA^\infty}\times \on{pt})$$
is the pull-back functor $p^*$, where $p$ denotes the map $\on{pt}\underset{\BA^\infty}\times \on{pt}\to \on{pt}$. The 
right adjoint of $p^*$ is the usual direct image functor $p_*$.

\medskip

We will show:

\begin{lem} \label{l:inf monadic}
The functor $\sT$ is monadic.
\end{lem}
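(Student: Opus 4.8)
The plan is to verify the two hypotheses of the Barr--Beck--Lurie theorem \cite[Theorem 6.2.2.5]{Lu2} for $\sT$, in the spirit of the monadicity argument in \secref{ss:reductive}. Write $\bE:=\Vect\underset{\QCoh(\BA^\infty)}\otimes \Vect$. Unwinding the relative tensor product as a bar construction, the tautological functor $\sS$ is $V\mapsto V\otimes_k\one_\bE$, where $\one_\bE\in\bE$ is the distinguished object (the image of $k\boxtimes k$), so that $\sT\simeq \CMaps_\bE(\one_\bE,-)$ and the monad $\sT\circ\sS$ is $V\mapsto V\otimes_k A$ for the algebra $A:=\End_\bE(\one_\bE)$. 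Since $\sT$ is discontinuous (equivalently, $\one_\bE$ is not compact), the continuous form of Barr--Beck is unavailable, and the content of the lemma is exactly that $\sT$ is conservative and preserves $\sT$-split geometric realizations.

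For conservativity I would show that $\one_\bE$ generates $\bE$ under colimits, which is equivalent to conservativity of $\sT=\CMaps_\bE(\one_\bE,-)$ since generators detect the zero object. Using the presentation of $\bE$ as the geometric realization $|[m]\mapsto \QCoh(\BA^\infty)^{\otimes m}|$ and the identification of $\sS$ with the inclusion of the $0$-simplices, this reduces to a generation statement built from the bar faces, where the only geometric input is the t-exactness of the augmentation $\iota^*\colon\QCoh(\BA^\infty)\to\Vect$ and the structure of $\QCoh(\BA^\infty)=\underset{n}{\underset{\longleftarrow}{lim}}\,\QCoh(\BA^n)$.

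The main obstacle is the geometric-realization condition, where the discontinuity of $\sT$ must be confronted. I would equip $\bE$ with the t-structure whose connective part is generated under colimits and extensions by $\sS(\Vect^{\leq 0})$; then $\sS$ is right t-exact, and the crucial point is that $\sT$ is right t-exact up to a bounded shift, i.e. $\sT(\bE^{\leq 0})\subseteq\Vect^{\leq k}$ for some fixed $k$. This splits into two parts: a concrete computation that $A=\End_\bE(\one_\bE)$ is connective (a Koszul-type calculation identifying $A$ with the exterior algebra coming from the self-intersection $\on{pt}\underset{\BA^\infty}\times\on{pt}$, read off from the bar construction), and --- the genuinely delicate, ``functional-analytic'' step --- a convergence argument showing that $\sT$ does not raise connectivity even on colimits, despite not commuting with them; this I would extract from the defining tower $\QCoh(\BA^\infty)=\underset{n}{\underset{\longleftarrow}{lim}}\,\QCoh(\BA^n)$ together with left-completeness of $\Vect$. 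Granting this amplitude bound, \lemref{l:bounded} shows that $\sT$ commutes with geometric realizations of objects of $\bE^{\leq 0}$; and, exactly as in \secref{ss:reductive}, one reduces the general ($\sT$-split) case to these bounded-above objects by writing the adjunction as a limit of its truncations and invoking \lemref{l:monads in limit}. Combined with conservativity, Barr--Beck--Lurie then yields that $\sT$ is monadic.
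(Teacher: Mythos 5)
Your framework (verify Barr--Beck--Lurie for $\sT$: conservativity plus preservation of $\sT$-split geometric realizations) is legitimate, and the conservativity half is essentially formal: by \cite[Lemma 1.3.3]{DGCat} the relative tensor product $\Vect\underset{\QCoh(\BA^\infty)}\otimes \Vect$ is the totalization of the cosimplicial diagram obtained from the bar complex by passing to right adjoints, $\sT$ is the evaluation on $0$-simplices, and an object of such a limit vanishes as soon as its $0$-th component does. But there are two genuine gaps. First, your description of the monad is inconsistent with your own setup: writing $\one:=\sS(k)$, you correctly note that $\one$ is not compact (equivalently, $\sT$ is discontinuous), but then $\CMaps(\one,V\otimes \one)$ does not commute with colimits in $V$, so $\sT\circ\sS$ is \emph{not} of the form $V\mapsto V\otimes A$ with $A=\End(\one)$; that identification holds only for perfect $V$ (compare \lemref{l:coinduction}, where the monad is $p_*\circ (p_*)^R$, a composition with a discontinuous functor, not tensoring by a fixed algebra). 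Second, and decisively, the amplitude bound $\sT$ of the connective part landing in $\Vect^{\leq k}$ is exactly where the entire content of the lemma sits, and you do not prove it: the connective part of your proposed t-structure is generated under \emph{colimits} by $\sS(\Vect^{\leq 0})$, and since $\sT$ is discontinuous, no computation of $\sT$ on objects $\sS(V)$ (nor of $A$) can yield the bound --- you would need to control $\sT$ on arbitrary colimits, which is the original problem in disguise. A sharp sanity check: run your strategy verbatim for the functor $p_*$ on $\QCoh(\on{pt}\underset{\BA^\infty}\times \on{pt})$; by \lemref{l:inf non-monadic} that functor is \emph{not} monadic (the failure is an $R^1\on{lim}$ phenomenon on a direct sum of shifted bounded-above objects), so your convergence argument would have to detect a difference between the two categories. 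As sketched, it does not, and nothing in the proposal (including the unverified right-completeness of the t-structure, needed for the truncation-limit reduction via \lemref{l:monads in limit}) supplies that distinction.

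The paper's own proof avoids all of this by a structural trick: identify $\QCoh(\BA^\infty)\simeq \IndCoh(\BA^\infty)$ via $\Psi_{\BA^\infty}$, under which the pointwise monoidal structure becomes $\overset{!}\otimes$ and $\iota^*$ becomes $\iota^!$. In the $\IndCoh$ world the monoidal operation and the action functor admit \emph{left} adjoints, namely $(\Delta_{\BA^\infty})^{\IndCoh}_*$ and $\iota^{\IndCoh}_*$, the latter a map of $\IndCoh(\BA^\infty)$-module categories (this rests on ind-properness, and is precisely what fails for $\QCoh$ with $*$-functors). Then \corref{c:tensor prod Beck-Chevalley} applies off the shelf: the bar complex satisfies the monadic Beck--Chevalley condition, and monadicity of $\sT$ follows with no t-structures, amplitude bounds, or convergence analysis. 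If you want to salvage your approach, the missing ``functional-analytic'' step would in effect have to reprove this left-adjointability in disguise; given that the whole point of \thmref{t:A-infty} is that the naive $\QCoh$-side analysis breaks down here, I would recommend the structural route.
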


\begin{lem} \label{l:inf non-monadic}
The functor $p_*$ is \emph{not} monadic.
\end{lem}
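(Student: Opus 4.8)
The plan is to read off, geometrically, the monad attached to $p_*$ and then use the Barr--Beck--Lurie theorem to show that the associated comparison functor cannot be an equivalence; the heart of the matter is a $\on{colim}$-versus-$\lim$ discrepancy forced by the infinite-dimensionality of $\BA^\infty$. First I would describe $Z:=\on{pt}\underset{\BA^\infty}\times\on{pt}$ as an ind-affine DG indscheme. Writing $\BA^\infty=\on{colim}_n\,\BA^n$ along the closed embeddings $\BA^n\hookrightarrow\BA^{n+1}$, base change gives $Z\simeq\on{colim}_n\,Z_n$ with $Z_n:=\on{pt}\underset{\BA^n}\times\on{pt}=\Spec(B_n)$, where $B_n:=k\underset{k[x_1,\dots,x_n]}\otimes k\simeq\Lambda(\xi_1,\dots,\xi_n)$ is the Koszul (exterior) algebra on $n$ generators in cohomological degree $-1$, and $Z_n\hookrightarrow Z_{n+1}$ is dual to the surjection $B_{n+1}\twoheadrightarrow B_n$ killing $\xi_{n+1}$. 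Since all transition maps are closed embeddings, \cite[Lemma 1.3.3]{DGCat} identifies $\QCoh(Z)\simeq\lim_n\,B_n\mod$ (transition functors the $*$-restrictions) with $\on{colim}_n\,B_n\mod$ (transition functors the pushforwards), and I will use both presentations.

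Next I would identify the monad $p_*\circ p^*$ on $\Vect$. Here $p^*(k)=\CO_Z$, and computing global sections through the limit presentation gives $p_*\circ p^*(k)=\Gamma(Z,\CO_Z)\simeq\lim_n\,B_n=:B$, the completed exterior algebra, i.e.\ a \emph{product} (not a sum) of its graded pieces. Thus $p_*\circ p^*$ is the monad ``$B\otimes_k-$'', and $p_*$ is monadic precisely when the Barr--Beck--Lurie comparison $\QCoh(Z)\to B\mod$ is an equivalence. By \cite[Theorem 6.2.2.5]{Lu2} it therefore suffices to violate one of the two hypotheses on $p_*$: conservativity, or preservation of $p_*$-split geometric realizations. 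I plan to violate conservativity, in direct analogy with the proof of \thmref{t:thick}, where the right adjoint $(\on{coind}_G)^R$ was shown to annihilate a nonzero object by an explicit acyclicity of a $\Hom(\on{Sym},\Lambda)$-complex.

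The mechanism is the following. Through the limit presentation, $\Gamma(Z,-)\simeq\lim_n\,\bigl(\oblv\circ j_n^*\bigr)$, a \emph{derived} inverse limit over the tower whose maps are induced by $\iota_{n,n+1}^*=B_n\underset{B_{n+1}}\otimes-$. The key structural fact is that $B_n$ is \emph{not} perfect over $B_{n+1}$: because $\xi_{n+1}^2=0$, the module $B_n=B_{n+1}/(\xi_{n+1})$ has infinite Tor-amplitude, so each $\iota_{n,n+1}^*$ pushes cohomology into ever lower degrees. Exploiting this, I would exhibit a nonzero $\CF\in\QCoh(Z)$, assembled from the Koszul tower so that the induced transition maps on $\oblv\circ j_n^*(\CF)$ are pro-null, for which the derived limit $\Gamma(Z,\CF)$ is acyclic. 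Conceptually $\CF$ is the categorical shadow of the inequality $\on{colim}_n\,B_n\neq\lim_n\,B_n$: the same discrepancy already shows that $p_*$ fails to be continuous (a continuous $p_*$ would compute $\Gamma(Z,\CO_Z)$ as the colimit $\on{colim}_n\,B_n$ rather than the product), which is the first warning sign, and the task is to upgrade this to an honest object of $\ker(p_*)$. I fully expect the main obstacle to be exactly this convergence estimate, namely proving that a carefully chosen derived $\lim$ over the exterior-algebra tower is acyclic while its underlying object is nonzero --- precisely the ``colimits commuting with limits'' analysis advertised in the introduction. (Should conservativity prove delicate to break by hand, the fallback is to violate the second Barr--Beck hypothesis instead, producing a $p_*$-split simplicial object whose geometric realization $p_*$ does not preserve; the non-continuity of $p_*$ makes this route plausible, and its underlying cause is again $\on{colim}_n\,B_n\neq\lim_n\,B_n$.)

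Finally, this is what is needed for the theorem: combined with \lemref{l:inf monadic}, a hypothetical equivalence in \eqref{e:functor to fail} would intertwine $\sS$ with $p^*$, hence $\sT$ with $p_*$, forcing $p_*$ to be monadic and contradicting the present lemma. Therefore \eqref{e:functor to fail} is not an equivalence, whence $\bGamma^{\on{enh}}_{\BA^\infty}$ is not fully faithful and $\BA^\infty$ is not $1$-affine.
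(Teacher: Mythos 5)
Your central mechanism --- producing a nonzero $\CF\in\QCoh(Z)$, $Z=\on{pt}\underset{\BA^\infty}\times\on{pt}$, with $\Gamma(Z,\CF)=0$, i.e.\ breaking \emph{conservativity} of $p_*$ --- is not what the paper does, and there are concrete reasons to doubt it can be done at all. The natural candidate, the skyscraper $\delta_*(k)$ at the canonical point $\delta:\on{pt}\to Z$, is exactly the paper's object $N$ (its $n$-th component is $\underset{m\geq n}{\underset{\longleftarrow}{lim}}\,\on{Sym}(\on{ker}(V_m^*\to V_n^*)[2])$), and it satisfies $p_*(N)=(p\circ\delta)_*(k)=k\neq 0$. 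Your ``pro-null transition maps'' mechanism is also obstructed: the transition map $M_{n+1}\to M_n=B_n\underset{B_{n+1}}\otimes M_{n+1}$ is the unit of the $(\iota^*,\iota_*)$-adjunction, so if any composite of transition maps were null \emph{as a map of modules}, applying $\iota^*$ and the triangle identity would split the identity of $M_n$ through $0$, forcing $M_n=0$; and for objects with bounded-above components the top cohomology is preserved by every transition map and survives the limit, so $p_*$ is conservative on all such objects. Any counterexample would therefore have to be an exotic unbounded object, and you give none --- the entire content of the lemma is deferred to this unproven (and possibly false) claim, as you yourself acknowledge. The paper's proof avoids conservativity entirely: it shows that the comparison functor $(p_*)^{\on{enh}}:\QCoh(Z)\to\sH\mod(\Vect)$ fails to commute with the explicit direct sum $\underset{i}\oplus\,N[-2i]$. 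The underlying complex of $\underset{i}\oplus\,(p_*)^{\on{enh}}(N)[-2i]$ is $\underset{i}\oplus\,k[-2i]$, while $(p_*)^{\on{enh}}\bigl(\underset{i}\oplus\,N[-2i]\bigr)$ has nonzero $H^1$, produced by a nonvanishing $R^1(limproj)$ exhibited by an explicit element; since an equivalence must preserve coproducts, $(p_*)^{\on{enh}}$ is not an equivalence. Your fallback (violating the second Barr--Beck hypothesis) is closer in spirit to this, but again comes with no construction.

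Two of your supporting identifications are also incorrect. First, the monad $p_*\circ p^*$ is $M\mapsto\underset{n}{\underset{\longleftarrow}{lim}}\,(B_n\otimes M)$, \emph{not} $B\otimes_k-$ with $B=\underset{n}{\underset{\longleftarrow}{lim}}\,B_n$: a monad of the form $B\otimes-$ is continuous, whereas $p_*$ is not (as you note yourself two paragraphs later), and the canonical map $B\otimes M\to\underset{n}{\underset{\longleftarrow}{lim}}\,(B_n\otimes M)$ fails to be an equivalence precisely on objects like the infinite direct sums the paper uses; accordingly the comparison category is $\sH\mod(\Vect)$ for this non-continuous monad, not $B\mod$. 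Second, the identification of $\underset{n}{\underset{\longleftarrow}{lim}}\,(B_n\mod,\iota^*)$ with $\underset{n}{\underset{\longrightarrow}{colim}}\,(B_n\mod,\iota_*)$ via \cite[Lemma 1.3.3]{DGCat} is unjustified: that lemma identifies the colimit along $\iota_*$ with the \emph{limit along the right adjoints of $\iota_*$}, i.e.\ along $\iota^!=\Hom_{B_{n+1}}(B_n,-)$, not along $\iota^*$; rewriting the limit along $\iota^*$ as a colimit would require \emph{left} adjoints to $\iota^*$, which do not exist here for exactly the reason you point out --- $B_n$ is not perfect over $B_{n+1}$.
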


Together, these two lemmas imply that \eqref{e:functor to fail} is not an equivalence.

\sssec{Proof of \lemref{l:inf monadic}}

To analyze $(\sS,\sT)$ we identify $\QCoh(\BA^\infty)$ with $\IndCoh(\BA^\infty)$ via the functor
$\Psi_{\BA^\infty}$. The usual monoidal structure on $\QCoh(\BA^\infty)$ goes over to the 
$\overset{!}\otimes$ monoidal structure on $\IndCoh(\BA^\infty)$. Similarly, 
$\iota^*:\QCoh(\BA^\infty)\to \Vect$ goes over to the functor $\iota^!$.

\medskip

Note in this case the tensor product functor 
$$\IndCoh(\BA^\infty)\otimes \IndCoh(\BA^\infty)\to \IndCoh(\BA^\infty)$$
admits a \emph{left} adjoint, given by $(\Delta_{\BA^\infty})^\IndCoh_*$.
Similarly, the functor $\iota^!$ admits a \emph{left} adjoint given by $\iota^\IndCoh_*$,
which is a map of $\IndCoh(\BA^\infty)$-module categories. 

\medskip

This implies that $\sT$ is monadic by \corref{c:tensor prod Beck-Chevalley}.

\qed

\ssec{Proof of \lemref{l:inf non-monadic}}

\sssec{}

By definition:
$$\QCoh(\on{pt}\underset{\BA^\infty}\times \on{pt})\simeq \underset{n}{\underset{\longleftarrow}{lim}}\, 
\QCoh(\on{pt}\underset{V_n}\times \on{pt}),$$
where $V_n=\BA^n$. Set $V:=\underset{n}{\underset{\longrightarrow}{lim}}\, V_n$.

\medskip

For each $n$, we have:
$$\QCoh(\on{pt}\underset{V_n}\times \on{pt})\simeq \on{Sym}(V_n^*[1])\mod.$$

Hence, the monad $\sH:=p_*\circ p^*$ is given by 
$$M\mapsto \underset{n}{\underset{\longleftarrow}{lim}}\, \left( \on{Sym}(V_n^*[1])\otimes M\right).$$

Consider the corresponding functor 
$$(p_*)^{\on{enh}}:\QCoh(\on{pt}\underset{\BA^\infty}\times \on{pt})\to \sH\mod(\Vect).$$

We need to show that $(p_*)^{\on{enh}}$ is \emph{not} an equivalence. We will do so by showing
that it does not send a certain direct sum to the direct sum.

\sssec{}

Consider the object $N$ of 
$$\QCoh(\on{pt}\underset{\BA^\infty}\times \on{pt})\simeq \underset{n}{\underset{\longleftarrow}{lim}}\, 
\on{Sym}(V_n^*[1])\mod,$$
whose $n$-th term is 
$$\underset{m\geq n}{\underset{\longleftarrow}{lim}}\, \on{Sym}\left(\on{ker}(V^*_m\to V^*_n)[2]\right),$$
viewed as an object of $\on{Sym}(V_n^*[1])\mod$ via the trivial action. 

\medskip

We have $p_*(N)=k\in \Vect$. It is also easy to see that $(p_*)^{\on{enh}}=k\in \sH\mod$, 
where the action of $\sH$ on $k$ is trivial. 

\medskip

Consider now the object $\underset{i}\oplus\, N[-2i]\in \QCoh(\on{pt}\underset{\BA^\infty}\times \on{pt})$.
We will show that the map
$$\underset{i}\bigoplus\, (p_*)^{\on{enh}}(N)[-2i] \to (p_*)^{\on{enh}}\left(\underset{i}\oplus\, N[-2i]\right)$$
is \emph{not} an isomorphism.

\sssec{}

On the one hand, it is easy to see that the object of $\Vect$, underlying the object $$\underset{i}\oplus\, k[-2i]\in \sH\mod$$
is isomorphic to $\underset{i}\oplus\, k[-2i]$ (although the forgetful functor $\sH\mod\to \Vect$ does not commute 
with arbitrarty direct sums).  

\sssec{}

On the other hand, we will show that the object of $\Vect$ underlying 
$$(p_*)^{\on{enh}}\left(\underset{i}\oplus\, N[-2i]\right)$$
has a non-trivial cohomology in degree $1$.  

\medskip

Namely, the 1st cohomology in question is equal to $R^1(limproj)$ of the following inverse family of vector spaces
$$n\mapsto \underset{i}\bigoplus\, \left(\underset{m\geq n}{\underset{\longleftarrow}{lim}}\, 
\left(\on{Sym}^i(\on{ker}(V_m^*\to V_n^*))\right)\right).$$

\medskip

We compute $R(limproj)$ of the above family by embedding it into the constant family with value
$$\underset{i}\bigoplus\, \left(\underset{m}{\underset{\longleftarrow}{lim}}\, \on{Sym}^i(V^*_m)\right).$$

To see that $R^1(limproj)\neq 0$, we need to show that the map
\begin{multline} \label{e:map of limproj}
 \underset{i}\bigoplus\,  \underset{m}{\underset{\longleftarrow}{lim}}\, \on{Sym}^i(V^*_m)\to \\
\to \underset{n}{limproj}\, \left( \underset{i}\bigoplus\, \on{coker}\left( \underset{m\geq n}{\underset{\longleftarrow}{lim}}\, 
\on{Sym}^i(\on{ker}(V_m^*\to V_n^*))\to \underset{m}{\underset{\longleftarrow}{lim}}\, \on{Sym}^i(V^*_m)
 \right)\right)
 \end{multline}
is \emph{not} surjective. 

\sssec{}

Choose a basis $v_1,v_2,\ldots$ of $V$ so that $\{v_1,...,v_n\}$ is a basis of $V_n$. For every $m$,
let $\{v_{m,1}^*,...,v_{m,m}^*\}$ be the corresponding dual basis of $V_m^*$. 

\medskip

The following
element in the right-hand side of \eqref{e:map of limproj} does not lie in the image of the left-hand side:

\medskip

Its $n$-th component, i.e., the corresponding element of 
\begin{equation} \label{e:nth comp}
\underset{i}\bigoplus\, \on{coker}\left( \underset{m\geq n}{\underset{\longleftarrow}{lim}}\, 
\on{Sym}^i(\on{ker}(V_m^*\to V_n^*))\to \underset{m}{\underset{\longleftarrow}{lim}}\, \on{Sym}^i(V^*_m)\right)
\end{equation}
equals the sum over $i=1,2,\ldots$ of the elements $w_{n,i}$, where each $w_{n,i}$ is the image under
$$\underset{m}{\underset{\longleftarrow}{lim}}\, \on{Sym}^i(V^*_m)\to
\on{coker}\left( \underset{m\geq n}{\underset{\longleftarrow}{lim}}\, 
\on{Sym}^i(\on{ker}(V_m^*\to V_n^*))\to \underset{m}{\underset{\longleftarrow}{lim}}\, \on{Sym}^i(V^*_m)\right)$$
of the family of elements
$$m\mapsto (v_{m,i}^*)^{\otimes i}\in \on{Sym}^i(V_m^*),\quad m\geq i.$$

Note that $w_{n,i}=0$ if $i\geq n$, because in this case
$$(v_{m,i}^*)^{\otimes i} \in \on{Sym}^i(\on{ker}(V_m^*\to V_n^*)).$$

\medskip 

Hence, the sum $\underset{i}\Sigma\, w_{n,i}$ is finite, i.e., gives rise to a well-defined element in 
\eqref{e:nth comp}.

\qed

\section{Classifying prestacks}   \label{s:classifying}

\ssec{Sheaves of categories over classifying prestacks} \label{ss:groups}

\sssec{}

In this section we let $\CG$ be a group-object of $\on{PreStk}$ such that
the functor $\bLoc_\CG$ is fully faithful.

\medskip

Note that by \corref{c:Loc and prod}, the functor $\bLoc_{\CG^n}$ is fully faithful
for any $n$. 

\medskip

We will give a more explicit description of the category $\on{ShvCat}(B\CG)$ as well
as the functors $\bGamma_{B\CG}$ and $\bLoc_{B\CG}$.

\sssec{}

First, by definition, we have:
$$\on{ShvCat}(B\CG)\simeq \on{Tot}(\on{ShvCat}(B^\bullet\CG)).$$

We now claim:

\begin{prop} \label{p:two Tots}
The term-wise $\bLoc$ functor 
$$\on{Tot}(\QCoh(B^\bullet\CG)\mmod)\to \on{Tot}(\on{ShvCat}(B^\bullet\CG))$$
is an equivalence. 
\end{prop}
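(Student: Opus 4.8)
The plan is to realize the term-wise $\bLoc$ as a levelwise fully faithful map of cosimplicial $\infty$-categories, and then to read off the equivalence on totalizations. First I would pin down the two cosimplicial structures. Applying the (limit-preserving) functor $\on{ShvCat}(-)$ to the simplicial prestack $B^\bullet\CG$ yields the cosimplicial $\infty$-category $[n]\mapsto\on{ShvCat}(\CG^n)$ with structure maps $\coRres_{f^\alpha}$, whereas $\QCoh(B^\bullet\CG)\mmod$ is the cosimplicial $\infty$-category $[n]\mapsto\QCoh(\CG^n)\mmod$ with structure maps $\Iind_{f^\alpha}$. By the commutativity of \eqref{e:commute 1}, the collection $\{\bLoc_{\CG^n}\}$ intertwines $\Iind_{f^\alpha}$ with $\coRres_{f^\alpha}$, hence assembles into a map of cosimplicial objects whose totalization is precisely the functor under consideration.

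The key input is that $\bLoc_{\CG^n}$ is fully faithful for every $n$: this holds by hypothesis for $n=1$, and for all $n$ by \corref{c:Loc and prod}, since $\CG^n$ is a product of copies of $\CG$. A levelwise fully faithful transformation of diagrams induces a fully faithful functor on limits, because mapping spaces in a limit are computed as limits of mapping spaces; so $\on{Tot}(\bLoc)$ is fully faithful. Factoring each $\bLoc_{\CG^n}$ through its essential image $\D_n\subseteq\on{ShvCat}(\CG^n)$ exhibits $\QCoh(B^\bullet\CG)\mmod$ as equivalent to the full cosimplicial subobject $\D_\bullet$; consequently an object $\CC=\{\CC_n\}\in\on{Tot}(\on{ShvCat}(B^\bullet\CG))$ lies in the essential image of $\on{Tot}(\bLoc)$ if and only if each component $\CC_n$ lies in the essential image of $\bLoc_{\CG^n}$.

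It then remains to verify this pointwise condition for arbitrary $\CC$. Here I would use the coherence datum attached to a vertex inclusion $[0]\to[n]$ in $\bDelta$: the associated map $p_n:\CG^n\to\on{pt}=B^0\CG$ is simply the projection, and the limit cone supplies an equivalence $\CC_n\simeq\coRres_{p_n}(\CC_0)$. Since $\on{pt}$ is tautologically 1-affine, $\bLoc_{\on{pt}}$ is an equivalence, so $\CC_0\simeq\bLoc_{\on{pt}}(\bC_0)$ for some $\bC_0\in\StinftyCat_{\on{cont}}$; applying \eqref{e:commute 1} to $p_n$ gives $\coRres_{p_n}(\CC_0)\simeq\bLoc_{\CG^n}(\QCoh(\CG^n)\otimes\bC_0)$, which is manifestly in the essential image of $\bLoc_{\CG^n}$. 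This yields essential surjectivity, and together with fully faithfulness it gives the equivalence.

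The main obstacle is conceptual rather than computational: one must ensure that ``essentially surjective at each level'' genuinely upgrades to ``essentially surjective on $\on{Tot}$,'' which is exactly the place where levelwise fully faithfulness is used, via the characterization of the essential image of a limit of fully faithful functors (i.e. that $\D_\bullet\hookrightarrow\on{ShvCat}(B^\bullet\CG)$ induces on totalizations the full subcategory of families lying levelwise in $\D_\bullet$). I would also flag that this argument compares the two \emph{totalizations} and says nothing about the 1-affineness of $B\CG$ itself, since $\on{Tot}(\QCoh(B^\bullet\CG)\mmod)$ need not coincide with $\QCoh(B\CG)\mmod$.
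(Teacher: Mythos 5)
Your proof is correct and takes essentially the same route as the paper's: fully faithfulness of the totalized functor is inherited from the levelwise fully faithfulness of $\bLoc_{\CG^{\times n}}$ (via \corref{c:Loc and prod}), and essential surjectivity is reduced, using a vertex map $[0]\to[n]$ together with the commutativity of \eqref{e:commute 1}, to the case $n=0$, where $\on{pt}$ is tautologically 1-affine. The additional details you supply — the characterization of the essential image of a totalization of fully faithful functors — are precisely what the paper leaves implicit.
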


\begin{proof}
The functor in question is fully faithful since each $\bLoc_{\CG^{\times n}}$ is fully faithful.
To prove that it is essentially surjective, we need to show that for $\bC^\bullet\in \on{Tot}(\on{ShvCat}(B^\bullet\CG))$,
each term $\bC^n$ lies in the essential image of the functor $\bLoc_{\CG^{\times n}}$. 

\medskip

Choosing any map $[0]\to [n]$ in $\bDelta$, from the commutative diagram
$$
\CD
\QCoh(\CG^{\times n})\mmod @>{\bLoc_{\CG^{\times n}}}>>  \on{ShvCat}(\CG^{\times n})  \\
@AAA   @AAA \\
\QCoh(\CG^{\times 0})\mmod @>{\bLoc_{\CG^{\times 0}}}>>  \on{ShvCat}(\CG^{\times 0}),
\endCD
$$
we obtain that it is sufficient to consider the case of $n=0$. However, since $\CG^{\times 0}=\on{pt}$,
the latter case is evident.
\end{proof}

\sssec{}  \label{sss:as G-mod}

Note that, by \propref{p:Loc and prod}, the assumption that $\bLoc_\CG$ be fully faithful implies that 
for any $n$, the functor $$\QCoh(\CG)^{\otimes n}\to \QCoh(\CG^{\times n})$$
is an equivalence. 

\medskip

In particular, the structure on $\CG$ is group-object of $\on{PreStk}$ defines on $\QCoh(\CG)$ a structure
of augmented co-monoidal DG category, such that the corresponding co-simplicial category 
$\on{co-Bar}^\bullet(\QCoh(\CG))$ identifies with $\QCoh(B^\bullet\CG)$. 

\medskip

We note, however, that the above co-monoidal structure on $\QCoh(\CG)$ naturally extends
to a commutative Hopf algebra structure (see \secref{s:Hopf} for what this means) as an object 
of $\StinftyCat_{\on{cont}}$, via \emph{pointwise} (symmetric) monoidal structure.
In particular, $\on{co-Bar}^\bullet(\QCoh(\CG))$ is naturally
a co-simplicial (symmetric) monoidal DG category. 

\medskip

From \propref{p:Hopf} we obtain:

\begin{cor}  \label{c:over group}
The category $\on{Tot}(\QCoh(B^\bullet\CG)\mmod)$ is canonically equivalent to 
the category $\QCoh(\CG)\commod$. Under this identification, for $\bD\in \QCoh(\CG)\commod$,
the corresponding object of $\on{Tot}(\QCoh(B^\bullet\CG)\mmod)$ identifies with
$$\on{co-Bar}^\bullet(\QCoh(\CG),\bD)\in \on{co-Bar}^\bullet(\QCoh(\CG))\mmod
\simeq \QCoh(B^\bullet\CG)\mmod.$$
\end{cor}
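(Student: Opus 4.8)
The plan is to obtain the corollary as a direct application of \propref{p:Hopf} to the symmetric monoidal $\infty$-category $\StinftyCat_{\on{cont}}$ and the commutative Hopf algebra $\QCoh(\CG)$, once all the pieces in sight are identified with the module-category version of a cobar construction.

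First I would assemble the structure recorded in \secref{sss:as G-mod}. Full faithfulness of $\bLoc_\CG$ together with \propref{p:Loc and prod} gives the identifications $\QCoh(\CG)^{\otimes n}\simeq \QCoh(\CG^{\times n})$, so that the pointwise symmetric monoidal structure makes $\QCoh(\CG)$ a commutative algebra object of $(\StinftyCat_{\on{cont}},\otimes)$, while the group law, unit and inversion on $\CG$ supply a compatible comultiplication, counit and antipode. These are precisely the data of a commutative Hopf algebra, and the associated cobar cosimplicial object $\on{co-Bar}^\bullet(\QCoh(\CG))$ is identified, termwise and compatibly with all faces and degeneracies, with the cosimplicial monoidal DG category $\QCoh(B^\bullet\CG)$; concretely, the cobar coface maps, which are the algebra homomorphisms built from the comultiplication and the unit, are exactly the pullback functors $(f^\alpha)^*\colon \QCoh(B^m\CG)\to\QCoh(B^n\CG)$.

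Next I would pass to module categories levelwise. The assignment $A\mapsto A\mmod$ carries a homomorphism of commutative algebra objects $\phi\colon A\to B$ to the base-change functor $\Iind_\phi$, and applying it to $\on{co-Bar}^\bullet(\QCoh(\CG))$ produces a cosimplicial $\infty$-category $\on{co-Bar}^\bullet(\QCoh(\CG))\mmod$ whose structure maps are the induction functors $\Iind_{f^\alpha}$. By the commutative square \eqref{e:commute 1} these are exactly the structure maps of the cosimplicial $\infty$-category $\QCoh(B^\bullet\CG)\mmod$ used in \propref{p:two Tots}, so the two cosimplicial $\infty$-categories coincide and hence so do their totalizations. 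Finally, \propref{p:Hopf} identifies $\on{Tot}\big(\on{co-Bar}^\bullet(\QCoh(\CG))\mmod\big)$ with $\QCoh(\CG)\commod$, the equivalence sending a comodule $\bD$ to the two-sided cobar object $\on{co-Bar}^\bullet(\QCoh(\CG),\bD)$, whose $n$-th term $\bD\otimes \QCoh(\CG)^{\otimes n}$ carries the evident $\QCoh(B^n\CG)$-module structure. Combining the two identifications yields the asserted equivalence and its explicit description.

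The step I expect to be the main obstacle is not any single computation but the coherent matching of the two cosimplicial $\infty$-categories: one must check that the identification $\on{co-Bar}^\bullet(\QCoh(\CG))\simeq \QCoh(B^\bullet\CG)$ and the passage to $\mmod$ are compatible as maps of cosimplicial objects in $\inftyCat$ (i.e.\ as a homotopy-coherent datum, not merely levelwise), and that $\QCoh(\CG)$ genuinely satisfies the hypotheses of \propref{p:Hopf}. Both are arranged by the material of \secref{sss:as G-mod} and by \propref{p:Loc and prod}, so that beyond these verifications the corollary is a formal consequence of \propref{p:Hopf}.
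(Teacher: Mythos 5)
Your proposal is correct and follows essentially the same route as the paper: the statement is obtained exactly by endowing $\QCoh(\CG)$ with its commutative Hopf algebra structure in $\StinftyCat_{\on{cont}}$ (pointwise tensor product as multiplication, the group law via pullback as comultiplication, using the identifications $\QCoh(\CG)^{\otimes n}\simeq \QCoh(\CG^{\times n})$ from \propref{p:Loc and prod}), identifying $\on{co-Bar}^\bullet(\QCoh(\CG))\mmod$ with $\QCoh(B^\bullet\CG)\mmod$ as co-simplicial categories, and applying \propref{p:Hopf}. The coherence issues you flag are precisely what the setup of \secref{sss:as G-mod} is designed to arrange, so nothing further is needed.
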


\ssec{Categories acted on by $\CG$}

\sssec{}  \label{sss:G-mod}

Let $\CG\mmod$ denote the category $\QCoh(\CG)\commod$. 

\medskip

Combining \corref{c:over group}
and \propref{p:two Tots}, we obtain an equivalence
\begin{equation} \label{e:shvs as comod}
\on{ShvCat}(B\CG)\simeq \CG\mmod.
\end{equation}

In what follows we shall refer to objects of $\CG\mmod$ as \emph{categories endowed with an action of the 
group-prestack $\CG$}.

\sssec{}

Under the equivalence \eqref{e:shvs as comod} the category $\QCoh(B\CG)$ identifies with
$$\Rep(\CG):=\uHom_{\CG}(\Vect,\Vect).$$

\medskip

The functor 
$$\bGamma_{B\CG}:\on{ShvCat}(B\CG)\to \StinftyCat_{\on{cont}}$$
identifies with the functor
$$\Iinv_\CG:\CG\mmod\to \StinftyCat_{\on{cont}},\quad \bD\mapsto \uHom_{\CG}(\Vect,\bD)
\simeq\on{Tot}\left(\on{co-Bar}^\bullet(\QCoh(\CG),\bD)\right).$$

This functor naturally upgrades to the functor
$$\Iinv^{\on{enh}}_\CG:\CG\mmod\to \Rep(\CG)\mmod,$$
and the latter identifies with $\bGamma_{B\CG}^{\on{enh}}$.

\begin{rem}
We regard $\Rep(\CG)$ as being equipped with a monoidal structure resulting from
its definition as $\uHom_{\CG}(\Vect,\Vect)$. However, this structure naturally
extends to a symmetric (i.e., $E_\infty$) monoidal structure: 

\medskip

The right-lax symmetric monoidal structure on the functor $\Iinv_\CG$ defines on
$$\Rep(\CG)\simeq \Iinv_\CG(\Vect)$$ a structure of unital symmetric monoidal DG category,
which is compatible (=distributive) with the monoidal structure given by composition. 
Hence, by Eckmann-Hilton, the latter structure is induced by the former.
\end{rem}

\sssec{}

The functor 
$$\bLoc_{B\CG}:\Rep(\CG)\mmod\to  \on{ShvCat}(B\CG)$$
identifies with the functor
$$\Rrec^{\on{enh}}_{\CG}:\Rep(\CG)\mmod\to \CG\mmod, \quad \bC\mapsto \Vect\underset{\Rep(\CG)}\otimes \bC,$$
where \footnote{The symbol  $\Rrec^{\on{enh}}$ is for ''reconstruction."}
the right hand side naturally acquires a structure of an object of $\CG\mmod$ via the commuting $\CG$- and
 $\Rep(\CG)$-actions on $\Vect$. 
 
\medskip
 
Here the action of $\Rep(\CG)$ on $\Vect$ is given by the augmentation 
(forgetful) functor 
$$\on{oblv}_\CG:\Rep(\CG)\to \Vect, \quad \uHom_{\CG}(\Vect,\Vect)\to \uHom(\Vect,\Vect).$$

\medskip

In what follows we shall denote by $\Rrec_{\CG}$ the composition of $\Rrec^{\on{enh}}_{\CG}$ and the forgetful
functor
$$\Oblv_\CG:\CG\mmod\to \StinftyCat_{\on{cont}}.$$

I.e., 
$$\Rrec_{\CG}(\bC)= \Vect\underset{\Rep(\CG)}\otimes \bC\in \StinftyCat_{\on{cont}}.$$

\sssec{}

It is easy to see that $\Iinv_\CG(\QCoh(\CG))\simeq \Vect$, and moreover, this equivalence extends
to an isomorphism
$$\Iinv^{\on{enh}}_\CG(\QCoh(\CG))\simeq \Vect$$
in $\Rep(\CG)\mmod$, where $\Rep(\CG)$ acts on $\Vect$ ``trivially", i.e., via the functor 
$\on{oblv}_\CG$.

\medskip

In particlar, by adjunction, we obtain a map in $\CG\mmod$
\begin{equation} \label{e:recovery map enh}
\Rrec^{\on{enh}}_\CG(\Vect)\to \QCoh(\CG).
\end{equation}

At the level of plain DG categories, the map \eqref{e:recovery map enh} identifies with the functor
\begin{equation} \label{e:recovery map}
p^*:\Vect\underset{\Rep(\CG)}\otimes \Vect\to \QCoh(\CG).
\end{equation}

\sssec{}   \label{sss:on G dualizable}

Assume for a moment that $\QCoh(\CG)$ is dualizable as a plain DG category. Consider its dual, $\QCoh(\CG)^\vee$,
as a monoidal DG category (the monoidal structure on $\QCoh(\CG)^\vee$ is the dual of the co-monoidal structure
on $\QCoh(\CG)$). 

\medskip

In this case, by taking the dual of the co-action of $\QCoh(\CG)^\vee$, 
we can identify
$$\QCoh(\CG)\commod\simeq \QCoh(\CG)^\vee\mmod,$$
and hence
\begin{equation} \label{e:G cat as modules}
\CG\mmod\simeq \QCoh(\CG)^\vee\mmod.
\end{equation}

Let us note that in this case, in addition to the functors $\Iinv_\CG$ and  $\Iinv^{\on{enh}}_\CG$ we also have the functor 
$$
\ccoinv_\CG:\CG\mmod\to \StinftyCat_{\on{cont}} \text{ and } \ccoinv^{\on{enh}}_\CG:\CG\mmod\to \Rep(\CG)\mmod,
$$
defined by 
$$\bD\mapsto \Vect\underset{\QCoh(\CG)^\vee}\otimes \bD\simeq |\on{Bar}^\bullet(\QCoh(\CG)^\vee,\bD)|.$$

\ssec{Affine group DG schemes}

In this subsection we let  $\CG$ be a group-object of $\affdgSch$. 

\sssec{}

For $\bD\in \CG\mmod$ we consider the co-simplicial category $\on{co-Bar}^\bullet(\QCoh(\CG),\bD)$. As in
\lemref{l:BC*}, we note that $\on{co-Bar}^\bullet(\QCoh(\CG),\bD)$ satisfies the co-monadic Beck-Chevalley
condition.

\medskip

In particular, the forgetful functor
$$\on{oblv}_\CG:\Iinv_\CG(\bD)\to \bD$$
admits a right adjoint, denoted $\on{coind}_\CG$, and the co-monad
$$\on{oblv}_\CG\circ \on{coind}_\CG$$
identifies, when viewed as a plain endo-functor of $\bD$, with the composition
$$\bD\overset{\on{co-action}}\longrightarrow 
\bD\otimes \QCoh(\CG)\overset{\on{Id}_\bD\otimes p_*}\to \bD,$$
where $p:\CG\to \on{pt}$.

\sssec{}

Note that in the affine case we have a canonical
identification
\begin{equation} \label{e:self-duality affine}
\QCoh(\CG)^\vee\simeq \QCoh(\CG).
\end{equation}

Moreover, we note that the \emph{monoidal} structure on $\QCoh(\CG)$, induced by
\eqref{e:self-duality affine} and the \emph{co-monoidal} structure on $\QCoh(\CG)$, 
is canonically equivalent to that given by the structure on $\CG$ of an algebra-object
in $\affdgSch$ (with respect to the monoidal structure on $\affdgSch$ given by the Cartesian
product) and the monoidal functor
$$\QCoh_*:\affdgSch\to \StinftyCat_{\on{cont}},\quad S\rightsquigarrow \QCoh(S), \quad (f:S_1\to S_2)\rightsquigarrow f_*.$$

\medskip

We shall denote the resulting monoidal DG category by $\QCoh(\CG)_{\on{conv}}$. 

\medskip

When we consider $\QCoh(\CG)$
with the (symmetric) monoidal structure given by the pointwise tensor product, we shall denote it by $\QCoh(\CG)_{\on{ptw}}$.

\ssec{A criterion for 1-affineness in the affine case case}

In this subsection we continue to assume that $\CG$ is an affine group DG scheme.

\sssec{}

Consider the simplicial category $\on{Bar}^\bullet(\QCoh(\CG)_{\on{conv}})$. It is obtained by applying the functor
$\QCoh_*$ to the simplicial object $B^\bullet\CG\in \affdgSch$. In particular, we have a canonical identification
$$|\QCoh(B^\bullet\CG)_*|\simeq \Vect\underset{\QCoh(\CG)_{\on{conv}}}\otimes \Vect.$$

\medskip

As in \lemref{l:BC?}, from \lemref{l:double right adjoint new} and \cite[Lemma 1.3.3]{DGCat}, we obtain:

\begin{lem} \label{l:coinduction}  The right adjoint of the tautological functor
$$\Vect\to \Vect\underset{\QCoh(\CG)_{\on{conv}}}\otimes \Vect$$
is monadic, and the corresponding monad on $\Vect$, viewed as a plain endo-functor,
identifies canonically with $p_*\circ (p_*)^R$, where
$$p:\CG\to \on{pt}.$$
\end{lem}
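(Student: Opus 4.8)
The plan is to transcribe the argument of \lemref{l:BC?} into the present group-theoretic setting. First I would observe that $\QCoh(B^\bullet\CG)_*$ is a simplicial object of $\StinftyCat_{\on{cont}}$ whose every structure functor $(f^\alpha)_*$ is continuous: each face or degeneracy map $f^\alpha:\CG^{\times i}\to \CG^{\times j}$ is a morphism of affine DG schemes, hence affine, so the corresponding $*$-pushforward is continuous and therefore admits a right adjoint. Passing to these right adjoints yields a co-simplicial object $\QCoh(B^\bullet\CG)^?$ of $\StinftyCat$ (with, in general, non-continuous co-face functors), and by \cite[Lemma 1.3.3]{DGCat} its totalization is canonically identified with the geometric realization $|\QCoh(B^\bullet\CG)_*|\simeq \Vect\underset{\QCoh(\CG)_{\on{conv}}}\otimes \Vect$. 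Under this identification the tautological functor, being the structure map out of the $0$-simplices $\QCoh(B^0\CG)=\Vect$, is left adjoint to the evaluation functor $\on{ev}^0:\on{Tot}(\QCoh(B^\bullet\CG)^?)\to \Vect$; hence the right adjoint in question \emph{is} $\on{ev}^0$.

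Second, I would apply \lemref{l:double right adjoint new} to the simplicial category $\QCoh(B^\bullet\CG)_*$ to conclude that the co-simplicial category $\QCoh(B^\bullet\CG)^?$ satisfies the \emph{monadic} Beck-Chevalley condition. \lemref{l:monadicity} then applies and shows that $\on{ev}^0$ is monadic, which is exactly the asserted monadicity of the right adjoint of the tautological functor.

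Third, I would identify the monad. By the explicit formula in the monadic Beck-Chevalley setup (the analogue of \lemref{l:BC?}(b)), the monad $\on{ev}^0\circ (\on{ev}^0)^L$, viewed as a plain endofunctor of $\Vect$, is the composite $(\partial^1)_*\circ (\partial^0)^?$ of the two face maps $\partial^0,\partial^1\colon B^1\CG=\CG\rightrightarrows B^0\CG=\on{pt}$. Since both face maps coincide with $p\colon \CG\to \on{pt}$ and $(\partial^0)^?=(p_*)^R$, the monad is $p_*\circ (p_*)^R$, as claimed. The only real point to check is the monadic Beck-Chevalley condition, but this is precisely the content of \lemref{l:double right adjoint new}, whose hypotheses hold because $B^\bullet\CG$ is the bar resolution of a group, so the squares formed by its face maps are Cartesian and the requisite commutation of $*$-pushforwards with $?$-pullbacks follows by base change; the remaining work is the bookkeeping, carried out exactly as in the proof of \lemref{l:BC?}, needed to match the abstractly produced monad with $p_*\circ (p_*)^R$ on the nose.
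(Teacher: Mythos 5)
Your proof is correct and is essentially the paper's own argument: the paper likewise identifies $\Vect\underset{\QCoh(\CG)_{\on{conv}}}\otimes \Vect$ with $|\QCoh(B^\bullet\CG)_*|$ (the functor $\QCoh_*$ applied to $B^\bullet\CG$), passes to right adjoints, and invokes \lemref{l:double right adjoint new} together with \cite[Lemma 1.3.3]{DGCat}, ``as in \lemref{l:BC?}'', to get monadicity of $\on{ev}^0$ and the identification of the monad with $p_*\circ (p_*)^R$. Your write-up merely makes explicit the steps the paper leaves implicit (continuity of the pushforwards, the base-change verification of the Beck--Chevalley condition, and the matching of the abstract monad $(\sT^{\on{pr}_s})^L\circ \sT^{\on{pr}_t}$ from \lemref{l:monadicity} with $p_*\circ(p_*)^R$), all of which are carried out correctly.
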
 

\sssec{}

Finally, we note that as in \secref{sss:cosections}, from the natural transformation \eqref{e:from coinv to inv} we obtain that
there exists a canonically defined functor
\begin{equation} \label{e:from coinv to inv groups}
\on{coind}_\CG:\Vect\underset{\QCoh(\CG)_{\on{conv}}}\otimes \Vect\to \Rep(\CG),
\end{equation}
which lifts to a map
$$\ccoinv^{\on{enh}}_\CG(\Vect)\to \Iinv^{\on{enh}}_\CG(\Vect)=\Rep(\CG)$$
in $\Rep(\CG)\mmod$. 

\medskip

We claim:

\begin{prop} \label{p:1-aff and rigid} 
The following conditions are equivalent:

\smallskip

\noindent{\em(a)} $B\CG$ is 1-affine;

\medskip

\noindent{\em(b)} The following two conditions hold:

\begin{enumerate}

\item The functor \eqref{e:from coinv to inv groups} is an equivalence; 

\item The functor \eqref{e:recovery map} is an equivalence.

\end{enumerate}

\noindent{\em(b')} The following two conditions hold:

\begin{enumerate}

\item
There exists \emph{some} isomorphism of objects of $\Rep(\CG)\mmod$
$$\ccoinv^{\on{enh}}_\CG(\Vect)\simeq \Rep(\CG).$$

\item
There exists \emph{some} isomorphism of objects of objects of $\CG\mmod$
$$\Rrec^{\on{enh}}_\CG(\Vect)\simeq \QCoh(\CG)_{\on{conv}}.$$

\end{enumerate}

\end{prop}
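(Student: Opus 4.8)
The plan is to identify $1$-affineness of $B\CG$ with the requirement that both the unit and the co-unit of the adjunction $\Rrec^{\on{enh}}_\CG\dashv \Iinv^{\on{enh}}_\CG$ (which under the identifications of \secref{ss:groups} is exactly $\bLoc_{B\CG}\dashv \bGamma^{\on{enh}}_{B\CG}$) be isomorphisms, and then to localize this requirement to two distinguished objects. First I would record the dictionary: the co-unit $\Rrec^{\on{enh}}_\CG\circ \Iinv^{\on{enh}}_\CG\to \on{Id}_{\CG\mmod}$, evaluated on $\QCoh(\CG)_{\on{conv}}$ and combined with $\Iinv^{\on{enh}}_\CG(\QCoh(\CG))\simeq \Vect$, is precisely the map \eqref{e:recovery map enh} underlying \eqref{e:recovery map}, so condition (b)(2) says the co-unit is an isomorphism at $\QCoh(\CG)_{\on{conv}}$; and the norm map \eqref{e:from coinv to inv groups} is the comparison $\ccoinv^{\on{enh}}_\CG(\Vect)\to \Iinv^{\on{enh}}_\CG(\Vect)$ at the trivial object. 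With this dictionary the implication (a)$\Rightarrow$(b) is immediate, since both distinguished maps are instances of the unit and co-unit of an equivalence, and (b)$\Rightarrow$(b$'$) is trivial.

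The substance is the converse (b$'$)$\Rightarrow$(a), and my plan is to generate both module $\infty$-categories from a single object and show that both functors preserve the relevant colimits. Using the self-duality \eqref{e:self-duality affine} together with \eqref{e:G cat as modules}, I would present $\CG\mmod\simeq \QCoh(\CG)_{\on{conv}}\mmod$ as module categories over the convolution monoidal DG category; as such it is generated under colimits and tensoring by objects of $\StinftyCat_{\on{cont}}$ by the free module $\QCoh(\CG)_{\on{conv}}$ itself, and likewise $\Rep(\CG)\mmod$ is generated by $\Rep(\CG)$. Now $\Rrec^{\on{enh}}_\CG=\bLoc_{B\CG}$ is symmetric monoidal, hence automatically commutes with colimits and with $\StinftyCat_{\on{cont}}$-tensorings. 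The role of condition (b$'$)(1) is to upgrade the norm map to an identification of $\Iinv^{\on{enh}}_\CG$ with $\ccoinv^{\on{enh}}_\CG$ on these generators; since $\ccoinv^{\on{enh}}_\CG(-)=\Vect\underset{\QCoh(\CG)^\vee}\otimes(-)$ is a relative tensor product, it commutes with colimits and tensoring, so after invoking (b$'$)(1) the endofunctors $\Iinv^{\on{enh}}_\CG\circ \Rrec^{\on{enh}}_\CG$ and $\Rrec^{\on{enh}}_\CG\circ \Iinv^{\on{enh}}_\CG$ become colimit- and tensoring-preserving.

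Granting this, both the unit $\on{Id}\to \Iinv^{\on{enh}}_\CG\circ \Rrec^{\on{enh}}_\CG$ and the co-unit $\Rrec^{\on{enh}}_\CG\circ \Iinv^{\on{enh}}_\CG\to \on{Id}$ are natural transformations between functors preserving colimits and $\StinftyCat_{\on{cont}}$-tensorings, so each is an isomorphism on all objects as soon as it is one on the generators. On the generator $\Rep(\CG)$ one computes $\Rrec^{\on{enh}}_\CG(\Rep(\CG))\simeq \Vect$ and $\Iinv^{\on{enh}}_\CG(\Vect)\simeq \Rep(\CG)$, so the unit there is the identity; here \lemref{l:coinduction} supplies the monadic Beck--Chevalley description of the co-monad on $\Vect$ that makes this computation, and the comparison with $\ccoinv^{\on{enh}}_\CG$, rigorous. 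On the generator $\QCoh(\CG)_{\on{conv}}$ the co-unit is exactly \eqref{e:recovery map enh}, an isomorphism by (b$'$)(2). Thus both unit and co-unit are isomorphisms, i.e. $B\CG$ is $1$-affine.

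The hard part will be the middle step. A priori $\Iinv^{\on{enh}}_\CG=\bGamma^{\on{enh}}_{B\CG}$ is only a right adjoint, so it preserves limits but, for $\CG$ of infinite type, emphatically fails to preserve colimits — this is the very failure that makes $BG$ non-$1$-affine in \thmref{t:thick}. The entire weight of the argument therefore rests on showing that hypothesis (b$'$)(1) forces the norm map $\ccoinv^{\on{enh}}_\CG\to \Iinv^{\on{enh}}_\CG$ to be an equivalence of \emph{functors}, not merely at $\Vect$, thereby trading the limit-preserving invariants functor for the colimit-preserving coinvariants functor. Propagating the single isomorphism of (b$'$)(1) to an isomorphism of functors — using the $\Rep(\CG)$-linearity of the norm transformation and the generation statement above — is the delicate point I expect to occupy the bulk of the proof.
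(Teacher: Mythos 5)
Your cycle of implications (a)$\Rightarrow$(b)$\Rightarrow$(b')$\Rightarrow$(a) matches the paper's, but two of the three arrows have genuine gaps. First, (a)$\Rightarrow$(b) is not ``immediate'': while \eqref{e:recovery map} is indeed the co-unit of $\bLoc_{B\CG}\dashv\bGamma^{\on{enh}}_{B\CG}$ evaluated at $\QCoh(\CG)_{\on{conv}}$, the map \eqref{e:from coinv to inv groups} is \emph{not} an instance of the unit or co-unit of this adjunction: it is the coinvariants-to-invariants comparison coming from \eqref{e:from coinv to inv}, a natural transformation between two functors neither of which is a composite of $\bLoc$ and $\bGamma^{\on{enh}}$. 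The paper needs a separate argument here: it applies $\Rrec_\CG=\Vect\underset{\Rep(\CG)}\otimes-$ to \eqref{e:from coinv to inv groups}, identifies the result with an equivalence via a commutative diagram whose left vertical arrow is $\on{Id}_{\Vect}\otimes p^*$ (invertible by the already-established part of (b)), and then invokes conservativity of $\Rrec_\CG$, valid under (a).

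Second, and more seriously, your route for (b')$\Rightarrow$(a) cannot work as stated, for a structural reason. (i) You use (b')(2) to conclude that the co-unit at the generator, i.e.\ the \emph{specific} map \eqref{e:recovery map enh}, is an isomorphism; but (b')(2) supplies only an abstract isomorphism of its source and target --- conflating these is exactly the distinction between conditions (b) and (b') that the proposition is drawing. (ii) Your ``delicate point'' --- propagating the abstract isomorphism of (b')(1) to invertibility of the norm transformation $\ccoinv^{\on{enh}}_\CG\to\Iinv^{\on{enh}}_\CG$ --- is not merely hard but is the wrong target: an abstract isomorphism between the values of two functors at one object never forces a given natural transformation between them to be invertible, so no amount of $\Rep(\CG)$-linearity will close this. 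The paper's proof avoids $\Iinv^{\on{enh}}_\CG$ and the norm map entirely in this direction. Both $\Rrec^{\on{enh}}_\CG$ and $\ccoinv^{\on{enh}}_\CG$ are relative tensor products, hence their composites are computed by tensoring with their values at $\Vect$:
$$\ccoinv^{\on{enh}}_\CG\circ\Rrec^{\on{enh}}_\CG \simeq \ccoinv^{\on{enh}}_\CG(\Vect)\underset{\Rep(\CG)}\otimes -, \qquad
\Rrec^{\on{enh}}_\CG\circ\ccoinv^{\on{enh}}_\CG\simeq \Rrec^{\on{enh}}_\CG(\Vect)\underset{\QCoh(\CG)_{\on{conv}}}\otimes -,$$
and the \emph{abstract} isomorphisms of (b') are then exactly what is needed to identify these composites with the identity functors. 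Thus $\Rrec^{\on{enh}}_\CG$ and $\ccoinv^{\on{enh}}_\CG$ are mutually inverse equivalences ``on the nose,'' and $\Iinv^{\on{enh}}_\CG$, being right adjoint to the equivalence $\Rrec^{\on{enh}}_\CG=\bLoc_{B\CG}$, is automatically its inverse, which is (a). In particular the colimit-preservation of $\Iinv^{\on{enh}}_\CG$ and the invertibility of the norm map, on which you place the entire weight of the argument, are never inputs; they come out a posteriori once (a) is known.
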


\begin{proof} 

Let assume (a) and deduce (b). The fact that the map  \eqref{e:recovery map} is an equivalence
holds is the expression of the fact that the functor $\bGamma^{\on{enh}}_{B\CG}$ is fully faithful.

\medskip

Let us view $\Vect$ as object of $\CG\mmod$,
endowed with a commuting action of $\Rep(\CG)$. Tensoring up the map  \eqref{e:from coinv to inv groups}
on the right by $\Vect$ over $\Rep(\CG)$, we obtain a map
\begin{multline} \label{e:triple ten prod}
\Vect\underset{\QCoh(\CG)_{\on{conv}}}\otimes \left(\Vect\underset{\Rep(\CG)}\otimes \Vect\right)\simeq \\
\simeq \left(\Vect\underset{\QCoh(\CG)_{\on{conv}}}\otimes \Vect\right)\underset{\Rep(\CG)}\otimes \Vect\to 
\Rep(\CG)\underset{\Rep(\CG)}\otimes \Vect,
\end{multline} 
so that the diagram
$$
\CD
\Vect\underset{\QCoh(\CG)_{\on{conv}}}\otimes \left(\Vect\underset{\Rep(\CG)}\otimes \Vect\right)  @>>>
\Rep(\CG)\underset{\Rep(\CG)}\otimes \Vect  \\
@V{\on{Id}_{\Vect}\otimes p^*}VV  @VV{\sim}V \\
\Vect\underset{\QCoh(\CG)_{\on{conv}}}\otimes \QCoh(\CG)_{\on{conv}}  @>{\sim}>>  \Vect
\endCD
$$
commutes. Since the left vertical arrow is an isomorphism (by the above), we
obtain that so is the top horizontal map. 

\medskip

Now, since $\Rrec_\CG$ is conservative, the fact that \eqref{e:triple ten prod} is an equivalence, implies
that so is the map \eqref{e:from coinv to inv groups}.

\medskip

The fact that (b) implies (b') is tautological. 

\medskip

Let us show that (b') implies (a). However, this is obvious: (b') implies that 
the functors $\ccoinv^{\on{enh}}_\CG$ and $\Rrec^{\on{enh}}_\CG$ are mutually inverse
on the nose.

\end{proof}

\section{Groups with a rigid convolution category}   \label{s:formal groups}

\ssec{The rigidity condition}   \label{ss:rigid two}

We return to the conext of \secref{sss:on G dualizable}. In this subsection we will assume that
$\QCoh(\CG)$ is dualizable as a plain category, and, moreover, that the \emph{monoidal} category 
$\QCoh(\CG)^\vee$ is rigid (see \secref{ss:rigidity} for what this means).

\sssec{}

The self-duality of $\QCoh(\CG)^\vee$ induced by its rigid monoidal structure (see \secref{sss:rigid duality}) 
defines, in particular, an identification
$$\QCoh(\CG)^\vee\simeq \QCoh(\CG),$$
as plain categories.

\medskip

Thus, we can again think of $\QCoh(\CG)$ is a monoidal DG category; when considered as such,
it will be denoted by $\QCoh(\CG)_{\on{conv}^L}$. This monoidal structure should not be confused
with the pointwise (symmetric) monoidal structure; the latter is denoted by $\QCoh(\CG)_{\on{ptw}}$. 

\sssec{}

By \secref{ss:dual and adj rigid modules}, we obtain that the monoidal structure on $\QCoh(\CG)_{\on{conv}^L}$
is obtained from the co-monoidal structure on $\QCoh(\CG)$ by passage to the \emph{left}
adjoint functors. 

\medskip

By construction, the unit in $\QCoh(\CG)_{\on{conv}^L}$ is given by the functor 
$$(e^*)^L:\Vect\to \QCoh(\CG),$$
which is both the \emph{left} adjoint and the dual of $e^*:\QCoh(\CG)\to \Vect$, where $e:\on{pt}\to\CG$
is the unit point. 

\medskip

By \propref{p:right adj and dual homomorphisms}, 
the augmentation on $\QCoh(\CG)_{\on{conv}^L}$ is given by the functor
$$(p^*)^L:\QCoh(\CG)\to \Vect,$$
which is both the \emph{left} adjoint and the dual of $p^*:\Vect\to \QCoh(\CG)$.

\sssec{}

Let now $\bD$ be an object of $\CG\mmod$, thought of as an object of $\QCoh(\CG)_{\on{conv}^L}\mmod$. 
From \secref{ss:dual and adj rigid modules} we obtain:

\begin{lem}  \label{l:co-action as right adjoint}   \hfill

\smallskip

\noindent{\em(a)} The \emph{right adjoint} of the action data
$$\QCoh(\CG)_{\on{conv}^L}\otimes \bD\to \bD$$
identifies with the data of co-action 
$$\bD\to \QCoh(\CG)\otimes \bD.$$

\smallskip

\noindent{\em(b)} The simplicial category $\on{Bar}^\bullet(\QCoh(\CG)_{\on{conv}^L},\bD)$ 
is obtained from $\on{co-Bar}^\bullet(\QCoh(\CG),\bD)$ by passage to the left adjoint functors. 
\end{lem}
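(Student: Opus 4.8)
The plan is to reduce both assertions to the general results on rigid monoidal categories assembled in \secref{ss:dual and adj rigid modules}, applied to $\bA:=\QCoh(\CG)^\vee$. Recall from \secref{sss:on G dualizable} that the dualizability of $\QCoh(\CG)$ yields the identification $\CG\mmod=\QCoh(\CG)\commod\simeq \bA\mmod$, under which the co-action $\bD\to \QCoh(\CG)\otimes\bD$ corresponds, by dualizing, to the $\bA$-action $\bA\otimes\bD\to\bD$; and recall from \secref{ss:rigid two} that the rigid self-duality of $\bA$ furnishes an identification of plain categories $\bA\simeq\QCoh(\CG)$, under which $\bA$ becomes the monoidal category $\QCoh(\CG)_{\on{conv}^L}$ and $\bA^\vee$ becomes $\QCoh(\CG)$ equipped with its group-induced co-monoidal structure.

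For part (a), I would invoke the general statement that for a rigid monoidal category $\bA$ and a module $\bD\in\bA\mmod$, the action functor $\bA\otimes\bD\to\bD$ admits a continuous right adjoint which, transported through the canonical self-duality $\bA\simeq\bA^\vee$, is precisely the co-action $\bD\to\bA^\vee\otimes\bD$ exhibiting $\bD$ as a comodule over the dual co-monoidal category $\bA^\vee$. Substituting $\bA=\QCoh(\CG)_{\on{conv}^L}$ and $\bA^\vee=\QCoh(\CG)$ turns this into the assertion that the right adjoint of the $\QCoh(\CG)_{\on{conv}^L}$-action on $\bD$ is the co-action of $\bD$ viewed as an object of $\QCoh(\CG)\commod$, which is exactly (a).

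For part (b), I would combine (a) with the facts recorded immediately before the lemma: the multiplication of $\QCoh(\CG)_{\on{conv}^L}$ is the left adjoint of the co-multiplication of $\QCoh(\CG)$, the unit $(e^*)^L$ is the left adjoint of the co-unit $e^*$, and---by \propref{p:right adj and dual homomorphisms}---the augmentation $(p^*)^L$ is the left adjoint of $p^*$. Since the two (co)simplicial objects have the same terms $\QCoh(\CG)^{\otimes\bullet}\otimes\bD$ and every structure map of $\on{Bar}^\bullet(\QCoh(\CG)_{\on{conv}^L},\bD)$ is assembled from these data, each face and degeneracy is the left adjoint of the corresponding coface and codegeneracy of $\on{co-Bar}^\bullet(\QCoh(\CG),\bD)$. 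The main obstacle is not this degreewise matching but the homotopy-coherence: exhibiting the simplicial object \emph{as a whole} as obtained from the co-simplicial object by a coherent passage to left adjoints. I expect this coherence to be delivered wholesale by \secref{ss:dual and adj rigid modules}, where the adjunctions are set up at the level of the (co)monoidal and (co)module structures themselves, so that the Bar and co-Bar constructions are intertwined functorially rather than checked one simplex at a time.
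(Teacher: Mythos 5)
Your proposal matches the paper's own treatment: the paper obtains this lemma directly from the general rigid-monoidal-category machinery of \secref{ss:dual and adj rigid modules}, applied to $\QCoh(\CG)_{\on{conv}^L}\simeq \QCoh(\CG)^\vee$, exactly as you do, with the structure maps of $\on{Bar}^\bullet$ and $\on{co-Bar}^\bullet$ matched through the identifications (multiplication, unit $(e^*)^L$, augmentation $(p^*)^L$ via \propref{p:right adj and dual homomorphisms}) recorded just before the lemma. Your closing remark is also how the paper handles the coherence issue: the adjunction is constructed once and for all at the level of the (co)monoidal and (co)module structures in that appendix section, rather than simplex by simplex.
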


From \corref{c:dual and rigid module bis}, we obtain:

\begin{cor} \label{c:induction} \hfill

\smallskip

\noindent{\em(a)} The co-simplicial category $\on{co-Bar}^\bullet(\QCoh(\CG),\bD)$ satisfies
the monadic Beck-Chevalley condition. 

\smallskip

\noindent{\em(b)} The forgetful functor 
$$\on{oblv}_{\CG}:\Iinv_\CG(\bD)\to \bD$$
admits a left adjoint (denoted $\on{ind}_\CG$) and 
is monadic. The monad $$\on{oblv}_{\CG}\circ \on{ind}_\CG,$$ viewed
as a plain endo-functor of $\bD$, identifies with the composition
$$\bD\overset{\on{co-action}} \longrightarrow \QCoh(\CG)\otimes \bD \overset{(p^*)^L\otimes \on{Id}_\bD}\longrightarrow \bD,$$
where $(p^*)^L$ is the left adjoint of the functor $p^*:\Vect\to \QCoh(\CG)$.
\end{cor}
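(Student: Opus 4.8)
The plan is to deduce both assertions from the monadicity lemma for co-simplicial categories satisfying the Beck--Chevalley condition (\lemref{l:monadicity}), the entire point being to verify that condition via rigidity. Part (a) is the substantive claim, and part (b) will then follow formally, including the explicit description of the monad.

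For (a): by hypothesis $\QCoh(\CG)^\vee$ is rigid as a monoidal DG category, and the monoidal category $\QCoh(\CG)_{\on{conv}^L}$ is by definition $\QCoh(\CG)^\vee$ transported along the self-duality of \secref{sss:rigid duality}; hence $\QCoh(\CG)_{\on{conv}^L}$ is itself rigid. Thus $\bD$, viewed as an object of $\QCoh(\CG)_{\on{conv}^L}\mmod$, is a module category over a rigid monoidal DG category, so that \corref{c:dual and rigid module bis} applies to it. By \lemref{l:co-action as right adjoint}(b), the simplicial category $\on{Bar}^\bullet(\QCoh(\CG)_{\on{conv}^L},\bD)$ is obtained from the co-simplicial category $\on{co-Bar}^\bullet(\QCoh(\CG),\bD)$ by passing to the left adjoints of all the structure functors. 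Passage to left adjoints interchanges the two flavours of the Beck--Chevalley condition, since the relevant Beck--Chevalley squares are carried to one another by adjunction; therefore the output of \corref{c:dual and rigid module bis} for the rigid bar complex translates precisely into the \emph{monadic} Beck--Chevalley condition for $\on{co-Bar}^\bullet(\QCoh(\CG),\bD)$. This proves (a).

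For (b): recall that $\Iinv_\CG(\bD)\simeq \on{Tot}\left(\on{co-Bar}^\bullet(\QCoh(\CG),\bD)\right)$, and that under this identification the forgetful functor $\on{oblv}_\CG\colon\Iinv_\CG(\bD)\to \bD$ is the functor $\on{ev}^0$ of evaluation on $0$-simplices, the $0$-simplices of the co-bar complex being $\bD$ itself. Applying \lemref{l:monadicity} to the co-simplicial category of part (a), we obtain that $\on{ev}^0$ admits a left adjoint, which we denote $\on{ind}_\CG$, and that $\on{ev}^0$ is monadic; the same lemma identifies the monad $\on{oblv}_\CG\circ \on{ind}_\CG$, as a plain endo-functor of $\bD$, with the composite of the two cofaces $\bD\rightrightarrows \QCoh(\CG)\otimes\bD$ in which one coface is retained and the other is replaced by its left adjoint. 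Here one coface is the co-action and the other is induced by the unit $\Vect\to\QCoh(\CG)$; using the explicit description of the augmentation of $\QCoh(\CG)_{\on{conv}^L}$ recalled in \secref{ss:rigid two}, namely that it is $(p^*)^L$, the left adjoint of $p^*\colon\Vect\to\QCoh(\CG)$, the left adjoint of the unit coface is exactly $(p^*)^L\otimes\on{Id}_\bD$. This yields the asserted formula $\bD\overset{\on{co-action}}\longrightarrow \QCoh(\CG)\otimes\bD\overset{(p^*)^L\otimes \on{Id}_\bD}\longrightarrow \bD$ for the monad.

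The main obstacle is the bookkeeping concentrated in the middle step: one must check that passing to left adjoints genuinely converts the Beck--Chevalley squares of the rigid bar complex into those required for the monadic condition on the co-bar complex, and then track \emph{which} coface becomes the co-action and which becomes the unit insertion, so that the left adjoint entering the monad is $(p^*)^L$ and not $(e^*)^L$. Both issues are settled by \lemref{l:co-action as right adjoint} together with the explicit identification of the unit and augmentation of $\QCoh(\CG)_{\on{conv}^L}$ in \secref{ss:rigid two}, but this is where all the real content lies.
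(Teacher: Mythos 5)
Your proposal is correct and takes essentially the same route as the paper: the paper deduces the corollary as a direct specialization of \corref{c:dual and rigid module bis} (applied to the rigid monoidal category $\bO=\QCoh(\CG)_{\on{conv}^L}$, with $\bC^l=\bD$ and $\bC^r=\Vect$ acted on via the augmentation $(p^*)^L$), the identification of $\on{co-Bar}^\bullet(\QCoh(\CG),\bD)$ with the co-bar complex of that corollary being supplied by \lemref{l:co-action as right adjoint}, and the adjoint/monad description by \lemref{l:monadicity} --- exactly your ingredients, including the correct resolution of the $(p^*)^L$ versus $(e^*)^L$ issue. One cosmetic point: your intermediate step asserting that passage to left adjoints ``interchanges the two flavours'' of the Beck--Chevalley condition is both unnecessary and imprecisely stated (by \lemref{l:double right adjoint new} the passage between a simplicial category and the co-simplicial category of its right adjoints \emph{preserves} the monadic condition rather than interchanging monadic and co-monadic), but this does not affect your argument, since \corref{c:dual and rigid module bis}(a) already asserts the monadic condition for the co-bar complex itself.
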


\sssec{}

Combining \lemref{l:co-action as right adjoint}(b) with \cite[Lemma 1.3.3]{DGCat}, we obtain:

\begin{cor} \label{c:inv and coinv}
There exists a canonical isomorphism of functors
$$\ccoinv_\CG \simeq \Iinv_\CG:\CG\mmod\to \StinftyCat_{\on{cont}}.$$
\end{cor}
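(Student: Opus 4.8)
The plan is to unwind both functors into the (co)totalization of an explicit (co)simplicial category and then to appeal to the standard adjunction principle recorded in \secref{sss:cosections}. Recall that by definition $\Iinv_\CG(\bD) \simeq \on{Tot}(\on{co-Bar}^\bullet(\QCoh(\CG),\bD))$, whereas, under the identification $\QCoh(\CG)^\vee \simeq \QCoh(\CG)_{\on{conv}^L}$ furnished by the rigid structure, $\ccoinv_\CG(\bD) \simeq |\on{Bar}^\bullet(\QCoh(\CG)_{\on{conv}^L},\bD)|$. Thus the content of the corollary is a comparison between the geometric realization of one simplicial category and the totalization of another.

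First I would invoke the general paradigm of \secref{sss:cosections}: if $\bC^\bullet$ is a co-simplicial object of $\StinftyCat_{\on{cont}}$ all of whose structure functors admit left adjoints, then, denoting by $\bC^{\bullet,L}$ the simplicial object obtained by passing to these left adjoints, the canonically defined functor $|\bC^{\bullet,L}| \to \on{Tot}(\bC^\bullet)$ is an equivalence, by \cite[Lemma 1.3.3]{DGCat}. I would apply this with $\bC^\bullet := \on{co-Bar}^\bullet(\QCoh(\CG),\bD)$. The hypothesis that each structure functor admits a left adjoint, together with the identification of the associated simplicial object of left adjoints, is exactly the content of \lemref{l:co-action as right adjoint}(b) (with the termwise existence of the relevant adjoints supplied by the rigidity assumption, cf. \corref{c:induction}): there one has $\bC^{\bullet,L} \simeq \on{Bar}^\bullet(\QCoh(\CG)_{\on{conv}^L},\bD)$. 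Combining the two displays then yields the chain $\ccoinv_\CG(\bD) \simeq |\on{Bar}^\bullet(\QCoh(\CG)_{\on{conv}^L},\bD)| \overset{\sim}\to \on{Tot}(\on{co-Bar}^\bullet(\QCoh(\CG),\bD)) \simeq \Iinv_\CG(\bD)$.

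The one point that deserves care — and which I view as the only genuine obstacle — is promoting this to an isomorphism of functors $\CG\mmod \to \StinftyCat_{\on{cont}}$, rather than a pointwise equivalence. For this I would observe that the formation of $\on{co-Bar}^\bullet(\QCoh(\CG),\bD)$ is functorial in $\bD \in \CG\mmod$, and that the comparison map of \secref{sss:cosections} is itself constructed functorially out of the adjunctions (the evaluation functors $\on{ev}^i$ on the totalization acquire left adjoints which assemble into the desired map). Hence the comparison is a natural transformation between the two functors, and being an equivalence on every object, it is an isomorphism of functors. No further computation is required: all the genuine work has already been carried out in establishing the rigidity of $\QCoh(\CG)^\vee$ and in \lemref{l:co-action as right adjoint}.
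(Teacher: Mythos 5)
Your proof is correct and follows essentially the same route as the paper: the paper's entire argument is precisely the combination of \lemref{l:co-action as right adjoint}(b) with \cite[Lemma 1.3.3]{DGCat}, i.e., identifying $\on{Bar}^\bullet(\QCoh(\CG)_{\on{conv}^L},\bD)$ as the passage to left adjoints from $\on{co-Bar}^\bullet(\QCoh(\CG),\bD)$ and then equating the geometric realization with the totalization. Your additional remark on functoriality in $\bD$ (via the naturality of the comparison map of \secref{sss:cosections}) is a point the paper leaves implicit, but it is a correct elaboration rather than a different argument.
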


It follows from the construction, the isomorphism of \corref{c:inv and coinv} lifts
to an isomorphism
\begin{equation}  \label{e:inv and coinv}
\ccoinv_\CG^{\on{enh}} \simeq \Iinv_\CG^{\on{enh}}
\end{equation}
as functors $\CG\mmod\to \Rep(\CG)\mmod$. 

\ssec{A criterion for 1-affineness in the rigid case}

The goal of this subsection is to prove the following assertion: 

\begin{prop} \label{p:1-aff and rigid'} 
Let $\CG$ be as in \secref{ss:rigid two}. Then the following conditions are equivalent:

\smallskip

\noindent{\em(a)} $B\CG$ is 1-affine;

\medskip

\noindent{\em(b)} The functor \eqref{e:recovery map}
is an equivalence.

\medskip

\noindent{\em(b')} There exists \emph{an} equivalence in $\CG\mmod$:
$$\Rrec^{\on{enh}}_\CG(\Vect)\simeq  \QCoh(\CG).$$

\medskip

\noindent{\em(c)} The functor $p_*:\QCoh(\CG)\to \Vect$, right adjoint
to $p^*$, is monadic.

\end{prop}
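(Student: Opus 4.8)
\emph{Strategy.} The plan is to use the rigidity hypothesis to make the ``localization side'' of $1$-affineness automatic, reducing everything to a single full-faithfulness statement, which is then matched with monadicity of $p_*$ by a factorization argument parallel to the treatment of $\BA^\infty$ in \lemref{l:inf monadic}--\lemref{l:inf non-monadic}. Throughout I use that $\bLoc_{B\CG}=\Rrec^{\on{enh}}_\CG$ and $\bGamma^{\on{enh}}_{B\CG}=\Iinv^{\on{enh}}_\CG$ form an adjoint pair, so that $B\CG$ is $1$-affine iff both the unit and the counit are equivalences.

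First I would record that, in the rigid situation, $\bLoc_{B\CG}=\Rrec^{\on{enh}}_\CG$ is \emph{automatically} fully faithful. By \eqref{e:inv and coinv} we have $\Iinv^{\on{enh}}_\CG\simeq\ccoinv^{\on{enh}}_\CG$, and applying \corref{c:inv and coinv} to the trivial module gives $\ccoinv^{\on{enh}}_\CG(\Vect)\simeq\Rep(\CG)$ as a $\Rep(\CG)$-bimodule. Hence for $\bC\in\Rep(\CG)\mmod$
\[
\Iinv^{\on{enh}}_\CG\,\Rrec^{\on{enh}}_\CG(\bC)\simeq\ccoinv^{\on{enh}}_\CG\Bigl(\Vect\underset{\Rep(\CG)}\otimes\bC\Bigr)\simeq\Bigl(\Vect\underset{\QCoh(\CG)^\vee}\otimes\Vect\Bigr)\underset{\Rep(\CG)}\otimes\bC\simeq\Rep(\CG)\underset{\Rep(\CG)}\otimes\bC\simeq\bC,
\]
and I would check this composite is the adjunction unit. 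Thus $B\CG$ is $1$-affine iff the counit $\Rrec^{\on{enh}}_\CG\,\Iinv^{\on{enh}}_\CG\to\on{Id}$ is an equivalence, i.e.\ iff $\Iinv^{\on{enh}}_\CG$ is fully faithful. Both $\Rrec^{\on{enh}}_\CG$ and (again by \eqref{e:inv and coinv}, since $\ccoinv^{\on{enh}}_\CG$ is a left adjoint) $\Iinv^{\on{enh}}_\CG$ commute with colimits and with tensoring by objects of $\StinftyCat_{\on{cont}}$; so, exactly as in \propref{p:cond for Gamma}(iv)--(v), the counit is an equivalence everywhere as soon as it is at the generator $\QCoh(\CG)$, because the free modules $\QCoh(\CG)\otimes\bD$ generate $\CG\mmod$ under colimits. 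The counit at $\QCoh(\CG)$ is precisely \eqref{e:recovery map enh}, whose underlying functor is \eqref{e:recovery map}; this proves (a)$\Leftrightarrow$(b). The implication (b)$\Rightarrow$(b') is trivial, and for (b')$\Rightarrow$(a) I would use that $\Rrec^{\on{enh}}_\CG$ is fully faithful with essential image closed under colimits: any equivalence $\Rrec^{\on{enh}}_\CG(\Vect)\simeq\QCoh(\CG)$ places every generator $\QCoh(\CG)\otimes\bD\simeq\Rrec^{\on{enh}}_\CG(\Vect\otimes\bD)$ in that image, which is therefore all of $\CG\mmod$, so $\Rrec^{\on{enh}}_\CG$ is an equivalence.

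For (b)$\Leftrightarrow$(c) I would mimic the $\BA^\infty$ analysis. Write $\sS\colon\Vect\to\Vect\underset{\Rep(\CG)}\otimes\Vect$ for the tautological functor, $\sT$ its right adjoint, and let $c$ be the functor \eqref{e:recovery map}, so that $p^*=c\circ\sS$ and consequently $p_*=\sT\circ c^R$. Using that $\on{oblv}_\CG$ admits a left adjoint (\corref{c:induction}) and that the monoidal category $\Rep(\CG)$ is rigid, one sees, exactly as in \lemref{l:inf monadic}, that the hypotheses of \corref{c:tensor prod Beck-Chevalley} hold, so $\sT$ is monadic and $\Vect\underset{\Rep(\CG)}\otimes\Vect\simeq(\sT\sS)\mod(\Vect)$ with $\sS$ as free functor. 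The crux is then the identification of monads on $\Vect$: the canonical map $\sT\sS\to p_*p^*=\Gamma(\CG,\CO_\CG)\otimes(-)$ induced by the unit of $(c,c^R)$ is an isomorphism, which in the rigid case comes from $\uHom_{\Rep(\CG)}(\Vect,\Vect)\simeq\Gamma(\CG,\CO_\CG)$. Granting this, $c^R c\simeq\on{Id}$, so $p_*c\simeq\sT$ and $c$ is a functor over $\Vect$ commuting with the free functors; hence it is a one-sided inverse to the monadic comparison $\QCoh(\CG)\to(p_*p^*)\mod(\Vect)$, and $c$ is an equivalence iff that comparison is, i.e.\ (b)$\Leftrightarrow$(c).

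\emph{Main obstacle.} The genuinely technical point is the last one: establishing that $\sT$ is monadic and that the comparison $\sT\sS\to p_*p^*$ is an isomorphism of monads. Everything else is bookkeeping with the adjunction and with generation by free modules, but this step is where the rigidity of $\Rep(\CG)$ (and the resulting Beck--Chevalley machinery) must be used in an essential way, and where one must be careful to promote the abstract identifications to the canonical maps actually appearing in \eqref{e:recovery map} and \eqref{e:from coinv to inv groups}.
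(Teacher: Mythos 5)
Your handling of (a), (b) and (b$'$) is correct, and it takes a genuinely different route from the paper's: where the paper closes the cycle (a)$\Rightarrow$(b)$\Rightarrow$(b$'$)$\Rightarrow$(a) by checking that under (b$'$) the functors $\ccoinv^{\on{enh}}_\CG$ and $\Rrec^{\on{enh}}_\CG$ become mutually inverse on free modules, you isolate the stronger statement that rigidity of $\QCoh(\CG)_{\on{conv}^L}$ makes $\Rrec^{\on{enh}}_\CG=\bLoc_{B\CG}$ fully faithful unconditionally. That statement is true (it even re-proves \thmref{t:main formal groups}(a) by pure category theory), but the step you defer is the actual content: you must show that your composite equivalence is the adjunction unit, i.e.\ that the canonical map $\uHom_\CG(\Vect,\Vect)\underset{\Rep(\CG)}\otimes\bC\to\uHom_\CG(\Vect,\Vect\underset{\Rep(\CG)}\otimes\bC)$ is an equivalence; this follows by writing $\Vect\underset{\Rep(\CG)}\otimes\bC$ as a bar realization and using that $\Iinv_\CG\simeq\ccoinv_\CG$ commutes with geometric realizations and with tensoring by plain DG categories, which is exactly what \corref{c:inv and coinv} and \eqref{e:inv and coinv} provide.

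The genuine gap is in your proof of (b)$\Leftrightarrow$(c), where you verify the hypotheses of \corref{c:tensor prod Beck-Chevalley} by asserting that ``the monoidal category $\Rep(\CG)$ is rigid.'' This is proved nowhere and is false under the hypotheses of the proposition: for $\CG=\underset{n}{\underset{\longrightarrow}{colim}}\,(\BA^n)^\wedge_0$, which satisfies the assumption of \secref{ss:rigid two} by \secref{sss:formal IndCoh}, one has $\Rep(\CG)\simeq\underset{n}{\underset{\longleftarrow}{lim}}\,\Rep((\BA^n)^\wedge_0)\simeq \underset{n}{\underset{\longleftarrow}{lim}}\,\QCoh((\BA^n)^*)_{\on{conv}}$ (by the Remark in \secref{ss:monad on formal}), i.e.\ modules over a polynomial algebra on countably many generators, and the unit (the augmentation module) is not compact, so $\on{unit}^R$ is not continuous and $\Rep(\CG)$ is not rigid. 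This is precisely the case where, by \thmref{t:main formal groups}(b), $B\CG$ fails to be 1-affine, so the criterion must remain valid there while any argument resting on rigidity of $\Rep(\CG)$ breaks. What is true, and what the paper proves in Step 3 of its own proof, is only that the monoidal operation $\Rep(\CG)\otimes\Rep(\CG)\to\Rep(\CG)$ admits a left adjoint; this uses the group structure in an essential way: one identifies $\Rep(\CG)\otimes\Rep(\CG)\simeq\Rep(\CG\times\CG)$ via the coinvariant presentation $\Rep(\CG)\simeq\Vect\underset{\QCoh(\CG)_{\on{conv}^L}}\otimes\Vect$, interprets the multiplication as restriction along the diagonal homomorphism, and produces its left adjoint from the left adjoint (equivalently, the dual, by \propref{p:right adj and dual homomorphisms}) of $\phi^*$ on the convolution categories. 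Your monad identification is also unsound as stated: $p_*\circ p^*$ is not $\Gamma(\CG,\CO_\CG)\otimes(-)$ in general, since $p_*$ is discontinuous in the relevant examples, and the isomorphism $\uHom_{\Rep(\CG)}(\Vect,\Vect)\simeq\Gamma(\CG,\CO_\CG)$ you invoke is essentially condition (b) evaluated on endomorphisms of the unit, so this step is circular. The paper's non-circular route identifies the Beck--Chevalley monad with $\on{oblv}_\CG\circ(\on{oblv}_\CG)^R$ and proves the map to $p_*\circ p^*$ is an isomorphism by passing to left adjoints, where it becomes $(p^*)^L\circ p^*\to\on{oblv}_\CG\circ\on{ind}_\CG$, an isomorphism by \corref{c:induction}(b).
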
 

\sssec{Step 1}

Let us assume (a). Then (b) expresses the fact that the functor $\bGamma^{\on{enh}}_{B\CG}$ is 
fully faithful.

\medskip

The implication (b) $\Rightarrow$ (b') is tautological. The implication (b') $\Rightarrow$ (a) is easy: we obtain that 
the functors $\ccoinv_\CG^{\on{enh}}$ and $\Rrec^{\on{enh}}_\CG$ are mutually inverse on the nose.

\sssec{Step 2}

It remains to establish the equivalence of (b) and (c). We claim that the functor
$$\Vect\to \Vect\underset{\Rep(\CG)}\otimes \Vect$$
is monadic, and the corresponding monad on $\Vect$ maps isomorphically, 
as a plain endo-functor, to $p_*\circ p^*$. 

\medskip

We will prove this by applying \corref{c:tensor prod Beck-Chevalley}. In Step 3 we will show
that the monoidal operation 
$$\Rep(\CG)\otimes \Rep(\CG)\to \Rep(\CG)$$
admits a left adjoint. Assuming this, the required assertion follows from 
\corref{c:tensor prod Beck-Chevalley}, combined with \lemref{l:verify tensor prod Beck-Chevalley}:

\medskip

Indeed, it remains to show that the map
$$\on{oblv}_\CG \circ (\on{oblv}_\CG)^R\to p_*\circ p^*$$
is an isomorphism, which follows from the fact that the corresponding map of
left adjoints
$$(p^*)^L\circ p^*\to \on{oblv}_\CG\circ \on{ind}_\CG$$
is an isomorphism, by \corref{c:induction}(b).

\sssec{Step 3} 

Using \corref{c:inv and coinv}, we interpret $\Rep(\CG)$ as 
$$\Vect\underset{\QCoh(\CG)_{\on{conv}^L}}\otimes \Vect.$$

Hence, we can identify
$$\Rep(\CG)\otimes \Rep(\CG)\simeq \Rep(\CG\times \CG),$$
so that the monoidal operation on $\Rep(\CG)$ identifies with restriction
under the diagonal map.

\medskip

We now claim that if $\phi:\CG_1\to \CG_2$ is any homomorphism between group-objects
of $\on{PreStk}$, satisfying the assumption of \secref{ss:rigid two}, then the 
restriction functor $\Rep(\CG_2)\to \Rep(\CG_1)$ admits a left adjoint.

\medskip

Indeed, interpreting $\Rep(\CG_i)$ as $\Vect\underset{\QCoh(\CG_i)_{\on{conv}^L}}\otimes \Vect$,
the left adjoint in question is given by the homomorphism
$$\QCoh(\CG_1)_{\on{conv}^L}\to \QCoh(\CG_2)_{\on{conv}^L},$$
which is the left adjoint (and simultaneously dual, see \propref{p:right adj and dual homomorphisms}) of the restriction map 
$\phi^*:\QCoh(\CG_2)\to \QCoh(\CG_1)$ of the corresponding co-monoidal categories.

\qed

\ssec{Classifying prestacks of formal groups}  \label{ss:formal groups}

In this subsection we will prove \thmref{t:main formal groups}. 
Let $\CG$ be a weakly $\aleph_0$ formally smooth formal group locally almost of finite type. 

\sssec{Proof of point (a)}

We will deduce the required assertion by applying \propref{p:main prop}.

\medskip

By definition,
$$B\CG:=|B^\bullet \CG|,$$
and we claim that this presentation satisfies the conditions of \propref{p:main prop}.
Note that each term of $B^\bullet\CG$ is of the form $\CG^{\times n}$. 

\medskip

Condition (1) is satisfied by \thmref{t:indsch}. 

\medskip

Condition (2) is satisfied by \cite[Theorem 10.1.1]{IndSch}: indeed, each of the functors
$$\Upsilon_{\CG^{\times n}}:\QCoh(\CG^{\times n})\to \IndCoh(\CG^{\times n})$$
is an equivalence. This also shows that
$$\QCoh(B\CG)\simeq \on{Tot}(\QCoh(B^\bullet\CG))\to \on{Tot}(\IndCoh(B^\bullet\CG))\simeq
\IndCoh(B\CG)$$
is an equivalence.

\medskip

Finally, condition (3) is satisfied because all the maps in $B^\bullet \CG$ are \emph{ind-proper}
(see \cite[Sect. 2.7.4 and Corollary 2.8.3]{IndSch}). 

\qed[\thmref{t:main formal groups}(a)]

\sssec{Proof of point (b)}

In \secref{sss:formal IndCoh}, we will show that $\CG$ satisfies the assumption of
\secref{ss:rigid two}. 

\medskip

Assuming this, in order to prove point (b) of the theorem,
by \propref{p:1-aff and rigid'}, it remains to show that the functor $p_*$, right
adjoint to $p^*$ is monadic if and only if the the tangent space of $\CG$ at the origin is 
finite-dimensional. We will do this in \secref{ss:monad on formal}. 

\sssec{} \label{sss:formal IndCoh}

Using \cite[Theorem 10.1.1]{IndSch}, we identify $\QCoh(\CG)\simeq \IndCoh(\CG)$
via the functor $\Upsilon_\CG$
as co-monoidal categories, where the co-monoidal structure on $\IndCoh(\CG)$
is induced by the structure on $\CG$ of group-object in $\on{PreStk}_{\on{laft}}$
via the operation of !-pullback. 

\medskip

Recall now the self-duality 
$$\IndCoh(\CG)\simeq \IndCoh(\CG)^\vee$$
(see \cite[Corollary 2.6.2]{IndSch}). 

\medskip

The above co-monoidal structure on $\IndCoh(\CG)$
defines via duality a monoidal structure on $\IndCoh(\CG)$; we shall denote the
resulting monoidal DG category by $\IndCoh(\CG)_{\on{conv}}$. By construction, 
the monoidal operation on $\IndCoh(\CG)_{\on{conv}}$ is given by the operation of
$(\IndCoh,*)$-direct image, i.e., it  is obtained by applying the (symmetric) monoidal functor 
$$\IndCoh_{\dgindSch_{\on{laft}}}:\dgindSch_{\on{laft}}\to \StinftyCat_{\on{cont}}$$
of \cite[Sect. 2.7]{IndSch} to the algebra object $\CG\in \dgindSch_{\on{laft}}$. 

\medskip

We claim:

\begin{lem} 
The monoidal DG category $\IndCoh(\CG)_{\on{conv}}$ is rigid.
\end{lem}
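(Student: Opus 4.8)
The plan is to verify the criterion for rigidity recorded in \secref{ss:rigidity}. Since $\CG$ is an ind-scheme locally almost of finite type, $\IndCoh(\CG)$ is compactly generated by $\Coh(\CG)$ and in particular dualizable as a plain DG category, so it suffices to produce, for the convolution monoidal structure, a continuous right adjoint to the multiplication functor which is a morphism of bimodule categories, together with the compactness of the unit. Recall that the monoidal structure on $\IndCoh(\CG)_{\on{conv}}$ is obtained by applying the symmetric monoidal functor $\IndCoh_{\dgindSch_{\on{laft}}}$ to the group object $\CG$, so that the multiplication functor is $m^{\IndCoh}_*$ attached to the map $m:\CG\times\CG\to\CG$ (under the Künneth identification $\IndCoh(\CG)\otimes\IndCoh(\CG)\simeq\IndCoh(\CG\times\CG)$, valid because $\IndCoh(\CG)$ is dualizable), while the unit is $e^{\IndCoh}_*(\omega_{\on{pt}})$ for the unit point $e:\on{pt}\to\CG$.

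First I would dispose of the two easy conditions. The unit object $e^{\IndCoh}_*(\omega_{\on{pt}})$ is compact: the point $e$ is a proper closed embedding into the formal scheme $\CG$, so $e^{\IndCoh}_*$ preserves coherence, whence the unit lies in $\Coh(\CG)\subset\IndCoh(\CG)$ and is compact. Dualizability of $\IndCoh(\CG)$ is \cite[Corollary 2.6.2]{IndSch}, which also supplies the self-duality $\IndCoh(\CG)\simeq\IndCoh(\CG)^\vee$ used below.

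The substance of the argument is the existence and bimodule-compatibility of the right adjoint to $m^{\IndCoh}_*$. Here I would invoke the geometric input already exploited in the proof of part (a): every structure map of $B^\bullet\CG$, and in particular $m$, is ind-proper (this is where $({}^{cl}\CG)_{red}=\on{pt}$ enters, via \cite[Corollary 2.8.3]{IndSch}). Consequently $m^{\IndCoh}_*$ admits the continuous right adjoint $m^!$. It then remains to check that $m^!$ is a morphism of $\IndCoh(\CG)$-bimodule categories, i.e.\ that it is compatible with the left and right convolution actions. This is a base-change assertion: the compatibility of $m$ with the two module actions is encoded in Cartesian squares extracted from the associativity constraint of $\CG$, all of whose maps are again ind-proper, so the base-change and projection-formula statements for $\IndCoh$ along ind-proper maps (\cite{IndCoh}) apply and yield the desired intertwining. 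This verification of the bimodule-map property is the main obstacle; everything else is formal.

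Finally, I would record the conceptual content of the conclusion: by the criterion of \secref{ss:rigidity} the above shows that every compact object of $\IndCoh(\CG)_{\on{conv}}$ admits left and right monoidal duals, the dual of a coherent $\CF$ being $\on{inv}^!(\BD_\CG\CF)$ for the inversion automorphism $\on{inv}:\CG\to\CG$ and Serre duality $\BD_\CG$, with evaluation and coevaluation supplied by the group axioms relating $m$, $\on{inv}$ and $e$ through the adjunction $m^{\IndCoh}_*\dashv m^!$. This makes the rigidity of $\IndCoh(\CG)_{\on{conv}}$ manifest and exhibits the duality functor explicitly.
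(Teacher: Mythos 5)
Your proof is correct and follows essentially the same route as the paper: the paper's own argument is precisely that $\CG$ is ind-proper (which is where $({}^{cl}\CG)_{red}=\on{pt}$ enters), so that $m^{\IndCoh}_*$ admits the continuous right adjoint $m^!$, and that the base-change isomorphism for ind-proper maps (\cite[Proposition 2.9.2]{IndSch}) supplies the strict bimodule compatibility. Your additional verifications (compactness of the unit via $e$ being a closed embedding, and the explicit description of monoidal duals of compact objects) are consistent elaborations of what the paper leaves implicit.
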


\begin{proof}
Follows from the fact that $\CG$ is ind-proper and the base change isomorphism
of \cite[Proposition 2.9.2]{IndSch}.
\end{proof}

This implies that $\QCoh(\CG)_{\on{conv}^L}$ is rigid as a monoidal DG category, as 
$$\IndCoh(\CG)_{\on{conv}} \simeq \QCoh(\CG)_{\on{conv}^L},$$
as monoidal DG categories, by construction.

\ssec{Computation of the monad}   \label{ss:monad on formal}

Let $\CG$ be as in \thmref{t:main formal groups}. 

\sssec{}

We identity $\QCoh(\CG)\simeq \IndCoh(\CG)$ by means of the functor $\Upsilon_\CG$, so that the functor $p^*$ corresponds to
$$p^!:\Vect\to \IndCoh(\CG),$$
and $p_*$ corresponds to the right adjoint of $p^!$, denoted $p_?$. 

\medskip

We will show that  the functor $p_?:\IndCoh(\CG)\to \Vect$ is monadic
if and only if the tangent space of $\CG$ at the origin is finite-dimensional.

\sssec{}

Note that this assertion does not involve the group structure on $\CG$. By \cite[Propositions 7.12.22 and 7.12.23]{BD},
the assumption
on $\CG$ implies that, as a DG indscheme, it can be described as follows:

\begin{itemize}

\item If the tangent space of $\CG$ at the origin is finite-dimensional, then $\CG$ is isomorphic 
to the completion of a the vector group $V$ at the origin, where $V\in \Vect^\heartsuit$ is finite-dimensional.

\item If the tangent space of $\CG$ at the origin is infinite-dimensional, then $\CG$ is isomorphic
to 
$$\underset{n}{\underset{\longrightarrow}{colim}}\, (\BA^n)^\wedge_0.$$

\end{itemize}

\sssec{}

Let $\CG=V^\wedge_0$. In this case the category $\IndCoh(\CG)$ identifies 
with $\Rep(V^*)$, where $V^*$ is the dual vector space. 

\medskip

Under this identification
$p^!$ identifies with the functor $\on{coind}_{V^*}$, the \emph{right} adjoint
to the forgetful functor $\oblv_G:\Rep(V^*)\to \Vect$. The functor $p_?$
is the (discontinuous) \emph{right} adjoint $(\on{coind}_{V^*})^R$ of
$\on{coind}_{V^*}$. 

\medskip

Hence, the functor $p_?$ is monadic by Remark 
\secref{r:usual coinduction}.

\begin{rem}
Let us note that for $\CG=V^\wedge_0$,  the assertion of \thmref{t:main formal groups}(b)
is equivalent to that of \thmref{t:BG} for the group $V^*$. Indeed, in this case
$$\IndCoh(\CG)\simeq \Rep(V^*),$$
as monoidal categories, so that $\CG\mmod\simeq \Rep(V^*)\mmod$ and the functor
$\Iinv_\CG^{\on{enh}}\simeq \ccoinv_\CG^{\on{enh}}$ identifies with $\Rrec^{\on{enh}}_{V^*}$. 
In addition, 
$$\Rep(\CG)\simeq \QCoh(V^*)_{\on{conv}},$$
also as monoidal categories, so
$$\Rep(\CG)\mmod\simeq V^*\mmod,$$
and the functor $\Rrec^{\on{enh}}_\CG$ idenitifies with $\ccoinv^{\on{enh}}_{V^*}$. 
\end{rem} 

\sssec{}

Let $$\CG:=\underset{n}{\underset{\longrightarrow}{colim}}\, (\BA^n)^\wedge_0.$$

We claim that in this case the functor $p_?$ fails to be conservative. Indeed,
let $\iota_n$ denote the embedding 
$$(\BA^n)^\wedge_0\hookrightarrow \CG.$$

We claim that the functor $p_?$ annihilates $(\iota_0)^{\IndCoh}_*(k)$. To prove this
we have to show that
$$\CMaps_{\IndCoh(\CG)}(\omega_{\CG},(\iota_0)^{\IndCoh}_*(k))=0.$$

We note that
$$\omega_{\CG}\simeq \underset{n}{\underset{\longrightarrow}{colim}}\, (\iota_n)^{\IndCoh}_*(\omega_{(\BA^n)^\wedge_0}),$$
and so
$$\CMaps_{\IndCoh(\CG)}(\omega_{\CG},(\iota_0)^{\IndCoh}_*(k))
\simeq 
\underset{n}{\underset{\longleftarrow}{lim}}\, \, 
\CMaps_{\IndCoh(\CG)}((\iota_n)^{\IndCoh}_*(\omega_{(\BA^n)^\wedge_0}),(\iota_0)^{\IndCoh}_*(k)).$$
Now, for every $n$, 
$$(\iota_n)^{\IndCoh}_*(\omega_{(\BA^n)^\wedge_0})\in \IndCoh(\BA^\infty)^{\leq -n},$$ so
$$\CMaps_{\IndCoh(\CG)}((\iota_n)^{\IndCoh}_*(\omega_{(\BA^n)^\wedge_0}),(\iota_0)^{\IndCoh}_*(k))\in \Vect^{\geq n},$$
and hence the above limit vanishes.

\section{De Rham prestacks}  \label{s:DR}

\ssec{De Rham prestacks of indschemes}  \hfill  \label{ss:DR}

\medskip

The goal of this subsection is to prove \thmref{t:main DR}. Recall that we fix a DG indscheme $\CZ$
locally almost of finite type, and we want to show that the prestack $\CZ_\dr$ is 1-affine.

\sssec{Step 1}

We will first prove that the prestack $Z_\dr$ is 1-affine, where $Z$ is an affine scheme of finite type. 
We can embed $Z$ into $\BA^n$. Since $Z_\dr$ identifies with its formal completion inside $(\BA^n)_\dr$,
by \thmref{t:main formal}, it is enough to consider the case of $Z=\BA^n$.

\medskip

Let $\CG$ be the formal completion of $\BA^n$ at the origin, considered as a formal group. Note that the
prestack quotient of $\BA^n$ by $\CG$ identifies with $(\BA^n)_\dr$. Hence, we have a canonical
map
$$(\BA^n)_\dr\to B\CG,$$
and for any $S\in (\affdgSch)_{B\CG}$, the fiber product 
$$S\underset{B\CG}\times (\BA^n)_\dr$$ 
identifies with $S\times \BA^n$. 

\medskip

Applying \thmref{t:main formal groups}(b) and \corref{c:1-affine base and fiber}, we deduce that $(\BA^n)_\dr$
is 1-affine. 

\sssec{Step 2}

We now claim that for an arbitrary scheme of finite type $Z$, the prestack $Z_\dr$ is 1-affine. Indeed, the reduction
to the affine case is routine and is left to the reader. 

\sssec{Step 3}

Let $\CZ$ be an indscheme written as
$$\underset{i\in I}{\underset{\longrightarrow}{colim}}\, Z_i,$$
where $Z_i$ are schemes of finite type, and the maps $f_{i,j}:Z_i\to Z_j$ are closed embeddings.

\medskip

The fact that the functor $\bLoc_{\CZ_\dr}$ is fully faithful follows from \propref{p:main prop}: indeed,
the functor
$$\Psi_{\CZ_\dr}:\QCoh(\CZ_\dr)\to \IndCoh(\CZ_\dr)$$
is an equivalence for \emph{any} $\CZ\in \on{PreStk}_{\on{laft}}$, see \cite[Proposition 2.4.4]{Crys}.

\sssec{Step 4}

It remains to show that for $\CC\in \on{ShvCat}(\CZ_\dr)$, the co-unit of the adjunction
$$\bLoc_{\CZ_\dr}\circ \bGamma_{\CZ_\dr}^{\on{enh}}(\CC)\to \CC$$
is an equivalence.

\medskip

Since the theorem has been established for schemes, it is sufficient to show that
for every index $i_0\in I$, the functor
\begin{equation} \label{e:need to prove dr indsch}
\QCoh((Z_{i_0})_\dr)\underset{\QCoh(\CZ_\dr)}\otimes \bGamma_{\CZ_\dr}(\CC)\simeq 
\bGamma\left((Z_{i_0})_\dr,\bLoc_{\CZ_\dr}\circ \bGamma^{\on{enh}}_{\CZ_\dr}(\CC)\right)\to \bGamma((Z_{i_0})_\dr,\CC)
\end{equation}
is an equivalence.

\medskip

As in the proof of \propref{p:main prop}, we can express $\bGamma_{\CZ_\dr}(\CC)$ as 
$$\underset{i\in I}{\underset{\longrightarrow}{colim}}\, \bGamma((Z_i)_\dr,\CC).$$

Since $I$ is filtered, the map $I_{i_0/}\to I$ is cofinal. Hence,
$$\bGamma_{\CZ_\dr}(\CC)\simeq \underset{i\in I_{i_0/}}{\underset{\longrightarrow}{colim}}\, \bGamma((Z_i)_\dr,\CC).$$

Therefore, the left-hand side in \eqref{e:need to prove dr indsch} identifies with
\begin{equation} \label{e:need to prove dr indsch next}
\underset{i\in I_{i_0/}}{\underset{\longrightarrow}{colim}}\,  \left(\QCoh((Z_{i_0})_\dr)\underset{\QCoh(\CZ_\dr)}\otimes \QCoh((Z_i)_\dr)\right)
\underset{\QCoh((Z_i)_\dr)}\otimes \bGamma((Z_i)_\dr,\CC).
\end{equation}

\medskip

Note, however, that for every $i\in I_{i_0/}$, the map 
$$\QCoh((Z_{i_0})_\dr)\underset{\QCoh(\CZ_\dr)}\otimes \QCoh((Z_i)_\dr)\to \QCoh((Z_{i_0})_\dr)$$
is an equivalence. Indeed, this follows by \lemref{l:tensor up local} from the fact that the restriction functor
$$\QCoh((Z_i)_\dr)\to \QCoh((Z_{i_0})_\dr)$$
admits a left adjoint that commutes with the $\QCoh((Z_i)_\dr)$-action. 

\medskip

Furthermore, the fact that \thmref{t:main DR} holds for schemes implies that
$$\QCoh((Z_{i_0})_\dr)\underset{\QCoh((Z_i)_\dr)}\otimes \bGamma((Z_i)_\dr,\CC)\to 
\bGamma((Z_{i_0})_\dr,\CC)$$
is an equivalence. 

\medskip

Hence, the expression in \eqref{e:need to prove dr indsch next} identifies with
$$\underset{i\in I_{i_0/}}{\underset{\longrightarrow}{colim}}\, \bGamma((Z_{i_0})_\dr,\CC).$$

However, since the category of indices is contractible, the resulting colimit is isomorphic to 
$\bGamma((Z_{i_0})_\dr,\CC)$, as required.

\qed

\sssec{} \label{sss:quotient by A-infty proof}

To conclude this subsection, consider the group prestack 
$$\CG=\underset{n}{\underset{\longrightarrow}{colim}}\, (\BG_a)^{\times n},$$
see \secref{sss:quotient by A-infty}. Let us show that $B\CG$ is not 1-affine.

\medskip

Let $\CG'$ be the formal completion of $\CG$ at the origin. I.e.,
$$\CG':=\underset{n}{\underset{\longrightarrow}{colim}}\, ((\BG_a)^{\times n})^\wedge_0.$$

\medskip

Consider the natural map
$$B\CG'\to B\CG.$$
Note that its base change by any $S\in \affdgSch_{/B\CG}$ yields the prestack 
$$S\times (\CG)_\dr,$$
which is 1-affine by \corref{c:product 1-affine} and \thmref{t:main DR}. 

\medskip

Assume for the sake of contradiction that $\bGamma^{\on{enh}}_{B\CG}$ was fully 
faithful. Then by  \propref{p:nice base change} and \propref{p:almost commute 1}(b),
we would obtain that $\bGamma^{\on{enh}}_{B\CG'}$
is also fully faithful. However, the latter is false by \thmref{t:main formal groups}(b). 

\ssec{De Rham prestacks of classifying stacks}  \label{ss:DR stack}

In this subsection we let $G$ be a classical affine algebraic group of finite type.

\sssec{}

Note that we tautologically have:
$$B(G_\dr)\simeq (BG)_\dr.$$

Next, we note that since the canonical map 
$$BG\to \on{pt}/G$$
becomes an isomorphism after the \'etale sheafification, the same is true for the map
$$(BG)_\dr\to (\on{pt}/G)_\dr.$$

Hence,
$$\on{ShvCat}((\on{pt}/G)_\dr)\simeq \on{ShvCat}((BG)_\dr)\simeq \on{ShvCat}(B(G_\dr)),$$
and by \thmref{t:main DR} and \secref{ss:groups}, we have
$$\on{ShvCat}(B(G_\dr))\simeq G_\dr\mmod.$$

\sssec{}

Let us now prove \propref{p:dr stack}. We will show that the functor $\bGamma_{B(G_\dr)}$ 
fails to be conservative for $G=\BG_a$.

\medskip

Consider the following two objects $\bD_1,\bD_2\in G_\dr\mmod$. Namely, we take $\bD_1=\Vect$,
with the trivial action, and $\bD_2:=\QCoh(G_\dr)$. There is a canonical map $\bD_2\to \bD_1$,
which is \emph{not} an equivalence. However, we claim that it becomes an equivalence after applying
the functor $\bGamma_{B(G_\dr)}$.

\medskip

Indeed, it is easy to see that $\Iinv_{G_\dr}(\QCoh(G_\dr))\simeq \Vect$. Note that
$$\Iinv_{G_\dr}(\Vect)\simeq \QCoh(B(G_\dr)).$$

\medskip

Thus, it remains to show that the natural functor
$$\Vect\to \QCoh(B(G_\dr))$$
is an equivalence for $G=\BG_a$. 

\sssec{}

We calculate $\QCoh(B(G_\dr))$ as
$$\on{Tot}(\QCoh((G^\bullet)_\dr)).$$

Note, however, that for $G=\BG_a$, for any $n$, the pullback functor 
$$\Vect\simeq \QCoh(\on{pt})\to \QCoh((G^n)_\dr)$$
is fully faithful.  Since it is an equivalence on $0$-simplices, we obtain that
$$\on{Tot}(\Vect^\bullet) \to \on{Tot}(\QCoh((G^\bullet)_\dr))$$
is an equivalence, where $\Vect^\bullet$ is the constant co-simplicial category
with value $\Vect$. 

\medskip

Since the category $\bDelta$ os contractible, we obtain that
$$\Vect\to \on{Tot}(\Vect^\bullet)$$
is also an equivalence, implying the desired assertion.

\ssec{Classifying prestack of a formal completion: Proof of \thmref{t:HCh}} \label{ss:HCh}

\sssec{}

Let $G$ be a classical affine algebraic group of finite type, and let $H\subset G$
be a closed subgroup. Let $\CG$ be denotes the formal completion of $H$
in $G$. 

\medskip

We need to show that the prestack $B\CG$ is 1-affine. 

\sssec{}

Consider the tautological homomorphism $\CG\to G$, and the resulting map
$$B\CG\to BG.$$

Since $BG$ is 1-affine (by \thmref{t:BG} and \corref{c:shv via Cech}(b)), by \corref{c:1-affine base and fiber},
in order to show that $B\CG$ is 1-affine, it suffices to show that for $S\in (\affdgSch)_{/BG}$, the prestack
$$S\underset{BG}\times B\CG$$
is 1-affine. 

\medskip

We note that any map $S\to BG$ factors as $S\to \on{pt}\to BG$, so 
$$S\underset{BG}\times B\CG\simeq S\times (\on{pt} \underset{BG}\times B\CG).$$

By \corref{c:product 1-affine}, we obtain that
it suffices to show that the prestack $\on{pt} \underset{BG}\times B\CG$ is 1-affine. 

\sssec{}

Note now that we have a canonical map
$$\on{pt} \underset{BG}\times B\CG\to (G/H)_{\dr},$$
which becomes an isomorphism after \'etale sheafification.

\medskip

This implies that $\on{pt} \underset{BG}\times B\CG$ is 1-affine by \thmref{t:main DR} and
\corref{c:shv via Cech}(b).

\section{Infinitesimal loop spaces} \label{s:inf loop}

\ssec{The setting}

\sssec{}

Consider the following situation. Let $Z$ be an affine DG scheme locally almost of finite type, and $\iota:\on{pt}\to Z$ a point
with image $z$.  

\medskip

Consider the adjoint pairs of functors:
$$\iota^*:\QCoh(Z)_{\{z\}}\rightleftarrows \Vect:\iota_*$$
and 
$$\iota_*:\Vect\rightleftarrows \QCoh(Z)_{\{z\}}:\iota^{\QCoh,!}.$$

\begin{conj} \label{conj:pre-loop}  
Assume that $Z$ is eventually coconnective. Then the functor $\iota^{\QCoh,!}$ is monadic.
\end{conj}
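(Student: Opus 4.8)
The natural approach is to verify the hypotheses of the Barr--Beck--Lurie theorem (\cite[Theorem 6.2.2.5]{Lu2}) for the functor $\iota^{\QCoh,!}$, whose left adjoint is the given functor $\iota_*$. Two of the three conditions are essentially formal. First, since $\iota_*$ is t-exact (it is pushforward along a closed embedding), its right adjoint $\iota^{\QCoh,!}$ is left t-exact; concretely, on objects supported at $z$ one has $\iota^{\QCoh,!}(\CF)\simeq \CMaps_{\QCoh(Z)}(\iota_*(k),\CF)$. Second, conservativity of $\iota^{\QCoh,!}$ is the assertion that $\iota_*(k)$ generates $\QCoh(Z)_{\{z\}}$ under colimits; by \propref{p:compl and supp} we may replace $\QCoh(Z)_{\{z\}}$ by $\QCoh(Z^\wedge_z)$, and the required generation is exactly the content of \cite[Sect. 2.6.8]{DrGa}. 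It is here that the hypothesis that $Z$ be eventually coconnective enters, just as in Step 2 of the proof of \propref{p:generation}(a).

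The entire difficulty is therefore concentrated in the third condition: that $\iota^{\QCoh,!}$ commute with $\iota^{\QCoh,!}$-split geometric realizations. This is genuinely delicate because $\iota^{\QCoh,!}$ is \emph{not} continuous --- the object $\iota_*(k)$ is not compact in $\QCoh(Z)_{\{z\}}$ unless $z$ is a regular point (already for $Z=\Spec(k[\epsilon]/\epsilon^2)$ the skyscraper has infinite Tor-dimension). Thus neither the left-completeness mechanism of \lemref{l:bounded} (which requires the functor to be bounded above, whereas $\iota^{\QCoh,!}$ of a heart object is typically unbounded) nor the compact-generator recognition theorem applies directly. The plan is to route the verification through $\IndCoh$, in the spirit of \secref{s:stacks}: on $\IndCoh(Z)_{\{z\}}\simeq \IndCoh(Z^\wedge_z)$ the generator $\iota^{\IndCoh}_*(k)$ is a \emph{coherent}, hence \emph{compact}, object, so the corresponding right adjoint $\iota^{\IndCoh,!}$ is continuous and conservative, and its monadicity is immediate. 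Moreover the two monads agree: since $\iota_*(k)$ is bounded coherent, the relevant $\CMaps$ is computed identically in $\QCoh$ and in $\IndCoh$, so that $\iota^{\QCoh,!}\circ \iota_*\simeq \iota^{\IndCoh,!}\circ \iota^{\IndCoh}_*$ as monads on $\Vect$.

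It then remains to descend the $\IndCoh$ monadicity to $\QCoh$. Using that $Z$ is eventually coconnective, one has the pair $(\Xi_Z,\Psi_Z)$ together with the fully faithful right adjoint $\Phi_Z$ of $\Psi_Z$, all compatible with restriction to the open complement $Z\setminus\{z\}$ and hence restricting to the supported subcategories; one checks as in the proof of \propref{p:generation}(b) that $\Psi_Z$, $\Xi_Z$ and $\Phi_Z$ intertwine the two monads. The main obstacle --- and the step I expect to absorb the bulk of the work --- is precisely to show that this comparison is tight enough to force $\iota^{\QCoh,!}$ to preserve the split geometric realizations in question, equivalently to identify the essential image of the comparison functor $\QCoh(Z^\wedge_z)\to \left(\iota^{\QCoh,!}\circ \iota_*\right)\mod(\Vect)$ with the whole target. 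This reconciliation of the $\QCoh$- and $\IndCoh$-pictures on the formal completion is a convergence problem of exactly the ``functional analysis within homological algebra'' type flagged in the introduction, and is where the eventual coconnectivity of $Z$ and the finiteness of $Z^\wedge_z$ must be used in an essential way.
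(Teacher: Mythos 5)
Your proposal sets out to prove \conjref{conj:pre-loop} in full generality, but you should be aware that this statement is left as a \emph{conjecture} in the paper: the author proves it only in the special cases of \propref{p:loop space} --- for $Z$ smooth (where $\QCoh$ and $\IndCoh$ coincide, so the $\IndCoh$ argument applies verbatim) and for $Z=\on{pt}\underset{\BA^n}\times \on{pt}$ (by the shift-of-grading trick, reducing to \thmref{t:BG}) --- together with a remark about quasi-smooth $Z$. What the paper does establish for all eventually coconnective $Z$ (in \secref{ss:loop space}) is precisely the part of your proposal that is sound: conservativity of $\iota^{\QCoh,!}$ (the paper argues via $\iota^{\QCoh,!}\simeq \iota^!\circ \Phi_Z$ with $\Phi_Z$ fully faithful; your generation argument via \cite[Sect. 2.6.8]{DrGa} is a legitimate alternative), so that monadicity reduces to the second Barr--Beck--Lurie condition.

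The fatal gap is your claim that the two monads agree, i.e.\ that $\iota^{\QCoh,!}\circ\iota_*\simeq \iota^!\circ\iota_{\IndCoh,*}$ as endofunctors of $\Vect$. This is false whenever $z$ is a singular point, for a reason you yourself record: $\iota_{\IndCoh,*}(k)$ is compact in $\IndCoh(Z)_{\{z\}}$, so the $\IndCoh$ monad is \emph{continuous}, namely $V\mapsto \End_{\IndCoh(Z)}(\iota_{\IndCoh,*}(k))\otimes V$; whereas $\iota_*(k)$ is not compact in $\QCoh(Z)_{\{z\}}$, and the $\QCoh$ monad $V\mapsto \CMaps_{\QCoh(Z)}(\iota_*(k),\iota_*(V))$ is \emph{not} continuous. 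The agreement of mapping complexes between coherent objects does not save you, because the monad must be evaluated on arbitrary $V\in\Vect$, and $\iota_*(V)$ is then no longer coherent. Concretely, for $Z=\Spec(k[\epsilon]/\epsilon^2)$ one has $\End_{\IndCoh(Z)}(\iota_{\IndCoh,*}(k))\simeq k[u]$ with $\deg(u)=1$, so the $\IndCoh$ monad sends $\underset{n\geq 0}\oplus\, k[n]$ to $\underset{n\geq 0}\oplus\, k[u][n]$, which is countable-dimensional in each cohomological degree; but $\CMaps_{\QCoh(Z)}\bigl(\iota_*(k),\iota_*(\underset{n\geq 0}\oplus\, k[n])\bigr)$ is computed by a totalization involving infinite \emph{products} along the periodic Koszul resolution of $k$, and is uncountable-dimensional in each degree. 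A continuous functor cannot be isomorphic to a non-continuous one, so no compatibility of $(\Xi_Z,\Psi_Z,\Phi_Z)$ with the open complement can repair this step. Indeed, if the monads did agree, then combining $\IndCoh$-monadicity with \propref{p:loop space}(2) would yield $\QCoh(Z)_{\{z\}}\simeq \IndCoh(Z)_{\{z\}}$ for the singular scheme $Z=\on{pt}\underset{\BA^n}\times \on{pt}$, which is false ($\Psi_Z$ is not an equivalence there; the two sides are Koszul dual, not equivalent). The discrepancy between the continuous $\IndCoh$ monad and its ``completed'' $\QCoh$ counterpart is exactly the convergence problem that the conjecture asks one to control, and it is why the statement remains open.
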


In \secref{ss:loop space} we will prove:

\begin{prop}  \hfill  \label{p:loop space}

\smallskip

\noindent{\em(1)} \conjref{conj:pre-loop} holds if $Z$ is smooth.

\smallskip

\noindent{\em(2)} \conjref{conj:pre-loop} holds if $Z$ is of the form $\on{pt}\underset{\BA^n}\times \on{pt}$.

\end{prop}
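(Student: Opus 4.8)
The plan is to verify monadicity of $\iota^{\QCoh,!}$ in both cases through the Barr--Beck--Lurie theorem \cite[Theorem 6.2.2.5]{Lu2}. The left adjoint of $\iota^{\QCoh,!}$ is the given functor $\iota_*$, so the relevant monad on $\Vect$ is $\iota^{\QCoh,!}\circ\iota_*$, and two things must be checked: that $\iota^{\QCoh,!}$ is conservative, and that it commutes with geometric realizations of $\iota^{\QCoh,!}$-split simplicial objects. For conservativity, an object $\CF\in \QCoh(Z)_{\{z\}}$ is killed by $\iota^{\QCoh,!}$ exactly when it lies in the right orthogonal of the image of $\iota_*$; hence conservativity is equivalent to the assertion that $\iota_*(\Vect)$ generates $\QCoh(Z)_{\{z\}}$ under colimits, i.e. that the skyscraper $\iota_*(k)$ generates. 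This is the generation statement of \cite[Sect. 2.6.8]{DrGa}, and it is precisely here that the eventual coconnectivity hypothesis enters. The genuine work is the second condition, and it is at this point that the two cases diverge.

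First I would treat the smooth case. Here $\iota\colon \on{pt}\to Z$ is a regular closed embedding of codimension $d=\dim_z Z$, so by the standard formula one has $\iota^{\QCoh,!}(\CF)\simeq \iota^*(\CF)\otimes \det(T_zZ)[-d]$; that is, $\iota^{\QCoh,!}$ agrees with $\iota^*$ up to tensoring by a line and a cohomological shift. Since $\iota^*$ is right $t$-exact, $\iota^{\QCoh,!}$ carries $\QCoh(Z)_{\{z\}}^{\leq 0}$ into $\Vect^{\leq d}$, i.e. it has bounded cohomological amplitude. As $\Vect$ is left-complete, \lemref{l:bounded} then shows that $\iota^{\QCoh,!}$ commutes with all geometric realizations of objects bounded above; an argument with the right-complete $t$-structure (writing an arbitrary object as the filtered colimit of its truncations, as in Step 2 of the proof of \thmref{t:alg}) upgrades this to the split geometric realizations demanded by Barr--Beck--Lurie. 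Together with conservativity, this yields monadicity.

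The case $Z=\on{pt}\underset{\BA^n}\times \on{pt}\simeq \Spec(\on{Sym}(V^*[1]))$ with $V=k^n$ is harder, and is where I expect the main obstacle. Writing $A:=\on{Sym}(V^*[1])$, we have $\QCoh(Z)_{\{z\}}=\QCoh(Z)=A\mmod$; the functor $\iota_*$ sends $k$ to the augmentation module and $\iota^{\QCoh,!}\simeq \CMaps_A(k,-)$. The amplitude argument now fails outright, since $\iota^{\QCoh,!}\iota_*(k)\simeq \CMaps_A(k,k)\simeq \on{Sym}(V[-1])$ is unbounded above, so $\iota^{\QCoh,!}$ has infinite cohomological amplitude and \lemref{l:bounded} does not apply. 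Conservativity remains easy, as the augmentation-ideal filtration of $A$ terminates ($\mathfrak{m}^{n+1}=0$), exhibiting $A$ in the subcategory generated by $k$. To obtain the split-realization condition I would instead use the monoidal structure: $Z=\on{pt}\underset{\BA^n}\times\on{pt}$ is the loop group, $\iota$ is its unit, and the convolution product on $\QCoh(Z)$ together with the $\iota_*$-action admit compatible left adjoints because $\BA^n$ is smooth and finite-dimensional (so $\QCoh\simeq \IndCoh$ and both $\Delta_*$ and $\iota_*$ acquire $\IndCoh$-left adjoints). This places us in the setting of \corref{c:tensor prod Beck-Chevalley}, exactly as in the proof of \lemref{l:inf monadic}, and the finite-dimensionality of $V$ is what makes the relevant bar construction converge. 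The substance of the obstacle is precisely this convergence: it is the finite, successful counterpart of the divergent computation of \lemref{l:inf non-monadic} that obstructs $1$-affineness of $\BA^\infty$.
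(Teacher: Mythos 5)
Your part (1) is correct in substance, and is essentially the paper's mechanism (continuity plus conservativity): note that your own formula $\iota^{\QCoh,!}\simeq \iota^*(-)\otimes\det(T_zZ)[-d]$ already shows that $\iota^{\QCoh,!}$ is \emph{continuous}, so it commutes with all geometric realizations and Barr--Beck--Lurie applies at once; the detour through \lemref{l:bounded} is unnecessary, and is in fact the one weak point of your write-up, since that lemma only treats simplicial objects in $\bC^{\leq 0}$ and your ``upgrade'' to arbitrary split simplicial objects is not an argument. (The paper instead works with $\IndCoh(Z)_{\{z\}}$, where $\iota^!$ is continuous and conservative, and uses $\QCoh\simeq\IndCoh$ for smooth $Z$.)

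Part (2) has a genuine gap: the convolution/Beck--Chevalley mechanism you invoke addresses the wrong adjunction. If, as in \lemref{l:inf monadic}, you take $\bA=\QCoh(\BA^n)$ acting on $\Vect$ via restriction to the origin, then \corref{c:tensor prod Beck-Chevalley} shows that the right adjoint of the tautological functor $\Vect\to\Vect\underset{\QCoh(\BA^n)}\otimes\Vect$ is monadic. But for finite $n$ one has $\Vect\underset{\QCoh(\BA^n)}\otimes\Vect\simeq\QCoh(Z)$ by \propref{p:quasi-aff}, and under this identification the tautological functor is $p^*$ for $p:Z\to\on{pt}$ (the paper states this explicitly in \secref{ss:A-infty}) --- \emph{not} $\iota_*$. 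So this route proves monadicity of $p_*$, which is trivially true because $Z$ is affine ($p_*$ is the forgetful functor $\on{Sym}(V[1])\mod\to\Vect$), and says nothing about $\iota^{\QCoh,!}$, whose left adjoint is $\iota_*$ for $\iota:\on{pt}\to Z$. If instead you use the group structure on $Z$ itself and the convolution category $\QCoh(Z)_{\on{conv}}$, then $\Vect\underset{\QCoh(Z)_{\on{conv}}}\otimes\Vect$ is related to $\QCoh$ of the delooping $BZ\simeq(\BA^n)^\wedge_0$, i.e.\ to \conjref{conj:pre-loop} for $\BA^n$ --- which is part (1) again, not part (2). The conjecture for $Z$ itself, in group-theoretic form, involves the loop group \emph{of} $Z$, namely $\Omega(Z,z)=\Spec(\on{Sym}(V[2]))$, and amounts to the assertion that the comparison functor $\Vect\underset{\QCoh(\Omega(Z,z))_{\on{conv}}}\otimes\Vect\to\QCoh(Z)$ is an equivalence; \corref{c:tensor prod Beck-Chevalley} cannot deliver this, since it only identifies the left-hand side with modules over a monad, while the equivalence of the comparison functor \emph{is} the conjecture. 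Your diagnosis that ``finite-dimensionality makes the bar construction converge'' is also a misreading of the obstruction: \lemref{l:inf monadic} (the bar-side monadicity) holds for $\BA^\infty$ as well; what fails there is the comparison with $\QCoh(\on{pt}\underset{\BA^\infty}\times\on{pt})$ (\lemref{l:inf non-monadic}), and for finite $n$ the analogous comparison holds for the trivial reason of affineness, yielding only the trivial statement above. (Note also that $\QCoh(Z)\not\simeq\IndCoh(Z)$ here: $Z$ is quasi-smooth but not smooth, which is precisely why this case carries content.) The missing, non-formal ingredient --- which is the paper's actual proof --- is the shift-of-grading (shearing) trick: the required equivalence $\Vect\underset{\on{Sym}(V[2])\mod}\otimes\Vect\simeq\on{Sym}(V[1])\mod$ is transported by the shearing auto-equivalence of $\Rep(\BG_m)$ to the unsheared statement $\Vect\underset{\QCoh(V^*)_{\on{conv}}}\otimes\Vect\simeq\Rep(V^*)$, which is supplied by \thmref{t:BG} (1-affineness of $B\BG_a^n$) together with \propref{p:1-aff and rigid}. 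It is there, and not in any bar-construction convergence, that the finite-dimensionality of $V$ enters.
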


\begin{rem}
One can show that \propref{p:loop space} implies that \conjref{conj:pre-loop} holds for any $Z$,
which is \emph{quasi-smooth}.
\end{rem}

\ssec{Consequences of \conjref{conj:pre-loop}}

In this subsection we will assume that \conjref{conj:pre-loop} holds for a given $(Z,z)$, 
and deduce some consequences.

\sssec{}

Consider the group-object of $\affdgSch$
$$\Omega(Z,z):=\on{pt}\underset{Z}\times \on{pt},$$
i.e., the (derived!) inertia group of $Z$ at $z$.

\medskip

We will prove:

\begin{thm} \label{t:inf loops}
Assume that $(Z,z)$ satisfies \conjref{conj:pre-loop}. Then the prestack $B(\Omega(Z,z))$
is 1-affine and we have a canonical equivalence of symmetric monodical categories
$$\Rep(\Omega(Z,z))\simeq \QCoh(Z)_{\{z\}}.$$
\end{thm}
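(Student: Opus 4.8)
Write $\CG := \Omega(Z,z) = \on{pt}\underset{Z}\times\on{pt}$. Since $Z$ is affine, $\CG$ is a group-object of $\affdgSch$, so the whole apparatus of \secref{s:classifying} applies: $\on{ShvCat}(B\CG)\simeq\CG\mmod$ and $\QCoh(B\CG)\simeq\Rep(\CG)$. The plan is to extract \emph{both} conclusions from the single monadicity input \conjref{conj:pre-loop}. First I would produce the symmetric monoidal identification $\Rep(\CG)\simeq\QCoh(Z)_{\{z\}}$; then I would read off $1$-affineness from the affine-group criterion \propref{p:1-aff and rigid}, verifying that its conditions \eqref{e:recovery map} and \eqref{e:from coinv to inv groups} both collapse, after the identification, to the very monadicity asserted in \conjref{conj:pre-loop}.

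\textbf{Identifying the category of representations.} The point $\iota:\on{pt}\to Z$ factors through the formal completion, so the canonical map $\pi:B\CG\to Z$ factors as $B\CG\to Z^\wedge_z\to Z$, giving a symmetric monoidal pullback functor $\bar\pi^*:\QCoh(Z)_{\{z\}}\simeq\QCoh(Z^\wedge_z)\to\Rep(\CG)$, where the first equivalence is \propref{p:compl and supp}. Both the source and $\Rep(\CG)=\Iinv_\CG(\Vect)$ carry forgetful functors to $\Vect$ (namely $\iota^*$ and $\on{oblv}_\CG$, where $\on{oblv}_\CG$ is pullback along $\on{pt}\to B\CG$), and $\bar\pi^*$ intertwines them by construction. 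To prove $\bar\pi^*$ is an equivalence I would compare the associated (co)monads on $\Vect$: the functor $\on{oblv}_\CG$ is always (co)monadic for an affine group, and since $\CG^{\times n}\simeq\on{pt}\underset{Z}\times\cdots\underset{Z}\times\on{pt}$ ($n+1$ factors) is the \v{C}ech nerve of $\iota$, the Beck--Chevalley formalism of \secref{s:Beck-Chevalley} presents $\Rep(\CG)=\on{Tot}(\QCoh(B^\bullet\CG))$ as (co)modules over the endofunctor computed from $\iota$. Base change in the Cartesian square defining $\CG$ identifies this endofunctor with the one built from $\iota_*$ and its adjoint governing $\QCoh(Z)_{\{z\}}$, at which point \conjref{conj:pre-loop} (equivalently, its dual form) supplies precisely the (co)monadicity needed to conclude that $\bar\pi^*$ is an equivalence. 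The symmetric monoidal structures match because on both sides they descend from $\QCoh(Z)$ through the monoidal functor $\pi^*$.

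\textbf{$1$-affineness.} Granting the identification, I would check the criteria of \propref{p:1-aff and rigid}. Condition \eqref{e:recovery map} is full faithfulness of $\bGamma^{\on{enh}}_{B\CG}$, and condition \eqref{e:from coinv to inv groups} concerns $\on{coind}_\CG$; after transporting along $\Rep(\CG)\simeq\QCoh(Z)_{\{z\}}$ and applying base change, each unwinds into the monadicity of $\iota^{\QCoh,!}$, i.e.\ into \conjref{conj:pre-loop} once more. (In the favorable, quasi-smooth situations one may instead establish rigidity of the convolution category and invoke the cleaner criterion \propref{p:1-aff and rigid'}, whose condition (c) is then fed by \corref{c:induction}.) Either way $B\CG$ is $1$-affine.

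\textbf{Main obstacle.} The hard part is the base-change/Koszul-duality bookkeeping of the middle step: matching the (co)monad produced by $\on{Tot}(\QCoh(B^\bullet\CG))$ --- morally the algebra of cochains/functions on the loop group $\CG$ --- with the monad $\iota^{\QCoh,!}\circ\iota_*$ whose modules form $\QCoh(Z)_{\{z\}}$, and verifying that the comparison is an equivalence of monads (not merely of underlying endofunctors) compatibly with the symmetric monoidal structure. This is exactly where convergence can break down in the infinite-dimensional regime, as the analysis of $\BA^\infty$ in \lemref{l:inf non-monadic} shows; the finiteness encoded in \conjref{conj:pre-loop} is precisely what forces the relevant limits and colimits to commute, rendering $\bar\pi^*$ an equivalence and the criterion of \propref{p:1-aff and rigid} applicable.
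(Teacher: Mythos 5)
Your overall architecture does match the paper's: first identify $\Rep(\CG)$ with $\QCoh(Z)_{\{z\}}$, then feed this into the affine-group criterion \propref{p:1-aff and rigid}(b'). One correction to the last step before the main point: after the identification, condition \eqref{e:recovery map} does \emph{not} unwind into \conjref{conj:pre-loop} a second time; it unwinds into the tautological equivalence \eqref{e:QCoh on loop}, i.e. $\Vect\underset{\QCoh(Z)_{\{z\}}}\otimes \Vect \simeq \QCoh(\Omega(Z,z))$, which follows from \lemref{l:tensor up local} and affine base change. Only condition \eqref{e:from coinv to inv groups} carries the content of the conjecture (this is how the paper argues in \corref{c:B loop group}, using \corref{c:B loop group prel}(b) for one condition and \eqref{e:QCoh on loop} for the other). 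Your parenthetical about rigidity of the convolution category is also not something the paper has for general $(Z,z)$, which is precisely why it invokes \propref{p:1-aff and rigid} rather than \propref{p:1-aff and rigid'} here.

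The genuine gap is in your middle step, where you prove that $\bar\pi^*$ is an equivalence. The presentation you invoke, $\Rep(\CG)=\on{Tot}(\QCoh(B^\bullet\CG))$ with $*$-pullback transition functors, satisfies the \emph{co-monadic} Beck--Chevalley condition: $\on{oblv}_\CG$ is co-monadic over $\Vect$ with co-monad $p_*p^*\simeq \CO(\CG)\otimes -$, which by base change is $\iota^*\iota_*$. So your comparison would require that $\iota^*:\QCoh(Z)_{\{z\}}\to \Vect$ be \emph{co-monadic}. That is not \conjref{conj:pre-loop}: the conjecture asserts monadicity of $\iota^{\QCoh,!}$, i.e. that $\QCoh(Z)_{\{z\}}$ is modules over the monad $\iota^{\QCoh,!}\circ\iota_*$, which is the \emph{right adjoint} of the co-monad $\iota^*\iota_*$. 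For an adjoint pair in this direction (co-monad left adjoint to monad) there is no formal identification of the two Eilenberg--Moore categories --- this is exactly the torsion-versus-completion dichotomy --- so "\,equivalently, its dual form\," is an unproved and a priori inequivalent assertion, and it is nowhere established in the paper. What the paper actually does is match the monad $\iota^{\QCoh,!}\circ\iota_*$ not against $\Rep(\CG)$ (invariants) but against the \emph{coinvariants} $\Vect\underset{\QCoh(\CG)_{\on{conv}}}\otimes\Vect$: by \lemref{l:coinduction} (Beck--Chevalley plus \cite[Lemma 1.3.3]{DGCat}) this coinvariants category is monadic over $\Vect$ with monad $p_*\circ(p_*)^R\simeq \iota^{\QCoh,!}\circ\iota_*$, so the conjecture yields \propref{p:loop space cons}. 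The passage from coinvariants to $\Rep(\CG)$ --- exactly what your $\bar\pi^*$ tries to shortcut --- is then obtained by proving the module-2-category equivalence \propref{p:categories acted on by the loop group} (from \propref{p:loop space cons} together with \eqref{e:QCoh on loop}), from which $\Rep(\CG)\simeq\QCoh(Z)_{\{z\}}$ is read off as \propref{p:loop group reps}. To repair your argument you must either prove co-monadicity of $\iota^*$ on $\QCoh(Z)_{\{z\}}$ from the conjecture (a nontrivial missing step), or reroute through coinvariants as the paper does.
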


The rest of this subsection is devoted to the proof of this theorem.

\sssec{}

Note that the functor $\iota_*:\Vect\to \QCoh(Z)_{\{z\}}$ naturally upgrades to a functor:
\begin{equation} \label{e:loop space 1}
\Vect\underset{\QCoh(\Omega(Z,z))_{\on{conv}}}\otimes \Vect\to \QCoh(X)_{\{z\}}.
\end{equation}

Moreover, the functor \eqref{e:loop space 1} upgrades to a map in $\QCoh(Z)\mmod$, where $\QCoh(Z)$ 
acts on $\Vect$
via $\iota^*$.

\medskip

We claim (assuming that the pair $(Z,z)$ satisfies \conjref{conj:pre-loop}):

\begin{prop} \label{p:loop space cons} 
The functor \eqref{e:loop space 1} is an equivalence. 
\end{prop}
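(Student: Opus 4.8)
The plan is to realize both sides of \eqref{e:loop space 1} as categories of modules over one and the same monad on $\Vect$, via forgetful functors that are monadic, and then to identify \eqref{e:loop space 1} with the resulting comparison equivalence. Write $\CG:=\Omega(Z,z)=\on{pt}\underset{Z}\times\on{pt}$, let $p:\CG\to\on{pt}$ be the projection, and denote by $\Phi$ the functor \eqref{e:loop space 1}. Let $\sS:\Vect\to \Vect\underset{\QCoh(\CG)_{\on{conv}}}\otimes\Vect$ be the tautological functor and $\sT$ its right adjoint. By the very construction of \eqref{e:loop space 1} as the upgrade of $\iota_*$, we have $\Phi\circ\sS\simeq\iota_*$.

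On the source, \lemref{l:coinduction} already supplies what we need: $\sT$ is monadic, and the corresponding monad on $\Vect$ identifies, as a plain endofunctor, with $p_*\circ(p_*)^R$. On the target, \conjref{conj:pre-loop}, which we are assuming holds for the pair $(Z,z)$, says precisely that $\iota^{\QCoh,!}$ is monadic, so that $\QCoh(Z)_{\{z\}}$ is identified with modules over the monad $\iota^{\QCoh,!}\circ\iota_*$ on $\Vect$. Thus both sides are monadic over $\Vect$, with forgetful functors $\sT$ and $\iota^{\QCoh,!}$, respectively, and it remains to compare the two monads and to check that $\Phi$ is a morphism of these two monadic adjunctions over $\Vect$.

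The comparison of the underlying endofunctors is base change. Applied to the Cartesian square defining $\CG$, in which both projections $\CG\to\on{pt}$ coincide with $p$, the standard isomorphism $\iota^*\circ\iota_*\simeq p_*\circ p^*$ yields, upon passage to right adjoints, an isomorphism $\iota^{\QCoh,!}\circ\iota_*\simeq p_*\circ(p_*)^R$, matching the monad of \lemref{l:coinduction}. Granting in addition that $\Phi$ commutes with the two forgetful functors, i.e. that $\iota^{\QCoh,!}\circ\Phi\simeq\sT$ — which, together with $\Phi\circ\sS\simeq\iota_*$, forces the induced map of monads to be exactly the above base-change isomorphism — the formal comparison of monadic adjunctions (a direct consequence of the Barr--Beck--Lurie theorem) gives that $\Phi$ is an equivalence.

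The main obstacle is exactly this last compatibility: upgrading the base-change identification of endofunctors to an isomorphism of \emph{monads}, and verifying that $\Phi$ genuinely lies over $\Vect$ in the sense $\iota^{\QCoh,!}\circ\Phi\simeq\sT$. I would handle it by observing that $\Phi$ is a morphism in $\QCoh(Z)\mmod$ and that both $\sT$ and $\iota^{\QCoh,!}$ are $\QCoh(Z)$-linear (the latter via the $!$-projection formula, where $\QCoh(Z)$ acts on $\Vect$ through $\iota^*$); since the source is generated under colimits by the free modules $\sS(V)$, on which $\iota^{\QCoh,!}\circ\Phi\circ\sS\simeq\iota^{\QCoh,!}\circ\iota_*\simeq\sT\circ\sS$ holds tautologically, a $\QCoh(Z)$-linear colimit-preserving functor out of it is pinned down by these values, forcing $\iota^{\QCoh,!}\circ\Phi\simeq\sT$. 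The accompanying compatibility of monad structures is then tracked through the Beck--Chevalley data of the \v{C}ech nerve $B^\bullet\CG$ of $\iota:\on{pt}\to Z$, which is the common geometric source of both monad multiplications.
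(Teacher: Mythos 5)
Your proposal is correct and follows essentially the same route as the paper: monadicity of the source via \lemref{l:coinduction}, monadicity of the target via \conjref{conj:pre-loop}, and identification of the two monads as plain endofunctors through the chain $p_*\circ (p_*)^R\simeq (p_*\circ p^*)^R\simeq (\iota^*\circ\iota_*)^R\simeq \iota^{\QCoh,!}\circ\iota_*$ coming from base change along the Cartesian square defining $\Omega(Z,z)$. The compatibility you single out as the main obstacle is exactly what the paper dismisses as ``unwinding the definitions''; the only caution is that your colimit-generation argument should be recast as identifying $\Phi$ with the extension-of-scalars functor along the monad isomorphism (both are continuous and agree on free objects), since the composites $\sT$ and $\iota^{\QCoh,!}\circ\Phi$ are themselves discontinuous and hence not pinned down by their values on generators.
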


\begin{proof}

By \lemref{l:coinduction}, the right adjoint of the functor
$$\Vect\to \Vect\underset{\QCoh(\Omega(Z,z))_{\on{conv}}}\otimes \Vect$$
is monadic. Hence, to prove the assertion of the proposition, it remains to show that
the functor \eqref{e:loop space 1} induces an isomorphism of the resulting
monads on $\Vect$, regarded as plain endo-functors. 

\medskip

However, unwinding the definitions, we obtain that the resulting map of endo-functors
is 
$$p_*\circ (p_*)^R\simeq (p_*\circ p^*)^R\simeq (\iota^*\circ \iota_*)^R\simeq
(\iota_*)^R\circ \iota_*,$$
where the isomorphism $p_*\circ p^*\simeq \iota^*\circ \iota_*$ comes from base change
along the Cartesian diagram
$$
\CD
\on{pt}\underset{Z}\times \on{pt}   @>{p}>>  \on{pt} \\
@V{p}VV   @VV{\iota}V  \\
\on{pt}  @>{\iota}>> Z.
\endCD
$$

\end{proof}

\sssec{}

Consider now the category
$$\on{ShvCat}(Z^\wedge_{\{z\}}),$$
which according to \thmref{t:main formal}, identifies with 
$$\QCoh(Z^\wedge_{\{z\}})\mmod\simeq \QCoh(Z)_{\{z\}}\mmod.$$

\medskip

Consider the functor
\begin{equation} \label{e:taking the fiber}
\QCoh(Z)_{\{z\}}\mmod\to \StinftyCat_{\on{cont}}, \quad \bC\mapsto 
\Vect\underset{\QCoh(Z)_{\{z\}}}\otimes \bC\simeq \Vect\underset{\QCoh(Z)}\otimes \bC
\end{equation}
(the last equivalence is due to \lemref{l:tensor up local}).

\medskip

Note that since $\QCoh(Z)$ is rigid, by \corref{c:rigid Hochschild}, we can rewrite the functor \eqref{e:taking the fiber}
also as 
$$\uHom_{\QCoh(Z)}(\Vect,\bC)\simeq \uHom_{\QCoh(Z)_{\{z\}}}(\Vect,\bC).$$

We note that the functor \eqref{e:taking the fiber} naturally upgrades to a functor
\begin{equation} \label{e:taking the fiber enh}
\QCoh(Z^\wedge_{\{z\}})\mmod\to \Omega(Z,z)\mmod
\end{equation}
by regarding $\Vect$ as equipped with the trivial action of $\Omega(Z,z)$ that commutes with one
of $\QCoh(Z)$. 

\sssec{}

We claim (assuming that the pair $(Z,z)$ satisfies \conjref{conj:pre-loop}):

\begin{prop} \label{p:categories acted on by the loop group}
The functor \eqref{e:taking the fiber enh} is an equivalence.
\end{prop}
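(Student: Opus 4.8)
The plan is to produce an explicit quasi-inverse to the functor \eqref{e:taking the fiber enh} and to read off the two round-trip composites from \propref{p:loop space cons}. First I would turn both sides into honest module categories over monoidal DG categories. Since $\Omega(Z,z)$ is an affine group DG scheme, the self-duality \eqref{e:self-duality affine} gives $\Omega(Z,z)\mmod\simeq \QCoh(\Omega(Z,z))_{\on{conv}}\mmod$; write $\sB:=\QCoh(\Omega(Z,z))_{\on{conv}}$. On the other side, since $Z$ is affine the prestack $Z^\wedge_{\{z\}}$ is $1$-affine by \thmref{t:main formal}, so $\on{ShvCat}(Z^\wedge_{\{z\}})\simeq \QCoh(Z^\wedge_{\{z\}})\mmod\simeq \QCoh(Z)_{\{z\}}\mmod$ by \propref{p:compl and supp}; write $\sA:=\QCoh(Z)_{\{z\}}$ with its pointwise monoidal structure. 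Under these identifications the functor \eqref{e:taking the fiber enh} becomes $\Phi:=\Vect\underset{\sA}{\otimes}(-):\sA\mmod\to\sB\mmod$, where $\Vect$ carries the commuting action of $\sA$ (via $\iota^*$) and of $\sB$ (via the group structure on $\Omega(Z,z)$).

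The candidate inverse is $\Psi:=\Vect\underset{\sB}{\otimes}(-):\sB\mmod\to\sA\mmod$, i.e.\ the reconstruction functor $\Rrec_{\Omega(Z,z)}$ of \secref{ss:groups}. To see that $\Phi$ and $\Psi$ are mutually inverse it suffices to show that $\Vect$ is an invertible $(\sB,\sA)$-bimodule, which amounts to the two bimodule equivalences $\Vect\underset{\sB}{\otimes}\Vect\simeq \sA$ and $\Vect\underset{\sA}{\otimes}\Vect\simeq\sB$. The first is exactly the content of \propref{p:loop space cons} (the functor \eqref{e:loop space 1}), already proven to be an equivalence, promoted to the bimodule level. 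The second I would obtain from \propref{p:nice base change}: since $Z^\wedge_{\{z\}}$ is $1$-affine, $\bGamma^{\on{enh}}_{Z^\wedge_{\{z\}}}$ is fully faithful, whence
$$\Vect\underset{\sA}{\otimes}\Vect\simeq \QCoh\Big(\on{pt}\underset{Z^\wedge_{\{z\}}}\times\on{pt}\Big)\simeq \QCoh(\Omega(Z,z)),$$
and, using \lemref{l:tensor up local} to replace $\sA$ by the rigid category $\QCoh(Z)$ (the trivial case of \lemref{l:pass rigid}), the composition monoidal structure on the left matches the convolution structure $\sB$ on the right. Granting the two bimodule equivalences, the standard computations $\Psi\Phi(\bC)\simeq\big(\Vect\underset{\sB}{\otimes}\Vect\big)\underset{\sA}{\otimes}\bC\simeq\sA\underset{\sA}{\otimes}\bC\simeq\bC$ and $\Phi\Psi(\bD)\simeq\big(\Vect\underset{\sA}{\otimes}\Vect\big)\underset{\sB}{\otimes}\bD\simeq\sB\underset{\sB}{\otimes}\bD\simeq\bD$ conclude the proof; along the way this records the monoidal identification $\Rep(\Omega(Z,z))\simeq\QCoh(Z)_{\{z\}}$ required by \thmref{t:inf loops}.

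The main obstacle I expect is the bookkeeping of bimodule and monoidal structures rather than any new analytic input. Specifically, I must upgrade the equivalence of \propref{p:loop space cons} from an equivalence of left $\sA$-modules (which is what \eqref{e:loop space 1} supplies directly) to one of $(\sA,\sA)$-bimodules, and I must verify that the \propref{p:nice base change} identification intertwines the endomorphism/composition monoidal structure on $\Vect\underset{\sA}{\otimes}\Vect$ with the convolution structure on $\QCoh(\Omega(Z,z))$; this last compatibility is where the rigidity of $\QCoh(Z)$ and the base-change formalism of \secref{s:functors} are essential.

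As a fallback, should the bimodule coherences prove awkward, I would instead argue comonadically: the composite $\Oblv_{\Omega(Z,z)}\circ\Phi$ is the restriction $\coRres_b$ along $b:\on{pt}\to Z^\wedge_{\{z\}}$, whose Čech nerve is $B^\bullet\Omega(Z,z)$. Via \lemref{l:tensor up local} and the dualizability of $\Vect$ over the rigid category $\QCoh(Z)$ (\lemref{l:dualizable in rigid}), $\coRres_b$ commutes with all limits, and using \conjref{conj:pre-loop} it is conservative; comonadic Barr--Beck--Lurie then shows $\coRres_b$ is comonadic. Matching its comonad with that of $\Oblv_{\Omega(Z,z)}$ on $\Omega(Z,z)\mmod$ (the co-monadic Beck--Chevalley formalism of \secref{s:Beck-Chevalley}) by base change along the Čech nerve, and comparing comodule categories, yields that $\Phi$ is an equivalence.
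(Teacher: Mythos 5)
Your proposal is correct and takes essentially the same route as the paper: your candidate inverse $\bD\mapsto \Vect\underset{\QCoh(\Omega(Z,z))_{\on{conv}}}\otimes \bD$ is exactly the paper's functor \eqref{e:going from fiber}, and mutual inverseness is deduced, just as in the paper, from the tautological equivalence \eqref{e:QCoh on loop} combined with \propref{p:loop space cons}. Your additional care with bimodule coherences and the comonadic fallback are elaborations of details the paper leaves implicit, not a genuinely different argument.
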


\begin{proof}
We construct a functor 
$$\Omega(Z,z)\mmod\to \QCoh(Z)\mmod$$
by 
\begin{equation} \label{e:going from fiber}
\bD\mapsto \Vect\underset{\QCoh(\Omega(Z,z))_{\on{conv}}}\otimes \bD;
\end{equation}

It is easy to see that the essential image of \eqref{e:going from fiber} lies in the full subcategory
$$\QCoh(Z^\wedge_{\{z\}})\mmod\subset \QCoh(Z)\mmod.$$

\medskip

We claim that the functors \eqref{e:taking the fiber} and \eqref{e:going from fiber}
are mutually inverse. Indeed, this follows from the (tautological) equivalence
\begin{equation} \label{e:QCoh on loop}
\Vect\underset{\QCoh(Z)_{\{z\}}}\otimes \Vect \simeq \Vect\underset{\QCoh(Z)}\otimes \Vect
\simeq \QCoh(\Omega(Z,z))
\end{equation}
combined with that of \eqref{e:loop space 1}.

\end{proof}

\sssec{}

Note that the functor $\iota^*$ naturally upgrades to a functor
\begin{equation} \label{e:loop space 2}
\QCoh(X)_{\{z\}}\to \uHom_{\QCoh(\Omega(Z,z))_{\on{conv}}}(\Vect,\Vect).
\end{equation} 

We claim (still assuming that the pair $(Z,z)$ satisfies \conjref{conj:pre-loop}):

\begin{prop}  \label{p:loop group reps}  
The functor \eqref{e:loop space 2} is an equivalence.
\end{prop}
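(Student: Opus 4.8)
The plan is to prove that \eqref{e:loop space 2}, which I will denote $\Phi$, is an equivalence by realizing both its source and its target as co-modules over a co-monad on $\Vect$ and matching these co-monads. By construction $\Phi$ is the enhancement of $\iota^*$ along the augmentation of $\Rep(\Omega(Z,z))$, so that $\oblv_{\Omega(Z,z)}\circ \Phi\simeq \iota^*$, where $\oblv_{\Omega(Z,z)}:\Rep(\Omega(Z,z))\to \Vect$ is the forgetful functor. Since $\Omega(Z,z)$ is an affine group DG scheme, the discussion of affine group DG schemes above shows that $\oblv_{\Omega(Z,z)}$ is co-monadic, with right adjoint $\on{coind}_{\Omega(Z,z)}$ and associated co-monad isomorphic, as a plain endofunctor of $\Vect$, to $V\mapsto V\otimes \Gamma(\Omega(Z,z),\CO)$, where $\Gamma(\Omega(Z,z),\CO)=p_*(\CO)$ for $p:\Omega(Z,z)\to \on{pt}$. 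Thus it suffices to prove (i) that $\iota^*:\QCoh(Z)_{\{z\}}\to \Vect$ is co-monadic, and (ii) that $\Phi$ identifies the co-monad $\iota^*\circ \iota_*$ with $\oblv_{\Omega(Z,z)}\circ \on{coind}_{\Omega(Z,z)}$; granting these, the comparison functors for the two co-monadic adjunctions, together with the resulting identification of co-modules, exhibit $\Phi$ as an equivalence.

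Step (ii) is essentially already available: base change along the Cartesian square with $\Omega(Z,z)=\on{pt}\underset{Z}\times \on{pt}$ (the same one used in the proof of \propref{p:loop space cons}) yields $\iota^*\circ \iota_*\simeq p_*\circ p^*$, and $p_*\circ p^*(V)\simeq V\otimes \Gamma(\Omega(Z,z),\CO)$; one then checks that this identification is the one induced by $\Phi$. The real content is Step (i). Conservativity of $\iota^*$ on $\QCoh(Z)_{\{z\}}$ is a Nakayama-type statement for objects set-theoretically supported at $z$. The remaining, and genuinely hard, input is that $\iota^*$ preserves totalizations of $\iota^*$-split cosimplicial objects, so that the co-monadic form of the Barr--Beck--Lurie theorem applies. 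I expect this to be exactly the dual of \conjref{conj:pre-loop}: under the canonical self-duality of $\QCoh(Z)_{\{z\}}$ interchanging $\iota^*$ with $\iota^{\QCoh,!}$ and fixing $\iota_*$, co-monadicity of $\iota^*$ is equivalent to the monadicity of $\iota^{\QCoh,!}$ that we are assuming. This passage between a colimit-convergence statement and its limit-dual is the main obstacle, and is where the standing hypothesis \conjref{conj:pre-loop} is consumed.

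Once $\Phi$ is an equivalence, it is automatically an equivalence of symmetric monoidal categories, since its underlying functor $\iota^*$ is symmetric monoidal and the enhancement is compatible with the monoidal structures; this is the remaining assertion of \thmref{t:inf loops}, and $1$-affineness of $B(\Omega(Z,z))$ then follows by combining \propref{p:loop space cons} with the criterion \propref{p:1-aff and rigid}. Finally, I note a more formal route that avoids re-proving co-monadicity by hand: \propref{p:categories acted on by the loop group} provides a $\StinftyCat_{\on{cont}}$-enriched equivalence $\QCoh(Z)_{\{z\}}\mmod\simeq \Omega(Z,z)\mmod$ carrying the unit object $\QCoh(Z)_{\{z\}}$ to the trivial module $\Vect$; passing to internal endomorphisms of these units identifies $\QCoh(Z)_{\{z\}}\simeq \uHom_{\Omega(Z,z)}(\Vect,\Vect)=\Rep(\Omega(Z,z))$, and unwinding the construction shows the resulting functor is precisely \eqref{e:loop space 2}. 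The only point needing care in this alternative is that the equivalence of \propref{p:categories acted on by the loop group}, being given by mutually inverse tensoring functors, is enriched and hence preserves internal Hom-categories.
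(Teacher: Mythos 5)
Your closing ``more formal route'' is, in fact, the paper's entire proof: the paper deduces this proposition in one line from \propref{p:categories acted on by the loop group}, exactly as you describe --- the equivalence $\QCoh(Z)_{\{z\}}\mmod\simeq \Omega(Z,z)\mmod$ is implemented by tensoring functors, hence is compatible with the $\StinftyCat_{\on{cont}}$-enrichment, it matches the unit $\QCoh(Z)_{\{z\}}$ with the trivial module $\Vect$, and passing to internal endomorphisms of these units yields precisely \eqref{e:loop space 2}. So that part is correct and should be your proof, not a remark.

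Your primary route, however, has a genuine gap at Step (i), and it is located exactly where you flagged the conjecture as being ``consumed.'' There is no self-duality of $\QCoh(Z)_{\{z\}}$ interchanging $\iota^*$ with $\iota^{\QCoh,!}$ and fixing $\iota_*$. Duality in $\StinftyCat_{\on{cont}}$ is defined only for continuous functors and produces continuous functors; if a self-duality fixed $\iota_*$, then dualizing the adjunction $\iota^*\dashv \iota_*$ would identify $(\iota^*)^\vee$ with the \emph{right} adjoint of $\iota_*$, i.e.\ with $\iota^{\QCoh,!}$, forcing $\iota^{\QCoh,!}$ to be continuous. But $\iota^{\QCoh,!}$ is discontinuous in precisely the cases \conjref{conj:pre-loop} is about: for $Z=\on{pt}\underset{\BA^n}\times \on{pt}=\Spec(\on{Sym}(V[1]))$ one has $\QCoh(Z)_{\{z\}}=\QCoh(Z)$ (the classical reduction of $Z$ is a point), and $\iota_*(k)$ is not compact there, since $\CMaps_{\on{Sym}(V[1])}(k,k)\simeq \on{Sym}(V^*[-2])$ has cohomology in infinitely many degrees, so $k$ is not perfect. (What the actual naive self-duality of $\QCoh(Z)$ does is exchange $\iota^*$ with $\iota_*$.) The same discontinuity also blocks any formal exchange of ``monadicity'' and ``co-monadicity'' through duality, since Barr--Beck--Lurie conditions involve geometric realizations on one side and totalizations on the other, and these are exchanged by duality only for continuous functors.

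The upshot is that your main route never actually reduces co-monadicity of $\iota^*$ to \conjref{conj:pre-loop}; given your Step (ii), co-monadicity of $\iota^*$ on $\QCoh(Z)_{\{z\}}$ is essentially a restatement of the proposition itself, so nothing has been gained. The paper consumes the conjecture through a different door: in \propref{p:loop space cons} it compares two \emph{monads} on $\Vect$ --- the one from \lemref{l:coinduction} governing $\Vect\underset{\QCoh(\Omega(Z,z))_{\on{conv}}}\otimes \Vect$, and $\iota^{\QCoh,!}\circ \iota_*$, whose monadicity is exactly the conjecture --- to prove \eqref{e:loop space 1} is an equivalence; then \propref{p:categories acted on by the loop group} follows, and the present proposition drops out formally. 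In that chain, co-monadicity of $\iota^*$ appears as a corollary, never as an input.
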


\begin{proof}
Follows from \propref{p:categories acted on by the loop group}.
\end{proof}

\begin{cor} \hfill \label{c:B loop group prel}

\smallskip

\noindent{\em(a)} There exists a canonical equivalence
$$\Rep(\Omega(Z,z))\simeq \QCoh(Z)_{\{z\}}.$$

\smallskip

\noindent{\em(b)} The map \eqref{e:from coinv to inv groups} is an equivalence for 
$\Omega(Z,z)$.
\end{cor}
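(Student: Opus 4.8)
The plan is to deduce both parts directly from the two equivalences \eqref{e:loop space 1} and \eqref{e:loop space 2} already established in \propref{p:loop space cons} and \propref{p:loop group reps}, under the standing assumption that $(Z,z)$ satisfies \conjref{conj:pre-loop}.

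First I would settle part (a). Since $\Omega(Z,z)=\on{pt}\underset{Z}\times \on{pt}$ is an affine group DG scheme and $\QCoh(\Omega(Z,z))$ is dualizable, the identification $\CG\mmod\simeq \QCoh(\CG)_{\on{conv}}\mmod$ of \secref{sss:on G dualizable} applies with $\CG=\Omega(Z,z)$, and under it the trivial module $\Vect$ corresponds to $\Vect$ with its augmentation action. Consequently $\Rep(\Omega(Z,z))=\uHom_{\Omega(Z,z)}(\Vect,\Vect)$ is, by definition, the target $\uHom_{\QCoh(\Omega(Z,z))_{\on{conv}}}(\Vect,\Vect)$ of \eqref{e:loop space 2}. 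Thus \propref{p:loop group reps} yields the desired equivalence $\QCoh(Z)_{\{z\}}\simeq \Rep(\Omega(Z,z))$ on the nose. To record that it is symmetric monoidal, I would observe that \eqref{e:loop space 2} enhances the symmetric monoidal functor $\iota^*$ and that the symmetric monoidal structure on $\Rep(\Omega(Z,z))$ is the one transported from the pointwise tensor product via the right-lax structure of $\Iinv$; these coincide by Eckmann--Hilton, so no further computation is needed.

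For part (b) the plan is to show that the map \eqref{e:from coinv to inv groups}, i.e.\ $\on{coind}_\CG$ for $\CG=\Omega(Z,z)$, factors as the composite of the two equivalences, $\eqref{e:loop space 2}\circ\eqref{e:loop space 1}$, and is therefore itself an equivalence. The source of \eqref{e:from coinv to inv groups} is $\ccoinv_\CG(\Vect)$, the target is $\Iinv_\CG(\Vect)=\Rep(\CG)$, and these are exactly the source of \eqref{e:loop space 1} and the target of \eqref{e:loop space 2}; moreover \eqref{e:loop space 1} enhances $\iota_*$ while \eqref{e:loop space 2} enhances $\iota^*$. Since \eqref{e:from coinv to inv groups} lifts to a map $\ccoinv^{\on{enh}}_\CG(\Vect)\to \Iinv^{\on{enh}}_\CG(\Vect)$ in $\Rep(\CG)\mmod$, it is an equivalence as soon as its underlying functor is, so it suffices to verify the factorization at the level of plain DG categories.

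The one genuine point, and the expected main obstacle, is matching the abstractly defined comparison map $\on{coind}_\CG$ — assembled from the right adjoints of the co-face maps of $\on{co-Bar}^\bullet(\QCoh(\CG),\Vect)$ through the general recipe \eqref{e:from coinv to inv} — with the concrete composite built from $\iota_*$ and $\iota^*$. I would carry this out exactly as in the proof of \propref{p:loop space cons}: pre-compose with the structural functor $\sS:\Vect\to \ccoinv_\CG(\Vect)$ and post-compose with $\on{oblv}_\CG:\Iinv_\CG(\Vect)\to \Vect$, and identify the two resulting endofunctors of $\Vect$ with $\iota^*\circ\iota_*$ by base change along the Cartesian square defining $\Omega(Z,z)$, which gives $p_*\circ p^*\simeq \iota^*\circ\iota_*$. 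The monadicity provided by \lemref{l:coinduction} then upgrades this agreement of endofunctors to the desired equivalence of total categories, completing the proof.
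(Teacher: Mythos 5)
Your part (a) is correct and is exactly the paper's argument: after identifying $\Omega(Z,z)\mmod$ with $\QCoh(\Omega(Z,z))_{\on{conv}}\mmod$, point (a) is a restatement of \propref{p:loop group reps}.

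For part (b) your overall route --- deduce the statement from \propref{p:loop space cons} together with part (a) --- is the paper's route, but the verification you propose does not close the argument. Write $F$ for the map \eqref{e:from coinv to inv groups} and $G$ for the composite of \eqref{e:loop space 1} followed by \eqref{e:loop space 2}. What you check is that $\on{oblv}_\CG\circ F\circ \sS$ and $\on{oblv}_\CG\circ G\circ \sS$ agree as endofunctors of $\Vect$ (both are $p_*\circ p^*\simeq \iota^*\circ \iota_*$). This is true; in fact the stronger statement $F\circ \sS\simeq \on{coind}_\CG\simeq G\circ \sS$ holds, the first isomorphism by the very construction of \eqref{e:from coinv to inv}, the second because \eqref{e:loop space 2} intertwines $\iota^*$ with $\on{oblv}_\CG$, hence, passing to right adjoints, $\iota_*$ with $\on{coind}_\CG$. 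But such agreement does not imply that $F$ is an equivalence, and \lemref{l:coinduction} alone cannot ``upgrade'' it: a continuous functor can agree with an equivalence on the essential image of $\sS$, naturally, and still fail to be an equivalence. For instance, with $\bC_1=\bC_2=k[x]\mod$, $\sS$ the free-module functor, $G=\on{Id}$ and $F=k[x]\otimes_k(k\otimes^L_{k[x]}-)$, one has $F\circ\sS\simeq G\circ\sS$, yet $F$ annihilates $k[x,x^{-1}]$. Note also that the composites you compare are the \emph{co-monads} $p_*\circ p^*$, whereas any Barr--Beck-type comparison must be run with the monads $\sT\circ\sS\simeq p_*\circ (p_*)^R$ of \lemref{l:coinduction}; and such a comparison requires monadicity on \emph{both} sides, while \lemref{l:coinduction} supplies it only on the source.

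The missing input is precisely what part (a) contributes --- which is why the paper phrases (b) as ``combining (a) with \propref{p:loop space cons}'': under the equivalence of (a), the functor $(\on{coind}_\CG)^R:\Rep(\CG)\to \Vect$ corresponds to $\iota^{\QCoh,!}$, which is monadic by the standing assumption \conjref{conj:pre-loop}. Granting this, the argument closes. Since $F\circ\sS\simeq \on{coind}_\CG$ and $F$ is continuous, passing to right adjoints gives $\sT\circ F^R\simeq (\on{coind}_\CG)^R$, so $F^R$ intertwines the two monadic functors $\sT$ and $(\on{coind}_\CG)^R$ down to $\Vect$. The induced map of monads $\sT\circ\sS\to (\on{coind}_\CG)^R\circ \on{coind}_\CG$ is, after unwinding, the base-change isomorphism $p_*\circ (p_*)^R\simeq (\iota_*)^R\circ \iota_*$, exactly as in the proof of \propref{p:loop space cons}; hence $F^R$, and therefore $F$, is an equivalence. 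So your factorization picture is the right one, but the step ``agreement of endofunctors plus one-sided monadicity'' must be replaced by this two-sided monadic comparison, with part (a) invoked to provide monadicity on the $\Rep(\Omega(Z,z))$ side.
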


\begin{proof}
Point (a) is a reformulation of \propref{p:loop group reps}. Point (b) follows by combining point (a)
with \propref{p:loop space cons}. 
\end{proof}

Finally, we obtain (always assuming that the pair $(Z,z)$ satisfies \conjref{conj:pre-loop}):

\begin{cor}  \label{c:B loop group} 
The prestack $B(\Omega(Z,z))$ is 1-affine.
\end{cor}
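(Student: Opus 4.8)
The plan is to deduce this from the affine-case criterion \propref{p:1-aff and rigid}. Set $\CG:=\Omega(Z,z)$. Since $Z$ is an affine DG scheme, the fiber product $\CG=\on{pt}\underset{Z}\times \on{pt}$ is again affine, so $\CG$ is a group-object of $\affdgSch$ and \propref{p:1-aff and rigid} applies (the standing hypothesis that $\bLoc_\CG$ be fully faithful is automatic, as any affine DG scheme is tautologically 1-affine). Thus $B\CG$ is 1-affine as soon as condition (b) of that proposition holds, i.e. once I check that both the functor \eqref{e:from coinv to inv groups} and the functor \eqref{e:recovery map} are equivalences. Throughout I use the running assumption of this subsection that $(Z,z)$ satisfies \conjref{conj:pre-loop}.

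The first of these, condition (b)(1), requires no new argument: the assertion that \eqref{e:from coinv to inv groups} is an equivalence for $\Omega(Z,z)$ is exactly the content of \corref{c:B loop group prel}(b), which was itself obtained by combining \corref{c:B loop group prel}(a) with \propref{p:loop space cons}. So I would simply cite it.

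For condition (b)(2) I would first unwind the definition of \eqref{e:recovery map}: it is the plain functor underlying the adjunction map $\Rrec^{\on{enh}}_\CG(\Vect)\to \QCoh(\CG)$ of \eqref{e:recovery map enh}, that is, the canonical comparison $\Vect\underset{\Rep(\CG)}\otimes \Vect\to \QCoh(\CG)$. I then invoke the canonical equivalence $\Rep(\CG)\simeq \QCoh(Z)_{\{z\}}$ of \corref{c:B loop group prel}(a). Under this equivalence — which is the inverse of \eqref{e:loop space 2}, the upgrade of $\iota^*$ — the augmentation functor $\on{oblv}_\CG$ corresponds to $\iota^*$, so the $\Rep(\CG)$-module $\Vect$ is carried to the $\QCoh(Z)_{\{z\}}$-module $\Vect$ defined via $\iota^*$. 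Consequently the source of \eqref{e:recovery map} identifies with $\Vect\underset{\QCoh(Z)_{\{z\}}}\otimes \Vect$, and the latter is identified with $\QCoh(\Omega(Z,z))=\QCoh(\CG)$ by the tautological equivalence \eqref{e:QCoh on loop}. Having matched the two sides, both conditions of \propref{p:1-aff and rigid}(b) hold and $B(\Omega(Z,z))$ is 1-affine.

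The only step demanding genuine care — and hence the main, if modest, obstacle — is the final compatibility in the previous paragraph: confirming that the equivalence $\Vect\underset{\Rep(\CG)}\otimes \Vect\simeq \QCoh(\CG)$ assembled from \corref{c:B loop group prel}(a) and \eqref{e:QCoh on loop} agrees, \emph{as a functor}, with the independently defined map \eqref{e:recovery map}. This is a bookkeeping matter of tracking the $\QCoh(Z)$-module (equivalently $\Rep(\CG)$-module) structures through the chain of identifications and checking that the augmentation $\on{oblv}_\CG$ really does correspond to $\iota^*$; it introduces no new analytic input beyond what has already been established.
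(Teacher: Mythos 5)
Your proof is correct and is built from exactly the same ingredients as the paper's (\propref{p:1-aff and rigid}, \corref{c:B loop group prel}, and the equivalence \eqref{e:QCoh on loop}); the one substantive difference is which form of the criterion you invoke, and it is precisely the difference that makes the paper's proof a one-liner. You target condition (b) of \propref{p:1-aff and rigid}, i.e.\ that the \emph{canonical} functors \eqref{e:from coinv to inv groups} and \eqref{e:recovery map} are equivalences, and you correctly flag that the delicate point is then to check that your chain of identifications $\Vect\underset{\Rep(\CG)}\otimes\Vect \simeq \Vect\underset{\QCoh(Z)_{\{z\}}}\otimes\Vect\simeq \QCoh(\CG)$ agrees, \emph{as a functor}, with \eqref{e:recovery map}. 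The paper instead invokes condition (b'), which asks only for the \emph{existence} of isomorphisms $\ccoinv^{\on{enh}}_\CG(\Vect)\simeq \Rep(\CG)$ in $\Rep(\CG)\mmod$ and $\Rrec^{\on{enh}}_\CG(\Vect)\simeq \QCoh(\CG)_{\on{conv}}$ in $\CG\mmod$: no compatibility with the adjunction maps is required, because the proof of (b') $\Rightarrow$ (a) is a Morita-style argument in which the two bimodule identifications force $\ccoinv^{\on{enh}}_\CG$ and $\Rrec^{\on{enh}}_\CG$ to be mutually inverse on the nose. With (b'), condition (1) is \corref{c:B loop group prel}(b) verbatim, and condition (2) is exactly the chain of identifications you assemble (one still needs it to be an isomorphism in $\CG\mmod$, but not to compare it with \eqref{e:recovery map}). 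So your ``only step demanding genuine care'' can simply be deleted: replace the appeal to (b) by an appeal to (b'), and observe that once 1-affineness is established, the implication (a) $\Rightarrow$ (b) of the same proposition gives you, a posteriori and for free, that \eqref{e:recovery map} is an equivalence --- the very statement you were proposing to verify by hand.
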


\begin{proof}
Follows using \propref{p:1-aff and rigid}(b') from \corref{c:B loop group prel}(b) and 
the equivalence \eqref{e:QCoh on loop}.
\end{proof}

Note that Corollaries \ref{c:B loop group} and \ref{c:B loop group prel}(a) together amount to
the statement of \thmref{t:inf loops}. 

\ssec{Towards \conjref{conj:pre-loop}}   \label{ss:loop space}

\sssec{}

Consider the pair of adjoint functors
$$\iota_{\IndCoh,*}:\Vect\rightleftarrows \IndCoh(Z)_{\{z\}}:\iota^!.$$

We claim that the functor $\iota^!$ is monadic. Indeed, it is conservative by 
\cite[Proposition 4.1.7(a)]{IndCoh}, and is continuous. 

\medskip

Note that this implies the statement of \propref{p:loop space}(1), as in the smooth
case there is no difference between $\IndCoh$ and $\QCoh$.

\sssec{}

Let us show that the functor $\iota^{\QCoh,!}$ is conservative for $Z$ eventually coconnective. Recall
the functor
$$\Phi_Z:\QCoh(Z)\to \IndCoh(Z),$$
\emph{right} adjoint to the functor $\Psi_Z:\IndCoh(Z)\to \QCoh(Z)$. 

\medskip

Note that
$$\iota_*\simeq \Psi_Z\circ \iota_{\IndCoh,*},$$
and hence
$$\iota^{\QCoh,!}\simeq \iota^!\circ \Phi_Z.$$

We have just seen that the functor $\iota^!$ is conservative. Hence, it is enough to show that $Z$ eventually 
coconnective, the functor $\Phi_Z$ is conservative. 

\medskip

We claim that $\Phi_Z$ is in fact fully faithful. Indeed,
this follows from the fact that $\Psi_Z$ admits a fully faithful \emph{left} adjoint (see \cite[Proposition 1.5.3]{IndCoh}).

\medskip

Hence, we obtain that the monadicity of $\iota^{\QCoh,!}$ is equivalent to it satisfying the second condition in the
Barr-Beck-Lurie theorem.

\sssec{} \label{sss:loop space noncoconn}

Let us show that $\iota^{\QCoh,!}$ fails to be monadic for the non-eventually coconnective DG scheme
$Z=\on{pt}\underset{\on{pt}\underset{\BA^1}\times \on{pt}}\times \on{pt}$.
In fact, we claim that in this case, it fails to be conservative. 

\medskip

Indeed, $Z=\Spec(k[\xi])$, where $\deg(\xi)=-2$. The functor $\iota^{\QCoh,!}$ annihilates the module $k[\xi,\xi^{-1}]$. 

\ssec{Shift of grading and proof of \propref{p:loop space}(2)}

\sssec{Shift of grading}

In order to prove \propref{p:loop space}(2), we will use the ``shift of grading" trick (see, e.g., \cite[Sect. A.2]{AG}). 

\medskip

Consider
the symmetric monoidal DG category $\Rep(\BG_m)$, i.e., the category chain complexes of $\BZ$-graded vector spaces. 
It carries a canonical (symmetric monoidal self-equivalence), denoted 
$$M\mapsto M^{\on{shift}}.$$
Namely, the $n$-graded
piece of the $m$-th cohomology of $M^{\on{shift}}$ equals by definition the $n$-th graded piece of the $(m+2n)$-th
cohomology of $M$.

\medskip

If $A$ is an algebra object of $\Rep(\BG_m)$, we obtain an equivalence
\begin{equation} \label{e:shifted equivalence}
A\mod(\Rep(\BG_m))\simeq A^{\on{shift}}\mod(\Rep(\BG_m)).
\end{equation}

Note, however, that the equivalence \eqref{e:shifted equivalence} \emph{does not} commute with the
forgetful funcors
$$A\mod(\Rep(\BG_m))\to A\mod(\Vect) \text{ and } A^{\on{shift}}\mod(\Rep(\BG_m))\to A^{\on{shift}}\mod(\Vect).$$

\sssec{}

Let $\bO$ be an algebra object in the symmetric monoidal category $\on{ShvCat}(B\BG_m)\simeq \BG_m\mmod$,
and let $\bC_1$ and $\bC_2$ be right and left $\bO$-modules, respectively. Let
$$\bC_1\underset{\bO}\otimes \bC_2\to \bC$$
be a map in $\BG_m\mmod$.

\medskip

The following results from \thmref{t:BG} applied to $G=\BG_m$:

\begin{lem}  \label{l:grading trick}
The functor 
$\bC_1\underset{\bO}\otimes \bC_2\to \bC$ is an equivalence as plain DG categories if and only if
the functor
$$\Iinv_{\BG_m}(\bC_1)\underset{\Iinv_{\BG_m}(\bO)}\otimes \Iinv_{\BG_m}(\bC_2)\to \Iinv_{\BG_m}(\bC)$$
is an equivalence of plain DG categories.
\end{lem}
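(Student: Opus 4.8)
The plan is to reduce everything to the $1$-affineness of $\on{pt}/\BG_m$, i.e. to \thmref{t:BG} for $G=\BG_m$. Under the identifications of \secref{ss:groups} we have $\on{ShvCat}(B\BG_m)\simeq\BG_m\mmod$ with $\bGamma^{\on{enh}}_{B\BG_m}\simeq\Iinv^{\on{enh}}_{\BG_m}$, and $1$-affineness makes $\Iinv^{\on{enh}}_{\BG_m}\colon\BG_m\mmod\to\Rep(\BG_m)\mmod$ an equivalence; moreover, by the lemma asserting that $1$-affineness upgrades the lax structure on $\bGamma^{\on{enh}}$ to a strict one, this equivalence is strictly symmetric monoidal. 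First I would exploit that $\Oblv_{\BG_m}$ and $\Oblv_{\Rep(\BG_m)}$ are both conservative, and that $\Iinv_{\BG_m}=\Oblv_{\Rep(\BG_m)}\circ\Iinv^{\on{enh}}_{\BG_m}$. Thus the map $\phi\colon\bC_1\otimes_\bO\bC_2\to\bC$ is an equivalence of plain DG categories (that is, after $\Oblv_{\BG_m}$) precisely when it is an equivalence in $\BG_m\mmod$, equivalently when $\Iinv^{\on{enh}}_{\BG_m}(\phi)$ is an equivalence in $\Rep(\BG_m)\mmod$, equivalently when $\Iinv_{\BG_m}(\phi)$ is an equivalence of plain DG categories. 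This handles both directions of the ``if and only if'' simultaneously.

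Next, since $\Iinv^{\on{enh}}_{\BG_m}$ is an equivalence and strictly monoidal, it carries relative tensor products to relative tensor products, giving $\Iinv^{\on{enh}}_{\BG_m}(\bC_1\otimes_\bO\bC_2)\simeq\Iinv^{\on{enh}}_{\BG_m}(\bC_1)\otimes_{\Iinv^{\on{enh}}_{\BG_m}(\bO)}\Iinv^{\on{enh}}_{\BG_m}(\bC_2)$ in $\Rep(\BG_m)\mmod$. Applying $\Oblv_{\Rep(\BG_m)}$ produces $\Iinv_{\BG_m}(\bC_1\otimes_\bO\bC_2)$ on the left. So it remains to identify the underlying plain DG category of this relative tensor product with the relative tensor product $\Iinv_{\BG_m}(\bC_1)\otimes_{\Iinv_{\BG_m}(\bO)}\Iinv_{\BG_m}(\bC_2)$ formed in $\StinftyCat_{\on{cont}}$, compatibly with the maps to $\Iinv_{\BG_m}(\bC)$. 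Granting this, the chain of equivalences above yields the lemma.

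The main obstacle is exactly this last identification: one must show that $\Iinv_{\BG_m}$ (equivalently $\Oblv_{\Rep(\BG_m)}$) commutes with the relative tensor product, which is \emph{not} formal. The forgetful functor $\Oblv_{\Rep(\BG_m)}\colon(\Rep(\BG_m)\mmod,\otimes_{\Rep(\BG_m)})\to(\StinftyCat_{\on{cont}},\otimes)$ is only lax monoidal, so the comparison of the two bar complexes fails term by term — for instance, on objects with trivial action the map $\Iinv_{\BG_m}(\bC_1)\otimes\Iinv_{\BG_m}(\bC_2)\to\Iinv_{\BG_m}(\bC_1\otimes\bC_2)$ is the non-invertible multiplication $\Rep(\BG_m)\otimes\Rep(\BG_m)\to\Rep(\BG_m)$. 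Instead I would argue that the two geometric realizations nonetheless agree, using the rigidity of $\Rep(\BG_m)$ (which holds since $\on{pt}/\BG_m$ is passable, \propref{p:passable rigid}, and which controls tensor products of module categories as in \secref{s:rigid}) together with the shift-of-grading self-equivalence introduced above. The role of the grading is to ensure convergence: $\Iinv_{\BG_m}$ is computed by a totalization, and this limit commutes with the colimit (geometric realization) defining the relative tensor product once the shift makes the cohomological amplitudes uniformly bounded in each $\BG_m$-weight. Establishing this interchange of the totalization with the bar-complex realization is the technical heart of the argument; everything else is formal bookkeeping with the equivalence $\Iinv^{\on{enh}}_{\BG_m}$.
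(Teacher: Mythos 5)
Your reduction is the one the paper intends: its entire proof of this lemma is the citation of \thmref{t:BG} for $G=\BG_m$, unwound exactly as in your first two paragraphs. That is, 1-affineness of $B\BG_m$ (via \corref{c:shv via Cech}(b) and \secref{ss:groups}) makes $\Iinv^{\on{enh}}_{\BG_m}$ a symmetric monoidal equivalence $\BG_m\mmod\to\Rep(\BG_m)\mmod$ (strictly monoidal, by the lemma on 1-affine prestacks), a monoidal equivalence carries bar constructions to bar constructions and hence relative tensor products to relative tensor products, and the forgetful functors out of $\BG_m\mmod$ and out of $\Rep(\BG_m)\mmod$ are conservative, so equivalences of plain DG categories can be tested after applying $\Iinv^{\on{enh}}_{\BG_m}$. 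Up to and including the identification you label as ``remaining,'' your argument is correct and is the intended one.

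The gap is in your last paragraph. The step you call the ``technical heart'' --- that the plain DG category underlying $\Iinv^{\on{enh}}_{\BG_m}(\bC_1)\otimes_{\Iinv^{\on{enh}}_{\BG_m}(\bO)}\Iinv^{\on{enh}}_{\BG_m}(\bC_2)$, computed in $\Rep(\BG_m)\mmod$, agrees with $\Iinv_{\BG_m}(\bC_1)\otimes_{\Iinv_{\BG_m}(\bO)}\Iinv_{\BG_m}(\bC_2)$ computed in $\StinftyCat_{\on{cont}}$ --- is not a convergence problem; it is a formal fact, valid for an arbitrary algebra object in $\bA\mmod$ over an arbitrary symmetric monoidal DG category $\bA$, with no rigidity and no t-structure hypotheses. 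One proof: the composite forgetful functor from $\Iinv^{\on{enh}}_{\BG_m}(\bO)\mod\left(\Rep(\BG_m)\mmod\right)$ to $\StinftyCat_{\on{cont}}$ is continuous, conservative, and admits a left adjoint (the free module functor), hence is monadic by the Barr-Beck-Lurie theorem, with monad given by tensoring with the plain monoidal DG category $\Iinv_{\BG_m}(\bO)$; this identifies modules over the algebra in $\Rep(\BG_m)\mmod$ with modules over its underlying plain algebra, and the two relative tensor products, both continuous in each variable, agree on free modules $\Iinv_{\BG_m}(\bO)\otimes\bD$ (both give $\Iinv_{\BG_m}(\bC_1)\otimes\bD$), hence agree. (This is also \cite[Sect.~4.5.3]{Lu2}.) Your purported counterexample --- the non-invertible multiplication $\Rep(\BG_m)\otimes\Rep(\BG_m)\to\Rep(\BG_m)$ --- concerns the lax structure of the non-enhanced $\Iinv_{\BG_m}$ on tensor products over $\Vect$, and is beside the point: in the tensor product over $\Iinv_{\BG_m}(\bO)$, those extra copies of $\Rep(\BG_m)$ are exactly what gets absorbed along the central unit map $\Rep(\BG_m)\to\Iinv_{\BG_m}(\bO)$. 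Moreover, the tools you propose cannot do the job: $\bO,\bC_1,\bC_2,\bC$ are arbitrary objects of $\BG_m\mmod$, so there is no t-structure and no notion of ``amplitude bounded in each weight''; and the shift-of-grading equivalence is not an ingredient of this lemma at all --- in the paper it is a separate device that is combined \emph{with} the already-proved lemma in \secref{sss:trick}. The limit-colimit interchange you sense (the totalization computing $\Iinv_{\BG_m}$ against the bar realization) is precisely what \thmref{t:BG} takes care of; once 1-affineness is invoked and one passes to the equivalence $\Iinv^{\on{enh}}_{\BG_m}$, which preserves all colimits, nothing non-formal remains.
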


\sssec{Proof of \propref{p:loop space}(2)}  \label{sss:trick}

Let $V$ be a finite-dimensional vector space so that $\BA^n=\Spec(\on{Sym}(V))$.

\medskip

By \lemref{l:coinduction}, it is enough to show that the functor 
\begin{equation}  \label{e:for vector group shift}
\Vect\underset{\on{Sym}(V[2])\mod}\otimes \Vect\to \on{Sym}(V[1])\mod
\end{equation}
is an equivalence. 

\medskip 

We will deduce this from the fact that the functor
\begin{equation} \label{e:for vector group}
\Vect\underset{\on{Sym}(V)\mod}\otimes \Vect \to \on{Sym}(V[-1])\mod
\end{equation}
is an equivalence; the latter is due to \thmref{t:BG} and \propref{p:1-aff and rigid}, once we identify
$$\QCoh(V^*)_{\on{conv}}\simeq \on{Sym}(V)\mod \text{ and } \Rep(V^*)\simeq  \on{Sym}(V[-1])\mod.$$

\medskip

Indeed, by \lemref{l:grading trick}, to show that \eqref{e:for vector group shift} is an equivalence, it is enough
to show that 
$$\Rep(\BG_m)\underset{\on{Sym}(V[2])\mod(\Rep(\BG_m))}\otimes \Rep(\BG_m)\to \on{Sym}(V[1])\mod(\Rep(\BG_m))$$
is an equivalence. 

\medskip

By \eqref{e:shifted equivalence}, the latter is equivalent to 
$$\Rep(\BG_m)\underset{\on{Sym}(V)\mod(\Rep(\BG_m))}\otimes \Rep(\BG_m)\to \on{Sym}(V[-1])\mod(\Rep(\BG_m))$$
being an equivalence, which, again by \lemref{l:grading trick}, follows from the fact that \eqref{e:for vector group}
is an equivalence.

\section{Classifying prestacks of (co)-affine group-prestacks} \label{s:coaffine}

\ssec{The iterated classifying prestack}  

In this subsection we will prove \thmref{t:iterated B}. 

\sssec{Proof of \thmref{t:iterated B}(a)}

Let $V\in \Vect^\heartsuit$ be finite-dimensional. Set $\CG:=BV$; we already know that $\CG$ is 1-affine, so the category
$\on{ShvCat}(B\CG)$ can be described as in \secref{ss:groups}.  

\medskip

We note that there is a canonical equivalence
\begin{equation} \label{e:conv as ptw 1}
\QCoh(\CG)\simeq \QCoh(V^*)_{\{0\}},
\end{equation}
under which the co-monoidal structure on $\QCoh(\CG)$ (induced by the group structure on $\CG$) 
is obtained via the duality
$$\left(\QCoh(V^*)_{\{0\}}\right)^\vee\simeq \QCoh(V^*)_{\{0\}}$$
from the pointwise monoidal structure on $\QCoh(V^*)_{\{0\}}$. 

\medskip

Thus, the category $\CG\mmod$ can be identified with
$$\QCoh(V^*)_{\{0\}}\mmod,$$
with the functor $\Iinv_\CG$ being
$$\bD\mapsto \uHom_{\QCoh(V^*)_{\{0\}}}(\Vect,\bD),$$
where $\QCoh(V^*)_{\{0\}}$ acts on $\Vect$ via the functor $\iota^*$, 
where $\iota:\on{pt}\to V^*$ corresponds to $0\in V^*$. 

\medskip

Now, the assertion of  \thmref{t:iterated B}(a) follows from \propref{p:categories acted on by the loop group}
for $(Z,z)=(V^*,0)$.

\sssec{Proof of \thmref{t:iterated B}(b)}

Let $\CG_1=B^2(V)$. According to \thmref{t:iterated B}(a), proved above, $\CG_1$ is 1-affine. Hence,
the category $\on{ShvCat}(B\CG_1)$ can be described as in \secref{ss:groups}.

\medskip

Moreover, we have identified $\QCoh(\CG_1)$ as a plain category with $\QCoh(Z_1)$, where
$$Z_1:=\on{pt}\underset{V^*}\times \on{pt}.$$

\medskip

Under this identification, the co-monoidal structure on $\QCoh(\CG_1)$ (induced by the group structure on $\CG_1$) 
is obtained via the duality
$$\QCoh(Z_1)\simeq \QCoh(Z_1)^\vee,$$
from the pointwise monoidal structure on $\QCoh(Z_1)$. Since $\QCoh(Z_1)_{\on{ptw}}$ is rigid, the group $\CG_1$
satisfies the assumption of \secref{ss:rigid two}. 

\medskip

Thus, the category $\CG_1\mmod$ can be identified with
$$\QCoh(Z_1)\mmod,$$
with the functor $\Iinv_{\CG_1}$ being
$$\bD\mapsto \uHom_{\QCoh(Z_1)}(\Vect,\bD),$$
where $\QCoh(Z_1)$ acts on $\Vect$ via the functor $\iota_1^*$, 
where $\iota_1:\on{pt}\to Z_1$ is the unique $k$-point $z_1$ of $Z_1$. 

\medskip 

The category $\Rep(\CG_1)$ therefore identifies with $\QCoh(\Omega(Z_1,z_1))_{\on{conv}}$. 
We will show that $B\CG_1$ is 1-affine by applying \propref{p:1-aff and rigid'}(b'). Indeed, the condition that
$$\Vect\underset{\Rep(\CG_1)}\otimes \Vect\simeq \QCoh(\CG_1)$$
translates as
$$\Vect\underset{\QCoh(\Omega(Z_1,z_1))_{\on{conv}}}\otimes \Vect\simeq \QCoh(Z_1),$$
and follows from \propref{p:loop space cons} for $(Z,z)=(Z_1,z_1)$. 

\sssec{Proof of \thmref{t:iterated B}(c)}

Denote $\CG_2:=B^3(V)$. According to \thmref{t:iterated B}(a), proved above, $\CG_2$ is 1-affine. Hence,
the category $\on{ShvCat}(B\CG_2)$ can be described as in \secref{ss:groups}.

\medskip

Moreover, we have identified $\QCoh(\CG_2)$ as a plain DG category with $\QCoh(Z_2)$, where
$$Z_2:=\on{pt}\underset{Z_1}\times \on{pt}, \text{ where } Z_1:=\on{pt}\underset{V^*}\times \on{pt}.$$

\medskip

Under this identification, the co-monoidal structure on $\QCoh(\CG_2)$ (induced by the group structure on $\CG_2$)
is obtained via the duality
$$\QCoh(Z_2)\simeq \QCoh(Z_2)^\vee,$$
from the pointwise monoidal structure on $\QCoh(Z_2)$. Since $\QCoh(Z_2)_{\on{ptw}}$ is rigid, the group $\CG_2$
satisfies the assumption of \secref{ss:rigid two}. 

\medskip

Thus, the category $\CG_2\mmod$ can be identified with
$$\QCoh(Z_2)\mmod,$$
with the functor $\Iinv_{\CG_2}$ being
$$\bD\mapsto \uHom_{\QCoh(Z_2)}(\Vect,\bD),$$
where $\QCoh(Z_2)$ acts on $\Vect$ via the functor $\iota_2^*$, 
where $\iota_2:\on{pt}\to Z_2$ is the unique $k$-point $z_2$ of $Z_2$. 

\medskip

The category $\Rep(\CG_2)$ therefore identifies with $\QCoh(\Omega(Z_2,z_2))_{\on{conv}}$. 
We will show that $B\CG_2$ is \emph{not} 1-affine by applying \propref{p:1-aff and rigid'}(c). 

\medskip

The functor $p^*:\Vect\to \QCoh(\CG_2)$ is the right adjoint of the functor $\QCoh(\CG_2)_{\on{conv}^L}\to \Vect$
that defines the trivial action of $\CG_2$ on $\Vect$. Thus, under the identification
$$\QCoh(\CG_2)_{\on{conv}^L}\simeq \QCoh(Z_2)_{\on{ptw}},$$
the functor $p^*$ corresponds to $(\iota_2)_*$. Hence the functor
$(p_*)^R$ translates as $\iota_2^{\QCoh,!}$. However, we claim that $\iota_2^{\QCoh,!}$
is \emph{not} monadic. In fact, we claim that $\iota_2^{\QCoh,!}$ is \emph{not} conservative.
Indeed, this follows from \secref{sss:loop space noncoconn}. 

\sssec{Proof of \thmref{t:iterated B}(d)} 

Set $\CG_{\on{inf}}:=B(V^\wedge_{\{0\}})$. We know that $\CG_{\on{inf}}$ is 1-affine by \thmref{t:main formal groups}. 

\medskip

We note that $\CG_{\on{inf}}$ falls into the paradigm of \secref{ss:rigid two}, where
$$\QCoh(\CG_{\on{inf}})_{\on{conv}^L} \simeq \QCoh(V^*)_{\on{ptw}}.$$

\medskip

Under this identification, the trivial action of $\QCoh(\CG_{\on{inf}})_{\on{conv}^L}$ on $\Vect$ corresponds to 
$$\iota^*:\QCoh(V^*)\to \Vect,$$
where $\iota:\on{pt}\to V^*$ corresponds to $0\in V^*$. 

\medskip

We will show that $B\CG_{\on{inf}}$ is \emph{not} 1-affine by applying \propref{p:1-aff and rigid'}(c). 
We note that the functor 
$$p^*:\Vect\to \QCoh(\CG_{\on{inf}})$$
is the \emph{right} adjoint to one corresponding to the augmentation $\QCoh(\CG_{\on{inf}})\to \Vect$.
I.e., $p^*$ indentifies with the functor
$$\iota_*:\Vect\to \QCoh(V^*).$$

Its right adjoint is, therefore, the functor
$$\iota^{\QCoh,!}:\QCoh(V^*)\to \Vect.$$

However, it is clear that $\iota^{\QCoh,!}$ is not conservative: it annihilates any object that comes
as direct image from $V^*-\{0\}$. 

\ssec{Group DG schemes}  \label{ss:vector}

In this subsection we will prove \thmref{t:vector}.

\sssec{}

Denote $\CG_n=\Spec(\on{Sym}(V[n]))$. The case $n=0$ follows from \thmref{t:BG} for $G=V^*$. 
For $n>0$, we will consider the cases of $n$ even and odd separately.

\sssec{}

Let first $n=2$.  In this case, the assertion follows from \corref{c:B loop group} applied to
$Z:=\Spec(\on{Sym}(V[1]))$. 

\medskip

Let now $n$ be an arbitrary even integer. The required assertion follows \propref{p:1-aff and rigid}(b)
and the case of $n=2$, using the ``shift of grading" trick, as in \secref{sss:trick}. 

\sssec{}

Let us now take $n=1$. In this case the assertion follows from \corref{c:B loop group} applied to
$Z:=\Spec(\on{Sym}(V))$. 

\medskip

Let now $n$ be an arbitrary odd integer. The required assertion follows \propref{p:1-aff and rigid}(b)
and the case of $n=1$, using the ``shift of grading" trick, as in \secref{sss:trick}. 

\appendix

\section{Descent theorems}  \label{s:proof of descent}

\ssec{Descent for module categories}

In this subsection we will prove \thmref{t:descent 1}. Let $Y$ be an affine DG scheme. 

\sssec{Step 1}

Let 
$$\CP:(\affdgSch_{/Y})^{\on{op}}\to \CT$$
be a functor, where $\CT$ is some $\infty$-category. It is well-known that if $\CP$
satisfies descent with respect to finite flat maps and Nisnevich covers, then $\CP$
satisfies fppf descent. This follows from the fact that, Nisnevich-locally, any fppf
map admits a section after a finite flat base change. 

\medskip

We let $\CT=\StinftyCat_{\on{cont}}$ and $\CP$ be the functor
$$S\in \affdgSch_{/Y}\rightsquigarrow \QCoh(S)\underset{\QCoh(Y)}\otimes \bC.$$

\medskip

In Step 2 we will show that $\CT$ satisfies Nisnevich descent, and in Step 3 we will
show that $\CT$ satisfies finite flat descent. This will prove \thmref{t:descent 1}.

\sssec{Step 2}  \label{sss:Nisn}

For Nisnevich descent, it is enough to consider the case of \emph{basic Nisnevich covers}. I.e.,
let $\oS\overset{\jmath}\hookrightarrow S$ be an open embedding, and $\pi:S_1\to S$ an
\'etale map, such that $\pi$ is one-to-one over $S-\oS$. Set $\oS_1:=\oS\underset{S}\times S_1$.
Let $\opi:\oS_1\to \oS$ and $\jmath_1:\oS_1\to S_1$ denote the corresponding morphisms.

\medskip

We need to show that 
$$
\CD
\QCoh(S)\underset{\QCoh(Y)}\otimes \bC @>{\jmath^*\otimes \on{Id}_\bC}>>  \QCoh(\oS)\underset{\QCoh(Y)}\otimes \bC \\
@V{\pi^*\otimes \on{Id}_\bC}VV    @VV{\opi^*\otimes \on{Id}_\bC}V  \\
\QCoh(S_1)\underset{\QCoh(Y)}\otimes \bC @>{\jmath_1^*\otimes \on{Id}_\bC}>>  \QCoh(\oS_1)\underset{\QCoh(Y)}\otimes \bC
\endCD
$$
is a pull-back square. 

\medskip

By definition, the pull-back
$$\left(\QCoh(S_1)\underset{\QCoh(Y)}\otimes \bC\right) \underset{\QCoh(\oS_1)\underset{\QCoh(Y)}\otimes \bC}\times
\left(\QCoh(\oS)\underset{\QCoh(Y)}\otimes \bC\right)$$
is the category of quintuples
\begin{multline} \label{e:fiber product}
\{\bc_1\in \QCoh(S_1)\underset{\QCoh(Y)}\otimes \bC, \quad \obc_1\in \QCoh(\oS_1)\underset{\QCoh(Y)}\otimes \bC, \quad
\obc\in \QCoh(\oS)\underset{\QCoh(Y)}\otimes \bC, \\
\alpha: (\jmath_1^*\otimes \on{Id}_\bC)(\bc_1)\simeq \obc_1, \quad \quad 
\beta:(\opi^*\otimes \on{Id}_\bC)(\obc)\simeq \obc_1.\}
\end{multline}
 
We define the right adjoint to the natural functor 
$$ \QCoh(S)\underset{\QCoh(Y)}\otimes \bC \to \QCoh(S_1)\underset{\QCoh(Y)}\otimes \bC 
\underset{ \QCoh(\oS_1)\underset{\QCoh(Y)}\otimes \bC}\times \QCoh(\oS)\underset{\QCoh(Y)}\otimes \bC$$
by sending a quintuple as in \eqref{e:fiber product} to 
$$\on{Cone}\left((\pi_*\otimes \on{Id}_\bC)(\bc_1)\oplus (\jmath_*\otimes \on{Id}_\bC)(\obc)\to 
((\jmath\circ \opi)_*\otimes \on{Id}_\bC)(\obc_1)\right)[-1].$$

Thus, we obtain that the above right adjoint is obtained from the right adjoint for $\bC:=\QCoh(Y)$ by tensoring by
$-\underset{\QCoh(Y)}\otimes \bC$. Therefore, since the unit and the co-unit of the adjunction are
isomorphisms in the former case (by Nisnevich descent for $\QCoh$), they are isomorphisms for any $\bC$.

\begin{rem}  \label{r:Nisn}
It is very easy to prove directly that for $\QCoh(Y)$, the unit and co-unit map are isomorphisms, see \cite[Lemma 2.2.6]{QCoh}.
\end{rem}

\sssec{Step 3}

Let now $\pi:T\to S$ be a finite faithfully flat map, and let $T^\bullet/S$ be its \v{C}ech nerve. We need to show that the functor
$$\QCoh(S)\underset{\QCoh(Y)}\otimes \bC\to
\on{Tot}\left(\QCoh(T^\bullet/S)^*\underset{\QCoh(Y)}\otimes \bC\right)$$
is an equivalence.

\medskip

We note that for every map $\alpha:[j]\to [i]$ and the corresponding map $f^\alpha:T^i/S\to T^j/S$, the functor
$(f^\alpha)^*$ admits a \emph{left} adjoint, denoted $f^\alpha_\sharp$. Namely, for any finite flat map $f$,
the functor $f_\sharp$ is the ind-extension of the functor on $\QCoh(-)^{\on{perf}}$, defined as
$$\CE\mapsto (f_*(\CE^\vee))^\vee.$$

\medskip

In particular, we obtain that the functors $(f^\alpha)^*\otimes \on{Id}_\bC$ all admit left adjoints, given by
$f^\alpha_\sharp\otimes \on{Id}_\bC$. Hence, by \cite[Lemma 1.3.3]{DGCat}, we have
$$\on{Tot}\left(\QCoh(T^\bullet/S)^*\underset{\QCoh(Y)}\otimes \bC\right)\simeq
|\QCoh(T^\bullet/S)_\sharp\underset{\QCoh(Y)}\otimes \bC|.$$

From the commutative diagram
$$
\CD
\on{Tot}\left(\QCoh(T^\bullet/S)^*\right) \underset{\QCoh(Y)}\otimes \bC @>>> 
|\QCoh(T^\bullet/S)_\sharp|\underset{\QCoh(Y)}\otimes \bC  \\
@VVV   @VV{\sim}V \\
\on{Tot}\left(\QCoh(T^\bullet/S)^*\underset{\QCoh(Y)}\otimes \bC\right)  @>>> 
|\QCoh(T^\bullet/S)_\sharp\underset{\QCoh(Y)}\otimes \bC|
\endCD
$$
we obtain that the functor
$$\on{Tot}\left(\QCoh(T^\bullet/S)^*\right) \underset{\QCoh(Y)}\otimes \bC\to 
\on{Tot}\left(\QCoh(T^\bullet/S)^*\underset{\QCoh(Y)}\otimes \bC\right)$$
is an equivalence.

\medskip

Now, the required assertion follows from the commutative diagram
$$
\CD
\QCoh(S)\underset{\QCoh(Y)}\otimes \bC  @>>>  \on{Tot}\left(\QCoh(T^\bullet/S)^*\right) \underset{\QCoh(Y)}\otimes \bC  \\
@V{\on{Id}}VV   @VV{\sim}V  \\
\QCoh(S)\underset{\QCoh(Y)}\otimes \bC  @>>>  \on{Tot}\left(\QCoh(T^\bullet/S)^*\underset{\QCoh(Y)}\otimes \bC\right)
\endCD
$$
and the fact that 
$$\QCoh(S)\to \on{Tot}\left(\QCoh(T^\bullet/S)^*\right)$$
is an equivalence, by the fppf descent for $\QCoh$. 

\begin{rem}
One can show directly that $\QCoh(S)\to \on{Tot}\left(\QCoh(T^\bullet/S)^*\right)$ is an equivalence
(so that together with Remark \ref{r:Nisn} one obtains an alternative proof of fppf descent for $\QCoh$,
without apealing to the t-structures).  

\begin{proof}[Proof of finite flat descent for $\QCoh$] \hfill

\smallskip

\noindent By \lemref{l:monadicity}, the functor of evaluation on $0$-simplices
$$\on{Tot}\left(\QCoh(T^\bullet/S)^*\right)\to \QCoh(T)$$
is monadic, and the corresponding monad on $\QCoh(T)$, viewed as a plain
endo-functor, is canonically isomorphic to $(\on{pr}_2)_\sharp\circ \on{pr}_1^*$,
where $\on{pr}_1,\on{pr}_2:T\underset{S}\times T\rightrightarrows T$ are the two
projections. Note also that we have a canonical isomorphism of endo-functors:
$$(\on{pr}_2)_\sharp\circ \on{pr}_1^*\simeq ((\on{pr}_1)_*\circ \on{pr}_2^*)^L\simeq
(\pi^*\circ \pi_*)^L\simeq \pi^*\circ \pi_\sharp.$$

Hence, it remains to show that the functor $\pi^*:\QCoh(S)\to \QCoh(T)$ is monadic
(we have just seen that the corresponding monad $\pi^*\circ \pi_\sharp$ maps isomorphically
to one defining $\on{Tot}\left(\QCoh(T^\bullet/S)^*\right)$). 

\medskip

Since $\pi$ is faithfully flat, it is easy to see that $\pi^*$ is conservative. Now, since $\pi^*$
is continuous, it commutes with all geometric realizations. Hence, the monadicity of $\pi^*$
follows from the Barr-Beck-Lurie theorem. 

\end{proof} 

\end{rem}

\ssec{Descent for sheaves of categories}

In this subsection we will prove \thmref{t:descent 2}. 

\sssec{Step 0}

Let $\pi:T\to S$ be an fppf cover, and let $T^\bullet/S$ be its \v{C}ech nerve. We need to show that
the functor
\begin{equation} \label{e:descent functor}
\on{ShvCat}(S)\to \on{Tot}(\on{ShvCat}(T^\bullet/S))
\end{equation}
is an equivalence.

\medskip

Since all the DG schemes involved are affine, we have
$$\on{ShvCat}(S)=\QCoh(S)\mmod \text{ and } \on{ShvCat}(T^\bullet/S)=\QCoh(T^\bullet/S)\mmod.$$

\medskip

The right adjoint to the functor \eqref{e:descent functor} sends 
$$\bC^\bullet\in \on{Tot}(\QCoh\left(T^\bullet/S)\mmod\right)\rightsquigarrow \on{Tot}(\bC^\bullet),$$
where the totalization is taken in $\QCoh(S)\mmod$.

\medskip

We need to show that the functor \eqref{e:descent functor} and its right adjoint are mutually inverse.
We will check that the unit and the co-unit of the adjunction are isomorphisms.

\sssec{Step 1}

The fact that the unit of the adjunction is an isomorphism follows immediately from \thmref{t:descent 1}.

\sssec{Step 2}

To say that the co-unit of the adjunction is an isomorphism is equivalent to saying that 
for $\bC^\bullet\in \on{Tot}\left(\QCoh(T^\bullet/S)\mmod\right)$ the functor
$$\QCoh(T)\underset{\QCoh(S)}\otimes \on{Tot}(\bC^\bullet)\to \bC^0$$
is an equivalence.

\medskip

By \lemref{l:dualizable in rigid}, $\QCoh(T)$ is dualiazable as an object of $\QCoh(S)\mmod$. Hence,
the functor
$$\QCoh(T)\underset{\QCoh(S)}\otimes \on{Tot}(\bC^\bullet)\to
\on{Tot}(\QCoh(T)\underset{\QCoh(S)}\otimes \bC^\bullet)$$
is an equivalence. Hence, it remains to show that the functor
\begin{equation} \label{e:split abs}
\on{Tot}(\QCoh(T)\underset{\QCoh(S)}\otimes \bC^\bullet)\to \bC^0
\end{equation}
is an equivalence.

\medskip

Now, we note that the co-simplicial category $\QCoh(T)\underset{\QCoh(S)}\otimes \bC^\bullet$
identifies with $\bC^{\bullet+1}$, which is split, and its map to $\bC^0$ is the augmentation. Hence,
\eqref{e:split abs} is an equivalence, as desired. 

\section{Quasi-affine morphisms}   \label{s:quasi-affine}

\ssec{Fiber products of prestacks vs. tensor products of categories}  

\sssec{}

Let 
$$
\CD
\CY'_1  @>{g_1}>> \CY_1  \\
@V{f'}VV   @VV{f}V   \\
\CY'_2  @>{g_2}>> \CY_2
\endCD
$$
be a Cartesian diagram in $\on{PreStk}$. It gives rise to a (symmetric monoidal) functor
\begin{equation} \label{e:fiber vs ten}
\QCoh(\CY'_2)\underset{\QCoh(\CY_2)}\otimes \QCoh(\CY_1)\to \QCoh(\CY'_1).
\end{equation}

In this section we will discuss two instances in which the functor \eqref{e:fiber vs ten}
is an equivalence.

\sssec{}

We will prove:

\begin{prop} \label{p:quasi-aff}
Assume that $f$ (and hence $f'$) is quasi-affine and quasi-compact. Then \eqref{e:fiber vs ten} is an equivalence.
\end{prop}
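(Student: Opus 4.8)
The plan is to reduce to the two building blocks of a quasi-affine morphism and treat each separately. Recall that a quasi-affine quasi-compact morphism $f:\CY_1\to\CY_2$ factors as $\CY_1\overset{j}\hookrightarrow \overline{\CY}_1\overset{\bar f}\to \CY_2$, where $j$ is a quasi-compact open embedding and $\bar f$ is affine. First I would observe that \eqref{e:fiber vs ten} is compatible with composition: stacking the two Cartesian squares obtained by pulling $j$ and $\bar f$ back along $g_2$, and using the canonical isomorphism $\QCoh(\CY'_2)\underset{\QCoh(\CY_2)}\otimes\QCoh(\CY_1)\simeq \left(\QCoh(\CY'_2)\underset{\QCoh(\CY_2)}\otimes\QCoh(\overline{\CY}_1)\right)\underset{\QCoh(\overline{\CY}_1)}\otimes\QCoh(\CY_1)$, one sees that if \eqref{e:fiber vs ten} is an equivalence for $\bar f$ and for $j$ (together with all their base changes), then it is one for $f$. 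Since the base change of an affine morphism is affine and the base change of a quasi-compact open embedding is again one, it suffices to treat these two cases over an arbitrary base $\CY_2$.

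For $f$ affine, set $A:=f_*\CO_{\CY_1}$, a commutative algebra object of $\QCoh(\CY_2)$. Writing $\QCoh(\CY_2)\simeq \underset{T\in\affdgSch_{/\CY_2}}{\lim}\,\QCoh(T)$ and using that over each affine $T$ the fibre $\CY_1\underset{\CY_2}\times T$ is the relative spectrum of $A_T$, I would identify $\QCoh(\CY_1)\simeq A\mod(\QCoh(\CY_2))$. Then the general base-change formula for module categories gives $\QCoh(\CY'_2)\underset{\QCoh(\CY_2)}\otimes A\mod(\QCoh(\CY_2))\simeq (g_2^*A)\mod(\QCoh(\CY'_2))$, since the monad $A\otimes-$ is continuous. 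Finally, base change for the pushforward of functions along the affine morphism $f$ yields $g_2^*A\simeq f'_*\CO_{\CY'_1}$, and the right-hand side identifies $(g_2^*A)\mod(\QCoh(\CY'_2))$ with $\QCoh(\CY'_1)$, as $f'$ is affine. This settles the affine case.

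For $f=j$ a quasi-compact open embedding, let $Z$ be the closed complement and recall the localization sequence \eqref{e:localization seq}, in which $j_*$ is continuous (as $j$ is quasi-compact) and, by the projection formula, is a map of $\QCoh(\CY_2)$-module categories. Since $j^*\circ j_*\simeq\on{Id}$, the category $\QCoh(\CY_1)$ is a retract of the unit $\QCoh(\CY_2)$ in $\QCoh(\CY_2)\mmod$, hence dualizable as a $\QCoh(\CY_2)$-module (\lemref{l:dualizable in rigid}); consequently $-\underset{\QCoh(\CY_2)}\otimes\QCoh(\CY_1)$ commutes with limits, and tensoring the localization sequence by $\QCoh(\CY'_2)$ over $\QCoh(\CY_2)$ again produces a localization sequence whose quotient term is $\QCoh(\CY'_2)\underset{\QCoh(\CY_2)}\otimes\QCoh(\CY_1)$. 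The functor \eqref{e:fiber vs ten} is then the comparison with the localization sequence attached to the open embedding $\CY'_1\hookrightarrow\CY'_2$, and being compatible with the quotient maps out of $\QCoh(\CY'_2)$, it is an equivalence precisely when the two kernels coincide.

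The main obstacle is exactly this last point: identifying the image of $\QCoh(\CY_2)_{Z}\underset{\QCoh(\CY_2)}\otimes\QCoh(\CY'_2)$ in $\QCoh(\CY'_2)$ with the subcategory $\QCoh(\CY'_2)_{Z'}$ of objects set-theoretically supported on $Z':=Z\underset{\CY_2}\times\CY'_2$. One inclusion is formal, as the tensored-up kernel is annihilated by $(j')^*$; the reverse inclusion is the substantive claim that support along $Z$ is preserved under base change. Here I would reduce to the case $\CY'_2=S$ affine — legitimate precisely because $\QCoh(\CY_1)$ is dualizable over $\QCoh(\CY_2)$, so that the tensor product commutes with the limit presenting $\QCoh(\CY'_2)$ — and then match the two subcategories using \lemref{l:tensor up local} and the explicit description of $\ker(j^*)$. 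Everything outside this step is formal manipulation of adjoint functors and module categories.
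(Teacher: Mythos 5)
Your route is genuinely different from the paper's. The paper never factors $f$: it proves (\lemref{l:q-aff alg}) that $f_*$ is monadic, i.e.\ $\QCoh(\CY_1)\simeq f_*(\CO_{\CY_1})\mod(\QCoh(\CY_2))$, via Barr--Beck--Lurie (continuity, projection formula, and conservativity of $f_*$), and then concludes by \lemref{l:algebras} plus base change for $f_*(\CO_{\CY_1})$. Your affine case is exactly this mechanism and is fine; so is the reduction to affine $\CY'_2$ via dualizability (though the dualizability of $\QCoh(\CY_1)$ over $\QCoh(\CY_2)$ in the open case follows from ``a retract of a dualizable object is dualizable,'' not from \lemref{l:dualizable in rigid}, which requires rigidity of $\QCoh(\CY_2)$ and is unavailable for a general prestack). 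The problems are concentrated in the two steps that are special to your route.

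First, the factorization you ``recall'' is a claim, not a recollection, in this setting. Since $f_*\CO_{\CY_1}$ is not connective for quasi-affine $f$, the candidate is $\overline\CY_1=\Spec_{\CY_2}\bigl(\tau^{\leq 0}(f_*\CO_{\CY_1})\bigr)$; but truncation does not commute with the non-flat pullbacks $g_T^*$ for affine $T\to\CY_2$, so $\overline\CY_1\underset{\CY_2}\times T$ and $\Spec_T\bigl(\tau^{\leq0}((f_T)_*\CO)\bigr)$ differ in general already on classical truncations, and the absolute fact that a quasi-affine DG scheme is open in the Spec of its truncated functions cannot be applied fiberwise. The paper's monadic argument is designed to avoid precisely this: it uses an open-into-affine presentation only for quasi-affine DG schemes over an affine base (in the conservativity step of \lemref{l:q-aff alg}), where it is essentially definitional.

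Second, in the open case the step you yourself flag as ``the main obstacle'' is not closed by the tools you cite. \lemref{l:tensor up local} gives $\QCoh(\CY'_1)\simeq \QCoh(\CY_1)\underset{\QCoh(\CY_2)}\otimes\QCoh(\CY'_1)$, which is not the needed statement; and over an affine base the kernel of $(j')^*$ is generated by Koszul complexes on functions cutting out $Z'$, which need not be pulled back from $\CY_2$, so no generation statement follows. The missing ingredient is base change for $j_*$ along $g_2$. By the projection formula, $j_*\circ j^*=-\otimes\CB$ with $\CB:=j_*(\CO_{\CY_1})$ an idempotent algebra; hence $\QCoh(\CY_1)\underset{\QCoh(\CY_2)}\otimes\QCoh(\CY'_2)$ is the localization of $\QCoh(\CY'_2)$ on $g_2^*(\CB)$-modules, while $\QCoh(\CY'_1)$ is the localization on $j'_*(\CO_{\CY'_1})$-modules, and the two kernels agree exactly because $g_2^*(j_*\CO_{\CY_1})\simeq j'_*(\CO_{\CY'_1})$. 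With this input your open case does close---but at that point it has become the paper's proof specialized to $j$ (open embeddings being quasi-affine), and the factorization of the previous paragraph, with its attendant difficulties, is no longer needed.
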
 

The proof will use the following lemma, proved in \secref{sss:q-aff alg}:

\begin{lem} \label{l:q-aff alg}
Let $f:\CY_1\to \CY_2$ be a quasi-affine quasi-compact map, and consider $f_*(\CO_{\CY_1})$
as an assciative algebra in $\QCoh(\CY_2)$. Then the functor
$$\QCoh(\CY_1)\to f_*(\CO_{\CY_1})\mod(\QCoh(\CY_2))$$
is an equivalence.
\end{lem}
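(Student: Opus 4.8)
The plan is to exhibit the functor in question as the monadic comparison functor for the adjunction
$$f^*:\QCoh(\CY_2)\rightleftarrows \QCoh(\CY_1):f_*,$$
and to verify monadicity via the Barr--Beck--Lurie theorem (\cite[Theorem 6.2.2.5]{Lu2}). Since $f^*$ is symmetric monoidal, its right adjoint $f_*$ is lax symmetric monoidal; hence $f_*$ carries the unit $\CO_{\CY_1}$ to an algebra object, namely $\CA:=f_*(\CO_{\CY_1})$, and carries an arbitrary object $N$ to $f_*(N)$ equipped with its induced $\CA$-module structure. This is precisely the functor $\QCoh(\CY_1)\to \CA\mod(\QCoh(\CY_2))$ of the lemma, and it coincides with the comparison functor attached to the monad $f_*\circ f^*$ acting on $\QCoh(\CY_2)$.

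First I would record two preliminary facts. The functor $f_*$ is continuous: a quasi-affine morphism is quasi-separated, and $f$ is assumed quasi-compact, so this follows from \cite[Proposition 2.1.1]{QCoh}. Second, the projection formula $f_*(N\otimes f^*M)\simeq f_*(N)\otimes M$ holds; evaluating on $N=\CO_{\CY_1}$ identifies the monad $f_*\circ f^*$, as a lax monoidal endofunctor of $\QCoh(\CY_2)$, with $\CA\otimes(-)$. Under this identification the category of algebras for the monad becomes $\CA\mod(\QCoh(\CY_2))$, and the comparison functor becomes the enhanced $f_*$.

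It then remains to check the two hypotheses of Barr--Beck--Lurie. Since $f_*$ is continuous it preserves all geometric realizations, so the condition on $f_*$-split simplicial objects is automatic; thus the only substantive point is conservativity of $f_*$. Here I would use quasi-affineness of $f$ to factor it as $\CY_1\overset{j}\to \Spec_{\CY_2}(\CA)\overset{\bar f}\to \CY_2$, with $j$ a quasi-compact open embedding and $\bar f$ affine. The pushforward $\bar f_*$ is conservative because $\bar f$ is affine, while $j_*$ is fully faithful --- hence conservative --- because $j^*\circ j_*\simeq \on{Id}$ for an open embedding. Therefore $f_*=\bar f_*\circ j_*$ is conservative, and Barr--Beck--Lurie applies, yielding the desired equivalence.

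The main obstacle I anticipate is not the monadic formalism but the two geometric inputs just invoked: establishing the projection formula together with the full homotopy-coherent matching of the monad structure on $f_*\circ f^*$ with the algebra structure on $\CA$, and justifying the factorization through the relative affinization $\Spec_{\CY_2}(\CA)$ along with the fully-faithfulness of $j_*$ in the derived, prestack setting. Both are standard but require care; if one prefers, the entire statement can be reduced to the case $\CY_2=S$ affine by base change along affines $S\to\CY_2$, using $\QCoh$-descent and compatibility of all functors with such base change, where these inputs are cleanest.
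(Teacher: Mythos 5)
Your proposal is correct and follows essentially the same route as the paper: Barr--Beck--Lurie, with continuity of $f_*$ disposing of the geometric-realization condition and the projection formula identifying the monad $f_*\circ f^*$ with $f_*(\CO_{\CY_1})\otimes(-)$. The only cosmetic difference is in the conservativity step: you factor $f$ relatively through $\Spec_{\CY_2}(f_*(\CO_{\CY_1}))$ as a quasi-compact open embedding followed by an affine morphism, whereas the paper first base-changes to affine $\CY_2$ and then factors the resulting quasi-affine scheme through an open embedding into an affine scheme --- the same idea in relative versus absolute form.
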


\begin{proof}[Proof of \propref{p:quasi-aff}]

By \lemref{l:q-aff alg}, we have:
$$\QCoh(\CY_1)\simeq f_*(\CO_{\CY_1})\mod(\QCoh(\CY_2)) \text{ and }
\QCoh(\CY'_1)\simeq f'_*(\CO_{\CY'_1})\mod(\QCoh(\CY'_2)).$$

Hence, it is sufficient to show that the functor $g_2^*$ induces an equivalence
$$\QCoh(\CY'_2)\underset{\QCoh(\CY_2)}\otimes \left(f_*(\CO_{\CY_1})\mod(\QCoh(\CY_2))\right)\to
f'_*(\CO_{\CY'_1})\mod(\QCoh(\CY'_2)).$$

By base change, $f'_*(\CO_{\CY'_1})\simeq g^*(f_*(\CO_{\CY_1}))$. Now, the required assertion follows
from the following general lemma:

\begin{lem} \label{l:algebras}
Let $\bO$ be a monoidal DG category, $\bC$ a left $\bO$-module category, and $A\in \bO$ an algebra.
Consider $A\mod(\bO)$ as a right $\bO$-module category. Then the natural functor
$$A\mod(\bO)\underset{\bO}\otimes \bC\to A\mod(\bC)$$
is an equivalence.
\end{lem}

\end{proof}

\sssec{Proof of \lemref{l:algebras}}

We have an adjoint pair
$$\on{ind}_{A,\bO}:\bO\rightleftarrows A\mod(\bO):\on{oblv}_{A,\bO}$$
as right $\bO$-module categories. Tensoring up on the right with $\bC$ over $\bO$,
we obtain an adjoint pair
$$(\on{ind}_{A,\bO}\otimes \on{Id}_\bC):\bC\rightleftarrows A\mod(\bO)\underset{\bO}\otimes \bC:
(\on{oblv}_{A,\bO}\otimes \on{Id}_\bC).$$
The functor $(\on{oblv}_{A,\bO}\otimes \on{Id}_\bC)$ is monadic: indeed, it commutes with all colimits
and its left adjoint generates $A\mod(\bO)\underset{\bO}\otimes \bC$.

\medskip

Consider also the adjoint pair
$$\on{ind}_{A,\bC}:\bC\rightleftarrows A\mod(\bC):\on{oblv}_{A,\bC}.$$
The functor $\on{oblv}_{A,\bC}$ is tautologically monadic. 

\medskip

Hence, to prove the lemma, it suffices to show that the functor $A\mod(\bO)\underset{\bO}\otimes \bC\to A\mod(\bC)$
defines an isomorphism of the corresponding monads on $\bC$ as plain endo-functors. However, this follows
from the fact that
$$(\on{oblv}_{A,\bO}\otimes \on{Id}_\bC)\circ (\on{ind}_{A,\bO}\otimes \on{Id}_\bC)\simeq ((A\otimes -)\otimes \on{Id}_\bC)$$
identifies with the action of $A$ on $\bC$.

\qed 

\sssec{Proof of \lemref{l:q-aff alg}}  \label{sss:q-aff alg}

We shall deduce the assertion from the Barr-Beck-Lurie theorem. Since the morhism $f$
is quasi-compact and quasi-separated, it satisfies base change and the projection formula. 
and the functor $f_*$ is continuous.

\medskip

The projection formula implies that the monad $f_*\circ f^*$ is given by tensor product with $f_*(\CO_{\CY_1})$.
The continuity of $f_*$ implies that it commutes with all geometric realizations. It remains to show that $f_*$
is conservative. 

\medskip

By base change, the conservativity assertion reduces to the case when $\CY_2$ is an affine DG scheme,
in which case $\CY_1$ is quasi-affine. In this case, the conservativity of $f_*$ is equivalent to that of 
$\Gamma(\CY_2,-)$.

\medskip 

Thus, we need to show that for a quasi-affine DG scheme $Y$,
the functor $$\Gamma(Y,-):\QCoh(Y)\to \Vect$$ is conservative. Let $\jmath:Y\to \ol{Y}$ be an open embedding,
where $\ol{Y}$ is affine. Now, the functors $\jmath_*$ and $\Gamma(\ol{Y},-)$ are both conservative, and
hence so is $\Gamma(Y,-)$.

\qed

\ssec{Fiber products of passable prestacks}

\sssec{}

Recall the notion of passable prestack, see \secref{ss:passable}. We are going to prove:  

\begin{prop}  \label{p:base change pass}
Assume that $\CY_2$ is passable and that $\QCoh(\CY_1)$ is dualizable as a DG category.
Then \eqref{e:fiber vs ten} is an equivalence.
\end{prop}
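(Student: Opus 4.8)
The plan is to reduce to the case where $\CY'_2$ is an affine DG scheme and then invoke \propref{p:quasi-aff}, the input being that passability forces any map from an affine scheme into $\CY_2$ to be quasi-affine and quasi-compact.

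First I would establish the reduction. Since $\CY_2$ is passable, $\QCoh(\CY_2)$ is rigid by \propref{p:passable rigid}; combined with the hypothesis that $\QCoh(\CY_1)$ is dualizable as a plain DG category, \lemref{l:dualizable in rigid} shows that $\QCoh(\CY_1)$ is dualizable as an object of $\QCoh(\CY_2)\mmod$. Consequently the functor $-\underset{\QCoh(\CY_2)}\otimes \QCoh(\CY_1)$ commutes with limits. Writing $\QCoh(\CY'_2)\simeq \underset{S\in \affdgSch_{/\CY'_2}}{\underset{\longleftarrow}{lim}}\,\QCoh(S)$ and commuting the tensor product past this limit, the left-hand side of \eqref{e:fiber vs ten} becomes $\underset{S}{\underset{\longleftarrow}{lim}}\,\left(\QCoh(S)\underset{\QCoh(\CY_2)}\otimes \QCoh(\CY_1)\right)$. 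On the other hand, since fiber products of prestacks commute with colimits, $\CY'_1\simeq \CY'_2\underset{\CY_2}\times \CY_1\simeq \underset{S}{\underset{\longrightarrow}{colim}}\,(S\underset{\CY_2}\times \CY_1)$, whence $\QCoh(\CY'_1)\simeq \underset{S}{\underset{\longleftarrow}{lim}}\,\QCoh(S\underset{\CY_2}\times\CY_1)$, using $S\underset{\CY'_2}\times \CY'_1\simeq S\underset{\CY_2}\times\CY_1$. As \eqref{e:fiber vs ten} is compatible with these presentations, it is the limit over $S$ of the corresponding affine-level maps, so it suffices to prove that each $\QCoh(S)\underset{\QCoh(\CY_2)}\otimes \QCoh(\CY_1)\to \QCoh(S\underset{\CY_2}\times \CY_1)$ is an equivalence; i.e. we may assume $\CY'_2=S$ is affine.

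Next I would show that for affine $S$ the structure map $g\colon S\to \CY_2$ is quasi-affine and quasi-compact. The graph $\Gamma_g\colon S\to S\times \CY_2$ fits into a Cartesian square with the diagonal $\Delta_{\CY_2}\colon \CY_2\to \CY_2\times\CY_2$ along $g\times \on{id}$, so since passability guarantees $\Delta_{\CY_2}$ is schematic, quasi-affine and quasi-compact, and these properties are stable under base change, $\Gamma_g$ is quasi-affine and quasi-compact. As $g$ is the composite of $\Gamma_g$ with the projection $S\times \CY_2\to \CY_2$ — which is affine, being a base change of $S\to \on{pt}$ — it follows that $g$ is quasi-affine and quasi-compact.

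Finally, using the symmetry of the relative tensor product $\QCoh(S)\underset{\QCoh(\CY_2)}\otimes \QCoh(\CY_1)\simeq \QCoh(\CY_1)\underset{\QCoh(\CY_2)}\otimes \QCoh(S)$, I would apply \propref{p:quasi-aff} to the Cartesian square with the quasi-affine quasi-compact map $g\colon S\to \CY_2$ playing the role of the morphism being pulled back; this yields the desired equivalence onto $\QCoh(S\underset{\CY_2}\times \CY_1)$. I expect the main obstacle to be the reduction step: one must verify both that tensoring with the dualizable module genuinely commutes with the defining limit of $\QCoh(\CY'_2)$ (which is exactly where rigidity of $\QCoh(\CY_2)$ enters) and that the affine-level equivalences are natural enough to assemble into an equivalence of the limits, matching the base-change map \eqref{e:fiber vs ten} on the nose.
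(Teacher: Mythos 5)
Your proposal is correct and follows essentially the same route as the paper's own proof: use rigidity of $\QCoh(\CY_2)$ (from passability) together with \lemref{l:dualizable in rigid} to make $-\underset{\QCoh(\CY_2)}\otimes \QCoh(\CY_1)$ commute with the limit presentation over $\affdgSch_{/\CY'_2}$, reduce to affine $\CY'_2=S$, and conclude by \propref{p:quasi-aff} since $S\to\CY_2$ is quasi-affine and quasi-compact. Your graph-factorization argument for quasi-affineness of $S\to\CY_2$ is a correct elaboration of a step the paper leaves implicit.
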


The proof will use the following assertion:

\begin{lem} \label{l:pass rigid}
Let $\CY\in \on{PreStk}$ be such that the diagonal morphism is representable, quasi-compact and quasi-separated,
and $\CO_\CY\in \QCoh(\CY)$ is compact. Then the following conditions are equivalent:

\smallskip

\noindent{\em(a)} The functor $\QCoh(\CY)\otimes \QCoh(\CY')\to \QCoh(\CY\times \CY')$ is an equivalence
for any $\CY'\in \on{PreStk}$. 

\smallskip

\noindent{\em(b)} The functor $\QCoh(\CY)\otimes \QCoh(\CY)\to \QCoh(\CY\times \CY)$ is an equivalence. 

\smallskip

\noindent{\em(c)} The monoidal DG category $\QCoh(\CY)$ is rigid.

\smallskip

\noindent{\em(d)} The category $\QCoh(\CY)$ is dualizable as a DG category. 

\end{lem}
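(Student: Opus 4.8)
The plan is to prove the equivalence by running the cyclic chain $(c)\Rightarrow(d)\Rightarrow(a)\Rightarrow(b)\Rightarrow(c)$, in which three of the four implications are purely formal and all of the content lives in the last one. The implication $(a)\Rightarrow(b)$ is immediate: specialize $\CY'=\CY$. For $(d)\Rightarrow(a)$ I would invoke the statement recalled just after \eqref{e:product map} (namely \cite[Proposition 1.4.4]{QCoh}): the external product functor $\QCoh(\CY)\otimes\QCoh(\CY')\to\QCoh(\CY\times\CY')$ is an equivalence as soon as one of the two factors is dualizable as a plain DG category, and condition (d) supplies exactly this for the factor $\QCoh(\CY)$. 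For $(c)\Rightarrow(d)$ I would appeal to the general fact from \secref{s:rigid} that a rigid monoidal DG category is automatically dualizable (indeed self-dual) as a plain DG category. Note that none of these three steps uses the standing hypotheses on the diagonal or on $\CO_\CY$.

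The substance is the implication $(b)\Rightarrow(c)$, which is where the hypotheses enter. Assuming (b), I would use the identification $\QCoh(\CY\times\CY)\simeq\QCoh(\CY)\otimes\QCoh(\CY)$ to transport the pointwise multiplication functor $\on{mult}\colon\QCoh(\CY)\otimes\QCoh(\CY)\to\QCoh(\CY)$ to the pullback functor $\Delta^*\colon\QCoh(\CY\times\CY)\to\QCoh(\CY)$ along the diagonal $\Delta\colon\CY\to\CY\times\CY$; indeed $\Delta^*(\CF_1\boxtimes\CF_2)\simeq\CF_1\otimes\CF_2$. The right adjoint of $\on{mult}$ then identifies with $\Delta_*$. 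Since $\Delta$ is representable, quasi-compact and quasi-separated, the functor $\Delta_*$ is continuous by \cite[Proposition 2.1.1]{QCoh}, and the projection formula for $\Delta$ exhibits $\Delta_*$ as a morphism of $\QCoh(\CY\times\CY)$-module categories, i.e. as the bimodule-map property for the regular $\QCoh(\CY)$-bimodule structure on $\QCoh(\CY)$. Combined with the hypothesis that $\CO_\CY$ is compact (so that the unit of $\QCoh(\CY)$ is compact), these are precisely the two conditions in the criterion for rigidity reviewed in \secref{s:rigid}, and (c) follows.

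The main obstacle is exactly this step $(b)\Rightarrow(c)$, and the only genuinely geometric inputs are the two assertions about $\Delta_*$: its continuity, which rests on the quasi-compactness and quasi-separatedness of the representable diagonal, and the module-map property, which is the projection formula for $\Delta$. The remaining care is bookkeeping: one must match these facts with the precise form of the rigidity criterion in \secref{s:rigid}, and in particular observe that dualizability of $\QCoh(\CY)$ is a \emph{consequence} of rigidity rather than an input to it --- this is what allows the chain $(c)\Rightarrow(d)\Rightarrow(a)\Rightarrow(b)\Rightarrow(c)$ to close without circularity.
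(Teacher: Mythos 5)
Your proposal is correct and follows essentially the same route as the paper: the same cycle $(c)\Rightarrow(d)\Rightarrow(a)\Rightarrow(b)\Rightarrow(c)$, with the three formal implications handled identically (specialization, \cite[Proposition 1.4.4]{QCoh}, self-duality of rigid categories), and the key step $(b)\Rightarrow(c)$ done by transporting the multiplication to $\Delta_\CY^*$ and reading off continuity of $\Delta_*$ and the bimodule-map property from the hypotheses on the diagonal. Your treatment is in fact slightly more explicit than the paper's, which compresses the last step into ``the required properties follow from the assumptions on $\Delta_\CY$'' and leaves the unit condition (compactness of $\CO_\CY$) implicit.
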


\begin{proof}[Proof of \propref{p:base change pass}]

We have
$$\QCoh(\CY'_1)\simeq \underset{S\in \affdgSch_{/\CY'_2}}{\underset{\longleftarrow}{lim}}\, \QCoh(S\underset{\CY_2}\times \CY_1).$$

\medskip

By \lemref{l:pass rigid}, the category $\QCoh(\CY_2)$ is rigid. Hence, by \lemref{l:dualizable in rigid}, the functor
$$\bC\mapsto \bC\underset{\QCoh(\CY_2)}\otimes \QCoh(\CY_1),\quad \QCoh(\CY_2)\mmod\to \StinftyCat_{\on{cont}}$$
commutes with limits. In particular, the functor
\begin{multline*}
\QCoh(\CY'_2)\underset{\QCoh(\CY_2)}\otimes \QCoh(\CY_1)=
\left(\underset{S\in \affdgSch_{/\CY'_2}}{\underset{\longleftarrow}{lim}}\, \QCoh(S)\right)
\underset{\QCoh(\CY_2)}\otimes \QCoh(\CY_1)\to \\
\to \underset{S\in \affdgSch_{/\CY'_2}}{\underset{\longleftarrow}{lim}}\,  \left(\QCoh(S)\underset{\QCoh(\CY_2)}\otimes \QCoh(\CY_1)\right)
\end{multline*}
is an equivalence.

\medskip

This reduces the assertion to the case when $\CY'_2=S\in \affdgSch$. However, in the latter case, the morphism $g_2$
is quasi-affine, and the assertion follows from \propref{p:quasi-aff}.

\end{proof}

\sssec{Proof of \lemref{l:pass rigid}}   \label{sss:pass rigid}

The implications (a) $\Rightarrow$ (b) and (c) $\Rightarrow$ (d) are tautological. The implication (d) $\Rightarrow$ (a) is
\cite[Proposition 1.4.4]{QCoh}. Thus, it remains to show that (b) implies (c). 

\medskip

Thus, we need to show that the right adjoint to the monoidal operation
\begin{equation} \label{e:mon op}
\QCoh(\CY)\otimes \QCoh(\CY)\to \QCoh(\CY)
\end{equation}
is continuous, and is a map of $\QCoh(\CY)$-bimodule categories. 

\medskip

Using the assumption in (b), we identify the functor \eqref{e:mon op} with the functor
$$\Delta_\CY^*:\QCoh(\CY\times \CY)\to \QCoh(\CY).$$

The required properties follow now from the assumptions on the morphism $\Delta_\CY$.

\qed

\section{Beck-Chevalley conditions} \label{s:Beck-Chevalley}

\ssec{Monadic and co-monadic Beck-Chevalley conditions}  \label{ss:BC}

\sssec{}

Let $\bC^\bullet$ be a co-simplicial category; for a map $\alpha:[j]\to [i]$ in $\bDelta$, let
$\sT^\alpha:\bC^j\to \bC^i$ denote the corresponding functor.

\begin{defn}
We shall say that $\bC^\bullet$ satisfies the \emph{monadic} 
Beck-Chevalley condition, if the following holds:

\begin{itemize}

\item For every $i$ and the last face map $\partial_i:[i]\to [i+1]$, the functor $\sT^{\partial_i}$
admits a left adjoint. 

\item For every $\alpha:[j]\to [i]$ (and the corresponding map $\alpha+1:[j+1]\to [i+1]$), the diagram
$$
\CD
\bC^i  @<{(\sT^{\partial_i})^L}<<  \bC^{i+1}  \\
@A{\sT^\alpha}AA   @AA{\sT^{\alpha+1}}A \\
\bC^j @<{(\sT^{\partial_j})^L}<<   \bC^{j+1},
\endCD
$$
that a priori commutes up to a natural transformation, commutes.

\end{itemize}

\end{defn}

\begin{defn} 
We say that $\bC^\bullet$ satisfies the \emph{co-monadic} Beck-Chevalley condition, when we replace 
``left adjoint" by ``right adjoint" in the above definition.
\end{defn}

\sssec{}

We also give the following definition:

\begin{defn} \label{defn:simpl mon}
Let $\bC^\bullet$ be a simplicial category. 
We shall say that $\bC^\bullet$ satisfies the \emph{monadic} Beck-Chevalley condition if for every $i$ and the last face map 
$\partial_i:[i]\to [i+1]$, the functor $\sT^{\partial_i}:\bC^{i+1}\to \bC^i$ admits a left adjoint and for every $\alpha:[j]\to [i]$ the diagram
$$
\CD
\bC^i  @>{(\sT^{\partial_i})^L}>>  \bC^{i+1}  \\
@V{\sT^\alpha}VV   @VV{\sT^{\alpha+1}}V \\
\bC^j @>{(\sT^{\partial_j})^L}>>   \bC^{j+1},
\endCD
$$
that a priori commutes up to a natural transformation, commutes.
\end{defn}

The following (tautological) observation is often useful:

\begin{lem} \label{l:double right adjoint new} 
Let $\bC^\bullet$ be a simplicial category, in which for every $([j]\overset{\alpha}\to [i]) \in \bDelta$,
the corresponding functor $\sT^\alpha:\bC^i\to \bC^j$ admits a right adjoint, and every $i$ and the last face map 
$\partial_i:[i]\to [i+1]$, the functor $\sT^{\partial_i}$ admits a left adjoint. Then the co-simplicial category 
$\bC^{\bullet,R}$, obtained by passing to the right adjoints, satisfies 
the monadic Beck-Chevalley condition if and only if $\bC^\bullet$ does.
\end{lem}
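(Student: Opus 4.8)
The plan is to observe that, under the two standing hypotheses, the ``existence of adjoint'' clauses in \emph{both} Beck--Chevalley conditions hold automatically, so that all the content sits in the commutativity clauses; and then to recognize the canonical Beck--Chevalley $2$-cell for $\bC^{\bullet,R}$ as the \emph{mate} of the one for $\bC^\bullet$, so that the two are invertible at the same time. Since the passage $\bC^\bullet\rightsquigarrow\bC^{\bullet,R}$ and its inverse (passing back to left adjoints, using the hypothesis that each $\sT^{\partial_i}$ has a left adjoint) are symmetric in spirit, establishing one implication establishes both.

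First I would unwind the definitions, writing $\sS^\alpha:=(\sT^\alpha)^R$ for the co-simplicial structure functors of $\bC^{\bullet,R}$, so that $\sS^\alpha:\bC^j\to\bC^i$ for $\alpha:[j]\to[i]$; these exist by the hypothesis that every $\sT^\alpha$ admits a right adjoint, so $\bC^{\bullet,R}$ is a well-defined co-simplicial category. For the last coface $\partial_i$ the functor $\sS^{\partial_i}=(\sT^{\partial_i})^R$ has $\sT^{\partial_i}$ as its left adjoint, and this left adjoint exists simply because $\sT^{\partial_i}$ is an original structure functor; hence the first clause of the co-simplicial monadic Beck--Chevalley condition is automatic, with $(\sS^{\partial_i})^L\simeq \sT^{\partial_i}$. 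Dually, the first clause of the simplicial condition for $\bC^\bullet$ is exactly the standing hypothesis that $\sT^{\partial_i}$ admits a left adjoint. Thus in each of the two conditions only the commutativity clause carries information.

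Next I would compare the two commutativity clauses directly. For $\bC^\bullet$ the clause asserts that the canonical Beck--Chevalley transformation between the two functors $\bC^i\to\bC^{j+1}$,
$$
(\sT^{\partial_j})^L\circ \sT^\alpha \quad\text{and}\quad \sT^{\alpha+1}\circ (\sT^{\partial_i})^L,
$$
is an isomorphism. Each of these is a composite of functors admitting right adjoints (using the hypotheses together with $((\sT^{\partial_i})^L)^R\simeq \sT^{\partial_i}$), and forming the right adjoints factor by factor yields $(\sT^\alpha)^R\circ \sT^{\partial_j}$ and $\sT^{\partial_i}\circ(\sT^{\alpha+1})^R$ respectively. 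Substituting $(\sS^{\partial_i})^L\simeq\sT^{\partial_i}$ and $\sS^\alpha=(\sT^\alpha)^R$, these are \emph{exactly} the two functors $\bC^{j+1}\to\bC^i$ whose comparison is the commutativity clause for $\bC^{\bullet,R}$. The key step is to check that the Beck--Chevalley cell in the $\bC^{\bullet,R}$-square is the mate of the one in the $\bC^\bullet$-square under passage to right adjoints on all four edges: both cells are built from the same simplicial identity in $\bDelta$ (the identity $\partial_i\circ\alpha\simeq(\alpha+1)\circ\partial_j$, which makes the underlying square of structure functors commute) together with the relevant units and counits, so this is a formal instance of the calculus of mates. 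Granting it, the conclusion is immediate, since a natural transformation whose source and target admit right adjoints is invertible if and only if its mate is.

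The main obstacle is precisely this coherent identification of $2$-cells. In ordinary category theory the comparison of a Beck--Chevalley transformation with the one obtained by replacing every edge of the square by its adjoint is a routine diagram chase; here, however, the cells live in mapping spaces of $\infty$-categories and must be produced compatibly rather than verified on objects. I would dispatch it not by hand but by invoking the mate correspondence for adjunctions in $\infty$-categories --- concretely, via the (co)Cartesian unstraightening of the square of functors, under which passing to adjoints corresponds to reversing the fibration and the Beck--Chevalley cell is encoded by a fixed comparison of push/pull functors --- together with the fact that the resulting duality between left-adjoint and right-adjoint functors (between the fixed pair $\bC^i$, $\bC^{j+1}$) is an equivalence and hence reflects invertibility of morphisms.
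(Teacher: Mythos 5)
Your proof is correct, and it matches the paper's approach: the paper records this lemma as a ``tautological observation'' with no written proof, and your argument --- noting that the adjoint-existence clauses are automatic under the hypotheses, that the two Beck--Chevalley squares have edges that are adjoint to one another (using $((\sT^{\partial_i})^L)^R\simeq \sT^{\partial_i}$), and that the two canonical $2$-cells are mates, hence simultaneously invertible --- is exactly the routine unwinding the paper takes for granted. The appeal to the mate correspondence for adjunctions in $\infty$-categories is the right way to make the ``tautology'' rigorous in this setting.
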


Interchanging the words ``left" and ``right" in Definition \ref{defn:simpl mon} and \lemref{l:double right adjoint new}
we obtain the dual definition and assertion for the co-monadic Beck-Chevalley conditions. 

\sssec{}

We have the following basic results (see \cite[Theorem 6.2.4.2]{Lu2}):

\begin{lem}  \label{l:monadicity}
Let $\bC^\bullet$ satisfy the monadic Beck-Chevalley condition. Then:

\smallskip

\noindent{\em(a)} 
The functor of evaluation on $0$-simplices 
$$\on{ev}^0:\on{Tot}(\bC^\bullet)\to \bC^0$$
admits a left adjoint; to be denoted $(\on{ev}^0)^L$.

\smallskip

\noindent{\em(b)} The monad $\on{ev}^0\circ (\on{ev}^0)^L$, acting on $\bC^0$, is isomorphic,
as a plain endo-functor, to $$(\sT^{\on{pr}_s})^L\circ \sT^{\on{pr}_t},$$ where
$\on{pr}_s,\on{pr}_t$ are the two maps $[0]\to [1]$\footnote{The notation ``s" is for ``source" and ``t" for ``target."}.

\smallskip

\noindent{\em(c)} The functor 
$$\on{ev}^0: \on{Tot}(\bC^\bullet)\to \bC^0$$
is monadic. 

\end{lem}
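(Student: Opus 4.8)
The plan is to deduce all three assertions from the Barr-Beck-Lurie theorem (\cite[Theorem 6.2.2.5]{Lu2}) applied to the functor $\on{ev}^0:\on{Tot}(\bC^\bullet)\to \bC^0$, together with an explicit computation of the relevant adjoint. First I would construct the left adjoint $(\on{ev}^0)^L$ claimed in (a). The key input is the monadic Beck-Chevalley condition itself: for each $i$ the last face map $\partial_i$ gives a functor $\sT^{\partial_i}$ admitting a left adjoint, and the compatibility of these left adjoints with the remaining coface and codegeneracy maps (the commuting squares in the definition) is exactly what is needed to assemble the degreewise left adjoints into a functor out of $\bC^0$ landing in the totalization. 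Concretely, the totalization $\on{Tot}(\bC^\bullet)$ is a limit over $\bDelta$, and the Beck-Chevalley squares allow one to produce, from an object of $\bC^0$, a compatible system of objects in each $\bC^i$ via iterated application of the functors $(\sT^{\partial_\bullet})^L$; this is the content of \cite[Theorem 6.2.4.2]{Lu2}, whose hypotheses are precisely our monadic Beck-Chevalley condition.

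For part (b) I would identify the monad $\on{ev}^0\circ(\on{ev}^0)^L$ as a plain endofunctor of $\bC^0$. Once $(\on{ev}^0)^L$ is described as the transport of an object along the left-adjoint coface maps, the composite $\on{ev}^0\circ(\on{ev}^0)^L$ is computed by going up one simplicial degree and coming back down. The two maps $[0]\to[1]$, namely $\on{pr}_s$ and $\on{pr}_t$, govern this passage: applying $\sT^{\on{pr}_t}$ moves an object of $\bC^0$ into $\bC^1$, and then the left adjoint $(\sT^{\on{pr}_s})^L$ returns it to $\bC^0$. The Beck-Chevalley compatibility guarantees that no higher simplices contribute to the endofunctor structure, so $\on{ev}^0\circ(\on{ev}^0)^L\simeq (\sT^{\on{pr}_s})^L\circ \sT^{\on{pr}_t}$ as plain endofunctors. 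This step is essentially a bookkeeping of which face maps appear, and is again furnished by \cite[Theorem 6.2.4.2]{Lu2}.

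For part (c), the monadicity of $\on{ev}^0$, I would verify the two hypotheses of Barr-Beck-Lurie. Conservativity of $\on{ev}^0$ on a totalization over $\bDelta$ is standard: an object of $\on{Tot}(\bC^\bullet)$ whose value on $0$-simplices is zero (or an equivalence) forces, via the cosimplicial structure maps, the same on all higher simplices, since every $\bC^i$ receives a structure map from $\bC^0$ through a map $[0]\to[i]$ in $\bDelta$. The second condition — that $\on{ev}^0$ preserves those geometric realizations of $\on{ev}^0$-split simplicial objects which exist — follows because limits over $\bDelta$ interact well with split geometric realizations; this is the portion handled in the proof of \cite[Theorem 6.2.4.2]{Lu2}. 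I expect the main obstacle to be the bookkeeping in (a) and (b): assembling the degreewise left adjoints into an honest functor to the totalization, and checking that the resulting monad sees only the $1$-simplex data, requires careful tracking of the coherences in the Beck-Chevalley squares. Since all of this is precisely what \cite[Theorem 6.2.4.2]{Lu2} establishes, I would simply cite that result and spell out only the translation of its hypotheses into our monadic Beck-Chevalley condition and the identification of the monad in (b).
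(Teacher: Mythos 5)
Your proposal is correct and takes essentially the same route as the paper: the paper offers no independent argument for this lemma, proving it simply by citation to \cite[Theorem 6.2.4.2]{Lu2}, which is exactly where your argument bottoms out. Your additional sketches (conservativity of $\on{ev}^0$ via the maps $[0]\to[i]$, the identification of the monad with $(\sT^{\on{pr}_s})^L\circ \sT^{\on{pr}_t}$, and deferring the split-realization step to Lurie) are accurate glosses on the content of that cited theorem.
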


Similarly, we have:

\begin{lem}  \label{l:comonadicity}
Let $\bC^\bullet$ satisfy the co-monadic Beck-Chevalley condition. Then:

\smallskip

\noindent{\em(a)} 
The functor of evaluation on $0$-simplices 
$$\on{ev}^0:\on{Tot}(\bC^\bullet)\to \bC^0$$
admits a right adjoint; to be denoted $(\on{ev}^0)^R$.

\smallskip

\noindent{\em(b)} The co-monad $\on{ev}^0\circ (\on{ev}^0)^R$, acting on $\bC^0$, is isomorphic,
as a plain endo-functor, to $$(\sT^{\on{pr}_s})^R\circ \sT^{\on{pr}_t}.$$ 

\smallskip

\noindent{\em(c)} The functor
$$\on{ev}^0:\on{Tot}(\bC^\bullet)\to \bC^0$$
is co-monadic. 

\end{lem}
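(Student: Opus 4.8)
The plan is to deduce \lemref{l:comonadicity} from \lemref{l:monadicity} by passing to opposite categories, using that the formation of opposites is an involutive autoequivalence of $\inftyCat$ which interchanges the monadic and co-monadic situations, as well as left and right adjoints.

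First I would form the co-simplicial category $\bC^{\bullet,\on{op}}$, whose $i$-th term is $(\bC^i)^{\on{op}}$ and whose structure functor attached to $\alpha:[j]\to[i]$ is $(\sT^\alpha)^{\on{op}}$. I claim that $\bC^{\bullet,\on{op}}$ satisfies the \emph{monadic} Beck-Chevalley condition. Indeed, a right adjoint $(\sT^{\partial_i})^R$ of $\sT^{\partial_i}$ yields, upon passing to opposites, a left adjoint $\bigl((\sT^{\partial_i})^R\bigr)^{\on{op}}$ of $(\sT^{\partial_i})^{\on{op}}$; and applying $(-)^{\on{op}}$ to the commuting Beck-Chevalley squares of the co-monadic condition for $\bC^\bullet$ produces exactly the commuting squares required for the monadic condition for $\bC^{\bullet,\on{op}}$.

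Next I would use that $(-)^{\on{op}}:\inftyCat\to\inftyCat$ is an equivalence and hence commutes with limits, so that
$$\on{Tot}(\bC^{\bullet,\on{op}})\simeq \bigl(\on{Tot}(\bC^\bullet)\bigr)^{\on{op}},$$
compatibly with the evaluation functors: under this identification the functor $\on{ev}^0$ for $\bC^{\bullet,\on{op}}$ goes over to $(\on{ev}^0)^{\on{op}}$ for $\bC^\bullet$. Now I apply \lemref{l:monadicity} to $\bC^{\bullet,\on{op}}$. Part (a) there gives a left adjoint to $(\on{ev}^0)^{\on{op}}$; taking its opposite produces the desired right adjoint $(\on{ev}^0)^R$ of $\on{ev}^0$, proving (a). Part (c) there says that $(\on{ev}^0)^{\on{op}}$ is monadic, which is by definition the statement that $\on{ev}^0$ is co-monadic, proving (c). For (b), \lemref{l:monadicity}(b) identifies the monad $(\on{ev}^0)^{\on{op}}\circ\bigl((\on{ev}^0)^{\on{op}}\bigr)^L$ on $(\bC^0)^{\on{op}}$ with $(\sT^{\on{pr}_s})^L\circ\sT^{\on{pr}_t}$; applying $(-)^{\on{op}}$, which turns a monad into a co-monad, reverses $L$-adjoints into $R$-adjoints, and preserves the order of composition, transports this into the asserted identification of $\on{ev}^0\circ(\on{ev}^0)^R$ with $(\sT^{\on{pr}_s})^R\circ\sT^{\on{pr}_t}$ as plain endofunctors of $\bC^0$.

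The argument is entirely formal; the only point that requires care, and which I regard as the main (if modest) obstacle, is the bookkeeping in the last two paragraphs: verifying that $(-)^{\on{op}}$ genuinely commutes with the totalization and with the relevant evaluation and projection functors, and that under opposites left adjoints, monads, and monadicity pass over precisely to right adjoints, co-monads, and co-monadicity with the composition order unchanged. Once these compatibilities are recorded, each of (a), (b), (c) follows by transporting the corresponding clause of \lemref{l:monadicity} through $(-)^{\on{op}}$.
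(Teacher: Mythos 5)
Your proof is correct. The paper offers no explicit argument for this lemma---it cites Lurie's Barr--Beck theorem for \lemref{l:monadicity} and introduces \lemref{l:comonadicity} with ``Similarly, we have''---and your passage to opposite categories (with the attendant bookkeeping that $(-)^{\on{op}}$ preserves totalizations, exchanges left and right adjoints, and turns monads and monadicity into co-monads and co-monadicity) is exactly the formal duality that this ``similarly'' refers to, so your write-up supplies the standard and evidently intended justification that the paper omits.
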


\ssec{Calculating tensor products}

For future use, here are some examples, of how the Beck-Chevalley conditions can be used
to calculate tensor products of categories.

\sssec{}

Let $\bA$ be a monoidal DG category, and let $\bC^r$ and $\bC^l$ be right and left $\bA$-module categories
respectively. 

\medskip

Consider the corresponding ``Bar" complex, i.e., simplicial category $\on{Bar}^\bullet(\bC^r,\bA,\bC^l)$,
so that
$$|\on{Bar}^\bullet(\bC^r,\bA,\bC^l)|=\bC^r\underset{\bA}\otimes \bC^l.$$

The next assertion is tautological:

\begin{lem} \label{l:tensor prod Beck-Chevalley} \hfill

\medskip

\noindent{\em(i)} Assume that the action functors $\on{act}_{\bC^r,\bA}:\bC^r\otimes \bA\to \bC^r$  and $\on{act}_{\bA,\bC^l}:\bA\otimes \bC^l\to \bC^l$ 
and the monoidal operation $\on{mult}_\bA:\bA\otimes \bA\to \bA$ all admit continuous right adjoints, and the right adjoint to $\on{act}_{\bA,\bC^l}$
is also a map of $\bA$-module categories. 

\smallskip

Then the co-simplicial category $\on{Bar}^{\bullet,R}(\bC^r,\bA,\bC^l)$,
obtained from $\on{Bar}^\bullet(\bC^r,\bA,\bC^l)$ by passage to the right adjoint functors, satisfies
the monadic Beck-Chevalley condition.

\medskip

\noindent{\em(ii)} Assume that the action functors $\on{act}_{\bC^r,\bA}$ and $\on{act}_{\bA,\bC^l}$
and the monoidal operation $\on{mult}_\bA$ all admit left adjoints, and that the 
left adjoint of the action functor $\on{act}_{\bA,\bC^l}$ is also a map of $\bA$-module categories. 

\smallskip

Then the co-simplicial category 
$\on{Bar}^{\bullet,L}(\bC^r,\bA,\bC^l)$,
obtained from $\on{Bar}^\bullet(\bC^r,\bA,\bC^l)$ by passage to the left adjoint functors, satisfies
the co-monadic Beck-Chevalley condition.

\medskip

\noindent{\em(ii')} In the situation of point (ii), the simplicial category $\on{Bar}^\bullet(\bC^r,\bA,\bC^l)$
satisfies the monadic Beck-Chevalley condition.

\end{lem}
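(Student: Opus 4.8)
The three assertions are variants of one another, so the plan is to verify (ii') in full and to read off (i) and (ii) from the same computation with the roles of left and right adjoints interchanged. First I would spell out the two-sided bar construction: the category of $n$-simplices of $\on{Bar}^\bullet(\bC^r,\bA,\bC^l)$ is $\bC^r\otimes \bA^{\otimes n}\otimes \bC^l$, the outer face maps $d_0$ and $d_n$ are $\on{act}_{\bC^r,\bA}\otimes \on{Id}$ and $\on{Id}\otimes \on{act}_{\bA,\bC^l}$, the inner face maps $d_k$ ($0<k<n$) are $\on{Id}\otimes \on{mult}_\bA\otimes \on{Id}$ applied to the $k$-th adjacent pair of $\bA$-slots, and the degeneracies insert the unit $\one$ of $\bA$. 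With this bookkeeping the clauses of \defnref{defn:simpl mon} concern only the face maps.

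Next I would dispose of the existence-of-adjoints clauses. In the situation of (ii) and (ii') the three structure functors admit left adjoints, hence so does every face map (tensoring with the identity on the untouched factors preserves adjoints); in particular the last face map $d_n$ has a left adjoint, which is the first requirement of \defnref{defn:simpl mon}. For (i) I would instead pass to right adjoints to form the co-simplicial object $\on{Bar}^{\bullet,R}$; there the last coface map is $(\on{Id}\otimes \on{act}_{\bA,\bC^l})^R$, whose left adjoint is the original face map $d_n$ and so exists for free, and symmetrically the last coface of $\on{Bar}^{\bullet,L}$ in (ii) has the original $d_n$ as its right adjoint. Thus in all three cases the adjoint-existence clause is immediate, and there is no need to invoke \lemref{l:double right adjoint new} (which would require adjoints on \emph{both} sides of $d_n$, whereas we are given only one side); each statement is checked directly against the relevant Beck--Chevalley definition, and the conclusions then feed into \lemref{l:monadicity} or \lemref{l:comonadicity}.

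The heart of the matter is the commutativity of the Beck--Chevalley squares, each of which pairs the (adjoint of the) last face map $d_n$ with one further face map $d_k$. These split into two kinds. For $k\le n-2$ the functor $d_k$ acts on slots among $\bC^r,\bA_1,\dots,\bA_{n-1}$, which are disjoint from the slots $\bA_n,\bC^l$ touched by $\on{act}_{\bA,\bC^l}$; the two functors are then of the form $F\otimes \on{Id}$ and $\on{Id}\otimes G$ in complementary factors, so they commute on the nose and the induced Beck--Chevalley transformation is manifestly an isomorphism. The single genuine square is the one pairing $d_n$ with $d_{n-1}=\on{Id}\otimes \on{mult}_\bA\otimes \on{Id}$, which share the slot $\bA_n$: this is precisely the associativity constraint $\on{act}_{\bA,\bC^l}\circ(\on{mult}_\bA\otimes \on{Id})\simeq \on{act}_{\bA,\bC^l}\circ(\on{Id}\otimes \on{act}_{\bA,\bC^l})$ for the left $\bA$-action on $\bC^l$. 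Passing to the relevant adjoint of $\on{act}_{\bA,\bC^l}$, its Beck--Chevalley transformation is an isomorphism exactly when that adjoint intertwines the ambient $\bA$-action, i.e. is a morphism of $\bA$-module categories --- which is precisely the extra hypothesis imposed in (i), (ii) and (ii').

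I expect the one real point --- everything else being formal --- to be isolating this unique non-formal square and confirming that the module-map hypothesis is exactly what trivializes its mate. The accompanying subtlety to watch is the asymmetry between the two ends: it is the adjoint on the $\bC^l$-side that must be a module map and not the one on the $\bC^r$-side, and I would make sure this is consistent with the fact that the Beck--Chevalley conditions single out the \emph{last} face map (the $\bC^l$-action), whereas the $\bC^r$-action enters only as one of the generic maps $d_k$ and therefore needs no hypothesis. Once the two kinds of squares are checked, \defnref{defn:simpl mon} holds, giving (ii'); the identical computation read off $\on{Bar}^{\bullet,R}$ and $\on{Bar}^{\bullet,L}$ gives the monadic condition of (i) and the co-monadic condition of (ii).
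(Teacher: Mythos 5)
Your verification of (ii$'$) is correct and complete: the only non-formal square is the one pairing the last face with the adjacent multiplication face, and its Beck--Chevalley transformation is literally the condition that $\on{act}_{\bA,\bC^l}^L$ be a strict map of $\bA$-module categories. Since the paper offers no proof (it declares the lemma tautological), this part of your argument is a faithful expansion of the intended content.

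For (i) and (ii), however, there is a genuine gap, located exactly where you dismiss \lemref{l:double right adjoint new}. In $\on{Bar}^{\bullet,R}$ (resp.\ $\on{Bar}^{\bullet,L}$) \emph{all} structure maps get replaced by their adjoints, including the inner face built from $\on{mult}_\bA$. So the ``genuine'' square there does not compare $\on{act}^R\circ \on{act}$ with $(\on{mult}_\bA\otimes \on{Id}_{\bC^l})\circ(\on{Id}_\bA\otimes \on{act}^R)$, which is the module-functor condition you invoke; it compares $\on{act}^R\circ \on{act}$ with $(\on{Id}_\bA\otimes \on{act})\circ(\on{mult}_\bA^R\otimes \on{Id}_{\bC^l})$ --- note $\on{mult}_\bA^R$, not $\on{mult}_\bA$ (and likewise with $L$ in case (ii)). These are two \emph{different} mates of the associativity isomorphism: your hypothesis is the mate taken along the last-face adjunctions, while the required square is the mate taken along the $(\on{mult}_\bA,\on{act})$-adjunctions. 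Passing between them is a conjugation-of-mates argument, which is precisely the content of \lemref{l:double right adjoint new}; it is not a repetition of the (ii$'$) computation, so the claim that ``the identical computation'' handles (i) and (ii) fails.

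Your reason for avoiding \lemref{l:double right adjoint new} is also based on a misreading: that lemma needs right adjoints of \emph{all} simplicial structure maps (these exist automatically, because the bar maps are continuous functors between presentable DG categories) together with left adjoints of the last face maps (the hypothesis of (ii)). So in case (ii) it applies, and the correct route --- the one the paper itself takes in proving \corref{c:tensor prod Beck-Chevalley} --- is your (ii$'$) combined with \lemref{l:double right adjoint new}, yielding the monadic Beck--Chevalley property of $\on{Bar}^{\bullet,R}$. In case (i), where the last faces are not assumed to have left adjoints, even this conjugation is unavailable: the square involving $\on{mult}_\bA^R$ must be produced by other means, and in the paper's actual application of (i) (\corref{c:dual and rigid module bis}) it comes from rigidity, which makes $\on{mult}_\bO^R$ a strict bimodule functor (\secref{ss:rigidity}, \propref{p:extsience of right adjoint for rigid}) --- an input your argument never extracts from the stated hypothesis on $\on{act}^R$ alone.
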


From here we will deduce:  

\begin{cor} \label{c:tensor prod Beck-Chevalley} 
In the situation of either of the points of 
\lemref{l:tensor prod Beck-Chevalley}, the right adjoint to 
$$\bC^r\otimes \bC^l\to \bC^r\underset{\bA}\otimes \bC^l$$
is monadic, and the resulting monad on $\bC^r\otimes \bC^l$ is isomorphic,
as a plain endo-functor to
$$(\on{act}_{\bC^r,\bA}\otimes  \on{Id}_{\bC^l})\circ (\on{Id}_{\bC^r}\otimes \on{act}_{\bA,\bC^l})^R.$$
\end{cor}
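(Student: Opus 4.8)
The plan is to recognize the canonical functor $\bC^r\otimes\bC^l\to \bC^r\underset{\bA}\otimes\bC^l$ as the insertion $\on{ins}^0$ of $0$-simplices into the geometric realization of the simplicial category $\on{Bar}^\bullet(\bC^r,\bA,\bC^l)$, since by definition $|\on{Bar}^\bullet(\bC^r,\bA,\bC^l)|\simeq \bC^r\underset{\bA}\otimes\bC^l$ while $\on{Bar}^0=\bC^r\otimes\bC^l$. Thus what must be shown is that $\on{ins}^0$ admits a right adjoint $G$, that $G$ is monadic, and that the monad $G\circ\on{ins}^0$ has the asserted form. First I would pass to the co-simplicial category $\on{Bar}^{\bullet,R}(\bC^r,\bA,\bC^l)$ of right adjoints: in the situation of \lemref{l:tensor prod Beck-Chevalley}(i) this is exactly the object produced there, and it satisfies the monadic Beck-Chevalley condition, whereas in the situation of point (ii) the simplicial $\on{Bar}^\bullet$ itself satisfies the monadic Beck-Chevalley condition by point (ii') (equivalently, $\on{Bar}^{\bullet,L}$ satisfies the co-monadic one), which via \lemref{l:double right adjoint new} feeds into the same mechanism.

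The heart of the argument is then the identification of adjoints. By \cite[Lemma 1.3.3]{DGCat} together with the paradigm of \secref{sss:cosections}, one has a canonical equivalence $\on{Tot}\left(\on{Bar}^{\bullet,R}(\bC^r,\bA,\bC^l)\right)\simeq |\on{Bar}^\bullet(\bC^r,\bA,\bC^l)|\simeq \bC^r\underset{\bA}\otimes\bC^l$, under which the insertion $\on{ins}^0$ corresponds to the left adjoint $(\on{ev}^0)^L$ of the evaluation functor $\on{ev}^0$ on $0$-simplices. Consequently the right adjoint $G=(\on{ins}^0)^R$ is identified with $\on{ev}^0$ itself. Now \lemref{l:monadicity} applies: part (a) guarantees that $\on{ev}^0$ (hence $G$) admits the left adjoint $\on{ins}^0$, part (c) gives that $\on{ev}^0$ is monadic, and part (b) computes the monad $\on{ev}^0\circ(\on{ev}^0)^L$, as a plain endo-functor, as $(\sT^{\on{pr}_s})^L\circ\sT^{\on{pr}_t}$.

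It remains to identify this monad with $(\on{act}_{\bC^r,\bA}\otimes\on{Id}_{\bC^l})\circ(\on{Id}_{\bC^r}\otimes\on{act}_{\bA,\bC^l})^R$. The two maps $[0]\to[1]$ correspond to the two face functors $\on{Bar}^1=\bC^r\otimes\bA\otimes\bC^l\to\bC^r\otimes\bC^l=\on{Bar}^0$, namely $\on{act}_{\bC^r,\bA}\otimes\on{Id}_{\bC^l}$ and $\on{Id}_{\bC^r}\otimes\on{act}_{\bA,\bC^l}$; the coface functors of $\on{Bar}^{\bullet,R}$ are their right adjoints, so that $\sT^{\on{pr}_t}=(\on{Id}_{\bC^r}\otimes\on{act}_{\bA,\bC^l})^R$ and $(\sT^{\on{pr}_s})^L=\on{act}_{\bC^r,\bA}\otimes\on{Id}_{\bC^l}$, yielding the claimed monad. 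I expect the main obstacle to be essentially bookkeeping: matching the source/target labelling of the two coface maps so that the right adjoint lands on the $\bC^l$-action factor and not the $\bC^r$-one, and, in the situation of point (ii), running the argument through the co-monadic Beck-Chevalley version (passing to left rather than right adjoints) to reach the same conclusion, since there only the left adjoints of the action functors are assumed to exist.
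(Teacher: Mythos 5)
Your proposal is correct and follows essentially the same route as the paper: the paper's proof is exactly the observation that in case (i) the co-simplicial category $\on{Bar}^{\bullet,R}(\bC^r,\bA,\bC^l)$ satisfies the monadic Beck-Chevalley condition so that \lemref{l:monadicity} applies (with \cite[Lemma 1.3.3]{DGCat} identifying $\on{Tot}$ of it with $\bC^r\underset{\bA}\otimes\bC^l$ and the evaluation functor with the right adjoint of the insertion), while in case (ii) one first transfers the condition from the simplicial $\on{Bar}^\bullet$ (point (ii')) to $\on{Bar}^{\bullet,R}$ via \lemref{l:double right adjoint new}. Your closing suggestion to rerun case (ii) through the co-monadic version with left adjoints is unnecessary (and not what is done): the route you already gave via (ii') and \lemref{l:double right adjoint new} is the complete argument.
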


\begin{proof}
In the situation of \lemref{l:tensor prod Beck-Chevalley}(i), this follows from \lemref{l:monadicity}.

\medskip

In the situation of \lemref{l:tensor prod Beck-Chevalley}(ii), this follows from Lemmas \ref{l:monadicity},
and \ref{l:double right adjoint new}, combined with \cite[Lemma 1.3.3]{DGCat}. 

\end{proof}

\sssec{}

The next lemma implies that \corref{c:tensor prod Beck-Chevalley} is applicable
to the computation of the tensor product as along as the corresponding functors admit continuous right (resp., left)
adjoints:

\medskip

Assume that $\bC^l=\Vect$, with the action map
$\on{act}_{\bA,\bC^l}$ being given by a monoidal functor $\sF:\bA\to \Vect$. We can view $\sF$ as
a datum of augmentation on $\bA$, and in this case we will write
$\on{Bar}^\bullet(\bC^r,\bA)$ instead of $\on{Bar}^\bullet(\bC^r,\bA,\Vect)$. (When $\bC^r$ is also $\Vect$
with the action given by $\sF$, we will simply write $\on{Bar}^\bullet(\bA)$.)

\medskip

We note:

\begin{lem} \label{l:verify tensor prod Beck-Chevalley} Assume that the functor $\sF$ is conservative. 

\smallskip

\noindent{\em(i)} If the functor $\bA\to \Vect$ admits a continuous right adjoint, then this right
adjoint is automatically a map of $\bA$-module categories.

\smallskip

\noindent{\em(ii)} If the functor $\bA\to \Vect$ admits a left adjoint, then this left
adjoint is automatically a map of $\bA$-module categories.

\end{lem}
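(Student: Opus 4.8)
The plan is to exhibit the canonical lax structure carried by the adjoint functor and to reduce the statement to the invertibility of its structure maps, the point being that ``being a map of $\bA$-module categories'' is exactly the requirement that these maps be equivalences. Write $\sG:=\sF^R$ in case (i), and recall that $\sF\colon\bA\to\Vect$ is a \emph{strict} morphism of left $\bA$-module categories (with $\bA$ the regular module and $\Vect$ an $\bA$-module via $\sF$), precisely because $\sF$ is monoidal. Consequently $\sG$ (resp.\ $\sF^L$) inherits a canonical right-lax (resp.\ op-lax) $\bA$-linear structure, whose structure map in case (i) is the natural transformation $\lambda_{a,v}\colon a\otimes\sG(v)\to\sG(\sF(a)\otimes_k v)$, obtained as the adjunct of $\sF(a\otimes\sG(v))\simeq\sF(a)\otimes_k\sF\sG(v)\to\sF(a)\otimes_k v$. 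First I would observe that both sides of $\lambda_{a,-}$ are continuous in $v$ and that $\Vect$ is generated under colimits by the unit $\one$; since $\sG$ is continuous, it therefore suffices to prove that $\lambda_{a,\one}\colon a\otimes\sG(\one)\to\sG(\sF(a))$ is an equivalence for every $a\in\bA$, and dually that the op-lax map $\sF^L(\sF(a)\otimes_k\one)\to a\otimes\sF^L(\one)$ is an equivalence in case (ii).

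The main device is to present $\bA$ itself as a category of (co)modules over a (co)algebra in $\Vect$. In case (ii) the functor $\sF$ is a right adjoint (of $\sF^L$) which is conservative by hypothesis and continuous, hence preserves geometric realizations of $\sF$-split simplicial objects; by the Barr--Beck--Lurie theorem \cite[Theorem 6.2.2.5]{Lu2} it is monadic, and since $\sF\sF^L$ is a continuous monad on $\Vect$ it is given by $-\otimes_k B$ for the algebra $B:=\sF\sF^L(\one)$, so that $\bA\simeq B\mod(\Vect)$ with $\sF$ the forgetful functor and $\sF^L$ the induction functor. In case (i) the functor $\sF$ is instead a left adjoint; here the hypothesis that $\sG=\sF^R$ be \emph{continuous} is what allows me to rewrite the totalization computing the comparison with $\comod_{\sF\sG}(\Vect)$ as a geometric realization, via \cite[Lemma 1.3.3]{DGCat}, which the continuous functor $\sF$ then preserves. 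Combined with the conservativity of $\sF$, this yields comonadicity of $\sF$ (the comonadic form of Barr--Beck--Lurie, cf.\ \cite[Theorem 6.2.4.2]{Lu2}), identifying $\bA\simeq E\comod(\Vect)$ with $E:=\sF\sG(\one)$, $\sF$ the forgetful functor, and $\sG$ the co-free functor $v\mapsto v\otimes_k E$.

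Under either identification, the structure map $\lambda_{a,\one}$ (resp.\ its analogue in case (ii)) becomes the comparison between $a\otimes(\text{co-free on }\one)$ and (co-free on $\sF(a)$), i.e.\ the classical tensor identity for (co)free (co)modules. To make this rigorous I would use that $\sF$ is monoidal while $\sG$ (resp.\ $\sF^L$) is lax (resp.\ op-lax) monoidal, so that $E$ (resp.\ $B$) is canonically a bialgebra and the (co)monadic comparison is an equivalence of \emph{monoidal} categories; invertibility of the structure map then reduces, again by conservativity of $\sF$, to an identity of endomorphisms of $\sF(a)\otimes_k(\text{coefficient})$ that follows from the (co)unit and (co)associativity axioms, the case $a=\one$ being immediate from the counit axiom. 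The hard part will be exactly this last structural input: verifying that the (co)monadic comparison is monoidal, equivalently that the monoidal product transported onto $\bA$ agrees with the (co)module tensor product, so that the abstract tensor identity applies; and, in case (i), establishing comonadicity at all — which, unlike the monadicity in case (ii), is not formal and must be extracted from the continuity of $\sG$ together with \cite[Lemma 1.3.3]{DGCat}.
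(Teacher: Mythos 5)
Your proposal does not follow the paper's argument, and the route you choose cannot be completed as described. The paper's proof is two lines long and introduces no (co)monadic machinery at all: since $\sF$ is conservative, the canonical (op)lax structure map on the adjoint is an equivalence if and only if its image under $\sF$ is one, so it suffices to show that the \emph{composite} $\Vect\overset{\sF^R}\to \bA\overset{\sF}\to \Vect$ (resp. $\sF\circ \sF^L$) is a map of $\bA$-module categories; this composite is a continuous endofunctor of $\Vect$, hence is given by tensoring with the object $\sF\circ\sF^R(k)$, and such a functor is $\Vect$-linear, hence $\bA$-linear, because the $\bA$-action on $\Vect$ factors through $\sF$. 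Your opening reduction (checking the structure map on $v=k$ by continuity) is fine, and your monadicity claim in case (ii) is correct (conservative, continuous, and admitting a left adjoint gives monadicity by Barr--Beck--Lurie). But the two points you yourself flag as ``the hard part'' are genuine gaps, not verifications you have merely deferred.

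Concretely: first, comonadicity of $\sF$ in case (i) is not supplied by conservativity plus continuity of $\sG$; the comonadic Barr--Beck theorem requires $\sF$ to preserve totalizations of $\sF$-split cosimplicial objects, which a left adjoint need not do, and \cite[Lemma 1.3.3]{DGCat} concerns limits of \emph{diagrams of DG categories} whose transition functors admit left adjoints --- it says nothing about totalizations inside $\bA$, so it does not close this gap. Second, and fatally, the ``classical tensor identity for (co)free (co)modules'' does \emph{not} follow from the (co)unit and (co)associativity axioms of a bialgebra: it requires an antipode. Take $B=k[M]$ for the two-element monoid $M=\{1,e\}$, $e^2=e$, with $\Delta(m)=m\otimes m$; this is a finite-dimensional bialgebra admitting no antipode ($e$ is not invertible), $\bA=B\mod$ is a monoidal DG category, and $\oblv:\bA\to\Vect$ is monoidal, conservative, with continuous left and right adjoints. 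Yet for $W=k$ with $e$ acting by $0$, the canonical op-lax map $B\otimes \underline{W}\to W\otimes B$, given in Sweedler notation by $b\otimes w\mapsto b_{(1)}w\otimes b_{(2)}$, sends $e\otimes w$ to $0$ and is not an equivalence: on the source $e$ acts with a nonzero idempotent, on the target $e$ acts by zero. So once you have rewritten the assertion as the tensor identity for the bialgebra $B=\sF\sF^L(k)$, no amount of checking bialgebra axioms can finish the proof; your route would require producing an antipode, i.e. exactly the group-like input that the hypotheses of the lemma do not hand you (and which is present in all of the paper's applications of this lemma, where $\bA$ is representations of a group-object). This obstruction is precisely what the paper's argument avoids by never introducing $B$ and working instead with the composite endofunctor of $\Vect$.
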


\begin{proof}

Since $\sF$ is conservative, it suffices to show that the composition
$$\Vect\overset{\sF^R}\to \bA\overset{\sF}\to \Vect$$
is a map of $\bA$-module categories. However, the latter is evident:
the functor in question is given by tensor product with $\sF\circ \sF^R(k)$.

\end{proof}

\section{Rigid monoidal categories} \label{s:rigid}

\ssec{The notion of rigidity} \label{ss:rigidity}

\sssec{}   \label{sss:rigidity}

Let $\bO$ be a monoidal DG category. We shall say that $\bO$ is \emph{rigid} if the following conditions are satisfied:

\begin{itemize}

\item The right adjoint $\on{mult}_\bO^R$ of the monoidal operation $\on{mult}_\bO:\bO\otimes \bO\to \bO$ is continuous;

\item The functor $\on{mult}_\bO^R:\bO\to \bO\otimes \bO$ is strictly (rather than lax) compactible with the
action of $\bO\otimes \bO$;

\item The right adjoint $\on{unit}_\bO^R$ of the unit functor $\on{unit}_\bO:\Vect\to \bO$ is continuous.

\end{itemize}

\sssec{}   \label{sss:rigid duality}

A basic feature of rigid monoidal categories is that the monoidal structure on $\bO$
gives rise to a canonical identification 
\begin{equation}  \label{e:rigid duality}
\bO^\vee\simeq \bO
\end{equation}
as plain DG categories. 

\medskip

Namely, we define the co-unit $\bO\otimes \bO\to \Vect$ as
$$\bO\otimes \bO\overset{\on{mult}_\bO}\longrightarrow \bO \overset{\on{unit}_\bO^R}\longrightarrow \Vect,$$
and the unit $\Vect\to \bO\otimes \bO$ as
$$\Vect \overset{\on{unit}_\bO}\longrightarrow \bO \overset{\on{mult}^R_\bO}\longrightarrow \bO\otimes \bO.$$

The fact that the above maps indeed define a data of duality is immediate from the assumption on $\bO$. 

\medskip

Under the identification \eqref{e:rigid duality}, the dual of $\on{mult}_\bO$ is $\on{mult}_\bO^R$,
and the dual of $\on{unit}_\bO$ is $\on{unit}_\bO^R$.

\sssec{} \label{sss:comp gen rigid}

Assume for a moment that $\bO$ is compactly generated. Then it is easy to show that $\bO$
is rigid in the sense of \secref{sss:rigidity} if and only if every compact object of $\bO$ admits
both left and right monoidal duals.

\medskip

In this case, the equialence \eqref{e:rigid duality} is given at the level of compact objects
by
$$\bo\mapsto \bo^\vee,$$
where $\bo^\vee$ denotes the right monoidal dual.

\ssec{Modules over a rigid category}

In this subsection we let $\bO$ be a rigid monoidal DG category.

\sssec{}

We have the following key assertion: 

\begin{prop} \label{p:extsience of right adjoint for rigid}
Let $\bC$ be an $\bO$-module category. Then the right adjoint to the action functor
$$\on{act}_{\bC,\bO}:\bO\otimes \bC\to \bC$$ 
is continuous and is given by the following functor (to be denoted $\on{co-act}_{\bC,\bO}$):
\begin{equation} \label{e:right adj}
\bC\overset{\on{unit}_\bO\otimes \on{Id}_\bC}\longrightarrow \bO\otimes \bC
\overset{\on{mult}_\bO^R\otimes \on{Id}_\bC}\longrightarrow \bO\otimes \bO\otimes \bC
\overset{\on{Id}_\bO\otimes \on{act}_{\bC,\bO}}\longrightarrow \bO\otimes \bC.
\end{equation}
\end{prop}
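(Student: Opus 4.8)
The plan is to verify directly that the functor $R := \on{co-act}_{\bC,\bO}$ displayed in \eqref{e:right adj} is right adjoint to $L := \on{act}_{\bC,\bO}$, by exhibiting a unit $\on{Id}_{\bO\otimes\bC}\to R\circ L$ and a counit $L\circ R\to\on{Id}_\bC$ and checking the two triangle identities. Continuity of $R$ requires no work: it is a composite of $\on{unit}_\bO\otimes\on{Id}_\bC$, of $\on{mult}_\bO^R\otimes\on{Id}_\bC$, and of $\on{Id}_\bO\otimes\on{act}_{\bC,\bO}$, and $\on{mult}_\bO^R$ is continuous by the first bullet of \secref{sss:rigidity} while the other two factors are continuous by default in $\StinftyCat_{\on{cont}}$. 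The structural inputs for the adjunction are: the associativity and unitality constraints of the $\bO$-action on $\bC$; the adjunction $(\on{mult}_\bO,\on{mult}_\bO^R)$ with its unit and counit; and --- crucially --- the fact that $\on{mult}_\bO^R$ is a strict map of $\bO\otimes\bO$-module categories, i.e.\ the projection formula supplied by the second bullet of \secref{sss:rigidity}.

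For the counit I would first use associativity of the action, in the form $\on{act}_{\bC,\bO}\circ(\on{Id}_\bO\otimes\on{act}_{\bC,\bO})\simeq\on{act}_{\bC,\bO}\circ(\on{mult}_\bO\otimes\on{Id}_\bC)$, to rewrite $L\circ R$ as $\on{act}_{\bC,\bO}\circ\bigl((\on{mult}_\bO\circ\on{mult}_\bO^R)\otimes\on{Id}_\bC\bigr)\circ(\on{unit}_\bO\otimes\on{Id}_\bC)$. Applying the counit $\on{mult}_\bO\circ\on{mult}_\bO^R\to\on{Id}_\bO$ of the $(\on{mult}_\bO,\on{mult}_\bO^R)$-adjunction, followed by the unitality isomorphism $\on{act}_{\bC,\bO}\circ(\on{unit}_\bO\otimes\on{Id}_\bC)\simeq\on{Id}_\bC$, then yields the desired counit $L\circ R\to\on{Id}_\bC$.

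The unit is the heart of the matter. Here I would begin from the unitality identification $\on{Id}_{\bO\otimes\bC}\simeq(\on{Id}_\bO\otimes\on{act}_{\bC,\bO})\circ(\on{Id}_\bO\otimes\on{unit}_\bO\otimes\on{Id}_\bC)$, then insert the unit $\on{Id}_{\bO\otimes\bO}\to\on{mult}_\bO^R\circ\on{mult}_\bO$ of the $(\on{mult}_\bO,\on{mult}_\bO^R)$-adjunction in the first two tensor factors. The resulting natural transformation lands in a functor which, by the projection formula for $\on{mult}_\bO^R$ together with associativity of the action, is canonically identified with $R\circ L$; this step is exactly where the $\bO\otimes\bO$-linearity of $\on{mult}_\bO^R$ is indispensable, since it is precisely what rewrites the image of $\on{mult}_\bO^R\circ\on{mult}_\bO$ as the coaction built into the definition of $R$ in \eqref{e:right adj}.

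The step I expect to be the main obstacle is the coherent verification of the two triangle identities. Both should reduce to the triangle identities of the single adjunction $(\on{mult}_\bO,\on{mult}_\bO^R)$, suitably pre- and post-composed with $\on{act}_{\bC,\bO}$, once one knows that the associativity/unitality constraints and the projection-formula isomorphism are mutually compatible as $\infty$-categorical natural transformations; marshalling these higher coherences is the only genuinely delicate point. As an independent check one can specialize, when $\bO$ is compactly generated, to compact objects and recover the formula from the existence of monoidal duals as in \secref{sss:comp gen rigid}, and one can test the whole assertion on the regular module $\bC=\bO$, where $L=\on{mult}_\bO$, $R$ reduces to $\on{mult}_\bO^R$, and the claim is the defining adjunction.
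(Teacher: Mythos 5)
Your proposal is correct and follows essentially the same route as the paper's own proof: the counit is built from associativity of the action, the counit of the $(\on{mult}_\bO,\on{mult}_\bO^R)$-adjunction, and unitality, while the unit is built from unitality and the unit of that adjunction, with the strict $\bO\otimes\bO$-linearity of $\on{mult}_\bO^R$ used exactly where you place it, namely to identify $(\on{Id}_\bO\otimes\on{act}_{\bC,\bO})\circ(\on{mult}_\bO^R\otimes\on{Id}_\bC)$ with $\on{co-act}_{\bC,\bO}\circ\on{act}_{\bC,\bO}$. The paper likewise leaves the triangle identities as a straightforward (if coherence-laden) verification, so there is nothing to add.
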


\begin{proof}
We construct adjunction data for the functors $\on{act}_{\bC,\bO}$ and $\on{co-act}_{\bC,\bO}$ as follows.
The composition $\on{act}_{\bC,\bO}\circ \on{co-act}_{\bC,\bO}$ is isomorphic to the composition
\begin{equation} \label{e:comp 1}
\bC\overset{\on{unit}_\bO\otimes \on{Id}_\bC}\longrightarrow \bO\otimes \bC
\overset{\on{mult}_\bO^R\otimes \on{Id}_\bC}\longrightarrow \bO\otimes \bO\otimes \bC
\overset{\on{mult}_\bO\otimes \on{Id}_\bC}\longrightarrow \bO\otimes \bC 
\overset{\on{act}_{\bC,\bO}}\longrightarrow \bC.
\end{equation}
The natural transformation $\on{mult}_\bO\circ \on{mult}_\bO^R\to \on{Id}_\bO$ defines a natural transformation
from \eqref{e:comp 1} to
$$\bC\overset{\on{unit}_\bO\otimes \on{Id}_\bC}\longrightarrow \bO\otimes \bC 
\overset{\on{act}_{\bC,\bO}}\longrightarrow \bC,$$
while the latter functor is canonically isomorphic to the identity functor on $\bC$. This defines
the co-unit of the adjunction. 

\medskip

The composition $ \on{co-act}_{\bC,\bO} \circ \on{act}_{\bC,\bO}$ is isomorphic to the composition
\begin{multline} \label{e:comp 2}
\bO\otimes \bC\overset{\on{unit}_\bO\otimes \on{Id}_\bO\otimes \on{Id}_\bC}\longrightarrow \bO\otimes \bO\otimes \bC
\overset{\on{mult}_\bO^R\otimes \on{Id}_\bO \otimes \on{Id}_\bC}\longrightarrow \bO\otimes \bO\otimes \bO\otimes \bC \to \\
\overset{\on{Id}_\bO \otimes \on{mult}_\bO\otimes \on{Id}_\bC}\longrightarrow 
\bO\otimes \bO\otimes \bC  \overset{\on{Id}_\bO \otimes\on{act}_{\bC,\bO}}\longrightarrow \bO\otimes \bC.
\end{multline}

The condition on $\bO$ implies that the diagram
$$
\CD
\bO\otimes \bO @>{\on{mult}_\bO^R\otimes \on{Id}_\bO}>>   \bO\otimes \bO\otimes \bO \\
@V{\on{mult}_\bO}VV    @VV{\on{Id}_\bO \otimes \on{mult}_\bO}V  \\
\bO  @>{\on{mult}_\bO^R}>>  \bO\otimes \bO
\endCD
$$
commutes. 

\medskip

Hence, the functor in \eqref{e:comp 2} can be rewritten as 
\begin{equation} \label{e:comp 3}
\bO\otimes \bC \overset{\on{mult}_\bO^R\otimes \on{Id}_\bO}\longrightarrow \bO\otimes \bO\otimes \bC 
 \overset{\on{Id}_\bO \otimes\on{act}_{\bC,\bO}}\longrightarrow \bO\otimes \bC.
\end{equation}
 
Now, the isomorphism
$$\on{mult}_\bO\circ (\on{Id}_\bO\otimes \on{unit}_\bO)\simeq \on{Id}_\bO$$
gives rise to a natural transformation
$$\on{Id}_\bO\otimes \on{unit}_\bO\to \on{mult}_\bO^R.$$

Hence, the functor in \eqref{e:comp 3} receives a natural transformation from
$$\bO\otimes \bC \overset{\on{Id}_\bO\otimes \on{unit}_\bO\otimes \on{Id}_\bO}\longrightarrow \bO\otimes \bO\otimes \bC 
\overset{\on{Id}_\bO \otimes\on{act}_{\bC,\bO}}\longrightarrow \bO\otimes \bC,$$
whereas the latter is the identity functor on $\bO\otimes \bC$. 

\medskip

This defines the unit for the 
$(\on{act}_{\bC,\bO},\on{co-act}_{\bC,\bO})$-adjunction
The verification that the above unit and co-unit satisfy the adjunction requirements is a straightforward verification.

\end{proof}

\sssec{}

As a formal corollary by diagram chase we obtain: 

\begin{cor}  \label{c:dual and rigid module}
For an $\bO$-module $\bC$, the right adjoint of the action map is a map of $\bO$-module categories.
\end{cor}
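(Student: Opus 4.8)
The plan is to produce the $\bO$-linearity datum for $\on{co-act}_{\bC,\bO}$ directly from the explicit formula \eqref{e:right adj} of \propref{p:extsience of right adjoint for rigid}, feeding in the associativity compatibilities of $\on{mult}_\bO^R$ that are guaranteed by rigidity. Recall that $\bO\otimes\bC$ carries the $\bO$-module structure acting on the left tensor factor (via $\on{mult}_\bO$), that $\bC$ carries its given structure, and that $\on{act}_{\bC,\bO}$ is tautologically a map of $\bO$-module categories (by associativity of the action). Thus what must be shown is that the square
\[
\begin{CD}
\bO\otimes\bC @>{\on{Id}_\bO\otimes\on{co-act}_{\bC,\bO}}>> \bO\otimes\bO\otimes\bC\\
@V{\on{act}_{\bC,\bO}}VV @VV{\on{mult}_\bO\otimes\on{Id}_\bC}V\\
\bC @>{\on{co-act}_{\bC,\bO}}>> \bO\otimes\bC
\end{CD}
\]
carries a canonical commutativity datum.

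First I would substitute the formula $\on{co-act}_{\bC,\bO}=(\on{Id}_\bO\otimes\on{act}_{\bC,\bO})\circ(\on{mult}_\bO^R\otimes\on{Id}_\bC)\circ(\on{unit}_\bO\otimes\on{Id}_\bC)$ into both legs of the square. Both composites then take the form $(\on{Id}_\bO\otimes\on{act}_{\bC,\bO})\circ(\Phi\otimes\on{Id}_\bC)$ for a functor $\Phi\colon\bO\to\bO\otimes\bO$; concretely, writing $\on{mult}_\bO^R(\mathbf{1})=\sum_i a_i\otimes b_i$, one leg produces $\Phi(o)=\sum_i a_i\otimes(b_i\,o)$ (the second factor of $\on{mult}_\bO^R(\mathbf{1})$ right-multiplied by $o$), while the other produces $\Phi(o)=\sum_i(o\,a_i)\otimes b_i$ (the first factor left-multiplied by $o$).

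The heart of the matter is then that these two functors $\bO\to\bO\otimes\bO$ coincide, and this is exactly the bimodule compatibility of $\on{mult}_\bO^R$ built into the rigidity hypothesis of \secref{sss:rigidity}. It is the content of the commuting square displayed in the proof of \propref{p:extsience of right adjoint for rigid}, together with its mirror image obtained by exchanging the two tensor factors: both express that $\on{mult}_\bO^R$ is a \emph{strict} map of $\bO\otimes\bO$-module categories, and each identifies the corresponding $\Phi$ with $\on{mult}_\bO^R$ itself. Feeding the resulting identification $\Phi\simeq\on{mult}_\bO^R$ through $(\on{Id}_\bO\otimes\on{act}_{\bC,\bO})\circ(-\otimes\on{Id}_\bC)$ yields the commutativity of the square, and hence the $\bO$-linear structure on $\on{co-act}_{\bC,\bO}$.

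The step I expect to be the main obstacle is organizing this as a genuine chase of $2$-cells rather than the object-level verification sketched above: one must assemble the two rigidity squares, the unit and associativity coherences of the $\bO$-action on $\bC$, and the adjunction $2$-cells constructed in \propref{p:extsience of right adjoint for rigid} into a single coherent homotopy filling the square. Since every ingredient is already available as coherent data—the action constraints of the module category $\bC$ and the strict $\bO\otimes\bO$-compatibility of $\on{mult}_\bO^R$—no genuinely new input is required, which is why the assertion is ``formal''; the only real care lies in the bookkeeping of these cells.
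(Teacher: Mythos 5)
Your proposal is correct and is essentially the argument the paper intends: the paper's entire proof of this corollary is the phrase ``as a formal corollary by diagram chase,'' the chase in question being exactly what you carry out — substitute the explicit formula \eqref{e:right adj} into both legs of the linearity square, and reduce both to $(\on{Id}_\bO\otimes\on{act}_{\bC,\bO})\circ(\on{mult}_\bO^R\otimes\on{Id}_\bC)$ using the displayed rigidity square and its mirror image, i.e., the strict $\bO\otimes\bO$-linearity of $\on{mult}_\bO^R$ from \secref{sss:rigidity}. Your closing caveat about assembling the $2$-cells coherently is also the right place to locate the remaining (purely bookkeeping) work, and is no more of a gap than the paper's own one-line proof.
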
 

\ssec{The dual co-monoidal category}  \label{ss:dual and adj rigid modules}

Let $\bO$ be a monoidal DG category, dualizable as a plain DG category. 

\sssec{}

The monoidal structure on $\bO$ gives rise to a co-monoidal structure on $\bO^\vee$, so that
we have a canonical equivalence
\begin{equation} \label{e:modules and comodules 1}
\bO\mmod\simeq \bO^\vee\commod,
\end{equation}
commuting with the forgetful functor to $\StinftyCat_{\on{cont}}$.

\sssec{}

From now in this subsection, let us assume that $\bO$ is rigid. 

\medskip

The assumption on $\bO$ implies that the \emph{right adjoint} of the monoidal structure
on $\bO$ defines a co-monoidal structure on $\bO$. We shall denote $\bO$, equipped with
this co-monoidal structure by $\bO_{\on{co}}$. 

\medskip

Furthermore, \propref{p:extsience of right adjoint for rigid} implies that the procedure of taking
the right adjoint of the action defines a functor
\begin{equation} \label{e:modules and comodules 2}
\bO\mmod\to  \bO_{\on{co}}\commod,
\end{equation}
which commutes with the forgetful functor to $\StinftyCat_{\on{cont}}$.

\sssec{}

Combining \eqref{e:modules and comodules 1} to \eqref{e:modules and comodules 2}, we obtain that there exists
a canonically defined functor
\begin{equation} \label{e:comodules and comodules}
\bO^\vee\commod\to \bO_{\on{co}}\commod,
\end{equation}
that commutes with the forgetful functor to $\StinftyCat_{\on{cont}}$.

\medskip

Hence, the functor \eqref{e:comodules and comodules} comes from a homomorphism of co-monoidal categories
\begin{equation} \label{e:another ident}
\phi_{\bO}:\bO^\vee\to \bO_{\on{co}}.
\end{equation}

\begin{lem}
The homomorphism \eqref{e:another ident} is an isomorphism.
\end{lem}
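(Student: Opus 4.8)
The plan is to identify $\phi_{\bO}$ with the self-duality equivalence of \secref{sss:rigid duality} and thereby conclude that it is an isomorphism. The first observation is a general one: a homomorphism of co-monoidal DG categories is an isomorphism precisely when its underlying functor in $\StinftyCat_{\on{cont}}$ is an equivalence, since the inverse functor then inherits a canonical co-monoidal structure and the two are mutually inverse as co-monoidal functors. So it suffices to exhibit the underlying functor of $\phi_{\bO}$ as an equivalence.

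Recall from \secref{sss:rigid duality} that rigidity of $\bO$ produces a canonical self-duality $\bO^\vee\simeq\bO$ as in \eqref{e:rigid duality}, with co-unit $\on{unit}_\bO^R\circ\on{mult}_\bO$ and unit $\on{mult}_\bO^R\circ\on{unit}_\bO$, and that under this identification the dual of $\on{mult}_\bO$ is $\on{mult}_\bO^R$ and the dual of $\on{unit}_\bO$ is $\on{unit}_\bO^R$. The co-monoidal structure on $\bO^\vee$ is by definition the dual $\on{mult}_\bO^\vee$ of $\on{mult}_\bO$, while the co-monoidal structure on $\bO_{\on{co}}$ is $\on{mult}_\bO^R$; hence the very computation of \secref{sss:rigid duality} says that \eqref{e:rigid duality} intertwines these two co-multiplications (and the two co-units). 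In other words, \eqref{e:rigid duality} upgrades automatically to an isomorphism of co-monoidal categories $\iota\colon\bO^\vee\to\bO_{\on{co}}$, whose underlying functor is an equivalence because duality data are invertible. The theorem will follow once we show $\phi_{\bO}=\iota$.

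To compare the two, recall how $\phi_{\bO}$ was produced in \eqref{e:another ident}: it is the co-monoidal homomorphism underlying the functor \eqref{e:comodules and comodules} on comodule categories, compatibly with the forgetful functors to $\StinftyCat_{\on{cont}}$. It therefore suffices to check that $\iota$ induces the same functor \eqref{e:comodules and comodules}. Unwinding the two equivalences \eqref{e:modules and comodules 1} and \eqref{e:modules and comodules 2} through which \eqref{e:comodules and comodules} was built, this amounts to the following concrete assertion: for every $\bO$-module $\bC$ with action $a=\on{act}_{\bC,\bO}$, the $\bO_{\on{co}}$-comodule obtained by pushing the dual $\bO^\vee$-comodule structure forward along $\iota$ coincides with the $\bO_{\on{co}}$-comodule structure $\on{co-act}_{\bC,\bO}$ furnished by \propref{p:extsience of right adjoint for rigid}.

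This last point is the heart of the matter and the step I expect to be the main obstacle. Both co-action functors $\bC\to\bO\otimes\bC$ can be written out explicitly: the dual co-action is the composite $\bC\to\bO^\vee\otimes\bO\otimes\bC\to\bO^\vee\otimes\bC$ assembled from the coevaluation $\on{mult}_\bO^R\circ\on{unit}_\bO$ of \eqref{e:rigid duality} together with $a$, transported along $\iota$; the functor $\on{co-act}_{\bC,\bO}$ is the three-step composite \eqref{e:right adj}. The plan is to expand both composites using the explicit duality unit and co-unit recorded in \secref{sss:rigid duality}, and to match them term by term via the triangle identities of the $(\on{mult}_\bO,\on{mult}_\bO^R)$- and $(\on{unit}_\bO,\on{unit}_\bO^R)$-adjunctions, exactly the coherences already exploited in the proof of \propref{p:extsience of right adjoint for rigid}. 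Once this compatibility is confirmed, $\iota$ and $\phi_{\bO}$ induce the same functor on comodule categories, so $\phi_{\bO}=\iota$ is an isomorphism of co-monoidal categories, as claimed.
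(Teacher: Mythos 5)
Your proof is correct and takes essentially the same route as the paper: the paper's one-line argument is precisely that, by construction, the underlying plain functor of $\phi_{\bO}$ agrees with the self-duality \eqref{e:rigid duality} (your identification $\phi_{\bO}=\iota$), after which the conservativity of the forgetful functor---which you make explicit---finishes the argument. Your comparison of the transported dual co-action with \eqref{e:right adj} is exactly the content of the paper's ``it follows from the construction,'' and in fact it is a direct substitution of the duality unit $\on{mult}_\bO^R\circ\on{unit}_\bO$, requiring no further use of the triangle identities.
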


\begin{proof}
It follows from the construction that at the level of plain DG categories, the functor \eqref{e:another ident}
equals that of \eqref{e:rigid duality}.
\end{proof}

\sssec{Homomorphisms between rigid categories}  \label{sss:right adj and dual homomorphisms}

In a similar way to the construction of \eqref{e:another ident} we obtain:

\begin{prop} \label{p:right adj and dual homomorphisms}
Let $\sF:\bO_1\to \bO_2$ be a homomorphism between rigid monoidal categories. Then
the following diagram of homomorphisms of co-monoidal categories commutes:
$$
\CD
(\bO_2)_{\on{co}} @>{\sF^R}>>  (\bO_1)_{\on{co}} \\
@VVV    @VVV  \\
\bO_2^\vee   @>{\sF^\vee}>> \bO_1^\vee.
\endCD
$$
\end{prop}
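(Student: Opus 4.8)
The plan is to mimic the construction of the isomorphism $\phi_\bO$ in \eqref{e:another ident}: rather than comparing the co-monoidal functors $\sF^R$ and $\sF^\vee$ head-on (which would force me to unwind their coherence data), I will compare the functors they induce on co-module categories and invoke the same reconstruction principle — that a homomorphism of co-monoidal categories $\bP_1\to\bP_2$ is determined by the induced functor $\bP_1\commod\to\bP_2\commod$ lying over $\StinftyCat_{\on{cont}}$ — that was used to produce $\phi_\bO$. Thus it suffices to show that, under the identifications $\phi_{\bO_i}\colon\bO_i^\vee\to(\bO_i)_{\on{co}}$, the functor $\bO_2^\vee\commod\to\bO_1^\vee\commod$ induced by $\sF^\vee$ agrees with the functor $(\bO_2)_{\on{co}}\commod\to(\bO_1)_{\on{co}}\commod$ induced by $\sF^R$, compatibly with the forgetful functors to $\StinftyCat_{\on{cont}}$.

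First I would show that both induced functors coincide with a single, manifestly defined functor: the restriction functor $\on{Res}_\sF\colon\bO_2\mmod\to\bO_1\mmod$ along $\sF$ (sending an $\bO_2$-module $\bC$ to $\bC$ with the $\bO_1$-action $\on{act}_{\bC,\bO_2}\circ(\sF\otimes\on{Id}_\bC)$), transported through the equivalences $\bO_i\mmod\simeq\bO_i^\vee\commod$ of \eqref{e:modules and comodules 1} and the functors $\bO_i\mmod\to(\bO_i)_{\on{co}}\commod$ of \eqref{e:modules and comodules 2}. For the dual side this is just functoriality of the module/co-module duality \eqref{e:modules and comodules 1} in the monoidal variable: dualizing $\sF$ yields the co-monoidal functor $\sF^\vee$, and under duality precomposition of an action by $\sF\otimes\on{Id}_\bC$ dualizes to postcomposition of the co-action by $\sF^\vee\otimes\on{Id}_\bC$, i.e. restriction along $\sF$ goes over to co-restriction along $\sF^\vee$. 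This step is formal.

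The substance is the right-hand side. Here I would use \propref{p:extsience of right adjoint for rigid}: for an $\bO_i$-module $\bC$, the $(\bO_i)_{\on{co}}$-co-action attached to $\bC$ by \eqref{e:modules and comodules 2} is the right adjoint $\on{co-act}_{\bC,\bO_i}$ of the action. Restricting along $\sF$ replaces $\on{act}_{\bC,\bO_2}$ by $\on{act}_{\bC,\bO_2}\circ(\sF\otimes\on{Id}_\bC)$; since $\sF$ admits the continuous right adjoint $\sF^R$ (the relevant adjoints being continuous by rigidity, cf. \corref{c:dual and rigid module}), passing to right adjoints gives $(\sF\otimes\on{Id}_\bC)^R\simeq\sF^R\otimes\on{Id}_\bC$ and hence
$$\on{co-act}_{\bC,\bO_1}\simeq(\sF^R\otimes\on{Id}_\bC)\circ\on{co-act}_{\bC,\bO_2}.$$
This is precisely the co-action obtained by co-restricting the $(\bO_2)_{\on{co}}$-co-module $\bC$ along the co-monoidal functor $\sF^R\colon(\bO_2)_{\on{co}}\to(\bO_1)_{\on{co}}$, so \eqref{e:modules and comodules 2} intertwines $\on{Res}_\sF$ with co-restriction along $\sF^R$, as needed.

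Granting the two identifications, both induced functors on co-module categories equal $\on{Res}_\sF$ and lie over $\StinftyCat_{\on{cont}}$, so the reconstruction principle forces $\sF^\vee$ and $\sF^R$ to agree under $\phi_{\bO_1},\phi_{\bO_2}$, which is the assertion of the proposition. The main obstacle I anticipate is not any single computation but keeping the comparison honest at the homotopy-coherent level: the identity $(\sF\otimes\on{Id}_\bC)^R\simeq\sF^R\otimes\on{Id}_\bC$ and the matching of co-actions must be promoted to an equivalence of the full $(\bO_i)_{\on{co}}$-co-module structures, which is exactly why I would argue throughout with the functors on module and co-module categories — as in the construction of $\phi_\bO$ — rather than with explicit (co)action $2$-morphisms.
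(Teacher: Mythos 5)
Your proposal is correct and takes essentially the same approach the paper intends: the paper's entire proof of \propref{p:right adj and dual homomorphisms} is the remark that it follows ``in a similar way to the construction of \eqref{e:another ident}'', i.e.\ by comparing the induced functors on co-module categories over $\StinftyCat_{\on{cont}}$ and invoking the same reconstruction principle, which is exactly your strategy. Your identification of both composites with restriction along $\sF$ --- formal duality on the $\sF^\vee$ side, and passage to right adjoints of the twisted action via \propref{p:extsience of right adjoint for rigid} on the $\sF^R$ side --- is the correct fleshing-out of that remark.
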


\sssec{Left modules vs. right modules}  \label{sss:left vs right}

Note that if $\bO$ is rigid, then so is the category $\bO^{\on{o}}$ with the opposite monoidal structure.
Applying the construction of \secref{ss:dual and adj rigid modules}, we obtain an isomorphism
$$\phi_{\bO^{\on{o}}}:(\bO^{\on{o}})^\vee\to (\bO^{\on{o}})_{\on{co}}.$$

\medskip

We obtain that there exists a canonically defined monoidal automorphism $\psi_\bO$ of $\bO$, which 
intertwines the isomorphisms $\phi_{\bO^{\on{o}}}$ and 
$$(\bO^{\on{o}})^\vee\simeq (\bO^\vee)^{\on{o}}\overset{(\phi_{\bO})^{\on{o}}}\longrightarrow 
(\bO_{\on{co}})^{\on{o}}\simeq (\bO^{\on{o}})_{\on{co}}.$$

\medskip

We note that the automorphism $\psi_\bO$ is trivial when the monoidal struture on $\bO$ is commutative. 

\begin{rem}
When $\bO$ is compactly generated, at the level of compact objects, the automorphism $\psi_\bO$
acts as 
$$\bo\mapsto (\bo^\vee)^\vee.$$
\end{rem}

\ssec{Hochschild homology vs cohomology}

\sssec{}

Let $\bO$ be a monoidal DG category, and  
let $\bC^l$ and $\bC^r$ be a left and right $\bO$-module categories. We can form their
tensor product 
$$\bC^r\underset{\bO}\otimes \bC^l\in \StinftyCat_{\on{cont}},$$
which is computed as  
$$|\on{Bar}^\bullet(\bC^r,\bO,\bC^l)|.$$

\medskip

Assume that $\bO$ is dualizable as a plain DG category, and consider $\bO^\vee$ as a monoidal DG category.
Consider the co-tensor product
$$\bC^r\overset{\bO}\otimes \bC^l\in \StinftyCat_{\on{cont}},$$
defined as 
$$\on{Tot}(\on{co-Bar}^\bullet(\bC^r,\bO^\vee,\bC^l)).$$

\sssec{}

We now claim:

\begin{prop} \label{p:rigid Hochschild}
Assume that $\bO$ is rigid. Then 
there exists a canonical isomorphism in $\StinftyCat_{\on{cont}}$
$$\bC^r\underset{\bO}\otimes \bC^l\simeq (\bC^r)_{\psi_\bO}\overset{\bO}\otimes \bC^l,$$
where $(\bC^r)_{\psi_\bO}$ is the right $\bO$-module category, obtained from $\bC^r$
by twisting the action by the automorhism $\psi_\bO$ of \secref{sss:left vs right}.
\end{prop}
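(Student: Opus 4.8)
The plan is to compare the two complexes $\on{Bar}^\bullet(\bC^r,\bO,\bC^l)$ and $\on{co-Bar}^\bullet(\bC^r,\bO^\vee,\bC^l)$ termwise and show that one is obtained from the other by passing to adjoint functors, so that \cite[Lemma 1.3.3]{DGCat} identifies the geometric realization of the former with the totalization of the latter. The key structural input is the machinery of \secref{ss:dual and adj rigid modules}: since $\bO$ is rigid, the right adjoint of the monoidal operation endows $\bO$ with a co-monoidal structure, and the isomorphism $\phi_\bO:\bO^\vee\to \bO_{\on{co}}$ of \eqref{e:another ident} identifies $\bO^\vee$ (as a co-monoidal category) with $\bO$ equipped with $\on{mult}_\bO^R$. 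Under this identification, the co-Bar complex computing $\bC^r\overset{\bO}\otimes \bC^l$ is precisely the complex obtained from $\on{Bar}^\bullet(\bC^r,\bO,\bC^l)$ by replacing each structure functor by its right adjoint.

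First I would spell out the two simplicial/co-simplicial objects in each cosimplicial degree $n$: both have $n$-th term $\bC^r\otimes \bO^{\otimes n}\otimes \bC^l$. In the Bar complex the face maps are given by the action functors $\on{act}_{\bC^r,\bO}$, the multiplications $\on{mult}_\bO$, and $\on{act}_{\bO,\bC^l}$; the degeneracies insert units via $\on{unit}_\bO$. In the co-Bar complex for $\bO^\vee$ the coface maps are the duals of these, and via $\phi_\bO$ these duals are identified with the right adjoints $\on{mult}_\bO^R$ and (by \propref{p:extsience of right adjoint for rigid}) the co-action functor $\on{co-act}_{\bC^l,\bO}$, which is the right adjoint of $\on{act}_{\bO,\bC^l}$. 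Thus degree by degree the co-Bar structure functors are the right adjoints of the Bar structure functors. Then \cite[Lemma 1.3.3]{DGCat} applies to give
$$|\on{Bar}^\bullet(\bC^r,\bO,\bC^l)|\simeq \on{Tot}(\on{co-Bar}^\bullet(\bC^r,\bO^\vee,\bC^l)),$$
which is the desired equivalence in $\StinftyCat_{\on{cont}}$.

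The point requiring care, and the one I expect to be the main obstacle, is the appearance of the twist $\psi_\bO$ on $\bC^r$. The subtlety is that the identification of the dual co-monoidal structure with the right-adjoint co-monoidal structure behaves asymmetrically with respect to the left and right module actions: the isomorphism \eqref{e:another ident} is built from $\phi_\bO$, whereas the right-module side is governed by $\bO^{\on{o}}$, and reconciling $\phi_{\bO^{\on{o}}}$ with $(\phi_\bO)^{\on{o}}$ is exactly what produces the monoidal automorphism $\psi_\bO$ of \secref{sss:left vs right}. So when one passes from the abstractly-dualized co-Bar complex (for $\bO^\vee$) to the right-adjoint co-Bar complex (for $\bO_{\on{co}}$), the right-module structure on $\bC^r$ gets transported not by the identity but by $\psi_\bO$. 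I would track this carefully by writing the right $\bO$-module structure on $\bC^r$ as a left $\bO^{\on{o}}$-module structure and applying \propref{p:right adj and dual homomorphisms} together with the definition of $\psi_\bO$; the net effect is that $\bC^r$ must be replaced by $(\bC^r)_{\psi_\bO}$, yielding the stated formula.

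Finally I would note that all the compatibility (homotopy-coherence) of the face and degeneracy maps under passage to adjoints is guaranteed by the rigidity hypothesis via \corref{c:dual and rigid module}, which ensures the relevant right adjoints are themselves maps of module categories; this is what makes the termwise adjunction assemble into an equivalence of the full (co)simplicial diagrams rather than just a degreewise statement. When the monoidal structure on $\bO$ is commutative, $\psi_\bO$ is trivial, recovering the naive symmetric statement, which serves as a useful consistency check.
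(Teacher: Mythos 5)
Your proposal is correct and follows essentially the same route as the paper: apply \cite[Lemma 1.3.3]{DGCat} to rewrite $\bC^r\underset{\bO}\otimes \bC^l$ as the totalization of the co-simplicial category $\on{Bar}^{\bullet,R}(\bC^r,\bO,\bC^l)$ obtained by passing to right adjoints, and then identify that co-simplicial category with $\on{co-Bar}^\bullet((\bC^r)_{\psi_\bO},\bO^\vee,\bC^l)$ via the constructions of \secref{ss:dual and adj rigid modules}. The paper compresses the second step into one sentence, so your careful tracking of where $\psi_\bO$ arises (reconciling $\phi_{\bO^{\on{o}}}$ with $(\phi_\bO)^{\on{o}}$ on the right-module side, via \propref{p:right adj and dual homomorphisms}) is exactly the content the paper leaves implicit, not a deviation from it.
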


\begin{proof}

By \cite[Lemma 1.3.3]{DGCat}, the tensor product $\bC^r\underset{\bO}\otimes \bC^l$ can
be computed as the totalization of the co-simplicial category $\on{Bar}^{\bullet,R}(\bC^r,\bO,\bC^l)$,
obtained from $\on{Bar}^\bullet(\bC^r,\bO,\bC^l)$ by passing to the right adjoint functors.

\medskip

Now, by the construction of \secref{ss:dual and adj rigid modules}, the co-simplicial categories
$$\on{Bar}^{\bullet,R}(\bC^r,\bO,\bC^l) \text{ and } \on{co-Bar}^\bullet((\bC^r)_{\psi_\bO},\bO^\vee,\bC^l)$$
are canonically equivalent.

\end{proof}

\sssec{}

Let $\bC_1$ and $\bC_2$ be two left $\bO$-module categories, and assume that $\bC_1$ is dualizable
as a plain category. Consider $\bC_1^\vee$ as a right $\bO$-module category. Then we have
$$\uHom_{\bO}(\bC_1,\bC_2)\simeq \bC_1^\vee \overset{\bO}\otimes \bC^l.$$

\medskip

Hence, from \propref{p:rigid Hochschild} we obtain:

\medskip

\begin{cor} \label{c:rigid Hochschild}
Assume that $\bO$ is rigid. Then for $\bC_1$ and $\bC_2$ as above, there exists a canonical isomorphism
$$\uHom_{\bO}((\bC_1)_{\psi_\bO},\bC_2)\simeq \bC_1^\vee \underset{\bO}\otimes \bC^l.$$
\end{cor}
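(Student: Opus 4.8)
The plan is to deduce this directly from \propref{p:rigid Hochschild}, combined with the identification of $\uHom_{\bO}$ with a cotensor product recorded in the display immediately preceding the corollary. Observe that the asserted isomorphism differs from that display only in two features: the first argument $\bC_1$ has been replaced by its twist $(\bC_1)_{\psi_\bO}$, and the cotensor product $\overset{\bO}\otimes$ has been replaced by the genuine relative tensor product $\underset{\bO}\otimes$. The point of the proof is that these two modifications compensate for each other exactly, and that the bridge between them is supplied by \propref{p:rigid Hochschild}.

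Concretely, the first step I would carry out is to apply the identification $\uHom_{\bO}(\bC_1,\bC_2)\simeq \bC_1^\vee \overset{\bO}\otimes \bC_2$ to the left $\bO$-module category $(\bC_1)_{\psi_\bO}$ in place of $\bC_1$, which is still dualizable as a plain category. This gives
$$\uHom_{\bO}\big((\bC_1)_{\psi_\bO},\bC_2\big)\simeq \big((\bC_1)_{\psi_\bO}\big)^\vee \overset{\bO}\otimes \bC_2.$$
The second step is to identify the dual of the twisted module. Since $\psi_\bO$ is a \emph{monoidal} automorphism of $\bO$ (see \secref{sss:left vs right}), twisting the action of a left module by $\psi_\bO$ is compatible with passage to the dual right module, so that $\big((\bC_1)_{\psi_\bO}\big)^\vee \simeq (\bC_1^\vee)_{\psi_\bO}$ as right $\bO$-module categories. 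Substituting, I obtain
$$\uHom_{\bO}\big((\bC_1)_{\psi_\bO},\bC_2\big)\simeq (\bC_1^\vee)_{\psi_\bO} \overset{\bO}\otimes \bC_2.$$

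The third and final step is to invoke \propref{p:rigid Hochschild} with $\bC^r:=\bC_1^\vee$ and $\bC^l:=\bC_2$, which asserts precisely $\bC_1^\vee \underset{\bO}\otimes \bC_2 \simeq (\bC_1^\vee)_{\psi_\bO} \overset{\bO}\otimes \bC_2$; composing this with the previous identification yields the desired isomorphism $\uHom_{\bO}((\bC_1)_{\psi_\bO},\bC_2)\simeq \bC_1^\vee \underset{\bO}\otimes \bC_2$. The one genuinely non-formal ingredient, \propref{p:rigid Hochschild}, is already available, so the only place requiring care is the compatibility of the $\psi_\bO$-twist with the formation of the dual module in the second step: this must be checked by unwinding how $\psi_\bO$ interacts with the duality data $\bO^\vee\simeq \bO_{\on{co}}$ of \secref{ss:dual and adj rigid modules}, ensuring the resulting equivalence is canonical rather than merely abstractly existing. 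I expect this bookkeeping to be the main (though routine) obstacle, with the rest being a straightforward composition of the two cited identifications.
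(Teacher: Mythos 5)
Your proof is correct and takes essentially the same route as the paper: the corollary is stated there as an immediate consequence of combining the identification $\uHom_{\bO}(\bC_1,\bC_2)\simeq \bC_1^\vee \overset{\bO}\otimes \bC_2$ with \propref{p:rigid Hochschild}, which is exactly your composition. The only detail the paper leaves implicit is the compatibility $\bigl((\bC_1)_{\psi_\bO}\bigr)^\vee \simeq (\bC_1^\vee)_{\psi_\bO}$, which you correctly identify as the routine bookkeeping step (valid since $\psi_\bO$ is a monoidal automorphism) and supply.
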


\sssec{}  

As another corollary of \propref{p:rigid Hochschild}, we obtain:

\begin{cor}  \label{c:any object as limit}
Let $\bO$ be rigid. Then any $\bO$-module category can be obtained as a totalization of a co-simplicial
object, whose terms are of the form $\bO\otimes \bD$ with $\bD\in \StinftyCat_{\on{cont}}$.
\end{cor}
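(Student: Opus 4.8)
The plan is to realize $\bC$ itself as such a totalization by re-running the mechanism from the proof of \propref{p:rigid Hochschild}. First I would use the tautological equivalence $\bC \simeq \bO \underset{\bO}\otimes \bC$, where $\bO$ is regarded as a right module over itself and $\bC$ as a left $\bO$-module category. By definition this tensor product is the geometric realization $|\on{Bar}^\bullet(\bO,\bO,\bC)|$, whose category of $n$-simplices is $\bO \otimes \bO^{\otimes n} \otimes \bC$.

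Next I would invoke rigidity. By \propref{p:extsience of right adjoint for rigid}, the action functor $\bO \otimes \bC \to \bC$ admits a continuous right adjoint; likewise the multiplication $\on{mult}_\bO$ and the self-action of $\bO$ admit continuous right adjoints (the latter being the trivial case, $\bO$ acting on itself). Consequently every face and degeneracy functor of the simplicial category $\on{Bar}^\bullet(\bO,\bO,\bC)$ admits a continuous right adjoint. Then, exactly as in \propref{p:rigid Hochschild}, \cite[Lemma 1.3.3]{DGCat} lets me compute the geometric realization as the totalization of the co-simplicial category $\on{Bar}^{\bullet,R}(\bO,\bO,\bC)$ obtained by passing to the right adjoint functors.

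Finally I would observe that passing to right adjoints does not alter the \emph{terms} of the diagram, only the direction of the structure functors. Hence the $n$-th term of $\on{Bar}^{\bullet,R}(\bO,\bO,\bC)$ remains $\bO \otimes \bO^{\otimes n} \otimes \bC$, which is visibly of the desired form $\bO \otimes \bD$ with $\bD := \bO^{\otimes n} \otimes \bC \in \StinftyCat_{\on{cont}}$. This exhibits $\bC$ as the totalization of a co-simplicial object whose terms are of the form $\bO \otimes \bD$, as claimed.

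The only genuine point to verify is the existence and continuity of the right adjoints to all structure maps of the bar complex, which is precisely what rigidity supplies through \propref{p:extsience of right adjoint for rigid} (equivalently \corref{c:dual and rigid module}); everything else is formal bookkeeping about which way the arrows point. I therefore expect this to be the main — and essentially the only — obstacle, and it is already discharged by the cited results.
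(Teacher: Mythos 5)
Your proof is correct and is essentially the paper's own argument: the paper writes $\bC\simeq \bO\underset{\bO}\otimes \bC$ and invokes \propref{p:rigid Hochschild}, whose proof is precisely your step of passing to right adjoints in $\on{Bar}^\bullet(\bO,\bO,\bC)$ via \cite[Lemma 1.3.3]{DGCat}, so you have simply inlined that proposition. The twist by $\psi_\bO$ appearing in the paper's formulation only matters for identifying the resulting co-simplicial category as a co-Bar complex over $\bO^\vee$; it does not affect the terms, so omitting it loses nothing.
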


\begin{proof}
For $\bC\in \bO\mod$, we have
$$\bC\simeq \bO\underset{\bO}\otimes \bC,$$
(where the right-hand side is regarded as a left $\bO$-module category via the left action of $\bO$
on itself). Now, by  \propref{p:rigid Hochschild}
$$\bO\underset{\bO}\otimes \bC\simeq \on{Tot}\left(\on{co-Bar}^\bullet(\bO_{\psi_\bO},\bO,\bC)\right)
\simeq \on{Tot}\left(\on{co-Bar}^\bullet(\bO,\bO,(\bC)_{\psi^{-1}_\bO})\right).$$

Now, the terms of $\on{co-Bar}^\bullet(\bO,\bO,(\bC)_{\psi^{-1}_\bO})$, when regarded as left 
$\bO$-modules, have the required form.

\end{proof}

\sssec{}

Finally, combining Corollaries \ref{c:dual and rigid module} and \ref{c:tensor prod Beck-Chevalley}, we obtain:

\begin{cor}  \label{c:dual and rigid module bis}
Let $\bO$ be rigid. 

\smallskip

\noindent{\em(a)} The co-simplicial category $\on{co-Bar}^\bullet(\bC^r,\bO^\vee,\bC^l)$ satisfies
the monadic Beck-Chevalley condition. 

\smallskip

\noindent{\em(b)} The functor of evaluation on $0$-simplices
$$\on{Tot}\left(\on{co-Bar}^\bullet(\bC^r,\bO^\vee,\bC^l)\right)\to \bC^r\otimes \bC^l$$
admits a left adjoint and 
is monadic. The resulting monad, viewed
as a plain endo-functor of $\bC^r\otimes \bC^l$, identifies with the composition
$$\bC^r\otimes \bC^l\overset{\on{Id}_{\bC^r}\otimes \on{co-act}_{\bC^l,\bO}}\longrightarrow 
\bC^r\otimes \bO\otimes \bC^l \overset{\on{act}_{\bC^r,\bO}\otimes \on{Id}_{\bC^l}}\longrightarrow \bC^r\otimes \bC^l.$$
\end{cor}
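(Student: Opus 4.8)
The plan is to deduce the statement from \lemref{l:tensor prod Beck-Chevalley}(i) and \lemref{l:monadicity}, with rigidity supplying the hypotheses, exactly as indicated by the reference to Corollaries~\ref{c:dual and rigid module} and~\ref{c:tensor prod Beck-Chevalley}. First I would verify that the bar-complex $\on{Bar}^\bullet(\bC^r,\bO,\bC^l)$ meets the conditions of \lemref{l:tensor prod Beck-Chevalley}(i). Since $\bO$ is rigid, \propref{p:extsience of right adjoint for rigid} shows that the two action functors $\on{act}_{\bC^r,\bO}$, $\on{act}_{\bO,\bC^l}$ and the multiplication $\on{mult}_\bO$ all admit continuous right adjoints (the right adjoint of an action being the explicit functor $\on{co-act}$ of \eqref{e:right adj}), while \corref{c:dual and rigid module} shows that the right adjoint of $\on{act}_{\bO,\bC^l}$ is a map of $\bO$-module categories. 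Thus \lemref{l:tensor prod Beck-Chevalley}(i) applies, and the co-simplicial category $\on{Bar}^{\bullet,R}(\bC^r,\bO,\bC^l)$ satisfies the monadic Beck-Chevalley condition.

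Next I would identify this co-simplicial category with $\on{co-Bar}^\bullet(\bC^r,\bO^\vee,\bC^l)$. By the construction of \secref{ss:dual and adj rigid modules}, the right adjoint of the monoidal operation endows $\bO$ with the co-monoidal structure $\bO_{\on{co}}$, the isomorphism $\phi_\bO\colon \bO^\vee\simeq \bO_{\on{co}}$ of \eqref{e:another ident} holds, and the functor \eqref{e:modules and comodules 2} turns an $\bO$-action into the corresponding $\bO_{\on{co}}$-coaction. Passing to right adjoints throughout the bar complex therefore converts it term-by-term into a co-bar complex for $\bO^\vee$; this is precisely the identification already used in the proof of \propref{p:rigid Hochschild}, namely $\on{Bar}^{\bullet,R}(\bC^r,\bO,\bC^l)\simeq \on{co-Bar}^\bullet((\bC^r)_{\psi_\bO},\bO^\vee,\bC^l)$. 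Since $\psi_\bO$ is a monoidal automorphism of $\bO$, replacing $\bC^r$ by $(\bC^r)_{\psi_\bO^{-1}}$ (itself a legitimate right $\bO$-module) produces $\on{co-Bar}^\bullet(\bC^r,\bO^\vee,\bC^l)$, and twisting by an automorphism neither creates nor destroys the adjoints featuring in the Beck-Chevalley condition. This gives part (a).

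For part (b), I would feed the co-simplicial category of (a) into \lemref{l:monadicity}: its parts (a) and (c) give that $\on{ev}^0$ admits a left adjoint and is monadic, while part (b) computes the monad $\on{ev}^0\circ(\on{ev}^0)^L$, as a plain endo-functor, as $(\sT^{\on{pr}_s})^L\circ \sT^{\on{pr}_t}$. It then remains to unwind the two coface maps $[0]\to[1]$ of $\on{co-Bar}^\bullet(\bC^r,\bO^\vee,\bC^l)$: one is $\on{Id}_{\bC^r}\otimes \on{co-act}_{\bC^l,\bO}$, and the other, after passing to its left adjoint, is $\on{act}_{\bC^r,\bO}\otimes \on{Id}_{\bC^l}$ (the left adjoint of $\on{co-act}_{\bC^r,\bO}$ being $\on{act}_{\bC^r,\bO}$, by \propref{p:extsience of right adjoint for rigid}). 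Composing these recovers the displayed endo-functor. Equivalently, one may invoke \corref{c:tensor prod Beck-Chevalley} directly, after recording via \propref{p:rigid Hochschild} and \cite[Lemma 1.3.3]{DGCat} that $\on{Tot}(\on{co-Bar}^\bullet(\bC^r,\bO^\vee,\bC^l))$ computes $\bC^r\underset{\bO}\otimes\bC^l$ and that $\on{ev}^0$ is the right adjoint of the canonical projection $\bC^r\otimes\bC^l\to \bC^r\underset{\bO}\otimes\bC^l$.

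I expect the only genuine subtlety to be the bookkeeping in the second and third paragraphs: keeping track of the automorphism $\psi_\bO$ and of which coface map becomes which adjoint, so that the monad emerges in exactly the stated form rather than in an opposite or twisted guise. Everything else is a formal application of results already in place, so no convergence or colimit-commutation issues arise.
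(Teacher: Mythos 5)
Your proposal is correct and follows the paper's own route: the paper proves this corollary precisely by combining \corref{c:dual and rigid module} (which, together with \propref{p:extsience of right adjoint for rigid}, supplies the hypotheses of \lemref{l:tensor prod Beck-Chevalley}(i)) with \corref{c:tensor prod Beck-Chevalley}, using the identification of $\on{Bar}^{\bullet,R}(\bC^r,\bO,\bC^l)$ with the co-Bar complex over $\bO^\vee$ from \secref{ss:dual and adj rigid modules}, exactly as you describe. The $\psi_\bO$-bookkeeping you flag is real but harmless (the paper's one-line proof silently absorbs it the same way), so your write-up is if anything more careful than the original.
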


\ssec{Dualizability of modules over a rigid category}

\sssec{}

Let $\bO$ be a monoidal DG category, and let $\bC^r$ and $\bC^l$ be a right and left $\bO$-module categories,
respectively. 

\medskip

Recall that a data of duality between $\bC^r$ and $\bC^l$ as $\bO$-module categories consists of a unit map
$$\Vect\to \bC^r\underset{\bO}\otimes \bC^l,$$
which is a map in $\StinftyCat_{\on{cont}}$, and a co-unit map
$$\bC^l\otimes \bC^r\to \bO,$$
which is a map on $(\bO\otimes \bO)\mmod$, which satisfy the usual axioms. 

\medskip

Equivalently, the datum of duality between $\bC^r$ and $\bC^l$ as $\bO$-module categories is 
a functorial equivalence
$$\uHom_{\bO}(\bC^l,\bC)\simeq \bC^r\underset{\bO}\otimes \bC,\quad \bC\in \bO\mmod.$$

\begin{rem}
Assume for a moment that $\bO$ is symmetric monoidal. Then it is easy to see that
a duality data between two $\bO$-module categories is equivalent to a duality data
inside the symmetric monoidal DG category $\bO\mmod$.
\end{rem}

\sssec{}  \label{sss:dualizable in rigid}

We claim:

\begin{prop}
Assume that $\bO$ is rigid. Then $\bC^l\in \bO\mmod$ is dualizable if and only if it is dualiable as a plain 
DG category. The DG category underlying the $\bO$-module dual of $\bO$ is canonically equivalent to
$(\bC^l)^\vee$.
\end{prop}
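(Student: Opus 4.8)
The plan is to reduce everything to the characterization of duality recalled just above: a right $\bO$-module $\bC^r$ is dual to $\bC^l$ precisely when there is a functorial equivalence $\uHom_\bO(\bC^l,\bC)\simeq \bC^r\underset{\bO}\otimes \bC$ for $\bC\in\bO\mmod$. Thus the statement amounts to understanding when the functor $\uHom_\bO(\bC^l,-)$ is co-represented by a relative tensor product, and the self-duality $\bO^\vee\simeq\bO$ of \eqref{e:rigid duality} together with \corref{c:rigid Hochschild} is exactly what makes this happen.

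First I would treat the implication ``plain-dualizable $\Rightarrow$ module-dualizable'', which simultaneously produces the identification of the dual. Assuming $\bC^l$ is dualizable as a plain DG category, set $\bC^r:=\bigl((\bC^l)_{\psi_\bO^{-1}}\bigr)^\vee$, the plain dual of $\bC^l$ twisted by the monoidal automorphism $\psi_\bO$ of \secref{sss:left vs right}, regarded as a right $\bO$-module. Since twisting does not affect the underlying DG category, $(\bC^l)_{\psi_\bO^{-1}}$ is again plain-dualizable and $\bigl((\bC^l)_{\psi_\bO^{-1}}\bigr)_{\psi_\bO}\simeq \bC^l$, so \corref{c:rigid Hochschild} supplies a canonical, functorial equivalence
$$\uHom_\bO(\bC^l,\bC)\simeq \bigl((\bC^l)_{\psi_\bO^{-1}}\bigr)^\vee\underset{\bO}\otimes \bC=\bC^r\underset{\bO}\otimes \bC.$$
By the characterization above, $\bC^l$ is dualizable in $\bO\mmod$ with dual $\bC^r$, whose underlying plain DG category is $(\bC^l)^\vee$; this proves the last sentence of the proposition.

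For the converse, ``module-dualizable $\Rightarrow$ plain-dualizable'', I would extract plain duality data from the module duality data, the key point being that rigidity turns the defining colimit into a limit. Let $\bC^r$ be a module-dual, with unit $u\colon\Vect\to\bC^r\underset{\bO}\otimes\bC^l$ and co-unit $c\colon\bC^l\otimes\bC^r\to\bO$. The plain evaluation is $\bC^l\otimes\bC^r\overset{c}\longrightarrow\bO\overset{\on{unit}_\bO^R}\longrightarrow\Vect$, which is continuous because $\on{unit}_\bO^R$ is (rigidity). For the plain co-evaluation, note that by \propref{p:rigid Hochschild} (via \cite[Lemma 1.3.3]{DGCat}) the relative tensor product $\bC^r\underset{\bO}\otimes\bC^l$ is computed as the totalization of the co-simplicial category $\on{Bar}^{\bullet,R}(\bC^r,\bO,\bC^l)$ obtained by passing to right adjoints; evaluation on $0$-simplices then gives a functor $\on{ev}^0\colon\bC^r\underset{\bO}\otimes\bC^l\to\bC^r\otimes\bC^l$, and the plain co-evaluation is $\Vect\overset{u}\longrightarrow\bC^r\underset{\bO}\otimes\bC^l\overset{\on{ev}^0}\longrightarrow\bC^r\otimes\bC^l$.

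The hard part will be verifying the two triangle identities for this plain pair, as the remaining steps are essentially formal. This is a diagram chase that runs the triangle identities of the module-level duality through the explicit formula for $\on{co-act}_{\bC^l,\bO}$ from \propref{p:extsience of right adjoint for rigid} and through the compatibilities of the self-duality \eqref{e:rigid duality} (that the duals of $\on{mult}_\bO$ and $\on{unit}_\bO$ are $\on{mult}_\bO^R$ and $\on{unit}_\bO^R$), while \corref{c:dual and rigid module} guarantees that $\on{ev}^0$ is compatible with the $\bO$-action. Once these identities are checked, $\bC^l$ is dualizable as a plain DG category, completing the equivalence; by uniqueness of duals its plain dual is the underlying category of $\bC^r$, consistent with the first direction.
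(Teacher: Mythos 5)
Your proposal is correct and takes essentially the same route as the paper: for the direction plain-dualizable $\Rightarrow$ module-dualizable you set $\bC^r$ equal to a $\psi_\bO$-twisted plain dual and invoke \corref{c:rigid Hochschild} (the paper writes $((\bC^l)^\vee)_{\psi_\bO}$, you write $((\bC^l)_{\psi_\bO^{-1}})^\vee$; these agree up to the twist-vs-dual convention, and your form is the one obtained by applying the corollary literally), while for module-dualizable $\Rightarrow$ plain-dualizable your unit $\on{ev}^0\circ u$ is exactly the paper's composite of $u$ with the right adjoint of the tautological functor $\bC^r\otimes \bC^l\to \bC^r\underset{\bO}\otimes \bC^l$, and your co-unit $\on{unit}_\bO^R\circ c$ is identical to the paper's. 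Both you and the paper leave the verification of the triangle identities as a routine diagram chase, so your write-up is at the same level of completeness as the paper's own proof.
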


\begin{proof}

Suppose first being given a duality data between $\bC^r$ and $\bC^l$ as $\bO$-module categories.
We define a duality data between $\bC^r$ and $\bC^l$ as plain DG categories by taking the unit to be 
$$\Vect\to \bC^r\underset{\bO}\otimes \bC^l\to \bC^r\otimes \bC^l,$$
where the second arrow is the right adjoint to the tautological functor $\bC^r\otimes \bC^l\to \bC^r\underset{\bO}\otimes \bC^l$
(it is continuous, e.g., by \corref{c:dual and rigid module bis}(b)). We take the co-unit to be
$$\bC^l\otimes \bC^r\to \bO\overset{\on{unit}_\bO^R}\longrightarrow \Vect.$$

The fact that the duality axioms hold is straightforward. 

\medskip

Vice versa, let $\bC^l$ be dualizable as a plain DG category. Set $\bC^r:=((\bC^l)^\vee)_{\psi_\bO}$. Now, the functorial
equivalence
$$\uHom_{\bO}(\bC^l,\bC)\simeq \bC^r\underset{\bO}\otimes \bC$$
follows from \corref{c:rigid Hochschild}.

\end{proof}

\section{Commutative Hopf algebras}  \label{s:Hopf}

\ssec{The setting}

\sssec{}

Let $\bO$ be a symmetric monoidal category\footnote{In this section $\bO$ is not necessarily stable, e.g.,
$\bO=\StinftyCat_{\on{cont}}$.}. 

\medskip

Consider the category $\on{co-Alg}(\bO)$ of co-algebras in $\bO$. We regard it as a symmetric monoidal
category under the operation of tensor product.

\medskip

By a (commutative) bi-algebra in $\bO$ we will mean a (commutative) algebra in $\on{co-Alg}(\bO)$. We shall say that
a (commutative) bi-algebra is a (commutative) Hopf algebra if it is such at the level of the underlying ordinary categories
(i.e., if it admits a homotopy antipode).

\sssec{}

Recall that if $A$ is an augmented co-algebra object in a monoidal category $\bO$, we can canonically attach to it a
co-simplicial object $$\on{co-Bar}^\bullet(A).$$

\medskip 

If $A$ is a bi-algebra, the object $\on{co-Bar}^\bullet(A)\in \bO^{\bDelta}$ naturally lifts to one in 
$$\on{Alg}(\bO^{\bDelta})\simeq (\on{Alg}(\bO))^{\bDelta},$$
i.e., $\on{co-Bar}^\bullet(A)$ is a co-simplicial algebra in $\bO$, or equivalently, 
a co-simplicial object of $\bO$ endowed with a compatible family of simplex-wise monoidal structures. 

\sssec{}

Consider the corresponding co-simplicial category 
$$\on{co-Bar}^\bullet(A)\mod$$
(where the transition functors are given by tensoring up along the maps in $\on{co-Bar}^\bullet(A)$).

\medskip

Consider the totalization
$$\on{Tot}\left(\on{co-Bar}^\bullet(A)\mod\right).$$

\medskip

The goal of this Appendix is to prove the following:

\begin{propconstr}  \label{p:Hopf}
Let $A$ be a Hopf algebra in $\bO$. Then there exists a canonical equivalence of categories
$$A\comod\to \on{Tot}\left(\on{co-Bar}^\bullet(A)\mod\right).$$
\end{propconstr}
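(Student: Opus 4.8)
The plan is to realize both sides as categories of comodules over one and the same comonad acting on $\bO$, and then to invoke the comonadic form of the Beck--Chevalley machinery of \secref{ss:BC}. Essentially by definition, $A\comod$ is the category of comodules for the comonad $-\otimes A$ on $\bO$ (with comultiplication induced by $\Delta_A$), and the forgetful functor $\on{oblv}_A\colon A\comod\to \bO$ is comonadic with this comonad. Hence it suffices to exhibit $\on{Tot}\left(\on{co-Bar}^\bullet(A)\mod\right)$ as comonadic over $\bO$ via evaluation on $0$-simplices, with a comonad identified with $-\otimes A$.

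First I would record that in the cosimplicial category $\bC^\bullet:=\on{co-Bar}^\bullet(A)\mod$ every coface and codegeneracy functor $\sT^\alpha$ is an induction (extension-of-scalars) functor along the corresponding map of the cosimplicial algebra $\on{co-Bar}^\bullet(A)$, and therefore admits a right adjoint, namely restriction of scalars. Thus the hypotheses needed for the comonadic Beck--Chevalley condition are in place, and the real content is the commutation of the Beck--Chevalley squares: for $\alpha\colon[j]\to[i]$ one must check that the square relating induction along $\sT^{\alpha},\sT^{\alpha+1}$ and restriction along $\sT^{\partial_j},\sT^{\partial_i}$ commutes. This commutation is equivalent to the assertion that the natural algebra map $\on{co-Bar}^i(A)\underset{\on{co-Bar}^j(A)}\otimes\on{co-Bar}^{j+1}(A)\to\on{co-Bar}^{i+1}(A)$ is an isomorphism. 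It is exactly here that the Hopf structure enters: these base-change squares reduce to the invertibility of the shear map $A\otimes A\to A\otimes A$ assembled from $\Delta_A$ and $\on{mult}_A$, and that map is an isomorphism precisely because $A$ admits an antipode.

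Granting the comonadic Beck--Chevalley condition, \lemref{l:comonadicity} gives that $\on{ev}^0\colon\on{Tot}(\bC^\bullet)\to\bC^0$ is comonadic, where $\bC^0=\one\mod\simeq\bO$, and identifies the resulting comonad, as a plain endofunctor, with $(\sT^{\on{pr}_s})^R\circ\sT^{\on{pr}_t}$. Since both coface maps $[0]\to[1]$ correspond to the coaugmentation $\one\to A$, the functor $\sT^{\on{pr}_t}$ is the free-module functor $M\mapsto A\otimes M$ and $(\sT^{\on{pr}_s})^R$ is the forgetful functor $A\mod\to\bO$, so the underlying endofunctor is $-\otimes A$. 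The remaining, and genuinely delicate, step is to upgrade this to an identification of \emph{comonads}: the comultiplication of the Beck--Chevalley comonad is assembled from the degree-two coface data, which encodes $\Delta_A$, and I would verify that it matches the comultiplication of the comodule comonad $-\otimes A$. In ordinary categories this is a direct diagram chase; the actual labor is to make the identification coherent as a map of comonad objects in the $\infty$-categorical sense, and this is the main obstacle of the proof.

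Finally, with the comonad of $\on{ev}^0$ identified with $-\otimes A$ \emph{as comonads}, both $\on{oblv}_A\colon A\comod\to\bO$ and $\on{ev}^0\colon\on{Tot}(\bC^\bullet)\to\bO$ are comonadic over $\bO$ with the same comonad; consequently the induced functor between the two categories of comodules is an equivalence $A\comod\xrightarrow{\ \sim\ }\on{Tot}\left(\on{co-Bar}^\bullet(A)\mod\right)$, compatible with the forgetful functors, which is the asserted equivalence.
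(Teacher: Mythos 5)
Your overall framing agrees with the paper's second half: the Hopf condition (antipode) gives the co-monadic Beck--Chevalley condition for $\on{co-Bar}^\bullet(A)\mod$, so by \lemref{l:comonadicity} the functor $\on{ev}^0:\on{Tot}\left(\on{co-Bar}^\bullet(A)\mod\right)\to \bO$ is co-monadic with underlying endofunctor $-\otimes A$, and $\on{oblv}_A:A\comod\to\bO$ is co-monadic with the same underlying endofunctor. But there is a genuine gap exactly at the step you yourself flag as ``the main obstacle'': you need an identification of these two co-monads \emph{as co-monads}, coherently, and you propose to ``verify'' it by matching comultiplications. In the $\infty$-categorical setting this is not a verification one can perform by a diagram chase: a co-monad is an algebra object in endofunctors, and constructing a morphism (let alone an equivalence) of such objects by hand requires specifying an infinite hierarchy of coherence data. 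Nothing in your argument produces this data, and without it you have no map at all between the two comodule categories --- abstract co-monadicity of both sides over $\bO$ does not by itself yield a functor, canonical or otherwise, from one to the other.

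The paper's proof is organized precisely to sidestep this problem, and the difference is not cosmetic. It \emph{first} constructs the functor $A\comod\to \on{Tot}\left(\on{co-Bar}^\bullet(A)\mod\right)$, via the symmetric monoidal functor $(A,M)\rightsquigarrow \on{co-Bar}^\bullet(A,M)$ on the category of pairs $\on{coAlg+comod}(\bO)$, applied to the algebra object $(A,\one_\bO)$; the Hopf condition enters here to show that the cosimplicial module $\on{co-Bar}^\bullet(A,M)$ is Cartesian over $\on{co-Bar}^\bullet(A)$, i.e.\ genuinely defines an object of the totalization. (This construction is also what makes the equivalence ``canonical'' --- a part of the statement your proposal never addresses.) Once this functor exists and commutes with the two co-monadic projections to $\bO$, it \emph{induces} a map of co-monads with all coherences supplied automatically, and then --- this is the standard maneuver --- one only needs to check that this already-given map is an equivalence, which \emph{can} legitimately be done at the level of plain endofunctors, where it is the identity on $-\otimes A$. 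In short: morphisms of highly structured objects should be constructed functorially and only then checked to be equivalences on underlying objects; your proposal inverts this order and thereby runs into the coherence wall.
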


\ssec{Construction of the functor}

To construct the sought-for functor in \propref{p:Hopf}
we proceed as follows.

\sssec{}

Let 
$$\on{coAlg+comod}(\bO)$$ be the category of pairs $$(A\in \on{co-Alg}^{\on{aug}}(\bO), M\in A\comod(\bO)).$$ Note that 
the assignment $A\rightsquigarrow \on{co-Bar}^\bullet(A)$ can be extended to a functor 
\begin{equation} \label{e:coBar modules}
\on{coAlg+comod}(\bO) \rightsquigarrow \on{co-Bar}^\bullet(A,M')\in \bO^{\bDelta}.
\end{equation}

Moreover, this functor is (symmetric) monoidal, when on $\on{coAlg+comod}(\bO)$ we consider the (symmetric) monoidal
structure
$$(A_1,M_1)\otimes (A_2,M_2):=(A_1\otimes A_2,M_1\otimes M_2),$$
and on $\bO^{\bDelta}$ the component-wise (symmetric) monoidal structure.

\sssec{}

Note that for a bi-algebra $A$, the pair $(A,\one_\bO)$ is naturally an algebra object in the category $\on{coAlg+comod}(\bO)$,
and we have a canonically defined functor
\begin{equation}    \label{e:comodules pairs}
A\comod\to (A,\one_\bO)\mod(\on{coAlg+comod}(\bO)).
\end{equation}

\medskip

We note that the value of the functor \eqref{e:coBar modules} on $(A,\one_\bO)$ is 
$\on{co-Bar}^\bullet(A)\in \on{Alg}(\bO^{\bDelta})$. Now composing the functor \eqref{e:comodules pairs}
and \eqref{e:coBar modules}, we obtain a functor
\begin{equation} \label{e:pre Hopf}
A\comod \to \on{co-Bar}^\bullet(A)\mod(\bO^{\bDelta}).
\end{equation}

\sssec{}

Now, it is easy to see that if $A$ is a Hopf algebra, then  
for $M\in A\comod$ and a map $[j]\to [i]$ in $\bDelta$, for the
corresponding map of algebras and modules 
$$\on{co-Bar}^j(A)\to \on{co-Bar}^i(A), \quad \on{co-Bar}^j(A,M)\to \on{co-Bar}^i(A,M),$$
the resulting map
$$ \on{co-Bar}^i(A)\underset{\on{co-Bar}^j(A)}\otimes  \on{co-Bar}^j(A,M)\to \on{co-Bar}^i(A,M)$$
is an isomorphism. 

\medskip

Hence, the functor \eqref{e:pre Hopf} defines a functor 

\begin{equation} \label{e:Hopf}
A\comod\to \on{Tot}\left(\on{co-Bar}^\bullet(A)\mod\right).
\end{equation}

\ssec{Proof of the equivalence}

\sssec{}

The fact that $A$ is a Hopf algebra implies that the co-simplicial 
category $\on{co-Bar}^\bullet(A)\mod$ satisfies the co-monadic 
Beck-Chevalley condition. Hence, by \lemref{l:comonadicity}, the functor $\on{ev}^0$ of evaluation
on $0$-simplices is co-monadic, and the resulting co-monad on $\bO$ is given by tensor 
product with $A$. 

\sssec{}

Consider now the composition 
$$A\comod\to \on{Tot}\left(\on{co-Bar}^\bullet(A)\mod\right)\overset{\on{ev}^0}\longrightarrow \bO.$$

It identifies with the forgetful functor $\on{oblv}_A:A\comod\to \bO$. In particular, it is co-monadic,
and the resulting monad on $\bO$ being the tensor product with $A$.

\sssec{}

It remains to show that the map of co-monads, induced by the functor \eqref{e:Hopf}, is an isomorphism
as plain endo-functors of $\bO$. However, it is easy to see that the natural
transformation in question is the identity map on the functor of tensor product by $A$.

\end{document}